\DeclareMathAlphabet{\mathpzc}{OT1}{pzc}{m}{it}
\def\figlabel#1{\label{#1}}
\newtheorem{theorem}{Theorem}[section]
\newtheorem{lemma}[theorem]{Lemma}
\newtheorem{corollary}[theorem]{Corollary}
\newtheorem{remark}[theorem]{Remark}
\newtheorem{proposition}[theorem]{Proposition}
\newcommand{\be}{\begin{equation}}
\newcommand{\ee}{\end{equation}}
\newcommand{\eps}{\varepsilon}
\newcommand{\ga}{\gamma}
\newcommand{\dps}{\displaystyle}
\newcommand{\RR}{\mathbb{R}}
\newcommand{\NN}{\mathbb{N}}
\newcommand{\CC}{\mathbb{C}}
\newcommand{\TT}{\mathbb{T}}
\newcommand{\ZZ}{\mathbb{Z}}
\newcommand{\Mm}{\mathcal{M}}
\newcommand{\MM}{M}
\newcommand{\WW}{\mathcal{W}}
\newcommand{\PP}{\mathcal{P}}
\newcommand{\GG}{\mathcal{G}}
\newcommand{\LL}{\mathcal{L}}
\newcommand{\QQ}{\mathcal{Q}}
\newcommand{\YY}{\mathcal{Y}}
\newcommand{\KK}{\mathcal{K}}
\newcommand{\SSS}{\mathcal{S}}
\newcommand{\DD}{\mathcal{D}}
\newcommand{\XX}{\mathcal{X}}
\newcommand{\OO}{\mathcal{O}}
\newcommand{\oo}{\mathpzc{o}}
\newcommand{\FF}{\mathcal{F}}
\newcommand{\HH}{\mathcal{H}}
\newcommand{\VV}{\mathcal{V}}
\newcommand{\RRR}{\mathcal{R}}
\newcommand{\TTT}{\mathcal{T}}
\newcommand{\UU}{\mathcal{U}}
\newcommand{\NNN}{\mathcal{N}}
\newcommand{\EE}{\mathcal{E}}
\newcommand{\JJ}{\mathcal{J}}
\newcommand{\AAA}{\mathcal{A}}
\newcommand{\ZZZ}{\mathcal{Z}}
\newcommand{\CCC}{\mathcal{C}}
\newcommand{\Id}{\mathrm{Id}}
\newcommand{\xp}{x_p}
\newcommand{\yp}{y_p}
\newcommand{\ii}{^{-1}}
\newcommand{\de}{\delta}
\newcommand{\pa}{\partial}
\newcommand{\la}{\lambda}
\newcommand{\inn}{\mathrm{in}}
\newcommand{\out}{\mathrm{out}}
\newcommand{\q}{\beta}
\newcommand{\p}{\alpha}
\newcommand{\overM}{\overline M_1}
\newcommand{\vinfty}{v_\infty}
\newcommand{\kk}{\kappa}
\newcommand{\rr}{\rho}
\newcommand{\tro}{I}
\newcommand{\tri}{\mathcal{I}}
\newcommand{\ups}{\Upsilon}
\newcommand{\C}{c}
\newcommand{\tet}{\theta}
\newcommand{\ol}{\overline}
\newcommand{\mat}{A_0}
\newcommand{\ppi}{\phi}
\newcommand{\wH}{H_1}
\renewcommand{\Re}{\mathrm{Re\, }}
\renewcommand{\Im}{\mathrm{Im\,}}
\newcommand{\wt}{\widetilde}
\newcommand{\wh}{\widehat}
\newcommand{\Lip}{\mathrm{Lip}\,}
\newcommand{\hmu}{\hat\mu}
\newcommand{\Cte}{A}
\begin{document}
\title{Exponentially small splitting of separatrices beyond Melnikov analysis: rigorous results}
\author{Inmaculada Baldom\'a\thanks{\tt immaculada.baldoma@upc.edu}, Ernest Fontich\thanks{\tt fontich@ub.edu}, Marcel Guardia\thanks{\tt marcel.guardia@upc.edu}\, and Tere M. Seara\thanks{\tt tere.m-seara@upc.edu}}
\maketitle
\medskip
\begin{center}$^{*\S}$
Departament de Matem\`atica Aplicada I\\
Universitat Polit\`ecnica de Catalunya\\
Diagonal 647, 08028 Barcelona, Spain
\end{center}
\smallskip
\begin{center}$^\dagger$
Departament de Matem\`atica Aplicada i An\`alisi\\
Universitat de Barcelona\\
Gran Via 585, 08007 Barcelona, Spain
\end{center}
\smallskip
\begin{center}$^\ddagger$
Department of Mathematics\\
Mathematics Building, University of Maryland\\
College Park, MD 20742-4015
\end{center}

\begin{abstract}
We study the  problem of exponentially small splitting of separatrices of one degree of freedom classical Hamiltonian systems with a non-autonomous perturbation fast and periodic in time. We provide a result valid for general systems which are algebraic or trigonometric polynomials in the state variables. It consists on obtaining a rigorous proof of the asymptotic formula for the measure of the splitting. We obtain that the splitting has the asymptotic behavior  $K \varepsilon^{\beta} \text{e}^{-a/\varepsilon}$, identifying the constants $K,\beta,a$ in terms of the system features.

We consider several cases. In some cases, assuming the perturbation is small enough, the values of $K,\beta$ coincide with the classical Melnikov approach. We identify the limit size of the perturbation for which this theory holds true. However for the limit  cases, which appear naturally both in averaging and bifurcation 
theories, we encounter that, generically, $K$ and $\beta$ are not well predicted by Melnikov theory.


\end{abstract}

\section{Introduction}\label{sec:intro}
In this  paper we consider the familiy of Hamiltonian systems of the form
\begin{equation}  \label{eq:model}
H\left(x,y,\frac{t}{\eps};\eps\right)=H_0(x,y)+\mu\eps^{\eta}H_1\left(x,y,\frac{
t}{\eps};\eps\right), \qquad (x,y) \in \RR ^{2},
\end{equation}
where $H_0(x,y)$ is given by a classical Hamiltonian
\[
H_0(x,y)= \frac{y^2}{2}+V(x)
\]
and $H_1(x,y,\tau;\eps)$
is a $2 \pi$-periodic time dependent Hamiltonian with zero average:
\[
\langle H_1 \rangle = \frac{1}{2\pi} \int _{0}^{2\pi} H_1(x,y,\tau;\eps) \,
d\tau = 0 .
\]
We
study the problem of the splitting of separatrices. The
parameter $\eps$ is a small parameter but this is not the case for $\mu$,
which may be  of order one.
The results in this paper are valid not only for $\mu$ small, but also for
finite values of $\mu$.  We will see
that the results are significantly different depending on the other parameter
$\eta\geq 0$, which
appears in \eqref{eq:model}, and on the analytic properties of $H$. Depending of these properties our results are valid even for (the non perturbative case) $\eta=0$ and we will see that, in this case, Melnikov theory gives a wrong prediction of the measure of the splitting.

The perturbative setting is when  $\mu \eps ^{\eta}$ is small, that is when
$\eta >0$. In this case,
the Hamiltonian system associated to $H$ is a small perturbation
of the Hamiltonian system associated to $H_0$:
\begin{eqnarray}\label{eq:sistemanopertorbat}
\dot x &=& y\nonumber\\
\dot y &=& - V'(x).
\end{eqnarray}

Our first observation is that, being the Hamiltonian $H$ fast in time, averaging
theory \cite{ArnoldKN88, LochakM88}
tells us that, even for $\mu \eps ^{\eta}=\OO (1)$, that is for $\eta =0$, the
solutions of the Hamiltonian system associated to \eqref{eq:model} are close to
the solutions of
\eqref{eq:sistemanopertorbat}.

We assume that system \eqref{eq:sistemanopertorbat} has a hyperbolic
or parabolic critical point at the origin with stable and unstable manifolds
which coincide along
a separatrix $(q_0(u), p_0(u))$. The coincidence of the stable and unstable
invariant manifolds is not a generic phenomenon for Hamiltonian systems of
one and half degrees of freedom as \eqref{eq:model}. Therefore, one can expect
that the homoclinic connection of \eqref{eq:sistemanopertorbat} breaks down when
we add the non-autonomous part  to the system. Nevertheless, the symplectic
structure ensures the existence of intersections between the perturbed invariant
manifolds. Hence a natural question is whether these intersections are
transversal or not.

As it is well known, the transversal intersection of invariant manifolds is an
obstruction for  the integrability of  the system as well as  one of the  main causes
of the appearance of chaos.  Even if this transversality is a generic
phenomenon, it is difficult to check it in a concrete given system of type
\eqref{eq:model}. In this paper we give checkable conditions (see Section \ref{sec:Hypotheses} for the concrete hypotheses)
which  ensure that
transversality and, moreover, we provide an asymptotic formula, as $\eps \to 0$,
which measures this transversality and shows that it is exponentially small with
respect to $\eps$.

To check this transversality there are several quantities that can be
considered.
Due to the $2\pi\eps$-periodicity with respect to $t$ of the Hamiltonian $H$, it
is convenient to consider the
Poincar\'{e} map $P_{t_{0}}$ defined in a Poincar\'{e}  section
$\Sigma_{t_{0}}=\{ (x,y,t_{0}); \,(x,y) \in \RR^{2}\}$.
If $\mu=0$, the phase portrait of $P_{t_{0}}$ is given by
the level curves of the Hamiltonian $ H_0(x,y)=\frac{y^2}{2}+V(x)$.
Therefore,  the homoclinic connection $(q_0(u), p_0(u))$ is contained in the 
stable and unstable curves of  the fixed point $(0,0)$ of $P_{t_{0}}$.


In the hyperbolic case, a classical result of averaging theory \cite{ArnoldKN88,
LochakM88}
is that, for $\eps$  small enough,
there exists a
hyperbolic fixed point of $P_{t_{0}}$, corresponding to a hyperbolic periodic
orbit of $H$, which has  stable and unstable
invariant curves $C^{s}(t_0)$ and $C^{u}(t_0)$. These curves  remain close to the
unperturbed separatrix. In the parabolic case our (standard) hypotheses will
ensure that the origin
will still be a fixed point with similar properties.

As $P_{t_{0}}$ is a symplectic map,  the curves $C^{s}(t_0)$ and $C^{u}(t_0)$ intersect
giving rise to some homoclinic points $z_h$.
The natural  quantity that can be used at  homoclinic points to
measure the transversality of the intersection is the angle between
the curves $C^{s}(t_0)$ and $C^{u}(t_0)$.

Once we have proved that this intersection is transversal at two consecutive
homoclinic points,
we can measure the splitting by computing the area $\mathcal{A}$ enclosed by the
invariant curves between these two points. This area does not depend on the
chosen homoclinic points  (see Figure~\ref{fig:splitting}) and is  also
invariant under symplectic changes of coordinates.
For these reasons, in Theorems \ref{th:MainGeometric:regular}  and
\ref{th:MainGeometric:singular} we
measure this area  instead of measuring the angle.
Another invariant quantity, related to the angle, is the so-called
\emph{Lazutkin invariant} (see, for
instance~\cite{GelfreichLT91}).  From now on, we will use the expression
\emph{splitting of separatrices} to refer to any of these quantities.

One model where our results can be applied is a classical $2\pi\varepsilon$-periodic time dependent Hamiltonian system:
\begin{equation}\label{eq:modelclassic}
H\left(x,y,\frac{t}{\eps}\right)=\frac{y^2}{2}+ \wt
V\left(x,\frac{t}{\eps}\right)
\end{equation}
taking $ V(x) = \frac{1}{2\pi}\int _0^{2\pi} \wt V(x,\tau)\, d\tau$ and
$H_1\left(x,y,\tau\right)=\wt V(x,\tau)- V(x)$.
In this case, under certain hypotheses about $V$, which are specified in
Section \ref{sec:Hypotheses}, our result in Theorem
\ref{th:MainGeometric:singular} provides a formula for the splitting even if in this case $\mu=1$ and
$\eta  =0$. In this case, our result improves several partial results \cite{DelshamsS97,Gelfreich97a,BaldomaF04} which,
applied to \eqref{eq:model},
needed to consider an artificial factor
$\eps^{\eta}$, $\eta >\eta _0>0$, in front of the term $H_1$ to prove  an asymptotic formula for the splitting.
Moreover, it occurs that this formula is wrong for the natural case $\eta =0$.



One also encounters the  case  $\eta =0$,
when one studies the splitting of separatrices phenomenon
near a resonance of one and a half degrees of freedom Hamiltonian
systems which are close to completely integrable ones (in the sense of
Liouville-Arnold).
This setting does not fit exactly in our hypotheses but,
as we will see in a forthcoming paper, the methods used in this paper can be
easily adapted to that case (see Section \ref{sec:mainresult:resonance} for a discussion of this problem).

\begin{figure}[ht]
\begin{center}
\psfrag{A}{$\AAA$}\psfrag{x}{$x$}\psfrag{y}{$y$}\psfrag{z}{$z_h$}
\includegraphics[height=2in]{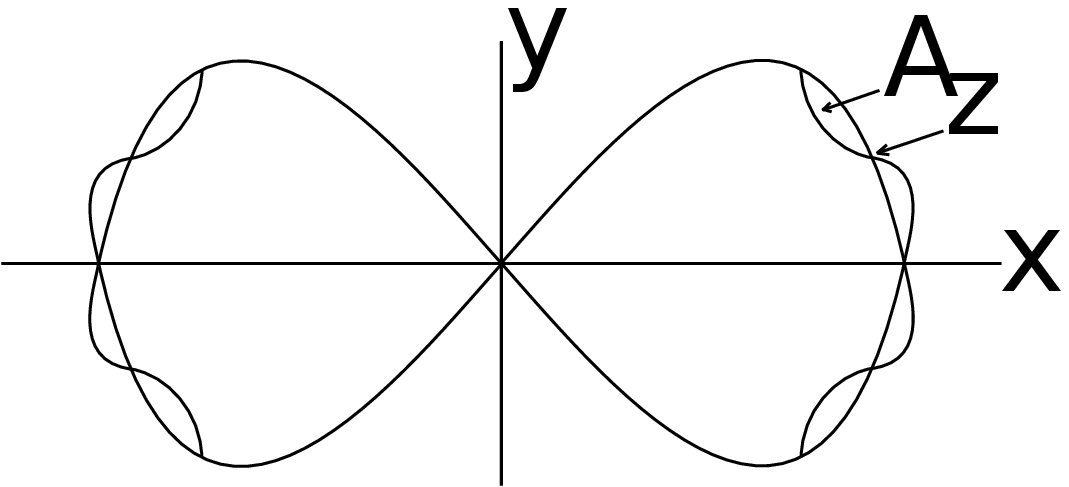}
\end{center}
\caption{\figlabel{fig:splitting} Splitting of separatrices.}
\end{figure}

Classical perturbation theory applied to our problem provides the so-called
Melnikov potential
(called also sometimes Poincar\'e Function, see for instance
\cite{DelshamsG00}),
which is given by
\[
L\left(t_0 \right)=\int_{-\infty}^{+\infty}H_1\left(q_0(u), p_0(u),\eps\ii (t_0
+ u);0\right)du.
\]
Using this function, Poincar\'{e} \cite{Poincare90,Poincare99}, and later Melnikov \cite{Melnikov63}, proved
that, if $\mu \eps ^{\eta}$
is small enough, non-degenerate critical points of $L$ give rise to transversal
intersections between
the invariant curves $C^{s}(t_0)$ and $C^{u}(t_0)$, and the area of the lobes is
given asymptotically
by $L(t_0^1) - L(t_0^2)$, being $t_0^1$ and $t_0^2$ two consecutive critical
points of $L$.

If  $H_0(x,y)$ and $H_1(x,y,\tau;0)$ are 
either algebraic or algebraic in $y$ and trigonometric
polynomials in $x$,
the Poincar\'{e} function $L$ is asymptotically given by:
\begin{equation} \label{eq:formulamelnikov}
L\left(t_0 \right) \simeq K \eps ^{\beta} e ^{-a/\eps} \sin
\left(\frac{t_0}{\eps}+\phi \right), \qquad \eps \to 0
\end{equation}
being $a>0$, $K, \phi , \beta \in \RR $ some computable constants. The constant $a$
is independent of the perturbation: it turns out that the time parameterization
of the unperturbed separatrix has always singularities in the complex plane (see
\cite{Fontich95, BaldomaF04}) and the constant $a$ is nothing but the imaginary
part of the singularity closest to the real axis. It is clear that $L\left(t_0
\right)$ has non-degenerate critical points if  $K\ne 0$.

We want to emphasize that the asymptotic size with respect to $\eps$ of the
Melnikov potential is given by \eqref{eq:formulamelnikov} provided $H_0(x,y)$
and $H_1(x,y,\tau;0)$ are either algebraic or algebraic in $y$ and trigonometric
polynomials in $x$. The study of the Melnikov potential for general analytic
Hamiltonian systems with fast periodic perturbations strongly depends on the
analyticity properties of the Hamiltonian $H$. Even if the Melnikov potential
can be estimated for some concrete systems
\cite{LlibreS80,MartinezP94,HolmesMS91}, a general study of this function seems
to require more powerful analytic tools and, as far as the authors know, has not
been done.

The straightforward application of Melnikov method to Hamiltonian
\eqref{eq:model} provides a
formula for the area of the lobes which reads:
\begin{equation} \label{eq:formulanocorrecta}
 \mathcal{A} = \mu\eps^\eta\left(\mathcal{A} _0 +
\OO\left(\mu \eps^{\eta}\right)\right), \qquad \eps \to 0 ,
\end{equation}
where
\begin{equation} \label{eq:formulanocorrecta0}
 \mathcal{A} _0\simeq 2 K \eps ^{\beta} e ^{-a/\eps}
\end{equation}
is the prediction for the area given by the Melnikov potential \eqref{eq:formulamelnikov}.

Therefore, either for general algebraic or algebraic in $y$ and trigonometric
polynomials in $x$ Hamiltonians, the Melnikov potential is exponentially small in $\eps$ and a direct
application
of classical perturbation theory  only ensures the validity of such an
approximation if
 $K\ne 0$ and $\mu\eps^\eta=\oo(\eps^\beta e^{-a/\eps})$.

To compute the first asymptotic order of the splitting of separatrices for
general analytic Hamiltonian systems seems nowadays a problem out of reach. Nevertheless,
 (non-sharp) exponentially small upper bounds were already obtained by
Neishtadt in \cite{Neishtadt84} using averaging techniques and by
\cite{FontichS90,Fontich95} using complex extensions of the invariant manifolds.


Once we know that the splitting is exponentially small, a natural question which
arises is whether the Melnikov
potential gives the correct asymptotic first order of the splitting. In
comparison with the problem of giving exponentially small upper bounds for the
splitting, this problem is much more intricate. The results in this direction
strongly depend on the behavior of the homoclinic orbit $(q_0(u),p_0(u))$ around
its complex singularities and on the analytical properties of the perturbation.

The previous considerations lead us to consider the problem of splitting of separatrices for general systems which are
either algebraic in $(x,y)$ or trigonometric polinomial in $x$ and algebraic in $y$.

As we have already explained, inspecting formula \eqref{eq:formulanocorrecta},
one sees that Melnikov theory works provided $\mu\eps^\eta=\oo(\eps^\beta
e^{-a/\eps})$. Namely, one needs the size of the perturbation to be
exponentially small with respect to $\eps$. This is not the natural setting and
therefore the first works dealing with this problem
\cite{HolmesMS88} (see also Section \ref{sec:Historical} about historical
remarks) tried to enlarge the size of the perturbation $\mu\eps^\eta H_1$ for
which Melnikov theory actually measures the splitting.
In fact, under certain non-degeneracy conditions, it suffices to take $\eta$ big
enough and $\mu$ of order 1.

In this work we have obtained, for  Hamiltonians \eqref{eq:model}
satisfying the hypotheses given in Section \ref{sec:Hypotheses}, the open
set of values of $\eta$ for which the Melnikov prediction works.

Studying the phenomenon of splitting in general Hamiltonian systems, for $\eta$ in the boundary of this set,
we have
found examples where
the Melnikov theory does not predict correctly the formula for the area
of the lobes  \eqref{eq:formulanocorrecta} in several aspects.

There are cases where the constant $K$ is not
correctly given by the Melnikov formula. This phenomenon has been found before
in concrete examples \cite{Gelfreich00,Treshev97,Olive06, GuardiaOS10}.
In these cases, the correct value of the constant $K$ is obtained  from the study of the so called \textit{inner equation}.

Moreover, we have found a more surprising phenomenon, namely, there are cases where the Melnikov prediction  \eqref{eq:formulanocorrecta0}  does not give  the correct order of the splitting. More concretely, it fails to predict the constant $K$ but also the correct power $\beta$ in
\eqref{eq:formulanocorrecta0}.
In section \ref{sec:examples} we provide a concrete model where this phenomenon happens.

Our work shows that   all the results validating the prediction of the Melnikov
approach require some artificial conditions about the smallness of the
perturbation.
The reason, roughly speaking, is the following.
To prove that  Melnikov theory gives asymptotically the  first order of the
splitting one needs to perform ``complex perturbation theory''. Namely, one
looks for complex parameterizations $Z_\mu^{u,s}(u,t_0)$ of the perturbed
invariant curves $C^{u,s}(t_0)$  of the Poincar\'e map $P_{t_0}$ as a
perturbation of the time-parameterization of the unperturbed separatrix
$Z_0(u)=(q_0(u),p_0(u))$.
This is the main novelty in the proofs of exponentially small splitting,
and was discovered by Lazutkin in his pioneer paper
\cite{Lazutkin84russian}:  the perturbed and unperturbed manifolds, as well as
the solutions of the variational equations along them, need to be close enough
when one considers complex times in a domain which contains a suitable real interval and
which reaches a neighborhood of order $\eps$ of the singularities
of the unperturbed homoclinic orbit. Clearly, when time is real, the homoclinic
orbit is a bounded solution
and it is easy to see that the perturbed invariant manifolds are close to it in suitable intervals.
However, when we reach a neighborhood of its  singularities, the homoclinic
orbit itself
blows up, and it is not always the case that the perturbed invariant manifolds are close
to it anymore.
Of course assuming artificially that the perturbation is small enough (increasing $\eta$
in the perturbative term in \eqref{eq:model}) one can see that the perturbed
manifolds are close to the unperturbed homoclinic orbit in a complex domain which
reaches a neighborhood
of size $\eps$ of the singularities of the unperturbed homoclinic trajectory.
Consequently the Melnikov
approach, that is based on the fact that the perturbed
manifolds are well approximated by the unperturbed homoclinic orbit, still works.
This was the approach used in \cite{DelshamsS97,Gelfreich97a,BaldomaF04} for
$\eta >\ell$, were the constant $\ell$ was called the \emph{order}
of the perturbation $H_1$. Roughly speaking, it is
the order of the singularities  of the unperturbed homoclinic trajectory
$(q_0(u),p_0(u))$ closest to the real axis
of the function $h_1(u)=H_1(q_0(u),p_0(u), t/\eps;0)$, for any $t \in \RR$.

In the aforementioned works, the condition $\eta>\ell$ ensures that the
perturbed  parameterizations $Z_\mu^{u.s}$ are close to the parameterization of
the unperturbed separatrix $Z_0$ even up to  a distance of order $\eps$ of the
singularities of $Z_0$ closest to the real axis. Nevertheless, as we will see in
this paper, the condition $\eta>\ell$ is sufficient but not necessary to ensure
that Melnikov approach still predicts correctly the size of the splitting.  What
is important is the relative size between the homoclinic orbit $Z_0$ and the
difference between
the homoclinic orbit and the perturbed manifolds, and analogously between the
solutions of the corresponding variational equations.
In other words, as the parameterizations of the invariant manifolds can be
written as
$Z^{u,s}_{\mu}= Z_0+ (Z^{u,s}_{\mu}- Z_0)$,
the  Melnikov method
gives the correct asymptotic term for the size of the splitting provided
the homoclinic $Z_0$ is bigger than the  difference $Z^{u,s}_{\mu}- Z_0$.
For systems of type \eqref{eq:model} this condition can be easily stated as
follows.
Call $r$ to the order of the singularities of $p_0(u)$ closest to the real axis.
Then, the size of $p_0(u)$ at points $u$ which are $\eps$-close to the
singularities is  $\OO(\eps^{-r})$.
Looking at the relative size of  $\mathrm{grad}H_0(q_0(u),p_0(u))$ and
$\mu \eps ^{\eta}\mathrm{grad}H_1(q_0(u),p_0(u),\tau;\eps)$, one can guess that
the first one is strictly bigger than the second if $\eta-(\ell-r)>-r$.
Working with the equations associated to Hamiltonian System \eqref{eq:model},
we prove in this paper that $Z_0(u)$ is strictly
bigger than $Z^{u,s}_{\mu}(u,t_0)- Z_0(u)$ provided $\eta >\ell -2r$, even if
$u$ is at  a distance $\eps$ of the singularity.

For $\ell\geq 2r$, the condition for both the parameterizations and  the solutions
of the variational equations to be  relatively close coincides and is given by 
$\eta>\eta^\ast=\ell-2r$.
For  $\ell<2r$ we will not consider  values of $\eta$ such that $\ell-2r<\eta  <0$.
In fact, decreasing $\eta$, we  will reach first the ``natural'' limit
$\eta =0$, where $\mathrm{grad}H_0(q_0(u),p_0(u))$ and
$\mu \mathrm{grad}H_1(q_0(u),p_0(u),\tau;\eps)$
are not close even for real values of
$u$. Even if for concrete examples \cite{Gelfreich00,GuardiaOS10} one can prove
the existence of invariant manifolds and compute the size of their splitting for
negative values of $\eta$, in this paper we deal with general Hamiltonians and
$\eta \ge 0$.  This  means that we deal with cases for which the unperturbed system and the perturbation
can have the same size.


When $\eta=0$, one can apply classical averaging theory to see that we are still
in a perturbative setting and the real perturbed  invariant manifolds are
$\mu\eps$-close to the real unperturbed separatrix and it makes sense  to study the splitting of separatrices in this case.
Nevertheless, as we will see in this paper, the solutions of the variational
equations are not close enough near the singularity in this case. This implies that, as is stated in
Theorems \ref{th:MainGeometric:regular} and \ref{th:MainGeometric:singular},  Melnikov formula \eqref{eq:formulanocorrecta0} generically
does not give the
correct first asymptotic term of the splitting.

In conclusion, under certain non-degeneracy conditions, the previous considerations suggest, and we actually will prove in Theorem
\ref{th:MainGeometric:regular} and Corollary \ref{coro:MainGeometric:regular},
that Melnikov theory gives
the correct prediction provided
\[
 \eta>\eta^*=\max\{\ell-2r,0\}.
\]


The so called ``singular'' case occurs when the difference
$Z^{u,s}_{\mu}(u,t_0)- Z_0(u)$ has the same size as the unperturbed homoclinic
$Z_0(u)$ when $u$ reaches a neighborhood at a distance $\eps$ of the
singularities of $Z_0$. Consequently,
the invariant manifolds are not well approximated by the unperturbed homoclinic
in this complex region. Let us note that this singular case can only happen if
$\ell\geq 2r$ and $\eta=\eta^\ast$.
In this case, we need to obtain a different approximation
of the manifolds in this region of the complex plane.
Close to a singularity of the homoclinic orbit,
an equation for the leading term is obtained and it is called the
\emph{inner equation}.
This is  a non-integrable equation  whose  study is done in \cite{Baldoma06}.

Summarizing, on the one hand, the invariant manifolds are well approximated by
the unperturbed homoclinic orbit
in a complex region containing an interval of the real line. On the other hand,
the inner equations provide
good approximations of the invariant manifolds near the singularities of the
unperturbed homoclinic.
Finally, matching techniques are required to match the different
approximations obtained for the invariant manifolds. Roughly
speaking, the difference between two suitable solutions of the inner equations
replaces the Melnikov potential in the asymptotic formula for the
splitting.

We want to emphasize that, as far as the authors know, there are no general
results
dealing with the singular case.
The previous results in the singular case (see \cite{Lazutkin84russian,
Lazutkin84,Gelfreich00,
Treshev97,Olive06, GuardiaOS10})
only dealt with particular examples.

In this paper we give results that contain the
so-called regular  case $\eta>\eta^\ast$ (see Section
\ref{sec:Hypotheses}), in which the Melnikov formula predicts
correctly the splitting  between the manifolds, but we also consider the
so-called singular case
$\eta=\eta^\ast$, in which the Melnikov formula does not predict
correctly the splitting between the perturbed manifolds anymore.
In this singular case we provide and prove  an alternative formula for the splitting.

We have seen that the behavior of the splitting is extremely sensitive on the
sign of $\ell-2r$ and the value of $\eta$.
We summarize the main features of each case:
\begin{itemize}
\item
$\eta>\eta^\ast= \max \{\ell-2r, 0\}$: under certain non-degeneracy conditions,
the Melnikov formula \eqref{eq:formulanocorrecta0} gives the correct first order of the splitting,
that is, the correct constants $K$, $\beta$ and $a$.
Moreover, the transversality of the splitting is a direct consequence of the
existence of non-degenerate critical points of the Melnikov potential, which is
ensured if $K\ne 0$.
\item
$\ell-2r<0$ and $\eta=0$: it appears a (depending on $\mu$) constant  correcting
term which multiplies  $K$ in the Melnikov formula \eqref{eq:formulanocorrecta0}.
This term can be obtained through classical perturbation theory techniques. This
correcting term  does not vanish for any value of $\mu$. Therefore, the first
asymptotic order is non-degenerate if and only if $K\ne 0$. Note that in this
case, for real values of the variables, $H$ is not a perturbation of $H_0$.
\item
$\ell-2r>0$ and $\eta=\eta^\ast=\ell-2r$: it appears a (depending on $\mu$) constant  correcting term which replaces
$K$ in the Melnikov formula \eqref{eq:formulanocorrecta0}.
This correcting term has a significantly different origin from the one in the previous case, since it
comes from the study of the aforementioned \emph{inner equation}. In particular, it can vanish for some values of $\mu$. Then, the
transversality of the invariant manifolds is guaranteed provided this correcting term does not vanish. Let us note that for the range $\eta\in
[0,\ell-2r)$ the problem of the splitting of separatrices remains open.
\item
$\ell-2r=0$ and $\eta=0$: as in the previous case, we need to consider an
\emph{inner equation} to obtain a candidate for the  first asymptotic order of
the splitting. This candidate differs from the Melnikov formula by both the
constant $K$ and the exponent $\beta$.
Note, that the change in the exponent $\beta$ is a substantial
qualitative change in the behavior of the splitting. Even if this fact was
already pointed out in \cite{Baldoma06}, the present paper, as far as the
authors know, is the first work that rigorously proves that this phenomenon
actually happens.
\end{itemize}

This work concludes the general problem, initiated
and partially solved in
\cite{DelshamsS97,Gelfreich97a,BaldomaF04,BaldomaF05} for $\eta >\ell$, of the
splitting of separatrices in the singular and regular cases $\eta \ge \eta ^*$,
for the general mentioned perturbations $H_1$ of  classical polynomial or trigonometric polynomial Hamiltonian systems
$H_0(x,y)=\frac{y^2}{2}+V(x)$.

\subsection{Historical remarks}\label{sec:Historical}

Historically, the results about exponentially small splitting of separatrices
can be classified into three groups: upper bounds, validation of the Melnikov
approach and asymptotics for  the singular case.

Some results, dealing with quite general systems,  obtain exponentially
small upper
bounds for the splitting for Hamiltonian systems.
Neishtadt in \cite{Neishtadt84} gave exponentially small upper bounds for the
splitting for two degrees of freedom Hamiltonian systems.
For second order equations with a rapidly forced periodic term, several authors
gave sharp
exponentially small upper bounds in~\cite{Fontich93,Fontich95,FiedlerS96} and,
for the higher dimensional case, the papers~\cite{Sauzin01, Simo94} gave
(non-sharp)
exponentially small upper bounds.

The Poincar\'{e} map of a non-autonomous Hamiltonian in the plane is a
particular case of a
planar area preserving map. For the Hamiltonian \eqref{eq:model} the
Poincar\'{e} map $P$ is  a near
the identity area preserving map.
Rigorous upper bounds for the splitting of area
preserving maps close to the identity were given in \cite{FontichS90}.

The second group of results is concerned with the question of the validity of
the
asymptotics provided by the Melnikov theory.
Several authors in the last 15 years  have tried to ensure the validity of
the formula provided by the Melnikov
potential \eqref{eq:formulanocorrecta0} to compute the  asymptotic formula for the
area $\mathcal{A}$.
As we have already said, the results in this direction strongly depend on the
behavior of the homoclinic orbit around
its complex singularities and on the analytical properties of the perturbation.
For this reason, the existing results  in this direction mostly deal with
specific examples.


The most studied example in the literature has been the rapidly perturbed
pendulum with a
perturbation only depending on time,
\[
 \ddot x=\sin x+\mu\eps^\eta\sin\frac{t}{\eps},
\]
which in our notation corresponds to  $H_0(x,y) = y^2/2+\cos x -1$ and
$H_1(x,t/\eps) = -x \sin (t/\eps)$.
The first result concerning this system was obtained by Holmes, Marsden and
Scheurle
in~\cite{HolmesMS88} (followed by~\cite{Scheurle89,Angenent93}),
where they  confirmed the prediction of the Melnikov potential establishing
exponentially
small upper and lower bounds for the area $\mathcal{A}$
provided $\eta\geq 8$, which coincide with the Melnikov prediction.
Later the work~\cite{EllisonKS93} validated the same result for $\eta\geq 3$.
Delshams and Seara established rigourosly the result in~\cite{DelshamsS92}
for $\eta>0$ and an analogous result for $\eta>5$ was obtained by Gelfreich in
\cite{Gelfreich94}.
The latter two  papers used a different approach inspired by
the work of Lazutkin \cite{GelfreichLT91}.
For a simplified perturbation  an alternative proof, using
Parametric Resurgence, was
done in  \cite{Sauzin95}.


The only works which provide (partial) results for some general Hamiltonian as
\eqref{eq:model}
taking $\eta$ big enough, are \cite{DelshamsS97,Gelfreich97a,BaldomaF04,
BaldomaF05}. In \cite{DelshamsS97,Gelfreich97a},
a proof for the validity of the Melnikov method for
general rapidly periodic Hamiltonian perturbations of a class of
second order equations was given.
The case of a perturbed second order equation with a parabolic point was studied
in~\cite{BaldomaF04,
BaldomaF05}.

In the papers~\cite{Sauzin01,LochakMS03} the authors introduced a
different approach that avoided the ``flow box coordinates'' of Lazutkin's
method.
The authors worked with  the original
variables of the problem and were able to measure the distance between the
manifolds without using
``flow box coordinates''.
The idea was the following: being both manifolds given by the graphs of suitable
functions that are
solutions of the same equation, their difference satisfies a linear
equation and is bounded in some complex strip. Studying the
properties of bounded solutions of this linear equation, where
periodicity also plays a role, one obtains exponentially small
results.

The method in~\cite{Sauzin01,LochakMS03} uses the fact that, in the  considered
systems,
the manifolds  can be written as graphs of the gradient
of generating functions in suitable domains. These generating functions are
solutions of the
Hamilton-Jacobi equation associated to system \eqref{eq:model}. Solving
these partial differential equations one can obtain parameterizations of the
global manifolds.


A Melnikov theory for twist maps can be found in~\cite{DelshamsR97} and
some results about the validity of the prediction given
by the Poincar\'e function for area preserving maps were given in
\cite{DelshamsR98}.

The generalization of the splitting  problem to higher dimensional systems has
been
achieved by several authors, mainly in the Hamiltonian case.
See, for instance,~\cite{Eliasson94,Treshev94,LochakMS03,DelshamsG00} and
references
therein.
Some results about the validity of the Melnikov method for higher dimensional
Hamiltonian
systems can be found in
\cite{Gal94, ChierchiaG94, DelshamsGJS97, GalGM99, Sauzin01, DelshamsGS04}.
Finally, in a non Hamiltonian setting, in \cite{BaldomaS06} the splitting of a
heteroclinic orbit for some degenerate unfoldings of the Hopf-zero
singularity of vector fields in $\RR ^{3}$ was found.

As we have already explained, all the results validating the prediction of the
Melnikov
approach require some artificial condition about the smallness of the
perturbation.

The third group of results deals with the so called ``singular case''  $\eta =
\eta ^*$ for which one needs to study the \emph{inner equation} and use matching
techniques to relate  different
approximations for the invariant manifolds.

The first author who dealt with this singular case was Lazutkin
in~\cite{Lazutkin84russian,
Lazutkin84}.
He studied the splitting of separatrices of the
Chirikov standard map, and gave the main idea that inspired most of  the works in the
subject:
as we explained above, one needs to deal with suitable
complex parameterizations of the invariant manifolds.
A complete proof was published years later by Gelfreich in~\cite{Gelfreich99}. A
fundamental tool in Lazutkin's work is the use of ``flow box
coordinates", called ``straightening the flow"
in~\cite{Gelfreich00}, around one of the manifolds. In this way, one
obtains a periodic function whose values are related with the
distance between the manifolds and whose zeros correspond to the
intersections between them. Consequently, the result about
exponentially small splitting is derived from some properties of
analytic periodic functions bounded in complex strips (see, for
instance, Proposition 2.7  in~\cite{DelshamsS97}).

After Lazutkin's work, some authors used his method and
obtained results for the inner equation of several specific equations.
In~\cite{GelfreichS01} there is a rigorous study of the inner equation of the
H\'enon map using Resurgence Theory \cite{Ecalle81a,Ecalle81b}, and
in~\cite{BaldomaS08}
the authors studied  the inner system associated to the
Hopf-zero singularity using functional analysis techniques.
The corresponding inner equation for several periodically perturbed second order
equations
was given by Gelfreich in~\cite{Gelfreich97} and he called them Reference
Systems.
In~\cite{OliveSS03} there is a rigorous analysis
of the inner equation for the Hamilton-Jacobi equation
associated to a pendulum equation with perturbation term $H_1(x,t/\eps) = (\cos
x -1) \sin (t/\eps)$ by
using Resurgence Theory.
The only result which deals with the inner equation associated to general
polynomial Hamiltonian
like \eqref{eq:model} is~\cite{Baldoma06}, where this analysis is done using
functional analysis
techniques. Finally, in \cite{MartinSS10a}, the authors study the inner equation
of the McMillan Map.

Besides the work of Lazutkin, there are very few works with rigorous proofs in
the singular case.
In~\cite{Gelfreich00} there is a detailed
sketch of the proof for the splitting of separatrices of the
equation of a pendulum with perturbation $H_1(x,t/\eps)= x \sin (t/\eps)$  and
$\eta ^*=-2$.
A complete rigorous proof which also cover some ``under the limit'' cases, that
is $\eta <\eta ^*=-2$
is done in \cite{GuardiaOS10}.
Numerical results about the splitting for this problem can be found
in~\cite{BensenyO93, Gelfreich97}.
In~\cite{Olive06} it was obtained a rigorous proof for the pendulum with
perturbation $H_1(x,t/\eps) = (\cos x -1) \sin (t/\eps)$, for which
$\eta^\ast=0$.
Treschev, in a remarkable paper~\cite{Treshev97}, gave  an asymptotic formula for the
splitting in the case of a pendulum with
certain perturbations, for which $\eta^\ast=0$,
using a different method called Continuous Averaging.
Concerning 2-dimensional symplectic maps, a detailed numerical study of the splitting can be found in \cite{DelshamsR99, GelfreichSimoSagaro}. 
The study of the splitting for the H\'enon and McMillan
maps  have recently been completed in \cite{BrannstromG10} and
\cite{MartinSS10b} respectively.
Both cases correspond to  $\eta^*=0$.
Finally, in \cite{GaivaoG10}, combining numerical and analytical techniques,
the authors study the Hamiltonian-Hopf bifurcation.

Another work dealing with a singular case is~\cite{Lombardi00},  where the
author
proves the splitting of separatrices
for a certain class of reversible systems in $\RR^4$. 
A related problem
about adiabatic invariants for the harmonic oscillator is studied in \cite{Slutskin64}.
See also \cite{ArnoldKN88}.
The study of this problem using matching
techniques and Resurgence Theory was done in \cite{BonetSSV98}.

The structure of this paper goes as follows. First in Section
\ref{sec:HypsAndMainResults} we introduce some notation, the
hypotheses and we state the main results. In Section \ref{sec:Heuristic} we give
some heuristic ideas of the proof and we compare our methods to those of some of
the aforementioned previous results.
Section \ref{sec:SketchProof} is devoted to
describe the proof of the main theorems. To make this section more
readable, the proof of the partial results obtained in this section
are deferred to the following sections, that is, Sections
\ref{sec:periodica}-\ref{sec:CanviFinal}.

\section{Notation and main results}\label{sec:HypsAndMainResults}
In this section we present the main problem we consider, the
hypotheses we assume and the rigorous statement of the main results.

\subsection{Notation and hypotheses}\label{sec:Hypotheses}
We consider Hamiltonian systems with Hamiltonian function of the
form
\begin{equation}\label{def:Ham:Original0}H\left(x,y,\frac{t}{\eps}
;\eps\right)=H_0(x,y)+\mu\eps^{\eta}H_1\left(x,y,\frac{t}{\eps};\eps\right),
\end{equation}
where
\begin{equation}\label{def:Ham:Integrable}
H_0(x,y)=\frac{y^2}{2}+V(x)
\end{equation}
and $V$ is either a polynomial or a trigonometric polynomial.  In
the first case we assume that
\begin{equation}\label{def:Ham:Original:perturb:poli}
H_1\left(x,y,\tau;\eps\right)=\sum_{k+l=n}^N a_{kl}(\tau;\eps)x^ky^l
\end{equation}
and in the second one
\begin{equation}\label{def:Ham:Original:perturb:trig}
H_1\left(x,y,\tau;\eps\right)=a(\tau;\eps)x+\sum_{\substack{k=-N,\ldots,
N\\l=0,\ldots, N}} a_{kl}(\tau;\eps)e^{k i x}y^l=\sum_{i+j\geq n}\wh
a_{ij}(\tau;\eps)x^iy^j ,
\end{equation}
where the second equality defines $n$ and $\wh
a_{ij}$.
Even if in the second case $H_1$ can have terms of the form
$a(\tau;\eps)x$, we will refer to $H_1$ as a trigonometric polynomial. In both
cases
we will refer to  $n$ as the order of $H_1$.

The equations associated to the Hamiltonian
\eqref{def:Ham:Original0} are
\begin{equation}\label{eq:ode:original0}
\left\{\begin{array}{l}\dps \dot x=y+\mu\eps^\eta \pa_y
H_1\left(x,y,\frac{t}{\eps};\eps\right)\\
\dps\dot y=-V'(x)-\mu\eps^\eta \pa_x
H_1\left(x,y,\frac{t}{\eps};\eps\right).
\end{array}\right.
\end{equation}
From now on, we call  unperturbed system to the system defined by
the Hamiltonian $H_0$ and we refer to $H_1$ as the perturbation. Let
us observe that the term $a(\tau;\eps)x$ in
\eqref{def:Ham:Original:perturb:trig} corresponds to a term in
\eqref{eq:ode:original0} which only depends on time (and on the parameter
$\eps$).

We devote the rest of the section to state the hypotheses we assume
on $H$.

\subsubsection{Hypotheses on the unperturbed system}
We assume the following hypotheses corresponding to the unperturbed
system
\begin{description}
\item[\textbf{HP1}] $H_0(x,y)=y^2/2+V(x)$, where $V$ is either a polynomial or a
trigonometric polynomial and satisfies one of the following conditions
\begin{description}
\item[\textbf{HP1.1}] $H_0$ has a hyperbolic critical point at $(0,0)$ with
eigenvalues $\{\la,-\la\}$ with $\la>0$, and then
\[
V(x)=-\frac{\la^2}{2}x^2+\OO\left(x^3\right)\,\quad\text{ as
}x\rightarrow 0.
\]
\item[\textbf{HP1.2}] $H_0$ has a parabolic critical point at $(0,0)$ and then
\begin{equation}\label{def:Potencial:Parabolic}
V(x)=v_m x^m+\OO\left(x^{m+1}\right)\,\quad\text{ as }x\rightarrow
0,
\end{equation}
for certain $m \in \NN$,   $m\geq 3$, which is called the order of
$V$ and $v_m\in\RR$.
\end{description}
\item [\textbf{HP2}] The critical point $(0,0)$ has stable
and unstable invariant manifolds which coincide along a separatrix.

We denote by $(q_0(u),p_0(u))$ a real-analytic time parameterization of the
separatrix with some chosen (fixed) initial condition.
It is well known (see \cite{Fontich95} for the hyperbolic case and
\cite{BaldomaF04} for the parabolic one) that  there exists $\rho>0$ such that
the parameterization $(q_0(u), p_0(u))$ is analytic in the complex strip $\{ |\Im
u |<\rho\}$.

We assume that there exists a real-analytic time parameterization of the
separatrix $(q_0(u),p_0(u))$ analytic on $\{|\Im u | < a\}$
such that the only singularities of $(q_0(u),p_0(u))$ in the lines $\{\Im u=\pm
a\}$ are $\pm ia$.

More precisely, Hypothesis \textbf{HP2} implies that one of the two following
situations is satisfied (see the remarks in Section \ref{remrkshipotesis}):
\begin{description}
\item[\textbf{HP2.1}] In the polynomial case, the singularities $\pm ia$ of the
homoclinic orbit are branching points
(or poles) of the same order, i.e. there exists an irreducible rational number
$r=\p/\q>1$ (independent of the singularity) and $\nu>0$ such that
$(q_0(u),p_0(u))$ can be expressed as
\begin{equation}\label{eq:SepartriuAlPol}
\begin{split}
\dps q_0(u)&=-\frac{C_\pm}{(r-1)(u\mp
ia)^{r-1}}\left(1+\OO\left((u\mp ia)^{1/\q}\right)\right)\\
\dps p_0(u)&=\frac{C_\pm}{(u\mp ia)^{r}}\left(1+\OO\left((u\mp
ia)^{1/\q}\right)\right)
\end{split}
\end{equation}
for $u\in\CC$ and either $|u-ia|<\nu$ and $\mathrm{arg}(u-ia)\in
(-3\pi/2,\pi/2)$ or  $|u+ia|<\nu$ and
$\mathrm{arg}(u+ia)\in(-\pi/2,3\pi/2)$ respectively. Let us point
out that the real-analytic character of $(q_0(u),p_0(u))$ implies that
$C_-=\ol C_+$.
\item[\textbf{HP2.2}] In the trigonometric case, $q_0(u)$ has logarithmic
singularities at $\pm ia$
of the form  $q_0(u)\sim \ln(u\mp ia)$ (where we  take different
branches of the logarithm whether we are close to $+ia$ or $-ia$: we
take $\mathrm{arg}(u-ia)\in (-3\pi/2,\pi/2)$ and
$\mathrm{arg}(u+ia)\in(-\pi/2,3\pi/2)$ respectively). In this case, one can see
that there exists $M\in \NN$ such that, if
$u\in\CC$, $|u\mp ia|<\nu$,
\begin{equation}\label{eq:SepartriuAlPolTrig}
\begin{split}
\dps \cos(q_0(u))&=\frac{\wh C^1_\pm}{(u\mp
ia)^{2/M}}\left(1+\OO\left((u\mp ia)^{2/M}\right)\right)\\
\dps \sin(q_0(u))&=\frac{\wh C^2_\pm}{(u\mp
ia)^{2/M}}\left(1+\OO\left((u\mp ia)^{2/M}\right)\right)\\
\dps p_0(u)&=\frac{C_\pm}{(u\mp ia)}\left(1+\OO\left((u\mp
ia)^{2/M}\right)\right)
\end{split}
\end{equation}
with  $\mathrm{arg}(u-ia)\in (-3\pi/2,\pi/2)$ and
$\mathrm{arg}(u+ia)\in(-\pi/2,3\pi/2)$ if we are dealing with the
singularity $+ia$ or $-ia$ respectively. We also have that $C_+ =
\overline{C_-}= \pm i 2/M $.

For convenience, in the trigonometric case, we take the convention
$r=1$ and $\beta=M$.
\end{description}
\end{description}

\subsubsection{Hypotheses on the perturbation}
\begin{description}
\item[\textbf{HP3}] The function $H_1(x,y,\tau;\eps)$ is $2\pi$-periodic in
$\tau$ and real-analytic in $(x,y,\tau,\eps)\in \CC^2\times\TT\times
(-\eps^*,\eps^*)$, for certain $\eps^*>0$.
Furthermore,  either it is a polynomial  of the form
\eqref{def:Ham:Original:perturb:poli} if $V(x)$ is a polynomial or it is a
trigonometric
polynomial  of the form \eqref{def:Ham:Original:perturb:trig} if $V(x)$ is a
trigonometric polynomial.
Moreover, it has zero mean
\[
\int_0^{2\pi}H_1(x,y,\tau;\eps)\, d\tau=0.
\]
\item[\textbf{HP4}] Let us consider the order of $H_1$, $n$ given in
\eqref{def:Ham:Original:perturb:poli} or
\eqref{def:Ham:Original:perturb:trig}. We ask $H_1$ to satisfy:
\begin{description}
\item[\textbf{HP4.1}] In the hyperbolic case ($H_0$ satisfies \textbf{HP1.1}),
$n\geq 1$.
\item[\textbf{HP4.2}] In the parabolic case  ($H_0$ satisfies \textbf{HP1.2}),
$2n-2\geq m$.
\end{description}
\end{description}
\begin{remark}
Let us point out that, in fact, \textbf{HP4.1} does not add any
extra hypothesis on the Hamiltonian, since it can always be taken
with $n\geq 1$ (the constant terms in $(x,y)$ do not play any role).
\end{remark}
Let us consider the function $H_1(q_0(u),p_0(u),\tau;\eps)$ that is:
$H_1$ evaluated on the separatrix. Then, we define $\ell$ to be the
 order of the branching points $\pm ia$, namely,
the maximum of the  orders of the branching points of the monomials of $H_1$.
This parameter was already defined in \cite{DelshamsS97,BaldomaF04}.
Let us point out that $\ell$ can be simply defined  as
\begin{equation}\label{def:ell}
\begin{split}
\ell(\eps)&=\max_{ n \le k+l \le N}\left\{k(r-1)+l r;
a_{kl}(\tau;\eps)\not\equiv
0\right\}\quad\quad\text{(polynomial case)}\\
\ell(\eps)&=\max_{|k|\le N, \ 0\le l\le N}\left\{2|k|/M+l;
a_{kl}(\tau;\eps)\not\equiv
0\right\}\quad\quad\text{(trigonometric case).}
\end{split}
\end{equation}
Note that in the trigonometric case, if $H_1(x,y,\tau;\eps)=a(\tau;\eps) x$,
then  $H_1(q_0(u),p_0(u),\tau;\eps)$ has a logarithmic singularity (see
Hypothesis \textbf{HP2.2}). In this case we make the convention $\ell(\eps)=0$.
\begin{description}
\item[\textbf{HP5}]
We assume $\ell=\ell(0)=\ell(\eps)$ for all $\eps\in (-\eps^*,\eps^*)$ and
$\eta\geq \eta ^*= \max\{0,\ell-2r\}$.
\end{description}


\subsubsection{Some remarks about the hypotheses} \label{remrkshipotesis}

\begin{itemize}

\item Let us point out
that the time parameterization of the separatrix
has always singularities for complex time
(see \cite{Fontich95} for the hyperbolic case and \cite{BaldomaF04} for the
parabolic one).
The real restriction in \textbf{HP2} is that there exists only one singularity
in the lines $\{\Im u=\pm a\}$. In Remark \ref{remark:DosSing} we explain how to generalize the
results obtained in this paper to systems whose
separatrix has more than one singularity with the same  minimum
imaginary part.

\item The conditions satisfied in \textbf{HP2.1} and \textbf{HP2.2} are
consequence of \textbf{HP2}. Indeed,
let $u^{*}$ be a singularity of $(q_0(u),p_0(u))$. We have that:
\begin{itemize}
\item If $V$ is a polynomial, let $M$ be its degree. Then $u^*$ is a branching
points
(or pole) of order $2/(M-2)$. That is, if $u$ belongs to a neighborhood of
$u^*$, then $(q_0(u),p_0(u))$ can be expressed as
\begin{equation*}
\begin{split}
\dps
q_0(u)&=-\frac{C(M-2)}{2(u-u^*)^{2/(M-2)}}\left(1+\OO\left((u-u^*)^{2/(M-2)}
\right)\right)\\
\dps
p_0(u)&=\frac{C}{(u-u^*)^{M/(M-2)}}\left(1+\OO\left((u-u^*)^{2/(M-2)}
\right)\right)
\end{split}
\end{equation*}
with $C\neq 0$ some adequate constant. This fact is proved in \cite{BaldomaF04}.

From the above equalities, taking into account that the homoclinic connection
is a solution of the unperturbed Hamiltonian system and identifying
terms of the same order in $(u-ia)$, one can deduce that the degree of $V$ is $2r/(r-1)$. In fact,
there exists a constant $v_\infty\in\RR$ such that
\begin{equation}\label{eq:PotencialInfinit}
V(x)= \vinfty x^{\frac{2r}{r-1}}(1+\oo(1))\qquad \text{ as }\ x\rightarrow
\infty.
\end{equation}
\item If $V$ is a trigonometric polynomial, let us
call $M$ to its degree. Then, for $u$ belonging to a neighborhood of $u^*$,
$(q_0(u), p_0(u))$ are of the form
\begin{equation*}
\begin{split}
\dps q_0(u)&= C \log \big (-i(u-u^*) \big )+ \OO\big( (u-u^*)^{2/M}\big) \\
\dps p_0(u)&= \frac{C}{(u-u^*)} + \OO\big( (u-u^*)^{2/M}\big)
\end{split}
\end{equation*}
with the constant $C= \pm i 2/M$ depending on $\Im q_0(u) \to \mp \infty$
respectively.
Indeed, first we note that, due to the fact that $\Re q_0(u) \in [0,2\pi]$, the
condition $|q_0(u)|\to +\infty$ as $u\to u^*$ forces to
$|\Im q_0(u)|\to +\infty$ as $u$ goes to $u^*$.
Assume that $\Im q_0(u) \to -\infty$ as $u\to u^*$. We note that in this case,
since $q_0(u)$ is a real analytic function,
then $\overline{u^*}$ is also a singularity of $q_0$ and it satisfies $\Im
q_0(u) \to +\infty$ as $u\to \overline{u^*}$.
We perform the change of variables $x=i\log w$ and we emphasize that, if $\Im x
\to -\infty$, then $w \to 0$. From the fact that
\begin{equation*}
\frac{dx}{du} = \sqrt{-2 V(x)},
\end{equation*}
we obtain that
\begin{equation*}
\frac{du}{dw} = i w^{M/2 -1} (c_0+\OO(w))
\end{equation*}
for some constant $c_0$.
Henceforth, integrating both sides of the previous differential equation,
we obtain $u-u^* = i w^{M/2} (c_1+ \OO(w))$, for some constant $c_1$, which
implies that $w = \big (-i(u-u^*)\big )^{2/M} \big(c_2 + \OO\big ((u-
u^*)^{2/M}\big )\big)$ for a suitable constant $c_2$.
and the results follows going back to the original variables.
\end{itemize}

\item In fact, let us observe that the hypotheses considered about  the
expansions of
$(q_0(u),p_0(u))$ given in \eqref{eq:SepartriuAlPol} and
\eqref{eq:SepartriuAlPolTrig} (\textbf{HP2.1} and \textbf{HP2.2}) are weaker
than what usually happens
when the potential $V$ is a polynomial or a trigonometric
polynomial as we have seen previously. This weakness comes from the fact that
the second terms
in the expansions are, in fact, of  greater order. We assume this
weaker hypothesis to show that our results could be applied to more
general potentials as long as Hypothesis \textbf{HP2} is satisfied.

\item Hypothesis \textbf{HP4.2} is to ensure that the parabolic critical point
$(0,0)$ of the unperturbed system persists when we add the perturbation and that
it keeps its parabolic character. Therefore it is the natural hypothesis to deal
with and it is the same one that was considered in \cite{BaldomaF04}. Namely, if
the perturbation has order $n$ with $2n-2<m$, when the perturbation is added the
system might undergo bifurcations and the invariant manifolds might even
disappear. The only study done in one of these bifurcation cases can be found in
\cite{BaldomaF05}.

\item
The class of  the perturbed Hamiltonian $H_1$ considered is more restrictive than
necessary.
In fact, our result can be applied to any Hamiltonian of the form
\[
H_1(x,y,\tau;\eps) = \sum _{n=0} ^{N}\eps ^{n} H_1^n (x,y,\tau)
\]
if the functions $H_1^n (q_0(u),p_0(u),\tau)$ have a singularity of order less
or equal than $\ell +n$. In this case, the order $\ell (\eps)$ in  \eqref{def:ell} does depend on
$\eps$ ($\ell(0)= \ell$, and $\ell(\eps)= \ell +N$ if $\eps \ne 0$) and then Hypothesis \textbf{HP5} is not satisfied.
The  result in this case would be the same but one has
to slightly adapt the definition of the constant $b$ in Theorem
\ref{th:MainGeometric:singular}.
\item Note that the hypothesis requiring $\ell(\eps)$ constant is nothing but a
non-degeneracy condition on the coefficients $a_{kl}(\tau;\eps)$. This condition
is equivalent to ask that one of the pairs $(k,l)$ reaching the maximum in the
definition of $\ell(\eps)$ in \eqref{def:ell} for any value of $\eps$  must reach also the maximum for $\eps=0$.

\item
Recall the Hamiltonian
\[
H\left(x,y,\frac{t}{\eps};\eps\right)=H_0(x,y)+\mu\eps^{\eta}H_1\left(x,y,\frac{
t}{\eps};\eps\right).
\]
Let us point out that in the case $\ell-2r\leq0$, Hypothesis
\textbf{HP5} corresponds to $\eta \ge 0$, which is optimal in the sense that it
includes the case such that
the perturbation is of the same order as the unperturbed system.

The case $\ell=2r$ is what typically happens in near  integrable
Hamiltonian systems close to a resonance and in general periodic systems with
slow dynamics, therefore, in this
sense Hypothesis \textbf{HP5} is optimal in the generic case.

In the case $\ell-2r>0$ one may think to also ask
$\eta\geq 0$. Nevertheless, our techniques only provide
optimal exponentially upper bounds if $\eta-\ell+2r\geq 0$.

For lower values of $\eta$, that is
$0\leq \eta<\ell-2r$, using similar tools as the ones presented in
this paper, one could easily prove the existence of the perturbed
invariant manifolds and  obtain (non-optimal) exponentially small
upper bounds for the difference between them. This case can be
called \emph{below the singular case} (see \cite{GuardiaOS10}). To
obtain an asymptotic formula for the difference between the
invariant manifolds in the \emph{below the singular case} is a
problem which remains open. Some ideas to deal with this case by
using averaging theory can be found in \cite{GuardiaOS10}.


\end{itemize}


\subsection{Main results}\label{sec:MainResults}
By Hypothesis \textbf{HP1}, system \eqref{def:Ham:Original0} with
$\mu=0$ has either a hyperbolic or parabolic  point at the origin.
In the second case, Hypothesis \textbf{HP4.2} ensures that the
origin is also a critical point of the perturbed system ($\mu\neq 0$) which is
also parabolic.
In the hyperbolic case, the next
theorem ensures that the  hyperbolic critical point of
the unperturbed system becomes a hyperbolic periodic orbit which is
close to the origin.

\begin{theorem}\label{th:MainPO}
Let us  assume Hypotheses \textbf{HP1.1}, \textbf{HP3},
\textbf{HP4.1}. Take  $\eta \ge 0$  and fix any value $\mu_0>0$. Then,
there exists $\eps_0>0$ such that for any $|\mu|<\mu_0$ and
$\eps\in(0,\eps_0)$, system \eqref{def:Ham:Original0} has a hyperbolic
periodic orbit $(x_p(t/\eps),y_p(t/\eps))$ which satisfies that, for
$t\in\RR$,
\[
\left
|x_p\left(\frac{t}{\eps}\right)\right|+\left|y_p\left(\frac{t}{\eps}
\right)\right|\leq
K |\mu|\eps^{\eta+1}
\]
for a constant $K>0$ independent of $\eps$ and $\mu$.
\end{theorem}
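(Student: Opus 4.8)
The plan is to find the periodic orbit as a fixed point of a contraction operator acting on a suitable space of $2\pi$-periodic functions. First I would rewrite the equation of motion near the origin. By Hypothesis \textbf{HP4.1} the perturbation $H_1$ has order $n\geq 1$, so $\partial_y H_1$ and $\partial_x H_1$ evaluated near $(x,y)=(0,0)$ are $\OO(1)$ as functions of $(\tau;\eps)$; more precisely the terms linear in $(x,y)$ contribute a bounded (small in $\eps^{\eta}$) linear correction and the rest is at least quadratic. Writing $\tau=t/\eps$ and looking for a $2\pi$-periodic (in $\tau$) solution $(x_p(\tau),y_p(\tau))$, the system \eqref{eq:ode:original0} becomes, after multiplying by $\eps$,
\begin{equation*}
\begin{cases}
\dps \frac{d x_p}{d\tau}=\eps\bigl(y_p+\mu\eps^{\eta}\pa_y H_1(x_p,y_p,\tau;\eps)\bigr),\\[2mm]
\dps \frac{d y_p}{d\tau}=\eps\bigl(-V'(x_p)-\mu\eps^{\eta}\pa_x H_1(x_p,y_p,\tau;\eps)\bigr),
\end{cases}
\end{equation*}
so the vector field is $\OO(\eps)$-small, which is the structural reason the periodic orbit is $\OO(\eps)$-close to the fixed point.

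Next I would set up the fixed point scheme. Since $H_1$ has zero $\tau$-average (Hypothesis \textbf{HP3}), the ``constant'' (i.e.\ $\tau$-independent) part of the forcing at the origin vanishes, which is what allows a genuinely periodic solution near $0$ rather than merely one drifting at rate $\OO(\eps)$. I would split a periodic function into its average and its zero-average part and invert the linearized operator $\frac{d}{d\tau}-\eps A$, where $A$ is the linearization of the $H_0$-part at the origin, i.e.\ $A=\begin{pmatrix}0&1\\ \la^2&0\end{pmatrix}$ by Hypothesis \textbf{HP1.1}. The key point is that $\frac{d}{d\tau}-\eps A$ is boundedly invertible on the space of $2\pi$-periodic functions with inverse of norm $\OO(1)$ (uniformly in small $\eps$), because the spectrum of $\frac{d}{d\tau}$ on this space is $i\ZZ$ and $\eps\la^2$ stays away from it for $\eps$ small; equivalently, the monodromy matrix $e^{2\pi\eps A}$ has no eigenvalue equal to $1$. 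Then the nonlinear equation is recast as $w = \TTT(w)$ with $\TTT(w) = (\frac{d}{d\tau}-\eps A)^{-1}\bigl[\eps\,\mu\eps^{\eta}F_1(w,\tau;\eps) + \eps\, R(w)\bigr]$, where $F_1$ collects the $H_1$-terms and $R$ collects the higher order part of $V'$ and $H_0$; on a ball of radius $C|\mu|\eps^{\eta+1}$ in the sup-norm over $\tau\in\TT$, the term $\eps\mu\eps^{\eta}F_1$ has size $\OO(|\mu|\eps^{\eta+1})$ and $\eps R$ is quadratically small, so for $\eps_0$ small enough (depending on $\mu_0$) $\TTT$ maps this ball into itself and is a contraction. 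The fixed point $w=(x_p,y_p)$ then automatically satisfies the stated bound $|x_p|+|y_p|\leq K|\mu|\eps^{\eta+1}$.

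Finally I would address hyperbolicity. The periodic orbit just constructed has monodromy matrix which is an $\OO(\eps)$-perturbation of $e^{2\pi\eps A}$, whose eigenvalues are $e^{\pm 2\pi\eps\la}\neq 1$, hence real, positive and bounded away from $1$; since the map is area preserving (the flow of a Hamiltonian), the product of the two multipliers is $1$, so one is $>1$ and the other $<1$, giving hyperbolicity, and this persists under the small perturbation by continuity of eigenvalues. The main obstacle, and the only place that requires genuine care, is the uniformity of the estimates in $\mu$ on the range $|\mu|<\mu_0$ and in $\eps$: one must check that the contraction constant and the invertibility bound for $\frac{d}{d\tau}-\eps A$ can be taken independent of $\mu$ (they can, since $\mu$ only multiplies the already-small $\eps^{\eta}$ factor) and that the domain of analyticity in $(x,y)$ from Hypothesis \textbf{HP3} comfortably contains the ball of radius $C\mu_0\eps_0^{\eta+1}$, which it does for $\eps_0$ small. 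Everything else is a routine application of the contraction mapping principle in a Banach space of periodic functions.
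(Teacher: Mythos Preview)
Your overall strategy---fixed point for a $2\pi$-periodic solution after passing to the fast time $\tau$---is the same as the paper's, and your treatment of hyperbolicity is fine. But there is a genuine gap in the contraction step.

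You claim that $\bigl(\frac{d}{d\tau}-\eps A\bigr)^{-1}$ has operator norm $\OO(1)$ on $2\pi$-periodic functions. This is false: on the zero Fourier mode the operator reduces to $-\eps A$, whose inverse has norm $\sim 1/(\eps\la)$. Thus the correct bound is $\OO(1)$ on zero-average functions and $\OO(1/\eps)$ in general. Your ``$\eps\la$ stays away from $i\ZZ$'' observation only gives distance $\eps\la$ from $0\in i\ZZ$, which is \emph{not} bounded below uniformly in $\eps$. With the correct $\OO(1/\eps)$ bound, the Lipschitz constant of your map $\TTT$ picks up a contribution $\OO(1/\eps)\cdot \eps\,\mu\eps^{\eta}\,\|D_wF_1\|=\OO(|\mu|\eps^{\eta})$ from the linear-in-$w$ part of the $H_1$-terms (note that although the Hessian of $H_1$ at the origin has zero $\tau$-average, its product with a $\tau$-dependent $w$ need not). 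For $\eta>0$ this is small and your argument goes through, but the theorem allows $\eta=0$, in which case the Lipschitz constant is $\OO(|\mu|)$ and the contraction fails for $|\mu|$ of order~$1$.

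The paper (Section~\ref{sec:periodica}) fixes exactly this point by a preliminary near-identity change of variables $z=(\Id+\eps\,\ol F_1(\tau))\,\ol z$, where $\ol F_1$ is a zero-average $\tau$-primitive of the (zero-average) Hessian matrix $F_1(\tau)$ of $H_1$ at the origin. This absorbs the linear term; in the transformed equation the new linear coefficient is $\OO(\eps^{2}|\mu|\eps^{\eta})$ (commutator and product terms), so after applying the $\OO(1/\eps)$ inverse the Lipschitz constant becomes $\OO(|\mu|\eps^{\eta+1})$, uniformly small. Your argument becomes correct once you insert this step.
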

The proof of this theorem, which was done in \cite{DelshamsS97} for
$\eta >\ell$, is given in Section
\ref{sec:periodica}. An alternative proof for values of $\eta>-1/2$ without
explicit bounds for the
periodic orbit can be found in \cite{Fontich95}. For the case when perturbation
only depends on time in \cite{Fontich93} the existence of the periodic orbit
with explicit bounds was given for $\eta >-2$.

To use the same notation in both the hyperbolic and parabolic cases,
in the latter one we define $(x_p,y_p)=(0,0)$.

The next step is to study the stable and unstable invariant
manifolds of the periodic orbit $(x_p,y_p)$. In the unperturbed case
(that is $\mu=0$) we know that they coincide along the separatrix
$(q_0,p_0)$ given in \textbf{HP2}. When $\mu\neq 0$ they generically split.

To measure the splitting of the invariant manifolds let us consider the
$2\pi\eps$-Poincar\'e map $P_{t_0}$ in a transversal section
$\Sigma_{t_0}=\left\{(x,y,t_0); (x,y)\in\RR^2\right\}$. This
Poincar\'e map has a (hyperbolic or parabolic) fixed point
$(x_p(t_0/\eps),y_p(t_0/\eps))$. We will see that this fixed point
has stable and unstable invariant curves.

As $P_{t_0}$ is an area
preserving map, we measure the splitting giving an asymptotic
formula for the area of the lobes generated by these curves between
two transversal homoclinic points. Moreover, by the area preserving
character of $P_{t_0}$, the area $\AAA$ of these lobes does not
depend on the choice of the homoclinic points. Other quantities measuring
the splitting, as the distance along a transversal section to the unperturbed
separatrix, or the angle between these curves at an homoclinic point, can be
easily derived from our work.

Assuming \textbf{HP5}, we have that $\eta\geq \eta^\ast=\max\{\ell-2r,0\}$  (see
Hypothesis \textbf{HP2}
for the definition of $r$ and \eqref{def:ell} for the definition of $\ell$). The
quantitative measure of the splitting depends substantially
on the sign of $\eta-(\ell-2r)$. Therefore, we split these results into two
different theorems.
First,  Theorem \ref{th:MainGeometric:regular} deals with the regular case
$\eta>\ell-2r$ and then
Theorem \ref{th:MainGeometric:singular} deals with the singular case
$\eta=\ell-2r$, which can only happen provided $\ell-2r\geq 0$.
We will give a complete description of the proof of the two theorems in Section
\ref{sec:SketchProof}.
We also refer to Section \ref{sec:Heuristic} for an heuristic idea of the main
features of the proof
of our main results.

\subsubsection{Main result for the regular case}\label{sec:reg}

In this section we will give results concerning the regular case. This case
appears in two different settings. The first one is  when
$\eta>\eta^*=\max\{\ell-2r,0\}$ and we will see in Theorem
\ref{th:MainGeometric:regular} that Melnikov predicts the splitting correctly.
The second case is when $\ell-2r<0$ and $\eta=\eta^\ast=0$. In this case, we
reach the natural value  $\eta=0$ before we reach the singular limit
$\eta=\ell-2r<0$.  We will see in Theorem \ref{th:MainGeometric:regular} that
even if we are in a regular setting, one has to modify slightly the Melnikov
function to obtain the true first asymptotic order.

Since the asymptotic coefficient for the area of the lobe between two
consecutive homoclinic points
is strongly related with the Melnikov Potential, first of all
we are going to obtain an asymptotic formula for it.

The Melnikov Potential (called also sometimes Poincar\'e Function, see for
instance \cite{DelshamsG00}),
is given by
\begin{equation}\label{def:MelnikovPotentialEnt}
L\left(u,\frac{t}{\eps};\eps\right)=\int_{-\infty}^{+\infty}H_1\left(q_0(u+s),
p_0(u+s),\eps\ii (t+s);\eps\right)ds.
\end{equation}

Let us point out that, by Hypothesis \textbf{HP4}, this integral is uniformly
convergent. Moreover
\begin{equation}\label{LM}
 L(u,\tau ;\eps )= \MM(\tau-\eps^{-1} u, \eps  ),
\end{equation}
where $\MM$ is the $2\pi$-periodic function
\begin{equation*}
\MM(s;\eps)=\int_{-\infty}^{+\infty}H_1\left(q_0(r), p_0(r),\eps \ii r + s
;\eps \right)dr = \sum_{k \neq 0} \MM^{[k]}(\eps)
e^{ik s}
\end{equation*}
which, by \textbf{HP3}, has zero mean. Here  $\MM^{[k]}$ denotes the $k$-Fourier
coefficient of $\MM$.

In \cite{DelshamsS97} (polar case) and \cite{BaldomaF04} (branching point case),
it was seen that
Hypotheses \textbf{HP3} and \textbf{HP4} allow us to give an asymptotic formula
for the Fourier coefficients of $\MM$ and henceforth we will obtain an
asymptotic formula for the functions
$\MM $ and $L$. To state the lemma, we first define the following Fourier
expansion
\[
H_1(q_0(u),p_0(u),\tau;0)=\sum_{k\in\ZZ\setminus\{0\}}H_1^{[k]}(q_0(u),
p_0(u);0)e^{ik\tau}.
\]
Note that, by the definition of $\ell$ in \eqref{def:ell}, all the Fourier
coefficients $H_1^{[k]}(q_0(u),p_0(u);0)$ have at $u=\pm ia$ a branching point
of  order less than or equal to $\ell$.

\begin{lemma}[\cite{DelshamsS97,BaldomaF04}]\label{lemma:Melnikov}
Let us assume Hypotheses \textbf{HP2}, \textbf{HP3} and \textbf{HP4}. Let
\begin{equation*}
 f_0= \frac{\Cte i^{-\ell -1}}{\Gamma(\ell)},
\end{equation*}
where $\Cte$ is the constant defined as
\begin{equation}
 \Cte= \lim_{u \to ia} (u-ia)^\ell H_1^{[1]}(q_0(u), p_0(u);0).
\end{equation}
Then:
\begin{enumerate}
\item The first Fourier coefficients of $\MM$ are given by:
\begin{equation*}
\overline{\MM^{[1]}}= \MM^{[-1]} = -\frac{1}{\eps^{\ell-1}}
e^{-\dps\tfrac{a}{\eps}}\left(f_0 +\OO\left(\eps^{\frac{1}{\q}}\right)\right ).
\end{equation*}
\item If $|k|\neq 1$,
\begin{equation*}
\MM^{[k]} = \OO\left ( \frac{1}{\eps^{\ell-1}} e^{-|k| \frac{a}{\eps}} \right ).
\end{equation*}
\item For $u\in \RR$ and $t\in\RR$,
\[
L\left(u,\frac{t}{\eps};\eps\right)=-\frac{2}{\eps^{\ell-1}}e^{-\dps\tfrac{a}{\eps}}
\left(\Re \left(f_0
e^{-i\left({\dps\tfrac{u-t}{\eps}}\right)}\right)+\OO\left(\eps^{\frac{1}{\q}}
\right)\right),
\]
where $a$ and  $\q$ are the constants defined in Hypothesis \textbf{HP2}.
\end{enumerate}
\end{lemma}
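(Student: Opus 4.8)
The plan is to compute the Fourier coefficients $\MM^{[k]}$ directly from their integral representation and extract the exponentially small asymptotics via a contour-shift argument. Writing
\[
\MM^{[k]}(\eps)=\frac{1}{2\pi}\int_0^{2\pi}\MM(s;\eps)e^{-iks}\,ds
=\int_{-\infty}^{+\infty}H_1^{[k]}\big(q_0(r),p_0(r);\eps\big)e^{ikr/\eps}\,dr,
\]
we see that the exponentially small behavior comes from deforming the real line of integration in the variable $r$ toward the singularity of $(q_0(r),p_0(r))$ nearest the real axis. For $k>0$ we push the contour up to a horizontal line just below $\{\Im r = a\}$, picking up the factor $e^{-ka/\eps}$; the dominant contribution is a Hankel-type loop integral around the branch point $r=ia$. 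By Hypothesis \textbf{HP2} the only singularity on $\{\Im r=\pm a\}$ is $\pm ia$, and by \textbf{HP3}--\textbf{HP4} the coefficients $H_1^{[k]}$ decay fast enough at $r\to\pm\infty$ (because $H_1$ has order $n$, so $H_1(q_0,p_0,\tau;\eps)\to 0$ as $\Re r\to\pm\infty$ on the strip) to justify the deformation and to show that the straight parts of the contour contribute lower-order terms.

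The key step is the evaluation of the loop integral around $r=ia$. Near this point, by \textbf{HP2.1}/\textbf{HP2.2} and the definition \eqref{def:ell} of $\ell$, the first Fourier coefficient behaves like $H_1^{[1]}(q_0(r),p_0(r);0)=\Cte (r-ia)^{-\ell}(1+\OO((r-ia)^{1/\q}))$, with $\Cte$ as defined in the statement. Substituting $r-ia=\eps w$ and using the standard Hankel representation of the Gamma function,
\[
\frac{1}{2\pi i}\oint (r-ia)^{-\ell}e^{ir/\eps}\,dr
=\frac{e^{-a/\eps}}{\eps^{\ell-1}}\cdot\frac{i^{1-\ell}}{\Gamma(\ell)},
\]
one obtains precisely $\MM^{[1]}=-\eps^{-(\ell-1)}e^{-a/\eps}(f_0+\OO(\eps^{1/\q}))$ with $f_0=\Cte\, i^{-\ell-1}/\Gamma(\ell)$; the error term $\OO(\eps^{1/\q})$ tracks the next term $\OO((r-ia)^{1/\q})$ in the local expansion of $H_1^{[1]}$ after the rescaling. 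For $|k|\neq 1$ the same deformation, now shifted to $\{\Im r = |k|a - \delta\}$ (using the absence of other singularities in $0<\Im r<|k|a$), gives the bound $\OO(\eps^{-(\ell-1)}e^{-|k|a/\eps})$; here one does not need the precise constant, only that each $H_1^{[k]}$ has a branch point of order at most $\ell$. The coefficient $\MM^{[-1]}=\overline{\MM^{[1]}}$ follows from the real-analytic character of $H_1$ and of the separatrix (so that $C_-=\overline{C_+}$), which forces $\MM(s;\eps)$ to be real for real $s$.

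Finally, assertion (3) is assembled from (1) and (2) together with relation \eqref{LM}: for real $u,t$ one has $L(u,t/\eps;\eps)=\MM(t/\eps-u/\eps;\eps)=\sum_{k\neq 0}\MM^{[k]}e^{ik(t-u)/\eps}$, and the $|k|=1$ terms dominate, giving
\[
L\left(u,\frac{t}{\eps};\eps\right)
=-\frac{2}{\eps^{\ell-1}}e^{-a/\eps}\left(\Re\!\left(f_0 e^{-i(u-t)/\eps}\right)+\OO\big(\eps^{1/\q}\big)\right),
\]
the remaining harmonics $|k|\geq 2$ being absorbed into a term $\OO(\eps^{-(\ell-1)}e^{-2a/\eps})$ which is far smaller than the stated error. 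The main obstacle is the careful justification of the contour deformation and the estimate on the straight portions of the contour: one must control $H_1^{[k]}(q_0(r),p_0(r);\eps)$ uniformly on the horizontal segments $\{\Im r = a-\delta\}$ and near $\pm\infty$, which is where Hypotheses \textbf{HP3}--\textbf{HP4} (analyticity on $\CC^2\times\TT\times(-\eps^*,\eps^*)$ and the order conditions on $n$) enter decisively; since this computation is carried out in \cite{DelshamsS97,BaldomaF04}, we only sketch it here.
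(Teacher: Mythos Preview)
The paper does not give its own proof of this lemma; it is stated with a citation to \cite{DelshamsS97,BaldomaF04} and used as input. Your contour-shift argument is precisely the standard method employed in those references, so your proposal is in line with the intended proof.

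One slip worth correcting: for $|k|\neq 1$ you write that the contour is ``shifted to $\{\Im r = |k|a - \delta\}$ (using the absence of other singularities in $0<\Im r<|k|a$)''. Hypothesis \textbf{HP2} only guarantees analyticity in the strip $|\Im r|<a$; nothing is assumed about singularities beyond $\pm ia$, so you cannot push the contour past $\Im r=a$. The correct argument shifts to $\{\Im r=\pm(a-\delta)\}$ just as for $k=\pm 1$; the factor $e^{-|k|a/\eps}$ then comes from $|e^{ikr/\eps}|=e^{-k\Im r/\eps}$, not from a higher contour. The loop contribution around $\pm ia$ still yields an $\OO(\eps^{-(\ell-1)})$ prefactor because every $H_1^{[k]}$ has branch order at most $\ell$ there. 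With that correction your sketch is sound.
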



\begin{theorem}[Main Theorem: Regular case]\label{th:MainGeometric:regular}
Let us assume Hypotheses \textbf{HP1}-\textbf{HP5} and $\eta>\ell-2r$. Then,
given  any $\mu_0>0$, there exists $\eps_0>0$ such that for any $\mu \in
\{|\mu|\leq
\mu_0\}$ and $\eps\in(0,\eps_0)$ the area of the lobes between the invariant
manifolds of the periodic orbit given in Theorem \ref{th:MainPO}  is given by,
\begin{itemize}
\item If $\eta>\eta^\ast$,
\begin{equation}\label{def:formulaArea:regular}
\AAA=4|\mu|\eps^{\eta+1-\ell}e^{-{\dps\tfrac{a}{\eps}}}
\left(\left|f_0\right|+\OO\left(\frac{1}{|\ln\eps|^{\nu}}\right)\right),
\end{equation}
where $f_0$ is the constant given in Lemma \ref{lemma:Melnikov}, $\nu=1$ if
$\ell-2r\leq0$ and $\nu=\ell-2r$ if $\ell-2r>0$.
\item If $\eta=0$ (which can only happen if $\ell-2r<0$),
\begin{equation}\label{def:formulaArea:regular:lmenor}
\AAA=4|\mu|\eps^{1-\ell}e^{-{\dps\tfrac{a}{\eps}}}\left(\left|f_0e^{iC(\mu)}
\right|+\OO\left(\frac{1}{|\ln\eps|}\right)\right),
\end{equation}
where $f_0$ is the constant given in Lemma \ref{lemma:Melnikov} and $C(\mu)$ is
an entire analytic function  which satisfies $C(\mu)=\OO(\mu)$.
\end{itemize}
\end{theorem}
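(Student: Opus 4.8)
The plan is to carry out the complex-parameterization (Lazutkin-type) scheme outlined in the Introduction, in the regular regime $\eta>\ell-2r$ where first-order perturbation theory still captures the leading term. First I would construct time-parameterizations $Z^{u,s}_\mu(u,t_0)=(q^{u,s}_\mu,p^{u,s}_\mu)$ of the stable and unstable invariant curves of $P_{t_0}$ at the periodic orbit of Theorem \ref{th:MainPO}, written as $Z^{u,s}_\mu=Z_0+\Delta^{u,s}$ with $Z_0(u)=(q_0(u),p_0(u))$ the unperturbed separatrix. The corrections $\Delta^{u,s}$ solve a fixed-point equation obtained from \eqref{eq:ode:original0} by variation of constants, using the explicit fundamental matrix of the variational equation of the unperturbed flow along $Z_0$ (one column being $\dot Z_0$). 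The essential point is to solve this equation on complex ``heart-shaped'' domains $D^{u}$, $D^{s}$ that contain a real segment and reach a distance of order $\eps$ from the singularities $\pm ia$ of $Z_0$, including a small inner sector of radius $\sim\eps$ around each singularity.

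The technical core is to run these fixed points in Banach spaces of analytic functions with norms weighted by powers of $|u\mp ia|$ tuned to the singularity orders of $p_0$, $q_0$ and of $H_1$ restricted to $Z_0$ (orders $r$, $r-1$ and $\ell$ respectively). Hypothesis \textbf{HP5} in the form $\eta>\ell-2r$ is precisely what makes the contraction work up to the inner sector: it ensures that $\Delta^{u,s}(u,t_0)$ remains strictly smaller than $Z_0(u)$ even when $u$ is $\eps$-close to $\pm ia$, where $Z_0\sim\eps^{-r}$ while the correction is $\OO(\mu\eps^{\eta-\ell+r})$. The same a priori estimates must then be propagated to the solutions of the variational equations along the \emph{perturbed} manifolds, since the next step linearizes in the difference of the two manifolds. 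I expect this step --- sharp weighted estimates near the singularity, carried out simultaneously for the manifold parameterizations and for the associated fundamental matrices, with the borderline exponent $\eta>\ell-2r$ --- to be the main obstacle and the bulk of the proof.

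With the manifolds controlled on the overlap of $D^u$ and $D^s$, form the difference $\Delta Z(u,t_0)=Z^u_\mu(u,t_0)-Z^s_\mu(u,t_0)$. Exploiting the area-preserving character of $P_{t_0}$ (Lagrangian invariant curves), this difference is encoded by a single $2\pi\eps$-periodic (in $t_0$) ``splitting potential'' $\Theta(u,t_0)$, governed by a linear transport equation along the unperturbed flow; hence $\Theta$ is analytic and exponentially small on a complex strip $\{|\Im u|<a-c\eps\}$, and its Fourier coefficients in $t_0/\eps$ are $\OO(e^{-|k|a/\eps})$, the harmonic $k=\pm1$ dominating. When $\eta>\eta^\ast$, expanding $Z^{u,s}_\mu=Z_0+\mu\eps^\eta Z_1^{u,s}+(\text{h.o.t.})$ with a remainder controlled by the relative-size bound of the previous step identifies the leading part of $\Theta$ with $\mu\eps^\eta$ times the Melnikov potential $L$ of \eqref{def:MelnikovPotentialEnt}, and Lemma \ref{lemma:Melnikov} supplies its leading Fourier coefficient, namely the factor $\eps^{\eta+1-\ell}e^{-a/\eps}f_0$. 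When instead $\eta=0$ (so necessarily $\ell-2r<0$), the first-order truncation is no longer legitimate near the singularity because all powers of $\mu$ contribute at the same order in $\eps$; there I would instead solve the variational equation along $Z_0$ as a convergent power series in $\mu$ (it depends entirely analytically on $\mu$), whose resummation only shifts the phase of the first Fourier coefficient by an entire function $C(\mu)=\OO(\mu)$, producing the factor $f_0e^{iC(\mu)}$ of \eqref{def:formulaArea:regular:lmenor}.

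Finally I would translate $\Theta$ into the area of the lobes: if $f_0\neq0$ then, up to the stated error, the distance between the manifolds has two consecutive non-degenerate (hence transversal) homoclinic points, and the enclosed area equals the difference of $\Theta$ at two consecutive critical points, i.e. twice the amplitude of its leading harmonic; together with the prefactor $\mu\eps^\eta$ and the $\eps^{1-\ell}$ from Lemma \ref{lemma:Melnikov} this gives $4|\mu|\eps^{\eta+1-\ell}e^{-a/\eps}(|f_0|+\cdots)$. The error terms $\OO(1/|\ln\eps|^\nu)$ in \eqref{def:formulaArea:regular}--\eqref{def:formulaArea:regular:lmenor} are the worst among the remainders in the fixed-point estimates, the Melnikov remainder of Lemma \ref{lemma:Melnikov}, and (when $\ell-2r>0$) the relative size of $\Delta^{u,s}$ on the inner sector. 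Note that the companion case $\eta=\ell-2r$ is exactly where this last quantity ceases to be small, forcing the inner equation of \cite{Baldoma06}; that borderline is treated separately in Theorem \ref{th:MainGeometric:singular}.
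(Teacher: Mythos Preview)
Your overall scheme---complex parameterizations reaching $\eps$-close to the singularities, then a splitting potential whose Fourier coefficients are exponentially small---is the right skeleton, and is close in spirit to the Lazutkin/Delshams--Seara approach. The paper, however, proceeds via the Hamilton--Jacobi formulation rather than flow parameterizations: it writes the manifolds as graphs $w=\partial_u T^{u,s}(u,\tau)$ on complex ``boomerang'' domains where $p_0\neq 0$, and the difference $\Delta=T^u-T^s$ satisfies a homogeneous linear PDE $\widetilde{\mathcal L}_\eps\xi=0$. The key object is not a direct Fourier analysis of $\Delta$ but a particular solution $\xi_0=\eps^{-1}u-\tau+\mathcal C(u,\tau)$ of this PDE (Theorem~\ref{th:CanviFinal:lmenor}); every solution is then a periodic function of $\xi_0$, and the asymptotics follow from the Fourier coefficients of $\Delta$ in the variable $\xi_0$.

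There is a genuine gap in your treatment of the case $\eta=0$, $\ell-2r<0$. You write that ``the first-order truncation is no longer legitimate near the singularity because all powers of $\mu$ contribute at the same order in $\eps$'', and propose to recover $C(\mu)$ by resumming ``the variational equation along $Z_0$''. Both statements are off. In this regime the first-order approximation of the \emph{manifolds} near the singularity \emph{is} still given by the Melnikov primitive (Proposition~\ref{coro:Varietat:FirstOrder:lmenor}): the leading term of $\partial_u T^{u,s}-\partial_u T_0$ is exactly $\partial_u\mathcal T_0^{u,s}$, with a remainder that is genuinely smaller. What fails is the second of the two conditions spelled out at the end of Section~\ref{sec:heuristic:splitting}: the straightening change $\mathcal C$---built from the function $G$ of \eqref{def:FuncioLAnulador:lmenor}, which encodes the variational equation along the \emph{perturbed} manifolds---has $G=\OO(\mu\eps^\eta)=\OO(\mu)$ and is therefore not close to the identity. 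Its value near $u=ia$ produces the constant $C(\mu,\eps)\to C(\mu)$ (Proposition~\ref{coro:Canvi:FirstOrder:lmenor} and Section~\ref{Proof:Existence:C0:lmenor}). In your language, the phase shift comes from the flow-box change along $Z^{u,s}_\mu$, not along $Z_0$; the variational equation along $Z_0$ carries no $\mu$ at all, so the mechanism you describe cannot generate $C(\mu)$.

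A smaller point: the error $\OO(1/|\ln\eps|^\nu)$ is not simply inherited from the fixed-point remainders. It comes from the fact that the final domain must be taken at distance $\kappa\eps$ from the singularities with $\kappa=s\ln(1/\eps)$ rather than $\kappa$ fixed; this choice is what allows the Fourier-coefficient argument (proof of Theorem~\ref{th:CotaDiferencia:lmenor}) to separate the dominant harmonic from the remainder with a logarithmic gain, at the price of the same logarithmic loss in the error term.
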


Note that if $f_0=0$, this theorem only gives exponentially small upper bounds
for of the area $\AAA$.

\begin{corollary}\label{coro:MainGeometric:regular}
Let us assume the hypotheses of Theorem \ref{th:MainGeometric:regular} and
$f_0\neq 0$, where $f_0$ is the constant given in Lemma \ref{lemma:Melnikov}.
Then, the invariant manifolds intersect transversally and the area of the lobes
of the Poincar\'e map between two consecutive transversal homoclinic points is
asymptotically given by the formulas stated in  Theorem
\ref{th:MainGeometric:regular}.
\end{corollary}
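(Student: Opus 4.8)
The corollary is obtained by extracting from the proof of Theorem \ref{th:MainGeometric:regular} a pointwise description of the relevant \emph{splitting function} and reading off its simple zeros. Recall that in that proof one parameterizes the invariant curves $C^{u}(t_0)$ and $C^{s}(t_0)$ of $P_{t_0}$ as graphs of the gradients of generating functions over a common transversal section, and reduces the study of their difference to a $2\pi$-periodic real-analytic function $\Delta(u;\eps)$ whose zeros are in one-to-one correspondence with the homoclinic points $z_h$, and whose leading asymptotic order, when $f_0\neq0$, is a nonzero constant multiple of $\sin\!\big(\tfrac{u-t_0}{\eps}+\phi\big)$ up to an error relatively of size $\OO(|\ln\eps|^{-\nu})$ (respectively $\OO(|\ln\eps|^{-1})$ when $\eta=0$), exactly as dictated by Lemma \ref{lemma:Melnikov} (respectively by the corrected Melnikov function carrying the factor $e^{iC(\mu)}$). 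Since $f_0\neq0$ this leading term has only simple zeros, spaced $\pi\eps$ apart.

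The first step is to upgrade this to the full function. Writing $\Delta=\Delta_{\mathrm{lead}}+\Delta_{\mathrm{err}}$ with $|\Delta_{\mathrm{err}}|$ relatively smaller by the factors above, Cauchy estimates on the complex strip of width comparable to $\eps$ where $\Delta$ is analytic and bounded yield the same relative bound for $\Delta_{\mathrm{err}}'$; here one uses that the period $2\pi\eps$ is comparable to the width of the analyticity strip, so differentiation costs only a constant. A Rouch\'e-type argument then shows that in each interval of length $\pi\eps$ the function $\Delta$ has exactly one zero, which is simple because $|\Delta_{\mathrm{lead}}'|$ is bounded below there while $\Delta_{\mathrm{err}}'$ is negligible. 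Simplicity of a zero of $\Delta$ is equivalent to transversality of the corresponding intersection of $C^{u}(t_0)$ and $C^{s}(t_0)$ (this is the standard dictionary between the splitting function and the geometry of the lobes; cf. Proposition 2.7 in \cite{DelshamsS97}), so the invariant manifolds intersect transversally.

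The second step is to identify the enclosed area. By the area-preserving character of $P_{t_0}$, the area between the two curves joining two consecutive transversal homoclinic points $z_h^1,z_h^2$ equals $\big|\int_{u_1}^{u_2}\Delta(u;\eps)\,du\big|$, where $u_1<u_2$ are consecutive zeros of $\Delta$, and this quantity does not depend on the chosen pair. Inserting the asymptotic expansion of $\Delta$ — equivalently, evaluating the difference of the generating function at two consecutive critical points, which is twice its amplitude — produces the prefactor $4|\mu|\eps^{\eta+1-\ell}e^{-a/\eps}$ together with the stated error, reproducing \eqref{def:formulaArea:regular}; the case $\eta=0$ is identical after replacing $f_0$ by $f_0e^{iC(\mu)}$, giving \eqref{def:formulaArea:regular:lmenor}.

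The only genuinely new ingredient beyond Theorem \ref{th:MainGeometric:regular} is the derivative estimate for $\Delta_{\mathrm{err}}$, and I expect this to be the main (though minor) difficulty: one must check that the error bounds obtained for the splitting function in the proof of the theorem hold on a full complex neighborhood of the real segment of width of order $\eps$, not merely on the real axis, so that Cauchy's estimate applies without losing powers of $\eps$. Everything else reduces to the classical argument relating simple zeros of a periodic function bounded in a strip to transversal homoclinic points and to the area of the lobes.
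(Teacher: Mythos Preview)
Your proposal is correct in outline and reaches the right conclusion, but the paper takes a slightly more direct route that sidesteps precisely the issue you flag as the ``main difficulty.'' In the paper, Corollary~\ref{coro:Diferencia:Simple:regular} (itself a consequence of Theorem~\ref{th:CotaDiferencia:lmenor}) already provides, for real $u$, bounds not only on $\Delta-\Upsilon^{[0]}-\Delta_{00}$ but also on $\pa_u\Delta-\pa_u\Delta_{00}$ and $\pa_u^2\Delta-\pa_u^2\Delta_{00}$. These derivative bounds are read off directly from the exponential decay of the Fourier coefficients $\wt\Upsilon^{[k]}$ in the representation $\wt\Delta(u,\tau)=\sum_k\wt\Upsilon^{[k]}e^{ik(\eps^{-1}u-\tau+\CCC(u,\tau))}$, differentiating term by term; no Cauchy estimate on a complex strip around the real axis is needed. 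With the first- and second-derivative bounds in hand, the paper applies the implicit function theorem (rather than a Rouch\'e argument) to locate the zeros $u_\pm^\ast$ of $\pa_u\Delta(\cdot,\tau_0)$ near the explicit zeros $u_\pm$ of $\pa_u\Delta_{00}(\cdot,\tau_0)$, and the second-derivative control gives their nondegeneracy, hence the transversality of the homoclinic intersections. The area is then $|\Delta(u_+^\ast,\tau_0)-\Delta(u_-^\ast,\tau_0)|$, and substituting the estimates together with $\Delta_{00}(u_+,\tau_0)=-\Delta_{00}(u_-,\tau_0)$ yields the stated asymptotic formula.

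One terminological slip to watch: in the paper's convention $\Delta=T^u-T^s$ is the difference of generating functions, so homoclinic points correspond to \emph{critical points} of $\Delta$ (zeros of $\pa_u\Delta$), not to zeros of $\Delta$ itself; accordingly the lobe area is $\Delta(u_+^\ast)-\Delta(u_-^\ast)=\int_{u_-^\ast}^{u_+^\ast}\pa_u\Delta\,du$. Your formulas are consistent with this once one interprets your ``$\Delta$'' as $\pa_u\Delta$, but it is worth keeping the bookkeeping straight.
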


\begin{remark}
In Corollary \ref{coro:MainGeometric:regular} we have asked for the hypothesis
$f_0\neq 0$, which by Lemma \ref{lemma:Melnikov} corresponds to $A\neq 0$. This
condition is equivalent to ask that the Fourier coefficients $H_1^{[\pm
1]}(q_0(u),p_0(u);0)$ have branching points of order \emph{exactly} $\ell$ at
$u=\pm ia$. Note that this hypothesis is generic since it is equivalent to
assume that some coefficient in the Laurent expansions of $H_1^{[\pm
1]}(q_0(u),p_0(u);0)$ at the points $u=\pm ia$ is non-zero.
\end{remark}

\subsubsection{Main result for the singular case}

The case $\ell \geq 2r$ and $\eta=\ell-2r$ is essentially different from the
previous cases in the sense that
we are not able to have ``a priori" estimates for the asymptotic coefficient of
the area of the lobes
between two consecutive homoclinic points. Such asymptotic coefficient depends
on an unknown function
($f(\mu)$ in Theorem \ref{th:MainGeometric:singular}) which comes from the study
of the difference between
adequate approximations of the invariant manifolds near  the singularities $\pm
ia$.


\begin{theorem}[Main Theorem: singular case]\label{th:MainGeometric:singular}
Let us assume Hypotheses \textbf{HP1}-\textbf{HP5}, $\ell-2r\geq 0$ and
$\eta=\ell-2r$. Then, given any fixed $\mu$,
there exists $\eps_0>0$ such that if $\eps\in (0 ,\eps_0)$, the area of the
lobes between the invariant manifolds of the periodic orbit given in Theorem
\ref{th:MainPO}  is given by
\begin{itemize}
 \item If $\ell-2r>0$,
\begin{equation}\label{def:formulaArea:singular:lmajor}
\AAA=4|\mu|\eps^{1-2r}e^{-{\dps\tfrac{a}{\eps}}}
\left(\left|f\left(\mu\right)\right|+\OO\left(\frac{1}{|\ln\eps|^{\ell-2r}}
\right)\right)
\end{equation}
where $f(\mu)$ is an entire analytic function.
\item If $\ell-2r=0$,
\begin{equation}\label{def:formulaArea:singular:ligual}
\AAA=4|\mu|\eps^{1-2r}e^{-{\dps\tfrac{a}{\eps}}+\mu^2\Im
b\ln\frac{1}{\eps}}\left(\left| f
\left(\mu\right)e^{iC\left(\mu\right)}\right|+\OO\left(\frac{1}{|\ln\eps|}
\right)\right),
\end{equation}
where $b\in \CC$ is a constant, whose explicit expression is given in
\eqref{def:Constantb}, $f(\mu)$ is an entire analytic function and $C(\mu)$ is
an entire analytic function such
that $C(\mu)=\OO(\mu)$.
\end{itemize}
\end{theorem}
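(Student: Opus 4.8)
The plan is to follow the Lazutkin-type scheme adapted to the singular regime, as outlined in Section~\ref{sec:Heuristic} and carried out in detail in Section~\ref{sec:SketchProof} and the subsequent Sections~\ref{sec:periodica}--\ref{sec:CanviFinal}. After the preliminary reductions — existence of the hyperbolic (or, in the parabolic case, parabolic) invariant object from Theorem~\ref{th:MainPO}, together with a $\mu$-dependent normalizing change of variables that will ultimately produce the $\OO(\mu)$ phase $C(\mu)$ in \eqref{def:formulaArea:singular:ligual} — the first main step is to construct suitable complex parameterizations $Z_\mu^{u}(v,\tau)$, $Z_\mu^{s}(v,\tau)$ of the perturbed invariant curves as solutions of the invariance equation associated with \eqref{eq:ode:original0}, written in the suspended time variable $\tau=t/\eps$. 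These parameterizations are sought on \emph{outer} complex domains which contain a real interval and reach up to a distance of order $\eps$ (more precisely, up to distance $\OO(\eps\lne)$ in the matching region) of the singularities $\pm ia$ of $Z_0=(q_0,p_0)$. Existence is obtained by a fixed point argument in Banach spaces of analytic functions equipped with weights reproducing the singular behavior $(v\mp ia)^{-r}$ prescribed by \textbf{HP2}; the crucial point, and the reason the condition $\eta\ge\ell-2r$ enters, is that this argument still closes even though in the singular case $Z_\mu^{u,s}-Z_0$ is of the \emph{same} size as $Z_0$ itself near $\pm ia$.

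The second and central step is the analysis near the singularities. Rescaling $v=ia+\eps w$ (and symmetrically near $-ia$), the leading order of the invariance equation becomes the \emph{inner equation}, which is $\mu$-dependent, non-integrable, and whose relevant solutions and their asymptotic properties are supplied by \cite{Baldoma06}. One selects the two solutions $\varphi^{u}$, $\varphi^{s}$ of the inner equation arising as the inner limits of the outer parameterizations $Z_\mu^{u}$, $Z_\mu^{s}$; the entire analytic function $f(\mu)$ in \eqref{def:formulaArea:singular:lmajor}--\eqref{def:formulaArea:singular:ligual} is (a multiple of) the first Fourier coefficient — the Stokes constant — of the difference $\varphi^{u}-\varphi^{s}$, which decays exponentially in $\Im w$; its analyticity and entireness in $\mu$ follow from the analytic dependence of the inner problem on $\mu$. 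When $\ell-2r=0$ the inner equation contains a resonant monomial that forces a logarithmic term in its solutions: this is the origin of the constant $b$ of \eqref{def:Constantb} and of the correction $\mu^2\Im b\,\lne$ to the exponent in \eqref{def:formulaArea:singular:ligual}, that is, of the genuine change of the power $\beta$.

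The third step is matching. In the overlap between the outer domains and the rescaled inner domain, the outer parameterizations are controlled by their singular expansions — an argument parallel to the estimate of the Melnikov coefficients in Lemma~\ref{lemma:Melnikov}, but carried out for the true invariant manifolds — while the inner parameterizations, re-expanded in the original variables, carry the dominant exponentially small content; matching them fixes the relation between the Stokes constant of the inner solutions and the coefficients of the Fourier expansion of the difference $Z_\mu^{u}-Z_\mu^{s}$ on a common complex domain. Finally, on a strip containing a real segment this difference solves a linear equation with no source term, is $2\pi$-periodic in $\tau$, and is exponentially small; a Fourier/maximum-principle argument of the type of Proposition~2.7 in \cite{DelshamsS97} isolates the first harmonic, giving $Z_\mu^{u}-Z_\mu^{s}$ of the announced size $\sim\eps^{\,1-2r}e^{-a/\eps}$ (times the $\eps^{-\mu^2\Im b}$ factor when $\ell=2r$), with leading coefficient $f(\mu)e^{iC(\mu)}$. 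The area of a lobe between two consecutive transversal homoclinic points is then obtained by integrating this difference and localizing two consecutive simple zeros of its first harmonic, which yields \eqref{def:formulaArea:singular:lmajor}--\eqref{def:formulaArea:singular:ligual}; the $|\ln\eps|^{-(\ell-2r)}$ (resp. $|\ln\eps|^{-1}$) error records the accuracy of the matching.

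I expect the hard part to be the matching step together with the uniform control of the error terms in the overlap region — in particular, propagating the analyticity and the sharp weighted estimates from the outer to the inner domain and back — and, in the boundary case $\ell=2r$, keeping exact track of the logarithmic monodromy of the inner solutions so that it materializes precisely as the $\eps^{-\mu^2\Im b}$ modification of the exponential rate rather than being absorbed into a harmless bounded factor.
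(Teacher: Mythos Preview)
Your overall strategy (outer extension / inner equation / matching / Fourier isolation of the first harmonic / area) is correct, but the paper takes a genuinely different route at the technical core, and one step of your sketch hides a real difficulty.

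The paper does \emph{not} work with flow parameterizations $Z_\mu^{u,s}(v,\tau)$. Following \cite{Sauzin01,LochakMS03}, it writes the manifolds as graphs $w=\pa_u T^{u,s}(u,\tau)$ of generating functions solving the Hamilton--Jacobi equation \eqref{eq:HamJacGeneral} after the symplectic change \eqref{eq:CanviSimplecticSeparatriu} (with the flow parameterizations \eqref{eq:PDEParametritzacions} used only as a bridge across zeros of $p_0$). The payoff is structural: the difference $\Delta=T^u-T^s$ automatically satisfies the \emph{homogeneous} first--order PDE $\widetilde\LL_\eps\Delta=0$ of \eqref{eq:DifferencePDE}. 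One then constructs an injective solution $\xi_0=\eps^{-1}u-\tau+\CCC(u,\tau)$ of the same PDE (Theorem~\ref{th:CanviFinal}), writes $\Delta=\Upsilon\circ\xi_0$ with $\Upsilon$ $2\pi$--periodic, and extracts its Fourier coefficients. The constant $C(\mu)$ and the logarithmic term $\hmu^2 b\ln(u-ia)$ are precisely the asymptotics of $\CCC$ near the singularity (Proposition~\ref{coro:Canvi:FirstOrder}); they are \emph{not} produced by a preliminary normalizing change as you suggest. With your flow parameterizations, the claim that ``this difference solves a linear equation with no source term'' is not free: it requires flow--box coordinates \`a la Lazutkin, which you have not supplied, and which the paper's Hamilton--Jacobi set--up is designed to avoid.

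Two further differences are worth noting. First, the paper's matching (Theorem~\ref{th:MatchingHJ}) is \emph{a posteriori}: the generating functions $T^{u,s}$ are already extended to the full boomerang domains reaching distance $\OO(\eps)$ from $\pm ia$ (Theorems~\ref{th:Extensio:Trig}, \ref{th:ExtensioFinal}) \emph{without} using the inner equation; matching only compares the known $\psi^{u,s}$ with the inner solutions $\psi_0^{u,s}$ of \cite{Baldoma06} to sharpen the estimates. Second, because the change \eqref{eq:CanviSimplecticSeparatriu} is symplectic, the area is literally $|\Delta(u_+^\ast,\tau_0)-\Delta(u_-^\ast,\tau_0)|$ at two consecutive critical points of $\Delta$, so Corollary~\ref{coro:Diferencia:singular} yields \eqref{def:formulaArea:singular:lmajor}--\eqref{def:formulaArea:singular:ligual} directly; no separate integration of $Z_\mu^u-Z_\mu^s$ is needed.
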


\begin{corollary}\label{coro:MainGeometric:singular}
Let us  assume the hypotheses of Theorem \ref{th:MainGeometric:singular} and
$f(\mu)\neq 0$. Then, the invariant manifolds intersect transversally and the
area of the lobes of the Poincar\'e map between two consecutive transversal
homoclinic points is asymptotically given by the formulas of Theorem
\ref{th:MainGeometric:singular}.
\end{corollary}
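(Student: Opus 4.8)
\emph{Proof proposal.} The plan is to deduce Corollary \ref{coro:MainGeometric:singular} from the quantitative description of the perturbed invariant curves established along the proof of Theorem \ref{th:MainGeometric:singular}, in complete analogy with the way Corollary \ref{coro:MainGeometric:regular} is deduced from Theorem \ref{th:MainGeometric:regular}. Recall that that proof produces real-analytic parameterizations of the local unstable and stable curves $C^{u}(t_0)$ and $C^{s}(t_0)$ of the Poincar\'e map $P_{t_0}$, and that, after passing to symplectic straightening (``flow-box'') coordinates $(U,V)$ adapted to $C^{u}(t_0)=\{V=0\}$, the stable curve is the graph $\{V=\Delta(U,t_0)\}$ of a real-analytic function $\Delta$ which, for each $t_0$, is $2\pi\eps$-periodic in $U$, and whose zeros are the homoclinic points, the \emph{simple} ones being precisely the transversal intersections of $C^{u}(t_0)$ and $C^{s}(t_0)$. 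The heart of Theorem \ref{th:MainGeometric:singular} is the asymptotics of $\Delta$, of the form
\[
\Delta(U,t_0)=\gamma(\eps)\cdot 2\,\Re\big(\mu\,f(\mu)\,e^{iC(\mu)}\,e^{-i(U-t_0)/\eps}\big)+(\text{strictly smaller terms}),
\]
where $\gamma(\eps)=\eps^{-2r}e^{-a/\eps}$ and $C\equiv 0$ when $\ell-2r>0$, and $\gamma(\eps)=\eps^{-2r}e^{-a/\eps+\mu^2\Im b\ln(1/\eps)}$ when $\ell=2r$ (with $b$ the constant of Theorem \ref{th:MainGeometric:singular}); all the remaining harmonics and all the error terms are, relative to $\gamma(\eps)$, of strictly smaller order --- by a factor $e^{-a/\eps}$, $|\ln\eps|^{-(\ell-2r)}$ or $|\ln\eps|^{-1}$ respectively.

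First I would invoke the hypothesis $f(\mu)\neq 0$. Since $C(\mu)$ is entire, $e^{iC(\mu)}\neq 0$, so the amplitude of the dominant harmonic of $\Delta(\cdot,t_0)$ is of exact order $|\mu|\,|f(\mu)e^{iC(\mu)}|\,\gamma(\eps)$, strictly larger than every other harmonic and every error term for $\eps$ small (the log factor in the case $\ell=2r$ multiplies leading term and errors alike, so it does not affect this comparison). Thus $\Delta(\cdot,t_0)/\gamma(\eps)$ is, for $\eps$ small, an arbitrarily small real-analytic perturbation (in a complex strip, uniformly on one $U$-period) of $U\mapsto 2\Re\big(\mu f(\mu)e^{iC(\mu)}e^{-i(U-t_0)/\eps}\big)$, which over a period $2\pi\eps$ has exactly two simple zeros, spaced $\pi\eps$ apart, with $U$-derivative of order $\eps^{-1}$ times its amplitude. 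By the standard argument for analytic functions bounded in complex strips (Rouch\'e, or the implicit function theorem applied to the dominant harmonic, as in Proposition 2.7 of \cite{DelshamsS97} and as used for Corollary \ref{coro:MainGeometric:regular}), $\Delta(\cdot,t_0)$ itself has, in each period, exactly two zeros $U_1(t_0)<U_2(t_0)$, each $\OO(\eps)$-close to a zero of the dominant term, at which $\pa_U\Delta\neq 0$. Simplicity of these zeros of $\Delta$ is exactly transversality of the intersection of $C^{u}(t_0)$ and $C^{s}(t_0)$ at the corresponding homoclinic points $z_h$; this proves the first assertion of the Corollary and makes ``two consecutive transversal homoclinic points'' meaningful.

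Finally I would compute the area. Since $P_{t_0}$ is area preserving and the straightening coordinates $(U,V)$ are symplectic, the area of the lobe bounded by the arcs of $C^{u}(t_0)=\{V=0\}$ and $C^{s}(t_0)=\{V=\Delta(U,t_0)\}$ between two consecutive transversal homoclinic points equals $\big|\int_{U_1(t_0)}^{U_2(t_0)}\Delta(U,t_0)\,dU\big|$, and by the area-preserving character of $P_{t_0}$ this value is the same for every such pair. Inserting the asymptotics of $\Delta$: a $U$-primitive of $2\Re\big(\mu f(\mu)e^{iC(\mu)}e^{-i(U-t_0)/\eps}\big)$ is, up to sign, $2\eps\,\Im\big(\mu f(\mu)e^{iC(\mu)}e^{-i(U-t_0)/\eps}\big)$, whose increment between two consecutive zeros of the dominant term has absolute value $4\eps\,|\mu|\,|f(\mu)e^{iC(\mu)}|$. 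Multiplying by $\gamma(\eps)$ and absorbing the strictly smaller terms into a relative error of size $|\ln\eps|^{-(\ell-2r)}$ (when $\ell-2r>0$, where moreover $C\equiv 0$) or $|\ln\eps|^{-1}$ (when $\ell=2r$) yields exactly \eqref{def:formulaArea:singular:lmajor} and \eqref{def:formulaArea:singular:ligual}. The only genuinely delicate ingredient is that the error terms in the asymptotics of $\Delta$ are uniformly of strictly smaller order than the dominant harmonic on a complex period of width of order $\eps$; but this uniformity is precisely what the proof of Theorem \ref{th:MainGeometric:singular} supplies, so that, granting it, the passage to transversality and to the area formula is routine --- that is, the Corollary carries no obstacle of its own beyond bookkeeping.
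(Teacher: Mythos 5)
Your argument is correct in its logical skeleton and reaches the right formulas, but it is set up in a genuinely different representation from the one the paper actually uses, and the object you "grant" from the proof of Theorem \ref{th:MainGeometric:singular} is not literally what that proof supplies. You work in Lazutkin-style flow-box coordinates in which $C^{u}(t_0)=\{V=0\}$ and the stable curve is a graph $\{V=\Delta(U,t_0)\}$, so that homoclinic points are \emph{zeros} of $\Delta$, transversality is simplicity of those zeros, and the lobe area is $\bigl|\int_{U_1}^{U_2}\Delta\,dU\bigr|$. The paper deliberately avoids constructing such coordinates (this is stated explicitly when contrasting its method with Lazutkin's): its $\Delta=T^u-T^s$ is the difference of generating functions, homoclinic points are the \emph{critical} points of $\Delta(\cdot,\tau_0)$ (zeros of $\pa_u\Delta$), transversality is $\pa_u^2\Delta\neq 0$ there, and the area is $\bigl|\Delta(u_+^\ast,\tau_0)-\Delta(u_-^\ast,\tau_0)\bigr|$ via formula \eqref{area0}. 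Everything in your write-up is therefore shifted by one $u$-derivative relative to the paper: the role of your distance function is played by $\pa_u\Delta$, whose asymptotics together with those of $\pa_u^2\Delta$ are exactly what Corollary \ref{coro:Diferencia:singular} provides. With that substitution your argument coincides with the paper's: $f(\mu)\neq 0$ makes $\Delta_{00}$ nondegenerate, the implicit function theorem locates the zeros $u_\pm^\ast$ of $\pa_u\Delta$ at distance $\OO(\eps/|\ln\eps|^{\nu_\ell})$ from the zeros $u_\pm$ of $\pa_u\Delta_{00}$ with $\pa_u^2\Delta\neq 0$ there (transversality), and $\Delta_{00}(u_+,\tau_0)=-\Delta_{00}(u_-,\tau_0)$ with modulus \eqref{def:DeltaHom:lmajor}--\eqref{def:DeltaHom:ligual} yields the area. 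As written, though, your proof presupposes the straightening coordinates and the transfer of the asymptotics to the distance function, which is additional (standard but nontrivial) work the paper never does; you should either carry that out or recast the argument in terms of $\pa_u\Delta$ and $\pa_u^2\Delta$, for which the paper's estimates apply directly.
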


\subsubsection{Some comments about the results}\label{sec:mainresults:remark}
\begin{itemize}
\item It is important to mention that, by applying Theorems \ref{th:MainGeometric:regular} and \ref{th:MainGeometric:singular},
 we do not need to compute exactly a parameterization
$(q_0(u), p_0(u))$ of the homoclinic orbit in order to know the size of the splitting.
What we need is the behavior of the homoclinic connection around its singularities $\pm ia$, which
as we pointed out in Section \ref{remrkshipotesis}, can be computed explicitly.

\item The constant $b$ appearing in Theorem \ref{th:MainGeometric:singular} can
be
computed explicitly as it is showed in formula \eqref{def:Constantb}
in Proposition \ref{coro:gInner}. In particular, $b=0$ when the
Hamiltonian $H_1$ in \eqref{def:Ham:Original:perturb:poli} and
\eqref{def:Ham:Original:perturb:trig} does not depend on $y$. For
this reason, in the previous results obtained in the singular case corresponding
to $\eta = \ell -2r =0$,
see \cite{Treshev97,Gelfreich00,Olive06,GuardiaOS10}, this term does not appear.
The
appearance of this logarithmic term in the asymptotic formula had
already been detected in \cite{Baldoma06}. Let us also point out
that an analogous phenomenon happens  in the analytic unfoldings of
the Hopf-zero singularity (see \cite{BaldomaS06,BaldomaS08}) and in weak
resonances
of area preserving maps \cite{SimoV10}.

\item The constant $C(\mu)$ appearing in Theorems \ref{th:MainGeometric:regular}
and \ref{th:MainGeometric:singular} also satisfies $C(\mu)=0$ if the
Hamiltonian $H_1$ in \eqref{def:Ham:Original:perturb:poli} and
\eqref{def:Ham:Original:perturb:trig} does not depend on $y$. In Section
\ref{Proof:Existence:C0:lmenor} we give an explicit expression of $C(\mu)$ in
terms of several explicitly computable auxiliary functions.

\item If one weakens Hypothesis \textbf{HP3} to admit Hamiltonian systems
with $\CCC^1$ dependence on $\tau$, one can get analogous results to
the ones obtained in Theorems \ref{th:MainGeometric:regular} and
\ref{th:MainGeometric:singular}.

\item \textbf{Comparison with Melnikov}.
Observe that when $\eta>\eta^\ast$, Theorem \ref{th:MainGeometric:regular} gives
a natural result which generalizes the previous results dealing with the regular
case (see Section \ref{sec:Historical} about historical remarks): if one
artificially assumes that the perturbation is small enough, the splitting of
separatrices is given in first order by the Melnikov function.

If $\ell-2r<0$ and $\eta=0$, the Melnikov function does not predict the area
correctly in general. Nevertheless,
since $C(\mu)\equiv 0$ when the perturbation does not depend on $y$, in this
case Melnikov theory gives
the asymptotic size of the area of the lobes even if $\eta=0$, that is, when the
perturbation has the
same size as the integrable system.

In the singular cases $\ell-2r\geq0$ and $\eta=\eta^\ast=\ell-2r$, we know that
the function $f(\mu)$ appearing in Theorem
\ref{th:MainGeometric:singular},  satisfies that for $\mu$ small
\[
f(\mu)=f_0+\OO(\mu),
\]
where $f_0\in \CC$ is a constant independent of $\mu$. In
\cite{Baldoma06}, it is seen that the constant $f_0$ coincides with
the constant  that Melnikov theory gives in front of the exponential
term (see Lemma \ref{lemma:Melnikov}).

In other words, this means that for the case $\ell-2r>0$, if $\mu$ is a small
parameter and $f_0\neq 0$, Melnikov
theory also predicts the asymptotic behavior of the area of the
lobes correctly.

In the case $\ell-2r=0$, $f_0$ also corresponds to the Melnikov
theory prediction. Nevertheless, since a logarithmic term appears in the
exponential,
the Melnikov prediction is valid provided
\[
|\mu | \ll     \frac{1}{\sqrt{|\ln\eps|}}.
\]
Of course, if $b=0$, as happens when the perturbation does not
depend on $y$, the Melnikov prediction is valid for any $\mu$ small
and independent of $\eps$.
\end{itemize}

\subsubsection{Examples}\label{sec:examples}
In this section we apply Theorems \ref{th:MainGeometric:regular} and
\ref{th:MainGeometric:singular} to some examples.
We consider the Duffing equation
\[
 H_0(x,y)=\frac{y^2}{2}-\frac{x^2}{2}+\frac{x^4}{4}
\]
with different perturbations. The Duffing equation has two separatrices forming
a figure eight, which are parameterized by
\[
 \Gamma^\pm (u)=(\pm q_0(u),p_0(u))=\left(\pm \frac{\sqrt{2}}{\cosh u},\mp
\frac{\sqrt{2}\sinh u}{\cosh^2u}\right).
\]
The singularities of these separatrices which are closer to the real axis are
$u=\pm i\pi/2$ and $r=2$ (see the definition of $r$ in Hypothesis \textbf{HP2}).

We consider two different types of perturbations and we study how the separatrix
$\Gamma^+$ splits.
The first perturbation is
\[
 H(x,y)=\frac{y^2}{2}-\frac{x^2}{2}+\frac{x^4}{4}+\mu\eps^\eta
x^n\sin\frac{t}{\eps}
\]
for $n\in\NN$ and $\eta\ge 0$.
Then the order of the perturbation is $\ell=n$ (see the definition of $\ell$ in
\eqref{def:ell}).

Applying Melnikov theory to these Hamiltonian systems, one obtains the
following prediction for the area of the lobes
\begin{equation}\label{def:Duffing:Melnikov:1}
 \AAA=|\mu|\eps^\eta  \frac{2^{\frac{n}{2}+2}\pi}{(n-1)!\,\eps^{n-1}}e^{-{\dps
\tfrac{\pi}{2\eps}}}+\OO\left(\mu^2\eps^{2\eta}\right).
\end{equation}

For $\eta>\eta^*=\max\{n-4,0\}$ or $\eta=0$ and $n<4$ (which corresponds to
$\ell-2r<0$), one can apply Theorem \ref{th:MainGeometric:regular}  to see that
Melnikov theory predicts correctly the area of the lobes. Note that
$C(\mu)\equiv 0$ since the perturbation does not depend on $y$. Then,
\begin{equation} \label{eq:Duffing:FirstOrderArea:Melnikov}
 \AAA\simeq|\mu|\eps^\eta
\frac{2^{\frac{n}{2}+2}\pi}{(n-1)!\,\eps^{n-1}}e^{-{\dps \tfrac{\pi}{2\eps}}}.
\end{equation}

The case  $n\geq 4$ corresponds to $\ell\geq 2r$. In this case for
$\eta=\eta^*=n-4$, since the perturbation does not depend on $y$,
we have that $b=0$ and $C(\mu)\equiv 0$. Then, applying Theorem
\ref{th:MainGeometric:singular}, the area is given by the formula
\begin{equation}\label{def:Duffing:FirstOrderArea:1}
 \AAA=|\mu|  \frac{4\left|f\left(\mu\right)\right|}{\eps^{n-1}}e^{-{\dps
\tfrac{\pi}{2\eps}}}\left(1+\OO\left(\frac{1}{|\ln\eps|}\right)\right),
\end{equation}
where $f(\mu)$ satisfies
\begin{equation}\label{eq:Duffing:Expansio:f}
 f(\mu)=\frac{2^{\frac{n}{2}}\pi i}{(n-1)!}+\OO\left(\mu\right).
\end{equation}
Therefore, for $\eta=n-4$ and fixed $\mu$ independent of $\eps$, the first order
depends on the full
jet of $f(\mu)$ and then the Melnikov function does not predict it correctly.

To see how the first asymptotic order of the area of the lobes changes when the
perturbation depends on $y$, we consider the following perturbation of the
Duffing equation, where $\ell=2r=4$ and $\eta=\ell-2r=0$,
\[
 H(x,y)=\frac{y^2}{2}-\frac{x^2}{2}+\frac{x^4}{4}+\mu \left(
x^4\sin\frac{t}{\eps}+\lambda x^2y\cos\frac{t}{\eps} \right)
\]
with $\lambda\in \RR$. For this example, Melnikov theory predicts that the area
of the lobes is
\[
 \AAA=|\mu|\frac{4\pi}{3\eps^3}|2+\sqrt{2}\lambda|e^{-{\dps
\tfrac{\pi}{2\eps}}}+\OO\left(\mu^2\right).
\]
Note that if one takes $\lambda=0$, $\AAA$ coincides with
\eqref{def:Duffing:Melnikov:1} with $n=4$ and $\eta=0$.
On the other hand, if one takes $\lambda=-\sqrt{2}$ the Melnikov function is
degenerate since the first order vanishes.

Since $\ell=2r$ and $\eta=0$,  one can apply Theorem
\ref{th:MainGeometric:singular}.
Using formula \eqref{def:Constantb} for the definition of $b$, one can easily
see that $b=-4\sqrt{2}\lambda i$. Therefore, the true first asymptotic order of
the area of the lobes is given by
\begin{equation}\label{def:Duffing:FirstOrderArea:2}
 \AAA=|\mu|\frac{4}{\eps^3}e^{-{\dps
\tfrac{\pi}{2\eps}}-4\sqrt{2}\lambda\mu^2\ln\frac{1}{\eps}}\left(\left|f(\mu)e^{
iC(\mu)}\right|+\OO\left(\frac{1}{|\ln\eps|}\right)\right),
\end{equation}
where $f(\mu)$ satisfies
\[
 f(\mu)=\frac{\pi i }{3}\left(2+\sqrt{2}\lambda\right)+\OO\left(\mu\right).
\]
One can take, for instance, $\mu=1$ and write formula
\eqref{def:Duffing:FirstOrderArea:2} as
\[
  \AAA=\frac{4}{\eps^{3-4\sqrt{2}\lambda}}e^{-{\dps
\tfrac{\pi}{2\eps}}}\left(\left|f(1)e^{iC(1)}\right|+\OO\left(\frac{1}{|\ln\eps|
}\right)\right).
\]
Therefore, the correcting logarithmic term in the exponential implies a drastic
change in the power  of $\eps$ in  the asymptotics. 
Note that one can take any $\lambda\in \RR$ and then the
power of $\eps$ in the first order can  change
arbitrarily, both increasing or decreasing. Finally, if one takes
$\lambda=0$, one recovers formula
\eqref{def:Duffing:FirstOrderArea:1}.

\subsection{Near integrable Hamiltonian systems of $1\tfrac{1}{2}$ degrees of
freedom close to a resonance}\label{sec:mainresult:resonance}
The results obtained in this work can be easily  adapted to study
near integrable Hamiltonian systems of $1\tfrac{1}{2}$ degrees
of freedom close to a resonance. Let us consider an analytic
Hamiltonian system with Hamiltonian
\begin{equation}\label{def:HamResonant}
h(x,I,\tau)=h_0(I)+\de h_1(x,I,\tau),
\end{equation}
where $\de\ll 1$ is a small parameter, $(x,\tau)\in\TT^2$,
$I\in\RR$ and $h_1$ is a trigonometric polynomial as a function of $x$.
When $\de=0$, the Hamiltonian system is completely integrable (in
the sense of Liouville-Arnold) and the phase space is foliated by
invariant tori with frequency $\omega(I)=(\pa_I h_0(I),1)$.

In particular, if for certain $I$, there exists $k\in\ZZ^2$ such that
$\omega(I)\cdot k=0$, the corresponding torus is foliated by
periodic orbits. When $\de>0$ (but small enough), it is a well known
fact that typically this torus, a resonant torus, breaks down.

Let us consider the simplest setting and let us assume that
\[
h_0(I)=\frac{I^2}{2}+G(I)\qquad \text{ with }
G(I)=\OO\left(I^3\right).
\]
Then $I=0$ corresponds to the resonant vector $\omega(0)=(0,1)$. To
study the dynamics of the perturbed system around this resonance,
one usually performs the rescaling
\[
I=\sqrt{\de} y\quad\text{ and }\quad \tau=\frac{t}{\sqrt{\de}}
\]
and takes  $\eps=\sqrt{\de}$ as a new parameter. Then, one obtains
the Hamiltonian
\[
H(x,y,t)=\frac{y^2}{2}+\frac{1}{\eps^2}G(\eps
y)+V(x)+F\left(x,\frac{t}{\eps}\right)+R\left(x,\eps
y,\frac{t}{\eps}\right),
\]
where
\[
\begin{split}
\dps V(x)&=\langle h_1(x,0,\tau)\rangle=\frac{1}{2\pi}\int_0^{2\pi}
h_1(x,0,\tau)\,d\tau\\
\dps F(x,\tau)&= h_1(x,0,\tau)-\langle h_1(x,0,\tau)\rangle\\
\dps R(x,I,\tau)&=h_1(x,I,\tau)-h_1(x,0,\tau),
\end{split}
\]
which can be written as
\[
H\left(x,y,\frac{t}{\eps}\right)=H_0(x,y)+\mu
H_1\left(x,y,\frac{t}{\eps},\eps\right)
\]
with
\[
\begin{split}
H_0(x,y)&=\frac{y^2}{2}+V(x)\\
H_1(x,y,\tau,\eps)&=F(x,\tau)+\frac{1}{\eps^2}G(\eps y)+R(x,\eps y,
\tau).
\end{split}
\]
Here $\mu$ is in fact a fake parameter, since we are interested in
the case $\mu=1$. This system  is similar to the ones considered in
this paper. Let us point out also that, by definition,
$\eps^{-2}G(\eps y)$ and $R(x,\eps y,\tau)$ are of order $\eps$.

Let us assume that the Hamiltonian $H$ satisfies Hypotheses
\textbf{HP1}-\textbf{HP4} and instead of \textbf{HP5} satisfies the
alternative hypothesis that $V$, which is a trigonometric polynomial,
has the same degree as $h_1$ in \eqref{def:HamResonant} as a
function of $x$. Then, using the tools considered in this paper,
one can  give an asymptotic formula analogous to the one given
in Theorem \ref{th:MainGeometric:singular}. Let us point out that in this
setting, even if the terms $\eps^{-2}G(\eps y)$ and $R(x,\eps
y,\tau)$ are of order $\eps$ and therefore smaller than $F(x,\tau)$,
the function $f(\mu)$ appearing in Theorem \ref{th:MainGeometric:singular}
depends not only on $F$ but also on the full jet in $y$ of $G$ and
$R$. The reason is that  these terms become of the same order as $V(x)$ and
$F(x,\tau)$ close to the singularities of the unperturbed
separatrix. Moreover,  for these systems,
the first asymptotic order also has the logarithmic term in the
exponential as it happens in Theorem \ref{th:MainGeometric:singular} for
$\ell-2r=0$. We plan to study rigorously these kind of systems in
future work.





\section{Heuristic ideas of the proof} \label{sec:Heuristic}

The rigorous proofs  of asymptotic formulas for measuring the splitting of
separatrices require a significant amount of technicalities. For the convenience
of the reader, even though in Section \ref{sec:SketchProof} we give a precise
description of the entire proof of
Theorems \ref{th:MainGeometric:regular} and \ref{th:MainGeometric:singular}, we
first devote this
section to give an heuristic description of our strategy explaining the main
differences respect to the ones already
used in the literature. We also explain
the main novelties we have  introduced to overcome the difficulties that our
general setting involves.

\subsection{Measuring the splitting by using generating
functions}\label{sec:heuristic:splitting}
To measure the splitting using generating functions we use the method in
\cite{LochakMS03, Sauzin01}, based on ideas by Poincar\'e \cite{Poincare99}.
Roughly speaking, if the invariant manifolds can
be expressed in a suitable way, then the area of the lobes  generated by the
perturbed manifolds between two consecutive homoclinic points and also the
distance between the manifolds
can be simply computed by the difference between two functions.

Let us  explain this approach in more detail. 
As the main goal is to measure the distance of the stable and unstable manifolds
of
the periodic orbit $(x_p(t/\eps), y_p(t/\eps))$ in a Poincar\'{e} section
$\Sigma _{t_0}$,
it is useful to obtain these manifolds as graphs. The stable and unstable
manifolds of the perturbed system can be expressed as
graphs as
\[
y = \varphi(x,t/\eps) = y_p(t/\eps) + \pa_x S^{s,u}(x - x_p(t/\eps),t/\eps)
\]
in some complex domains, where the functions  $S^{s,u}$ are called generating
functions.
The generating functions
$S^{s,u}(q,\tau)$ are solutions of the Hamilton-Jacobi
equation associated to our Hamiltonian system after the change of variables
\[
q=x-x_p (t/\eps), \quad p=y-y_p (t/\eps)
\]
and the change of time $\tau =t/\eps$.

Note that for $\mu=0$, as the Hamiltonian is autonomous, the Hamilton-Jacobi equation reads:
\[
\frac{(\pa _q S(q))^2}{2} + V(q)=0
\]
which gives  $\pa _q S^{s}(q,\tau)= \pa _q S^{u}(q,\tau)=\pa _q S_0(q)=\sqrt{-2V(q)}$ as the homoclinic connection.

Then, to measure the distance between the stable and the unstable manifolds in a
Poincar\'{e} section
we just need to compute:
\begin{equation}\label{distancia0}
d(q,t_0 )=\pa_q S^{u}(q,t_0/\eps)-\pa_q S^{s} (q,t_0/\eps)
\end{equation}
and it is standard that the area of the lobes is given by
\begin{equation}\label{area0}
\AAA= S^{u}(q_2,t_0/\eps)-S^{s}(q_2,t_0/\eps)- \left(S^{u}(q_1
,t_0/\eps)-S^{s}(q_1 ,t_0/\eps)\right),
\end{equation}
where $q_1$, $q_2$ are the coordinates of two consecutive homoclinic points in
the section $\Sigma_{t_0}$. Note that, thanks to the symplectic structure,
$\AAA$ does not depend on $t_0$.

We perform the change of variables
$q= q_0(u)$, where $q_0(u)$ is the first component of the unperturbed homoclinic
orbit. In this way, we work with the function
\[
T^{u,s}(u,\tau)= S^{u,s}(q_0(u), \tau)
\]
that is, we write the perturbed manifolds as functions of the time $\tau$ and the
``time over the homoclinic orbit" $u$, which
parameterizes the unperturbed homoclinic orbit. These functions satisfy a new
Hamilton-Jacobi equation,  which is easier to deal with.

We consider the difference
\begin{equation*}
\Delta(u,\tau)=T^u(u,\tau)-T^s(u,\tau).
\end{equation*}

The first observation is that, when $\mu=0$, we have 
$p_0(u)= \pa _q S_0(q_0(u))$. Therefore
$\pa_uT^{u,s}(u,\tau)= \pa_uT_0 (u)
=p_0(u) \partial_q S^{u,s}(q_0(u),\tau)= (p_0(u))^2$ which corresponds to the
parameterization of the unperturbed separatrix. Then, by analyticity with
respect to
the regular parameter $\mu$, we have that $\Delta(u,\tau)= \OO(\mu)$.

The second observation is that, as the experts in this area know,
$\Delta(u,\tau)$ is exponentially small in the
singular parameter $\eps$. To obtain sharp estimates of  $\Delta(u,\tau)$, we
need to bound it, and
consequently $T^{u}(u,\tau)$ and $T^{s}(u,\tau)$, in a region of the complex
plane that,
on one hand, contains a segment of the real line having two values of $u$ giving
rise to two consecutive homoclinic points
and, on the other hand,  intersects a neighborhood sufficiently close to the
singularities $\pm ia$ of $T_0(u)$.

Assume that we can construct parameterizations $T^{u,s}(u,\tau)$ of the
perturbed invariant manifolds satisfying both that  they are $2\pi$-periodic
with respect to $\tau$ and that they are real-analytic and bounded in some
complex
domain which contains two real values of $u$ which give rise to two consecutive
homoclinic points.
Now we are going to explain how an exponentially small upper bound of the
difference $\Delta$ can be derived.
The first point is that, being $T^{u}$ and $T^{s}$
solutions of the same partial differential equation (with different boundary
conditions),
$\Delta(u,\tau)$ satisfies a homogeneous linear partial differential
equation. One can see that this equation is conjugated to
$(\eps \partial _u + \partial_\tau ) Y(u,\tau)=0$. Let us assume for a moment
that $\Delta$  is a
solution of this equation. In fact, in Theorems \ref{th:CanviFinal:lmenor} and
\ref{th:CanviFinal},
we will see that this is true after a suitable change of variables.
Then,  we obtain that
$\Delta(u,\tau)= \Lambda (\tau -u/ \eps )$ and, since $\Delta$ is $2\pi$-periodic in $\tau$,  
$\Lambda (s)$ is a $2\pi$-periodic function in $s$.
This fact implies that
\[
\Delta(u,\tau)=\sum _{k\in\ZZ}\Lambda _k e^{-i k \frac{u}{\eps}} e^{i k \tau}.
\]
Now, a bound $|\Delta(u,\tau)|\le M$ for $|\Im u |\le a'$, automatically gives
\[
|\Lambda _k| \le M e^{- |k| \frac{a'}{\eps}},  \quad k \ne 0
\]
which implies that $|\Delta(u,\tau)-\Lambda _0|\le 4 M e^{- \frac{a'}{\eps}}$ for
real values of $u$.
The bigger the size of the strip where we can bound $|\Delta(u,\tau)|$ the
smaller the exponential that
gives the bound  for real values of $u$.
Note that the constant $\Lambda_0$ does not appear neither in the formula of the
area \eqref{area0}, nor in the formula of the distance \eqref{distancia0}
If we use Melnikov theory the expected exponential exponent is $a$, where $\pm a
 i$ are the
singularities of $T_0$. Then,  to obtain sharp bounds, it would be enough to
take $a'= a-\eps$.

In some cases, which correspond to $\eta=0$ in \eqref{eq:model}, the change of
variables which conjugates the original partial differential equation for
$\Delta(u,\tau)$ with $(\eps\pa_u+\pa_\tau)Y(u,\tau)=0$ is not close enough to
the identity.
This fact implies the appearance of the constant $C(\mu)$ and the logarithmic
term in the asymptotic formulas obtained in Theorems
\ref{th:MainGeometric:regular} and \ref{th:MainGeometric:singular}.
This change of variables is obtained, essentially, studying the variational
equation along the perturbed invariant manifolds. Therefore, the existence of
these terms, which were not present in the Melnikov prediction, shows that, to
study the exponentially small splitting of separatrices, it is not enough to
look for the first order approximations of the invariant manifolds close to the
singularities. One has  to look also for the first order of certain solutions of
the variational equation of the perturbed invariant manifolds close to the
singularities. In fact, these terms appear when these certain solutions of the
variational equation of the perturbed invariant manifolds close to the
singularities are not well approximated by the solutions of the variational
equation of the unperturbed separatrix.

Then, roughly speaking one can conclude that Melnikov theory gives the correct
answer if:
\begin{itemize}
 \item The perturbed invariant manifolds are well approximated by the
unperturbed separatrix close to the singularity.
\item The solutions of the variational equation along the perturbed invariant
manifold are well approximated by certain solutions of the  variational equation
along the unperturbed separatrix.
\end{itemize}
In all the other cases, the splitting is given by an alternative formula. This
fact, is explained in more  detail Section
\ref{sec:heuristic:asymptotic}.

\subsection{The \emph{boomerang domains}}
For the  Hamiltonians considered in this paper, the  invariant manifolds, in
general, are not global graphs over
$q$.
Therefore, the approach explained in the previous section cannot be used
straightforwardly. Nevertheless, we will see that there are always regions in
the phase space where both manifolds are graphs and we will use one of these
regions to measure the splitting.
Consequently, being the area of the lobes  an invariant quantity, this will give
the wanted result.

As we have explained, we are forced to find parameterizations $T^{u,s}$ of the
invariant manifolds which have to be
analytic in a common complex domain which reaches points at a distance $\eps$ of
the singularities.
Moreover we also need to guarantee that our domain contains an open set of real
values of $u$ (this will be enough to ensure that the domain contains $u_1$ and
$u_2$ that give rise to homoclinic points since they are $\eps$ close).

To this end let us observe that we have no hope to construct parameterizations
$T^{u,s}(u,\tau)$
for values of $u$ such that $p_0(u) = 0$, at least in a general case.
In fact, the unperturbed homoclinic connection can be expressed as
$\text{graph}\{ p=\sqrt{-2 V(q)}\} \cup \text{graph}\{p=-\sqrt{-2V(q)}\}$.
Then if $p_0(u_0)=0$, for some value $u_0$, the unperturbed homoclinic
connection cannot be expressed as a graph over the base
in the original variables $(q,p)$ in a neighborhood of $(q_0(u_0), 0)$. This
fact implies that the Hamilton-Jacobi equation
that $T^{u,s}$ has to satisfy is not defined for $u=u_0$.

We will always keep in mind that we need to check this condition ($p_0(u)\neq
0$) if we want to use the parameterizations $T^{u,s}$.

For this reason we define the following
\emph{boomerang domains} (see Figure \ref{fig:BoomerangDomains}), in
which $p_0(u)\neq 0$, and hence the functions $T^{s,u}$ will be well defined on
them.
\begin{equation}\label{def:DominisRaros}
\begin{split}
D^{s}_{\kk,d}=&\left\{u\in\CC;\right.\left. |\Im u|<\tan\beta_1\Re
u+a-\kk\eps,
|\Im u|<\tan\beta_2\Re u+a-\kk\eps,\right.\\
& \left.|\Im u|>\tan\beta_2\Re u+a-d\right\}\\
D^u_{\kk,d}=&\left\{u\in\CC;\right.\left.|\Im u|<-\tan\beta_1\Re
u+a-\kk\eps,
|\Im u|<\tan\beta_2\Re u+a-\kk\eps,\right.\\
& \left.|\Im u|>\tan\beta_2\Re u+a-d\right\}\\
&\cup \left\{ u\in\CC; \right.|\Im u|<-\tan\beta_1\Re
u+a-\kk\eps, |\Im u|>-\tan\beta_2\Re u+a-d,\\
& \left.\Re u<0\right\},
\end{split}
\end{equation}
where $\beta_1\in (0,\pi/2)$ is any fixed angle.

To choose $\beta_2$ we use the following.
First we point out that the zeros of $p_0(u)$ are isolated in
$\CC$. Moreover, close to the singularities $u=\pm ia$, $p_0(u)$ can
not vanish. Then, in order to assure that $p_0(u)$ does not vanish
in the whole domains $D^s_{\kk,d}$ and $D^u_{\kk,d}$, one has to
choose an angle $\beta_2$ such that $\beta_2>\beta_1$
and the lines $|\Im
u|=\tan\beta_2\Re u+a$ do not contain any zero of $p_0(u)$. Then,
taking $\eps>0$ and $d>0$ independent of $\eps$, both small enough,
one can guarantee that $p_0(u)$ does not vanish neither in
$D^s_{\kk,d}$ nor in $D^u_{\kk,d}$.

We will use these \emph{boomerang domains} as fundamental domains to measure the
splitting.
It is important to emphasize that both $D^{s}_{\kk,d}$ and $D^{u}_{\kk,d}$ reach
a neighborhood of
the singularities $\pm ia$ of size $\eps$.

\begin{figure}[H]
\begin{center}
\psfrag{u1}{$u_1$}\psfrag{u2}{$\bar u_1$}
\psfrag{b1}{$\beta_1$}\psfrag{b2}{$\beta_2$}\psfrag{a1}{$ia$}
\psfrag{a2}{$-ia$}\psfrag{a3}{$i(a-d)$}\psfrag{a4}{$i(a-\kk\eps)$}
\psfrag{D1}{$D^{\out,s}_{\rr,
\kk}$}\psfrag{D}{$D^{s}_{\kk,d}$}\psfrag{D4}{$D^{\out,u}_{\rr,\kk}$}\psfrag{D3}{
$D^{u}_{\kk,d}$}
\includegraphics[height=6cm]{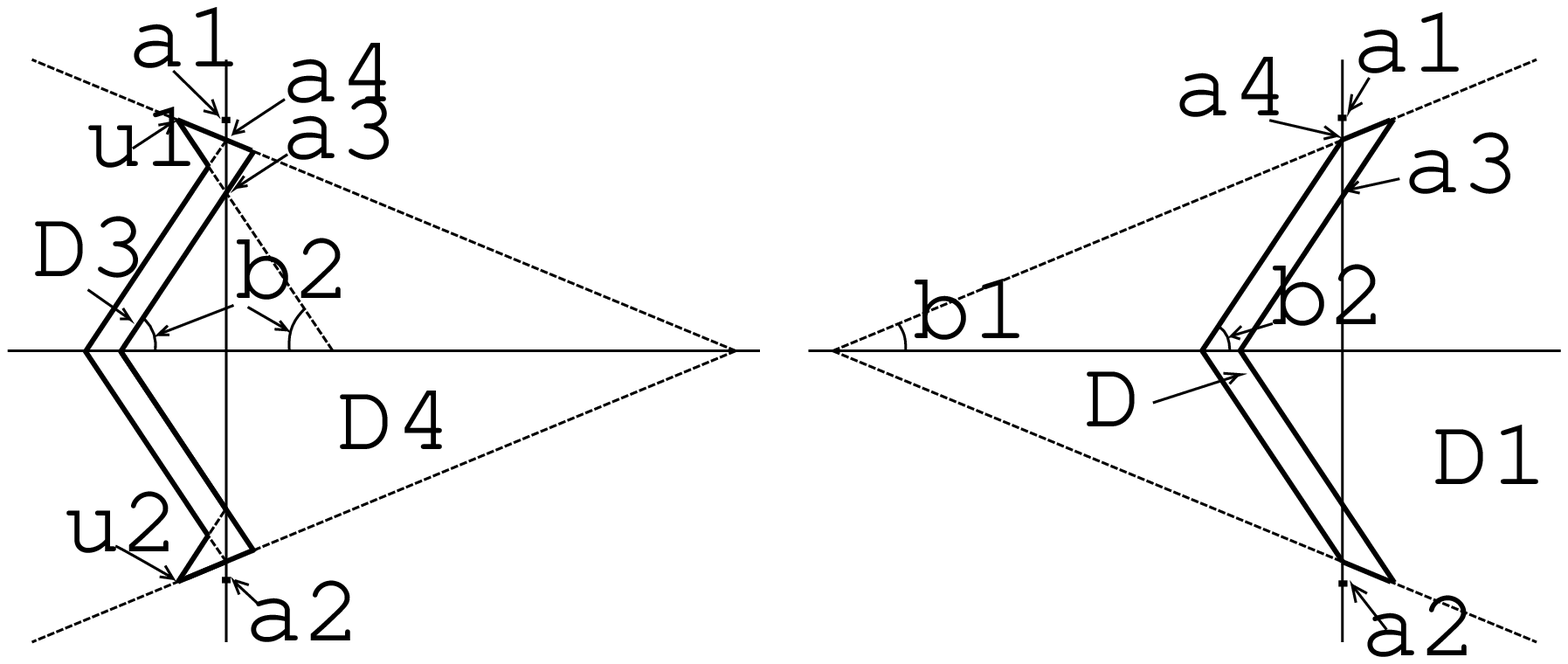}
\end{center}
\caption{\figlabel{fig:BoomerangDomains} The \emph{boomerang
domains} $D^{u}_{\kk,d}$ and $D^{s}_{\kk,d}$ defined in
\eqref{def:DominisRaros}.}
\end{figure}

\begin{remark}
Let us observe that the domains $D^u_{\kk,d}$ and $D^s_{\kk,d}$ have different
shape.
We will give all the proofs in the unstable case. All
of them are analogous, and even simpler, in the stable one.
\end{remark}

To study the difference between the manifolds, we consider
$\Delta(u,\tau)=T^u(u,\tau)-T^s(u,\tau)$
in the domain
$R_{\kk,d}=D^s_{\kk,d}\cap D^u_{\kk,d}$
which is defined as
\begin{equation}\label{def:DominiRaro:Interseccio}
\begin{split}
R_{\kk,d}=\left\{u\in\CC; \right.&\left. |\Im u|<\tan\beta_2\Re
u+a-\kk\eps, |\Im u|>\tan\beta_2\Re u+a-d , \right. \\
&\left.|\Im u|<-\tan \beta_1\Re u+a-\kk\eps\right\}.
\end{split}
\end{equation}

We recall that $p_0(u)\neq 0$ if $u\in R_{\kk,d}$ and hence we can
use the functions $T^{s,u}$ in this domain.

\begin{figure}[H]
\begin{center}
\psfrag{D6}{$\wt D^{\out,s}_{\rr,d,\kk}$} \psfrag{D5}{$\wt
D^{\out,u}_{\rr,d,\kk}$}
\psfrag{b1}{$\beta_1$}\psfrag{b2}{$\beta_2$}
\psfrag{a1}{$ia$}\psfrag{a2}{$-ia$}\psfrag{a3}{$i(a-d)$}\psfrag{a4}{
$i(a-\kk\eps)$}
\psfrag{D3}{$R_{\kk,d}$}\psfrag{D}{$D^{s}_{\kk,d}$}\psfrag{D1}{$D^{u}_{\kk,d}$}
\includegraphics[height=6cm]{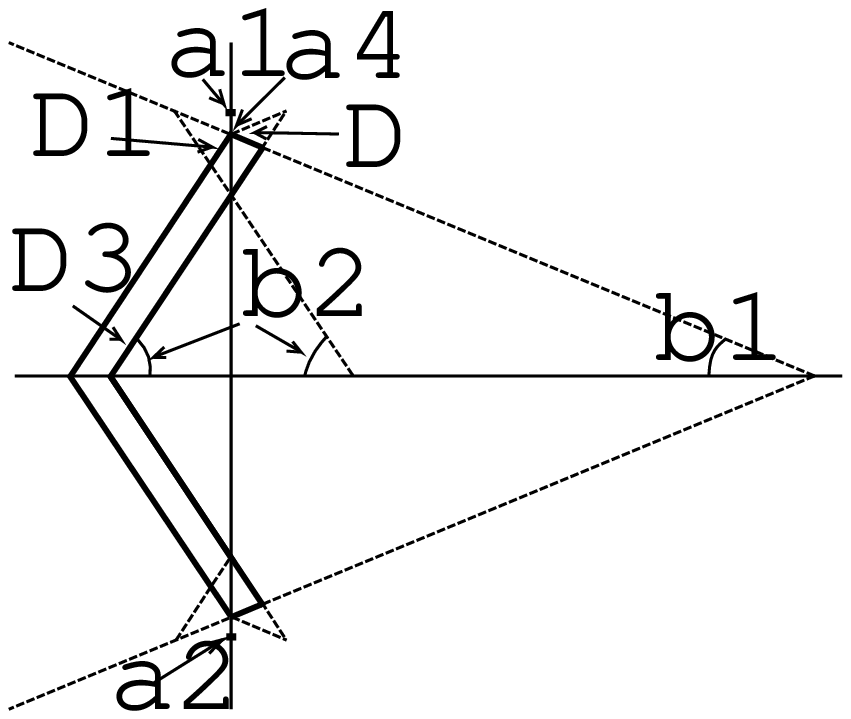}
\end{center}
\caption{\figlabel{fig:BoomInter} The domain $R_{\kk,d}$  defined in
\eqref{def:DominiRaro:Interseccio}.}
\end{figure}

The domain $R_{\kk,d}$, where we measure the difference between the invariant
manifolds,
is considerably different from  the ones used in previous works (see for
instance \cite{Sauzin01}), where the analogous domains  look like diamonds. In
\cite{Sauzin01}, the author considers systems for which the unperturbed
separatrix is a graph globally and then he can work in such wide domains.

Once we have the difference $\Delta $ in $R_{\kk,d}$, using the arguments
exposed in the previous subsection
one can obtain exponentially small upper bounds for $\Delta$.

Recall that our goal is to give an asymptotic formula for the area of the lobe
between two consecutive homoclinic points. Henceforth, once we find the first
asymptotic term of $\Delta$, which we call $\Delta_0$, we use the
arguments indicated in the previous section to bound the difference $\Delta
(u,\tau) - \Delta_0(u,\tau)$.
We will come back to the problem of finding $\Delta_0$ in Section
\ref{sec:heuristic:asymptotic}.

\subsection{Parameterizations of the invariant manifolds of the perturbed
system} \label{sec:heuristic:parametrization}
In this section we are going to explain the strategy we use to prove the
existence of $T^{u,s}$
in the corresponding boomerang domains $D^{u,s}_{\kk,d}$.
In fact we will always deal with $\partial_u T^{u,s}$.

We begin our construction near the origin $(q,p)=(0,0)$. In terms of the new
variable
$u$ this corresponds to take $\Re u$ near $-\infty$ for the unstable invariant
manifold and near $+\infty$ for the stable one.

Given $\rr_1\ge 0$, we consider the following domains:
\begin{equation}\label{def:DominsInfinit}
\begin{array}{l}
D^{u}_{\infty,\rr_1}=\{u\in\CC; \Re u<-\rr_1\}\\
D^{s}_{\infty,\rr_1}=\{u\in\CC; \Re u>\rr_1\}.
\end{array}
\end{equation}
It is not difficult to prove that  the constant $\rr_1$
can be taken big enough so that $p_0(u)$ does not vanish in these domains.
Henceforth the
Hamilton-Jacobi formulation is allowed in these domains (see
\eqref{eq:separatrix:hyp:infty} and
\eqref{eq:separatrix:hyp:parab}).
The first result is Theorem \ref{th:ExistenceCloseInfty}, where we prove the
existence of $\partial_ u T^{s,u}$
and we see that both are well approximated by $\partial_u T_0$ in
$D^{u,s}_{\infty,\rr_1}$.
This result gives the existence of local invariant manifolds and, moreover,
provides suitable properties of them.

In the case that $p_0(u) \ne 0$ the next step is to extend $\partial_u T^{u,s}$
to the
so-called \emph{outer domains} (see Figure \ref{fig:OuterDomains}) defined by
\begin{equation}\label{def:DominisOuter}
\begin{array}{l}
\dps D^{\out,u}_{\rr,\kk}=\left\{u\in\CC; |\Im u|<-\tan \beta_1\Re u+a-\kk\eps,
\Re u>-\rr\right\}\\
\dps D^{\out,s}_{\rr,\kk}=\left\{u\in\CC; -u\in
D^{\out,u}_{\rr,\kk}\right\},
\end{array}
\end{equation}
where $\kappa >0$, which might depend on $\eps$,
is such that $a-\kappa \eps >0$.
The constant $\rr$ will be taken $\rr>\rr_1$, in order to
ensure that $D^{*}_{\infty,\rr_1}\cap D^{\out,*}_{\rr,\kk}\neq
\emptyset$ for $*=u,s$.
Since  we have already proved the existence of local invariant manifolds defined
in $D^{u,s}_{\infty,\rr_1}$,
therefore $\partial_u T^{u,s}$ are defined in $D^*_{\infty,\rr_1}\cap
D^{\out,*}_{\rr,\kk}$ for $*=u,s$.

\begin{figure}[H]
\begin{center}
\psfrag{d}{$\beta_1$}\psfrag{a}{$ia$}\psfrag{b}{$-ia$}
\psfrag{D2}{$D^{\out,u}_{\rr,\kk}$}\psfrag{D3}{$D^{\out,s}_{\rr,\kk}$}\psfrag{D}
{$D^{u}_{\infty,\rr_1}$}
\psfrag{D1}{$D^{s}_{\infty,\rr_1}$}\psfrag{r}{-$\rr$}\psfrag{rr}{-$\rr_1$}
\psfrag{r1}{$\rr$}\psfrag{rr1}{$\rr_1$}
\psfrag{s}{$i(a-\kk\eps)$}\psfrag{u}{$u_1$}\psfrag{v}{$\ol
u_1$}
\includegraphics[height=6cm]{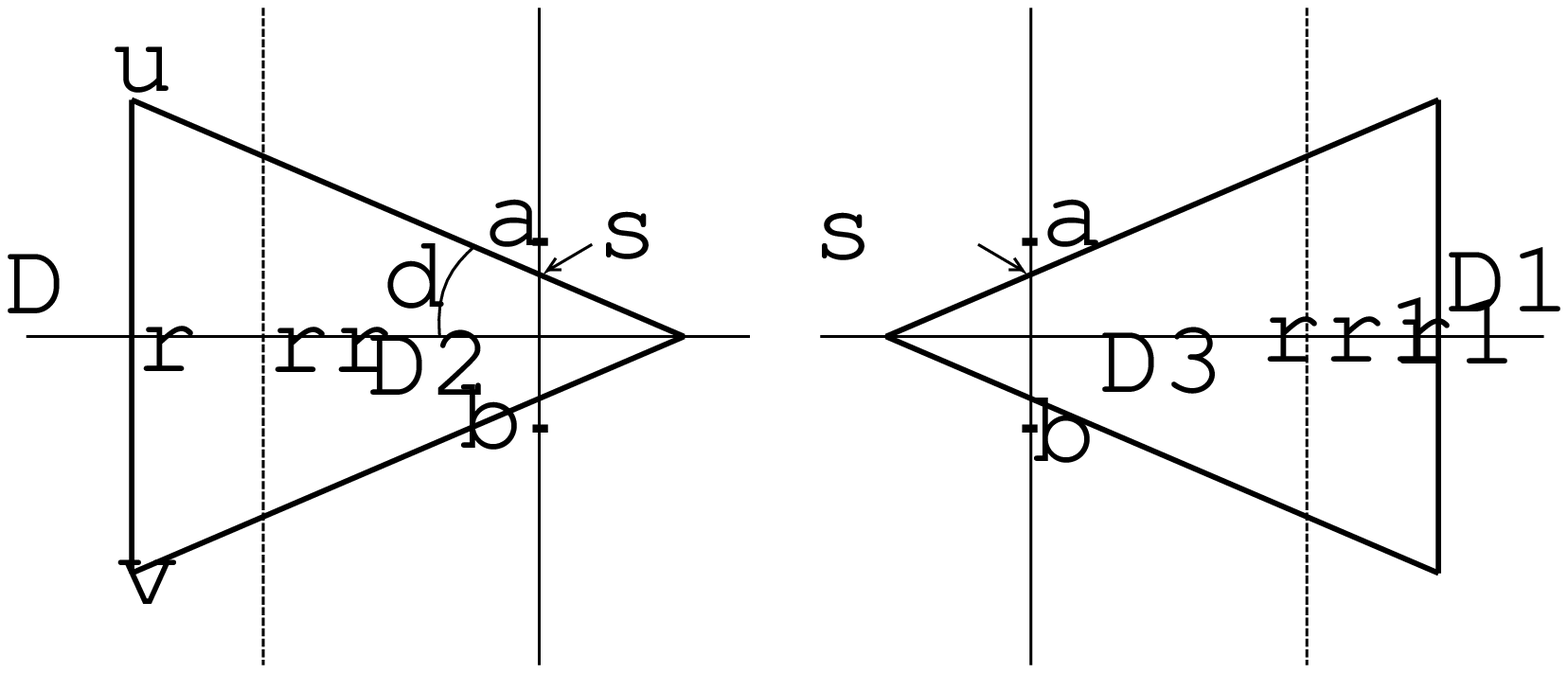}
\end{center}
\caption{\figlabel{fig:OuterDomains} The outer domains
$D^{\out,u}_{\rr,\kk}$ and $D^{\out,s}_{\rr,\kk}$ defined in
\eqref{def:DominisOuter}.}
\end{figure}

In Theorem \ref{th:Extensio:Trig} it is proved that $\partial_u T^{u,s}(u,\tau)$
can be extended
to the \emph{outer domain} $\dps D^{\out,*}_{\rr,\kk}$, $*=u,s$, and that is
well approximated (in some norm) by $\pa_u T_0(u)$ there.

In the case that $p_0(u)$ vanishes in the outer domains the procedure becomes
a little technical.
The main idea is to use  parameterizations of the invariant manifolds of the
form $(Q(u,\tau),P(u,\tau))$
to extend them to a new domain where $p_0(u)$ does not vanish anymore and
that
overlaps with the \emph{boomerang domain} $D^{u,s}_{\kk,d}$ (see Theorem
\ref{th:Extensio}).
We point out that these new domains are still far away
from the singularities $\pm ia$ of $T_0(u)$, henceforth the obtention of the parameterizations
defined in these domains is straightforward (see Theorem \ref{th:ParamtoHJ}).
Once we have proved the existence of the parameterizations of the  invariant
manifolds for values of $u$ far from the singularities but inside the
\emph{boomerang domains} $D^{u,s}_{\kk,d}$, we can recover the generating
functions $\partial_u T^{u,s}(u,\tau)$ and extend them to the whole
\emph{boomerang domains} $D^{u,s}_{\kk,d}$ in Theorem \ref{th:ExtensioFinal}.

We want to emphasize here that
\begin{itemize}
\item We are able  to extend the manifolds up to a distance of order $\eps$
of the singularities in all the cases without using any inner equation even in
the singular case
$\ell-2r \ge 0$ and $\eta=\ell-2r$.
\item
The outer domain $D^{\out,\ast}_{\rr,\kk}$ contains the \emph{boomerang domain}
$D^{\ast}_{\kk,d}$ for $\ast=u,s$.
\end{itemize}

\subsection{The asymptotic first order of $\Delta$
}\label{sec:heuristic:asymptotic}
Even though we have proved the existence of the invariant manifolds in the
\emph{boomerang domains}, we need some
extra information to detect the asymptotic first order of their difference. The
main idea is that functions which are
of algebraic order with respect to $\eps$ near the singularities $\pm ia$ are
exponentially small for real
values of $u$. Thus, the main point to compute the difference and capture the
asymptotic  first order
is to be able to give the main terms of this difference close to the
singularities, concretely,
up to distance of order $\eps$ of the singularities. For that we need to give
better approximations of the
generating functions $T^{u,s}(u,\tau)$ near the singularities $\pm ia$ of the
homoclinic connection.

To this end, we define the so-called \emph{inner domains} (see Figure
\ref{fig:Inners}), which are defined as
\begin{equation}\label{def:DominisInnerEnu}
\begin{split}
D_{\kk,\C}^{\inn,+,u}=&\left\{ u\in\CC; \Im
u>-\tan\beta_1 (\Re u+\C\eps^\gamma)+a, \Im u<-\tan \beta_2\Re
u+a-\kk\eps,\right.\\
& \left.\Im u<-\tan\beta_0\Re u+a-\kk\eps\right\}\\
D_{\kk,\C}^{\inn,-,u}=&\left\{u\in\CC; \bar u \in
D_{\kk,\C}^{\inn,+,u}\right\}\\
D_{\kk,\C}^{\inn,+,s}=&\left\{u\in\CC; -\bar u \in
D_{\kk,\C}^{\inn,+,u}\right\}\\
D_{\kk,\C}^{\inn,-,s}=&\left\{u\in\CC; - u \in
D_{\kk,\C}^{\inn,+,u}\right\}
\end{split}
\end{equation}
for $\kk>0$, $\C>0$ and $\gamma \in (0,1)$.
On the other hand, $\beta_1$  and $\beta _2$ are the angles  considered in
the definition of the boomerang domains in \eqref{def:DominisRaros}  and
$\beta_0$ is  any angle
satisfying that $\beta_1-\beta_0$ has a positive lower bound independent of
$\eps$ and $\mu$.
\ref{sec:sketch:Extensio:General}, 
$\beta_2$.
Let us observe that, if $u\in D_{\kk,\C}^{\inn,\pm,\ast}$, $\ast=u,s$, then
$\OO(\kk \eps) \leq |u\mp ia| \leq \OO(\eps^{\gamma})$.

\begin{figure}[H]
\begin{center}
\psfrag{D}{$D_{\kk,\C}^{\inn,+,u}$}\psfrag{D1}{$D_{\kk,\C}^{\inn,-,u}$}
\psfrag{D3}{$D_{\kk,\C}^{\inn,+,s}$}\psfrag{D4}{$D_{\kk,\C}^{\inn,-,s}$}
\psfrag{b0}{$\beta_0$}\psfrag{b1}{$\beta_1$}\psfrag{b2}{$\beta_2$}
\psfrag{a}{$ia$}\psfrag{a1}{$-ia$}\psfrag{a2}{$i(a-\kk\eps)$}
\includegraphics[height=6cm]{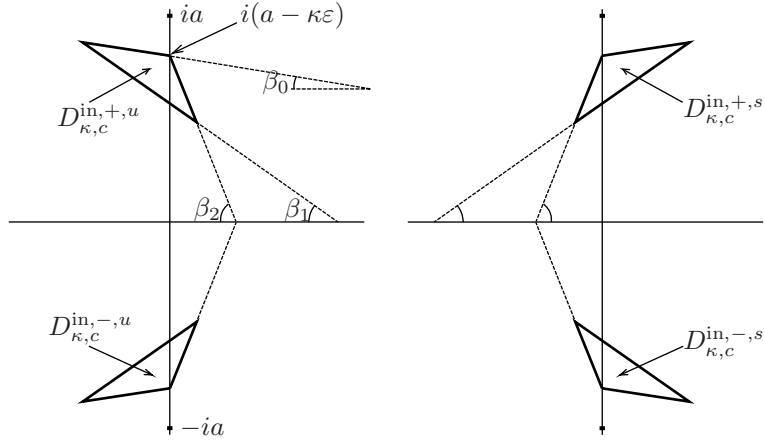}
\end{center}
\caption{\figlabel{fig:Inners} The \emph{inner domains} defined in
\eqref{def:DominisInnerEnu}.}
\end{figure}

Let us observe that simply rewriting $\mu:=\mu\eps^{\eta-\eta^\ast}$, one can
include the regular case ($\eta>\eta^\ast$) into the singular one.
This is very convenient since one can prove the results for both cases at the
same time.
Therefore, from now on in this section, we will focus on the singular case.

When studying the functions $\partial_u T^{u,s}$ evaluated in the inner domains,
one can distinguish the cases $\ell - 2r<0$ or $\ell - 2r \ge 0$.
The difference between these two cases, roughly speaking, is that, when $\ell -
2r<0$, the approximation of the manifolds in the inner domain is still given by
the first order perturbation theory as is stated in Proposition
\ref{coro:Canvi:FirstOrder:lmenor}.
In the  case $\ell - 2r \ge 0$ this fact  is not true anymore.

Analyzing $\partial_u T^{u,s}$ close to the singularity $ia$, one can see that,
if $u-ia = \OO(\eps)$, then
$\partial_u T^{u,s}$ is of order $\OO(1/\eps^{2r})$. For this reason we perform
the change of variables
$
u= ia +\eps z
$
and we study the functions $\psi^{u,s}(z,\tau) = \eps ^{2r-1} T^{u,s}(ia + \eps
z ,\tau)$.
The first order in $\eps$ of these functions verifies the so called \emph{inner
equation}.  Their  solutions $\psi ^{u,s}_{0}(z,\tau)$  were studied in
\cite{Baldoma06}.
Then, in Theorem \ref{th:MatchingHJ} we provide a bound for $|\psi
^{u,s}(z,\tau)-\psi ^{u,s}_{0}(z,\tau)|$.
This is known as \emph{complex matching}.

We emphasize that we have not used the inner
solutions $\psi ^{u,s}_{0}(z,\tau)$
to extend our functions $T^{u,s}$ to the inner domains since we already knew
their existence.
Henceforth to bound  $|\psi ^{u,s}(z,\tau)-\psi ^{u,s}_{0}(z,\tau)|$ we have
exploited the same idea as the one used to study the difference $\Delta =
T^{u}-T^s$.
Let us  explain it in more detail.
As we have explained in Section \ref{sec:heuristic:parametrization}, we have
already
proved the existence of generating functions $T^{u,s}$ in the whole
\emph{boomerang domains}. Henceforth, 
the new functions $\psi^{u,s}(z,\tau) =
\eps ^{2r-1} T^{u,s}(ia + \eps z ,\tau)$ have the corresponding properties
coming from the ones of $T^{u,s}$.
Now we consider the difference $\Delta \psi^{u,s} = \partial_z \psi^{u,s} -
\partial_z \psi_0$.
Such functions (which are known)  satisfy a non-homogeneous linear equation
which can be ``easily'' studied.
Summarizing, we just obtain an ``a posteriori'' bound of $\Delta
\psi^{u,s}$.
This makes our \emph{complex matching}
considerably simpler because we just need to use Gronwall-like techniques.

In both cases $\ell-2r<0$ and $\ell-2r\ge 0$, we have now accurate
approximations for $T^{u,s}$ near the singularities.
Let us call them $T_0^{u,s}$.
The first order asymptotics for the difference $\Delta = T^{u}-T^s$ comes from
$T_0^{u} - T_0^s$ after a change of variables.
Recall that, as we have explained in Section \ref{sec:heuristic:splitting}, in
some cases, this change of variables implies an additional correcting term in
$T_0^{u} - T_0^s$.
Finally, we bound the remainder by using the techniques explained in Section
\ref{sec:heuristic:splitting}.

\section{Description of the proofs of Theorems~\ref{th:MainGeometric:regular}
and \ref{th:MainGeometric:singular}}\label{sec:SketchProof}
We devote this section to prove Theorems~\ref{th:MainGeometric:regular} and
\ref{th:MainGeometric:singular}.

\subsection{Basic notations}\label{sec:basicnotation}
First, we introduce some basic notations which will
be used through the paper.

We denote by $\TT=\RR/(2\pi\ZZ)$ the real 1-dimensional torus and by
\[
\TT_\sigma=\left\{\tau\in \CC/(2\pi\ZZ); |\Im \tau|<\sigma\right\},
\]
with $\sigma>0$, the torus with a complex strip.

Given a function $h:D\times\TT_\sigma\rightarrow \CC$,
where $D\subset\CC$ is an open set, we denote its Fourier
series by
\[
h(u,\tau)=\sum_{k\in\ZZ}h^{[k]}(u)e^{ik\tau}
\]
and its average by
\[
\langle h\rangle (u)
=h^{[0]}(u)=\frac{1}{2\pi}\int_0^{2\pi}h(u,\tau)\,d\tau .
\]

In any Banach space $(\XX, \|\cdot\|)$, we define the following
balls
\[
\begin{split}
B(R)&=\left\{x\in\XX; \|x\|<R\right\}\\
\ol B(R)&=\left\{x\in\XX; \|x\|\leq R\right\}.\\
\end{split}
\]

By Hypothesis \textbf{HP3},  the
Hamiltonian $H$ in \eqref{def:Ham:Original0} is analytic in $\tau=t/\eps$.
By the compactness of $\TT$,  there exists a constant
$\sigma_0$ such that $H$ is continuous in $\ol \TT_{\sigma_0}$ and
analytic in $\TT_{\sigma_0}$.  From now on, we fix  $0<\sigma<\sigma_0$.

Throughout the proof of  Theorems \ref{th:MainGeometric:regular} and
\ref{th:MainGeometric:singular} we will use the analyticity in $\mu$. We fix an
arbitrary value  $\mu_0>0$. Even if we do not write it explicitly, all
functions we will encounter from now on will be analytic in $\mu \in B(\mu_0)$.

From now on, we work with the fast time $\tau=t/\eps$. Then,
denoting $'=d/d\tau$, we have the system
\begin{equation}\label{eq:ode:original:lent}
\left\{\begin{array}{rl} 
x'&\dps=\eps\left(y+\mu\eps^{\eta} \pa_y
H_1\left(x,y,\tau;\eps\right)\right)\\
y'&\dps =-\eps\left(V'(x)+\mu\eps^{\eta} \pa_x
H_1\left(x,y,\tau;\eps\right)\right).
\end{array}\right.
\end{equation}

In order to simplify the notation, through the rest of this paper
we will denote by $K$ any constant  independent of $\mu$ and $\eps$ to state all
the bounds.

\subsection{The periodic orbit}
In the parabolic case, Hypothesis \textbf{HP4.2} on $H_1$ implies
that the origin is still a critical point of the perturbed system
\eqref{eq:ode:original:lent}
In the hyperbolic case, the next theorem
states the existence and useful properties of a hyperbolic periodic
orbit close to the origin of the perturbed system.

\begin{theorem}\label{th:Periodica}
Let us  assume Hypotheses \textbf{HP1.1}, \textbf{HP3},
\textbf{HP4.1} and $\eta \ge 0$. Then,
there exists $\eps_0>0$ such that for any $|\mu|<\mu_0$ and
$\eps\in(0,\eps_0)$,  system \eqref{eq:ode:original:lent} has a
$2\pi$-periodic orbit $(\xp(\tau),\yp(\tau)):\TT_\sigma\rightarrow
\CC^2$ which is real-analytic and satisfies
\[
\sup_{\tau\in\TT_\sigma}
\left(\left|\xp(\tau)\right|+\left|\yp(\tau)\right|\right)\leq
b_0|\mu|\eps^{\eta+1},
\]
where $b_0>0$ is a constant independent of $\eps$ and $\mu$.
\end{theorem}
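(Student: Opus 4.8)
The plan is to set up a fixed point argument for the periodic orbit directly in the space of analytic $2\pi$-periodic functions with values in a small ball of $\CC^2$. First I would rewrite the search for a periodic orbit of \eqref{eq:ode:original:lent} as a functional equation. Since $\eta\ge 0$ and the perturbation carries the explicit factor $\mu\eps^{\eta}$ and since the vector field itself carries a factor $\eps$, the ``natural'' size of the periodic orbit is $\OO(\mu\eps^{\eta+1})$, which is what we want to prove; so I would rescale $x=\mu\eps^{\eta+1}\xi$, $y=\mu\eps^{\eta+1}\zeta$ and look for a periodic solution $(\xi,\zeta)$ of size $\OO(1)$. Because the origin is a hyperbolic critical point of the unperturbed system with eigenvalues $\pm\la$ (Hypothesis \textbf{HP1.1}), the linear part of the system around the origin is invertible on the space of zero-frequency-free (and in fact on all) $2\pi$-periodic functions: writing $V'(x)=-\la^2 x+\OO(x^2)$, the linearized operator is $\LL(\xi,\zeta)=(\xi'-\eps\zeta,\ \zeta'+\eps\la^2\xi - \eps(\text{higher order}))$, and the constant-coefficient operator $(\xi,\zeta)\mapsto(\xi'-\eps\zeta,\zeta'+\eps\la^2\xi)$ is boundedly invertible on $2\pi$-periodic analytic functions on $\TT_\sigma$ with a bound on the inverse independent of $\eps$ (the relevant eigenvalues are $ik\pm\eps\la$, which stay away from $0$ uniformly in $k$ and in small $\eps$, since $\eps\la<1$).

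Next I would recast the periodic-orbit equation as $(\xi,\zeta)=\FF(\xi,\zeta)$, where $\FF$ is built by inverting this constant-coefficient linear operator and moving all genuinely nonlinear terms and the inhomogeneous term $\mu\eps^{\eta}\pa H_1$ (which, after the rescaling and using the analyticity and boundedness of $H_1$ on $\CC^2\times\TT\times(-\eps^*,\eps^*)$ from Hypothesis \textbf{HP3}, contributes a term of size $\OO(1)$ in the rescaled variables) to the right-hand side. On a ball $\ol B(b_0)$ in the Banach space $\XX$ of real-analytic $2\pi$-periodic functions $\TT_\sigma\to\CC^2$ with the sup norm, I would check that $\FF$ maps $\ol B(b_0)$ into itself and is a contraction, for $b_0$ chosen large enough (depending only on $\mu_0$, $\la$, $\sigma$ and the bounds on $H_1$ and $V$) and for $\eps$ small enough. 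The inhomogeneous term is bounded by $K|\mu|\le K\mu_0$ times the norm of the inverse operator, hence $\le\tfrac12 b_0$ for $b_0$ large; the nonlinear terms (quadratic and higher in $(x,y)$ from $V$, and the full $H_1$-contribution expanded around the periodic orbit) are bounded by $K\eps$ times something depending on $b_0$ because of the extra factor $\mu\eps^{\eta+1}\le\eps$ that multiplies every nonlinear monomial after rescaling, so they are small for small $\eps$; the Lipschitz constant of $\FF$ on $\ol B(b_0)$ is $\OO(\eps)$ by the same mechanism, giving the contraction. The Banach fixed point theorem then yields a unique fixed point in $\ol B(b_0)$, which is the rescaled periodic orbit; undoing the rescaling gives $\sup_{\TT_\sigma}(|\xp|+|\yp|)\le b_0|\mu|\eps^{\eta+1}$. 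Real-analyticity of $(\xp,\zeta_p)$ follows because $\FF$ preserves the real-analytic subspace (the linear operator and $H_1$, $V$ all respect complex conjugation), so the unique fixed point is automatically real.

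Finally, I would note hyperbolicity of the periodic orbit: having found it, one linearizes \eqref{eq:ode:original:lent} along $(\xp,\yp)$ and observes that the resulting $2\pi$-periodic linear system is an $\OO(\eps)$-perturbation (again, via the smallness of the orbit and the explicit $\eps$, $\mu\eps^\eta$ factors) of the constant-coefficient system with multipliers $e^{\pm 2\pi\la}$; by continuity of Floquet multipliers the perturbed system has one multiplier inside and one outside the unit circle, so the orbit is hyperbolic. I expect the only mildly delicate point to be the uniformity in $\eps$ of the bound on the inverse of the linearized operator — one must check that $ik\pm\eps\la$ is bounded away from $0$ uniformly, and more importantly that the presence of the $\OO(x)$ correction to $V'$ (which contributes an $\eps$-dependent but $\OO(\mu\eps^{\eta+1})$-small perturbation after rescaling) does not spoil invertibility; this is handled by absorbing that correction into the contraction estimate rather than into the linear operator. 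Everything else is a routine contraction-mapping estimate, and the statement (including the explicit bound $b_0|\mu|\eps^{\eta+1}$) is exactly what this scheme produces.
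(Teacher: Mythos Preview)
Your overall strategy---set up a fixed point in a Banach space of $2\pi$-periodic analytic functions on $\TT_\sigma$, inverting the constant-coefficient linearization at the origin---is exactly what the paper does. But there is a genuine gap in your spectral claim, and it is precisely the point where the paper has to work.

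You assert that the eigenvalues $ik\pm\eps\la$ ``stay away from $0$ uniformly in $k$ and in small $\eps$''. For $k\neq 0$ this is true, but for $k=0$ the eigenvalues are $\pm\eps\la$, whose modulus tends to $0$ with $\eps$. Hence the inverse of $\DD_0-\eps A_0$ has norm $\sim(\eps\la)^{-1}$ on the zero Fourier mode (the paper records this as $\|\GG_0(w)\|_{1,\sigma}\le K\eps^{-1}\|w\|_{1,\sigma}$ in Lemma~\ref{lemma:periodica:OperadorIntegral}). Two things then break. First, after your rescaling the inhomogeneous term is essentially $(\pa_y H_1,\,-\pa_x H_1)(0,0,\tau)$; to get $\|\FF(0)\|=\OO(1)$ you must use that this term has \emph{zero mean} in $\tau$ (this is Hypothesis~\textbf{HP3}, $\langle H_1\rangle=0$), so that the $\eps^{-1}$ loss on the zero mode is avoided---you never invoke this. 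Second, and more seriously, the Lipschitz constant: the linear-in-$(\xi,\zeta)$ part of $\pa H_1(\mu\eps^{\eta+1}\xi,\mu\eps^{\eta+1}\zeta,\tau)$ contributes a term of size $\OO(\mu\eps^{\eta+1})$, but after multiplying by the $\OO(\eps^{-1})$ inverse on the zero mode (the product $F_1\cdot z$ need not have zero mean even though $F_1$ does) the Lipschitz constant is only $\OO(|\mu|\eps^{\eta})$. For $\eta=0$ this is $\OO(|\mu|)$, not small, and the contraction fails.

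The paper repairs this with a preliminary change of variables $z=(\Id+\eps\ol F_1(\tau))\,\ol z$, where $\pa_\tau\ol F_1=F_1$ and $\langle\ol F_1\rangle=0$ (here $F_1(\tau)$ is the linear part $\mu\eps^{\eta}D^2H_1(0,0,\tau)$). This pushes the offending linear term down to $\OO(\eps^2|\mu|\eps^{\eta})$, after which $\GG_0$ gives a Lipschitz constant $\OO(|\mu|\eps^{\eta+1})$ and the contraction goes through for all $\eta\ge 0$. Your proposal is missing both the use of the zero-mean hypothesis and this averaging-type change of variables; once you add these two ingredients, the argument is essentially the paper's.
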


This theorem is proved in Section \ref{sec:periodica}.

\begin{remark}
The Hamiltonian $H_1$, the periodic orbit $(\xp(\tau),\yp(\tau))$, and
consequently the Hamiltonians $\wh H$,
$\wh H_1$, $\wh H^1_1$, $\wh H^2_1$, which will be defined below, depend on the
parameters $\mu$, $\eps$.
From now on, we will not write  this dependence explicitly  but we will
emphasize it when necessary.
\end{remark}

Once we know the existence of the periodic orbit, we perform the
time dependent change of variables
\begin{equation}\label{eq:CanviShiftPO}
\left\{\begin{array}{l} q=x-\xp(\tau)\\ p=y-\yp(\tau)
\end{array}\right.
\end{equation}
which transforms system \eqref{eq:ode:original:lent} into a
Hamiltonian system with Hamiltonian function $\eps\widehat
H(q,p,\tau)$:
\begin{equation}\label{def:HamPeriodicaShiftada}
\begin{split}
\widehat
H(q,p,\tau)=&\frac{p^2}{2}
+V\left(q+\xp(\tau)\right)-V\left(\xp(\tau)\right)-V'\left(\xp(\tau)\right)q\\
&+\mu\eps^{\eta}\widehat H_1(q,p,\tau)
\end{split}
\end{equation}
with
\begin{equation}\label{def:ham:ShiftedOP:perturb}
\begin{split}
\widehat
H_1(q,p,\tau)=&H_1(x_p(\tau)+q,y_p(\tau)+p,\tau)-H_1(x_p(\tau),y_p(\tau),\tau)\\
&-DH_1(x_p(\tau),y_p(\tau),\tau)\left(\begin{array}{c}q\\p\end{array}\right),
\end{split}
\end{equation}
where we have denoted $DH_1=(\pa_xH_1,\pa_yH_1)$. We have added the terms
$V\left(\xp(\tau)\right)$ and
$H_1(x_p(\tau),y_p(\tau),\tau)$ for convenience.
Note that they do not generate any term in the differential equations associated
to $\wh H$.

Since
$|(x_p(\tau),y_p(\tau))|=\OO\left(\mu \eps^{\eta+1}\right)$, $\wh
H_1$ can be split as
\[
\wh H_1(q,p,\tau)=\wh H_1^1(q,p,\tau)+\eps \wh H^2_1(q,p,\tau),
\]
where
\[
\wh
H_1^1(q,p,\tau)=H_1(q,p,\tau)-H_1(0,0,\tau)-DH_1(0,0,\tau)\left(\begin{array}{c}
q\\p\end{array}\right)
\]
and $ \wh H^2_1(q,p,\tau)$ is the remaining part.
In fact, we can give a more precise formula for $\wh H_1^1$ and $\wh H_1^2$ in
both the polynomial and the
trigonometric cases:
\begin{align}
\wh H_1^1(q,p,\tau)=&\dps\sum_{2\leq k+l\leq N}
a_{kl}(\tau)q^kp^l&\text{(polynomial case)}\notag\\
\wh H_1^1(q,p,\tau) =&\dps\sum_{k=-N,\ldots,
N}a_{k0}(\tau)\left(e^{ikq}-1-ikq\right)&\label{def:HamPertorbat:H1}\\
&+\sum_{k=-N,\ldots,
N}a_{k1}(\tau)\left(e^{ikq}-1\right)p+\sum_{\substack{k=-N,\ldots,
N\\l=2,\ldots, N}}a_{kl}(\tau)e^{ikq}p^{l}, & \text{(trigonometric
case)}\notag
\end{align}
where $a_{kl}$ are the functions defined in
\eqref{def:Ham:Original:perturb:poli} and
\eqref{def:Ham:Original:perturb:trig} and have zero average, that is
\begin{equation}
 \langle\wh H_1^1\rangle=0.
\end{equation}
Let us point out that $\wh H_1^1$ is $H_1$
subtracting its linear terms in $(x,y)$, and hence it is of order
$n=2$.

The Hamiltonian $\wh H_1^2$ is given by:
\begin{align}
 \wh
H_1^2(q,p,\tau)=&\dps\sum_{2\leq k+l\leq N-1}
c_{kl}(\tau)q^kp^l&{\text{(polynomial case)}}\notag\\
 \wh
H_1^2(q,p,\tau)=&\dps\sum_{k=-N,\ldots,
N}c_{k0}(\tau)\left(e^{ikq}-1-ikq\right)&\label{def:HamPertorbat:H2}\\
&\dps+\sum_{k=-N,\ldots,
N}c_{k1}(\tau)\left(e^{ikq}-1\right)p+\sum_{\substack{k=-N,\ldots,
N\\l=2,\ldots, N-1}}c_{kl}(\tau)e^{ikq}p^{l} , &\text{(trigonometric
case)}\notag
\end{align}
where $c_{kl}$ are $2\pi$-periodic functions which, in general, do not
have zero average.
As we will see in Corollary \ref{coro:ShiftPeriodica} the functions
$c_{kl}$ are $2\pi$-periodic and satisfy
\begin{equation}
|c_{kl}(\tau)|\leq
K|\mu|\eps^{\eta}.
\end{equation}

In the case that the unperturbed Hamiltonian has
a parabolic point at the origin, since $(\xp,\yp)=(0,0)$, we have
that $c_{kl}=0$.

\subsection{Different parameterizations of the invariant
manifolds}\label{sec:DiferentsParam}
The next step is to prove the existence of the unstable and stable invariant
manifolds of the
periodic orbit given in Theorem \ref{th:Periodica}.

We will consider two different strategies to find suitable parameterizations of
these invariant manifolds
depending on the domain we are.
On the one hand, when it is possible, we will  follow  \cite{LochakMS03,
Sauzin01} (see also \cite{GuardiaOS10}),
and we will write the invariant manifolds as graphs of suitable generating
functions which are solutions
of a Hamilton-Jacobi equation in appropriate variables.
On the other hand, when this is not possible, we will obtain 
parameterizations of invariant manifolds formed by families of solutions of the differential equations.

To introduce the first method, let us consider the
symplectic change of variables (see \cite{Baldoma06})
\begin{equation}\label{eq:CanviSimplecticSeparatriu}
\left\{\begin{array}{l}
q=q_0(u)\\
\dps p=\frac{w}{p_0(u)},
\end{array}\right.
\end{equation}
where $(q_0(u),p_0(u))$ is the parameterization of the homoclinic orbit given in
Hypothesis \textbf{HP2}.
This is a well defined change for any $u\in\CC$ such that $p_0(u)\neq 0$ and
leads to a new Hamiltonian given  by
\begin{equation}\label{def:Hbarra}
\eps \overline H(u,w,\tau)=\eps \widehat
H\left(q_0(u),\frac{w}{p_0(u)}, \tau\right),
\end{equation}
where $\widehat H$ is the Hamiltonian defined in
\eqref{def:HamPeriodicaShiftada}.

Let us recall that when $\mu=0$, $\wh H$ becomes $H_0$ defined in
\eqref{def:Ham:Integrable}. Then, the separatrix of the unperturbed
system ($\mu=0$) for $\ol H$ can be parameterized as a graph as
$w=p_0(u)^2$.

To obtain parameterizations of the perturbed invariant
manifolds, we can take into account the well known fact that,  locally,  they
are Lagrangian and can be obtained as graphs of some functions which are
solutions of the
Hamilton-Jacobi equation associated to the Hamiltonian $\eps\overline H$.
That is, we look for  $w= \partial _{u} T^{u,s}(u,\tau)$, where the
functions $T^{u,s}$ satisfy
\begin{equation}\label{eq:HamJacGeneral}
\pa_\tau T(u,\tau)+ \eps \overline H(u,\pa_u T(u,\tau),\tau)=0
\end{equation}
and certain limiting properties.

The solutions of this equation give parameterizations of the
invariant manifolds, which, in the original variables, read
\begin{equation}\label{eq:ParameterizationHJ}
(q,p)=\left(q_0(u), \frac{\pa_u T^{u,s}(u,\tau)}{p_0(u)}\right).
\end{equation}

Notice that in variables $(q,p)$ the condition $p_0(u)=\dot q_0(u)\ne 0$ ensures
that the manifolds can be written as graphs over the variable $q$ through the
functions
$ S^{u,s}(q,\tau) = T^{u,s}(q_0^{-1}(q),\tau)$ which verify the classical
Hamilton-Jacobi equation associated to the
Hamiltonian $\widehat H(q,p,\tau)$.

When this method cannot be used, that is when $p_0(u)$ can vanish,
we  look for the invariant manifolds as parameterizations:
\begin{equation}\label{def:ParamByFlow}
(q,p)=(Q(v,\tau),P(v,\tau))
\end{equation}
in such a way that $(q(s), p(s))=(Q(u+\eps s,s),P(u+\eps s,s))$
are solutions of the differential equation associated to the Hamiltonian
\eqref{def:HamPeriodicaShiftada}. These kind of parameterizations were used in
\cite{DelshamsS92, DelshamsS97,Gelfreich97a,Gelfreich00,BaldomaF04,BaldomaF05}.

Then, it is straightforward to see
(\cite{Gelfreich97a}) that $(Q,P)$ has to satisfy
\begin{equation}\label{eq:PDEParametritzacions}
\LL_\eps\left(\begin{array}{c}Q\\P\end{array}\right)=\left(\begin{array}{c}
P+\mu\eps^{\eta}\pa_p \widehat H_1(Q,P,\tau)\\
-\left(V'(Q+\xp(\tau))-V'(\xp(\tau))\right)-\mu\eps^{\eta}\pa_q
\widehat H_1(Q,P,\tau)\end{array}\right),
\end{equation}
where $\LL_\eps$ is the operator
\begin{equation}\label{def:Lde}
\LL_\eps=\eps\ii\pa_\tau+\pa_v
\end{equation}
and $\widehat H_1$ is the Hamiltonian defined in
\eqref{def:ham:ShiftedOP:perturb}.

Both parameterizations \eqref{eq:ParameterizationHJ} and
\eqref{eq:PDEParametritzacions} satisfy that,
fixing $\tau=\tau_\ast$, they give
parameterizations of the invariant curves of the fixed point of the
$2\pi$-Poincar\'e map from the section $\tau=\tau^\ast$ to the
section $\tau=\tau^\ast+2\pi$.

\subsection{Existence of the local invariant manifolds}\label{sec:sketch:outer}

In this section we will find the local invariant manifolds of the origin of the
Hamiltonian system
\eqref{def:HamPeriodicaShiftada}.

First, we recall the behavior of the separatrix $(q_0(u),p_0(u))$
as $\Re u\rightarrow \pm\infty$, which is substantially different
depending on whether $(0,0)$ is a hyperbolic or a parabolic point of
the unperturbed system.

In the hyperbolic case, by
Hypothesis \textbf{HP1.1}, close to $x=0$ the potential behaves as
\begin{equation}\label{def:Potencial:Hyperbolic:PropZero}
V(x)=-\frac{\lambda^2}{2} x^2+\OO(x^3).
\end{equation}
Therefore,  $\{\lambda, -\lambda\}$
are the eigenvalues of the critical point.
Moreover, there exist constants $c_{\pm }\neq 0$ such that as $\Re
u\rightarrow\mp\infty$ the separatrix behaves as
\begin{equation}\label{eq:separatrix:hyp:infty}
\begin{split}
q_0(u)&=c_{\pm} e^{\pm\lambda u}+\OO\left(e^{\pm 2\lambda
u}\right),\\
p_0(u)&=\pm\lambda c_{\pm} e^{\pm\lambda u}+\OO\left(e^{\pm 2\lambda
u}\right).
\end{split}
\end{equation}
In the parabolic case, using Hypothesis \textbf{HP1.2}, in
\cite{BaldomaF04} it is seen that  there exists a constant $c_0$
such that as $\Re u\rightarrow\mp\infty$ the separatrix behaves as
\begin{equation}\label{eq:separatrix:hyp:parab}
\begin{split}
q_0(u)&=\frac{c_0}{u^\frac{2}{m-2}}+\OO\left(\frac{1}{u^\nu}\right),\\
p_0(u)&=-\frac{2c_0}{(m-2)
u^\frac{m}{m-2}}+\OO\left(\frac{1}{u^{\nu+1}}\right),
\end{split}
\end{equation}
where $m$ is the order of the potential
\eqref{def:Potencial:Parabolic} and $\nu>2/(m-2)$.

We look for the parameterizations of the local invariant manifolds in the
domains $D^{u,s}_{\infty,\rr}$
defined in \eqref{def:DominsInfinit}.

By \eqref{eq:separatrix:hyp:infty} and \eqref{eq:separatrix:hyp:parab},
the constant $\rr$ can be taken big enough so that $p_0(u)$ does not vanish in
these domains.
Then, as we explained in Section \ref{sec:DiferentsParam}, we can look for the
invariant manifolds by means
of generating functions $T^{u,s}$ (see \eqref{eq:ParameterizationHJ}) defined in
$D^\ast_{\infty,\rr}$ with $\ast=u,s$ respectively, which are
solutions of the Hamilton-Jacobi equation \eqref{eq:HamJacGeneral}.
Moreover, we impose the asymptotic conditions
\begin{align}
\dps \lim_{\Re u\rightarrow-\infty}p_0\ii(u)\cdot\pa_u T^u(u,\tau)=0 &
\text{\quad (for the unstable manifold)}
\label{eq:AsymptCondFuncioGeneradora:uns}\\
\dps \lim_{\Re u\rightarrow+\infty}p_0\ii(u)\cdot \pa_u
T^s(u,\tau)=0 & \text{\quad (for the stable manifold)}.
\label{eq:AsymptCondFuncioGeneradora:st}
\end{align}

We note that when $\mu=0$ a solution of \eqref{eq:HamJacGeneral}
satisfying both asymptotic conditions \eqref{eq:AsymptCondFuncioGeneradora:uns}
and
\eqref{eq:AsymptCondFuncioGeneradora:st} is
\begin{equation}\label{def:T00}
T_0(u)=\int_{-\infty}^u p_0^2(v)\,dv,
\end{equation}
which corresponds to the  the unperturbed separatrix.

The next theorem gives the existence of the invariant manifolds in
the domains $D^{\ast}_{\infty,\rr}$ with $\ast=u,s$ defined in
\eqref{def:DominsInfinit}. We state the results for the unstable
invariant manifold. The stable one has analogous properties.

\begin{theorem}\label{th:ExistenceCloseInfty}
Let us  assume Hypotheses \textbf{HP1.1}, \textbf{HP3},
\textbf{HP4} and take   $\eta \ge 0$.
Let  $\rr_1>0$ be a real number big enough such that $p_0(u)\neq0$
for $u\in D^{u}_{\infty,\rr_1}$. Then, there exists $\eps_0>0$ such
that for $\eps\in(0,\eps_0)$ and $\mu\in B(\mu_0)$, the
Hamilton-Jacobi equation \eqref{eq:HamJacGeneral} has  a unique
(modulo an additive constant) real-analytic solution in
$D^{u}_{\infty,\rr_1}\times\TT_\sigma$ satisfying the asymptotic
condition \eqref{eq:AsymptCondFuncioGeneradora:uns}.

Moreover, there exists a real constant $b_1>0$ independent of $\eps$
and $\mu$, such that for $(u,\tau)\in
D^{u}_{\infty,\rr_1}\times\TT_\sigma$,
\[
\left|\pa_u T^u(u,\tau)-\pa_u T_0(u)\right|\leq
b_1|\mu|\eps^{\eta+1}.
\]
\end{theorem}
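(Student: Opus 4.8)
\emph{Strategy.} The plan is to look for the unstable generating function in the form $T^u = T_0 + T_1$, with $T_0$ as in \eqref{def:T00}, and to run a fixed point argument directly on $\xi := \partial_u T^u - \partial_u T_0 = \partial_u T_1$, since $\xi$ is exactly the quantity constrained by \eqref{eq:AsymptCondFuncioGeneradora:uns} and the quantity whose bound is asserted. All the work takes place in a Banach space of functions analytic and $2\pi$-periodic in $\tau$ on $D^u_{\infty,\rho_1}\times\TT_\sigma$, with the weighted norm $\|h\| = \sup e^{-2\lambda\Re u}|h(u,\tau)|$. The weight is dictated by \eqref{eq:separatrix:hyp:infty}: since $q_0(u),p_0(u)=\OO(e^{\lambda\Re u})$ as $\Re u\to-\infty$, everything below decays like $e^{2\lambda\Re u}$ and the characteristic integrals converge.

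\emph{The equation for $\xi$.} First I would substitute $T = T_0+T_1$ into \eqref{eq:HamJacGeneral} and differentiate once in $u$. Using $\partial_\tau T_0 = 0$ and the identity $\overline H(u,p_0^2(u),\tau)\equiv 0$ valid when $\mu = 0$, one arrives at a quasilinear transport equation
\[
\partial_\tau\xi + \eps\, B(u,\tau,\xi)\,\partial_u\xi = g(u,\tau,\xi),
\]
where $B(u,\tau,0) = \partial_w\overline H(u,p_0^2,\tau) = 1 + \mu\eps^\eta p_0^{-1}\partial_p\widehat H_1(q_0,p_0,\tau)$, the correction $B-1$ is bounded and $\OO(|\mu|\eps^\eta)$ on $D^u_{\infty,\rho_1}$, and $g(\cdot,\cdot,\xi)$ collects the source together with the terms that are quadratic and higher in $\xi$. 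The crucial bookkeeping step is to check, using \eqref{eq:separatrix:hyp:infty}, Theorem \ref{th:Periodica} (so that $|x_p(\tau)| = \OO(|\mu|\eps^{\eta+1})$), the fact that $\widehat H_1^1$ has order $n=2$, and the splitting $\widehat H_1 = \widehat H_1^1 + \eps\widehat H_1^2$ of \eqref{def:HamPertorbat:H1}--\eqref{def:HamPertorbat:H2} with $\OO(|\mu|\eps^\eta)$ coefficients $c_{kl}$, that $g(\cdot,\cdot,0) = \OO(|\mu|\eps^{\eta+1}e^{2\lambda\Re u})$ and — this is where \textbf{HP3} enters — that its $\tau$-average $\langle g(\cdot,\cdot,0)\rangle$ is one order smaller, $\OO(|\mu|\eps^{\eta+2}e^{2\lambda\Re u})$, because $\langle\widehat H_1^1\rangle = 0$ forces the $\tau$-average of the $\OO(\eps^\eta)$ part of the source to vanish. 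The higher-order terms in $\xi$ (in particular the $p_0^{-2}\xi^2$ contribution, which would blow up as $\Re u\to-\infty$) are kept under control by the weight and become of strictly higher order once $\|\xi\|\le C|\mu|\eps^{\eta+1}$.

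\emph{Fixed point.} Next I would invert the transport operator. Freezing the previous iterate in $B$, the linear operator $\partial_\tau + \eps B(u,\tau,\xi_{\mathrm{old}})\partial_u$ is inverted by integrating along its characteristics backwards from $\Re u = -\infty$; since $B = 1+\OO(|\mu|\eps^\eta)$ the domain $D^u_{\infty,\rho_1}\times\TT_\sigma$ is invariant under these backward characteristics, the inverse is well defined on the weighted space, and it automatically enforces the asymptotic condition \eqref{eq:AsymptCondFuncioGeneradora:uns}. The quantitative point is that this inverse is bounded with an $\OO(1)$ norm on the $\tau$-oscillating Fourier modes and loses only a factor $1/\eps$ on the $\tau$-average. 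Hence the resulting operator maps the ball of radius $C|\mu|\eps^{\eta+1}$ into itself: the oscillating part of $g(\cdot,\cdot,0)$ is already $\OO(|\mu|\eps^{\eta+1})$ and is inverted with no loss, while its average part is $\OO(|\mu|\eps^{\eta+2})$ and survives the $1/\eps$ loss precisely because of the zero-mean cancellation. It is moreover a contraction, with Lipschitz constant $\OO(|\mu|\eps^\eta)$, for $\eps$ small (depending on $\mu_0$) and $|\mu|\le\mu_0$. This gives the unique fixed point $\xi$ with $\|\xi\|\le C|\mu|\eps^{\eta+1}$, i.e. $|\partial_u T^u - \partial_u T_0|\le b_1|\mu|\eps^{\eta+1}$; integrating $\xi$ in $u$ and fixing the $\tau$-dependent integration constant via the $u$-independent part of \eqref{eq:HamJacGeneral} recovers $T^u$, determined modulo an additive constant. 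Real-analyticity is inherited from the operator (which preserves analyticity and the reality condition $\overline{\xi(\bar u,\bar\tau)} = \xi(u,\tau)$), and uniqueness modulo a constant follows because the difference of two admissible solutions solves a homogeneous linear transport equation whose only bounded solution compatible with the blow-up of $p_0^{-1}$ in \eqref{eq:AsymptCondFuncioGeneradora:uns} is a constant.

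\emph{Main difficulty.} The hard part is not the (routine) construction of the weighted spaces and the characteristic inversion, but the precise order counting that pins the estimate down to $\eps^{\eta+1}$ rather than $\eps^\eta$: one must combine the factor $\eps$ in front of $\overline H$ in \eqref{eq:HamJacGeneral}, the zero-average hypothesis \textbf{HP3}, the splitting $\widehat H_1 = \widehat H_1^1 + \eps\widehat H_1^2$, and the $\OO(|\mu|\eps^{\eta+1})$ size of the periodic orbit, all while propagating the weight $e^{2\lambda\Re u}$ through a non-constant, $\mu$-dependent coefficient $B$ and a genuinely quasilinear inversion. In the parabolic case the same scheme applies with the algebraic decay \eqref{eq:separatrix:hyp:parab} replacing the exponential one; the statement above, however, is confined to the hyperbolic case \textbf{HP1.1}.
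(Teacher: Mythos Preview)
Your overall setup matches the paper: write $T^u=T_0+T_1$, work in the weighted space $\|h\|=\sup e^{-2\lambda\Re u}|h|$, and run a fixed point for $\partial_u T_1$. Your bookkeeping for the first iterate --- the zero mean of $\widehat H_1^1$, the splitting $\widehat H_1=\widehat H_1^1+\eps\widehat H_1^2$, and the $\mathcal O(|\mu|\eps^{\eta+1})$ periodic orbit combining to give $\|\mathcal T(0)\|\le C|\mu|\eps^{\eta+1}$ --- is exactly the mechanism of Lemma~\ref{lemma:Infty:Cotes}.

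The gap is contractivity at $\eta=0$. You state the Lipschitz constant is $\mathcal O(|\mu|\eps^\eta)$; for $\eta=0$ this is $\mathcal O(|\mu|)\le\mathcal O(\mu_0)$ with $\mu_0$ arbitrary, and taking $\eps$ small does not help. Concretely: after differentiating in $u$ and moving the transport part $\eps B\,\partial_u\xi$ to the left, your source $g$ still contains the linear term $\eps\,\partial_u B_1\cdot\xi$ with $B_1=-\mu\eps^\eta p_0^{-1}\partial_p\widehat H_1^1$. Although $\partial_u B_1$ has zero $\tau$-average, the product $\partial_u B_1\cdot\delta\xi$ generically does not, so your inverse (which you correctly identify as losing $1/\eps$ on the average) returns $\mathcal O(|\mu|\eps^\eta)\|\delta\xi\|$. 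Putting $B_1$ into the characteristics absorbs the term $B_1\partial_u\xi$ but not $\partial_u B_1\cdot\xi$; the latter survives and controls the Lipschitz constant.

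The paper closes this gap with an explicit preliminary change of variables (Lemma~\ref{lemma:infty:ligual:canvi}): one solves $\mathcal L_\eps g=-B_1$ to get $g=\mathcal O(|\mu|\eps^{\eta+1})$ (the extra $\eps$ coming precisely from $\langle B_1\rangle=0$), and sets $u=v+g(v,\tau)$. In the new variable the linear coefficient becomes
\[
\widehat B(v,\tau)=\frac{B_1(v+g,\tau)-B_1(v,\tau)+B_2(v+g,\tau)}{1+\partial_v g}=\mathcal O(|\mu|\eps^{\eta+1}),
\]
small in all norms, not merely zero-average. The fixed point is then run with the constant-coefficient inverse $\overline{\mathcal G}_\eps=\partial_u\circ\mathcal G_\eps$, which has $\mathcal O(1)$ operator norm on the weighted space, so the Lipschitz constant is $\mathcal O(|\mu|\eps^{\eta+1})$ --- contractive for every $\eta\ge 0$. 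The paper notes explicitly that this change is unnecessary when $\eta>0$; it is the whole point when $\eta=0$.
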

The asymptotic behavior of the invariant manifolds when $\Re u\to
+\infty$ is qualitatively different for the hyperbolic case and the
parabolic case. For this reason we prove separately Theorem
\ref{th:ExistenceCloseInfty}
for  these two cases.
We deal with the hyperbolic case  in Section \ref{sec:InftyHyperbolic} and with
the parabolic case in
Section \ref{sec:InftyParabolic}.\\

In the rest of the paper we will assume the whole set of Hypotheses
\textbf{HP1}, \textbf{HP2}, \textbf{HP3}, \textbf{HP4} and \textbf{HP5}.

\subsection{The global invariant manifolds}\label{sec:sketch:Extensio}

The next step is to extend the invariant manifolds to a wider domain
which contains a region close to the singularities $\pm ia$ of the separatrix
(see Hypothesis \textbf{HP2}).
In the general case the function $p_0(u)$ can vanish and therefore,
the symplectic change \eqref{eq:CanviSimplecticSeparatriu} is not
well defined. For this reason one cannot use the Hamilton-Jacobi equation
\eqref{eq:HamJacGeneral} anymore. Instead we  look for
parameterizations
\[(q,p)=(Q^{u,s}(v,\tau),P^{u,s}(v,\tau))\] which are solutions of the
partial differential equation \eqref{eq:PDEParametritzacions}.

Nevertheless, there are some cases, as happens for the classical
pendulum, where $p_0(u)$ does not vanish for $u\in\CC$, and then
one can  use  the Hamilton-Jacobi equation in the whole domain,
which makes the proof of Theorems \ref{th:MainGeometric:regular} and
\ref{th:MainGeometric:singular} remarkably
simpler. Section \ref{sec:sketch:Extensio:Trig} is devoted to this
simpler case and Section \ref{sec:sketch:Extensio:General} to  the general one.

\subsubsection{The global invariant manifolds in the case $p_0(u)\neq
0$}\label{sec:sketch:Extensio:Trig}

In this section we extend the parameterizations \eqref{eq:ParameterizationHJ} of
the invariant manifolds to
the outer domains $\dps D^{\out,\ast}_{\rr,\kk}$, $\ast=u,s$,
(see Figure \ref{fig:OuterDomains}) defined by
\eqref{def:DominisOuter}, in the case that $p_0(u)\neq 0$.
We emphasize that these domains reach a region which is at a distance of
$\OO(\eps)$
of the singularities $u=\pm i a$ of the unperturbed separatrix.

The constant $\rho$ will be taken $\rr>\rr_1$, where $\rr_1$ is the
constant given by Theorem \ref{th:ExistenceCloseInfty}, in order to
ensure that $D^u_{\infty,\rr_1}\cap D^{\out,u}_{\rr,\kk}\neq
\emptyset$.

Since in this section we are assuming that $p_0(u)\neq 0$ in the whole
\emph{outer domain}, the
symplectic change of variables \eqref{eq:CanviSimplecticSeparatriu}
is still well defined there. Then, it is enough to look for the analytic
continuation of the generating functions $T^{u,s}$ obtained in
Theorem \ref{th:ExistenceCloseInfty}.

\begin{theorem}\label{th:Extensio:Trig}
Let $\rr_1$  be the constant considered in Theorem
\ref{th:ExistenceCloseInfty} and let us consider $\rr_2$ such that
$\rr_2>\rr_1$, $\kk_1>0$ big enough and $\eps_0>0$ small enough.
Then, for $\mu\in B(\mu_0)$, $\eps\in(0,\eps_0)$, the function
$T^u(u,\tau)$ obtained in Theorem \ref{th:ExistenceCloseInfty} can
be analytically extended to the domain $D^{\out,
u}_{\rr_2,\kk_1}\times\TT_\sigma$.

Moreover, there exists a real constant $b_2>0$ independent of $\eps$
and $\mu$, such that for $(u,\tau)\in D^{\out,
u}_{\rr_2,\kk_1}\times\TT_\sigma$,
\[
\left|\pa_uT^u(u,\tau)-\pa_u T_0(u)\right|\leq
\frac{b_2|\mu|\eps^{\eta+1}}{\left|u^2+a^2\right|^{\ell+1}}.
\]
\end{theorem}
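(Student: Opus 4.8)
The plan is to prove Theorem~\ref{th:Extensio:Trig} by a fixed point argument for the analytic continuation of $\partial_u T^u$, using as unknown the difference $\xi = \partial_u T^u - \partial_u T_0$. First I would write the Hamilton--Jacobi equation \eqref{eq:HamJacGeneral} for $T^u$ and subtract the corresponding equation satisfied by $T_0$ (which, since $T_0$ solves the unperturbed stationary problem, reduces to $\eps\,\ol H_0(u,\partial_u T_0) = 0$ with $\ol H_0(u,w)=\tfrac{w^2}{2p_0(u)^2}+V(q_0(u))$ and $\partial_u T_0 = p_0^2$). The resulting equation for $\xi$ has the shape
\[
\pa_\tau \xi + \eps\,\frac{p_0^2(u)}{p_0^2(u)}\,\pa_u\!\left(\frac{\xi}{?}\right)+\dots
\]
— more precisely, a transport operator of the form $\LL_\eps$ conjugated by $p_0$, i.e. something like $\eps^{-1}\pa_\tau + \pa_u$ acting on $\xi$ after dividing by $p_0^2$, plus a lower-order linear term coming from $V(q_0(u)+x_p)-V(q_0(u))$ and the explicitly $\eps$-small terms, plus the nonlinearity $\tfrac{\eps}{2p_0^2}\xi^2$, plus the inhomogeneous source $\mu\eps^{\eta}\,\ol H_1(u,\partial_u T_0+\xi,\tau)$ which is $\OO(\mu\eps^{\eta})$. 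The key is to invert the dominant transport operator with a gain, exploiting that we integrate from the side where the local solution from Theorem~\ref{th:ExistenceCloseInfty} provides a boundary condition on $D^u_{\infty,\rr_1}\cap D^{\out,u}_{\rr_2,\kk_1}$.

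Next I would set up the appropriate Banach space: functions analytic on $D^{\out,u}_{\rr_2,\kk_1}\times\TT_\sigma$ with the weighted norm whose weight is $|u^2+a^2|^{\ell+1}$ (this is exactly the weight appearing in the statement, and $|u^2+a^2| = |u-ia||u+ia|$ measures distance to the singularities). The reason this weight is the correct one: near $u=\pm ia$ the separatrix blows up like $(u\mp ia)^{-r}$ by \textbf{HP2}, so $p_0^2 \sim (u\mp ia)^{-2r}$ and the perturbation $\ol H_1$ evaluated along it behaves like $(u\mp ia)^{-\ell}$; dividing the HJ equation by $p_0^2$ and integrating the transport operator along characteristics from $\Re u = -\rr_2$ toward the singularity produces a gain of roughly $(u^2+a^2)^{r}$ relative to the source, but since $\eta \ge \ell - 2r$ and the source carries $\mu\eps^{\eta}$, the balance gives precisely the claimed $|\mu|\eps^{\eta+1}/|u^2+a^2|^{\ell+1}$ (the extra $\eps$ comes from the $\eps^{-1}\pa_\tau$ structure of $\LL_\eps$, which for the non-averaged Fourier modes produces a factor $\eps$ upon integration; the averaged mode is handled separately and is smaller). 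I would then show the integral operator $\FF(\xi) = (\text{transport inverse})\circ(\text{source} + \text{nonlinear and lower-order terms in }\xi)$ is a contraction on a ball of radius $\sim b_2|\mu|\eps^{\eta+1}$ in this weighted space, using that the nonlinear term $\tfrac{\eps}{2p_0^2}\xi^2$ is quadratically small and that the lower-order linear term carries an extra power of $\eps$ or of $x_p = \OO(\mu\eps^{\eta+1})$. Uniqueness of the continuation follows from uniqueness in Theorem~\ref{th:ExistenceCloseInfty} together with analytic continuation on the overlap.

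The main obstacle, I expect, is the careful inversion of the transport operator with the sharp weight — that is, constructing the right-inverse of (the conjugate of) $\LL_\eps = \eps^{-1}\pa_\tau + \pa_v$ on the boomerang-shaped outer domain with the boundary condition coming from the local manifold, and proving it gains exactly a factor $|u^2+a^2|^{-(\ell+1)}$ times $\eps$ without losing powers. This requires choosing the integration paths (characteristics $v \mapsto (v, \tau_0 + \eps^{-1}(v - v_0))$ reparametrized, or rather integrating $\pa_v$ at fixed Fourier mode in $\tau$, which turns $\LL_\eps$ into $\pa_v + ik\eps^{-1}$ on the $k$-th mode, solved by $\int e^{-ik(v-s)/\eps}(\cdot)\,ds$) so that they stay inside $D^{\out,u}_{\rr_2,\kk_1}$ — this is precisely why the angle $\beta_1$ and the $\eps$-buffer $a - \kk\eps$ enter the definition of the domain — and then bounding $\int_{-\rr_2}^{\Re v}\! |s^2+a^2|^{-(\ell+1)}\,\dots$ against $|v^2+a^2|^{-(\ell+1)}$ along such paths. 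The secondary technical point is that $p_0(u)\neq 0$ throughout $D^{\out,u}_{\rr_2,\kk_1}$ must be used to keep the division by $p_0^2$ harmless away from the singularities while the weight absorbs the blow-up near $\pm ia$; for $\rr_2$ large and $\kk_1$ large this is guaranteed as discussed in the text. Everything else — the zero-average bookkeeping, the $\OO(\eps)$ split of $\wh H_1$ into $\wh H_1^1 + \eps \wh H_1^2$, and the contraction estimates — is routine once the weighted transport estimate is in place.
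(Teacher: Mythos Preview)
Your overall architecture is right and matches the paper's: work with $\xi = \pa_u T^u - \pa_u T_0 =: \pa_u T_1$, set up a fixed point in the weighted space $\EE_{\ell+1,\sigma} = \{h : \|(u^2+a^2)^{\ell+1} h\|_\sigma < \infty\}$ on $D^{\out,u}_{\rr_2,\kk_1}$, and invert $\LL_\eps$ Fourier-mode by Fourier-mode with initial data supplied by Theorem~\ref{th:ExistenceCloseInfty} on the overlap. But there is a genuine gap in the contraction step.

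The equation for $T_1$ reads $\LL_\eps T_1 = A + (B_1+B_2)\,\pa_u T_1 + C(\pa_u T_1)$ with $A$, $B_1$, $B_2$, $C$ as in \eqref{def:InftyA}--\eqref{def:InftyC}. You treat the perturbation $\mu\eps^\eta \ol H_1(u,\pa_u T_0+\xi,\tau)$ as ``inhomogeneous source plus nonlinearity'', but it also contains a \emph{linear} part in $\xi$, namely
\[
B_1(u,\tau) = -\mu\eps^\eta\, p_0(u)^{-1}\,\pa_p \wh H_1^1\bigl(q_0(u),p_0(u),\tau\bigr),
\]
which is of size $|\mu|\eps^\eta$. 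When $\eta = 0$ (allowed by \textbf{HP5} whenever $\ell \le 2r$) this is $\OO(\mu)$ with $\mu \in B(\mu_0)$ arbitrary, so the Lipschitz constant of your fixed-point operator on $\EE_{\ell+1,\sigma}$ is $\sim K|\mu|$ and the map is \emph{not} contractive. The ``lower-order linear term'' you identify (coming from $V(q_0+x_p)-V(q_0)$ and from $B_2$) is indeed $\OO(|\mu|\eps^{\eta+1})$, but it is not the only linear term present.

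The paper's remedy (Lemma~\ref{lemma:Extensio:Trig:canvi}) is a preliminary change of variables $u = v + g(v,\tau)$ with $g$ solving $\LL_\eps g = -B_1$. This is possible precisely because $\langle B_1\rangle = 0$ (inherited from $\langle H_1\rangle = 0$ in \textbf{HP3}), and inverting $\LL_\eps$ on zero-mean functions gains a factor $\eps$, so $\|g\|_{0,\sigma} \le K|\mu|\eps^{\eta+1}$. In the new variable the linear coefficient becomes
\[
\wh B(v,\tau) = \bigl(1+\pa_v g\bigr)^{-1}\Bigl(B_1(v+g,\tau)-B_1(v,\tau)+B_2(v+g,\tau)\Bigr),
\]
which is now $\OO(|\mu|\eps^{\eta+1})$, and the fixed point argument closes with Lipschitz constant $K|\mu|\eps^{\eta-(\ell-2r)}/\kk_1^{\ell-2r+1}$ (case $\ell\ge 2r$) or $K|\mu|\eps^{\eta+1-\max\{0,\ell-2r+1\}}$ (case $\ell<2r$). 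This is the same device used near infinity in Lemmas~\ref{lemma:infty:ligual:canvi} and~\ref{lemma:Parab:canvi}. Without it your argument only covers $\eta > 0$ (or $\ell > 2r$, where one can instead make the Lipschitz constant small by taking $\kk_1$ large), not the full range claimed.
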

The proof of this theorem is given in Section \ref{sec:Extensio}.
The results for the stable manifold  are analogous.

\subsubsection{The global invariant manifolds for the general
case}\label{sec:sketch:Extensio:General}
We devote this section to obtain parameterizations of the global
invariant manifolds for the general case, that is, considering
Hamiltonian systems for which $p_0(u)$ can vanish in the outer
domains defined in \eqref{def:DominisOuter}. We look  for parameterizations
\[(q,p)=(Q^{u,s}(v,\tau),P^{u,s}(v,\tau))\] which are solutions of the
partial differential equation \eqref{eq:PDEParametritzacions}. Our
strategy will be:
\begin{itemize}
\item To obtain the parameterizations $(Q^{u,s}(v,\tau),P^{u,s}(v,\tau))$ in a
transition
domain (Theorem \ref{th:HJtoParam}).
\item To extend them up to a region where we can ensure
that $p_0(u)$ does not vanish (Theorem \ref{th:Extensio}).
\item To recover in this new region the
representations \eqref{eq:ParameterizationHJ} through the generating function
$T^{u,s}$ of the manifolds,
which are solution of the Hamilton-Jacobi equation
\eqref{eq:HamJacGeneral} (Theorem \ref{th:ParamtoHJ}).
\item  To extend the generating function $\pa_u T^{u,s}(u,\tau)$ up to a
distance of order
$\eps$ of the singularity, as it was done in the easier case
$p_0(u)\neq 0$ in Theorem \ref{th:Extensio:Trig} (Theorem
\ref{th:ExtensioFinal}).
\end{itemize}

First we are going to construct the two dimensional parameterizations
of the invariant manifolds from the parameterizations of the local
invariant manifolds given in Theorem \ref{th:ExistenceCloseInfty},
which were obtained by using the Hamilton-Jacobi equation. We look
for them in  the transition domains
\begin{equation}\label{def:TransDomainInfty}
\begin{split}
\tro_{\rr,\bar\rr}^u&=D^{\out,u}_{\kk,\bar\rr}\cap
D^{u}_{\infty,\rr}\\
\tro_{\rr,\bar\rr}^s&=D^{\out,s}_{\kk,\bar\rr}\cap
D^{s}_{\infty,\rr}
\end{split}
\end{equation}
with $\bar\rr>\rr$ (see Figure \ref{fig:TransInfty}). Taking into
account the change of variables
\eqref{eq:CanviSimplecticSeparatriu}, it is natural to look for the
parameterizations of the invariant manifolds  $(Q^{u,s},P^{u,s})$ of
the form
\begin{equation}\label{eq:HJtoParam}
\begin{array}{ll}
\dps Q^{u,s}(v,\tau)=q_0\left(v+\UU^{u,s}(v,\tau)\right)\\
\dps P^{u,s}(v,\tau)=\frac{\pa_u
T^{u,s}\left(v+\UU^{u,s}(v,\tau)\right)}{p_0(v+\UU^{u,s}(v,\tau))},
\end{array}
\end{equation}
where $\UU^{u,s}$ define a change of variables
$u=v+\UU^{u,s}(v,\tau)$ in such a way that $(Q^{u,s},P^{u,s})$
satisfy the system of equations \eqref{eq:PDEParametritzacions}.
\begin{figure}[H]
\begin{center}
\psfrag{b}{$\beta_1$}\psfrag{D4}{$D^{\out,u}_{\bar\rr,\kk}$}\psfrag{D3}{$D^{\out
,s}_{\bar\rr,\kk}$}
\psfrag{D6}{$D^{u}_{\infty,\rr}$}\psfrag{D1}{$D^{s}_{\infty,\rr}$}\psfrag{D5}{
$\tro_{\rr,\bar\rr}^u$}\psfrag{D2}{$\tro_{\rr,\bar\rr}^s$}
\psfrag{r}{$-\bar\rr$}\psfrag{r1}{$-\rr$}\psfrag{r2}{$\rr$}\psfrag{r3}{$\bar\rr$
}\psfrag{s}{$i(a-\de)$}\psfrag{u}{$u_1$}\psfrag{v}{$\ol
u_1$}
\includegraphics[height=6cm]{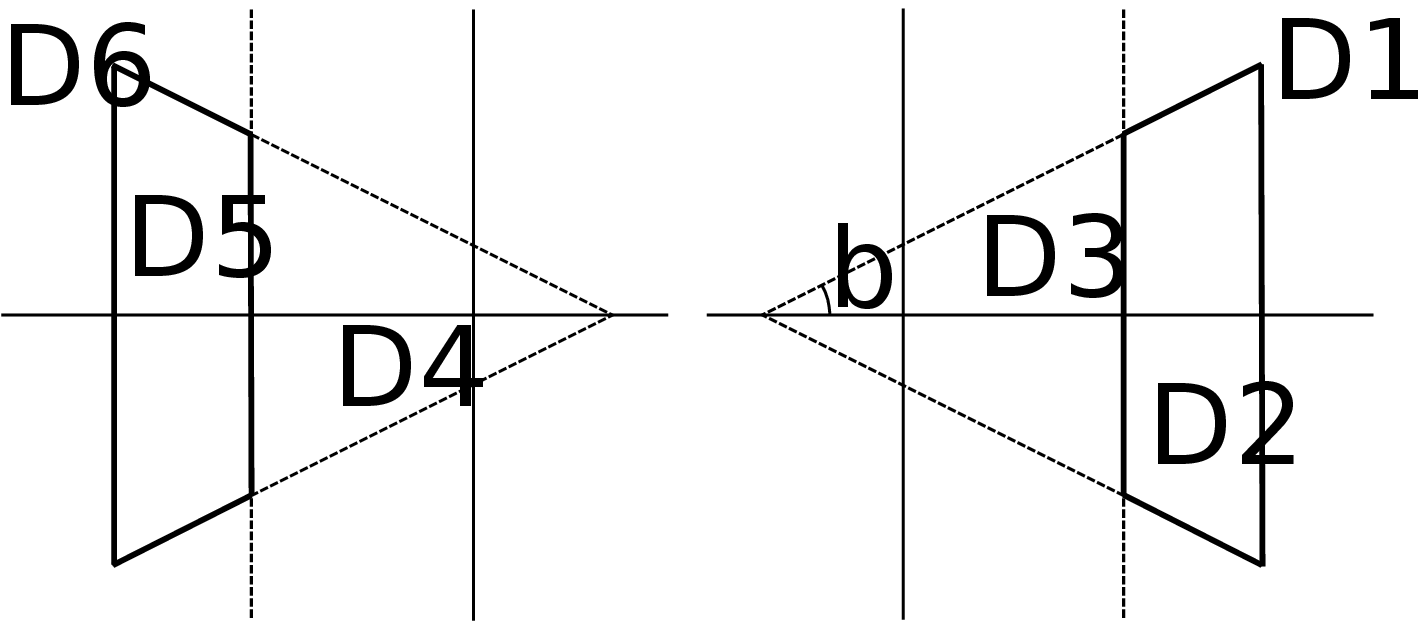}
\end{center}
\caption{\figlabel{fig:TransInfty} The transition domains
$\tro_{\rr,\bar\rr}^u$ and $\tro_{\rr,\bar\rr}^s$ defined in
\eqref{def:TransDomainInfty}.}
\end{figure}

The results in this section are only stated in the unstable case since the ones for the stable case are analogous.

The next theorem ensures that the change of variables $u=v+\UU^{u}(v,\tau)$ exists and it
is well defined in the transition domain $\tro_{\rr,\bar\rr}^u$. 
\begin{theorem}\label{th:HJtoParam}
Let $\rr_1$  be the constant considered in Theorem
\ref{th:ExistenceCloseInfty} and  let  $\rr_3$ and
$\rr_4$ such that $\rr_4>\rr_3>\rr_1$ and $\eps_0$ small enough
(which might depend on $\rr_i$, $i=1,2,3$). Then,  for
$\eps\in(0,\eps_0)$ and $\mu\in B(\mu_0)$, there exists a
real-analytic function $\UU^u: \tro_{\rr_3,
\rr_4}^u\times\TT_\sigma\rightarrow \CC$ such that
\begin{itemize}
\item There exists a constant $b_3>0$ independent of $\eps$ and $\mu$
such that for $(v,\tau)\in \tro_{\rr_3,\rr_4}^u\times\TT_\sigma$,
\[
|\UU^u(v,\tau)|\leq b_3|\mu|\eps^{\eta+1}.
\]
\item If $(v,\tau)\in \tro_{\rr_3,\rr_4}^u\times\TT_\sigma$, then
$v+\UU^u(v,\tau)\in
 D^{u}_{\infty,\rr_1}$.
\item The parameterizations of the invariant manifolds
$(Q^{u}(v,\tau),P^{u}(v,\tau))$ in \eqref{eq:HJtoParam} satisfy the system of
equations \eqref{eq:PDEParametritzacions} and there exists a
constant $b_4>0$ such that for $(v,\tau)\in
\tro_{\rr_3,\rr_4}^u\times\TT_\sigma$,
\[
\begin{split}
\left| Q^u(v,\tau)-q_0(v)\right|&\leq b_4|\mu|\eps^{\eta+1}\\
\left| P^u(v,\tau)-p_0(v)\right|&\leq b_4|\mu|\eps^{\eta+1},
\end{split}
\]
where $(q_0,p_0)$ is the parameterization of the unperturbed
separatrix given in Hypothesis \textbf{HP2}.
\end{itemize}
\end{theorem}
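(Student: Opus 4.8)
The plan is to determine the change of variables $\UU^u$ from the first component of system \eqref{eq:PDEParametritzacions}, and then to check that, once $\UU^u$ is fixed, the second component is automatically satisfied because $T^u$ solves the Hamilton--Jacobi equation \eqref{eq:HamJacGeneral}. Substituting the ansatz \eqref{eq:HJtoParam} into the first equation of \eqref{eq:PDEParametritzacions}, using that $\LL_\eps q_0(v+\UU^u)=p_0(v+\UU^u)\,(1+\LL_\eps\UU^u)$ and recalling $\pa_u T_0=p_0^2$ (see \eqref{def:T00}), one obtains the scalar fixed point equation
\[
\LL_\eps\UU^u=\RRR[\UU^u],\qquad
\RRR[\UU]=\frac{\pa_uT^u(v+\UU,\tau)-\pa_uT_0(v+\UU)}{p_0(v+\UU)^2}+\frac{\mu\eps^{\eta}}{p_0(v+\UU)}\,\pa_p\wh H_1(Q^u,P^u,\tau),
\]
where $(Q^u,P^u)$ denotes the right-hand side of \eqref{eq:HJtoParam} expressed through $\UU$. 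I would first recall the computation of \cite{Gelfreich97a,Baldoma06}: differentiating \eqref{eq:HamJacGeneral} with respect to $u$ and composing with $u=v+\UU^u$ converts the second component of \eqref{eq:PDEParametritzacions} into an identity as soon as $\UU^u$ solves the displayed equation, so only this equation must be solved.

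Next I would build a right inverse $\GG_\eps^u$ of the operator $\LL_\eps=\eps\ii\pa_\tau+\pa_v$ from \eqref{def:Lde} adapted to the unstable manifold. Expanding in Fourier series in $\tau$, the $k$-th mode amounts to $(ik\eps\ii+\pa_v)g^{[k]}=h^{[k]}$; for $k=0$ I integrate from $\Re v\to-\infty$ along horizontal segments inside $D^u_{\infty,\rr_3}$, which is the normalization compatible with the asymptotic condition \eqref{eq:AsymptCondFuncioGeneradora:uns} singling out the unstable manifold, and for $k\neq0$ I use $g^{[k]}(v)=\int_{\gamma_k}e^{-ik\eps\ii(v-s)}h^{[k]}(s)\,ds$ along a contour $\gamma_k$ escaping to infinity in the direction where the kernel decays. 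An integration by parts yields an extra factor $\eps/|k|$ on each non-zero mode, so that $\GG_\eps^u$ loses no power of $\eps$ on the non-zero modes and maps functions analytic and bounded on $\tro_{\rr_3,\rr_4}^u\times\TT_\sigma$ to the same type of functions on a slightly smaller domain.

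Then I would solve $\UU^u=\GG_\eps^u\circ\RRR[\UU^u]$ by a contraction argument on the ball of radius $b_3|\mu|\eps^{\eta+1}$ in the space of analytic functions on $\tro_{\rr_3,\rr_4}^u\times\TT_\sigma$. The two estimates to establish are: (i) $\RRR[0]=\OO(\mu\eps^{\eta+1})$, using Theorem \ref{th:ExistenceCloseInfty} (which gives $|\pa_uT^u-\pa_uT_0|\le b_1|\mu|\eps^{\eta+1}$ and $p_0$ bounded away from $0$ on $\{\Re v<-\rr_3\}$), the fact that $\pa_p\wh H_1$ has zero $\tau$-average up to a term of size $\eps$ and vanishes to first order at the origin, and the $\eps$-gain of $\GG_\eps^u$ on the non-zero modes; and (ii) $\RRR$ is Lipschitz in $\UU$ with a small constant, from the analyticity and boundedness of $q_0$, $p_0$, $\pa_uT^u$ and $\wh H_1$ on a slightly larger domain, where one must check that $v+\UU^u(v,\tau)\in D^u_{\infty,\rr_1}$ for $\Re v<-\rr_3$ and $|\UU^u|\le b_3|\mu|\eps^{\eta+1}$ (true for $\eps$ small since $\rr_3>\rr_1$), which is exactly the second bullet of the statement. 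Once $\UU^u$ is found, the bounds $|Q^u-q_0(v)|\le b_4|\mu|\eps^{\eta+1}$ and $|P^u-p_0(v)|\le b_4|\mu|\eps^{\eta+1}$ follow from $|q_0(v+\UU^u)-q_0(v)|\le\sup|q_0'|\,|\UU^u|$ and from writing $P^u-p_0(v)=\pa_uT^u(v+\UU^u,\tau)/p_0(v+\UU^u)-\pa_uT_0(v)/p_0(v)$ and splitting it with the bounds on $\UU^u$ and on $\pa_uT^u-\pa_uT_0$.

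The step I expect to be the main obstacle is the construction of $\GG_\eps^u$ with estimates that are uniform (and sharp on the non-zero modes) over the transition domain $\tro_{\rr_3,\rr_4}^u$, which is a truncated cone rather than a strip: the contours $\gamma_k$ must be chosen so that they remain inside the relevant domain, yield convergent integrals, and stay compatible with the $2\pi$-periodicity in $\tau$ and with the asymptotic condition defining the unstable manifold. Controlling the non-local composition $v\mapsto v+\UU^u(v,\tau)$ appearing in $\RRR$, so that its argument never leaves the domain where $T^u$ and $1/p_0$ are controlled, is the companion technical difficulty.
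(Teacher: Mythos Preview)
Your setup is essentially the same as the paper's: you correctly identify that $\UU^u$ must satisfy $\LL_\eps\UU^u=M(v+\UU^u,\tau)$ with $M$ as in \eqref{eq:PDEHJtoParamRHS}, and that once this holds the second component of \eqref{eq:PDEParametritzacions} follows from the Hamilton--Jacobi equation. Your derivation of the bounds on $Q^u-q_0$ and $P^u-p_0$ at the end is also correct.

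The genuine gap is in step (ii). You claim $\RRR$ is Lipschitz with a \emph{small} constant merely from analyticity and boundedness, but this only gives a bounded Lipschitz constant, not a small one. Linearizing $\RRR$ at $\UU=0$ produces the term $N_1(v,\tau)h$ with $N_1=\mu\eps^\eta\pa_v\bigl[p_0^{-1}\pa_p\wh H_1^1(q_0,p_0,\tau)\bigr]$ (see \eqref{def:PDEtoPam:N1}), which is of size $|\mu|\eps^\eta$. Although $\langle N_1\rangle=0$, the product $N_1 h$ need not have zero $\tau$-mean, so $\GG_\eps(N_1 h)$ does \emph{not} gain an extra factor of $\eps$: it is only $\OO(|\mu|\eps^\eta\|h\|)$. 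When $\eta=0$ and $\mu$ is order one (which is precisely the regime the paper targets), this is $\OO(1)$ and the direct contraction fails.

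The paper fixes this with a preliminary near-identity change $h=(1+\ol N_1)\ol h$ where $\ol N_1=\GG_\eps(N_1)$; since $\langle N_1\rangle=0$, one has $\|\ol N_1\|\le K|\mu|\eps^{\eta+1}$, and in the equation for $\ol h$ the offending linear term becomes $(1+\ol N_1)^{-1}(N_1\ol N_1+N_2)$, which is $\OO(|\mu|\eps^{\eta+1})$. After this, the fixed point argument goes through for all $\eta\ge0$. (This is exactly the same trick used earlier in the paper in Lemmas \ref{lemma:infty:ligual:canvi} and \ref{lemma:Parab:canvi}.) A minor side remark: the paper defines $\GG_\eps$ by integrating from the finite vertices $v_1,\bar v_1,-\rr_4$ of the bounded transition domain $\tro^u_{\rr_3,\rr_4}$ rather than from infinity; your variant could in principle work since $M$ extends to $D^u_{\infty,\rr_1}$, but you would then need decay estimates there, which the paper avoids.
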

The proof of this theorem is deferred to section
\ref{sec:HJtoParam}.

Having the parameterizations $(Q^{u,s}(v,\tau),P^{u,s}(v,\tau))$ in
the transition domains $ \tro_{\rr_3,\rr_4}^\ast\times\TT_\sigma$
for $\ast=u,s$, we extend them until we arrive  to a region where we can ensure
that $p_0(u)$ does not vanish anymore.
This region consists of a piece of the
\emph{boomerang domains} defined in \eqref{def:DominisRaros} (see Figure
\ref{fig:BoomerangDomains}), in
which $p_0(u)\neq 0$, and hence the parameterizations
\eqref{eq:ParameterizationHJ} will be well defined in them.

The next step is to extend the parameterizations
$(Q^{u,s}(v,\tau),P^{u,s}(v,\tau))$ provided in Theorem
\ref{th:HJtoParam} up to domains  which intersect the \emph{boomerang domains}
$D^u_{\kk,d}$ and
$D^s_{\kk,d}$ respectively.
To this end, we define the following domains
\begin{equation}\label{def:DominOuterParam}
\begin{split}
\wt D^{\out,u}_{\rr,d,\kk}&=D^{\out,u}_{\rr,\kk}\cap \left\{u\in\CC
; |\Im
u|<-\tan\beta_2\Re u+a-\frac{d}{2}\right\}\\
\wt D^{\out,s}_{\rr,d,\kk}&=D^{\out,s}_{\rr,\kk}\cap \left\{u\in\CC
; |\Im u|>\tan\beta_2\Re u+a-\frac{d}{2}\right\},
\end{split}
\end{equation}
which are depicted in Figure \ref{fig:OuterParam}.

We want to emphasize that to extend the parameterizations
$(Q^{u,s}(v,\tau),P^{u,s}(v,\tau))$
to these new domains, has no technical difficulties since they are far from the
singularities $u=\pm ia$.
Actually the next theorem is a classical perturbative result.

\begin{figure}[h]
\begin{center}
\psfrag{D6}{$\wt D^{\out,s}_{\rr,d,\kk}$} \psfrag{D5}{$\wt
D^{\out,u}_{\rr,d,\kk}$}
\psfrag{b1}{$\beta_1$}\psfrag{b2}{$\beta_2$}\psfrag{r1}{$-\rr$}\psfrag{r2}{$\rr$
}
\psfrag{a1}{$ia$}\psfrag{a2}{$-ia$}\psfrag{a3}{$i(a-d)$}\psfrag{a4}{
$i(a-\kk\eps)$}
\psfrag{D1}{$D^{\out,s}_{\rr,\kk}$}\psfrag{D}{$D^{s}_{\kk,d}$}\psfrag{D4}{$D^{
\out,u}_{\rr,\kk}$}\psfrag{D3}{$D^{u}_{\kk,d}$}
\includegraphics[height=6cm]{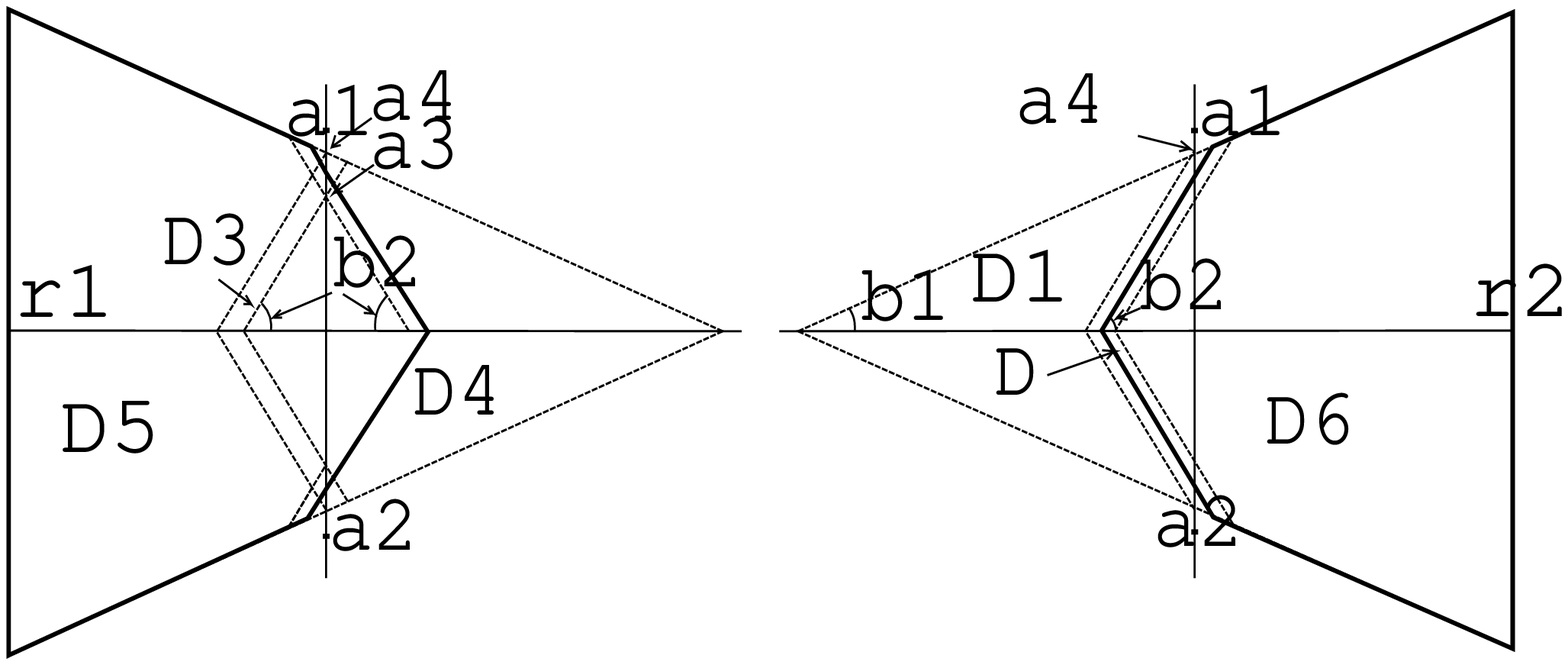}
\end{center}
\caption{\figlabel{fig:OuterParam} The domains $\wt
D^{\out,u}_{\rr,d,\kk}$ and $\wt D^{\out,s}_{\rr,d,\kk}$ defined in
\eqref{def:DominOuterParam}.}
\end{figure}

\begin{theorem}\label{th:Extensio}
Let $\rr_4$ and $\kk_1$  be the constants considered in Theorems
\ref{th:HJtoParam} and \ref{th:Extensio:Trig}, $d_0>0$ and
$\eps_0>0$ small enough. Then, for $\mu\in B(\mu_0)$ and
$\eps\in(0,\eps_0)$, there exist functions
$(Q^{u}(v,\tau),P^{u}(v,\tau))$ defined in $\wt
D^{\out,u}_{\rr_4,d_0,\kk_1}\times\TT_\sigma$ satisfying
equation \eqref{eq:PDEParametritzacions} and such that they are the analytic
continuation of the parameterizations of the invariant manifolds
obtained in Theorem \ref{th:HJtoParam}.

Moreover, there exists a constant $b_5>0$ independent of $\eps$ and
$\mu$ such that for  $(v,\tau)\in \wt
D^{\out,u}_{\rr_4,d_0,\kk_1}\times\TT_\sigma$,
\[
\begin{array}{l}
\dps\left|Q^u(v,\tau)-q_0(v)\right|\leq
b_5|\mu|\eps^{\eta+1}\\
\dps\left|P^u(v,\tau)-p_0(v)\right|\leq b_5|\mu|\eps^{\eta+1}.
\end{array}
\]
\end{theorem}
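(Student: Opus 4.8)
The plan is to construct $(Q^u,P^u)$ directly as the solution of the transport system \eqref{eq:PDEParametritzacions} in $\wt D^{\out,u}_{\rr_4,d_0,\kk_1}\times\TT_\sigma$ by a contraction argument, using as ``inflow data'' the parameterization already produced by Theorem \ref{th:HJtoParam} on the overlap with the transition domain $\tro^u_{\rr_3,\rr_4}$. In this new domain $p_0(u)$ may vanish, so the Hamilton--Jacobi representation \eqref{eq:HJtoParam} is no longer available and one must work with $(Q^u,P^u)$ themselves. First I would write $Q^u=q_0(v)+\xi_1$, $P^u=p_0(v)+\xi_2$. Since $(q_0,p_0)$ already satisfies $\LL_\eps(q_0,p_0)^{\top}=(p_0,-V'(q_0))^{\top}$, the pair $(\xi_1,\xi_2)$ obeys an equation of the form
\begin{equation*}
\LL_\eps\begin{pmatrix}\xi_1\\\xi_2\end{pmatrix}=A(v)\begin{pmatrix}\xi_1\\\xi_2\end{pmatrix}+\mu\eps^{\eta}G(v,\tau)+R(v,\tau,\xi_1,\xi_2),
\end{equation*}
where $A(v)=\left(\begin{smallmatrix}0&1\\-V''(q_0(v))&0\end{smallmatrix}\right)$ is the variational matrix along the separatrix, $\mu\eps^{\eta}G$ collects $\mu\eps^{\eta}\pa_p\wh H_1(q_0,p_0,\tau)$, $-\mu\eps^{\eta}\pa_q\wh H_1(q_0,p_0,\tau)$ together with the terms coming from $V'(q_0+\xp)-V'(\xp)-V'(q_0)$, which are $O(|\xp|)=O(\mu\eps^{\eta+1})$ (and vanish identically in the parabolic case), and $R$ gathers the contributions that are quadratic in $(\xi_1,\xi_2)$ or of size $O(\mu\eps^{\eta}(|\xi_1|+|\xi_2|))$.

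Next I would invert $\LL_\eps=\eps\ii\pa_\tau+\pa_v$ by the method of characteristics $v(s)=v+\eps s$, $\tau(s)=\tau+s$: along a characteristic the system becomes the ordinary differential equation $\tfrac{d}{ds}(\xi\circ\mathrm{char})=\eps\,[A\xi+\mu\eps^{\eta}G+R]$, which I solve by Duhamel's formula using the fundamental matrix of $w'=\eps A(v(s))w$ and the value of $(\xi_1,\xi_2)$ on the portion of $\partial\wt D^{\out,u}_{\rr_4,d_0,\kk_1}$ where characteristics enter, where it is known from Theorem \ref{th:HJtoParam} and bounded by $b_4|\mu|\eps^{\eta+1}$. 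The whole point is that $\wt D^{\out,u}_{\rr_4,d_0,\kk_1}$ stays at distance at least $d_0/2$ from the singularities $\pm ia$, so on it $(q_0,p_0)$, $A(v)$ and $\wh H_1$ evaluated on the separatrix are all bounded, and every backward characteristic issued from a point of $\wt D^{\out,u}_{\rr_4,d_0,\kk_1}$ can be continued, staying in the controlled region, until it meets $\tro^u_{\rr_3,\rr_4}$. This turns the problem into a fixed point equation $(\xi_1,\xi_2)=\mathcal{T}(\xi_1,\xi_2)$ in the Banach space of functions analytic on $\wt D^{\out,u}_{\rr_4,d_0,\kk_1}\times\TT_\sigma$ with the supremum norm, and I would check that $\mathcal{T}$ leaves invariant the ball of radius $b_5|\mu|\eps^{\eta+1}$ and contracts there. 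Two estimates are needed: the fundamental matrix of $w'=\eps A(v(s))w$ stays bounded because $\eps$ times the length of the characteristic (which is $O(1/\eps)$) times $\sup\|A\|$ is bounded on the bounded domain; and the forcing $\mu\eps^{\eta}G$, once integrated along such a characteristic, contributes only $O(\mu\eps^{\eta+1})$ because the zero $\tau$-average of $\wh H_1$ (Hypothesis \textbf{HP3}) makes the corresponding oscillatory integral bounded in $\eps$, which together with the factor $\eps$ built into the characteristic form of $\LL_\eps$ gives the gain of one power of $\eps$. The remainder $R$ is absorbed by its quadratic/small-linear structure, and the transported inflow data is $O(\mu\eps^{\eta+1})$ by Theorem \ref{th:HJtoParam}.

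Finally, uniqueness of the fixed point, together with the uniqueness already available on $\tro^u_{\rr_3,\rr_4}$ from Theorems \ref{th:ExistenceCloseInfty} and \ref{th:HJtoParam}, forces the constructed $(Q^u,P^u)$ to coincide there with the earlier parameterization, so it is genuinely its analytic continuation; the bounds $|Q^u-q_0|,|P^u-p_0|\le b_5|\mu|\eps^{\eta+1}$ are exactly the radius of the invariant ball. I do not expect a real obstacle here: as the text says, this is a classical perturbative extension, with no inner equation and no approach to the singularities. The only points demanding care are the bookkeeping of the characteristic geometry --- one must verify that the backward characteristics of $\LL_\eps$ starting in $\wt D^{\out,u}_{\rr_4,d_0,\kk_1}$ never escape through the ``outer'' boundary nor drift into a neighborhood of $\pm ia$ before reaching $\tro^u_{\rr_3,\rr_4}$, which is precisely what dictates the shape of the domains \eqref{def:DominOuterParam} and the choice $\rr_4>\rr_3$ --- and the (standard, averaging-type) estimate that yields the sharp exponent $\eta+1$ rather than $\eta$. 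The stable case is identical after the reflection $u\mapsto -u$.
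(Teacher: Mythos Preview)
Your overall plan is essentially the paper's: write $(Q^u,P^u)=(q_0+\xi_1,p_0+\xi_2)$, use that $\xi=(\xi_1,\xi_2)$ satisfies $(\LL_\eps-A(v))\xi=\KK(\xi)$ with $A(v)=\begin{pmatrix}0&1\\-V''(q_0(v))&0\end{pmatrix}$, invert $\LL_\eps-A$ with inflow data coming from Theorem~\ref{th:HJtoParam}, and run a fixed point. The paper does the inversion Fourier-mode by Fourier-mode via $\wh\GG_\eps(h)=\Phi\,\GG_\eps(\Phi^{-1}h)$, integrating each harmonic from a different vertex of the domain; your Duhamel-along-characteristics is the same operator written in the time domain. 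The paper also exploits the zero $\tau$-average of the leading part of the forcing exactly as you suggest, which is what produces the exponent $\eta+1$ rather than $\eta$.

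There is, however, a genuine gap when $\eta=0$. The linear-in-$\xi$ piece of the right-hand side is the matrix $M_1(v,\tau)=\mu\eps^{\eta}D^2\wh H_1^1(q_0(v),p_0(v),\tau)$, of size $O(\mu\eps^{\eta})$. After integration along a characteristic of $s$-length $O(1/\eps)$ this contributes a Lipschitz factor of order $\mu\eps^{\eta}$, not $\mu\eps^{\eta+1}$, and for $\eta=0$ this is $O(\mu)$ with $\mu$ not assumed small: your phrase ``absorbed by its \ldots\ small-linear structure'' does not apply. The averaging trick you invoke for the forcing does not carry over verbatim, because the product $M_1\cdot\xi$ no longer has zero $\tau$-average even though $M_1$ does.

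The paper's fix is a preliminary near-identity change of unknown $\xi=(\Id+\ol M_1)\ol\xi$, where $\ol M_1=\GG_\eps(M_1)$ is the $\tau$-antiderivative of $M_1$; since $\langle M_1\rangle=0$ one gets $\|\ol M_1\|_\sigma\le K|\mu|\eps^{\eta+1}$. In the transformed equation the new linear coefficient $\wh M=(\Id+\ol M_1)^{-1}\bigl(M_1\ol M_1+A\ol M_1-\ol M_1 A+M_2(\Id+\ol M_1)\bigr)$ is $O(\mu\eps^{\eta+1})$, and the fixed point map becomes contractive for every $\eta\ge 0$. If you prefer to stay with characteristics, the equivalent move is an integration by parts of $\int \eps M_1\xi\,ds$ against a primitive of $M_1$, but that has to be built into the operator before you try to contract, not treated as a small remainder.
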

The proof of this theorem is given in Section \ref{subsec:Extensio:general}.\\

Theorem \ref{th:Extensio} provides parameterizations of the
invariant manifolds of the form \eqref{def:ParamByFlow} in the
domains $\wt D^{\out,u}_{\rr,d,\kk}$ and $\wt
D^{\out,s}_{\rr,d,\kk}$. In particular, they are defined in the
following transition domains, which are depicted  in Figure \ref{fig:TransBoom}.
\begin{equation}\label{def:Dominis:Trans:Raros}
\begin{split}
\tro^{\out,u}_{\kk,d}&=\wt D^{\out,u}_{\rr,d,\kk}\cap D^{u}_{\kk,d}\\
\tro^{\out,s}_{\kk,d}&=\wt D^{\out,s}_{\rr,d,\kk}\cap D^{s}_{\kk,d},
\end{split}
\end{equation}
where, by construction, $p_0(u)$ does not vanish. Then, we can use
these domains as  transition domains where we can go back to the
parameterizations \eqref{eq:ParameterizationHJ} and where the
Hamilton-Jacobi equation \eqref{eq:HamJacGeneral} can be used. To
obtain them, we look for changes of variables
$v=u+\VV^{u,s}(u,\tau)$ which satisfy
\begin{equation}\label{eq:CanviParamToHJ}
Q^{u,s}(u+\VV^{u,s}(u,\tau),\tau)=q_0(u),
\end{equation}
where $Q^{u,s}$ are the first components of the parameterizations
obtained in Theorem \ref{th:Extensio}. Once we have them, we will
define the generating functions $T^{u,s}$ which  give the
parameterizations \eqref{eq:ParameterizationHJ}. Let us observe that
if $p_0(u)$ does not vanish in the outer domains, the changes of
variables $v=u+\VV^{u,s}(u,\tau)$ are defined in the whole domain
and they are the inverse of the changes $u=v+\UU^{u,s}(v,\tau)$ obtained
in Theorem \ref{th:HJtoParam}.

\begin{figure}[h]
\begin{center}
\psfrag{D6}{$\wt D^{\out,s}_{\rr,d,\kk}$} \psfrag{D5}{$\wt
D^{\out,u}_{\rr,d,\kk}$}
\psfrag{b1}{$\beta_1$}\psfrag{b2}{$\beta_2$}
\psfrag{a1}{$ia$}\psfrag{a2}{$-ia$}\psfrag{a3}{$i(a-d)$}\psfrag{a4}{
$i(a-\kk\eps)$}
\psfrag{D1}{$D^{\out,s}_{\rr,\kk}$}\psfrag{D}{$D^{s}_{\kk,d}$}\psfrag{D4}{$D^{
\out,u}_{\rr,\kk}$}\psfrag{D3}{$D^{u}_{\kk,d}$}
\psfrag{D8}{$\tro^{\out,s}_{\kk,d}$}\psfrag{D7}{$\tro^{\out,u}_{\kk,d}$}
\includegraphics[height=6cm]{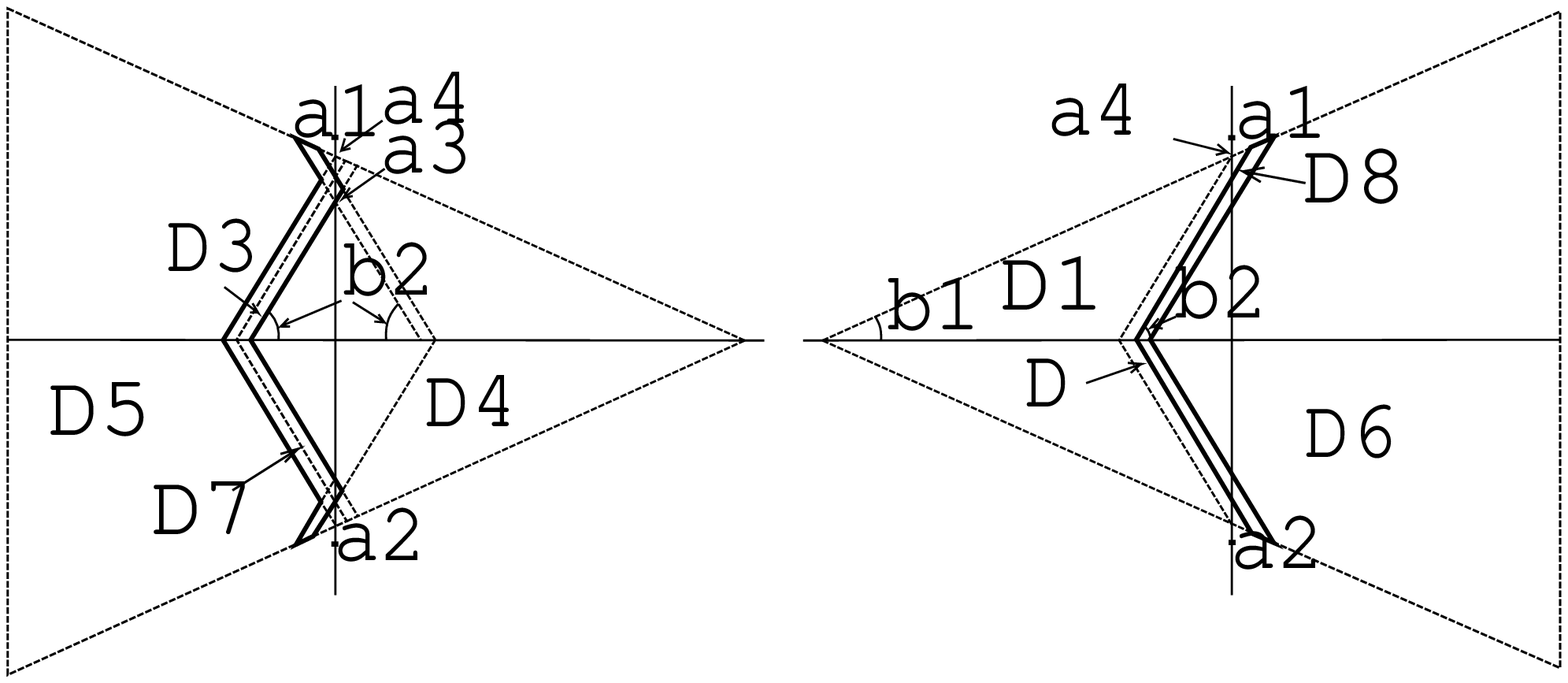}
\end{center}
\caption{\figlabel{fig:TransBoom} The domains
$\tro^{\out,u}_{\kk,d}$ and $\tro^{\out,s}_{\kk,d}$ defined in
\eqref{def:Dominis:Trans:Raros}.}
\end{figure}

\begin{theorem}\label{th:ParamtoHJ}
Let  $d_0$, $\kk_1$, $\rr_4$ be the constants given in Theorem
\ref{th:Extensio}, $\kk_2>\kk_1$, $d_1<d_0$ and $\eps_0>0$ small
enough. Then, for $\eps\in (0,\eps_0)$  and $\mu\in B(\mu_0)$, and
increasing $\kk_1$ if necessary,
\begin{itemize}
\item There exists a real-analytic function $\VV^u:
\tro^{\out,u}_{\kk_2,d_1}\times\TT_\sigma\rightarrow\CC$ which satisfies
\eqref{eq:CanviParamToHJ}. 
Moreover, if $(u,\tau)\in
\tro^{\out,u}_{\kk_2,d_1}\times\TT_\sigma$, then $u+\VV^u(u,\tau)\in
\tro^{\out,u}_{\kk_1,d_0}$ and
\[
\left|\VV^u(u,\tau)\right|\leq b_6|\mu|\eps^{\eta+1}
\]
with  $b_6$ a constant independent of $\mu$ and $\eps$.
\item There exists a generating function $T^u:
\tro^{\out,u}_{\kk_2,d_1}\times\TT_\sigma\rightarrow\CC$ such that
\[
\pa_uT^u(u,\tau)=p_0(u)P^u(u+\VV^u(u,\tau),\tau),
\]
where $P^u$ is the function obtained in Theorem \ref{th:Extensio},
and
 satisfies equation \eqref{eq:HamJacGeneral}. Then, we have that
$(q,p)=(q_0,p_0(u)\ii \pa_u T^u(u,\tau))$ is a
parameterization of the unstable invariant manifold of the form
\eqref{eq:ParameterizationHJ}. Moreover, there exists a constant
$b_7>0$ such that, for $(u,\tau)\in
\tro^{\out,u}_{\kk_2,d_1}\times\TT_\sigma$,
\[
\left| \pa_ uT^u(u,\tau)-\pa_ uT_0(u)\right|\leq b_7|\mu|
\eps^{\eta+1}.
\]
\end{itemize}
\end{theorem}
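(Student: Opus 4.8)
The plan is to work entirely inside the transition domain $\tro^{\out,u}_{\kk_2,d_1}$, which by construction lies in the \emph{boomerang domain} $D^{u}_{\kk_2,d_1}$ (so $p_0(u)\neq 0$ there) and is a bounded region staying uniformly away from $\pm ia$, hence a domain on which $p_0$ is bounded away from both $0$ and $\infty$; the fact that $p_0$ has no zero there comes from the choice of $\beta_2$ in \eqref{def:DominisRaros}. First I would solve the change of variables equation \eqref{eq:CanviParamToHJ}. Writing $q_0(u+\VV)-q_0(u)=\VV\int_0^1 p_0(u+t\VV)\,dt$ and using that $Q^u-q_0$ is $\OO(|\mu|\eps^{\eta+1})$ by Theorem \ref{th:Extensio}, equation \eqref{eq:CanviParamToHJ} becomes the fixed point equation
\[
\VV=-\frac{\big(Q^u-q_0\big)\big(u+\VV,\tau\big)}{\int_0^1 p_0(u+t\VV)\,dt},
\]
which has a unique small solution by the contraction principle in the ball $\{\|\VV\|\leq b_6|\mu|\eps^{\eta+1}\}$ of real-analytic functions, provided $\eps_0$ is small. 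The bookkeeping point is to check that for such $\VV$ the point $u+\VV$ stays in $\wt D^{\out,u}_{\rr_4,d_0,\kk_1}$, where $(Q^u,P^u)$ is defined: this forces $\kk_2>\kk_1$ and $d_1<d_0$ and, in the limit case $\eta=0$, enlarging $\kk_1$ so that $(\kk_2-\kk_1)\eps$ dominates $b_6|\mu|\eps^{\eta+1}$. Analyticity in $(u,\tau)$ and in $\mu$ is inherited from the fixed point, which also gives $|\VV^u(u,\tau)|\leq b_6|\mu|\eps^{\eta+1}$ and $u+\VV^u(u,\tau)\in\tro^{\out,u}_{\kk_1,d_0}$.

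For the generating function, set $g(u,\tau):=p_0(u)P^u(u+\VV^u(u,\tau),\tau)$. The key observation is that the symplectic change \eqref{eq:CanviSimplecticSeparatriu} conjugates the Hamiltonian system of $\wh H$ to that of $\ol H$, so that the parameterization $(Q^u,P^u)$ of Theorem \ref{th:Extensio}, transported to the $(u,w)$ coordinates and then reparameterized by $v=u+\VV^u(u,\tau)$ (which makes its first component the identity, by \eqref{eq:CanviParamToHJ}), represents the unstable manifold as the graph $w=g(u,\tau)$; invariance under the flow then forces $g$ to satisfy the quasilinear equation $\pa_\tau g+\eps\,\pa_u\ol H(u,g,\tau)+\eps\,\pa_w\ol H(u,g,\tau)\,\pa_u g=0$, which is precisely the $u$-derivative of \eqref{eq:HamJacGeneral}. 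Equivalently, the $1$-form $g\,du-\eps\ol H(u,g,\tau)\,d\tau$ is closed; since $\tro^{\out,u}_{\kk_2,d_1}\times\TT_\sigma$ is simply connected, it is exact, and its primitive $T^u$ satisfies $\pa_u T^u=g$ and $\pa_\tau T^u=-\eps\ol H(u,\pa_u T^u,\tau)$, i.e.\ it solves \eqref{eq:HamJacGeneral}; consequently $(q,p)=(q_0(u),p_0(u)\ii\pa_u T^u(u,\tau))$ is a parameterization of the form \eqref{eq:ParameterizationHJ}. The additive constant is fixed by matching $T^u$ with the solution of Theorem \ref{th:ExistenceCloseInfty} on the nonempty overlap $\tro^{\out,u}_{\kk_2,d_1}\cap D^{u}_{\infty,\rr_1}$ (equivalently, with the functions of Theorem \ref{th:HJtoParam} on the overlap with the transition domain at infinity); by the uniqueness part of Theorem \ref{th:ExistenceCloseInfty} the two coincide there, which also shows that $T^u$ is $2\pi$-periodic in $\tau$ and is the analytic continuation of the local unstable manifold.

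Finally, the estimate for $\pa_u T^u-\pa_u T_0$ follows by splitting
\[
\pa_u T^u(u,\tau)-\pa_u T_0(u)=p_0(u)\Big[\big(P^u-p_0\big)(u+\VV^u,\tau)+\big(p_0(u+\VV^u)-p_0(u)\big)\Big],
\]
bounding the first bracket by $b_5|\mu|\eps^{\eta+1}$ via Theorem \ref{th:Extensio}, the second by $K|\VV^u|\leq K b_6|\mu|\eps^{\eta+1}$ using that $p_0'$ is bounded on the domain, and using that $|p_0|$ is bounded there; this gives the claimed bound with a suitable $b_7$. I expect the only genuinely delicate point to be the domain tracking in the first step, namely ensuring $u+\VV^u\in\wt D^{\out,u}_{\rr_4,d_0,\kk_1}$ and that $p_0$ stays away from $0$ and $\infty$ (both of which rely on the geometry of the boomerang domains and the choice of $\beta_2$); everything else is a routine consequence of the Hamilton--Jacobi/Hamiltonian-flow correspondence and the contraction principle, consistently with the fact that this is a classical perturbative result.
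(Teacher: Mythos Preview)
Your proposal is correct and follows essentially the same route as the paper: the paper also rewrites \eqref{eq:CanviParamToHJ} as a fixed point problem (with the equivalent operator $\mathcal{N}(h)=-p_0(u)^{-1}\big[Q_1(u+h,\tau)+q_0(u+h)-q_0(u)-p_0(u)h\big]$), obtains contraction with Lipschitz constant $\OO(|\mu|\eps^{\eta}/\kk_1)$ via a Cauchy estimate on $\pa_vQ_1$, and then recovers $T^u$ by checking the compatibility condition $\pa_\tau\big[p_0(u)P(u+\VV,\tau)\big]=-\pa_u\big[\eps\,\ol H(u,p_0(u)P(u+\VV,\tau),\tau)\big]$, which is exactly your closed $1$-form argument; the final estimate is obtained by the same splitting you give.

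One small correction: the overlap $\tro^{\out,u}_{\kk_2,d_1}\cap D^{u}_{\infty,\rr_1}$ you invoke to fix the additive constant is in fact empty, since the transition domain $\tro^{\out,u}_{\kk_2,d_1}\subset D^u_{\kk_2,d_1}$ is a bounded region near the imaginary axis while $D^{u}_{\infty,\rr_1}=\{\Re u<-\rr_1\}$ with $\rr_1$ large. This is harmless, because the statement only concerns $\pa_uT^u$ and the additive constant plays no role; the paper does not attempt any such matching at this step either.
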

This theorem is proved in Section \ref{sec:Transicio}.

The final step is to extend the just obtained parameterizations of
the form \eqref{eq:ParameterizationHJ} to the whole \emph{boomerang
domains} $D^u_{\kk,d}$ and $D^s_{\kk,d}$ defined in
\eqref{def:DominisRaros} (see also Figure
\ref{fig:BoomerangDomains}). In particular the whole \emph{boomerang domains}
contain
points up to a distance $\kk \eps$ of the singularities $\pm ia$.

\begin{theorem}\label{th:ExtensioFinal}
Let $\kk_2$ and $d_1$ be the constants given in Theorem
\ref{th:ParamtoHJ},  $d_2<d_1$, $\kk_3>\kk_2$ big enough and
$\eps_0>0$ small enough. Then, for $\mu\in B(\mu_0)$ and
$\eps\in(0,\eps_0)$, the function $T^u(u,\tau)$ obtained in Theorem
\ref{th:ParamtoHJ} can be analytically extended to the domain
$D^{u}_{\kk_3,d_2}\times\TT_\sigma$.

Moreover, there exists a real constant $b_8>0$ independent of $\eps$
and $\mu$, such that for $(u,\tau)\in
D^{u}_{\kk_3,d_2}\times\TT_\sigma$,
\[
\left|\pa_uT^u(u,\tau)-\pa_u T_0(u)\right|\leq
\frac{b_8|\mu|\eps^{\eta+1}}{\left|u^2+a^2\right|^{\ell+1}},
\]
where $T_0$ is the unperturbed separatrix given in \eqref{def:T00}.
\end{theorem}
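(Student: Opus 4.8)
The plan is to extend the generating function $T^u$ from the transition domain $\tro^{\out,u}_{\kk_2,d_1}$ obtained in Theorem \ref{th:ParamtoHJ} to the full \emph{boomerang domain} $D^u_{\kk_3,d_2}$, proceeding exactly as in the case $p_0(u)\neq 0$ treated in Theorem \ref{th:Extensio:Trig}. Since we have already verified that $p_0(u)$ does not vanish on $D^u_{\kk,d}$ (this is built into the choice of the angle $\beta_2$ in \eqref{def:DominisRaros}), the symplectic change \eqref{eq:CanviSimplecticSeparatriu} is legitimate throughout, and the Hamilton-Jacobi equation \eqref{eq:HamJacGeneral} is the correct equation to use. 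First I would write $\pa_u T^u = \pa_u T_0 + \pa_u \TT^u$ where $\TT^u$ is the unknown correction, and derive from \eqref{eq:HamJacGeneral} the linear (in $\pa_u\TT^u$) transport-type PDE it satisfies, of the schematic form $(\pa_\tau + \eps\, b(u,\tau)\,\pa_u)\pa_u\TT^u = \eps\, R(u,\tau) + (\text{l.o.t.})$, where $b$ is close to $p_0(u)^2$ and $R$ collects the perturbative terms $\mu\eps^\eta \wh H_1$ together with the contribution of the parabolic/hyperbolic correction to $V$; the inhomogeneity $R$ is $\OO\!\left(|\mu|\eps^{\eta}/|u^2+a^2|^{\ell+1}\right)$ by \textbf{HP4}, \textbf{HP5} and the singularity structure \eqref{eq:SepartriuAlPol}, \eqref{eq:SepartriuAlPolTrig}.

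The core of the argument is a fixed-point scheme for this PDE in a Banach space of analytic functions on $D^u_{\kk_3,d_2}\times\TT_\sigma$ with a weighted norm adapted to the singularities, roughly $\|h\| = \sup |h(u,\tau)|\,|u^2+a^2|^{\ell+1}$ (up to the obvious modification near $\Re u \to -\infty$, where the weight must degrade to the exponential decay of \eqref{eq:separatrix:hyp:infty} or the algebraic decay of \eqref{eq:separatrix:hyp:parab}). The key step is to invert the transport operator $\pa_\tau + \eps\, b\,\pa_u$ along its characteristics: one integrates from the ``boundary'' part of $D^u_{\kk_3,d_2}$ that lies inside the already-controlled domain $\tro^{\out,u}_{\kk_2,d_1}$ (where the Cauchy data for $\pa_u\TT^u$ is provided by Theorem \ref{th:ParamtoHJ}), tracking how the characteristic curves $\dot u = \eps\, b(u,\tau)$ sweep out the boomerang. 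One shows the characteristics stay in the domain until they reach the boundary, and one controls the length of the integration path; because the domain reaches only to distance $\kk_3\eps$ of $\pm ia$ and the characteristic speed is $\OO(\eps)$, the path length near the singularity is $\OO(1)$ in the appropriate sense and the weight $|u^2+a^2|^{\ell+1}$ is, up to constants, preserved along characteristics. This yields a linear operator on the weighted space with norm $\OO(1)$ acting on the inhomogeneity, so the contribution from $R$ is $\OO(|\mu|\eps^{\eta+1})$ in the weighted norm — note the gain of one power of $\eps$ compared to $R$ itself, which is exactly the $\eps^{\eta+1}$ in the statement and comes from the $\eps$ in front of $\overline H$ in \eqref{eq:HamJacGeneral}. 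The nonlinear/lower-order terms are then handled by a standard contraction argument for $\eps$ small, using that $\pa_u\TT^u$ is already small on the overlap $\tro^{\out,u}_{\kk_2,d_1}$ by Theorem \ref{th:ParamtoHJ}.

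Finally, uniqueness of the extension follows from analytic continuation (the new $T^u$ agrees with the one of Theorem \ref{th:ParamtoHJ} on the overlap, which has nonempty interior), and real-analyticity is inherited since all the data $H_0$, $H_1$, $(x_p,y_p)$, $(q_0,p_0)$ and the boundary values are real-analytic and the fixed-point operator preserves this class. The main obstacle I anticipate is the characteristics analysis near the singularities $\pm ia$: one must check that the flow of $\dot u = \eps\, b(u,\tau)$, with $b$ a small perturbation of $p_0(u)^2 \sim C_\pm^2/(u\mp ia)^{2r}$, does not push characteristics out of the thin boomerang region (whose width near $\pm ia$ is only $\OO(\kk_3\eps)$) before they exit through the controlled outer boundary, and simultaneously that the weighted norm is genuinely almost invariant along these curves despite the blow-up of $b$. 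This is precisely the estimate that forces $\kk_3$ to be taken large and governs the geometry of $D^u_{\kk,d}$; it is the same mechanism underlying Theorem \ref{th:Extensio:Trig}, so I would structure the proof in Section \ref{sec:Extensio} to treat both theorems in parallel, the only difference being that here the Cauchy data comes from $\tro^{\out,u}_{\kk_2,d_1}$ (Theorem \ref{th:ParamtoHJ}) rather than from $D^u_{\infty,\rr_1}$ (Theorem \ref{th:ExistenceCloseInfty}).
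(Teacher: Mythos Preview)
Your overall plan is the paper's: run the same fixed-point argument as Theorem \ref{th:Extensio:Trig} in the weighted space with weight $|u^2+a^2|^{\ell+1}$, on the boomerang domain, with initial data supplied by Theorem \ref{th:ParamtoHJ} on the overlap. But there is a concrete confusion about the transport operator. After the symplectic change \eqref{eq:CanviSimplecticSeparatriu} the principal part of the Hamilton--Jacobi equation is the \emph{constant-coefficient} operator $\LL_\eps=\eps^{-1}\pa_\tau+\pa_u$ (see \eqref{eq:HJperT1}): indeed $\pa_w\ol H_0(u,\pa_u T_0)=\pa_u T_0/p_0^2(u)=1$, which is precisely the reason for parameterizing by the time $u$ along the separatrix. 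So your coefficient $b$ is close to $1$, not to $p_0(u)^2$, and the ``main obstacle'' you anticipate (characteristics speeding up like $(u\mp ia)^{-2r}$ and escaping the thin boomerang) does not arise; the characteristics of $\LL_\eps$ are straight lines $u=u_0+\eps(\tau-\tau_0)$ and stay in $D^u_{\kk_3,d_2}$ by its geometry.

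The paper accordingly inverts $\LL_\eps$ Fourier-mode by Fourier-mode via the operator $\GG_\eps$ of \eqref{def:operadorG:HJtoParam}, now with integration endpoints at the vertices of the boomerang. The extra factor $\eps$ you need does not come from ``the $\eps$ in front of $\ol H$'' globally; it comes from the fact that the leading piece $A_2=-\mu\eps^\eta\wh H_1^1(q_0,p_0,\tau)$ of the source has zero $\tau$-average, so $\GG_\eps$ gains an $\eps$ on it (Lemma \ref{lemma:PropietatsGExtensio}), while the remaining pieces of $A$ are already $\OO(\mu\eps^{\eta+1})$ for other reasons. There is one more ingredient you do not mention and which is essential when $\eta=\eta^\ast$: the linear coefficient $B_1=-\mu\eps^\eta p_0^{-1}\pa_p\wh H_1^1(q_0,p_0,\tau)$ is $\OO(1)$ in the sup norm, so the contraction fails unless one first performs the change $u=v+g(v,\tau)$ with $\LL_\eps g=-B_1$ (Lemma \ref{lemma:Extensio:Trig:canvi}), again using $\langle B_1\rangle=0$; only then does the Lipschitz constant become $\OO\big(\kk^{-(\ell-2r+1)}\big)$, and this is what forces $\kk_3$ large. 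Your idea of absorbing $B_1$ into $b$ and following the perturbed characteristics is a legitimate alternative, but then $b=1+B_1+\ldots$ is still close to $1$, and you would have to reprove the weighted estimates along those curves; the paper's route via $\GG_\eps$ and the preliminary change avoids that.
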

The proof of this theorem is given in Section
\ref{subsec:ExtensioFinal}.

\begin{remark}
Let us point out that these domains satisfy $D^u_{\kk,d}\subset
D^{\out,u}_{\rr,\kk}$ and $D^s_{\kk,d}\subset D^{\out,s}_{\rr,\kk}$
if $\rr$ is big enough. Therefore, in the case that $p_0(u)$ does
not vanish, Theorem \ref{th:Extensio:Trig} ensures that the
functions $T^{u,s}$ are already defined in $D^u_{\kk,d}$ and
$D^s_{\kk,d}$ respectively.

Let us observe that, if
$\eps$ is small enough, $D_{\kk,\C}^{\inn,\pm,s}\subset D^s_{\kk,d}$
and $D_{\kk,\C}^{\inn,\pm,u}\subset D^u_{\kk,d}$.
\end{remark}

After Theorem \ref{th:Extensio:Trig} and \ref{th:ExtensioFinal} there is no
difference between
the case $p_0(u)\neq 0$, when the invariant
manifolds can be written as graphs globally, and the general case
when $p_0$ can vanish: we have found \emph{boomerang domains}
which intersect the real line and which reach neighborhoods
of size $\kk \eps$ of the singularities where both manifolds
can be written as graphs. This will be the starting point in
our strategy to measure the distance between the invariant manifolds.

\subsection{The asymptotic first order of $\partial_u T^{u,s}$ close to the
singularities $\pm i a$}\label{sec:Aproximacio}

Theorems \ref{th:Extensio:Trig} and \ref{th:ExtensioFinal} are valid
for $\eta \geq \max\{0,\ell-2r\}$. Therefore, 
when $\ell\leq 2r$ the results are true for $\eta \geq 0$. Notice that  if $\ell < 2r$
Theorems \ref{th:Extensio:Trig} and \ref{th:ExtensioFinal} give a
\emph{classical} perturbative result
with respect to the singular parameter $\eps$, in the sense
that the main term of $\partial_u T^{u,s}$ is given by the unperturbed
separatrix $\partial_u T_0$
in the whole \emph{outer} domains.
This fact is not true anymore in the case $\ell-2r\geq 0$ and $\eta=\ell-2r$.
Then we will have to look for
different approximations of the invariant manifolds close to the
singularities $u=\pm ia$, by using suitable solutions of the so-called
\emph{inner} equations.
Consequently, the case $\ell <2r$ is easier to deal with, because it is always
regular
 and there is no need of using \emph{inner} equations to obtain a better
approximation
of $\partial_u T^{u,s}$ near  the singularities $\pm i a$ of $T_0$. 
When $\ell-2r\geq 0$, as we have mentioned in section \ref{sec:heuristic:asymptotic},
we include the regular case $\eta>\ell-2r$ in the
singular one $\eta=\ell-2r$ doing the change of parameter
$\hmu=\mu\eps^{\eta-(\ell-2r)}$.

We separate both cases $\ell <2r$ and $\ell \geq 2r$ in the corresponding
sections below.

\subsubsection{The asymptotic  first order of $\partial_u T^{u,s}$ for the case
$\ell<2r$}\label{sec:Aproximacio:lmenor}

In this section we will assume that $\ell <2r$ and henceforth we are dealing
with values of $\eta \geq 0$.

To obtain the main term of $\partial_u T^{u,s} - \partial_u T_0$
we just need to use classical perturbation theory even in the \emph{inner
domains}
$D_{\kk,\C}^{\inn,\pm,\ast}$, $\ast=u,s$,  defined in
\eqref{def:DominisInnerEnu} (see Figure \ref{fig:Inners}).
Let us observe that, if $u\in D_{\kk,\C}^{\inn,\pm,\ast}$, $\ast=u,s$, then
$\OO(\kk \eps) \leq |u\mp ia| \leq \OO(\eps^{\gamma})$.

The next proposition gives the first order asymptotic terms of
$\pa_u T^{u,s}-\pa_u T_0$ close to $u=ia$, that is in $
D_{\kk,\C}^{\inn,+,\ast}$, $\ast=u,s$.
The study close to $u=-ia$ can be done
analogously.
\begin{proposition}\label{coro:Varietat:FirstOrder:lmenor}
Let us assume $\ell-2r<0$ and $0<\ga< \min\{1,\frac{\ell+1}{r+1}\}$ where $\ga$
is the constant involved in the definition of the inner domains in
\eqref{def:DominisInnerEnu}.
Let us consider the constant $\kk_3$ given by Theorem
\ref{th:ExtensioFinal} and $\C_1>0$ and let us define the constant
\[
\nu^\ast=\min\left\{\nu_1^{\ast},\nu_2^\ast,1-\max\{0,\ell-2r+1\},r,\ell,
\ell+1- (r+1) \ga \right\}>0,
\]
where
\[
\begin{split}
\nu_1^{\ast}&=\min\{(2r-\ell)\ga,1\}\\
 \nu_2^\ast&=\left\{\begin{array}{ll} \ell(1-\gamma) &\text{if }\ell>0\\
                    1-\ga &\text{if }\ell=0
                   \end{array}\right..
\end{split}
\]
Let us also define  the functions
\begin{equation}\label{def:MigMelnikov}
\begin{split}
\TTT^u_0(u,\tau)=&-\mu \eps^{\eta} \int_{-\infty}^0
H_1(q_0(u+t),p_0(u+t),\tau+\eps\ii t)\, dt\\
\TTT^s_0(u,\tau)=&-\mu \eps^{\eta} \int_{+\infty}^0
H_1(q_0(u+t),p_0(u+t),\tau+\eps\ii t)\, dt ,
\end{split}
\end{equation}
where $H_1$ is the function defined in \eqref{def:Ham:Original:perturb:poli} and
\eqref{def:Ham:Original:perturb:trig} and $(q_0(u),p_0(u))$ is the
parameterization
of the unperturbed separatrix given in Hypothesis \textbf{HP2}.
Then, there exists  $\eps_0>0$ and a constant $b_{9}>0$ such that for any
$\eps\in (0,\eps_0)$ and $\mu\in B(\mu_0)$ the following bounds are satisfied.
\begin{itemize}
 \item  If $(u,\tau)\in
D_{\kk_3,\C_1}^{\inn,+,u}\times\TT_\sigma$,
\[
\left|\pa_u T^u(u,\tau)-\pa_u T_0(u)-\pa_u \TTT^u_0(u,\tau)\right|\leq
b_9|\mu|\eps^{\eta-\ell+\nu^\ast}.
\]
 \item  If $(u,\tau)\in D_{\kk_3,\C_1}^{\inn,+,s}\times\TT_\sigma$,
\[
\left|\pa_u T^s(u,\tau)-\pa_s T_0(u)-\pa_u \TTT^s_0(u,\tau)\right|\leq
b_9|\mu|\eps^{\eta-\ell+\nu^\ast}.
\]
\end{itemize}
\end{proposition}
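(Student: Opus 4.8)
Theorem~\ref{th:ExtensioFinal} already provides, on the whole boomerang domain $D^u_{\kk_3,d_2}$ (hence on $D^{\inn,+,u}_{\kk_3,\C_1}$), the existence of $\pa_u T^u$ together with the crude bound $|\pa_u T^u-\pa_u T_0|\le K|\mu|\eps^{\eta+1}|u^2+a^2|^{-\ell-1}$. So the plan is not to re-solve a fixed point problem but to obtain an \emph{a posteriori} sharpening of it near $u=ia$, working directly from the Hamilton--Jacobi equation~\eqref{eq:HamJacGeneral} and recognizing $\TTT^u_0$ from \eqref{def:MigMelnikov} as the genuine first order in the perturbation. The key elementary facts are that $T_0$ of \eqref{def:T00} solves the autonomous equation $\ol H_0(u,\pa_u T_0)=0$, where $\ol H_0$ denotes $\ol H$ at $\mu=0$, and that on the separatrix $\pa_w\ol H_0(u,\pa_u T_0)=\pa_u T_0/p_0^2(u)=1$ and $\pa_{ww}\ol H_0=p_0^{-2}(u)$, so that the linearization of \eqref{eq:HamJacGeneral} around $T_0$ has the transport operator $\LL_\eps=\eps\ii\pa_\tau+\pa_u$ of \eqref{def:Lde} as its principal part, with characteristic speed exactly $1$. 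Differentiating the integral defining $\TTT^u_0$ and using that the integrand is a total $t$-derivative along the characteristics $t\mapsto(u+t,\tau+\eps\ii t)$ whose value vanishes as $\Re u\to-\infty$ (because $H_1$ vanishes at the origin), one checks that $\LL_\eps\TTT^u_0=-\mu\eps^\eta H_1(q_0(u),p_0(u),\tau)$ with the asymptotic condition~\eqref{eq:AsymptCondFuncioGeneradora:uns}.

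\textbf{The remainder equation.} Substituting $T^u=T_0+\TTT^u_0+\EE^u$ into \eqref{eq:HamJacGeneral} and Taylor expanding $\ol H$ in its second argument around $\pa_u T_0$, the linear part reproduces $\LL_\eps\EE^u$ and everything else becomes a source: $\LL_\eps\EE^u=\FF^u$, with $\FF^u$ collecting (i) the $\OO(\eps)$-corrections produced by the shift to the periodic orbit, namely $V(q_0(u)+\xp)-V(\xp)-V'(\xp)q_0(u)-V(q_0(u))$ and $\mu\eps^{\eta+1}\wh H_1^2(q_0,p_0,\tau)$ from \eqref{def:HamPeriodicaShiftada}--\eqref{def:HamPertorbat:H2}, controlled through $|\xp|+|\yp|\le K|\mu|\eps^{\eta+1}$ (Theorem~\ref{th:Periodica}) and $|c_{kl}|\le K|\mu|\eps^\eta$; (ii) the possible linear-in-$(q,p)$ part of $H_1$ evaluated on the separatrix, which is of strictly lower order in $u$ than $H_1(q_0,p_0,\tau)$; and (iii) the terms quadratic and of higher order in $\pa_u T^u-\pa_u T_0$, carrying the weights $\pa_{ww}\ol H_0=p_0^{-2}(u)$ and higher $w$-derivatives. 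Both $\EE^u$ and $\FF^u$ tend to $0$ as $\Re u\to-\infty$, so one may write $\EE^u(u,\tau)=\int_{-\infty}^0\FF^u(u+t,\tau+\eps\ii t)\,dt$ and then recover $\pa_u\EE^u$ by a Cauchy estimate on circles of radius a fixed fraction of $|u\mp ia|$.

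\textbf{Estimates and the exponent $\nu^\ast$.} Feeding the crude bound of Theorem~\ref{th:ExtensioFinal} into $\FF^u$ one reads off the orders of its singularities at $\pm ia$: the dominant quadratic term behaves like $p_0^{-2}(u)\,(\pa_u T^u-\pa_u T_0)^2\sim|\mu|^2\eps^{2\eta+2}|u-ia|^{2r-2\ell-2}$, and the $\OO(\eps)$-corrections like $|\mu|\eps^{\eta+1}$ times a pole whose order is governed by the expansions \eqref{eq:SepartriuAlPol}--\eqref{eq:SepartriuAlPolTrig}. Integration along a ray lowers the order of a pole of order $k$ to $k-1$, the subsequent Cauchy estimate raises it back by one, and since every point of $D^{\inn,+,u}_{\kk_3,\C_1}$ lies at distance between $\OO(\kk\eps)$ and $\OO(\eps^\ga)$ of $ia$ each of these operations loses only a controlled power of $\eps$; balancing the resulting contributions over the whole inner domain yields $|\pa_u T^u-\pa_u T_0-\pa_u\TTT^u_0|\le b_9|\mu|\eps^{\eta-\ell+\nu^\ast}$. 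The pieces of $\nu^\ast$ match these sources: $\nu_1^\ast$ comes from the quadratic terms, $\nu_2^\ast$ (with its separate clause for $\ell=0$, where $H_1$ on the separatrix is merely logarithmic) from the subleading terms in \eqref{eq:SepartriuAlPol}--\eqref{eq:SepartriuAlPolTrig}, the summand $1-\max\{0,\ell-2r+1\}$ from the $\OO(\eps)$-corrections, and $\ell+1-(r+1)\ga$ --- positive precisely because of the hypothesis $0<\ga<\min\{1,\frac{\ell+1}{r+1}\}$ --- from the portion of the inner domain farthest from the singularity, while $r$ and $\ell$ are harmless constraints keeping $\nu^\ast>0$. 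The study near $u=-ia$ is identical, and the stable case is the same with integration from $+\infty$.

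\textbf{Main obstacle.} The delicate point is the uniform bookkeeping of the orders of the singularities at $\pm ia$ through the transport inversion and the Cauchy estimates, carried out over an inner domain whose points range in distance from $\OO(\kk\eps)$ to $\OO(\eps^\ga)$ of the singularity. In particular one must verify that the quadratic-in-$(\pa_u T^u-\pa_u T_0)$ terms remain genuinely subdominant with respect to $\pa_u\TTT^u_0$ throughout $D^{\inn,+,u}_{\kk_3,\C_1}$ --- which is exactly where the standing hypothesis $\ell<2r$ enters, since it makes $2r-\ell$ appear with a favorable sign --- and one must also check the geometric compatibility $D^{\inn,+,u}_{\kk_3,\C_1}\subset D^{u}_{\kk_3,d_2}$, so that Theorem~\ref{th:ExtensioFinal} is indeed available as the input for the a posteriori estimate.
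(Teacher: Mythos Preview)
Your overall strategy --- treat the problem as an a posteriori estimate, recognize $\TTT^u_0$ as the first-order correction, and bound the remainder using the crude estimate of Theorem~\ref{th:ExtensioFinal} --- is the paper's strategy as well. The execution, however, is different. The paper does not substitute $T^u=T_0+\TTT^u_0+\EE^u$ and integrate $\LL_\eps\EE^u=\FF^u$ from $-\infty$; it works instead with $\wh T_1(v,\tau)=T_1(v+g(v,\tau),\tau)$, the object actually produced by the existence proof via the change of variables $g$ of Lemma~\ref{lemma:Extensio:Trig:canvi}, and exploits that $\pa_v\wh T_1$ is the fixed point of the operator $\JJ$ in \eqref{def:Extensio:Trig:Operador:Sencer}. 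It then introduces an integral operator $\wt\GG_\eps$ adapted to the inner domain (integrating from its two vertices $v_3,v_4$, not from $-\infty$) and splits $\pa_v\wh T_1-\pa_v\TTT^u_0$ into eight explicit pieces $N_1,\dots,N_8$. Working directly with the $v$-derivative, as the paper does, also avoids the Cauchy-estimate step you propose.

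Your attribution of the pieces of $\nu^\ast$ is largely off, and in particular you have swapped two of them. In the paper, $\nu_1^\ast$ comes from $N_4=\wt\GG_\eps(\pa_v\wh A-\pa_v A)$, that is, from the size of the change of variables $g$ in the inner domain (Corollary~\ref{coro:Extensio:CotaCanvigInner}) --- a term your scheme omits entirely --- and not from the quadratic part; the quadratic and higher terms enter through $N_2=\JJ(\pa_v\wh T_1)-\JJ(0)$, whose Lipschitz constant is what produces $1-\max\{0,\ell-2r+1\}$. The entry $\nu_2^\ast$ comes from $N_3$, the difference between the outer and inner integral operators acting on $\pa_v\wh A$, essentially a boundary contribution of the zero Fourier mode. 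The entries $r$ and $\ell$ are not ``harmless constraints keeping $\nu^\ast>0$'' but genuine contributions from the periodic-orbit shift terms $N_6$ and $N_7$. Finally, $\ell+1-(r+1)\ga$ arises from $N_8$, where the paper uses the periodic-orbit equation \eqref{eq:ode:original:lent} to rewrite the offending combination as an exact $\LL_\eps$-derivative, so that $\wt\GG_\eps$ reduces it to a boundary evaluation at the outer vertex (at distance $\sim\eps^\ga$ of the singularity); this trick does not appear in your sketch.
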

This proposition is proved in Section \ref{sec:Extensio}.

\subsubsection{The first asymptotic order of $\partial_u T^{u,s}$ for the case
$\ell\geq 2r$}
\label{sec:Aproximacio:lmajor}

Theorems \ref{th:Extensio:Trig} and \ref{th:ExtensioFinal} give the existence
of parameterizations of the invariant manifolds of the form
\eqref{eq:ParameterizationHJ} in $D^s_{\kk,d_2}$ and $D^u_{\kk,d_2}$
for $\eps$ small enough and $\kk$ big enough. Nevertheless, when
$\eta=\ell-2r$ the parameterizations of the perturbed invariant
manifolds are not well approximated by the unperturbed separatrix
when $u$ is at a distance of order $\OO(\eps)$ of the singularities
$u=\pm ia$. For this reason, to obtain the first asymptotic order of
the difference between the manifolds, we need to look for better
approximations $T^{u,s}$ in the inner domains defined in
\eqref{def:DominisInnerEnu}. We obtain them through a singular
limit. Since we are dealing with the case $\eta\geq \ell-2r$, the
first step is to define a new parameter
\begin{equation}\label{def:mu:barret}
\hmu=\mu\eps^{\eta-(\ell-2r)}.
\end{equation}
Then, the Hamiltonian $\widehat H$ reads
\begin{equation}\label{def:HamPeriodicaShiftada:muhat}
\begin{split}
\widehat
H(q,p,\tau)=&\frac{p^2}{2}
+V\left(q+\xp(\tau)\right)-V\left(\xp(\tau)\right)-V'\left(\xp(\tau)\right)q\\
&+\hmu\eps^{\ell-2r}\widehat H_1(q,p,\tau)
\end{split}
\end{equation}
and, from $\widehat H$, one can define the Hamiltonian $\ol H$ in
\eqref{def:Hbarra} using again  the change
\eqref{eq:CanviSimplecticSeparatriu}. On the other hand, from
Theorems \ref{th:Extensio:Trig} and \ref{th:ExtensioFinal}, one can
obtain bounds for the parameterizations of the invariant manifolds
in terms of $\hmu$ and $\eps$. We state them for the unstable
manifold. The stable manifold satisfies analogous bounds.

\begin{corollary}\label{coro:CotesOuter:muhat}
Let us consider the constants $\kk_3$ and $d_2$ defined in Theorem
\ref{th:ExtensioFinal}.
Then the function $T^u$ obtained in Theorems  \ref{th:Extensio:Trig} and
\ref{th:ExtensioFinal}, which is defined for $(u,\tau)\in
D^{u}_{\kk_3,d_2}\times\TT_\sigma$, satisfies
\[
\left|\pa_uT^u(u,\tau)-\pa_u T_0(u)\right|\leq
\frac{b_8|\hmu|\eps^{\ell-2r+1}}{\left|u^2+a^2\right|^{\ell+1}},
\]
where $T_0$ is the unperturbed separatrix given in \eqref{def:T00}.
\end{corollary}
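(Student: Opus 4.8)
The plan is to read off the corollary from Theorems \ref{th:Extensio:Trig} and \ref{th:ExtensioFinal} via the change of parameter \eqref{def:mu:barret}. Since we are in the case $\ell\geq 2r$, Hypothesis \textbf{HP5} gives $\eta\geq\eta^\ast=\ell-2r\geq 0$, so the exponent $\eta-(\ell-2r)$ is nonnegative; taking $\eps_0<1$, for every $\eps\in(0,\eps_0)$ the parameter $\hmu=\mu\eps^{\eta-(\ell-2r)}$ satisfies $|\hmu|=|\mu|\eps^{\eta-(\ell-2r)}\leq|\mu|<\mu_0$, hence $\hmu\in B(\mu_0)$ and all the standing hypotheses of the previous theorems hold verbatim with $\hmu$ in place of $\mu$.

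With this in hand the proof is the algebraic identity
\[
|\mu|\,\eps^{\eta+1}=\bigl(|\mu|\,\eps^{\eta-(\ell-2r)}\bigr)\,\eps^{\ell-2r+1}=|\hmu|\,\eps^{\ell-2r+1}.
\]
First I would use the Remark after Theorem \ref{th:ExtensioFinal}: for $\rr$ large enough one has $D^u_{\kk_3,d_2}\subset D^{\out,u}_{\rr,\kk_3}$, so when $p_0(u)\neq 0$ the function $T^u$ is already defined on $D^u_{\kk_3,d_2}\times\TT_\sigma$ and the bound there is provided by Theorem \ref{th:Extensio:Trig} with constant $b_2$; in the general case, the same type of bound with constant $b_8$ on the whole boomerang domain $D^u_{\kk_3,d_2}\times\TT_\sigma$ is precisely the conclusion of Theorem \ref{th:ExtensioFinal}. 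Substituting the identity above turns $b_8|\mu|\eps^{\eta+1}$ into $b_8|\hmu|\eps^{\ell-2r+1}$; enlarging $b_8$ so that $b_8\geq b_2$ makes one statement valid in both cases, which is the claimed inequality.

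There is essentially no obstacle here: the analytic existence of $T^u$ and the quantitative estimates have already been established in Sections \ref{sec:sketch:outer}--\ref{sec:sketch:Extensio}, and the corollary is a bookkeeping step that recasts those estimates in the form adapted to the singular limit $\eps\to 0$ with parameter $\hmu$, to be used in Section \ref{sec:Aproximacio:lmajor}. The single point that genuinely requires a remark is that the rescaling keeps $\mu$ inside $B(\mu_0)$, which is exactly the sign condition $\eta\geq\ell-2r$ guaranteed by Hypothesis \textbf{HP5}.
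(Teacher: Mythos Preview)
Your proof is correct and matches the paper's approach: the corollary is stated without proof in the paper because it is an immediate rewriting of the estimate from Theorem \ref{th:ExtensioFinal} (and Theorem \ref{th:Extensio:Trig} in the case $p_0(u)\neq 0$) using the identity $|\mu|\eps^{\eta+1}=|\hmu|\eps^{\ell-2r+1}$, which follows from the definition \eqref{def:mu:barret}. One minor remark: you do not need to argue that the previous theorems ``hold verbatim with $\hmu$ in place of $\mu$''; the function $T^u$ and its bound are already established in terms of $\mu$, and the corollary is simply the algebraic substitution you wrote down.
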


We want to study the invariant manifolds close to the singularities
$u=\pm ia$, that is, in the inner domains defined in
\eqref{def:DominisInnerEnu}. Since the study of both invariant
manifolds close either to $u=ia$ or $u=-ia$ is analogous, we only
study them in the domain $D_{\kk,\C}^{\inn,+,u}$. Then, we consider
the change of variables
\begin{equation}\label{def:CanviVarInnerOuter}
z=\eps\ii (u-ia).
\end{equation}
The variable $z$ is called the \emph{inner variable}, in
contraposition to the \emph{outer variable} $u$. We note that, by
definition of $T_0$ in \eqref{def:T00} and using the expansion
around the singularities of $p_0(u)$ in  \eqref{eq:SepartriuAlPol}
and \eqref{eq:SepartriuAlPolTrig}, we have that
\begin{equation*}
\partial_uT_0(\eps z+ ia) =\frac{C_+^{2}}{\eps^{2r}z^{2r}}
\left(1+\OO\left((\eps z)^{1/\beta}\right)\right)
\end{equation*}
and, using the results of Corollary  \ref{coro:CotesOuter:muhat}, we have that
\begin{equation*}
\left|\partial_u T^{u,s}(\eps z+ ia, \tau) - \partial_u T_0(\eps z+
ia)\right|\leq K\frac{|\hmu|}{\eps^{2r}|z|^{\ell+1}}.
\end{equation*}
Hence, in order to catch the terms of the same order in $\eps$, we
scale the generating function as
\begin{equation}\label{eq:FuncioGeneradoraInner}
\psi^{u,s}(z,\tau)=\eps^{2r-1}C_+^{-2} T^{u,s}(ia+\eps z,\tau).
\end{equation}

Then, the Hamilton-Jacobi equation \eqref{eq:HamJacGeneral} reads
\begin{equation}\label{eq:HJGeneralInner}
\pa_\tau\psi +\eps^{2r}C_+^{-2}\ol H\left(ia+\eps z,
\eps^{-2r}C^{2}_+\pa_z\psi,\tau\right)=0,
\end{equation}
where $\ol H$ is the Hamiltonian function defined in
\eqref{def:Hbarra}. The corresponding Hamiltonian is
\begin{equation}\label{Hamiltonia:varInner}
\HH (z,w,\tau)=\eps^{2r}C_+^{-2}\overline H\left(ia+\eps z,
\eps^{-2r}C^{2}_+w,\tau\right).
\end{equation}
We study equation \eqref{eq:HJGeneralInner} in the domain
$\DD_{\kk,\C}^{\inn,+,u}\times\TT_\sigma$, where
\begin{equation}\label{def:DominisInnerEnz}
\begin{split}
\DD_{\kk,\C}^{\inn,+,u}=&\left\{ z\in\CC; ia+\eps z \in
D_{\kk,\C}^{\inn,+,u}\right\}.
\end{split}
\end{equation}

To study equation \eqref{eq:HJGeneralInner}, as a first step it is
natural to study it in the limit case $\eps=0$.  In the polynomial
case it reads
\begin{equation}\label{eq:HJEqInner}
\pa_\tau\psi_0
+\frac{1}{2}z^{2r}\left(\pa_z\psi_0\right)^2-\frac{1}{2z^{2r}}+\frac{\hmu}{
z^\ell}
\sum_{(r-1)k+rl= \ell}\frac{C_+^{k+l-2}}{(1-r)^k}
a_{kl}(\tau)\left(z^{2r}\pa_z\psi_0\right)^l=0.
\end{equation}
The solutions of this equation were studied in detail in
\cite{Baldoma06}, where equation \eqref{eq:HJEqInner} was
rewritten as
\begin{equation}\label{eq:HJEqInner2}
\pa_\tau\psi_0
+\frac{1}{2}z^{2r}\left(\pa_z\psi_0\right)^2-\frac{1}{2z^{2r}}+
\frac{\hmu}{z^\ell}\sum_{l=0}^NA_l(\tau)\left(z^{2r}\pa_z\psi_0\right)^l=0,
\end{equation}
where
\begin{equation}\label{def:FuncionsA}
A_{l}(\tau)=\sum_{(r-1)k+rl=\ell}\frac{C_+^{k+l-2}}{(1-r)^k}
a_{kl}(\tau),
\end{equation}
and $a_{kl}$ are the coefficients of $H_1$ in
\eqref{def:Ham:Original:perturb:poli} and $C_{+}$ is
given in \textbf{HP2}.
This equation is in fact the Hamilton-Jacobi equation associated to
the non-autonomous Hamiltonian
\begin{equation}\label{Hamiltonia:EqInner}
\HH_0(z,w,\tau)=\frac{1}{2}z^{2r}w^2-\frac{1}{2z^{2r}}+\frac{\hmu}{z^\ell}\sum_{
l=0}^NA_l(\tau)\left(z^{2r}w\right)^l,
\end{equation}
which satisfies that $\HH\rightarrow\HH_0$ as $\eps\rightarrow 0$,
where $\HH$ is the Hamiltonian function defined in
\eqref{Hamiltonia:varInner}.

In the trigonometric case, an analogous equation to
\eqref{eq:HJEqInner} is obtained. There are only two differences.
First, one has to consider the definition of $\ell$ given in
\eqref{def:ell} associated to this type of systems. Secondly, in the
trigonometric case, the coefficients in front of $a_{kl}(\tau)$ are
expressed in terms of the coefficients $\wh C^1_\pm$, $\wh C^2_\pm$
and $C_\pm$ in \eqref{eq:SepartriuAlPolTrig}. Taking into account
these facts, one can also define the analogous functions $A_{l}$.

\begin{figure}[h]
\begin{center}
\psfrag{t}{$\arctan\tet$}\psfrag{k1}{$i\kk$}\psfrag{k2}{$-i\kk$}\psfrag{D1}{
$\DD_{\kk,\tet}^{+,u}$}\psfrag{D2}{$\DD_{\kk,\tet}^{+,s}$}
\includegraphics[height=6cm]{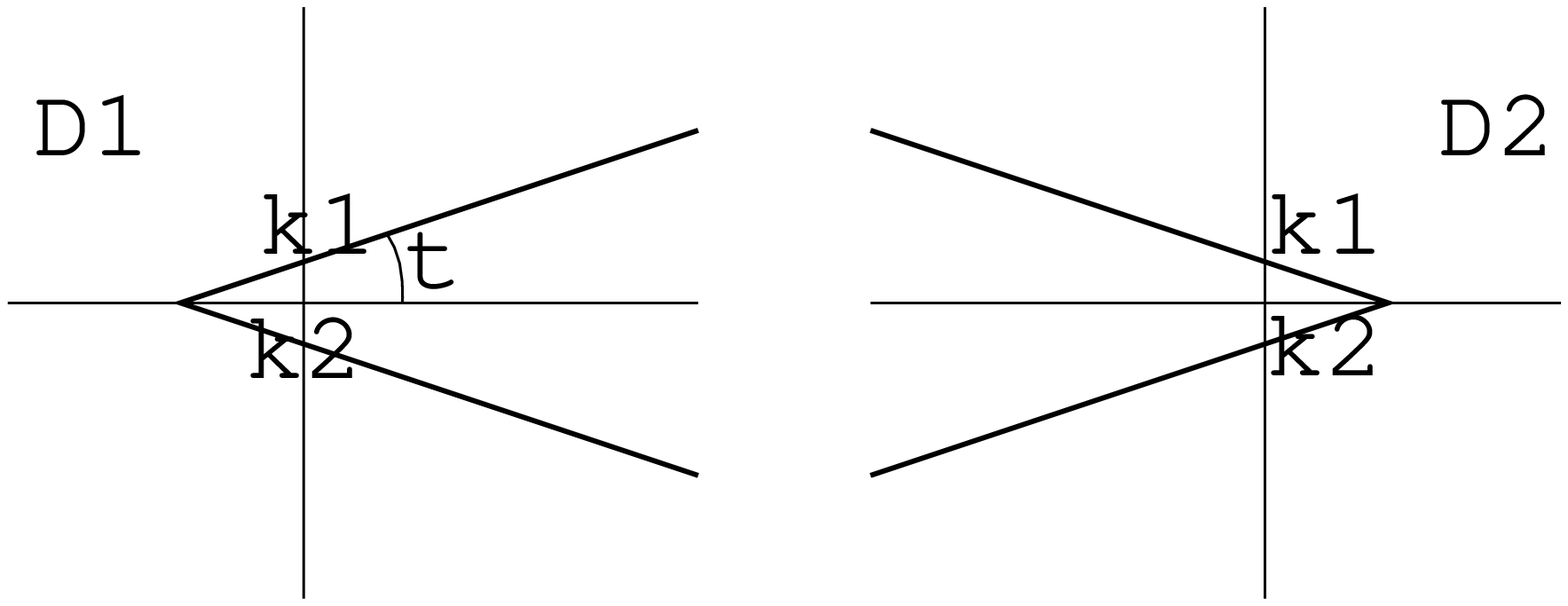}
\end{center}
\caption{\figlabel{fig:DomEqInner} The domains
$\DD_{\kk,\tet}^{+,u}$ and $\DD_{\kk,\tet}^{+,s}$ defined in
\eqref{def:Dominis:eqInner}.}
\end{figure}

The solutions of the Hamilton-Jacobi equation \eqref{eq:HJEqInner2}
were studied in \cite{Baldoma06} in the complex domains
\begin{equation}\label{def:Dominis:eqInner}
\begin{split}
\DD_{\kk,\tet}^{+,u}&=\left\{z\in\CC; \left|\Im z\right|>\tet \, \Re
z+\kk\right\}\\
\DD_{\kk,\tet}^{+,s}&=\left\{z\in\CC; -z\in \DD_{\kk,\tet}^{+,u}\right\}
\end{split}
\end{equation}
for $\kk>0$ and $\tet>0$. Let us observe that, for any $\C>0$,
$\DD_{\kk,\C}^{\inn,+,\ast}\subset\DD_{\kk, \tan\beta_2}^{+,\ast}$
for $\ast=u,s$. Nevertheless, since through the proof we will have
to change the slope  of the domains $\DD_{\kk,\tet}^{+,\ast}$, we
start with a certain fixed slope $\tet_0<\tan \beta_2$ which will be
determined \emph{a posteriori}.

The difference between the stable and unstable manifolds of the
inner equation was studied in the intersection domain
\begin{equation}\label{def:DominisInterseccio:eqInner}
\RRR_{\kk,\tet}^{+}=\DD_{\kk,\tet}^{+,u}\cap \DD_{\kk,\tet}^{+,s}
\cap \left\{z\in \CC; \Im z<0\right\}.
\end{equation}
\begin{figure}[h]
\begin{center}
\psfrag{k}{$-i\kk$}\psfrag{D1}{$\DD_{\kk,\tet}^{+,u}$}\psfrag{D2}{$\DD_{\kk,\tet
}^{+,s}$}\psfrag{R}{$\RRR_{\kk,\tet}^{+}$}
\includegraphics[height=6cm]{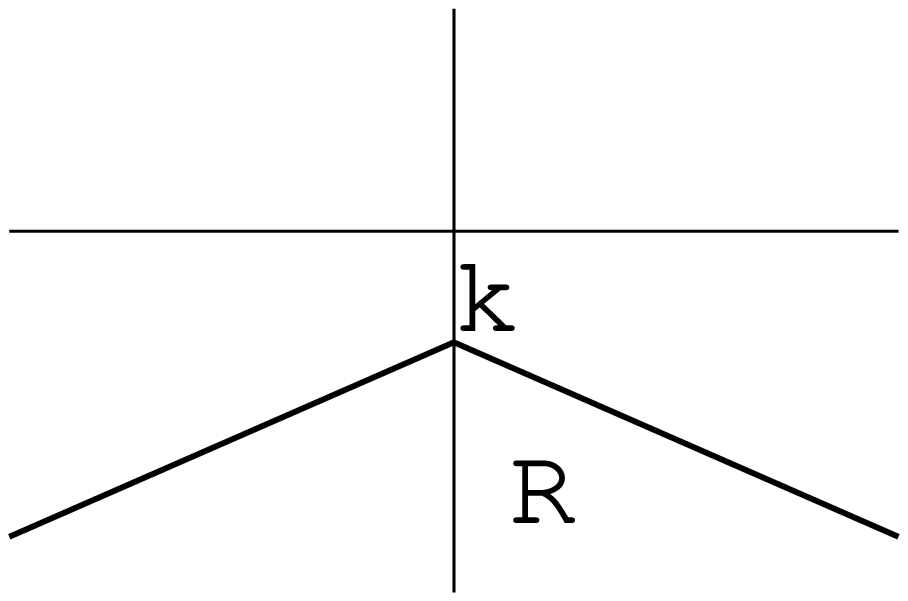}
\end{center}
\caption{\figlabel{fig:DomEqInnerInter} The domain
$\RRR_{\kk,\tet}^{+}$
 defined in \eqref{def:DominisInterseccio:eqInner}.}
\end{figure}
The next theorem gives the main results obtained in \cite{Baldoma06}
about the solutions of equation \eqref{eq:HJEqInner2} and their
difference.

\begin{theorem}\label{th:InnerImma}
Let us consider any fixed $\tet_0>0$. Then, for $\hmu\in B(\hmu_0)$
the following statements are satisfied:
\begin{enumerate}
\item There exists $\kk_4>0$ such that, equation \eqref{eq:HJEqInner2} has
solutions $\psi_0^{\ast}:
\DD_{\kk_4,\tet_0}^{+,\ast}\times\TT_\sigma\rightarrow\CC$,
$\ast=u,s$, of the form
\begin{equation}\label{eq:SolucioInnerHJ}
\psi_0^{u,s}(z,\tau)=-\frac{1}{(2r-1)z^{2r-1}}+\hmu\overline\psi_0^{u,s}
(z,\tau)+K^{u,s},\quad K^{u,s}\in\CC
\end{equation}
where $\overline\psi_0^{u,s}$ are analytic functions in all their
variables. Moreover, the derivatives of $\overline\psi_0^{u,s}$ are
uniquely determined by the condition
\[
\sup_{(z,\tau)\in
\DD_{\kk_4,\tet_0}^{+,\ast}\times\TT_\sigma}\left|z^{\ell+1}\pa_z\ol\psi_0^{\ast
}(z,\tau)\right|<\infty
\]
for $\ast=u,s$.  In fact, one can choose $\ol\psi_0^{u,s}$ such that
\[
\sup_{(z,\tau)\in
\DD_{\kk_4,\tet_0}^{+,\ast}\times\TT_\sigma}\left|z^{\ell}\ol\psi_0^{\ast}(z,
\tau)\right|<\infty
\]
for $\ast=u,s$.
\item There exists  $\kk_5>\kk_4$, analytic functions
$\left\{\chi^{[k]}(\hmu)\right\}_{k\in\ZZ^-}$
defined on $B(\hmu_0)$ and
$g:\RRR_{\kk_5,2\tet_0}^{+}\times\TT_\sigma\rightarrow \CC$ such
that two solutions $\psi_0^{u,s}$ of equation \eqref{eq:HJEqInner2}
of the form  given in \eqref{eq:SolucioInnerHJ} with $K^u=K^s$,
satisfy
\begin{equation}\label{eq:DifAsimptInner:HJ}
\left(\psi^u_0-\psi^s_0\right)(z,\tau)= \hmu\sum_{k<0}
\chi^{[k]}(\hmu)e^{ik\left(z-\tau+\hmu g(z,\tau)\right)}.
\end{equation}
Moreover, the function $g$ satisfies that
\begin{align*}
\sup_{(z,\tau)\in
\RRR_{\kk_5,2\tet_0}^{+}\times\TT_\sigma}\left|z^{\ell-2r}g(z,
\tau)\right|&<\infty
&\text{\,\,\,\,if
}&\ell>2r\\
\sup_{(z,\tau)\in
\RRR_{\kk_5,2\tet_0}^{+}\times\TT_\sigma}\left|\left(\ln|z|\right)\ii
g(z,\tau)\right|&<\infty &\text{\,\,\,\,if }&\ell=2r .
\end{align*}
\end{enumerate}
\end{theorem}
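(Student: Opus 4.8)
Theorem~\ref{th:InnerImma} collects the results of \cite{Baldoma06}; I outline the argument one would carry out. Throughout, $\tet_0>0$ is fixed and the constants $\kk$ are taken large, the point being that $|z|^{-1}$ is then uniformly small on all the domains involved, so every term carrying an explicit negative power of $z$ acts as a small perturbation.

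\textbf{Part 1: existence of $\psi_0^{u,s}$.} Since $\pa_z\bigl(-\tfrac{1}{(2r-1)z^{2r-1}}\bigr)=z^{-2r}$ already solves $\tfrac12 z^{2r}(\pa_z\psi)^2-\tfrac{1}{2z^{2r}}=0$, I would substitute the ansatz $\psi_0=-\tfrac{1}{(2r-1)z^{2r-1}}+\hmu\,\overline\psi_0+K$ into \eqref{eq:HJEqInner2}. The quadratic term of the Hamiltonian produces the cross contribution $\hmu\, z^{2r}z^{-2r}\pa_z\overline\psi_0=\hmu\,\pa_z\overline\psi_0$, so after dividing by $\hmu$ one is left with a quasilinear equation $(\pa_\tau+\pa_z)\overline\psi_0=\FF[\varphi]$ for $\varphi=\pa_z\overline\psi_0$, where $\FF$ gathers the forcing $-z^{-\ell}\sum_lA_l(\tau)(1+\hmu z^{2r}\varphi)^l$ and the quadratic term $-\tfrac{\hmu}{2}z^{2r}\varphi^2$. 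Differentiating in $z$ gives the self-contained fixed point problem $\varphi=\GG^{u,s}\,\pa_z\FF[\varphi]$, where $\GG^{u,s}$ is the right inverse of the transport operator $\pa_\tau+\pa_z$: in Fourier in $\tau$ it acts on the $k$-th mode as $(\pa_z+ik)^{-1}$, realised by integrating along the characteristics $\zeta\mapsto\zeta+s$, $\tau\mapsto\tau+s$ from $s=-\infty$ for the unstable manifold and from $s=+\infty$ for the stable one; the sectors $\DD_{\kk,\tet_0}^{+,u},\DD_{\kk,\tet_0}^{+,s}$ are invariant under the corresponding real translations, so the integrals converge and $\GG^{u,s}$ is bounded on the weighted Banach space of analytic functions $\varphi$ with $\|z^{\ell+1}\varphi\|_\infty<\infty$ (for the $k=0$ mode one uses $\langle A_l\rangle=0$, which follows from the zero-mean Hypothesis~\textbf{HP3}, to keep $\langle\varphi\rangle$ integrable at infinity). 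For $\kk_4$ large and uniformly in $\hmu\in B(\hmu_0)$ this map is a contraction, its fixed point depends analytically on $(z,\tau,\hmu)$, and the bound $\|z^\ell\overline\psi_0\|_\infty<\infty$ follows by fixing the integration constant so that $\overline\psi_0=-\int_z^{\infty}\varphi$ along the sector.

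\textbf{Part 2: the difference.} If $K^u=K^s$, then $\psi_0^u-\psi_0^s$ together with all its $\tau$-derivatives tends to $0$ as one escapes to infinity along the characteristic direction; hence it is beyond all algebraic orders and, being $\OO(\hmu)$ by Part 1, exponentially small. Subtracting the two copies of \eqref{eq:HJEqInner2} and applying the mean value theorem to $\HH_0(z,\cdot,\tau)$ yields a homogeneous linear transport equation
\[
\pa_\tau\Delta\psi_0+b(z,\tau)\,\pa_z\Delta\psi_0=0,\qquad \Delta\psi_0:=\psi_0^u-\psi_0^s,
\]
with $b=1+\hmu\,\widetilde b$, the correction coming from $z^{2r-\ell}\sum_{l\ge1}l\,A_l(\tau)(z^{2r}\pa_z\psi_0)^{l-1}$ and from $\hmu\pa_z\overline\psi_0$, so that $\widetilde b=\OO(z^{2r-\ell})$ decays if $\ell>2r$ and $\widetilde b=\OO(1)$ if $\ell=2r$. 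I would then look for a change of variable $z\mapsto w=z+\hmu\,g(z,\tau)$ straightening $\pa_\tau+b\pa_z$ into $\pa_\tau+\pa_w$; this forces $(\pa_\tau+b\pa_z)g=-\widetilde b$, solved by the same transport-inversion plus fixed-point machinery, and gives $g=\OO(z^{2r-\ell})$ when $\ell>2r$. When $\ell=2r$ the coefficient $\widetilde b$ no longer decays; its $\tau$-average vanishes at leading order by $\langle A_l\rangle=0$, but a subleading, resonant contribution — present precisely because of the $y$-dependent monomials of $H_1$ (the $l\ge1$ terms in $A_l$, whence $g$ is trivial when $H_1$ is $y$-independent) — behaves like $c/z$ with $c$ generically nonzero, and its primitive is a logarithm, giving $\|(\ln|z|)^{-1}g\|_\infty<\infty$. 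In the straightened coordinate $\Delta\psi_0$ depends only on $w-\tau$ and is $2\pi$-periodic, so $\Delta\psi_0=\sum_k\Lambda_k(\hmu)\,e^{ik(z-\tau+\hmu g(z,\tau))}$; since $\RRR_{\kk,2\tet_0}^{+}$ reaches $-i\infty$, where $|e^{ik(z-\tau+\hmu g)}|$ behaves like $e^{k|\Im z|}$, boundedness forces $\Lambda_k=0$ for $k>0$, while $\Lambda_0=0$ because $\Delta\psi_0\to0$ there. This is exactly \eqref{eq:DifAsimptInner:HJ} with $\chi^{[k]}(\hmu)=\hmu^{-1}\Lambda_k(\hmu)$, analytic on $B(\hmu_0)$ by analyticity of the whole construction; one shrinks the slope from $\tet_0$ to $2\tet_0$ so that all the objects are simultaneously defined there.

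\textbf{Main obstacle.} The delicate point is Part 2: constructing and inverting the straightening $w=z+\hmu g$ \emph{uniformly up to the corner} of the sector, where $|z|\sim\kk$ and the small parameter $z^{-1}$ is least favourable; and, in the critical case $\ell=2r$, isolating the resonant, non-decaying contribution to $b$ that forces the logarithmic term, since there $\pa_\tau+b\pa_z$ is only a bounded — not a small — perturbation of $\pa_\tau+\pa_z$, so the transport inversion must be set up globally rather than by a naive Neumann series.
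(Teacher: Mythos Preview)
The paper does not prove Theorem~\ref{th:InnerImma}: immediately after the statement it says ``The proof of Theorem~\ref{th:InnerImma} is given in~\cite{Baldoma06}.'' Your proposal correctly identifies this and sketches the argument from~\cite{Baldoma06}; the outline you give --- fixed point for $\pa_z\overline\psi_0$ via a right inverse of $\pa_\tau+\pa_z$ in weighted spaces on the sectors, then for the difference a homogeneous linear transport equation straightened by $w=z+\hmu g$, Fourier expansion in $w-\tau$, and elimination of the $k\geq 0$ modes by growth at $-i\infty$ --- is precisely the scheme of~\cite{Baldoma06}, and it is also the same mechanism the present paper reuses in Sections~\ref{sec:Difference:lmenor}--\ref{sec:Difference:lmajor} and~\ref{sec:CanviFinal} for the full problem. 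Your identification of the logarithm in the case $\ell=2r$ as coming from a resonant $c/z$ contribution to $\widetilde b$ matches Proposition~\ref{coro:gInner}. In short, your sketch is faithful to the cited proof and to the paper's own methodology; there is nothing further to compare.
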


The proof of Theorem \ref{th:InnerImma} is given in
\cite{Baldoma06}.

\begin{remark}
Following the proofs of  \cite{Baldoma06}, it can be easily seen that the
analytic functions  $\left\{\chi^{[k]}(\hmu)\right\}_{k\in\ZZ^-}$ are entire.
\end{remark}

For the case $\ell-2r=0$ we will need
better knowledge of the function $g$ given by Theorem
\ref{th:InnerImma}. The next proposition gives its first asymptotic
terms. First, we define certain functions which will be used in the
statement of the next proposition. Let us consider the functions
$A_j$ defined in \eqref{def:FuncionsA}, then we define
\begin{equation}\label{def:FunctionsQ}
Q_j(\tau)=\sum_{k=j}^N\left(\begin{array}{c}k\\j\end{array}\right)A_k(\tau),
\end{equation}
and functions $F_j$ such that
\begin{equation}\label{def:FunctionsF}
\pa_\tau F_j=Q_j\,\,\text{ and }\,\,\langle F_j\rangle=0,
\end{equation}
which are periodic since $\langle Q_j\rangle=0$.

\begin{remark}\label{remark:DefQIntrinseca}
The functions $Q_j(\tau)$ can be also defined
intrinsically either $\wh H_1^1$ is a polynomial or a trigonometric
polynomial, as
\[
Q_j(\tau)=\frac{1}{j!}C_+^{j-2}\lim_{u\to ia}(u-ia)^{\ell-rj}\pa_p^j
\wh H_1^1(q_0(u),p_0(u),\tau),
\]
where $\wh H_1^1$ is the Hamiltonian defined in
\eqref{def:HamPertorbat:H1} and $C_+$ is given in
\eqref{eq:SepartriuAlPol} and \eqref{eq:SepartriuAlPolTrig}.
\end{remark}

\begin{proposition}\label{coro:gInner}
Let us consider the constant
\begin{equation}\label{def:Constantb}
b=2r\left\langle Q_0F_1+2F_0Q_2\right\rangle,
\end{equation}
where $Q_j$ and $F_j$ are the functions defined in
\eqref{def:FunctionsQ} and \eqref{def:FunctionsF} respectively.
Then, when $\ell-2r=0$, the function $g$ obtained in Theorem
\ref{th:InnerImma}, is of the form
\[
g(z,\tau)=- F_1(\tau)-\hmu b\ln z+\ol g(z,\tau)
\]
and $\ol g$ satisfies
\[
 \sup_{(z,\tau)\in
\RRR_{\kk_5,2\tet_0}^{+}\times\TT_\sigma}\left|z \ol
g(z,\tau)\right|<\infty.
\]
\end{proposition}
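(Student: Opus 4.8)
The plan is to go back to the construction of $g$ in \cite{Baldoma06} and read off its first terms from the fact that $g$ straightens the characteristics of the transport equation satisfied by $\psi_0^u-\psi_0^s$. Since $\psi_0^{u,s}$ both solve the inner Hamilton--Jacobi equation \eqref{eq:HJEqInner2} with $\ell=2r$, subtracting them and factoring the quadratic and the polynomial nonlinearities shows that $\Delta:=\psi_0^u-\psi_0^s$ solves a homogeneous linear transport equation $\pa_\tau\Delta+\VV\pa_z\Delta=0$, where
\[
\VV(z,\tau)=\tfrac12 z^{2r}\big(\pa_z\psi_0^u+\pa_z\psi_0^s\big)+\frac{\hmu}{z^{2r}}\sum_l A_l(\tau)\,z^{2rl}\sum_{j=0}^{l-1}(\pa_z\psi_0^u)^j(\pa_z\psi_0^s)^{l-1-j}.
\]
Because the coefficients $\chi^{[k]}(\hmu)$ in \eqref{eq:DifAsimptInner:HJ} are independent of $(z,\tau)$, the phase $z-\tau+\hmu g(z,\tau)$ must be a first integral of $dz/d\tau=\VV$; equivalently, $g$ solves the linear first order PDE
\[
\pa_\tau g+\VV\,\pa_z g=\frac{1-\VV}{\hmu}=:-W,\qquad W:=\frac{\VV-1}{\hmu},
\]
together with the normalization that singles it out in \cite{Baldoma06}. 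Thus everything reduces to extracting the first two terms of $g$ from this equation, and the input is the asymptotic behaviour of $W$ as $|z|\to\infty$.

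For this I would use that, by Theorem \ref{th:InnerImma}, $\pa_z\psi_0^{u,s}=z^{-2r}+\hmu\,\pa_z\ol\psi_0^{u,s}$ with $z^{\ell+1}\pa_z\ol\psi_0^{u,s}$ bounded, and that the formal $1/z$--expansion of any solution of \eqref{eq:HJEqInner2} is unique up to an additive constant, so $\pa_z\psi_0^u$ and $\pa_z\psi_0^s$ share the same asymptotic series $\Psi(z,\tau;\hmu)$. Setting $\xi:=z^{2r}\Psi=1+O(1/z)$, the velocity takes the compact form $\VV\sim\xi+\hmu\,\pa_\xi\mathcal A(\xi,\tau)$ with $\mathcal A(\xi,\tau):=\sum_l A_l(\tau)\xi^l$, hence $W=\tfrac{\xi-1}{\hmu}+\pa_\xi\mathcal A(\xi,\tau)=Q_1(\tau)+W_{\mathrm{dec}}$ with $W_{\mathrm{dec}}=O(1/z)$ (using $\pa_\xi\mathcal A(1,\cdot)=Q_1$, $\pa_\xi^2\mathcal A(1,\cdot)=2Q_2$). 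The decisive quantity is $c_W(\tau):=\lim_{z\to\infty}zW_{\mathrm{dec}}(z,\tau)$. Solving \eqref{eq:HJEqInner2} order by order in $\hmu$ gives $\psi_{01}=-F_0(\tau)z^{-2r}+O(z^{-2r-1})$ and $\psi_{02}=\langle Q_1F_0\rangle z^{-2r}+O(z^{-2r-1})$ (with $F_j$ as in \eqref{def:FunctionsF}), while the solvability conditions at the next order force the $z^{-2r}$--coefficient of $\psi_{0n}$ to vanish for every $n\ge 3$, these being exactly the orders at which the corresponding right hand side has zero resonant mean. Combining these facts with the integration by parts $\langle Q_1F_0\rangle=-\langle Q_0F_1\rangle$ and with $\langle F_0\rangle=\langle Q_1\rangle=0$ yields
\[
\langle c_W\rangle=\hmu\,2r\big\langle Q_0F_1+2F_0Q_2\big\rangle=\hmu\,b
\]
(the constant $b$ of \eqref{def:Constantb}), and moreover $c_W-\hmu b$ has zero $\tau$--average. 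In the trigonometric case one argues identically, using the intrinsic definition of the $Q_j$ in Remark \ref{remark:DefQIntrinseca}.

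Finally I would set $g_1(z,\tau):=-F_1(\tau)-\hmu b\ln z$ and $\ol g:=g-g_1$. A direct substitution using $F_1'=Q_1$, $\pa_z\ln z=1/z$ and $W=Q_1(\tau)+c_W(\tau)/z+O(1/z^2)$ shows that the $-F_1$ term of $g_1$ cancels $Q_1$ while the $-\hmu b\ln z$ term produces exactly the $-\hmu b/z$ contribution of $-W$, so that $\ol g$ solves $\pa_\tau\ol g+\VV\pa_z\ol g=\wt R$ with $\wt R=(\hmu b-c_W(\tau))/z+O(1/z^2)$, whose $O(1/z)$ part has zero $\tau$--mean by the previous step. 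Integrating along the characteristics of $dz/d\tau=\VV=1+O(\hmu)$ in the region $\RRR_{\kk_5,2\tet_0}^{+}$, where $z(\tau)\approx z_0+\tau$, the zero--mean $O(1/z)$ part integrates to $O(1/z)$ and the $O(1/z^2)$ part to $O(1/z)$; a Gronwall/fixed point argument of the type used elsewhere in the paper then gives $\sup|z\,\ol g|<\infty$, and one checks that this $\ol g$ inherits the normalization characterising $g$. The main obstacle is precisely the claim that the coefficient of $\ln z$ is \emph{exactly} $-\hmu b$: this forces one to control, to all orders in $\hmu$, the resonant ($\tau$--averaged) part of the straightening velocity, hence to prove the vanishing of the $z^{-2r}$--coefficients of $\psi_{0n}$ for $n\ge 3$ — i.e.\ that the only resonance producing a logarithm comes from the coupling of the quadratic part of $\HH_0$ with the $w$--linear and $w$--independent parts of the perturbation, which is what the combination $Q_0F_1+2F_0Q_2$ records.
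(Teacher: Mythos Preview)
The paper does not actually prove this proposition: it is stated immediately after Theorem~\ref{th:InnerImma} and the proof is implicitly deferred to the analysis of the inner equation in \cite{Baldoma06}. So there is no ``paper's proof'' to compare against; your proposal is a genuine attempt to supply one, and the strategy you outline is correct and is precisely the computation behind the formula \eqref{def:Constantb}.

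Your argument is sound at the structural level: $g$ is obtained by straightening the characteristic flow $\dot z=\VV$, with $\VV=\pa_w\HH_0$ evaluated on the common asymptotic series $\Psi$ of $\pa_z\psi_0^{u,s}$, and the coefficient of $\ln z$ in $g$ equals $-\langle c_W\rangle$, where $c_W$ is the $1/z$--coefficient of $W=(\VV-1)/\hmu$. The only point that deserves a sharper justification is your claim that the $z^{-2r}$--coefficient of $\psi_{0n}$ vanishes for all $n\ge 3$. This is true, but the reason is simply that at order $n\ge 3$ the only source term at level $z^{-2r-1}$ is $Q_1\pa_z\psi_{0,n-1}$, whose mean is $(-2r\,e_0^{(n-1)})\langle Q_1\rangle=0$ because $\langle Q_1\rangle=0$; the recursion then gives $e_0^{(n)}=0$ for every $n\ge 3$. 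Once this is made explicit, your computation $\langle c_W\rangle=\hmu\,2r\langle Q_0F_1+2F_0Q_2\rangle=\hmu b$ is exact (not merely to leading order in $\hmu$), because the only nonzero contributions to $c_W$ come from $e_0^{(1)}=-F_0(\tau)$ and $e_0^{(2)}=\langle Q_1F_0\rangle$, and the residual $\hmu^2$ piece $-4rQ_2\langle Q_1F_0\rangle$ has zero mean since $\langle Q_2\rangle=0$. The final integration along characteristics and the $O(1/z)$ bound for $\ol g$ go through as you indicate.
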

To have a better knowledge of the parameterizations of the invariant
manifolds in the inner domains $\DD_{\kk, \C}^{\inn, +,\ast}$,
$\ast=u,s$ in \eqref{def:DominisInnerEnz}, we need to compare the
parameterizations $\psi^{u,s}$, which are solutions of
\eqref{eq:HJGeneralInner} with $\psi_0^{u,s}$ which are solutions of
\eqref{eq:HJEqInner} and have been given in Theorem
\ref{th:InnerImma}.

 Since we have to use the functions and results
obtained in Theorem \ref{th:InnerImma}, we need that
$\DD_{\kk,\C}^{\inn,+,u}\subset\DD_{\kk,2\tet_0}^{+,u}$. To this
end, we impose
\[
\tet_0=\frac{\tan\beta_2}{2}.
\]

We state the next theorem for the unstable invariant manifold. The
stable manifold satisfies analogous properties.

\begin{theorem}\label{th:MatchingHJ}
Let $\ga\in (0,1)$, the constants $\kk_3$ and  $\kk_5$  defined in Theorems
\ref{th:ExtensioFinal} and \ref{th:InnerImma},
$\C_1>0$ and  $\eps_0>0$
 small enough and $\kk_6>\max\{\kk_3,\kk_5\}$ big enough, which might depend on
the previous constants.
Then, for $\eps\in (0,\eps_0)$ and $\hmu\in B(\hmu_0)$, there exists a
constant $b_{10}>0$ such that for $(z,\tau)\in \DD_{\kappa_6,
\C_1}^{\inn,+,u}\times\TT_\sigma$
\[
\left|\pa_z\psi^u(z,\tau)-\pa_z\psi_0^u(z,\tau)\right|\leq
\frac{b_{10}\eps^{\frac{1}{\q}}}{\left|z\right|^{2r-\frac{1}{\q}}},
\]
where $\ga$ enters in the definition of
$\DD_{\kappa_6, \C_1}^{\inn,+,u}$,  $r=\p/\q$ has been defined in
Hypothesis \textbf{HP2}, $\psi^u_0$ is given in Theorem
\ref{th:InnerImma} and $\psi^{u}$ is the scaling of the generating
function $T^u$ given in \eqref{eq:FuncioGeneradoraInner}.
\end{theorem}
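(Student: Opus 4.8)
The plan is to treat $\Delta\psi^u := \psi^u - \psi^u_0$ as the unknown of a non-homogeneous linear partial differential equation obtained by subtracting the inner Hamilton--Jacobi equation \eqref{eq:HJEqInner} (satisfied by $\psi^u_0$) from the rescaled equation \eqref{eq:HJGeneralInner} (satisfied by $\psi^u$), and to get an ``a posteriori'' bound for it by a fixed-point / Gronwall-type argument. The crucial point, already stressed in the heuristic discussion in Section \ref{sec:heuristic:asymptotic}, is that we do \emph{not} use $\psi^u_0$ to \emph{construct} $\psi^u$ in the inner domain: the existence of $\psi^u$ in $\DD^{\inn,+,u}_{\kk,\C_1}$ is already guaranteed by Theorems \ref{th:Extensio:Trig} and \ref{th:ExtensioFinal} via the scaling \eqref{eq:FuncioGeneradoraInner}, together with the outer bound from Corollary \ref{coro:CotesOuter:muhat}, which on the inner domain reads $|\pa_u T^{u}(\eps z+ia,\tau)-\pa_uT_0(\eps z+ia)| \le K|\hmu|\eps^{-2r}|z|^{-\ell-1}$; rescaled this says $|\pa_z\psi^u - \pa_z\psi^u_{0,\mathrm{princ}}|$ is controlled, where $\psi^u_{0,\mathrm{princ}}=-\frac{1}{(2r-1)z^{2r-1}}$ is the common leading term of both $\psi^u_0$ and $\psi^u$ (cf. \eqref{eq:SolucioInnerHJ} and the expansion of $\pa_uT_0$). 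So the first step is to record these preliminary bounds on $\pa_z\psi^u$ in the inner domain and to note, from Theorem \ref{th:InnerImma}, the bound $\sup|z^{\ell+1}\pa_z\ol\psi^u_0|<\infty$ for $\psi^u_0$.

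Next I would derive the equation for $\varphi := \pa_z\Delta\psi^u$. Differentiating the difference of the two Hamilton--Jacobi equations with respect to $z$, the leading $z$-derivative structure produces a transport operator of the form $\LL := \pa_\tau + z^{2r}\pa_z\psi^u_0 \,\pa_z + (\text{lower order})$, i.e. morally $\LL \sim \eps^{-1}(\eps\pa_u + \pa_\tau)$ in the original variables; acting on $\varphi$ it equals an inhomogeneity $\EE(z,\tau)$ that is quadratic in $\varphi$ plus a genuinely small source term coming from the difference $\HH-\HH_0$ between the rescaled Hamiltonian \eqref{Hamiltonia:varInner} and the inner Hamiltonian \eqref{Hamiltonia:EqInner}, which is $\OO(\eps z)$ by construction (this is where the singularity orders $r=\p/\q$ and the $\OO((\eps z)^{1/\q})$ error terms in \textbf{HP2.1}/\textbf{HP2.2} enter and produce the gain $\eps^{1/\q}$ in the final estimate). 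I would then invert $\LL$ along its characteristics (curves $s\mapsto(z+ \text{path}, \tau+\text{path})$ staying inside $\DD^{\inn,+,u}_{\kk_6,\C_1}$ and tending to $\Re u\to-\infty$-type endpoints where the appropriate asymptotic/boundary condition for the unstable manifold is imposed), obtaining an integral operator $\GG$ whose fixed point is $\varphi$. Working in the Banach space of analytic functions on $\DD^{\inn,+,u}_{\kk_6,\C_1}\times\TT_\sigma$ with weighted norm $\|\varphi\|=\sup |z|^{2r-1/\q}\,|\varphi(z,\tau)|$, I would show $\GG$ is a contraction on a ball of radius $\OO(\eps^{1/\q})$: the linear part is contractive because integrating $|z|^{-(2r)}\cdot$(transport) along characteristics that go to infinity in $\Re z$ gains a factor $|z|$ (plus smallness from $\kk_6$ large and $\eps$ small), and the source term has the right size $\OO(\eps^{1/\q}|z|^{-(2r-1/\q)})$ thanks to $\HH-\HH_0=\OO(\eps z)$ combined with the a priori size $\OO(|z|^{-2r})$ of $\pa_z\psi^u,\pa_z\psi_0^u$ and their $\OO(|z|^{-(\ell+1)})$ corrections. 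The quadratic term is absorbed since on a small ball it is lower order. Uniqueness of $\varphi$ in the ball then pins it down, and $\|\varphi\|\le b_{10}\eps^{1/\q}$ is exactly the claimed bound.

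The main obstacle, I expect, is twofold: first, controlling the \emph{characteristics} of $\LL$ inside the thin, slanted inner domain $\DD^{\inn,+,u}_{\kk_6,\C_1}$ — one must choose the slope parameter and the constant $\kk_6$ so that the integration paths stay inside the domain all the way to the endpoint, which forces the a posteriori re-adjustment of slopes mentioned after \eqref{def:Dominis:eqInner} (this is why $\kk_6>\max\{\kk_3,\kk_5\}$ and $\tet_0=\tan\beta_2/2$ are imposed) and why $\DD^{\inn,+,u}_{\kk,\C_1}\subset\DD^{+,u}_{\kk,2\tet_0}$ is needed so that Theorem \ref{th:InnerImma}'s functions apply; second, the bookkeeping of the \emph{matching} between the two different sizes of smallness available — the $\OO(|z|^{-\ell-1})$ correction near infinity of the inner variable (where $|z|\sim\eps^{\ga-1}$ is large) versus the $\OO(\eps z)=\OO(\eps^\ga)$ smallness of $\HH-\HH_0$ near $|z|\sim\kk$ — and verifying that the single weighted norm $|z|^{2r-1/\q}|\cdot|$ simultaneously captures both regimes throughout the inner domain. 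Once the correct norm and domain are fixed, the contraction estimates themselves are routine Gronwall/fixed-point computations and need not be spelled out here.
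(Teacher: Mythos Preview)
Your overall strategy---subtract the two Hamilton--Jacobi equations, view the result as a linear equation for $\varphi=\psi^u-\psi_0^u$ with a small source coming from $\HH-\HH_0$, and run a fixed-point argument in the weighted norm $\sup |z|^{2r-1/\q}|\cdot|$---is exactly the paper's strategy, and your identification of the $\eps^{1/\q}$ gain with the $\OO((\eps z)^{1/\q})$ remainders in \textbf{HP2} is correct. However, two concrete pieces are missing and, as written, the argument would not close.

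First, the boundary data. You propose to invert $\LL$ along characteristics ``tending to $\Re u\to-\infty$-type endpoints where the appropriate asymptotic/boundary condition for the unstable manifold is imposed''. But the inner domain $\DD^{\inn,+,u}_{\kk_6,\C_1}$ is \emph{bounded}: in the $z$ variable one has $\kk_6\lesssim|z|\lesssim\eps^{\gamma-1}$, and no characteristic of $\LL=\pa_z+\pa_\tau$ reaches anything resembling $\Re u\to-\infty$. The paper instead integrates each Fourier mode from one of the two \emph{vertices} $z_1,z_2$ of the inner domain (negative modes from $z_1$, nonnegative from $z_2$), and the initial values $\pa_z\varphi^{[k]}(z_j)$ are supplied by the outer solution: in the transition region $\tri^{+,u}_{\C_0,\C_0'}=\{|z|\sim\eps^{\gamma-1}\}$ both the outer bound (Corollary \ref{coro:CotesOuter:muhat}) and Theorem \ref{th:InnerImma} apply, giving $\|\pa_z\varphi\|_{0,\sigma}\le K\eps^{2r(1-\gamma)+\gamma/\q}$, which translates into $\|W_0\|_{2r-1/\q,\sigma}\le K\eps^{1/\q}$ for the initial-condition function $W_0$. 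This matching step is the heart of the proof and requires the technical restriction $\gamma<\gamma_2=\q(\ell-2r+1)/(\q(\ell-2r+1)+1)$; the extension to general $\gamma\in(0,1)$ is then immediate by domain inclusion.

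Second, the case $\ell-2r=0$. The linear coefficient on the right-hand side is $\hmu Q_1(\tau)z^{-(\ell-2r)}+M(z,\tau)$; when $\ell>2r$ this is $\OO(\kk^{-(\ell-2r)})$ and your contraction works directly, but when $\ell=2r$ the term $\hmu Q_1(\tau)$ is of order one and the operator is \emph{not} contractive. The paper handles this with the preliminary change $z=x+\hmu F_1(\tau)$ (with $\pa_\tau F_1=Q_1$, $\langle F_1\rangle=0$), which removes $\hmu Q_1(\tau)$ from the linear part; only then does the fixed-point argument go through. Your proposal does not distinguish these two cases.
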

The proof of this theorem is given in Section \ref{sec:matching}.

\subsection{Study of the difference between the invariant
manifolds}\label{sec:Difference}

Once we have obtained parameterizations of the invariant manifolds
of the form \eqref{eq:ParameterizationHJ} in the domains
$D^s_{\kk_3,d_2}$ and $D^u_{\kk_3,d_2}$ and studied their first order
approximations close to the singularities, the next step is to study their
difference.

We devote Section \ref{sec:Difference:lmenor} to study the (easier) case
$\ell-2r<0$ and then
in Section \ref{sec:Difference:lmajor} we consider the case $\ell-2r\geq 0$.

\subsubsection{Study of the difference between the invariant manifolds for the
case
$\ell-2r<0$}\label{sec:Difference:lmenor}

We are going to proceed to study the
difference $\partial_u T^u (u,\tau) - \partial_u T^s(u,\tau)$. Recall that in
the case $\ell-2r<0$,
Hypothesis \textbf{HP5} becomes $\eta\geq 0$. Therefore our study includes the
non perturbative case $\eta=0$.

To study the difference between the manifolds, we define
\begin{equation}\label{def:Diferencia:Original}
\Delta(u,\tau)=T^u(u,\tau)-T^s(u,\tau)
\end{equation}
 in the domain
$R_{\kk,d}=D^s_{\kk,d}\cap D^u_{\kk,d}$ which is defined in
\eqref{def:DominiRaro:Interseccio}.

We recall that $p_0(u)\neq 0$ if $u\in R_{\kk,d}$ and hence we can
use the Hamilton-Jacobi equation in this domain.

Subtracting equation \eqref{eq:HamJacGeneral} for both $T^u$ and
$T^s$, one can see that $\Delta$ satisfies the partial differential
equation
\begin{equation}\label{eq:DifferencePDE}
\widetilde\LL_\eps\xi=0,
\end{equation}
where
\begin{equation}\label{def:OperadorAnulador}
\widetilde\LL_\eps=\eps^{-1}\pa_\tau+(1+G(u,\tau))\pa_u
\end{equation}
with
\begin{equation}\label{def:FuncioLAnulador:lmenor}
\begin{split}
G(u,\tau)=&\dps\frac{1}{2p_0^2(u)}\left(\pa_u T_1^u(u,\tau)+\pa_u
T_1^s(u,\tau)\right)\\
&+\frac{\mu\eps^{\eta}}{p_0(u)}\int_0^1 \pa_p \wh H_1 \left(q_0(u),
p_0(u) +\frac{s\pa_u T_1^u(u,\tau)+(1-s)\pa_u
T_1^s(u,\tau)}{p_0(u)}, \tau\right)\, ds,
\end{split}
\end{equation}
where $\wh H_1$ is the function defined in
\eqref{def:ham:ShiftedOP:perturb} and
$T_1^{u,s}(u,\tau)=T^{u,s}(u,\tau)-T_0(u)$ with $\pa_u
T_0(u)=p_0^2(u)$ and $T^{u,s}$ are given in Theorems
\ref{th:Extensio:Trig} and \ref{th:ExtensioFinal}.

Following \cite{Baldoma06}, to obtain the asymptotic expression of
the difference $\Delta$, we take advantage from the fact that it is
a solution of the homogeneous linear partial differential equation
\eqref{eq:DifferencePDE}. In \cite{Baldoma06} it is seen that if
\eqref{eq:DifferencePDE} has a solution $\xi_0$ such that
$(\xi_0(u,\tau),\tau)$ is injective in $R_{\kk,d}\times\TT_\sigma$,
then any solution of equation \eqref{eq:DifferencePDE} defined in
$R_{\kk,d}\times\TT_\sigma$ can be written as
$\xi=\Upsilon\circ\xi_0$ for some function $\Upsilon$.

Following this approach, we begin by looking  for a solution of the form
\begin{equation}\label{def:Solucio:xi0}
\xi_0(u,\tau)=\eps\ii u-\tau+\CCC(u,\tau)
\end{equation}
being $\CCC$ a function $2\pi$-periodic in $\tau$, such that
$(\xi_0(u,\tau),\tau)$ is injective in $R_{\kk,d}\times\TT_\sigma$.

From now on the parameter $\kk$ will be play an important role
in our computations. The next results will deal with big values of
$\kk=\kk(\eps)$ such that $\kk \eps <a$. In particular, in Theorem
\ref{th:CotaDiferencia:lmenor} we will use $\kk = \OO(\log (1/\eps))$.

\begin{theorem}\label{th:CanviFinal:lmenor}
Let $d_2>0$ and $\kk_3>0$ the constants defined in
Theorem \ref{th:ExtensioFinal}, $d_3<d_2$, $\eps_0>0$ small
enough and $\kk_7>\kk_3$ big enough, which might depend on the
previous constants.
Then, for $\eps\in (0,\eps_0)$, $\mu \in B(\mu_0)$ and any
$\kk\geq\kk_7$ such that $\eps\kk<a$, there exists
a real-analytic function
$\CCC:R_{\kk,d_3}\times\TT_\sigma\rightarrow\CC$ such that
$\xi_0(u,\tau)=\eps\ii u-\tau+\CCC(u,\tau)$ is a solution of
\eqref{eq:DifferencePDE} and
\[
\left(\xi_0(u,\tau),\tau\right)=\left(\eps\ii
u-\tau+\CCC(u,\tau),\tau\right)
\]
is injective.

Moreover, there exists a constant $b_{11}>0$ independent of $\mu$,
$\eps$ and $\kk$, such that for $(u,\tau)\in
R_{\kk,d_3}\times\TT_\sigma$,
\[
\begin{split}
\left|\CCC(u,\tau)\right|&\leq
b_{11}|\mu|\eps^{\eta}\\
\left|\pa_u\CCC(u,\tau)\right|&\leq b_{11}\kk\ii|\mu|\eps^{\eta-1}.
\end{split}
\]
\end{theorem}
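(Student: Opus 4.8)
The idea is to turn $\widetilde\LL_\eps\xi_0=0$ into a fixed point equation for $\CCC$. Plugging $\xi_0(u,\tau)=\eps\ii u-\tau+\CCC(u,\tau)$ into \eqref{eq:DifferencePDE}--\eqref{def:OperadorAnulador} and using $\pa_\tau\xi_0=-1+\pa_\tau\CCC$ and $\pa_u\xi_0=\eps\ii+\pa_u\CCC$, the $\mp\eps\ii$ terms cancel and $\CCC$ must solve the transport equation
\[
\eps\ii\pa_\tau\CCC+\pa_u\CCC=-\eps\ii G-G\,\pa_u\CCC ,
\]
that is $\LL_\eps\CCC=-G\,\pa_u\xi_0$, with $\LL_\eps=\eps\ii\pa_\tau+\pa_u$ the constant--coefficient operator of \eqref{def:Lde}. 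The key point is that $G$ in \eqref{def:FuncioLAnulador:lmenor} is a \emph{known} function, built from $p_0$, $\wh H_1$ and the generating functions $T^{u,s}$ already produced in Theorems \ref{th:Extensio:Trig} and \ref{th:ExtensioFinal}. Using $T^{u,s}_1=T^{u,s}-T_0$ and $|\pa_uT^{u,s}-\pa_uT_0|\le K|\mu|\eps^{\eta+1}|u^2+a^2|^{-\ell-1}$, one obtains in a suitable weighted sup--norm on $R_{\kk,d_3}\times\TT_\sigma$ (a norm carrying a power of $|u^2+a^2|$ that compensates the factor $p_0\ii$ and the blow--up near $\pm ia$) a bound $\|G\|\le K|\mu|\eps^{\eta+1}$, together with a comparable bound for $\langle G\rangle$ coming from averaging the Hamilton--Jacobi equation \eqref{eq:HamJacGeneral}, which forces $\langle\ol H(u,\pa_uT^{u,s},\tau)\rangle=0$ and hence controls $\langle\pa_uT_1^{u,s}\rangle$.

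Next I would construct a right inverse $\GG_\eps$ of $\LL_\eps$ adapted to the (convex) components of $R_{\kk,d_3}$. In Fourier in $\tau$, the mode $k\ne0$ solves the scalar ODE $(\eps\ii ik+\pa_u)\CCC^{[k]}=\zeta^{[k]}$, whose solution bounded in $\TT_\sigma$ is $\CCC^{[k]}(u)=e^{-iku/\eps}\int_{u_k}^{u}e^{ikv/\eps}\zeta^{[k]}(v)\,dv$ with base point $u_k\in\pa R_{\kk,d_3}$ chosen, according to $\mathrm{sign}\,k$, so that the exponential in the integrand never grows along the path; an integration by parts (with a Cauchy estimate for $\pa_u\zeta$) then shows that $\GG_\eps$ gains one power of $\eps$ on the oscillating part, $\|\GG_\eps\zeta-\langle\GG_\eps\zeta\rangle\|\le K\eps\|\zeta\|$. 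The mode $k=0$ is obtained by integrating $\pa_u\CCC^{[0]}=\zeta^{[0]}$ in $u$ from a fixed point of each component; this is precisely the piece that keeps $\CCC$ merely $O(1)$ rather than small when $\eta=0$, and is the analytic source of the correction $C(\mu)$ appearing in Theorems \ref{th:MainGeometric:regular} and \ref{th:MainGeometric:singular}. Setting $\FF(\CCC)=\GG_\eps(-\eps\ii G-G\,\pa_u\CCC)$, one checks that $\FF$ sends the ball $\ol B(b_{11}|\mu|\eps^{\eta})$ into itself and is a contraction there: the leading term $\GG_\eps(-\eps\ii G)$ is $O(|\mu|\eps^{\eta})$ because the $\eps\ii$ is absorbed by the $\eps$--gain on the nonzero modes and, on the zero mode, by the bound $\int\eps\ii|\langle G\rangle|\,|du|\le K|\mu|\eps^{\eta}$ over the $O(1)$ range of $u$ in each component, while $\GG_\eps(G\,\pa_u\CCC)$ is strictly subdominant since $G$ carries the extra factor $|\mu|\eps^{\eta+1}$.

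The fixed point $\CCC$ is real--analytic on $R_{\kk,d_3}\times\TT_\sigma$ with $|\CCC|\le b_{11}|\mu|\eps^{\eta}$. Since the construction works verbatim on $R_{\kk',d_3}$ for every $\kk'\ge\kk_7$, applying a Cauchy estimate for $\pa_u\CCC$ on $R_{\kk,d_3}$ inside $R_{\kk/2,d_3}$ --- whose boundary lies at distance $\gtrsim\kk\eps$ from $R_{\kk,d_3}$ near $\pm ia$ --- gives $|\pa_u\CCC|\le K|\mu|\eps^{\eta}/(\kk\eps)=b_{11}\kk\ii|\mu|\eps^{\eta-1}$ (for $\kk\in[\kk_7,2\kk_7]$ a fixed constant works), which is the claimed improved bound. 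Injectivity of $(u,\tau)\mapsto(\xi_0(u,\tau),\tau)$ then reduces, for fixed $\tau\in\TT_\sigma$, to injectivity of $u\mapsto\eps\ii u+\CCC(u,\tau)$: on each convex component, $\xi_0(u_1,\tau)=\xi_0(u_2,\tau)$ would force $\eps\ii|u_1-u_2|\le|u_1-u_2|\sup|\pa_u\CCC|\le b_{11}\kk\ii|\mu|\eps^{\eta-1}|u_1-u_2|$, impossible unless $u_1=u_2$ for $\kk$ large and $\eps$ small; and the two components near $\pm ia$ have $\Im\xi_0$ separated by a gap of order $\eps\ii$, so they cannot overlap.

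I expect the main obstacle to be the bookkeeping that simultaneously produces the \emph{sharp} exponent $\eps^{\eta}$ in $\|\CCC\|$, with no spurious loss, \emph{uniformly} in the large parameter $\kk$ (up to $\kk=O(\log(1/\eps))$, as needed later in Theorem \ref{th:CotaDiferencia:lmenor}), while absorbing the $\eps\ii$ in front of $G$ and the blow--up of $G$ and $\langle G\rangle$ as $u\to\pm ia$. Choosing the weighted norms finely enough that the $\eps$--regularising effect of $\GG_\eps$ on the oscillating modes and the short $u$--range of the zero--mode integration exactly compensate these two effects --- while keeping the quadratic term strictly subdominant --- is the delicate step, and it is here that the argument reuses and refines the analysis of \cite{Baldoma06}.
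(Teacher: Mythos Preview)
Your overall setup is correct: the equation for $\CCC$ is indeed $\LL_\eps\CCC=-\eps\ii G-G\,\pa_u\CCC$, and the right inverse $\GG_\eps$ built Fourier-mode-by-mode with appropriately chosen base points does gain a factor $\eps$ on non-zero modes. Your bound on $\FF(0)=\GG_\eps(-\eps\ii G)$ works essentially as you say, because $\langle G\rangle$ is genuinely $O(|\mu|\eps^{\eta+1})$ (the zero-average piece $G_1$ drops out of the mean).

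The gap is in the contraction estimate. You claim $\|G\|\le K|\mu|\eps^{\eta+1}$, but this is false: looking at \eqref{def:FuncioLAnulador:lmenor}, the second summand contains $\mu\eps^{\eta}p_0\ii\pa_p\wh H_1(q_0,p_0,\tau)$ even when $T_1^{u,s}=0$, and this term --- call it $G_1$ --- is only $O(|\mu|\eps^{\eta})$, with zero $\tau$-average but \emph{not} pointwise small. Consequently the Lipschitz constant of your map $\CCC\mapsto\GG_\eps(-G\,\pa_u\CCC)$ is of order $K|\mu|\eps^{\eta}$ (after rewriting $G\,\pa_u\CCC=\pa_u(G\CCC)-(\pa_uG)\CCC$ and using $\|\GG_\eps(\pa_u h)\|\le K\|h\|$), which for $\eta=0$ is $K|\mu|$: not a contraction for $\mu\in B(\mu_0)$ with $\mu_0$ arbitrary. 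No choice of weight in $|u^2+a^2|$ rescues this, since $G_1$ is bounded uniformly and $\langle G_1\,\pa_u\CCC\rangle\neq 0$ in general, so the $\eps$-gain on oscillating modes does not apply to the linear part.

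The paper's cure (Section~\ref{sec:PtFix:canvi:lmenor}) is to first perform a change of variables $u=v+g(v,\tau)$ with $g$ solving $\LL_\eps g=G_1$; since $\langle G_1\rangle=0$, one gets $\|g\|_{0,\sigma}\le K|\mu|\eps^{\eta+1}$. In the new variable the equation for $\wh\CCC(v,\tau)=\CCC(v+g,\tau)$ reads $\LL_\eps\wh\CCC=M+N\,\pa_v\wh\CCC$ where the new linear coefficient $N=-(1+\pa_vg)\ii(G(v+g,\tau)-G_1(v,\tau))$ has the $G_1$ piece removed and is genuinely $O(|\mu|\eps^{\eta+1})$. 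The fixed point argument then contracts with rate $K|\mu|\eps^{\eta}/\kk$, small for $\kk$ large, and the rest of your outline (Cauchy estimate for $\pa_u\CCC$, injectivity via the derivative bound) goes through. This preliminary change of variables is the missing ingredient; it is exactly analogous to the changes used earlier in the paper (Lemmas~\ref{lemma:infty:ligual:canvi}, \ref{lemma:Extensio:Trig:canvi}) to absorb the same non-small linear term.
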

To study the first order of the difference between the invariant
manifolds, we need a better knowledge of the behavior of the
function $\CCC$  in the inner domains defined in \eqref{def:DominisInnerEnu}.
The next proposition gives the first order asymptotic terms of
$\CCC$ close to $u=ia$, that is in $ D_{\kk,\C}^{\inn,+,u}\cap
D_{\kk,\C}^{\inn,+,s}$. The study close to $u=-ia$ can be done
analogously.
\begin{proposition}\label{coro:Canvi:FirstOrder:lmenor}
Let  $\kk_7$ be given by Theorem
\ref{th:CanviFinal:lmenor} and $\C_1>0$.
Then, for any $\eps_0>0$ and $\kk>\kk_7$ such that $\kk\eps<a$, there exist a
constant
$C(\mu,\eps)$ defined for $(\mu,\eps)\in B(\mu_0) \times (0,\eps_0)$ and
depending real-analytically
in $\mu$ and a constant $b_{12}>0$ such that
$|C(\mu,\eps)|\leq b_{12}|\mu|\eps^\eta$ and, if $(u,\tau)\in \left(
D_{\kk,\C_1}^{\inn,+,u}\cap
D_{\kk,\C_1}^{\inn,+,s}\right)\times\TT_\sigma$,
\[
\left|\CCC(u,\tau)-C(\mu,\eps)\right|\leq
\frac{b_{12}|\mu|\eps^{\eta}}{\kk} .
\]
Moreover, in the case $\eta=0$, there exists a constant $C(\mu)$ such that
$C(\mu,\eps)=C(\mu)+\OO\left(\eps^\nu\right)$ for certain $\nu>0$.
\end{proposition}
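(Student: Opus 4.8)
The function $\CCC$ solves a \emph{non-homogeneous linear transport equation}: substituting $\xi_0=\eps^{-1}u-\tau+\CCC$ into $\widetilde\LL_\eps\xi_0=0$ (equation \eqref{eq:DifferencePDE}) and using the form \eqref{def:OperadorAnulador} of the operator one gets immediately
\[
\eps^{-1}\pa_\tau\CCC+(1+G)\pa_u\CCC=-\eps^{-1}G ,
\]
with $G$ as in \eqref{def:FuncioLAnulador:lmenor}. The first point is that $G$ is \emph{small in the inner domain}: combining the inner estimate of Proposition \ref{coro:Varietat:FirstOrder:lmenor} (which identifies $\pa_u T_1^{u,s}$ with $\pa_u\TTT^{u,s}_0$ up to an error $\OO(|\mu|\eps^{\eta-\ell+\nu^*})$), the order of $p_0(u)$ at $\pm ia$ given by \textbf{HP2}, and the definition \eqref{def:ell} of $\ell$, one obtains $|G(u,\tau)|\le K|\mu|\eps^{\eta}|u-ia|^{2r-\ell}\le K|\mu|\eps^{\eta+\gamma(2r-\ell)}$ on $D^{\inn,+,u}_{\kk,\C_1}\cap D^{\inn,+,s}_{\kk,\C_1}$, a positive power of $\eps$ times $|\mu|\eps^\eta$ because $\ell<2r$.

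I would then expand $\CCC(u,\tau)=\sum_{k\in\ZZ}\CCC^{[k]}(u)e^{ik\tau}$ and treat $k\ne0$ and $k=0$ separately. For $k\ne0$ the equation gives the algebraic identity
\[
\CCC^{[k]}(u)=\frac{i}{k}\Bigl(G^{[k]}(u)+\eps\,\pa_u\CCC^{[k]}(u)+\eps\,(G\,\pa_u\CCC)^{[k]}(u)\Bigr);
\]
inserting the bound $|\pa_u\CCC|\le b_{11}\kk^{-1}|\mu|\eps^{\eta-1}$ of Theorem \ref{th:CanviFinal:lmenor}, the above bound on $G$, and Cauchy estimates in $\tau$ (which supply the decay $e^{-|k|\sigma}$), one gets $|\CCC^{[k]}(u)|\le K|k|^{-1}e^{-|k|\sigma}\kk^{-1}|\mu|\eps^{\eta}$ and hence $\sum_{k\ne0}|\CCC^{[k]}(u)|\le K\kk^{-1}|\mu|\eps^{\eta}$ on the inner intersection domain. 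For $k=0$, averaging the transport equation over $\tau$ yields the first order ODE $\pa_u\langle\CCC\rangle=-\eps^{-1}\langle G\rangle-\langle G\,\pa_u\CCC\rangle$. I would set $C(\mu,\eps):=\langle\CCC\rangle(u_\ast)$ at a fixed reference point $u_\ast$ in the inner domain, so that $|C(\mu,\eps)|\le b_{11}|\mu|\eps^{\eta}$ by Theorem \ref{th:CanviFinal:lmenor} and $C$ inherits the analyticity in $\mu$ from $\CCC$; then $|\CCC(u,\tau)-C(\mu,\eps)|\le|\langle\CCC\rangle(u)-C(\mu,\eps)|+\sum_{k\ne0}|\CCC^{[k]}(u)|$, and it remains to bound the first term by integrating the ODE along a path inside the inner domain.

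That last bound is the main obstacle, because of the $\eps^{-1}$ in front of $\langle G\rangle$: the naive estimate $|G|\le K|\mu|\eps^{\eta+\gamma(2r-\ell)}$ does not suffice. I would use that the \emph{dominant part of $G$ has zero $\tau$-average}. The term $\frac{1}{2p_0^2}\pa_u(T_1^u+T_1^s)$ equals, by Proposition \ref{coro:Varietat:FirstOrder:lmenor}, $\frac{1}{2p_0^2}\pa_u(\TTT^u_0+\TTT^s_0)$ plus an $\OO(|\mu|\eps^{\eta-\ell+\nu^*})$ error, and $\langle\TTT^{u,s}_0\rangle\equiv0$ because $\langle H_1\rangle=0$ (\textbf{HP3}); likewise the $\pa_p\wh H_1$ integral in \eqref{def:FuncioLAnulador:lmenor} has vanishing average up to a Taylor remainder controlled by $|T_1^{u,s}|/|p_0|$, and the $\eps\wh H_1^2$ and periodic-orbit correction terms are an order of $\eps$ smaller. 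This gives $|\langle G\rangle|\le K|\mu|\eps^{\eta-\ell+\nu^*}|u-ia|^{2r}$ on the inner domain, so $\eps^{-1}|\langle G\rangle|$ integrated over a path of length $\OO(\eps^\gamma)$ is $\le K|\mu|\eps^{\eta+\nu^*-\ell-1+\gamma(2r+1)}$, which is $\le K|\mu|\eps^{\eta}/\kk$ as soon as $\gamma(2r+1)>\ell+1-\nu^*$; since $\ell<2r$ this is compatible with $0<\gamma<\min\{1,(\ell+1)/(r+1)\}$, so $\gamma$ may be fixed accordingly. The term $\langle G\,\pa_u\CCC\rangle$ is of strictly lower order (or absorbed by a Gronwall argument). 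Combining with the $k\ne0$ estimate yields $|\CCC(u,\tau)-C(\mu,\eps)|\le b_{12}|\mu|\eps^{\eta}/\kk$.

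For the case $\eta=0$, the limit $C(\mu):=\lim_{\eps\to0^+}C(\mu,\eps)$ with rate $\OO(\eps^{\nu})$ follows from a companion estimate comparing, at two values of $\eps$, the construction of $\CCC$ in Theorem \ref{th:CanviFinal:lmenor}: the $\eps$-dependent pieces of $G$ (the term $\eps\wh H_1^2$, the periodic-orbit corrections and the fast oscillation $\tau+\eps^{-1}t$ inside $\TTT^{u,s}_0$) converge as $\eps\to0$ at an algebraic rate, and the transport equation then forces $C(\mu,\eps)$ to be Cauchy in $\eps$ with the same rate; its limit is the phase correction appearing in \eqref{def:formulaArea:regular:lmenor}.
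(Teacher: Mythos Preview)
Your approach is valid for the main estimate but takes a different route from the paper's. The paper returns to the auxiliary function $\wh\CCC(v,\tau)=\CCC(v+g(v,\tau),\tau)$ constructed in Lemma~\ref{lemma:canvi:lmenor:PtFix} and decomposes it as $\wh\CCC=E_1+E_2+E_3$, where $E_1=\GG_\eps(\langle M\rangle)$ is the primitive of the mean of the forcing, $E_2=\GG_\eps(M-\langle M\rangle)$ gains a factor $\eps$ from its vanishing average, and $E_3=\wt\FF(\wh\CCC)-\wt\FF(0)$ is controlled by the Lipschitz constant of the fixed-point operator. The constant is then defined \emph{explicitly} as $C(\mu,\eps)=\int_{v_0}^{v_1}\langle M\rangle(w)\,dw$ with $v_1=i(a-\kk_3'\eps)$ the upper vertex of $R_{\kk_3',d_3}$. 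Your $k\neq 0$/$k=0$ split corresponds closely to the $E_2$/$E_1$ split, and your implicit higher-order terms to $E_3$; your choice $C(\mu,\eps):=\langle\CCC\rangle(u_\ast)$ differs from the paper's integral but only by a quantity already absorbed in the error.

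Where the paper's choice pays off is in the last sentence of the statement. Because $C(\mu,\eps)$ is an explicit integral of $\langle M\rangle$, the paper can expand $M=M^1+M^2+M^3$ (coming from $G_1,G_2,G_3$), identify the $\eps$-independent leading piece of each $\langle M^i\rangle$, and extend the integration up to $ia$ (the integrands being integrable there since $\ell<2r$); this produces $C_0(\mu)$ as a sum of three concrete integrals (Section~\ref{Proof:Existence:C0:lmenor}). Your assertion that ``the transport equation forces $C(\mu,\eps)$ to be Cauchy in $\eps$'' is not yet a proof: with $C(\mu,\eps)=\langle\CCC\rangle(u_\ast)$ and $u_\ast\to ia$ as $\eps\to 0$, you would need uniform-in-$\eps$ control of $\langle\CCC\rangle$ near the singularity together with a comparison of the full fixed-point constructions of Theorem~\ref{th:CanviFinal:lmenor} at two different values of $\eps$. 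That is substantially more work than sketched, and it is precisely what the paper avoids by passing to an integral formula first.

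One minor correction in your $k\neq 0$ step: the identity gives $|\CCC^{[k]}|\le K|k|^{-1}e^{-|k|\sigma}|\mu|\eps^{\eta}\bigl(\eps^{\gamma(2r-\ell)}+\kk^{-1}\bigr)$ rather than simply $K|k|^{-1}e^{-|k|\sigma}\kk^{-1}|\mu|\eps^{\eta}$, because the $G^{[k]}$ contribution is of order $|\mu|\eps^{\eta+\gamma(2r-\ell)}$ on the inner domain. This is still $o(1)\cdot|\mu|\eps^{\eta}$ and is adequate for the application in Theorem~\ref{th:CotaDiferencia:lmenor} (where $\kk=s\ln(1/\eps)$), matching the bound the paper obtains for $E_2$.
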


The proofs of Theorem \ref{th:CanviFinal:lmenor} and Proposition
\ref{coro:Canvi:FirstOrder:lmenor} are done in Section
\ref{sec:PtFix:canvi:lmenor}.

As we have explained, since $\Delta=T^u-T^s$ is a solution of the
same homogeneous partial differential equation as $\xi_0$ given in
Theorem \ref{th:CanviFinal:lmenor}, there exists a function
$\Upsilon$ such that $\Delta=\Upsilon\circ\xi_0$, which gives
\begin{equation}\label{def:Upsilon:lmenor}
\Delta(u,\tau)=\Upsilon\left(\eps\ii u-\tau+\CCC(u,\tau)\right).
\end{equation}
Since $\Delta$ is $2\pi$-periodic in $\tau$,
we notice that the function $\Upsilon$ is $2\pi$-periodic in its variable.
Therefore, considering the Fourier series of $\Upsilon$ we obtain
\begin{equation}\label{def:Delta:Fourier:lmenor}
\Delta(u,\tau)=\sum_{k\in\ZZ}\Upsilon^{[k]} e^{ik\left(\eps\ii
u-\tau+\CCC(u,\tau)\right)}.
\end{equation}

Now we are going to find the first asymptotic term of $\Delta$. Let us first
observe that
the Melnikov Potential defined in \eqref{def:MelnikovPotentialEnt} can be
defined through
the functions  $\TTT_0^{u,s}$, given in \eqref{def:MigMelnikov}, as
\begin{equation}\label{def:MelnikovPotential}
\TTT_0^u(u,\tau)-\TTT^s_0(u,\tau)=-\mu \eps^{\eta} L(u,\tau).
\end{equation}
Moreover by \eqref{LM},
\begin{equation}\label{def:MelnikovPotential:Fourier}
L(u,\tau)=\sum_{k\in\ZZ}\MM^{[k]}e^{ik\left(\eps\ii u-\tau\right)}.
\end{equation}
In \cite{DelshamsS97} (for the hyperbolic case) and \cite{BaldomaF04} (for the
parabolic case),
it was seen that for $\eta>\ell$, the function $L$ gives the leading term of the
difference between manifolds.
Nevertheless, for the general case $\eta\geq 0$, one has to modify slightly this
function to obtain
the correct first order. Let us define
\begin{equation}\label{def:PrimerOrdre:lmenor}
\Delta_0(u,\tau)=\sum_{k\in\ZZ}\Upsilon_0^{[k]}e^{ik\left(\eps\ii
u-\tau+\CCC(u,\tau)\right)},
\end{equation}
where
\begin{equation}\label{def:PrimerOrdre:FourierCoef:lmenor}
\begin{split}
\Upsilon_0^{[k]}&=-\mu \eps^{\eta} \MM^{[k]}e^{-ik C(\mu,\eps)} \;\;\text{ if
}\;\;k<0\\
\Upsilon_0^{[0]}&=0\\
\Upsilon_0^{[k]}&=-\mu \eps^{\eta} \MM^{[k]}e^{-ik \ol C(\mu,\eps)} \;\;\text{
if }\;\;k>0 ,
\end{split}
\end{equation}
where $C(\mu,\eps)$ is the constant obtained in Proposition
\ref{coro:Canvi:FirstOrder:lmenor}
and $\ol C(\mu,\eps)$ is its complex conjugate. Let us point out that, by
Proposition
\ref{coro:Canvi:FirstOrder:lmenor}, these coefficients satisfy
\[
\Upsilon_0^{[k]}=-\mu \eps^{\eta}
\MM^{[k]}\left(1+\OO\left(|k|\mu\eps^\eta\right)\right).
\]
Next theorem shows that this function $\Delta_0$ gives the first asymptotic
order of \eqref{def:Diferencia:Original}.
{From} now on, in this subsection,
we consider real values of  $\tau\in\TT=\TT_\sigma\cap\RR$. In this setting it
can be easily
seen that the function $\Delta_0$ is real-analytic in $u$.

\begin{theorem}\label{th:CotaDiferencia:lmenor}
Let us  consider  the mean value of $\Upsilon$, $\Upsilon^{[0]}$,
defined in \eqref{def:Delta:Fourier:lmenor}, $s<\nu^\ast$ where $\nu^\ast$ is
the constant defined in Proposition \ref{coro:Varietat:FirstOrder:lmenor} and
$\eps_0>0$ small enough.
Then, there exists a constant $b_{13}>0$ such that for
$\eps\in(0,\eps_0)$ and $\mu\in B(\mu_0)\cap\RR$ and $(u,\tau)\in
\left(R_{s\ln(1/\eps),d_3}\cap\RR\right)\times\TT$, the following
statements are satisfied:
\[
\begin{split}
\left|\Delta(u,\tau)-\Upsilon^{[0]}- \Delta_0(u,\tau)\right|&\leq
\frac{b_{13}|\mu|\eps^{\eta+1-\ell}}{|\ln\eps|}e^{-\dps\tfrac{a}{\eps}}\\
\left|\pa_u\Delta(u,\tau)-\pa_u \Delta_0(u,\tau)\right|&\leq
\frac{b_{13}|\mu|\eps^{\eta-\ell}}{|\ln\eps|}e^{-\dps\tfrac{a}{\eps}}\\
\left|\pa_u^2\Delta(u,\tau)-\pa_u^2 \Delta_0(u,\tau)\right|&\leq
\frac{b_{13}|\mu|\eps^{\eta-1-\ell}}{|\ln\eps|}e^{-\dps\tfrac{a}{\eps}}.
\end{split}\]
\end{theorem}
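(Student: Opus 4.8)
The strategy is to exploit, as in \cite{Baldoma06}, the conjugacy structure of the solutions of the linear equation \eqref{eq:DifferencePDE}, together with the inner estimates of Propositions \ref{coro:Varietat:FirstOrder:lmenor} and \ref{coro:Canvi:FirstOrder:lmenor}. Since $\Delta=T^u-T^s$ solves \eqref{eq:DifferencePDE} and, by Theorem \ref{th:CanviFinal:lmenor}, $\xi_0(u,\tau)=\eps\ii u-\tau+\CCC(u,\tau)$ is a solution of \eqref{eq:DifferencePDE} for which $(\xi_0,\tau)$ is injective on $R_{\kk,d_3}\times\TT_\sigma$, one has $\Delta=\Upsilon\circ\xi_0$ with $\Upsilon$ the $2\pi$-periodic function of \eqref{def:Delta:Fourier:lmenor}, whereas $\Delta_0=\Upsilon_0\circ\xi_0$ by the very definition \eqref{def:PrimerOrdre:lmenor}. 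Setting $\Theta(s)=\sum_{k\neq0}(\Upsilon^{[k]}-\Upsilon_0^{[k]})e^{iks}$, which has zero mean since $\Upsilon_0^{[0]}=0$, we obtain
\[
\Delta(u,\tau)-\Upsilon^{[0]}-\Delta_0(u,\tau)=\Theta\bigl(\xi_0(u,\tau)\bigr),
\]
so the theorem reduces to bounding $\Upsilon^{[k]}-\Upsilon_0^{[k]}$ for $k\neq0$ and summing the resulting series for real $(u,\tau)$, where $\CCC$ is real and hence $\Im\xi_0=0$.

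I would first obtain the crucial algebraic smallness of $\Theta$ near the singularities. On the inner domains $D_{\kk,\C}^{\inn,\pm,u}\cap D_{\kk,\C}^{\inn,\pm,s}$, Proposition \ref{coro:Varietat:FirstOrder:lmenor} gives $\pa_uT^{u,s}=\pa_uT_0+\pa_u\TTT^{u,s}_0+\OO(|\mu|\eps^{\eta-\ell+\nu^\ast})$, hence by \eqref{def:MelnikovPotential} $\pa_u\Delta=-\mu\eps^{\eta}\pa_uL+\OO(|\mu|\eps^{\eta-\ell+\nu^\ast})$; moreover Proposition \ref{coro:Canvi:FirstOrder:lmenor} gives $\CCC=C(\mu,\eps)+\OO(|\mu|\eps^{\eta}/\kk)$ there, so the phases $e^{-ikC}$, $e^{-ik\ol C}$ in \eqref{def:PrimerOrdre:FourierCoef:lmenor} compensate $e^{ik\CCC}$ up to a relative error $\OO(|\mu|\eps^{\eta}/\kk)$, and invoking the Melnikov asymptotics of Lemma \ref{lemma:Melnikov} one checks that $\pa_u\Delta_0$ obeys the same expansion. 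Subtracting and using $\pa_u(\Theta\circ\xi_0)=\Theta'(\xi_0)\pa_u\xi_0$ with $\pa_u\xi_0=\eps\ii(1+\OO(|\mu|\eps^{\eta}/\kk))$ yields
\[
\bigl|\Theta'\bigl(\xi_0(u,\tau)\bigr)\bigr|\le K|\mu|\eps^{\eta+1-\ell+\nu^\ast}\qquad\text{on}\quad D_{\kk,\C}^{\inn,\pm,u}\cap D_{\kk,\C}^{\inn,\pm,s},
\]
and $\Theta$ itself satisfies the same bound through $\Theta^{[k]}=(ik)\ii(\Theta')^{[k]}$.

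The exponential gain comes from the geometry of $\xi_0$. The portions of $D_{\kk,\C}^{\inn,\pm,u}\cap D_{\kk,\C}^{\inn,\pm,s}$ lying near $\pm ia$, at distance $\asymp\kk\eps$ from the singularities, are wrapped by $\xi_0$ around the cylinder onto a neighborhood of the lines $\{\Im s=\pm(a\eps\ii-\kk)+\OO(|\mu|\eps^{\eta})\}$; by injectivity of $(\xi_0,\tau)$ and the previous step, $|\Theta'(s)|\le K|\mu|\eps^{\eta+1-\ell+\nu^\ast}$ on those lines, so a Cauchy estimate yields $|\Upsilon^{[k]}-\Upsilon_0^{[k]}|=|k|\ii|(\Theta')^{[k]}|\le|k|\ii K|\mu|\eps^{\eta+1-\ell+\nu^\ast}e^{-|k|(a\eps\ii-\kk)}$ for $k\neq0$. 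Summing over $k$ for real $(u,\tau)$ and taking $\kk=s\ln(1/\eps)$, so that $e^{\kk}=\eps^{-s}$, gives
\[
\bigl|\Delta(u,\tau)-\Upsilon^{[0]}-\Delta_0(u,\tau)\bigr|\le K|\mu|\eps^{\eta+1-\ell+\nu^\ast-s}e^{-{\dps\tfrac{a}{\eps}}}\le\frac{b_{13}|\mu|\eps^{\eta+1-\ell}}{|\ln\eps|}e^{-{\dps\tfrac{a}{\eps}}},
\]
since $\nu^\ast-s>0$ forces $\eps^{\nu^\ast-s}\le1/|\ln\eps|$ for $\eps$ small. Differentiating the series $\sum_{k\neq0}(\Upsilon^{[k]}-\Upsilon_0^{[k]})e^{ik\xi_0}$ once (resp. twice) in $u$ brings down $ik\pa_u\xi_0=\OO(k\eps\ii)$ (resp. its square, plus a term $\pa_u^2\xi_0$ controlled by a Cauchy estimate from Theorem \ref{th:CanviFinal:lmenor}), which is absorbed by the decay $e^{-|k|(a\eps\ii-\kk)}$ at the price of one factor $\eps\ii$ (resp. $\eps^{-2}$); this produces the stated bounds for $\pa_u\Delta-\pa_u\Delta_0$ and $\pa_u^2\Delta-\pa_u^2\Delta_0$.

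The main obstacle is the second step: proving that $\pa_u\Delta_0$ has, near $\pm ia$, precisely the same first-order behaviour as $\pa_u\Delta$ with error $\OO(|\mu|\eps^{\eta-\ell+\nu^\ast})$. This forces a careful term-by-term comparison of the Fourier series \eqref{def:PrimerOrdre:lmenor} against $-\mu\eps^{\eta}L$, keeping simultaneous track of the phase corrections $C(\mu,\eps)$, of the discrepancy between $\eps\ii u-\tau+\CCC$ and $\eps\ii u-\tau$, and of the matching between the local behaviour of $T^{u,s}$ near the singularities (Propositions \ref{coro:Varietat:FirstOrder:lmenor}, \ref{coro:Canvi:FirstOrder:lmenor}) and their global behaviour on the boomerang domains (Theorem \ref{th:ExtensioFinal}, Lemma \ref{lemma:Melnikov}); one must also ensure that the constant of integration implicit in passing from $\Theta'$ to $\Theta$ is the correct one, which is automatic since $\Theta$ has zero average by construction. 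Once this is in place, the Cauchy/Fourier argument and the summation are routine, in the spirit of \cite{Baldoma06,DelshamsS97}.
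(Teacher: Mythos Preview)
Your approach is exactly the paper's: write $\Delta-\Upsilon^{[0]}-\Delta_0=\Theta\circ\xi_0$ with $\Theta=\sum_{k\neq0}(\Upsilon^{[k]}-\Upsilon_0^{[k]})e^{iks}$, bound $\Theta'\circ\xi_0=\eps\,\pa_u\wt\Delta\cdot(1+o(1))$ near the tips of $R_{s\ln(1/\eps),d_3}$ using Propositions \ref{coro:Varietat:FirstOrder:lmenor} and \ref{coro:Canvi:FirstOrder:lmenor}, push this through a Cauchy/Fourier estimate at $\Im s\approx\pm(a\eps^{-1}-\kk)$, and sum the resulting series on the real axis.

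There is one quantitative slip. You claim that in the inner domains
\[
\bigl|\Theta'(\xi_0(u,\tau))\bigr|\le K|\mu|\eps^{\eta+1-\ell+\nu^\ast},
\]
i.e.\ that $\pa_u\Delta_0=-\mu\eps^\eta\pa_uL+\OO(|\mu|\eps^{\eta-\ell+\nu^\ast})$. But the phase--correction error you correctly identify as ``relative error $\OO(|\mu|\eps^\eta/\kk)$'' must be multiplied by the \emph{actual size} of $\mu\eps^\eta\pa_uL$ near the tip, which is $\OO(|\mu|\eps^{\eta-\ell+s})$ (at $\kk=s\ln(1/\eps)$ the $k$th Fourier mode of $L$ contributes $\eps^{1-\ell}e^{-|k|\kk}=\eps^{1-\ell+|k|s}$). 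This is precisely the paper's term $\wt\Delta_2$ in \eqref{def:Lambda2:lmenor}, and it gives
\[
|\pa_u\wt\Delta_2|\le K|\mu|^2\eps^{2\eta-\ell+s}/|\ln\eps|,
\]
which for $\eta=0$ is \emph{larger} than your asserted $|\mu|\eps^{\eta-\ell+\nu^\ast}$ (a positive power of $\eps$ does not beat $1/|\ln\eps|$ in the wrong direction). So the combined bound is the paper's
\[
|\pa_u\wt\Delta|\le K|\mu|\eps^{\eta-\ell+s}/|\ln\eps|
\]
rather than your $K|\mu|\eps^{\eta-\ell+\nu^\ast}$. Fortunately this does not break your proof: plugging the correct bound into your Cauchy step gives $|\wt\Upsilon^{[k]}|\le K|\mu|\eps^{\eta+1-\ell+s}|\ln\eps|^{-1}e^{-|k|(a/\eps-\kk)}$, and the factor $\eps^{+s}$ cancels exactly against the $\eps^{-s}=e^{\kk}$ from the strip width, leaving the stated $|\mu|\eps^{\eta+1-\ell}|\ln\eps|^{-1}e^{-a/\eps}$. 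In your version the $1/|\ln\eps|$ would instead come from the surplus $\eps^{\nu^\ast-s}$; in the paper's (correct) accounting it comes directly from the $\wt\Delta_2$ estimate. The paper also briefly checks that the outer portion of $R_{s\ln(1/\eps),d_3}$ gives an even smaller contribution (via Theorems \ref{th:Extensio:Trig}/\ref{th:ExtensioFinal}), which you should mention since the sup defining the Fourier bound is over all of $R$, not just the inner tips.
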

Let us observe that, using  Lemma \ref{lemma:Melnikov}, the definition
of the coefficients $\Upsilon_0^{[k]}$ in
\eqref{def:PrimerOrdre:FourierCoef:lmenor}
 and Proposition \ref{coro:Canvi:FirstOrder:lmenor}, one can deduce a simpler
leading
term of $\Delta$ in \eqref{def:Diferencia:Original}.
For this purpose let us define the function
\begin{equation}\label{def:Delta00:lmenor}
\Delta_{00}(u,\tau)=\frac{2\mu \eps ^{\eta}}{\eps^{\ell-1}}e^{\dps
-\tfrac{a}{\eps}}\Re\left(f_0e^{iC(\mu,\eps) }e^{-i\left({\dps
\tfrac{u}{\eps}}-\tau+\CCC(u,\tau)\right)}\right),
\end{equation}
where $C(\mu,\eps)$ is  the constant given in Proposition
\ref{coro:Canvi:FirstOrder:lmenor} and $\CCC$ is the function given by
Theorem \ref{th:CanviFinal:lmenor}.

\begin{corollary}\label{coro:Diferencia:Simple:lmenor}
There exists a constant $b_{14}>0$ such that for
$\eps\in(0,\eps_0)$, $\mu\in B(\mu_0)\cap\RR$ and $(u,\tau)\in
\left(R_{s\ln(1/\eps),d_3}\cap\RR\right)\times\TT$, the following
statements are satisfied.
\[
\begin{split}
\left|\Delta(u,\tau)-\Upsilon^{[0]}- \Delta_{00}(u,\tau)\right|&\leq
\frac{b_{14}\left|\mu\right|\eps^{\eta+1-\ell}}{|\ln\eps|}e^{-{\dps\tfrac{a}{
\eps}}}\\
\left|\pa_u\Delta(u,\tau)-\pa_u \Delta_{00}(u,\tau)\right|&\leq
\frac{b_{14}\left|\mu\right|\eps^{\eta-\ell}}{|\ln\eps|}e^{-{\dps\tfrac{a}{\eps}
}}\\
\left|\pa_u^2\Delta(u,\tau)-\pa_u^2 \Delta_{00}(u,\tau)\right|&\leq
\frac{b_{14}\left|\mu\right|\eps^{\eta-1-\ell}}{|\ln\eps|}e^{-{\dps\tfrac{a}{
\eps}}}.\\
\end{split}
\]
\end{corollary}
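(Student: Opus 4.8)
The plan is to read off the Corollary from Theorem \ref{th:CotaDiferencia:lmenor} by trading the Fourier-defined first order $\Delta_0$ for the explicit expression $\Delta_{00}$. By the triangle inequality, since Theorem \ref{th:CotaDiferencia:lmenor} already controls $\Delta-\Upsilon^{[0]}-\Delta_0$, $\pa_u(\Delta-\Delta_0)$ and $\pa_u^2(\Delta-\Delta_0)$ on $\left(R_{s\ln(1/\eps),d_3}\cap\RR\right)\times\TT$, it suffices to bound $\Delta_0-\Delta_{00}$ and its first two $u$-derivatives by the same right-hand sides. Write $\theta(u,\tau)=\eps\ii u-\tau+\CCC(u,\tau)$, so $\Delta_0(u,\tau)=\sum_{k\in\ZZ}\Upsilon_0^{[k]}e^{ik\theta(u,\tau)}$ with $\Upsilon_0^{[0]}=0$. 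For $\kk=s\ln(1/\eps)$ one has $\kk\geq\kk_7$ and $\kk\eps<a$ once $\eps$ is small, so Theorem \ref{th:CanviFinal:lmenor} and Proposition \ref{coro:Canvi:FirstOrder:lmenor} apply and yield $|\CCC(u,\tau)|\leq b_{11}|\mu|\eps^\eta$, $|\pa_u\CCC(u,\tau)|\leq b_{11}\kk\ii|\mu|\eps^{\eta-1}$ and $|C(\mu,\eps)|\leq b_{12}|\mu|\eps^\eta$; moreover, $\CCC$ being produced by a real-analytic fixed-point scheme with real-analytic data, it is real-valued on $R_{\kk,d_3}\cap\RR$, hence $\theta$ is real there and $\overline{\Upsilon_0^{[1]}}=\Upsilon_0^{[-1]}$.

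First I would discard the harmonics $|k|\geq2$. On the real domain $\Im u=0$, so $|e^{ik\theta}|\leq e^{|k|\,|\CCC|}$, and with $|\Upsilon_0^{[k]}|=|\mu|\eps^\eta|\MM^{[k]}|e^{|k|\,|C(\mu,\eps)|}$ and $|\MM^{[k]}|\leq K\eps^{1-\ell}e^{-|k|a/\eps}$ from Lemma \ref{lemma:Melnikov}, part 2, together with $|C(\mu,\eps)|+|\CCC|\leq K|\mu|$,
\[
\sum_{|k|\geq2}\left|\Upsilon_0^{[k]}e^{ik\theta}\right|\leq K|\mu|\eps^{\eta+1-\ell}\sum_{|k|\geq2}e^{-|k|\left(a/\eps-K|\mu|\right)}\leq K|\mu|\eps^{\eta+1-\ell}e^{-2a/\eps},
\]
where the last step uses $a/\eps-K|\mu|\geq a/(2\eps)$ for $\eps$ small and that the sum is dominated by the $|k|=2$ term. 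Since $e^{-2a/\eps}=e^{-a/\eps}e^{-a/\eps}\leq e^{-a/\eps}/|\ln\eps|$ for $\eps$ small, this tail has the claimed size. Differentiating $e^{ik\theta}$ brings extra factors $|k|\,\pa_u\theta=\OO(|k|\eps\ii)$ and, for the second derivative, $\OO(|k|^2\eps^{-2})$ using $\pa_u\theta=\eps\ii(1+\OO(\kk\ii|\mu|\eps^\eta))$ and a Cauchy estimate for $\pa_u^2\CCC$ on $R_{\kk,d_3}$; the extra polynomial factors in $|k|$ are harmless against the exponential, so the $u$- and $uu$-derivatives of the tail are $\OO(|\mu|\eps^{\eta-\ell}e^{-2a/\eps})$ and $\OO(|\mu|\eps^{\eta-1-\ell}e^{-2a/\eps})$, again negligible.

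Next I would treat the surviving harmonics $k=\pm1$. By Lemma \ref{lemma:Melnikov}, part 1, $\MM^{[-1]}=\overline{\MM^{[1]}}=-\eps^{1-\ell}e^{-a/\eps}\bigl(f_0+\OO(\eps^{1/\q})\bigr)$, whence, using reality of $\theta$ on the real domain and $\Upsilon_0^{[-1]}=-\mu\eps^\eta\MM^{[-1]}e^{iC(\mu,\eps)}$,
\[
\Upsilon_0^{[-1]}e^{-i\theta}+\Upsilon_0^{[1]}e^{i\theta}=2\mu\eps^{\eta+1-\ell}e^{-a/\eps}\Re\!\Bigl(\bigl(f_0+\OO(\eps^{1/\q})\bigr)e^{iC(\mu,\eps)}e^{-i\theta}\Bigr)=\Delta_{00}(u,\tau)+\OO\!\bigl(|\mu|\eps^{\eta+1-\ell+1/\q}e^{-a/\eps}\bigr),
\]
and $\eps^{1/\q}\leq 1/|\ln\eps|$ for $\eps$ small since $\q\geq1$ and $\eps^{1/\q}|\ln\eps|\to0$. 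Adding the tail estimate gives $|\Delta_0-\Delta_{00}|\leq K|\mu|\eps^{\eta+1-\ell}e^{-a/\eps}/|\ln\eps|$; differentiating in $u$ and using $\pa_u\theta=\OO(\eps\ii)$ (resp. $(\pa_u\theta)^2+\pa_u^2\theta=\OO(\eps^{-2})$) yields the analogous bounds with $\eps^{\eta-\ell}$ and $\eps^{\eta-1-\ell}$. Combining with the three inequalities of Theorem \ref{th:CotaDiferencia:lmenor} via the triangle inequality concludes the proof; a last cosmetic step is to absorb constants such as $e^{2K|\mu_0|}$ into the generic $K$.

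The one genuinely delicate point is the control of the Fourier tail: a priori the factors $e^{ik\CCC(u,\tau)}$ grow with $|k|$, and one must exploit the uniform smallness $|\CCC|,|C(\mu,\eps)|=\OO(|\mu|\eps^\eta)$ so that the decay $e^{-|k|a/\eps}$ of $\MM^{[k]}$ coming from Lemma \ref{lemma:Melnikov} survives, and — crucially — one has to retain the leading term $e^{-2a/\eps}$ of the tail rather than bound it crudely by $e^{-a/\eps}$, since it is exactly the spare factor $e^{-a/\eps}$ (easily $\leq1/|\ln\eps|$) that produces the claimed gain. Everything else is a routine substitution of the asymptotics of Lemma \ref{lemma:Melnikov} together with the bounds of Theorem \ref{th:CanviFinal:lmenor} and Proposition \ref{coro:Canvi:FirstOrder:lmenor}.
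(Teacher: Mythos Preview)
Your proposal is correct and follows exactly the route the paper indicates: the paper states that Corollary \ref{coro:Diferencia:Simple:lmenor} is a direct consequence of Theorem \ref{th:CotaDiferencia:lmenor} together with Lemma \ref{lemma:Melnikov}, and you have simply unpacked this by bounding $\Delta_0-\Delta_{00}$ via the Melnikov asymptotics for $k=\pm1$ and the tail estimate for $|k|\geq2$. Your treatment of the tail and of the $\eps^{1/\q}$ remainder is the intended one.
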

We devote the rest of this section to prove Theorem
\ref{th:CotaDiferencia:lmenor},
from which, using also Lemma \ref{lemma:Melnikov}, Corollary
\ref{coro:Diferencia:Simple:lmenor} is a direct consequence.

\begin{proof}[Proof of Theorem \ref{th:CotaDiferencia:lmenor}]
For the first part of the proof we consider complex values of
$\mu\in B(\mu_0)$ and later we will restrict to  $\mu\in
B(\mu_0)\cap\RR$.
We define
\[
\wt \Upsilon (\zeta) = \sum_{k\in\ZZ}\wt
\Upsilon^{[k]}e^{ik\zeta},
\]
where $\wt\Upsilon^{[k]}=\Upsilon^{[k]}-\Upsilon^{[k]}_0$.
By \eqref{def:Delta:Fourier:lmenor} and
\eqref{def:PrimerOrdre:lmenor}, the function
$\wt\Delta(u,\tau)=\Delta(u,\tau)-\Delta_0(u,\tau)$ can be written
as
\begin{equation}\label{def:DifResta:lmenor}
\wt\Delta(u,\tau)=\wt \Upsilon\left(\eps\ii
u-\tau+\CCC(u,\tau)\right)=\sum_{k\in\ZZ}\wt
\Upsilon^{[k]}e^{ik\left(\eps\ii u-\tau+\CCC(u,\tau)\right)}.
\end{equation}
Therefore, to obtain the bounds of Theorem \ref{th:CotaDiferencia:lmenor},
it is crucial to bound $\left|\wt\Upsilon^{[k]}\right|$.

The first step is to obtain a bound of $\wt\Delta(u,\tau)$ for
$(u,\tau)\in R_{s\ln\frac{1}{\eps},d_3}\times\TT$. First we bound
this term for $(u,\tau)\in \left(R_{s\ln\frac{1}{\eps},d_3}\cap
D_{s\ln\frac{1}{\eps},\C_1}^{\inn,+,s}\cap
D_{s\ln\frac{1}{\eps},\C_1}^{\inn,+,u}\right)\times\TT$. Recalling
the definitions in \eqref{def:Diferencia:Original}, \eqref{def:MigMelnikov},
\eqref{def:MelnikovPotential}, \eqref{def:MelnikovPotential:Fourier},
\eqref{def:PrimerOrdre:lmenor} and \eqref{def:PrimerOrdre:FourierCoef:lmenor},
we split $\wt\Delta$ as \[\wt\Delta
(u,\tau)=\wt\Delta_1^u (u,\tau)-\wt\Delta_1^s (u,\tau)+\wt\Delta_2
(u,\tau)+\wt\Delta_3 (u,\tau)\] with
\begin{eqnarray}
\wt\Delta_1^{u,s}(u,\tau)&=&T^{u,s}(u,\tau)-T_0(u)-\TTT_0^{u,s}(u,\tau)
\label{def:Lambda1:lmenor}\\
\wt\Delta_2(u,\tau)&=&-\mu \eps^{\eta} \sum_{k<0}\MM^{[k]}e^{ik\left(\eps\ii
u-\tau\right)}\left(1-e^{ik
\left(\CCC(u,\tau)-C(\mu,\eps)\right)}\right)\label{def:Lambda2:lmenor}\\
\wt\Delta_3(u,\tau)&=&-\mu \eps^{\eta} \sum_{k>0}\MM^{[k]}e^{ik\left(\eps\ii
u-\tau\right)}\left(1-e^{ik\left(\CCC(u,\tau)-\ol
C(\mu,\eps)\right)}\right).\label{def:Lambda3:lmenor}
\end{eqnarray}
Applying Proposition \ref{coro:Varietat:FirstOrder:lmenor}, one can see that for
$(u,\tau)\in \left(R_{s\ln\frac{1}{\eps},d_3}\cap
D_{s\ln\frac{1}{\eps},\C_1}^{\inn,+,s}\cap
D_{s\ln\frac{1}{\eps},\C_1}^{\inn,+,u}\right)\times\TT$,
\[
\left|\pa_u\wt\Delta_1^{u,s}(u,\tau)\right|\leq K|\mu|\eps^{\eta-\ell+\nu^\ast},
\]
where $\nu^\ast>0$ is a constant defined in that proposition.

To bound $\wt\Delta_2$, it is enough to apply Lemma \ref{lemma:Melnikov},
Theorem \ref{th:CanviFinal:lmenor} and
Proposition \ref{coro:Canvi:FirstOrder:lmenor} to obtain  that for
$(u,\tau)\in \left(R_{s\ln\frac{1}{\eps},d_3}\cap
D_{s\ln\frac{1}{\eps},\C_1}^{\inn,+,s}\cap
D_{s\ln\frac{1}{\eps},\C_1}^{\inn,+,u}\right)\times\TT$,
\[
\left|\pa_u\wt\Delta_2(u,\tau)\right|\leq
\frac{K|\mu|^2\eps^{2\eta-\ell+s}}{\left|\ln\eps\right|}.
\]
Finally, to bound  $\pa_u\wt\Delta_3$, it is enough to take into
account again Lemma \ref{lemma:Melnikov}, Theorem \ref{th:CanviFinal:lmenor} and
Proposition \ref{coro:Canvi:FirstOrder:lmenor}.
Then, one can see that for
$(u,\tau)\in \left(R_{s\ln\frac{1}{\eps},d_3}\cap
D_{s\ln\frac{1}{\eps},\C_1}^{\inn,+,s}\cap
D_{s\ln\frac{1}{\eps},\C_1}^{\inn,+,u}\right)\times\TT$,
\begin{equation*}
\left|\pa_u\wt\Delta_3(u,\tau)\right|\leq
K|\mu|^2\eps^{2\eta-\ell-s}e^{-\dps\tfrac{2a}{\eps}}.
\end{equation*}
Therefore, from the bounds of $\wt \Delta_1^{u,s}$, $\wt \Delta_2$
and $\wt \Delta_3$ and recalling that by hypothesis $s<\nu^\ast$, we
have that for $(u,\tau)\in \left(R_{s\ln\frac{1}{\eps},d_3}\cap
D_{s\ln\frac{1}{\eps},\C_1}^{\inn,+,s}\cap
D_{s\ln\frac{1}{\eps},\C_1}^{\inn,+,u}\right)\times\TT$,
\begin{equation}\label{eq:CotaDeltaTilde:lmenor:neg}
\left|\pa_u\wt\Delta (u,\tau)\right|\leq
\frac{K|\mu|\eps^{\eta-\ell+s}}{\left|\ln\eps\right|}.
\end{equation}
Reasoning analogously, one can see that  for \[(u,\tau)\in
\left(R_{s\ln\frac{1}{\eps},d_3}\cap
D_{s\ln\frac{1}{\eps},\C_1}^{\inn,-,s}\cap
D_{s\ln\frac{1}{\eps},\C_1}^{\inn,-,u}\right)\times\TT,\] the
function $\pa_u\wt\Delta$ satisfies
\begin{equation}\label{eq:CotaDeltaTilde:lmenor:pos}
\left|\pa_u\wt\Delta(u,\tau)\right|\leq
\frac{K|\mu|\eps^{\eta-\ell+s}}{\left|\ln\eps\right|}.
\end{equation}

Finally, for $(u,\tau)\in
\left(R_{s\ln\frac{1}{\eps},d_3}\cap
D_{\C_1\eps^\ga,\rr_4}^{\out,s}\cap
D_{\C_1\eps^\ga,\rr_4}^{\out,u}\right)\times\TT$, we decompose
$\wt{\Delta}(u,\tau) = (T^{u}(u, \tau) - T_0(u) ) - (T^s(u, \tau) - T_0(u)) -
\Delta_0(u, \tau)$.
Using Theorems \ref{th:Extensio:Trig}, \ref{th:ExtensioFinal},
 and \ref{th:CanviFinal:lmenor} and also Lemma \ref{lemma:Melnikov}, one can
easily see that
\begin{equation*}
 |\pa_u \Delta (u,\tau)| \leq K|\mu| \eps^{\eta + 1 - \gamma(\ell+1)}
\end{equation*}
provided
$|u-ia|\geq \OO(\eps^\ga)$. This bound is smaller than
\eqref{eq:CotaDeltaTilde:lmenor:neg} and
\eqref{eq:CotaDeltaTilde:lmenor:pos} due to the fact that $(\ell+1)(1-\gamma)
>\nu^{\ast} >s$
(see Proposition \ref{coro:Varietat:FirstOrder:lmenor} for the definition of
$\nu^{\ast}$).

Taking into account \eqref{eq:CotaDeltaTilde:lmenor:neg} and
\eqref{eq:CotaDeltaTilde:lmenor:pos}, one can
conclude that for $\mu\in B(\mu_0)\cap\RR$,
\begin{equation}\label{eq:CotaDeltaTilde:lmenor}
\left|\pa_u\wt\Delta(u,\tau)\right|\leq
\frac{K|\mu|\eps^{\eta-\ell+s}}{\left|\ln\eps\right|}.
\end{equation}
The second step of the proof is to consider the change of variables
$(w,\tau)=(u+\eps\CCC(u,\tau),\tau)$. By Theorem
\ref{th:CanviFinal:lmenor}, one can easily see that it is a diffeomorphism
from $R_{s\ln(1/\eps),d_3}\times\TT$ onto its image $\wt R\times\TT$.
Denoting by $\wt\Upsilon'$ the derivative of the function $\wt
\Upsilon$ (see \eqref{def:DifResta:lmenor}), we define the function
\[
\Theta(w,\tau)=\wt\Upsilon' \left(\eps\ii w-\tau\right),
\]
on  $\wt R\times\TT$ which, by construction,
satisfies
\begin{equation}\label{eq:ThetaDelta:lmenor} \Theta
(u+\eps\CCC(u,\tau),\tau)=\left(\frac{1}{\eps}+\pa_u\CCC(u,\tau)\right)\ii
\pa_u\wt\Delta (u,\tau).
\end{equation}
Moreover, as $\Theta(w,\tau)$ is periodic in $\tau$, it can be also
written  as
\[
\Theta(w,\tau)=\sum_{k\in \ZZ}\Theta^{[k]}(w)e^{ik\tau}.
\]
Then, for any $w\in\wt R$, the Fourier coefficients satisfy
\[
ik\wt\Upsilon^{[k]}=\Theta^{[-k]}(w)e^{-ik\dps\tfrac{w}{\eps}}.
\]
Now, taking advantage of the fact that the coefficients
$\wt\Upsilon^{[k]}$ do not depend on $w$, we will obtain sharp bounds for
the coefficients $\wt\Upsilon^{[k]}$ with $k<0$. Since we are
dealing with real analytic functions, the coefficients
$\wt\Upsilon^{[k]}$ with $k>0$ will satisfy the same bounds. Let us
consider $w=w^\ast=u^\ast+\eps\CCC(u^\ast,0)$ with $u^\ast=
i(a-s\eps\ln(1/\eps))$. Then,
\[
\begin{split}
\left|\wt\Upsilon^{[k]}\right|&\leq |k|\ii\sup_{w\in\wt
R}\left|\Theta^{[-k]}(w)\right|e^{-{\dps\tfrac{|k|}{\eps}}\left(a-s\eps\ln\frac{
1}{\eps}\right)-|k|\Im\left(
\CCC\left(u^\ast,0\right)\right)}\\
&\leq |k|\ii\sup_{(w,\tau)\in\wt
R\times\TT}\left|\Theta(w,\tau)\right|e^{-{\dps\tfrac{|k|}{\eps}}
\left(a-s\eps\ln\frac{1}{\eps}\right)-|k|\Im\left(
\CCC\left(u^\ast,0\right)\right)}.
\end{split}
\]
Then, taking into account \eqref {eq:ThetaDelta:lmenor} and Theorem
\ref{th:CanviFinal:lmenor}, we have that for $k<0$,
\[
\left|\wt\Upsilon^{[k]}\right|\leq K\eps\sup_{(u,\tau)\in
R_{s\ln(1/\eps),d_3}\times\TT}\left|\pa_u\wt\Delta(u,\tau)\right|e^{-{\dps\tfrac
{|k|}{\eps}}\left(a-s\eps\ln\frac{1}{\eps}\right)-|k|\Im\left(
\CCC\left(u^\ast,0\right)\right)}.
\]
Therefore, to obtain the bounds for $\wt\Upsilon^{[k]}$ with $k<0$,
it only remains to use bounds \eqref{eq:CotaDeltaTilde:lmenor} and the
properties of $\CCC$ given
in Theorem \ref{th:CanviFinal:lmenor} and Proposition
\ref{coro:Canvi:FirstOrder:lmenor}.
Then, we obtain that for $k<0$
\begin{equation*}
\left| \wt\Upsilon^{[k]}\right|\leq
\frac { K|\mu| \eps^{\eta} e^{-\frac{a}{\eps}}}{|\ln \eps | \eps^{\ell-1}}
e^{-\frac{|k|-1}{\eps} \big (a+\eps s \log \eps +b_{11} |\mu | \eps^{\eta+1}\big
)}.
\end{equation*}
Finally, the bounds of
$\wt\Upsilon^{[k]}$ lead easily to the desired bounds of
$\wt\Delta(u,\tau)$ for
$(u,\tau)\in\left(R_{s\ln(1/\eps),d_3}\cap\RR\right)\times\TT$.
\end{proof}

\subsubsection{Study of the difference between the invariant manifolds for the
case
$\ell-2r\geq 0$}\label{sec:Difference:lmajor}

Recall that when $\ell-2r\geq0$, Hypothesis
\textbf{HP5} becomes $\eta\geq \ell-2r$. For this reason, as we did in Section
\ref{sec:Aproximacio:lmajor},
we will denote $\hat \mu = \mu \eps ^{\eta-\ell+2r}$. Let us emphasize, that the
regular case $\eta>\ell-2r$ in this new setting corresponds to $\hmu\rightarrow
0$ as $\eps\rightarrow 0$.

As we have done for the case $\ell-2r<0$ in
Section \ref{sec:Difference:lmenor}, we consider the function
$\Delta(u,\tau)=T^u(u,\tau)-T^s(u,\tau)$ defined in
\eqref{def:Diferencia:Original}
 in the domain
$R_{\kk,d}=D^s_{\kk,d}\cap D^u_{\kk,d}$ defined in
\eqref{def:DominiRaro:Interseccio} (see also Figure
\ref{fig:BoomInter}).

Now $\Delta$ satisfies the partial differential equation
\begin{equation}\label{pdetilde}
\widetilde\LL_\eps\xi=0,
\end{equation}
where $\widetilde\LL_{\eps}$ is the operator defined in
\eqref{def:OperadorAnulador} and $G$ now is
\begin{equation}\label{def:FuncioLAnulador:lmajor}
\begin{split}
G(u,\tau)=&\dps\frac{1}{2p_0^2(u)}\left(\pa_u T_1^u(u,\tau)+\pa_u
T_1^s(u,\tau)\right)\\
&+\frac{\hmu\eps^{\ell-2r}}{p_0(u)}\int_0^1 \pa_p \wh H_1
\left(q_0(u), p_0(u) +\frac{s\pa_u T_1^u(u,\tau)+(1-s)\pa_u
T_1^s(u,\tau)}{p_0(u)}, \tau\right)\, ds,
\end{split}
\end{equation}
where $\wh H_1$ is the function defined in
\eqref{def:ham:ShiftedOP:perturb} and
$T^{u,s}(u,\tau)=T_0(u)+T_1^{u,s}(u,\tau)$ with $\pa_u
T_0(u)=p_0^2(u)$ and $T_1^{u,s}$ are given in Theorems
\ref{th:Extensio:Trig} and \ref{th:ExtensioFinal}. Let us point out
that the only difference between the function $G$ defined in
\eqref{def:FuncioLAnulador:lmajor} from the one defined in
\eqref{def:FuncioLAnulador:lmenor} is the dependence on the
parameters. The first one depends on $\mu$ and $\eps$ whereas the
second one depends on $\hmu$, which has been defined in terms of
$\mu$ and $\eps$ in \eqref{def:mu:barret}.

As we have done in Section \ref{sec:Difference:lmenor}, to obtain
the asymptotic expression of the difference $\Delta$, we look for a
solution  $\xi_0$ of \eqref{eq:DifferencePDE} of the form
\[
\xi_0(u,\tau)=\eps\ii u-\tau+\CCC(u,\tau)
\]
with $\CCC$ a function $2\pi$-periodic in $\tau$, such that
$(\xi_0(u,\tau),\tau)$ is injective in $R_{\kk,d}\times\TT_\sigma$.
Then, we will write $\Delta$ as $\xi=\Upsilon\circ\xi_0$ for some
function $\Upsilon$.

\begin{theorem}\label{th:CanviFinal}
Let us consider  the constants $d_2>0$ defined in
Theorem \ref{th:ExtensioFinal} and $\kk_6>0$ in Theorem \ref{th:MatchingHJ},
$d_3<d_2$ and $\eps_0>0$ small enough and $\kk_8>\kk_6$ big enough, which
might depend on the previous constants. Then, for $\eps\in
(0,\eps_0)$, $\mu\in B(\mu_0)$ and any $\kk\geq\kk_8$ such that
$\eps\kk<a$, there exists a real-analytic function
$\CCC(u,\tau):R_{\kk,d_3}\times\TT_\sigma\rightarrow\CC$ such that
$\xi_0(u,\tau)=\eps\ii u-\tau+\CCC(u,\tau)$ is solution of
\eqref{pdetilde} and
\[
\left(\xi_0(u,\tau),\tau\right)=\left(\eps\ii
u-\tau+\CCC(u,\tau),\tau\right)
\]
is injective.

Moreover, there exists a constant $b_{15}>0$ independent of $\mu$,
$\eps$ and $\kk$, such that for $(u,\tau)\in
R_{\kk,d_3}\times\TT_\sigma$,
\begin{itemize}
\item If $\ell-2r>0$,
\[
\begin{split}
\left|\CCC(u,\tau)\right|&\leq
\frac{b_{15}\left|\hmu\right|\eps^{\ell-2r}}{\left|u^2+a^2\right|^{\ell-2r}}\\
\left|\pa_u\CCC(u,\tau)\right|&\leq
\frac{b_{15}\left|\hmu\right|\eps^{\ell-2r-1}}{\kk\left|u^2+a^2\right|^{\ell-2r}
}.
\end{split}
\]
\item If $\ell-2r=0$,
\[
\begin{split}
\left|\CCC(u,\tau)\right|&\leq b_{15}\left|\hmu\right|\ln\left|u^2+a^2\right|\\
\left|\pa_u\CCC(u,\tau)\right|&\leq
\frac{b_{15}\left|\hmu\right|}{\left|u^2+a^2\right|}.
\end{split}
\]
\end{itemize}
\end{theorem}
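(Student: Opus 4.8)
The plan is to mimic the strategy of Theorem \ref{th:CanviFinal:lmenor}, rewriting everything in terms of the parameter $\hmu=\mu\eps^{\eta-\ell+2r}$, since under \textbf{HP5} with $\ell-2r\ge0$ we have $\eta\ge\ell-2r$ so $|\hmu|\le K|\mu|$ is bounded (and tends to $0$ in the regular subcase). We seek $\CCC$ so that $\xi_0(u,\tau)=\eps^{-1}u-\tau+\CCC(u,\tau)$ solves $\widetilde\LL_\eps\xi_0=0$ with $\widetilde\LL_\eps=\eps^{-1}\pa_\tau+(1+G)\pa_u$ and $G$ as in \eqref{def:FuncioLAnulador:lmajor}. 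Plugging in, $\CCC$ must satisfy the linear transport equation $\eps^{-1}\pa_\tau\CCC+(1+G)\pa_u\CCC=-G\cdot\eps^{-1}$, equivalently $\LL_\eps\CCC = -\eps^{-1}G - G\pa_u\CCC$ where $\LL_\eps=\eps^{-1}\pa_\tau+\pa_u$. The idea is to set this up as a fixed point problem in a suitable Banach space of functions on $R_{\kk,d_3}\times\TT_\sigma$, $2\pi$-periodic in $\tau$, with a weighted sup norm: for $\ell-2r>0$ the weight is $|u^2+a^2|^{\ell-2r}$ on $\CCC$ and $\kk|u^2+a^2|^{\ell-2r}$ on $\pa_u\CCC$, while for $\ell-2r=0$ the weight is $(\ln|u^2+a^2|)^{-1}$ on $\CCC$ and $|u^2+a^2|$ on $\pa_u\CCC$ — these weights are exactly dictated by the size of $G$ near the singularities, which from Theorems \ref{th:Extensio:Trig}, \ref{th:ExtensioFinal} and the inner matching Theorem \ref{th:MatchingHJ} behaves like $|\hmu|\,\eps^{\ell-2r}|u^2+a^2|^{-(\ell-2r)-1}$ (one gains a power of $u^2+a^2$ relative to the naive $\ell+1$ because the dominant contributions of $\pa_u T^{u,s}_1$ combine, as in the heuristic discussion of $Z^{u,s}_\mu-Z_0$; the case $\ell-2r=0$ is the borderline where this gives a logarithm).

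The analytic core is the inversion of $\LL_\eps$ acting on periodic functions. Writing everything in Fourier series in $\tau$, $\LL_\eps$ acts on the $k$-th mode as $\pa_u + ik/\eps$, which is inverted by integration along the characteristic $u\mapsto u$ with the integrating factor $e^{iku/\eps}$; the subtle point, as in \cite{Baldoma06} and \cite{Sauzin01}, is the choice of integration endpoint inside the boomerang domain $R_{\kk,d_3}$ so that the resulting operator is bounded in the weighted norm with a small operator norm — this requires that the domain stay at distance $\ge\kk\eps$ from $\pm ia$ and that the paths of integration remain inside $R_{\kk,d_3}$, which is why $\kk$ must be taken large. One runs a contraction: the map $\CCC\mapsto \LL_\eps^{-1}(-\eps^{-1}G - G\pa_u\CCC)$ is Lipschitz with constant $O(\kk^{-1})$ in the $\pa_u\CCC$ variable (the $\kk$ gain coming from the weighted estimate on $\LL_\eps^{-1}$ near the tips), giving a unique fixed point once $\kk\ge\kk_8$ and $\eps\le\eps_0$. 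Because $G$ depends real-analytically on $\mu$ (hence on $\hmu$) and the fixed point is obtained by a contraction, $\CCC$ inherits real-analyticity. Injectivity of $(\xi_0(u,\tau),\tau)$ follows because $\pa_u\xi_0=\eps^{-1}+\pa_u\CCC=\eps^{-1}(1+O(|\hmu|))$ is nonzero and the map $u\mapsto \eps^{-1}u+\text{(small periodic)}$ is a perturbation of a shear; one argues as in the proof of Theorem \ref{th:CanviFinal:lmenor}.

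The main obstacle I expect is not the transport-equation fixed point per se but controlling $G$ uniformly up to distance $\kk\eps$ of the singularities when $\ell-2r\ge0$, i.e. feeding in the inner information: away from $\pm ia$, Theorems \ref{th:Extensio:Trig} and \ref{th:ExtensioFinal} give $|\pa_u T^{u,s}_1|\le K|\hmu|\eps^{\ell-2r+1}|u^2+a^2|^{-(\ell+1)}$, which is \emph{too weak} by a factor $|u^2+a^2|^{2r}$ relative to what the weighted norm needs; the missing cancellation has to come from Theorem \ref{th:MatchingHJ}, which says $\pa_z\psi^{u,s}$ is $\eps^{1/\q}$-close to the inner solution $\pa_z\psi^{u,s}_0$, and one must translate the boundedness of $z^{\ell+1}\pa_z\overline\psi^{u,s}_0$ back to the outer variable to see that in fact $G$ has the claimed size $|\hmu|\eps^{\ell-2r}|u^2+a^2|^{-(\ell-2r)-1}$. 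Carefully patching the outer estimate, valid for $|u\mp ia|\ge O(\eps^\gamma)$, with the inner estimate, valid for $O(\kk\eps)\le|u\mp ia|\le O(\eps^\gamma)$, in the overlap region — and checking the powers of $\eps$ match on the common annulus — is the delicate bookkeeping step; once it is done, the rest is the by-now-standard weighted contraction argument, and in the $\ell-2r=0$ case one simply records that the integral of the borderline weight produces the $\ln|u^2+a^2|$ bound on $\CCC$.
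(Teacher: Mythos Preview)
Your overall transport--fixed-point scheme is the right architecture, but the last paragraph misidentifies the obstacle and overlooks the genuine difficulty in the borderline case.

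First, you do \emph{not} need Theorem~\ref{th:MatchingHJ} here. Look again at \eqref{def:FuncioLAnulador:lmajor}: the dominant piece of $G$ coming from the manifolds is $p_0^{-2}(u)\,\pa_uT_1^{u,s}$, and $p_0^{-2}(u)\sim (u^2+a^2)^{2r}$ near $\pm ia$. So the outer bound $|\pa_uT_1^{u,s}|\le K|\hmu|\eps^{\ell-2r+1}|u^2+a^2|^{-(\ell+1)}$ already gives this term the size $|\hmu|\eps^{\ell-2r+1}|u^2+a^2|^{-(\ell-2r+1)}$, which is exactly the weight you want. The paper indeed bounds $G$ (splitting it as $G_1+G_2+G_3+G_4$) using only the outer estimates of Theorems~\ref{th:Extensio:Trig}/\ref{th:ExtensioFinal} and Corollary~\ref{coro:ShiftPeriodica}; no matching enters. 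Your ``too weak by a factor $|u^2+a^2|^{2r}$'' is a phantom, caused by forgetting the $p_0^{-2}$.

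Second, your single contraction works cleanly when $\ell-2r>0$ (this is exactly what the paper does: Lipschitz constant $O(|\hmu|/\kk^{\ell-2r})$ in $\XX_{\ell-2r,\sigma}$), but it \emph{fails} as stated when $\ell-2r=0$. In that case $G_1(u,\tau)=\hmu\,p_0^{-1}\pa_p\wh H_1^1(q_0,p_0,\tau)$ is $O(|\hmu|)$ with no smallness in $\eps$ and no decay in $u$, so the map $\CCC\mapsto\GG_\eps(-G\pa_u\CCC)$ is not contractive for $|\hmu|\sim1$ (which is precisely the singular case $\eta=0$). The paper deals with this in two extra steps: a preliminary change $u=v+g(v,\tau)$ with $\LL_\eps g=G_1$ (exploiting $\langle G_1\rangle=0$) to remove the non-small linear piece, and then the subtraction of an \emph{explicit} first order $\wh\CCC_0$ (built from $\ol G_1$ and $\GG_\eps\langle\pa_vG_1\cdot g\rangle$, $\GG_\eps\langle G_2+G_3\rangle$) that carries the logarithmic growth; only the remainder $\wh\CCC_1=\wh\CCC-\wh\CCC_0$ is found by contraction, now in $\XX_{1,\sigma}$. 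Your remark that ``the integral of the borderline weight produces the $\ln|u^2+a^2|$'' is morally right, but to make the fixed point close you must isolate that log-growing piece by hand first.
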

To study the first order of the difference between the invariant
manifolds when $\ell-2r=0$, we need a better knowledge of the
behavior of the function $\CCC$ in the inner domains
\eqref{def:DominisInnerEnu}. The next proposition gives the first
order asymptotic terms of $\CCC$ close to $u=ia$. The study close to
$u=-ia$ can be done analogously.
\begin{proposition}\label{coro:Canvi:FirstOrder}
Assume $\ell=2r$. Let $\C_1$ be a constant as in Theorem
\ref{th:MatchingHJ}. We consider $\C_2>\C_1$ and
\begin{equation}\label{eq:cota:canvi:gamma}
\frac{\q}{\q+1}<\ga<1,
\end{equation}
where $r=\p/\q$ has been defined in Hypothesis \textbf{HP2}.

Then, for any $\eps_0>0$, there exist a constant $C(\hmu,\eps)$
defined for $(\hmu,\eps) \in B(\hmu_0) \times (0,\eps_0)$ depending
real-analytically in
$\hmu$ and a constant $b_{16}>0$ such that $|C(\hmu,\eps)|\leq
b_{16}|\hmu|$ and, if $(u,\tau)\in \left(
D_{\kk_8,\C_2}^{\inn,+,u}\cap
D_{\kk_8,\C_2}^{\inn,+,s}\right)\times\TT_\sigma$,
\[
\left|\CCC(u,\tau)-C(\hmu,\eps)+\mu F_1(\tau)+\hmu^2b \ln
(u-ia)\right|\leq \frac{b_{16}|\hmu|\eps}{|u-ia|}.
\]
We recall that $\ga$ enters in the definitions of
$D_{\kk_8,\C_2}^{\inn,+,u}$ and $D_{\kk_8,\C_2}^{\inn,+,s}$, $\CCC$
is the function given in Theorem \ref{th:CanviFinal} and the
function $F_1$ and the constant $b$ have been defined in
\eqref{def:FunctionsF} and \eqref{def:Constantb} respectively.

Therefore, if we consider the function $g$ given in Theorem
\ref{th:InnerImma}, by Proposition \ref{coro:gInner}, there exists a
constant $b_{17}>0$ such that, if $(u,\tau)\in \left(
D_{\kk_8,\C_2}^{\inn,+,u}\cap
D_{\kk_8,\C_2}^{\inn,+,s}\right)\times\TT_\sigma$,
\[
\left|\CCC(u,\tau)-C(\hmu,\eps)+\hmu^2b \ln\eps -\hmu
g\left(\eps\ii(u-ia),\tau\right)\right|\leq
\frac{b_{17}|\hmu|\eps}{|u-ia|}.
\]
Moreover, there exists a constant $C(\hmu)$ such that  $C(\hmu,\eps)$ satisfies
$C(\hmu,\eps)=C(\hmu)+\OO\left(\eps^\nu\right)$ for a certain $\nu>0$.
\end{proposition}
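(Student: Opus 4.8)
The function $\CCC$ produced by Theorem \ref{th:CanviFinal} is characterized as the unique solution (up to an additive constant, fixed by the injectivity/normalization requirement) of the transport equation obtained from $\widetilde\LL_\eps\xi_0=0$ with $\xi_0=\eps\ii u-\tau+\CCC$, namely
\[
\pa_\tau\CCC+\eps(1+G)\pa_u\CCC=-G ,
\]
with $G$ the function in \eqref{def:FuncioLAnulador:lmajor}. The plan is to pass to the inner variable $z=\eps\ii(u-ia)$ of \eqref{def:CanviVarInnerOuter}, writing $\mathcal{C}(z,\tau)=\CCC(ia+\eps z,\tau)$ and $\widetilde G(z,\tau)=G(ia+\eps z,\tau)$, so that $\pa_\tau\mathcal{C}+(1+\widetilde G)\pa_z\mathcal{C}=-\widetilde G$ on the image in the $z$-variable of $D_{\kk_8,\C_2}^{\inn,+,u}\cap D_{\kk_8,\C_2}^{\inn,+,s}$. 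The same computation applied to the inner Hamilton--Jacobi equation \eqref{eq:HJGeneralInner}, after subtracting the two equations for $\psi_0^u$ and $\psi_0^s$, shows that the inner straightening function $\hmu g(z,\tau)$, with $g$ as in Theorem \ref{th:InnerImma}, solves
\[
\pa_\tau(\hmu g)+(1+\widetilde G_0)\pa_z(\hmu g)=-\widetilde G_0 ,
\]
where $\widetilde G_0(z,\tau)$ is the limit coefficient, built from $\pa_z\psi_0^{u,s}$ exactly as $\widetilde G$ is built from $\pa_z\psi^{u,s}$ (use \eqref{eq:FuncioGeneradoraInner} to rewrite $\pa_uT^{u,s}$ in terms of $\pa_z\psi^{u,s}$, together with the expansions \eqref{eq:SepartriuAlPol}--\eqref{eq:SepartriuAlPolTrig} of $p_0$ near the singularity).

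The key estimate is that $\widetilde G-\widetilde G_0$ is small on the inner domain: here Theorem \ref{th:MatchingHJ} enters, giving $\bigl|\pa_z\psi^{u,s}-\pa_z\psi_0^{u,s}\bigr|\le b_{10}\eps^{1/\q}|z|^{-2r+1/\q}$ and hence a bound of the form $|\widetilde G(z,\tau)-\widetilde G_0(z,\tau)|\le K\eps^{1/\q}|z|^{1/\q}$ (plus strictly lower-order contributions from $p_0$ and from the $\hmu\eps^{\ell-2r}$ term of $G$); the hypothesis $\q/(\q+1)<\ga<1$ is exactly what keeps this small on the whole inner domain, where $|z|\le\OO(\eps^{\ga-1})$. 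Subtracting the two transport equations, $h:=\mathcal{C}-\hmu g$ satisfies
\[
\pa_\tau h+(1+\widetilde G)\pa_z h=-(\widetilde G-\widetilde G_0)\bigl(1+\hmu\,\pa_z g\bigr) ,
\]
a transport equation with a small, decaying right-hand side. I would solve it by the method of characteristics, integrating the forcing along the (nearly straight, since $\widetilde G$ is small) characteristics starting from the far boundary of the inner domain and closing the argument by a contraction in a Banach space of functions $2\pi$-periodic in $\tau$ with norm $\sup|z|\,|\cdot|$; this yields $h(z,\tau)=C(\hmu,\eps)+\OO(\hmu|z|\ii)$, with $C(\hmu,\eps)$ the normalization constant carried over from $\CCC$ and $g$, manifestly $\OO(\hmu)$ and real-analytic in $\hmu$ (the $\chi^{[k]}(\hmu)$, the inner solutions and $\CCC$ all being analytic in $\hmu$). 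Reverting $z=\eps\ii(u-ia)$ and using the expansion $g(z,\tau)=-F_1(\tau)-\hmu b\ln z+\ol g(z,\tau)$ of Proposition \ref{coro:gInner} together with $\ln z=\ln(u-ia)-\ln\eps$ and $|\ol g|\le K|z|\ii$ gives both displayed inequalities, the term $\hmu^2b\ln\eps$ being precisely the mismatch produced by $\ln z$ versus $\ln(u-ia)$, and the constant $b$ of \eqref{def:Constantb} being the one inherited from $g$.

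Finally, the $\eps$-independence of the limiting constant follows because the $\eps$-dependence of $C(\hmu,\eps)$ enters only through the characteristic-integral remainder (bounded by a positive power of $\eps$ uniformly along a fixed curve joining the outer and inner regimes) and through $\widetilde G-\widetilde G_0$, both $\OO(\eps^\nu)$; hence $C(\hmu,\eps)=C(\hmu)+\OO(\eps^\nu)$. I expect the principal obstacle to be the quantitative matching in the previous paragraph: one must propagate the Theorem \ref{th:MatchingHJ} error through the (rational, resp.\ trigonometric) algebra defining $G$ and then through the transport equation without losing powers of $|z|$, so that the remainder is genuinely $\OO(\hmu\eps|u-ia|\ii)$ rather than merely $o(\hmu)$; the secondary subtlety is verifying that the logarithmic term of $\CCC$ and that of $\hmu g$ cancel to the stated precision, which is what identifies the coefficient $b$.
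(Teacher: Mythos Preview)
Your strategy---pass to the inner variable, write $h=\mathcal{C}-\hmu g$ as the solution of a forced transport equation, and control the forcing through Theorem~\ref{th:MatchingHJ}---is genuinely different from the paper's route, and conceptually attractive. The difficulty is purely quantitative, and I do not see how to close it with the estimate you write down. From Theorem~\ref{th:MatchingHJ} you obtain (at best)
\[
\bigl|\widetilde G(z,\tau)-\widetilde G_0(z,\tau)\bigr|\le K\,(\eps|z|)^{1/\q},
\]
which \emph{grows} on the inner domain (where $|z|$ ranges up to $\OO(\eps^{\ga-1})$). Along the characteristics of $\pa_\tau+(1+\widetilde G)\pa_z$, the zero Fourier mode of $h$ picks up the primitive of $\langle\widetilde G-\widetilde G_0\rangle$, and integrating a forcing of size $(\eps|z|)^{1/\q}$ over a path of length $\OO(|z|)$ yields a contribution of order $\eps^{1/\q}|z|^{1+1/\q}$, not $\OO(|z|^{-1})$. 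The condition $\ga>\q/(\q+1)$ only makes this primitive \emph{bounded} on the inner domain; to make it $\OO(|z|^{-1})$ you would need $\ga>2\q/(2\q+1)$, which is strictly stronger. In short, the contraction you announce in the Banach space with norm $\sup|z|\,|\cdot|$ fails at the level of $\wt\FF(0)$: the image of the forcing under the inverse transport operator does not live in that space under the stated hypothesis on $\ga$. The missing ingredient is a finer structural decomposition of $\widetilde G-\widetilde G_0$ (not merely a sup bound) that isolates the part producing the logarithm and exhibits the remainder as $\OO(|z|^{-2})$.

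The paper avoids this comparison altogether. It uses that in the proof of Theorem~\ref{th:CanviFinal} (case $\ell=2r$) the function $\wh\CCC$ was already split as $\wh\CCC_0+\wh\CCC_1$, with $\wh\CCC_1$ bounded in $\XX_{1,\sigma}$ by $K|\hmu|\eps$ (Lemma~\ref{lemma:canvi:PtFix:ligual}) and $\wh\CCC_0$ given by the explicit formula \eqref{def:C0barret}. The four pieces $E_1,\ldots,E_4$ of $\wh\CCC_0$ are then read off directly: $E_1=-\ol G_1$ produces the $-\hmu F_1(\tau)$ term; $E_2,E_3$ contribute constants; and the crucial term $E_4=-\eps^{-1}\GG_\eps(\langle G_3\rangle)$ yields the logarithm $-\hmu^2 b\ln(v-ia)$ once one feeds in the two--term expansion of $\pa_u T_1^{u,s}$ from Proposition~\ref{coro:TuPropSing}. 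The second displayed inequality (the one involving $g$) is then obtained from the first via Proposition~\ref{coro:gInner} and the identity $\ln z=\ln(u-ia)-\ln\eps$, exactly as you indicate---but in the paper this is a corollary, not the starting point. So the essential input your argument lacks is Proposition~\ref{coro:TuPropSing} (or an equivalent refinement), not Theorem~\ref{th:MatchingHJ}.
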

The proofs of Theorem \ref{th:CanviFinal} and Proposition
\ref{coro:Canvi:FirstOrder} are done in Section
\ref{sec:PtFix:canvi}.

As we have explained in Section \ref{sec:Difference:lmenor}, since
$\Delta$ is a solution of the same homogeneous linear partial
differential equation as $\xi_0$ given by Theorem
\ref{th:CanviFinal}, there exists a $2\pi$-periodic function $\Upsilon$ such
that
$\Delta=\Upsilon\circ\xi_0$, which gives
\begin{equation}\label{def:Delta:xi0}
\Delta(u,\tau)=\Upsilon \left(\eps\ii u-\tau+\CCC(u,\tau)\right).
\end{equation}
and considering its Fourier series we have
\begin{equation}\label{def:Delta:Fourier}
\Delta(u,\tau)=\sum_{k\in\ZZ}\Upsilon^{[k]} e^{ik\left(\eps\ii
u-\tau+\CCC(u,\tau)\right)}.
\end{equation}

Now we are going to find the first asymptotic term of $\Delta$ which
will be strongly related with
$(\psi^{u}_0-\psi^{s}_{0})(\eps^{-1}(u-ia),\tau)$, being
$\psi_0^{u,s}$ the solutions of the inner equation given in Theorem
\ref{th:InnerImma}. We introduce the auxiliary function
\begin{equation}\label{def:Delta0:Fourier}
\Delta_0^+(u,\tau)=\sum_{k<0}\Upsilon_0^{[k]}e^{ik\left(\eps\ii
u-\tau+\CCC(u,\tau)\right)}
\end{equation}
with
\begin{align}
\Upsilon_0^{[k]}&=\frac{C_+^2\hmu}{\eps^{2r-1}}\chi^{[k]}(\hmu)
e^{-\dps\tfrac{|k|a}{\eps}} &\text{ if
}\ell-2r>0\label{def:Dif0:coefsFourier:lmajor}\\
\Upsilon_0^{[k]}&=\frac{C_+^2\hmu}{\eps^{2r-1}}\chi^{[k]}(\hmu)
e^{-{\dps\tfrac{|k|a}{\eps}}-i|k|(-C(\hmu,\eps)+\hmu^2b\ln\eps)}
&\text{ if }\ell-2r=0,\label{def:Dif0:coefsFourier:ligual}
\end{align}
where $\left\{\chi^{k}(\hmu)\right\}_{k<0}$ are the coefficients
given in Theorem \ref{th:InnerImma} and  $C(\hmu,\eps)$ and $b$ are
the constants obtained in Propositions \ref{coro:Canvi:FirstOrder}
and \ref{coro:gInner} respectively. The scaling $C_+^2/\eps^{2r-1}$
comes from the inner change in \eqref{eq:FuncioGeneradoraInner}.

We also introduce \begin{equation*} \Delta_0^-(u,\tau) =
\sum_{k>0}{\Upsilon_0^{[k]}}e^{ik\left(\eps\ii
u-\tau+\CCC(u,\tau)\right)}
\end{equation*}
with
\begin{align}
\Upsilon_0^{[k]}&=\frac{\ol
C_+^2\hmu}{\eps^{2r-1}}\ol\chi^{[-k]}(\hmu)
e^{-\dps\tfrac{|k|a}{\eps}} &\text{ if
}\ell-2r>0\label{def:Dif0:coefsFourier:lmajor:positiu}\\
\Upsilon_0^{[k]}&=\frac{\ol C_+^2\hmu}{\eps^{2r-1}}\ol
\chi^{[-k]}(\hmu) e^{-{\dps\tfrac{|k|a}{\eps}}+i|k|(-\ol
C(\hmu,\eps)+\hmu^2\ol b\ln\eps)} &\text{ if
}\ell-2r=0\label{def:Dif0:coefsFourier:ligual:positiu}.
\end{align}
The function $\Delta_0^-(u,\tau)$  corresponds to the difference of
the solutions of the inner equation close to $u=-ia$ if $\hmu,\tau
\in \RR$. We note that, taking $\tau, \hmu\in \RR$, $\Delta_0^{-}$
is nothing but the complex conjugate of $\Delta_0^{+}$. In fact, as
we know that $\Delta$ is a real analytic function in the $u$
variable for real values of $\hmu,\tau$, we can define $\Delta_0^-$
as the function that satisfies that $\Delta_0=\Delta_0^+ +
\Delta_0^-$ is also a real analytic function in the same sense as
explained before for $\Delta$.

We will see that the first order of $\Delta$ is given by
\begin{equation}\label{Def:Dif:PrimerOrdreSencer}
\Delta_0(u,\tau)=\Delta_0^+(u,\tau)+\Delta_0^-(u,\tau).
\end{equation}
Let us point out that it can be written as
\begin{equation}\label{def:Delta0sencer:Fourier}
\Delta_0(u,\tau)=\sum_{k\in\ZZ\setminus\{0\}}\Upsilon_0^{[k]}e^{ik\left(\eps\ii
u-\tau+\CCC(u,\tau)\right)},
\end{equation}
where $\Upsilon_0^{[k]}$ are defined either by
\eqref{def:Dif0:coefsFourier:lmajor} and
\eqref{def:Dif0:coefsFourier:lmajor:positiu} in the case $\ell-2r>0$
or by \eqref{def:Dif0:coefsFourier:ligual} and
\eqref{def:Dif0:coefsFourier:ligual:positiu} in the case
$\ell-2r=0$. For convenience we introduce $\Upsilon_0^{[0]}=0$. From
now on, in this subsection, we consider real values of
$\tau\in\TT_\sigma\cap\RR$.

\begin{theorem}\label{th:CotaDiferencia}
Let us consider  the mean value of $\Upsilon$, $\Upsilon^{[0]}$,
defined in \eqref{def:Delta:Fourier}, $s<1/\q$, where $r=\p/\q$ is
defined in Hypothesis \textbf{HP2}, and $\eps_0>0$ small enough.
Then, there exists a constant $b_{18}>0$ such that for
$\eps\in(0,\eps_0)$ and $\hmu\in B(\hmu_0)\cap\RR$ and $(u,\tau)\in
\left(R_{s\ln(1/\eps),d_3}\cap\RR\right)\times\TT$, the following
statements are satisfied.
\begin{itemize}
\item If $\ell-2r>0$,
\[
\begin{split}
\left|\Delta(u,\tau)-\Upsilon^{[0]}- \Delta_0(u,\tau)\right|&\leq
\frac{b_{18}|\hmu|}{\eps^{2r-1}|\ln\eps|^{\ell-2r}}e^{-\dps\tfrac{a}{\eps}}\\
\left|\pa_u\Delta(u,\tau)-\pa_u \Delta_0(u,\tau)\right|&\leq
\frac{b_{18}|\hmu|}{\eps^{2r}|\ln\eps|^{\ell-2r}}e^{-\dps\tfrac{a}{\eps}}\\
\left|\pa_u^2\Delta(u,\tau)-\pa_u^2 \Delta_0(u,\tau)\right|&\leq
\frac{b_{18}|\hmu|}{\eps^{2r+1}|\ln\eps|^{\ell-2r}}e^{-\dps\tfrac{a}{\eps}}.\\
\end{split}\]
\item If $\ell-2r=0$,
\[
\begin{split}
\left|\Delta(u,\tau)-\Upsilon^{[0]}- \Delta_0(u,\tau)\right|&\leq
\frac{b_{18}|\hmu|}{\eps^{2r-1}|\ln\eps|}e^{-{\dps\tfrac{a}{\eps}}+\hmu^2\Im
b\ln\eps}\\
\left|\pa_u\Delta(u,\tau)-\pa_u \Delta_0(u,\tau)\right|&\leq
\frac{b_{18}|\hmu|}{\eps^{2r}|\ln\eps|}e^{-{\dps\tfrac{a}{\eps}}+\hmu^2\Im
b\ln\eps}\\
\left|\pa_u^2\Delta(u,\tau)-\pa_u^2 \Delta_0(u,\tau)\right|&\leq
\frac{b_{18}|\hmu|}{\eps^{2r+1}|\ln\eps|}e^{-{\dps\tfrac{a}{\eps}}+\hmu^2\Im
b\ln\eps}.\\
\end{split}
\]
\end{itemize}
\end{theorem}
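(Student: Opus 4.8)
The plan is to follow the argument used to prove Theorem~\ref{th:CotaDiferencia:lmenor}, replacing the first order perturbative approximation of the manifolds near $\pm ia$ by the approximation coming from the inner equation. As in that case, since $\Delta=T^u-T^s$ solves the homogeneous linear equation \eqref{pdetilde} and $\xi_0(u,\tau)=\eps\ii u-\tau+\CCC(u,\tau)$ from Theorem~\ref{th:CanviFinal} is another solution with $(\xi_0,\tau)$ injective, there is a $2\pi$-periodic $\Upsilon$ with $\Delta=\Upsilon\circ\xi_0$. Setting $\wt\Upsilon^{[k]}=\Upsilon^{[k]}-\Upsilon_0^{[k]}$ and $\wt\Delta=\Delta-\Delta_0$, the identity $\wt\Delta(u,\tau)=\sum_{k}\wt\Upsilon^{[k]}e^{ik(\eps\ii u-\tau+\CCC(u,\tau))}$ shows that everything reduces to sharp bounds on the constants $\wt\Upsilon^{[k]}$; since these do not depend on $u$, one evaluates this Fourier representation at a point $u^\ast$ whose imaginary part is as close as possible to $a$ and converts a bound for $|\wt\Delta|$ there into exponentially small bounds on the real line.

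The first and main step is a bound for $\pa_u\wt\Delta$ on all of $R_{s\ln(1/\eps),d_3}\times\TT$ (first for complex $\hmu$, then restricting to $\hmu\in B(\hmu_0)\cap\RR$). I would split this domain into the part that is $\eps$-close to $+ia$, the part $\eps$-close to $-ia$, and the remaining outer part where $|u^2+a^2|\gtrsim\eps^{\ga}$. On the outer part, Theorems~\ref{th:Extensio:Trig}, \ref{th:ExtensioFinal} and \ref{th:CanviFinal}, together with the known size of the coefficients $\Upsilon_0^{[k]}$ (built from the $\chi^{[k]}(\hmu)$ of Theorem~\ref{th:InnerImma}), give a bound that is subdominant once $\ga$ is chosen close enough to $1$. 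On the part $\eps$-close to $+ia$ (and symmetrically near $-ia$) I would pass to the inner variable $z=\eps\ii(u-ia)$ and $\psi^{u,s}=\eps^{2r-1}C_+^{-2}T^{u,s}(ia+\eps z,\tau)$, and decompose $\wt\Delta$ into the differences $\psi^{u,s}-\psi_0^{u,s}$, controlled by the matching Theorem~\ref{th:MatchingHJ}, plus the corrections produced by replacing $\CCC$ by $C(\hmu,\eps)$ when $\ell>2r$, or by $C(\hmu,\eps)-\hmu^2 b\ln\eps+\hmu g(\eps\ii(u-ia),\tau)$ when $\ell=2r$, which are controlled by Proposition~\ref{coro:Canvi:FirstOrder} (and Proposition~\ref{coro:gInner}). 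Using that on these domains $|\CCC|\lesssim|\hmu|/|\ln\eps|^{\ell-2r}$ resp.\ $|\CCC|\lesssim|\hmu|$, this produces $|\pa_u\wt\Delta|\le K|\hmu|\eps^{-2r}|\ln\eps|^{-(\ell-2r)}$ when $\ell>2r$ and $|\pa_u\wt\Delta|\le K|\hmu|\eps^{-2r}|\ln\eps|^{-1}e^{\hmu^2\Im b\ln\eps}$ when $\ell=2r$.

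The second step is the periodicity argument. The map $(w,\tau)=(u+\eps\CCC(u,\tau),\tau)$ is a diffeomorphism from $R_{s\ln(1/\eps),d_3}\times\TT$ onto its image (this uses the $\pa_u\CCC$ estimate of Theorem~\ref{th:CanviFinal}, which carries the gain $|u^2+a^2|^{-(\ell-2r)}$, resp.\ $|u^2+a^2|^{-1}$, needed because $\CCC$ itself is only $\OO(|\hmu|)$, not small). Putting $\Theta(w,\tau)=\wt\Upsilon'(\eps\ii w-\tau)$, so that $\Theta(u+\eps\CCC,\tau)=(\eps\ii+\pa_u\CCC)\ii\pa_u\wt\Delta$, and expanding in Fourier series gives $ik\wt\Upsilon^{[k]}=\Theta^{[-k]}(w)e^{-ikw/\eps}$. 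Evaluating at $w^\ast=u^\ast+\eps\CCC(u^\ast,0)$ with $u^\ast=i(a-s\eps\ln(1/\eps))$, and inserting the Step~1 bound together with the value of $\Im\CCC(u^\ast,0)$ — which by Proposition~\ref{coro:Canvi:FirstOrder} is $-\hmu^2\Im b\ln\eps$ plus lower order when $\ell=2r$, and $\OO(|\hmu|\eps^{\ell-2r}|\ln\eps|^{-(\ell-2r)})$ when $\ell>2r$ — yields the exponentially small bounds for $\wt\Upsilon^{[k]}$ with $k<0$, the borderline case acquiring the shifted rate $a/\eps-\hmu^2\Im b\ln(1/\eps)$. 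Real-analyticity of $\Delta$ in $u$ gives the same bounds for $k>0$; summing and differentiating term by term gives the stated bounds for $\wt\Delta$, $\pa_u\wt\Delta$ and $\pa_u^2\wt\Delta$ on the real line.

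The hard part is the bookkeeping near the singularity when $\ell=2r$: one has to verify that, uniformly on the inner domains, the combination $\CCC(u,\tau)-C(\hmu,\eps)+\hmu^2 b\ln\eps-\hmu g(\eps\ii(u-ia),\tau)$ is genuinely $\OO(|\hmu|\eps/|u-ia|)$, that this smallness survives being multiplied by $|k|$ in the factors $1-e^{ik(\cdots)}$ and summed against the $\chi^{[k]}(\hmu)$, and that the logarithmic term $-\hmu b\ln z$ of $g$ from Proposition~\ref{coro:gInner}, together with the $\hmu^2 b\ln\eps$ built into the definition of the $\Upsilon_0^{[k]}$, is exactly what shifts the exponential rate to $a/\eps-\hmu^2\Im b\ln(1/\eps)$ — the source of the anomalous power of $\eps$ in \eqref{def:formulaArea:singular:ligual}. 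One must also check that the ranges $s<1/\q$ and $\ga$ close to $1$ keep both the outer contribution and the $\OO(\eps^{1/\q})$ matching error strictly subdominant with respect to $|\ln\eps|^{-1}$ (resp.\ $|\ln\eps|^{-(\ell-2r)}$).
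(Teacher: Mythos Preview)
Your plan is essentially the paper's own argument: the same three-region splitting of $R_{s\ln(1/\eps),d_3}$, the same decomposition near $+ia$ into the matching errors $\wt\Delta_1^{u,s}=C_+^{2}\eps^{1-2r}(\psi^{u,s}-\psi_0^{u,s})$, the $\CCC$-versus-$g$ correction $\wt\Delta_2$, and the ``wrong singularity'' piece $\wt\Delta_3=-\Delta_0^{-}$, followed by the change $(w,\tau)=(u+\eps\CCC(u,\tau),\tau)$ and evaluation at $u^{\ast}=i(a-s\eps\ln(1/\eps))$ to extract the Fourier coefficients $\wt\Upsilon^{[k]}$.

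One genuine point you glide over: your Step~1 bound carries a factor $|\hmu|$, but Theorem~\ref{th:MatchingHJ} only gives
\[
\left|\pa_z\bigl(\psi^{u,s}-\psi_0^{u,s}\bigr)\right|\le \frac{b_{10}\,\eps^{1/\q}}{|z|^{2r-1/\q}},
\]
with \emph{no} $|\hmu|$ in it, so the contribution of $\wt\Delta_1^{u,s}$ to $|\pa_u\wt\Delta|$ is only $K\eps^{1/\q-2r}|\ln\eps|^{-(2r-1/\q)}$ a priori. The paper recovers the missing $|\hmu|$ by noting that $\pa_u\wt\Delta$ depends analytically on $\hmu$ with $\pa_u\wt\Delta|_{\hmu=0}=0$ and applying Schwarz's lemma on $B(\hmu_0)$; without this extra step the final estimates would be off by exactly that factor. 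Also, in the $\ell=2r$ case the exponent shift $e^{\hmu^{2}\Im b\ln\eps}$ does not come out of the Step~1 estimate on $\pa_u\wt\Delta$ (there you only get $|\ln\eps|^{-1-\hmu^{2}\Im b}$); it is produced in Step~2 by the term $-|k|\,\Im\CCC(u^{\ast},0)$, since Proposition~\ref{coro:Canvi:FirstOrder} gives $\CCC(u^{\ast},0)\approx C(\hmu,\eps)-\hmu^{2}b\ln(u^{\ast}-ia)$ and $\ln(u^{\ast}-ia)=\ln\eps+\OO(\ln\ln(1/\eps))$.
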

We observe that $\partial_u \Delta_0$ gives the correct asymptotic
prediction of $\partial_u \Delta$ if $\Upsilon^{[-1]}_0\neq 0$. In
fact, we only need this coefficient to give a simpler leading term
of the asymptotic formula. For this purpose let us define the
function
\begin{equation}\label{def:funciof}
f\left(\hmu\right)=C_+^2 \chi^{[-1]}\left(\hmu\right),
\end{equation}
where $C_+$ is the constant defined in \eqref{eq:SepartriuAlPol} or
\eqref{eq:SepartriuAlPolTrig} and $\chi^{[-1]}(\hmu)$ is the
constant given in Theorem \ref{th:InnerImma}. Let us point out that
the zeros of $f(\hmu)$ correspond to the zeros of $\chi^{[-1]}(\hmu)$. We
define
\begin{align}
\Delta_{00}(u,\tau)&=\frac{2\hmu}{\eps^{2r-1}}e^{\dps
-\tfrac{a}{\eps}}\Re\left(f(\hmu)e^{-i\left({\dps
\tfrac{u}{\eps}}-\tau+\CCC(u,\tau)\right)}\right)&\text{ if
}\ell-2r>0\label{def:Delta00:lmajor}\\
\Delta_{00}(u,\tau)&=\frac{2\hmu}{\eps^{2r-1}}e^{{\dps
-\tfrac{a}{\eps}}}\Re\left(f(\hmu)e^{-i\left(\hmu^2b\ln\eps-C(\hmu,\eps)\right)}
e^{-i\left({\dps
\tfrac{u}{\eps}}-\tau+\CCC(u,\tau)\right)}\right)&\text{ if
}\ell-2r=0,\label{def:Delta00:ligual}
\end{align}
where $b$  is the constant defined in \eqref{def:Constantb},
$C(\hmu,\eps)$ the constant given in Proposition
\ref{coro:Canvi:FirstOrder} and $\CCC$ the function given by
Theorem \ref{th:CanviFinal}.

\begin{corollary}\label{coro:Diferencia:Simple}
There exists a constant $b_{19}>0$ such that for
$\eps\in(0,\eps_0)$, $\hmu\in B(\hmu_0)\cap\RR$ and $(u,\tau)\in
\left(R_{s\ln(1/\eps),d_3}\cap\RR\right)\times\TT$, the following
statements are satisfied.
\begin{itemize}
\item If $\ell-2r>0$,
\[
\begin{split}
\left|\Delta(u,\tau)-\Upsilon^{[0]}- \Delta_{00}(u,\tau)\right|&\leq
\frac{b_{19}|\hmu|}{\eps^{2r-1}|\ln\eps|^{\ell-2r}}e^{-\dps\tfrac{a}{\eps}}\\
\left|\pa_u\Delta(u,\tau)-\pa_u \Delta_{00}(u,\tau)\right|&\leq
\frac{b_{19}|\hmu|}{\eps^{2r}|\ln\eps|^{\ell-2r}}e^{-\dps\tfrac{a}{\eps}}\\
\left|\pa_u^2\Delta(u,\tau)-\pa_u^2 \Delta_{00}(u,\tau)\right|&\leq
\frac{b_{19}|\hmu|}{\eps^{2r+1}|\ln\eps|^{\ell-2r}}e^{-\dps\tfrac{a}{\eps}}.\\
\end{split}\]
\item If $\ell-2r=0$,
\[
\begin{split}
\left|\Delta(u,\tau)-\Upsilon^{[0]}- \Delta_{00}(u,\tau)\right|&\leq
\frac{b_{19}\left|\hmu\right|}{\eps^{2r-1}|\ln\eps|}e^{-{\dps\tfrac{a}{\eps}}
+\hmu^2\Im b\ln\eps}\\
\left|\pa_u\Delta(u,\tau)-\pa_u \Delta_{00}(u,\tau)\right|&\leq
\frac{b_{19}\left|\hmu\right|}{\eps^{2r}|\ln\eps|}e^{-{\dps\tfrac{a}{\eps}}
+\hmu^2\Im b\ln\eps}\\
\left|\pa_u^2\Delta(u,\tau)-\pa_u^2 \Delta_{00}(u,\tau)\right|&\leq
\frac{b_{19}\left|\hmu\right|}{\eps^{2r+1}|\ln\eps|}e^{-{\dps\tfrac{a}{\eps}}
+\hmu^2\Im b\ln\eps}.\\
\end{split}
\]
\end{itemize}
\end{corollary}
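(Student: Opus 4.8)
The plan is to deduce Corollary \ref{coro:Diferencia:Simple} from Theorem \ref{th:CotaDiferencia} by observing that $\Delta_{00}$ is precisely the truncation of $\Delta_0$ to the harmonics $k=\pm 1$, and that the omitted harmonics $|k|\geq 2$ are exponentially smaller (by a full factor $e^{-a/\eps}$) than the error already incurred in Theorem \ref{th:CotaDiferencia}. Throughout one works on the real domain $\left(R_{s\ln(1/\eps),d_3}\cap\RR\right)\times\TT$ with $\hmu\in B(\hmu_0)\cap\RR$, so that $u$, $\tau$ and $\CCC(u,\tau)$ are real and every factor $e^{ik(\eps\ii u-\tau+\CCC(u,\tau))}$ has modulus one.

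First I would check the identity
\[
\Delta_{00}(u,\tau)=\Upsilon_0^{[-1]}e^{-i(\eps\ii u-\tau+\CCC(u,\tau))}+\Upsilon_0^{[1]}e^{i(\eps\ii u-\tau+\CCC(u,\tau))}.
\]
By the real-analyticity of $\hmu\mapsto\chi^{[-1]}(\hmu)$ one has $\Upsilon_0^{[1]}=\overline{\Upsilon_0^{[-1]}}$ for real $\hmu$, so this sum equals $2\Re\bigl(\Upsilon_0^{[-1]}e^{-i(\eps\ii u-\tau+\CCC)}\bigr)$; inserting $\Upsilon_0^{[-1]}=\dfrac{\hmu f(\hmu)}{\eps^{2r-1}}e^{-a/\eps}$ if $\ell-2r>0$, respectively $\Upsilon_0^{[-1]}=\dfrac{\hmu f(\hmu)}{\eps^{2r-1}}e^{-a/\eps}e^{-i(\hmu^2 b\ln\eps-C(\hmu,\eps))}$ if $\ell-2r=0$, which follows from \eqref{def:funciof} together with \eqref{def:Dif0:coefsFourier:lmajor} (resp.\ \eqref{def:Dif0:coefsFourier:ligual}), one recovers exactly \eqref{def:Delta00:lmajor} (resp.\ \eqref{def:Delta00:ligual}). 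Hence, by \eqref{def:Delta0sencer:Fourier}, $\Delta_0-\Delta_{00}=\sum_{|k|\geq 2}\Upsilon_0^{[k]}e^{ik(\eps\ii u-\tau+\CCC(u,\tau))}$.

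Next I would bound this tail. Since the series \eqref{eq:DifAsimptInner:HJ} converges on $\RRR_{\kk_5,2\tet_0}^{+}\times\TT_\sigma$, the analytic functions $\chi^{[k]}(\hmu)$ grow at most exponentially in $|k|$, uniformly for $\hmu\in B(\hmu_0)$; therefore, by \eqref{def:Dif0:coefsFourier:lmajor}--\eqref{def:Dif0:coefsFourier:ligual:positiu}, for $|k|\geq 2$ one has $\bigl|\Upsilon_0^{[k]}\bigr|\leq K|\hmu|\eps^{1-2r}e^{-|k|a/\eps}$, times the extra factor $e^{\hmu^2\Im b\ln\eps}$ in the case $\ell-2r=0$. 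On the real domain all the exponentials $e^{ik(\eps\ii u-\tau+\CCC)}$ have modulus one, so summing the geometric-type series yields $|\Delta_0-\Delta_{00}|\leq K|\hmu|\eps^{1-2r}e^{-2a/\eps}$ (resp.\ the same with the extra factor), which is smaller than the right-hand side of the corresponding estimate in Theorem \ref{th:CotaDiferencia} by a factor $|\ln\eps|^{\ell-2r}e^{-a/\eps}$. For $\pa_u$ and $\pa_u^2$ I would differentiate the series term by term: each $\pa_u$ produces a factor $ik(\eps\ii+\pa_u\CCC)$, and since $\pa_u\CCC$ is uniformly small on this domain by Theorem \ref{th:CanviFinal}, this costs $O(|k|/\eps)$, which the extra factor $e^{-(|k|-1)a/\eps}$ arising from $|k|\geq 2$ absorbs easily (the second derivative being controlled in the same way, or by a Cauchy estimate on a complex strip of width $O(\eps)$); thus $\pa_u(\Delta_0-\Delta_{00})$ and $\pa_u^2(\Delta_0-\Delta_{00})$ are bounded by $K|\hmu|\eps^{-2r}e^{-2a/\eps}$ and $K|\hmu|\eps^{-2r-1}e^{-2a/\eps}$, again negligible against the bounds of Theorem \ref{th:CotaDiferencia}.

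Finally, the corollary follows from the triangle inequalities $|\Delta-\Upsilon^{[0]}-\Delta_{00}|\leq|\Delta-\Upsilon^{[0]}-\Delta_0|+|\Delta_0-\Delta_{00}|$ and their analogues for $\pa_u$ and $\pa_u^2$, combined with Theorem \ref{th:CotaDiferencia}. I do not anticipate any substantial obstacle: all the analytically delicate work — the construction of the change $\xi_0$, the complex matching, and the exponentially small estimate for $\Delta-\Upsilon^{[0]}-\Delta_0$ — is already contained in Theorems \ref{th:CanviFinal}, \ref{th:MatchingHJ} and \ref{th:CotaDiferencia}; the only mildly delicate point is checking that the phases and complex-conjugation conventions in \eqref{def:Dif0:coefsFourier:lmajor}--\eqref{def:Dif0:coefsFourier:ligual:positiu} combine to give precisely \eqref{def:Delta00:lmajor}--\eqref{def:Delta00:ligual}, which is a direct computation.
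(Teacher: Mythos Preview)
Your proposal is correct and is precisely the argument the paper has in mind: right before the proof of Theorem \ref{th:CotaDiferencia} the authors state that Corollary \ref{coro:Diferencia:Simple} ``is a direct consequence'' of that theorem, without giving any further details. You have correctly identified and carried out that deduction, namely that $\Delta_{00}$ is the $k=\pm 1$ truncation of $\Delta_0$ and that the remaining harmonics $|k|\geq 2$ contribute an extra $e^{-a/\eps}$ which makes them negligible against the error in Theorem \ref{th:CotaDiferencia}.
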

We devote the rest of this section to prove Theorem
\ref{th:CotaDiferencia}, from which Corollary
\ref{coro:Diferencia:Simple} is a direct consequence.



\begin{proof}[Proof of Theorem \ref{th:CotaDiferencia}]
For the first part of the proof we consider complex values of
$\hmu\in B(\hmu_0)$ and later we will restrict to  $\hmu\in
B(\hmu_0)\cap\RR$. By \eqref{def:Delta:Fourier} and
\eqref{def:Delta0sencer:Fourier}, the function
$\wt\Delta(u,\tau)=\Delta(u,\tau)-\Delta_0(u,\tau)$ can be written
as
\begin{equation}\label{def:DifResta}
\wt\Delta(u,\tau)=\wt \Upsilon\left(\eps\ii
u-\tau+\CCC(u,\tau)\right)=\sum_{k\in\ZZ}\wt
\Upsilon^{[k]}e^{ik\left(\eps\ii u-\tau+\CCC(u,\tau)\right)},
\end{equation}
where $\wt\Upsilon^{[k]}=\Upsilon^{[k]}-\Upsilon^{[k]}_0$.
Therefore, to obtain the bounds of Theorem \ref{th:CotaDiferencia},
it is crucial to bound $\left|\wt\Upsilon^{[k]}\right|$.

The first step is to obtain a bound of $\wt\Delta(u,\tau)$ for
$(u,\tau)\in R_{s\ln\frac{1}{\eps},d_3}\times\TT$. First we bound
this term for $(u,\tau)\in \left(R_{s\ln\frac{1}{\eps},d_3}\cap
D_{s\ln\frac{1}{\eps},\C_2}^{\inn,+,s}\cap
D_{s\ln\frac{1}{\eps},\C_2}^{\inn,+,u}\right)\times\TT$. Recalling
the definitions of \eqref{def:Diferencia:Original},
\eqref{Def:Dif:PrimerOrdreSencer}, \eqref{def:Delta0:Fourier} and
\eqref{eq:DifAsimptInner:HJ}, we split $\wt\Delta$ as \[\wt\Delta
(u,\tau)=\wt\Delta_1^u (u,\tau)-\wt\Delta_1^s (u,\tau)+\wt\Delta_2
(u,\tau)+\wt\Delta_3 (u,\tau)\] with
\begin{align}
\wt\Delta_1^{u,s}(u,\tau)&=T^{u,s}(u,\tau)-\frac{C_+^2}{\eps^{2r-1}}\psi^{u,s}
_0\left(\frac{u-ia}{\eps},\tau\right)\notag\\
&=\frac{C_+^2}{\eps^{2r-1}}\left(\psi^{u,s}\left(\frac{u-ia}{\eps},
\tau\right)-\psi^{u,s}_0\left(\frac{u-ia}{\eps},\tau\right)\right)\label{
def:Lambda1}\\
\wt\Delta_2(u,\tau)&=\frac{C_+^2}{\eps^{2r-1}}\left(\psi_0^{u}\left(\frac{u-ia}{
\eps},\tau\right)-\psi^{s}_0\left(\frac{u-ia}{\eps},
\tau\right)\right)-\Delta_0^+(u,\tau)\label{def:Lambda2}\\
\wt\Delta_3(u,\tau)&=-\Delta_0^-(u,\tau).\label{def:Lambda3}
\end{align}
Applying Theorem \ref{th:MatchingHJ}, one can see that for
$(u,\tau)\in \left(R_{s\ln\frac{1}{\eps},d_3}\cap
D_{s\ln\frac{1}{\eps},\C_2}^{\inn,+,s}\cap
D_{s\ln\frac{1}{\eps},\C_2}^{\inn,+,u}\right)\times\TT$,
\[
\left|\pa_u\wt\Delta_1^{u,s}(u,\tau)\right|\leq
\frac{K\eps^{\frac{1}{\q}-2r}}{\left|\ln\eps\right|^{2r-\frac{1}{\q}}}.
\]
To bound $\wt\Delta_2$, one has to proceed in different ways,
depending on whether $\ell-2r>0$ or $\ell-2r=0$. For the first case,
let us point out that,
\[
\wt\Delta_2(u,\tau)=\sum_{k<0}\ups_0^{[k]}\left(e^{ik\left(\eps\ii
u-\tau+\hmu
g\left(\eps\ii(u-ia),\tau\right)\right)}-e^{ik\left(\eps\ii
u-\tau+\CCC(u,\tau)\right)}\right).
\]
Then, applying Theorems \ref{th:InnerImma} and \ref{th:CanviFinal}
and the mean value theorem one obtains that for $(u,\tau)\in
\left(R_{s\ln\frac{1}{\eps},d_3}\cap
D_{s\ln\frac{1}{\eps},\C_2}^{\inn,+,s}\cap
D_{s\ln\frac{1}{\eps},\C_2}^{\inn,+,u}\right)\times\TT$,
\[
\left|\pa_u\wt\Delta_2(u,\tau)\right|\leq
\frac{K|\hmu|^2\eps^{s-2r}}{\left|\ln\eps\right|^{\ell-2r}}.
\]
For the case $\ell-2r=0$, taking into account the definition of
$\ups_0^{[k]}$ in \eqref{def:Dif0:coefsFourier:ligual},
\[
\wt\Delta_2(u,\tau)=\frac{C_+^2\hmu}{\eps^{2r-1}}\sum_{k<0}\chi^{[k]}
(\hmu)\left(e^{ik\left(\eps\ii
(u-ia)-\tau+\hmu
g\left(\eps\ii(u-ia),\tau\right)\right)}-e^{ik\left(\eps\ii
(u-ia)-\tau+\CCC(u,\tau)-C(\hmu,\eps)+\hmu^2b\ln\eps\right)}\right).
\]
By Theorems \ref{th:InnerImma} and \ref{th:CanviFinal}
and Proposition \ref{coro:Canvi:FirstOrder} for
$(u,\tau)\in \left(R_{s\ln\frac{1}{\eps},d_3}\cap
D_{s\ln\frac{1}{\eps},\C_2}^{\inn,+,s}\cap
D_{s\ln\frac{1}{\eps},\C_2}^{\inn,+,u}\right)\times\TT,$ we have that
\[
\left|\pa_u\wt\Delta_2(u,\tau)\right|\leq
\frac{K|\hmu|^2\eps^{s-2r}}{\left|\ln\eps\right|^{1+\Im\left(\hmu^2
b\right)}}.
\]
Finally, to bound  $\pa_u\wt\Delta_3$, it is enough to take into
account \eqref{eq:DifAsimptInner:HJ}. Then, one can see that for
$(u,\tau)\in \left(R_{s\ln\frac{1}{\eps},d_3}\cap
D_{s\ln\frac{1}{\eps},\C_2}^{\inn,+,s}\cap
D_{s\ln\frac{1}{\eps},\C_2}^{\inn,+,u}\right)\times\TT$,
\begin{align*}
\left|\pa_u\wt\Delta_3(u,\tau)\right|&\leq
K|\hat \mu|\eps^{-s-2r}e^{-\dps\tfrac{2a}{\eps}}&\text{provided }\ell-2r>0\\
\left|\pa_u\wt\Delta_3(u,\tau)\right|&\leq
K|\hat \mu|\eps^{-s-2r}e^{-{\dps\tfrac{2a}{\eps}}+2\Im \left({ \hat \mu} ^2
b\right)\ln\eps +\Im\left( {\hat \mu}^2 b\right)\ln\ln{\dps
\tfrac{1}{\eps}}}&\text{provided }\ell-2r=0.
\end{align*}
Therefore, from the bounds of $\wt \Delta_1^{u,s}$, $\wt \Delta_2$
and $\wt \Delta_3$ and recalling that by hypothesis $s<1/\q$, we
have that for $(u,\tau)\in \left(R_{s\ln\frac{1}{\eps},d_3}\cap
D_{s\ln\frac{1}{\eps},\C_2}^{\inn,+,s}\cap
D_{s\ln\frac{1}{\eps},\C_2}^{\inn,+,u}\right)\times\TT$,
\begin{align*}
\left|\pa_u\wt\Delta(u,\tau)\right|&\leq
\frac{K\eps^{s-2r}}{\left|\ln\eps\right|^{\ell-2r}}&\text{provided
}\ell-2r>0 \\
\left|\pa_u\wt\Delta(u,\tau)\right|&\leq
\frac{K\eps^{s-2r}}{\left|\ln\eps\right|^{1+ \Im\left(\hmu^2
b\right)}}&\text{provided }\ell-2r=0.
\end{align*}
Moreover, taking into account that $\pa_u\wt\Delta (u,\tau)$ depends
analytically on $\hmu$ and moreover satisfies
$\left.\pa_u\wt\Delta(u,\tau)\right|_{\hmu=0}=0$, one can apply
Schwartz Lemma to obtain
\begin{align}
\left|\pa_u\wt\Delta(u,\tau)\right|&\leq
\frac{K|\hmu|\eps^{s-2r}}{\left|\ln\eps\right|^{\ell-2r}}&
\text{provided }\ell-2r>0 \label{eq:CotaDeltaTilde:lmajor:neg}\\
\left|\pa_u\wt\Delta(u,\tau)\right|&\leq
\frac{K|\hmu|\eps^{s-2r}}{\left|\ln\eps\right|^{1+ \Im\left( \hmu^2
b\right)}}&\text{provided
}\ell-2r=0.\label{eq:CotaDeltaTilde:ligual:neg}
\end{align}
Reasoning analogously, one can see that  for $(u,\tau)\in
\left(R_{s\ln\frac{1}{\eps},d_3}\cap
D_{s\ln\frac{1}{\eps},\C_2}^{\inn,-,s}\cap
D_{s\ln\frac{1}{\eps},\C_2}^{\inn,-,u}\right)\times\TT$, the
function $\pa_u\wt\Delta$ satisfies
\begin{align}
\left|\pa_u\wt\Delta(u,\tau)\right|&\leq
\frac{K|\hmu|\eps^{s-2r}}{\left|\Delta_{00}(u,\tau)\ln\eps\right|^{\ell-2r}}&
\text{provided }\ell-2r>0 \label{eq:CotaDeltaTilde:lmajor:pos}\\
\left|\pa_u\wt\Delta(u,\tau)\right|&\leq
\frac{K|\hmu|\eps^{s-2r}}{\left|\ln\eps\right|^{1- \Im\left( \hmu^2
\ol b\right)}}&\text{provided
}\ell-2r=0.\label{eq:CotaDeltaTilde:ligual:pos}
\end{align}
Finally, by Theorems \ref{th:Extensio:Trig}, \ref{th:ExtensioFinal},
\ref{th:InnerImma} and \ref{th:CanviFinal}, one can easily see that
the bound of $\pa_u\wt\Delta (u,\tau)$ for $(u,\tau)\in
\left(R_{s\ln\frac{1}{\eps},d_3}\cap
D_{\C_2\eps^\ga,\rr_4}^{\out,s}\cap
D_{\C_2\eps^\ga,\rr_4}^{\out,u}\right)\times\TT$ is smaller than
\eqref{eq:CotaDeltaTilde:lmajor:neg} and
\eqref{eq:CotaDeltaTilde:lmajor:pos} (case $\ell-2r>0$) and
\eqref{eq:CotaDeltaTilde:ligual:neg} and
\eqref{eq:CotaDeltaTilde:ligual:pos} (case $\ell-2r=0$), provided
$|u-ia|\geq \OO(\eps^\ga)$.

Taking into account \eqref{eq:CotaDeltaTilde:lmajor:neg} and
\eqref{eq:CotaDeltaTilde:lmajor:pos} (case $\ell-2r>0$) and
\eqref{eq:CotaDeltaTilde:ligual:neg} and
\eqref{eq:CotaDeltaTilde:ligual:pos} (case $\ell-2r=0$), one can
conclude that for $\hmu\in B(\hmu_0)\cap\RR$,
\begin{align}
\left|\pa_u\wt\Delta(u,\tau)\right|&\leq
\frac{K|\hmu|\eps^{s-2r}}{\left|\ln\eps\right|^{\ell-2r}}&
\text{provided }\ell-2r>0 \label{eq:CotaDeltaTilde:lmajor}\\
\left|\pa_u\wt\Delta(u,\tau)\right|&\leq
\frac{K|\hmu|\eps^{s-2r}}{\left|\ln\eps\right|^{1+ \hmu^2\Im
b}}&\text{provided }\ell-2r=0.\label{eq:CotaDeltaTilde:ligual}
\end{align}
Analogously to the proof of Theorem \ref{th:CotaDiferencia:lmenor},
the second step is to consider the change of variables
$(w,\tau)=(u+\eps\CCC(u,\tau),\tau)$
and the auxiliary function
\[
\Theta(w,\tau)=\wt\Upsilon' \left(\eps\ii w-\tau\right),
\]
to obtain a bound for the Fourier coefficients of $\wt \Upsilon$:
\[
\left|\wt\Upsilon^{[k]}\right|\leq K\eps\sup_{(u,\tau)\in
R_{s\ln(1/\eps),d_3}\times\TT}\left|\pa_u\wt\Delta(u,\tau)\right|e^{-{\dps\tfrac
{|k|}{\eps}}\left(a-s\eps\ln\frac{1}{\eps}\right)-|k|\Im\left(
\CCC\left(u^\ast,0\right)\right)}.
\]

Therefore, to obtain the bounds for $\wt\Upsilon^{[k]}$ with $k<0$,
it only remains to use bounds \eqref{eq:CotaDeltaTilde:lmajor} and
\eqref{eq:CotaDeltaTilde:ligual} and the properties of $\CCC$ given
in Theorem \ref{th:CanviFinal} and Proposition
\ref{coro:Canvi:FirstOrder}. Then, we obtain that for $k<0$
\begin{align*}
\left| \wt\Upsilon^{[k]}\right|&\leq
\frac{K|\hmu|}{\eps^{2r-1}\left|\ln\eps\right|^{\ell-2r}}e^{-|k|{\dps
\tfrac{a}{\eps}}+(|k|-1)s\ln\frac{1}{\eps}}&\text{provided }\ell-2r>0\\
\left| \wt\Upsilon^{[k]}\right|&\leq
\frac{K|\hmu|}{\eps^{2r-1}\left|\ln\eps\right|}e^{-|k|\left({\dps\tfrac{a}{\eps}
}-\Im
\left( \hmu^2 b\right)\ln\eps\right)+
(|k|-1)\left(s\ln\frac{1}{\eps}+\Im\left( \hmu^2
b\right)\ln\ln\frac{1}{\eps}\right)} &\text{provided }\ell-2r=0.
\end{align*}
Since $\pa_u\wt\Delta(u,\tau)$ and $\CCC(u,\tau)$ are real-analytic
for $(\mu,\tau)\in\RR$, the coefficients $\wt\Upsilon^{[k]}$ for
$k>0$ satisfy the same bounds. Finally, the bounds of
$\wt\Upsilon^{[k]}$ lead easily to the desired bounds of
$\wt\Delta(u,\tau)$ for
$(u,\tau)\in\left(R_{s\ln(1/\eps),d_3}\cap\RR\right)\times\TT$.
\end{proof}

\subsection{Computation of the area of the lobes: proof of
Theorems~\ref{th:MainGeometric:regular} and
\ref{th:MainGeometric:singular}  and Corollaries
\ref{coro:MainGeometric:regular} and \ref{coro:MainGeometric:singular}}
To  prove Theorems~\ref{th:MainGeometric:regular} and
\ref{th:MainGeometric:singular}, we rewrite Corollaries
\ref{coro:Canvi:FirstOrder:lmenor} and \ref{coro:Canvi:FirstOrder} splitting the
results between the regular case $\eta>\ell-2r$ and the singular case
$\eta=\ell-2r$.

\begin{corollary}\label{coro:Diferencia:Simple:regular}
Let us assume $\eta>\ell-2r$. Then, there exists a constant $b_{20}>0$ such that
for
$\eps\in(0,\eps_0)$, $\mu\in B(\mu_0)\cap\RR$ and $(u,\tau)\in
\left(R_{s\ln(1/\eps),d_3}\cap\RR\right)\times\TT$, the following
statements are satisfied.
\[
\begin{split}
\left|\Delta(u,\tau)-\Upsilon^{[0]}- \Delta_{00}(u,\tau)\right|&\leq
\frac{b_{20}\left|\mu\right|\eps^{\eta+1-\ell}}{|\ln\eps|}e^{-{\dps\tfrac{a}{
\eps}}}\\
\left|\pa_u\Delta(u,\tau)-\pa_u \Delta_{00}(u,\tau)\right|&\leq
\frac{b_{20}\left|\mu\right|\eps^{\eta-\ell}}{|\ln\eps|}e^{-{\dps\tfrac{a}{\eps}
}}\\
\left|\pa_u^2\Delta(u,\tau)-\pa_u^2 \Delta_{00}(u,\tau)\right|&\leq
\frac{b_{20}\left|\mu\right|\eps^{\eta-1-\ell}}{|\ln\eps|}e^{-{\dps\tfrac{a}{
\eps}}}.\\
\end{split}
\]
where
\begin{itemize}
\item If $\eta>\eta^\ast$,
\[
\Delta_{00}(u,\tau) =\frac{2\mu \eps ^{\eta}}{\eps^{\ell-1}}e^{\dps
-\tfrac{a}{\eps}}\Re\left(f_0e^{-i\left({\dps
\tfrac{u}{\eps}}-\tau+\CCC(u,\tau)\right)}\right).
\]
\item If $\eta=0$ and $\ell-2r<0$,
\[
\Delta_{00}(u,\tau)=\frac{2\mu}{\eps^{\ell-1}}e^{\dps
-\tfrac{a}{\eps}}\Re\left(f_0e^{iC(\mu) }e^{-i\left({\dps
\tfrac{u}{\eps}}-\tau+\CCC(u,\tau)\right)}\right).
\]
\end{itemize}
\end{corollary}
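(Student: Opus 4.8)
The statement collects, and simplifies in the \emph{regular} regime, the two results already established: Corollary~\ref{coro:Diferencia:Simple:lmenor} (valid when $\ell-2r<0$, where \textbf{HP5} reads $\eta\ge 0$) and Corollary~\ref{coro:Diferencia:Simple} (valid when $\ell-2r\ge 0$, stated through the rescaled parameter $\hmu=\mu\eps^{\eta-(\ell-2r)}$). The plan is to split the proof according to the sign of $\ell-2r$ and, in each case, to show that under the hypothesis $\eta>\ell-2r$ the $\mu$- (resp.\ $\hmu$-) dependent corrections appearing in those corollaries are small enough to be absorbed into the error term, so that the leading part reduces to the bare Melnikov expression $\Delta_{00}$ written here. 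The three estimates (for $\Delta$, $\pa_u\Delta$ and $\pa_u^2\Delta$) will be handled uniformly, since the relevant derivatives of $\CCC$ are already controlled by Theorems~\ref{th:CanviFinal:lmenor} and \ref{th:CanviFinal} and Propositions~\ref{coro:Canvi:FirstOrder:lmenor} and \ref{coro:Canvi:FirstOrder}.

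\emph{Case $\ell-2r<0$.} Here $\eta>\ell-2r$ just means $\eta\ge 0$, and Corollary~\ref{coro:Diferencia:Simple:lmenor} applies verbatim, with $\Delta_{00}$ given by \eqref{def:Delta00:lmenor}, i.e.\ carrying the factor $e^{iC(\mu,\eps)}$, and with the error already bounded by $b_{14}|\mu|\eps^{\eta+1-\ell}|\ln\eps|^{-1}e^{-a/\eps}$. If $\eta>\eta^\ast=0$, Proposition~\ref{coro:Canvi:FirstOrder:lmenor} gives $|C(\mu,\eps)|\le b_{12}|\mu|\eps^{\eta}$, so replacing $e^{iC(\mu,\eps)}$ by $1$ changes $\Delta_{00}$ by $\OO(|\mu|\eps^{\eta}\cdot|\mu|\eps^{\eta+1-\ell}e^{-a/\eps})$, which is dominated by the right-hand side of the claim because $|\mu|\eps^{\eta}|\ln\eps|\to 0$; the same estimate applies to the derivatives. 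If $\eta=0$ (which is only possible in this case), the same proposition gives $C(\mu,\eps)=C(\mu)+\OO(\eps^{\nu})$ with $\nu>0$, so $e^{iC(\mu,\eps)}=e^{iC(\mu)}(1+\OO(\eps^{\nu}))$ and the $\OO(\eps^{\nu})$ defect is again absorbed since $\eps^{\nu}|\ln\eps|\to 0$. This gives the two sub-cases of the statement for $\ell-2r<0$.

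\emph{Case $\ell-2r\ge 0$, $\eta>\ell-2r$.} Then $\hmu=\mu\eps^{\eta-(\ell-2r)}$ is $\mu$ times a positive power of $\eps$, hence $\hmu\to 0$, and Corollary~\ref{coro:Diferencia:Simple} applies. The identity $\hmu\eps^{-(2r-1)}=\mu\eps^{\eta-\ell+1}=\mu\eps^{\eta}\eps^{-(\ell-1)}$ already matches the prefactors of $\Delta_{00}$ in \eqref{def:Delta00:lmajor}--\eqref{def:Delta00:ligual} with the one in the statement. Since $\chi^{[-1]}(\hmu)$ is entire and $f(\hmu)=C_+^2\chi^{[-1]}(\hmu)$ equals the Melnikov constant $f_0$ at $\hmu=0$ (by \cite{Baldoma06}), one has $f(\hmu)=f_0+\OO(\hmu)$, so replacing $f(\hmu)$ by $f_0$ costs $\OO(|\hmu|^2\eps^{1-2r}e^{-a/\eps})=\OO(|\mu|\eps^{\eta+1-\ell}\cdot|\mu|\eps^{\eta-\ell+2r}e^{-a/\eps})$, negligible against the claimed bound since $\eps^{\eta-\ell+2r}$ is a positive power of $\eps$. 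When $\ell-2r>0$ there is no extra phase; the error term $b_{19}|\hmu|\eps^{1-2r}|\ln\eps|^{-(\ell-2r)}e^{-a/\eps}$ of Corollary~\ref{coro:Diferencia:Simple} is bounded by the claimed one directly if $\ell-2r\ge 1$, and if $0<\ell-2r<1$ after revisiting the proof of Theorem~\ref{th:CotaDiferencia}: the factor $|\ln\eps|^{-(\ell-2r)}$ there is carried only by the term $\wt\Delta_2$ (comparing the inner change $\hmu g$ with the outer change $\CCC$), which is quadratic in $\hmu$ and hence of size $\OO(|\mu|\eps^{\eta+1-\ell}\cdot|\mu|\eps^{\eta-\ell+2r}|\ln\eps|^{-(\ell-2r)}e^{-a/\eps})$, again negligible against $|\mu|\eps^{\eta+1-\ell}|\ln\eps|^{-1}e^{-a/\eps}$, all other contributions carrying at worst $|\ln\eps|^{-1}$. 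When $\ell-2r=0$ (so $\eta>0$) one has in addition $C(\hmu,\eps)=\OO(\hmu)=\OO(\eps^{\eta})\to 0$ and $\hmu^2 b\ln\eps=\OO(\eps^{2\eta}\ln\eps)\to 0$, so the extra phase $e^{-i(\hmu^2b\ln\eps-C(\hmu,\eps))}$ in \eqref{def:Delta00:ligual} tends to $1$ with an error dominated by $|\ln\eps|^{-1}$, while $e^{\hmu^2\Im b\ln\eps}\to 1$ in the error exponent; thus both $\Delta_{00}$ and the error bound collapse to the expressions in the statement.

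The main obstacle I expect is precisely the reconciliation of the $|\ln\eps|$-powers in the sub-case $0<\ell-2r<1$: one has to go back into the proof of Theorem~\ref{th:CotaDiferencia} and check that the $|\ln\eps|^{-(\ell-2r)}$ loss is supplied solely by a term quadratic in $\hmu$, so that the extra positive power $\eps^{\eta-\ell+2r}$ coming from $\hmu=\mu\eps^{\eta-\ell+2r}$ more than compensates it. Everything else is substitution of the relation between $\mu$, $\eps$ and $\hmu$ together with absorption of lower-order terms into the error, exactly as in the proofs of Theorems~\ref{th:CotaDiferencia:lmenor} and \ref{th:CotaDiferencia}; Corollary~\ref{coro:Diferencia:Simple:regular} then follows, and Corollaries~\ref{coro:MainGeometric:regular} and \ref{coro:MainGeometric:singular} together with Theorems~\ref{th:MainGeometric:regular} and \ref{th:MainGeometric:singular} are obtained from it and its singular counterpart via the area formula \eqref{area0}.
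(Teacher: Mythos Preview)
Your approach matches the paper's: the paper presents this corollary (together with its singular counterpart) simply as a \emph{rewriting} of Corollaries~\ref{coro:Diferencia:Simple:lmenor} and~\ref{coro:Diferencia:Simple}, split according to whether $\eta>\ell-2r$ or $\eta=\ell-2r$, and offers no further argument. Your case split and the substitutions $e^{iC(\mu,\eps)}\to 1$ (when $\eta>0$, $\ell-2r<0$), $e^{iC(\mu,\eps)}\to e^{iC(\mu)}$ (when $\eta=0$), $f(\hmu)\to f_0$ and $e^{-i(\hmu^2b\ln\eps-C(\hmu,\eps))}\to 1$ (when $\ell-2r\ge 0$, using $\hmu=\mu\eps^{\eta-\ell+2r}\to 0$) are exactly what this rewriting amounts to.

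You have, however, gone further than the paper on one point. The error term in Corollary~\ref{coro:Diferencia:Simple} for $\ell-2r>0$ carries $|\ln\eps|^{-(\ell-2r)}$, while the present statement claims $|\ln\eps|^{-1}$; when $0<\ell-2r<1$ this is not a direct consequence. You correctly trace the weaker power back to the term $\wt\Delta_2$ in the proof of Theorem~\ref{th:CotaDiferencia}, observe that it is quadratic in $\hmu$, and use the extra factor $\hmu=\OO(\eps^{\eta-\ell+2r})$ to beat the logarithmic loss. This works: the remaining contributions $\wt\Delta_1^u-\wt\Delta_1^s$ carry $|\ln\eps|^{-(2r-1/\q)}$ with $2r-1/\q\ge 1$ and vanish at $\hmu=0$, so the Schwarz-lemma step can be applied to that piece separately. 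The paper does not spell this out; in fact Theorem~\ref{th:MainGeometric:regular} only claims the weaker power $|\ln\eps|^{-(\ell-2r)}$ when $\ell-2r>0$, so the corollary as stated is slightly sharper than what the paper actually needs or justifies elsewhere. Your argument fills that gap.
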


\begin{corollary}\label{coro:Diferencia:singular}
Let us assume $\ell-2r\geq 0$ and $\eta=\eta^\ast=\ell-2r$. Then, there exists a
constant $b_{21}>0$ such that for
$\eps\in(0,\eps_0)$, $\mu\in B(\mu_0)\cap\RR$ and $(u,\tau)\in
\left(R_{s\ln(1/\eps),d_3}\cap\RR\right)\times\TT$, the following
statements are satisfied.
\begin{itemize}
\item If $\ell-2r>0$,
\[
\begin{split}
\left|\Delta(u,\tau)-\Upsilon^{[0]}- \Delta_{00}(u,\tau)\right|&\leq
\frac{b_{21}|\mu|}{\eps^{2r-1}|\ln\eps|^{\ell-2r}}e^{-\dps\tfrac{a}{\eps}}\\
\left|\pa_u\Delta(u,\tau)-\pa_u \Delta_{00}(u,\tau)\right|&\leq
\frac{b_{21}|\mu|}{\eps^{2r}|\ln\eps|^{\ell-2r}}e^{-\dps\tfrac{a}{\eps}}\\
\left|\pa_u^2\Delta(u,\tau)-\pa_u^2 \Delta_{00}(u,\tau)\right|&\leq
\frac{b_{21}|\mu|}{\eps^{2r+1}|\ln\eps|^{\ell-2r}}e^{-\dps\tfrac{a}{\eps}},
\end{split}\]
where
\[
 \Delta_{00}(u,\tau)=\frac{2\mu}{\eps^{2r-1}}e^{\dps
-\tfrac{a}{\eps}}\Re\left(f(\mu)e^{-i\left({\dps
\tfrac{u}{\eps}}-\tau+\CCC(u,\tau)\right)}\right).
\]
\item If $\ell-2r=0$,
\[
\begin{split}
\left|\Delta(u,\tau)-\Upsilon^{[0]}- \Delta_{00}(u,\tau)\right|&\leq
\frac{b_{21}\left|\mu\right|}{\eps^{2r-1}|\ln\eps|}e^{-{\dps\tfrac{a}{\eps}}
+\mu^2\Im b\ln\eps}\\
\left|\pa_u\Delta(u,\tau)-\pa_u \Delta_{00}(u,\tau)\right|&\leq
\frac{b_{21}\left|\mu\right|}{\eps^{2r}|\ln\eps|}e^{-{\dps\tfrac{a}{\eps}}
+\mu^2\Im b\ln\eps}\\
\left|\pa_u^2\Delta(u,\tau)-\pa_u^2 \Delta_{00}(u,\tau)\right|&\leq
\frac{b_{21}\left|\mu\right|}{\eps^{2r+1}|\ln\eps|}e^{-{\dps\tfrac{a}{\eps}}
+\mu^2\Im b\ln\eps},
\end{split}
\]
where
\[
 \Delta_{00}(u,\tau)=\frac{2\mu}{\eps^{2r-1}}e^{{\dps
-\tfrac{a}{\eps}}}\Re\left(f(\mu)e^{-i\left(\mu^2b\ln\eps-C(\mu)\right)}e^{
-i\left({\dps
\tfrac{u}{\eps}}-\tau+\CCC(u,\tau)\right)}\right).
\]
\end{itemize}
\end{corollary}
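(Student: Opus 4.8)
The plan is to deduce the statement directly from Corollary \ref{coro:Diferencia:Simple} by specializing to the singular regime $\eta=\eta^\ast=\ell-2r$. In this regime the rescaled parameter introduced in \eqref{def:mu:barret} satisfies $\hmu=\mu\eps^{\eta-(\ell-2r)}=\mu$, so every bound in Theorem \ref{th:CotaDiferencia} and in Corollary \ref{coro:Diferencia:Simple} holds verbatim with $\hmu$ replaced by $\mu$; in particular $B(\hmu_0)\cap\RR$ may be taken to be $B(\mu_0)\cap\RR$, the prefactor $e^{-a/\eps+\hmu^2\Im b\ln\eps}$ becomes $e^{-a/\eps+\mu^2\Im b\ln\eps}$, and the function $\Upsilon^{[0]}$ of \eqref{def:Delta:Fourier} is unchanged.

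First I would dispose of the case $\ell-2r>0$. Here the function $\Delta_{00}$ of \eqref{def:Delta00:lmajor}, with $\hmu=\mu$, coincides literally with the $\Delta_{00}$ appearing in the present statement, so the three inequalities are exactly those of Corollary \ref{coro:Diferencia:Simple} after the substitution $\hmu=\mu$, and there is nothing further to check. For the case $\ell-2r=0$ the only discrepancy is that the $\Delta_{00}$ of \eqref{def:Delta00:ligual} involves the constant $C(\hmu,\eps)=C(\mu,\eps)$ supplied by Proposition \ref{coro:Canvi:FirstOrder}, whereas the present statement uses the $\eps$-independent constant $C(\mu)$. By the last assertion of Proposition \ref{coro:Canvi:FirstOrder} one has $C(\mu,\eps)=C(\mu)+\OO(\eps^\nu)$ for some $\nu>0$, so replacing $e^{iC(\mu,\eps)}$ by $e^{iC(\mu)}$ in $\Delta_{00}$ multiplies it by a constant factor $1+\OO(\eps^\nu)$, independent of $u$; consequently $\pa_u\Delta_{00}$ and $\pa_u^2\Delta_{00}$ are affected by the same factor. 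The change produced in $\pa_u^j\Delta_{00}$ ($j=0,1,2$) is therefore bounded by $\OO\big(|\mu|\,\eps^{\nu+1-2r-j}e^{-a/\eps+\mu^2\Im b\ln\eps}\big)$, and since $\eps^\nu=\oo(1/|\ln\eps|)$ as $\eps\to 0$ this is absorbed into the right-hand sides of the inequalities of Corollary \ref{coro:Diferencia:Simple}. This yields the stated bounds.

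I do not expect a genuine obstacle here: the substantive work — controlling $\Delta-\Upsilon^{[0]}-\Delta_0$ via the inner and matching estimates and the change of variables $\xi_0$ — has already been carried out in Theorem \ref{th:CotaDiferencia}, which itself rests on Theorems \ref{th:MatchingHJ} and \ref{th:CanviFinal} and Proposition \ref{coro:Canvi:FirstOrder}. The only point needing care is the bookkeeping of the preceding paragraph, namely verifying that the reparametrization $\hmu=\mu$ and the replacement $C(\mu,\eps)\rightsquigarrow C(\mu)$ introduce only errors strictly smaller than the remainders claimed in Corollary \ref{coro:Diferencia:singular}.
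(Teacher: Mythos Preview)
Your proposal is correct and follows exactly the approach the paper intends: Corollary \ref{coro:Diferencia:singular} is presented there as a straightforward rewriting of Corollary \ref{coro:Diferencia:Simple} under the specialization $\eta=\eta^\ast=\ell-2r$, so that $\hmu=\mu$. The only nontrivial detail you had to supply---that replacing $C(\mu,\eps)$ by $C(\mu)$ in the $\ell-2r=0$ case costs only $\OO(\eps^\nu)$, which is absorbed by the $1/|\ln\eps|$ remainder---is handled correctly.
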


Let us fix a transversal Poincar\'e section corresponding to
$\tau=\tau_0\in\RR$. Being $\Upsilon(w)$ in \eqref{def:Upsilon:lmenor} and \eqref{def:Delta:xi0} a $2\pi$-periodic function, we
know that $\Delta(u,\tau_0)$ has critical points which are $\OO(\eps)$-close to each
other. Then, in
$\left(R_{s\ln(1/\eps),d_3}\cap\RR\right)$ there exist almost two of these
points, reducing $\eps$ if necessary. These critical points correspond to
homoclinic orbits of system \eqref{eq:model}. Let us consider two
consecutive zeros  $u_-^*$ and $u_+^*$  in
$\left(R_{s\ln(1/\eps),d_3}\cap\RR\right)$, which depend on
$\tau_0$. Then, taking into account that the change
\eqref{eq:CanviSimplecticSeparatriu} is symplectic,  it preserves
area and recalling the definition of $\Delta$ in
\eqref{def:Diferencia:Original}, the   area of the lobes is given by
\[
 \AAA=\left|\int_{u_-^*}^{u_+^*}\pa_u\Delta(u,
\tau_0)du\,\right|=\left|\Delta(u_+^*,\tau_0)-\Delta(u_-^*,\tau_0)\right|.
\]
First we take $\eta>\ell-2r$ and we prove Theorem \ref{th:MainGeometric:regular}
and Corollary \ref{th:MainGeometric:regular}. The simplest case is when $f_0=0$.
In this case  Corollary \ref{coro:Diferencia:Simple:regular} directly implies
Theorem \ref{th:MainGeometric:regular} since $\Delta_{00}(u,\tau)\equiv 0$.

In the case $f_0\neq 0$ we prove  Theorem \ref{th:MainGeometric:regular} and
Corollary \ref{th:MainGeometric:regular} at the same time. It can be easily seen
that
the consecutive zeros of $\pa_u\Delta_{00}(u,\tau_0)$ (see
\eqref{def:Delta00:lmenor},
\eqref{def:Delta00:lmajor} and \eqref{def:Delta00:ligual}) are also
$\OO(\eps)$-close and therefore taking $\eps$ small enough, in
$\left(R_{s\ln(1/\eps),d_3}\cap\RR\right)$ there exist at least two
consecutive zeros  $u_-$ and $u_+$  in
$\left(R_{s\ln(1/\eps),d_3}\cap\RR\right)$, which again depend on
$\tau_0$. It can be easily checked that the function $\Delta_{00}$
evaluated at these points satisfies
\begin{equation}\label{eq:MesMenys}
\Delta_{00}(u_+,\tau_0)=-\Delta_{00}(u_-,\tau_0)
\end{equation}
 and
\begin{align}
\left|\Delta_{00}(u_\pm,\tau_0)\right|&=2\mu\eps^{\eta+1-\ell}
\left|f_0\right|e^{\dps-\tfrac{a}{\eps}}&\text{ if
}\eta>\eta^\ast\label{def:DeltaHom:regular}\\
\left|\Delta_{00}(u_\pm,\tau_0)\right|&=2\mu\eps^{\eta+1-\ell}
\left|f_0 e^{iC(\mu)}\right|e^{{\dps-\tfrac{a}{\eps}}}
&\text{ if }\ell-2r<0\text{ and }\eta=0.\label{def:DeltaHom:lmenor}
\end{align}

By Corollary \ref{coro:Diferencia:Simple:regular}, since by hypothesis
we have that $f_0\neq 0$, we can apply the implicit function theorem to see
that the zeros $u_-^\ast$ and
$u_+^\ast$ of the function $\pa_u\Delta(u,\tau_0)$ satisfy
\begin{equation}\label{def:PuntsHomo}
u_\pm^\ast=u_\pm+\OO\left(\frac{\eps}{|\ln\eps|^{\nu_\ell}}\right),
\end{equation}
where $\nu_\ell=\ell-2r$ for $\ell>2r$ and $\nu_\ell=1$ for
$\ell\leq 2r$.

Using formulas \eqref{eq:MesMenys}-\eqref{def:PuntsHomo} and the inequalities
given in Corollary \ref{coro:Diferencia:Simple:regular}, one obtains the
asymptotic formula for the area, which finishes the proofs of Theorem
\ref{th:MainGeometric:regular} and Corollary \ref{coro:MainGeometric:regular}.

The proofs of Theorem \ref{th:MainGeometric:singular} and Corollary
\ref{th:MainGeometric:singular} follow the same lines taking into account that
now
\begin{align}
\left|\Delta_{00}(u_\pm,\tau_0)\right|&=2\mu\eps^{\eta+1-\ell}
\left|f(\mu)\right|e^{\dps-\tfrac{a}{\eps}}&\text{ if
}\eta=\eta^\ast\text{ and }\ell-2r>0\label{def:DeltaHom:lmajor}\\
\left|\Delta_{00}(u_\pm,\tau_0)\right|&=2\mu\eps^{\eta+1-\ell}
\left|f(\mu)e^{iC(\mu)}\right|e^{{\dps-\tfrac{a}{\eps}}+\mu^2
\Im b\ln\eps}&\text{ if }\eta=\eta^\ast\text{ and
}\ell-2r=0.\label{def:DeltaHom:ligual}
\end{align}
In this case, given a value of $\mu$, one has to split the proof depending
whether $f(\mu)=0$, and therefore $\Delta_{00}(u,\tau)\equiv 0$, or $f(\mu)\neq
0$.

\begin{remark}
We emphasize that, by hypothesis \textbf{HP3}, the hamiltonian perturbation $H_1$ defined in either \eqref{def:Ham:Original:perturb:poli}
in the polynomial case or \eqref{def:Ham:Original:perturb:trig} in the trigonometric case
it may depend analytically on $\eps$.
We stress that all the results given in this section are also valid in this setting and consequently
Theorems \ref{th:MainGeometric:regular} and \ref{th:MainGeometric:singular} hold true.

Indeed, in this case, what we have is that the $2\pi$-periodically functions
$a_{k,l}(\tau; \eps)$ defining $H_1$ depend analytically on $\eps$ and henceforth the same happens
for the functions $A_{k}(\tau)\equiv A_{k}(\tau;\eps)$ defined in \eqref{def:FuncionsA}.
In this way one has that the inner equation \eqref{eq:HJEqInner2} depends analytically on $\eps$.
Following the proof in \cite{Baldoma06}, it is straightforward to check that the solutions
$\psi_0^{u,s}$ of the inner equation given in Theorem \ref{th:InnerImma} actually also depend analytically on the parameter
$\eps$. Moreover we have the same property for the coefficients $\chi^{[k]}$ defining the difference
$\psi_0^{u} - \psi_0^{s}$. As a consequence, $f(\mu) \equiv f(\mu; \eps) = f(\mu;0) + \OO(\eps)$.
In addition, the constant $b$ given in Proposition \ref{coro:gInner} also depends analytically on
$\eps$ and henceforth $b\equiv b(\eps) = b(0)+ \OO(\eps)$.

After these considerations, it is clear that we can replace $f(\mu;\eps)$ by $f(\mu,0)$ and $b(\eps)$ by $b(0)$ in all the
previous arguments and henceforth the claim is proved.
\end{remark}

\begin{remark}\label{remark:DosSing} The proof that we have just
explained works under the assumed hypotheses (see Section \ref{sec:Hypotheses}), in particular, under Hypothesis \textbf{HP2}, which assumes that there exists only one singularity  on each line $\{\Im u=\pm a\}$. Nevertheless, with little modifications, the same scheme works if there are more singularities on these lines, at least assuming some smallness condition on the perturbation, namely in the
regular case. Let us explain here how, assuming that the perturbation is small enough, the problem can be handled.

Assume that the closest singularities to the real axis of the separatrix are located at $u=\pm \alpha \pm ai$, $\alpha\neq 0$, (and assume moreover that $p_0(u)$ does not vanish to simplify the explanation). To prove the asymptotic formula for the splitting we need to obtain the existence
of two generating functions which parameterize the
perturbed invariant manifolds in a common domain containing points with imaginary part $\Im u=a-\kappa\eps$. The existence of the invariant manifolds close to the fixed point can be proved as in this paper, since the singularities are
far from the domains $D^\ast_{\infty, \rr_1}$. Therefore, Theorem \ref{th:ExistenceCloseInfty} is also valid in
this case (of course Theorem \ref{th:Periodica} is valid as well
since it does not require Hypothesis \textbf{HP2}).

To extend the invariant manifolds to  a common domain  containing points with imaginary part $\Im u=a-\kappa\eps$, we have to modify the outer domains $D_{\rr,\kk}^{\out, u}$ and $D_{\rr,\kk}^{\out, s}$. It is enough, for instance to ``center" the stable domain around the singularity with positive real part (that is, the boundary of the domain intersects the line ${\alpha + t i, t\in \RR}$ at $\alpha \pm ( a-\kappa\eps) i$) and the unstable one around the singularity with negative real part. The corresponding domains intersect in a strip of ``horizontal size" of order $\OO(1)$ but of ``vertical size" size smaller than $a-\kappa \varepsilon$. To achieve that the domains cover a piece of the imaginary axis that contain points with $\Im u= a-\kappa' \varepsilon$ (for some $\kk'>\kk$) one can proceed taking the angle $\beta_1$ of order $\OO(\eps)$. Without any extra technical  work, this worsens the estimates and is the reason  why we  need, under this more general hypothesis,  the perturbation to be small. Namely, we need to take $\eta$ big enough.

Once we have proved the existence of suitable parameterizations of the invariant manifolds in this new outer domain, the proof of the validity of the Melnikov method can be done exactly in the same way as in this paper (namely Theorems \ref{th:CanviFinal:lmenor} and \ref{th:CotaDiferencia:lmenor} are still valid). We have decided not to cover this case in this work due to the considerable length the paper already has.
\end{remark}

\section{Existence of the periodic orbit in the hyperbolic case: proof of
Theorem
\ref{th:Periodica}}\label{sec:periodica}

In this section we prove Theorem \ref{th:Periodica}. We look for a
periodic orbit $(x,y)=(\xp(\tau),\yp(\tau))$ which is close to the
hyperbolic critical point of the unperturbed system $(0,0)$.

By  \textbf{HP1.1}, the differential of
the unperturbed hyperbolic critical point is
\begin{equation}\label{def:DiferencialSella}
\eps\mat=\eps\left(\begin{array}{cc} 0 & 1\\ \lambda^2 &
0\end{array}\right).
\end{equation}
Then, defining $z=(x,y)$ and considering the differential operator
\begin{equation}\label{def:Operador:L0}
\DD_0 z(\tau)=\frac{d}{d\tau}z(\tau),
\end{equation}
we look for the periodic orbit as a $2\pi$-periodic solution of the
following equation,
\begin{equation}\label{eq:Periodica}
\left(\DD_0 - \eps\mat\right)z=\eps F(z,\tau),
\end{equation}
where
\[
F(z,\tau)=\left(\begin{array}{l} \mu\eps^{\eta} \pa_y \wH(x,y,\tau)\\
-\mu\eps^{\eta} \pa_x \wH(x,y,\tau)-\left( V'(x)+\lambda^2 x\right)
\end{array}\right).
\]
We split $F$ in constant, linear and higher order terms with respect to $z$
\begin{equation}
F(z,\tau)= F_0(\tau)+ F_1(\tau)z+ F_2(z,\tau)
\end{equation}
with
\begin{align}
F_0(\tau)&= \left(\begin{array}{l} \mu\eps^{\eta} \pa_y
\wH(0,0,\tau)\\ -\mu\eps^{\eta} \pa_x \wH(0,0,\tau)
\end{array}\right)\label{def:periodica:F0}\\
F_1(\tau)&=\left(\begin{array}{ll} \mu\eps^{\eta} \pa_{yx} \wH(0,0,\tau)
&\mu\eps^{\eta} \pa_{yy} \wH(0,0,\tau)\\
-\mu\eps^{\eta} \pa_{xx} \wH(0,0,\tau) &-\mu\eps^{\eta}
\pa_{xy} \wH(0,0,\tau)\end{array}\right)\label{def:periodica:F1}\\
F_2(z,\tau)&=F(z,\tau)- F_0(\tau)- F_1(\tau)z.
\label{def:periodica:F2}
\end{align}

We devote the rest of the section to obtain a solution of equation
\eqref{eq:Periodica}. First in Section \ref{sec:periodica:banach} we
define a Banach space we will use and we state some technical properties.
Then, in Section \ref{sec:periodica:proof} we prove
Theorem \ref{th:Periodica}.

\subsection{Banach spaces and technical
lemmas}\label{sec:periodica:banach} For analytic functions $z:
\TT_\sigma\rightarrow \CC$,
$z(\tau)=\sum_{k\in\ZZ}z^{[k]}e^{ik\tau}$, we define the Fourier
norm
\[
\| z\|_{\sigma}=\sum_{k\in\ZZ}\left| z^{[k]}\right| e^{|k|\sigma}.
\]
Then, we define the function space endowed with the previous norm
\begin{equation}\label{def:periodica:banach}
\SSS_\sigma=\left\{z:\TT_\sigma\rightarrow\CC; \text{ real-analytic
}, \| z\|_{\sigma}<\infty \right\}
\end{equation}
which is a Banach algebra. We also consider the product space
$\SSS_\sigma\times\SSS_\sigma$ with the induced norm
\[
\left\|(z_1,z_2)\right\|_{1,\sigma}=\left\|z_1\right\|_{\sigma}
+\left\|z_2\right\|_{\sigma}.
\]
\begin{remark}
Let us consider the classical supremmum norm
\[ \|z\|_{\infty, \sigma}=\sup_{\tau\in \ol\TT_\sigma}\left|z(\tau)\right|.
\]
Then, it is a well known fact (see for instance \cite{Sauzin01})
that for any $\sigma_1<\sigma_2$, the supremmum and the Fourier norm
satisfy the following relation
\[
\|z\|_{\sigma_1}<K\left(1+\frac{1}{\sigma_2-\sigma_1}\right)\|z\|_{\infty,
\sigma_2}
\]
Therefore, since we are assuming that there exists $\sigma_0>0$ such
that  the functions $a_{kl}$ defined in
\eqref{def:Ham:Original:perturb:poli} and
\eqref{def:Ham:Original:perturb:trig} are $\CCC^0$ in $\ol
\TT_{\sigma_0}$ and analytic in $\TT_{\sigma_0}$, we can deduce that
for any $\sigma<\sigma_0$ such that $\sigma_0-\sigma$ has a positive
lower bound independent of $\eps$, they satisfy
\[
\|a_{kl}\|_\sigma<K.
\]
We will use this fact without mentioning it, in the rest of the
section and also in Sections \ref{sec:InftyHyperbolic} to
\ref{sec:CanviFinal}.
\end{remark}

Since we deal with vector functions, we also consider the
norm for $2\times 2$ matrices  induced by $\|\cdot\|_{1,\sigma}$.
Let us consider $B=\left(b^{ij}\right)$ a $2\times 2$ matrix such that
$b^{ij}\in \SSS_\sigma$.
Then, the induced matrix norm is given by
\[
\|B\|_{1,\sigma}=\max_{j=1,2}\left\{
\left\|b^{1j}\right\|_\sigma+\left\|b^{2j}\right\|_\sigma\right\}.
\]
The next lemma gives some properties of this norm.
\begin{lemma}\label{lemma:periodica:NormesMatrius}
The following statements are satisfied.
\begin{enumerate}
\item If $h\in\SSS_\sigma\times\SSS_\sigma$ and
$B=\left(b^{ij}\right)$ is a $2\times 2$ matrix with $b^{ij}\in
\SSS_\sigma$, then $Bh\in\SSS_\sigma\times\SSS_\sigma$ and
\[
\left\|Bh\right\|_{1,\sigma}\leq\left\|B\right\|_{1,\sigma}\left\|h\right\|_{1,
\sigma}.
\]
\item If $B_1=\left(b_1^{ij}\right)$ and $B_2=\left(b_2^{ij}\right)$ are
$2\times 2$ matrices which satisfy
$b_1^{ij},b_2^{ij}\in\SSS_\sigma$,
then
\[
\| B_1B_2\|_{1,\sigma}\leq\| B_1\|_{1,\sigma}\| B_2\|_{1,\sigma}.
\]
\end{enumerate}
\end{lemma}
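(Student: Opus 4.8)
The plan is to reduce both statements to the Banach algebra property of $(\SSS_\sigma,\|\cdot\|_\sigma)$, namely that $\|fg\|_\sigma\le\|f\|_\sigma\|g\|_\sigma$ for $f,g\in\SSS_\sigma$, which is available since $\SSS_\sigma$ has been declared a Banach algebra. Everything else is bookkeeping with the definition $\|B\|_{1,\sigma}=\max_{j=1,2}\left\{\|b^{1j}\|_\sigma+\|b^{2j}\|_\sigma\right\}$, i.e. the maximum of the two ``column sums'' of $B$, together with $\|(z_1,z_2)\|_{1,\sigma}=\|z_1\|_\sigma+\|z_2\|_\sigma$.

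For item 1, I would write $h=(h_1,h_2)$ and expand $Bh=\left(b^{11}h_1+b^{12}h_2,\ b^{21}h_1+b^{22}h_2\right)$. Applying the triangle inequality in $\SSS_\sigma$ and then the Banach algebra inequality term by term gives
\[
\|Bh\|_{1,\sigma}\le\left(\|b^{11}\|_\sigma+\|b^{21}\|_\sigma\right)\|h_1\|_\sigma+\left(\|b^{12}\|_\sigma+\|b^{22}\|_\sigma\right)\|h_2\|_\sigma .
\]
Bounding each of the two column sums by $\|B\|_{1,\sigma}$ and using $\|h_1\|_\sigma+\|h_2\|_\sigma=\|h\|_{1,\sigma}$ yields the claim. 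In particular $Bh$ has finite norm, so $Bh\in\SSS_\sigma\times\SSS_\sigma$.

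For item 2, I would compute the entries of the product, $(B_1B_2)^{ij}=\sum_{k=1,2}b_1^{ik}b_2^{kj}$, and estimate the $j$-th column sum of $B_1B_2$ by the triangle inequality and the Banach algebra bound:
\[
\|(B_1B_2)^{1j}\|_\sigma+\|(B_1B_2)^{2j}\|_\sigma\le\sum_{k=1,2}\|b_2^{kj}\|_\sigma\left(\|b_1^{1k}\|_\sigma+\|b_1^{2k}\|_\sigma\right)\le\|B_1\|_{1,\sigma}\sum_{k=1,2}\|b_2^{kj}\|_\sigma\le\|B_1\|_{1,\sigma}\|B_2\|_{1,\sigma},
\]
where I used that the $k$-th column sum of $B_1$ is at most $\|B_1\|_{1,\sigma}$ and then that the $j$-th column sum of $B_2$ is at most $\|B_2\|_{1,\sigma}$. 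Taking the maximum over $j=1,2$ gives item 2. (Alternatively, once item 1 is established, testing $Bh$ on $h=(1,0)$ and $h=(0,1)$ shows that $\|B\|_{1,\sigma}$ coincides with the operator norm $\sup_{\|h\|_{1,\sigma}\le 1}\|Bh\|_{1,\sigma}$, and then item 2 is the usual submultiplicativity of operator norms under composition.)

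Since this is an elementary consequence of the Banach algebra structure of $\SSS_\sigma$, there is no genuine obstacle here; the only point requiring a little care is respecting the column-versus-row convention in the definition of $\|\cdot\|_{1,\sigma}$, so that in item 2 one factors out the norm of $B_1$ along the correct index before summing the column of $B_2$.
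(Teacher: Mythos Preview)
Your proof is correct. The paper does not give its own proof of this lemma; it is stated without proof as a routine property of the induced matrix norm, so your argument supplies exactly what is missing, using only the Banach algebra property of $\SSS_\sigma$ and the column-sum definition of $\|\cdot\|_{1,\sigma}$.
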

Throughout this section, we will need to solve equations of the form
$(\DD_0-\eps\mat)z=w$.
For that, we will invert the operator $\DD_0-\eps A_0$
acting on $\SSS_\sigma\times\SSS_\sigma$.
Considering the Fourier series of $z(\tau)=(z_1(\tau),z_2(\tau))$, one has that
\[
\DD_0(z)(\tau)=\sum_{\kk\in\ZZ} ikz^{[k]}e^{ik\tau}.
\]
Then, one can invert $\DD_0-\eps\mat$  as
\begin{equation}\label{def:periodica:OperadorIntegral}
\GG_0(w)(\tau)=-\sum_{k\in\ZZ}\frac{1}{k^2+\lambda^2\eps^2}\left(\begin{array}{l
}
ikw^{[k]}_1+\eps w^{[k]}_2\\ \eps\lambda^2 w^{[k]}_1+ik
w^{[k]}_2\end{array}\right)e^{ik\tau}.
\end{equation}

\begin{lemma}\label{lemma:periodica:OperadorIntegral}
The operator $\GG_0:\SSS_\sigma\times\SSS_\sigma\rightarrow
\SSS_\sigma\times\SSS_\sigma$ in
\eqref{def:periodica:OperadorIntegral} is well defined, and for
$w\in\SSS_\sigma\times\SSS_\sigma$,
\[
\left\|\GG_0(w)\right\|_{1,\sigma} \leq
\frac{K}{\eps}\|w\|_{1,\sigma}.
\]
Moreover, if $\langle w\rangle=0$,
\[
\left\|\GG_0(w)\right\|_{1,\sigma} \leq K\|w\|_{1,\sigma}.
\]
\end{lemma}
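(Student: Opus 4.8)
The plan is to work entirely on the Fourier side, mode by mode. First I would observe that, since $\lambda>0$ and $\eps>0$, the denominators $k^2+\lambda^2\eps^2$ never vanish, so \eqref{def:periodica:OperadorIntegral} defines a formal Fourier series; what remains is to show that it has finite $\|\cdot\|_{1,\sigma}$-norm (hence $\GG_0$ maps $\SSS_\sigma\times\SSS_\sigma$ into itself) and that it preserves real-analyticity. The latter is immediate: if $w$ is real-analytic then $\overline{w^{[k]}}=w^{[-k]}$, and since $\lambda,\eps\in\RR$ a direct inspection of \eqref{def:periodica:OperadorIntegral} gives $\overline{\GG_0(w)^{[k]}}=\GG_0(w)^{[-k]}$, so $\GG_0(w)$ is real-analytic as well.

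For the quantitative bounds I would fix $k\in\ZZ$ and estimate the two components of $\GG_0(w)^{[k]}$ by the triangle inequality, obtaining
\[
\left|\GG_0(w)^{[k]}_1\right|+\left|\GG_0(w)^{[k]}_2\right|\leq \frac{\left(|k|+\eps\lambda^2\right)\left|w_1^{[k]}\right|+\left(|k|+\eps\right)\left|w_2^{[k]}\right|}{k^2+\lambda^2\eps^2}.
\]
For $k\neq 0$ one has $k^2+\lambda^2\eps^2\geq k^2\geq |k|\geq 1$ and, by the arithmetic–geometric inequality, $k^2+\lambda^2\eps^2\geq 2\lambda\eps|k|\geq 2\lambda\eps$; hence each of $\tfrac{|k|}{k^2+\lambda^2\eps^2}$, $\tfrac{\eps}{k^2+\lambda^2\eps^2}$, $\tfrac{\eps\lambda^2}{k^2+\lambda^2\eps^2}$ is bounded by a constant depending only on $\lambda$ and $\eps_0$, so that $\left|\GG_0(w)^{[k]}_1\right|+\left|\GG_0(w)^{[k]}_2\right|\leq K\left(\left|w_1^{[k]}\right|+\left|w_2^{[k]}\right|\right)$ for every $k\neq 0$. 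For $k=0$ the formula reduces to $\GG_0(w)^{[0]}=-\bigl(\tfrac{w_2^{[0]}}{\lambda^2\eps},\tfrac{w_1^{[0]}}{\eps}\bigr)$, so $\left|\GG_0(w)^{[0]}_1\right|+\left|\GG_0(w)^{[0]}_2\right|\leq \tfrac{K}{\eps}\bigl(\left|w_1^{[0]}\right|+\left|w_2^{[0]}\right|\bigr)$.

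Then I would multiply these coefficientwise estimates by $e^{|k|\sigma}$ and sum over $k\in\ZZ$. This shows at once that the series converges in $\SSS_\sigma\times\SSS_\sigma$ and that $\|\GG_0(w)\|_{1,\sigma}\leq \tfrac{K}{\eps}\|w\|_{1,\sigma}$; when $\langle w\rangle=w^{[0]}=0$ the $k=0$ term is absent, and the sum of the remaining modes gives the improved bound $\|\GG_0(w)\|_{1,\sigma}\leq K\|w\|_{1,\sigma}$. Although it is not part of the statement, I would also remark that applying the matrix $ikI-\eps\mat$ to each Fourier mode confirms $(\DD_0-\eps\mat)\GG_0(w)=w$, so $\GG_0$ is genuinely the announced inverse. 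There is no real obstacle in this argument; the only point to keep in mind is that the zero mode is the sole source of the $\eps^{-1}$ loss, which is precisely why the zero-average hypothesis removes it.
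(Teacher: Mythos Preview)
Your argument is correct. The paper states this lemma without proof, treating it as a routine consequence of the explicit Fourier-side formula \eqref{def:periodica:OperadorIntegral}; your mode-by-mode estimate is exactly the natural way to fill in the details, and your observation that the $k=0$ mode alone carries the $\eps^{-1}$ loss is precisely the mechanism behind the improved bound under the zero-average hypothesis.
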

We finally state a technical lemma which will be used in Section
\ref{sec:periodica:proof}. Its proof is straightforward.

\begin{lemma}\label{lemma:periodica:cotes}
The functions $F_0$, $F_1$ and $F_2$ defined in
\eqref{def:periodica:F0}, \eqref{def:periodica:F1} and
\eqref{def:periodica:F2} respectively satisfy the following
properties.
\begin{enumerate}
\item $F_0\in \SSS_\sigma\times\SSS_\sigma$, $\langle F_0\rangle=0$
and
\[
\left\| F_0\right\|_{1,\sigma}\leq K|\mu|\eps^{\eta}.
\]
\item $F_1=\left(F_1^{ij}\right)$ satisfies
$F_1^{ij}\in\SSS_\sigma$, $\langle F_1^{ij}\rangle=0$ and
\[
\left\| F_1\right\|_{1,\sigma}\leq K|\mu|\eps^{\eta}.
\]
\item If  $z,z'\in B(\nu)\subset\SSS_\sigma$ with
$\nu\ll 1$, then
\[
\left\| F_2(z',\tau)-F_2(z,\tau)\right\|_\sigma\leq K\nu \|
z'-z\|_{\sigma}.
\]
\end{enumerate}
\end{lemma}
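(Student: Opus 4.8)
The final statement is Lemma \ref{lemma:periodica:cotes}, which asserts three facts about the functions $F_0$, $F_1$, $F_2$ defined in \eqref{def:periodica:F0}--\eqref{def:periodica:F2}: that $F_0$ lies in $\SSS_\sigma\times\SSS_\sigma$, has zero average, and is bounded by $K|\mu|\eps^\eta$; that the entries of the matrix $F_1$ lie in $\SSS_\sigma$, have zero average, and $F_1$ is bounded by $K|\mu|\eps^\eta$; and that $F_2$ is Lipschitz with a small Lipschitz constant on a small ball. Let me plan a proof.

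\textbf{Plan of proof.}

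The plan is to read off everything directly from the explicit formulas \eqref{def:periodica:F0}--\eqref{def:periodica:F2} together with Hypotheses \textbf{HP1.1}, \textbf{HP3}, \textbf{HP4.1} and the elementary Banach-algebra properties of $\SSS_\sigma$. First I would handle $F_0$. By definition $F_0(\tau)=\bigl(\mu\eps^\eta\pa_y\wH(0,0,\tau),-\mu\eps^\eta\pa_x\wH(0,0,\tau)\bigr)$; since $\wH=H_1$ and, by \textbf{HP3}, $H_1$ is a polynomial or trigonometric polynomial in $(x,y)$ with coefficients $a_{kl}(\tau;\eps)$ that are real-analytic and $\CCC^0$ on $\ol\TT_{\sigma_0}$, its partial derivatives evaluated at $(0,0)$ are finite $\RR$-linear combinations (with $\tau$-independent coefficients) of the $a_{kl}(\tau;\eps)$ themselves; hence $\pa_x\wH(0,0,\cdot),\pa_y\wH(0,0,\cdot)\in\SSS_\sigma$ with norms bounded by $K$ (using the Remark in Section \ref{sec:periodica:banach} relating the sup norm on $\ol\TT_{\sigma_0}$ to $\|\cdot\|_\sigma$ for $\sigma<\sigma_0$), so $\|F_0\|_{1,\sigma}\le K|\mu|\eps^\eta$. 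The zero-average property is inherited: by \textbf{HP3}, $\int_0^{2\pi}H_1(x,y,\tau;\eps)\,d\tau=0$ identically in $(x,y,\eps)$, so differentiating under the integral sign (legitimate by analyticity) in $x$ and in $y$ and evaluating at $(0,0)$ gives $\langle\pa_xH_1(0,0,\cdot)\rangle=\langle\pa_yH_1(0,0,\cdot)\rangle=0$, i.e.\ $\langle F_0\rangle=0$.

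For $F_1$, the argument is the same one level up: each entry of $F_1$ is $\pm\mu\eps^\eta$ times a second partial derivative of $\wH=H_1$ at $(0,0)$, which is again a finite linear combination of the $a_{kl}(\tau;\eps)$ with constant coefficients, hence in $\SSS_\sigma$ with norm $\le K|\mu|\eps^\eta$; by the definition of the matrix norm $\|\cdot\|_{1,\sigma}$ this gives $\|F_1\|_{1,\sigma}\le K|\mu|\eps^\eta$. Zero average of each entry follows from differentiating the identity $\int_0^{2\pi}H_1\,d\tau=0$ twice in $(x,y)$ and evaluating at $(0,0)$. Note that the ``extra'' term $-(V'(x)+\lambda^2x)$ appearing in the second component of $F$ is $\tau$-independent and, by \textbf{HP1.1}, is $\OO(x^2)$ near $x=0$ (since $V(x)=-\tfrac{\lambda^2}{2}x^2+\OO(x^3)$), so it contributes nothing to $F_0$ or to $F_1$ — it is absorbed entirely into $F_2$; this observation should be stated explicitly.

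For $F_2=F-F_0-F_1z$ I would argue that $F_2$ is, by Taylor's theorem, quadratic or higher in $z=(x,y)$: writing $F(z,\tau)=F_0(\tau)+F_1(\tau)z+F_2(z,\tau)$, the function $z\mapsto F_2(z,\tau)$ has vanishing value and vanishing first derivative at $z=0$. Since $F_2$ is a polynomial / trigonometric-polynomial in $z$ with coefficients in $\SSS_\sigma$ of norm $\le K(1+|\mu|\eps^\eta)\le K$ (the non-perturbative piece $-(V'(x)+\lambda^2x)$ has $\tau$-independent, $\eps$-independent coefficients), and since $\SSS_\sigma$ is a Banach algebra, for $z,z'\in B(\nu)\subset\SSS_\sigma$ one can write $F_2(z',\tau)-F_2(z,\tau)=\int_0^1 D_zF_2(z+t(z'-z),\tau)\,dt\,(z'-z)$ and bound $\|D_zF_2(w,\cdot)\|_{1,\sigma}\le K\|w\|_{1,\sigma}\le K\nu$ for $\|w\|_{1,\sigma}\le\nu$, using that $D_zF_2$ vanishes at $z=0$ and is again polynomial. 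Hence $\|F_2(z',\tau)-F_2(z,\tau)\|_\sigma\le K\nu\|z'-z\|_\sigma$, provided $\nu\ll1$ so that powers of $z$ are controlled in the Banach algebra. The main obstacle — really the only non-mechanical point — is keeping careful track of which terms are $\eps^\eta$-small (the genuinely perturbative contributions coming from $\mu\eps^\eta H_1$) versus $\OO(1)$ (the contribution of $-(V'(x)+\lambda^2x)$, which is why $F_2$ is not $\OO(\mu\eps^\eta)$ but merely $\OO(1)$ with small Lipschitz constant on a small ball); once this bookkeeping is done, everything reduces to the Banach-algebra estimates and to differentiating the zero-average identity of \textbf{HP3}.
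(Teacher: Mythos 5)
Your proof is correct, and it is exactly the routine verification the paper has in mind: the authors omit the argument entirely, stating only that "its proof is straightforward." Your filling-in of the details — reading the bounds and zero averages off the explicit formulas for the derivatives of $H_1$ at $(0,0)$ via \textbf{HP3}, noting that $V'(x)+\lambda^2x=\OO(x^2)$ by \textbf{HP1.1} so it falls entirely into $F_2$, and getting the Lipschitz estimate from the Banach-algebra structure of $\SSS_\sigma$ together with the vanishing of $F_2$ and $D_zF_2$ at $z=0$ — is sound and complete.
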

\subsection{Proof of Theorem
\ref{th:Periodica}}\label{sec:periodica:proof}

We rewrite Theorem \ref{th:Periodica} in terms of the Banach space
\eqref{def:periodica:banach}.

\begin{proposition}\label{prop:periodica}
Let $\eps_0>0$ small enough. Then, for $\eps\in
(0,\eps_0)$, equation \eqref{eq:Periodica} has a solution
$(\xp,\yp)\in\SSS_\sigma$. Moreover, there exists a constant $b_0>0$
such that
\[
\left\|(\xp,\yp)\right\|_{1,\sigma}\leq b_0|\mu|\eps^{\eta+1}.
\]
\end{proposition}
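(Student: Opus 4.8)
The plan is to recast equation \eqref{eq:Periodica} as a fixed point problem in a small ball of $\SSS_\sigma\times\SSS_\sigma$ and to solve it by a contraction argument built on the integral operator $\GG_0$ of \eqref{def:periodica:OperadorIntegral}. The point to watch is that $\GG_0$ loses a factor $\eps^{-1}$ on the constant Fourier mode (Lemma \ref{lemma:periodica:OperadorIntegral}), so the naive scheme $z=\eps\GG_0(F(z,\cdot))$ produces a linear contribution $\eps\GG_0(F_1z)$ of size $\OO(|\mu|\eps^{\eta}\|z\|)$, which fails to be a contraction in the non\-perturbative regime $\eta=0$ with $\mu$ of order one; this is the main obstacle. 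To circumvent it I would split $z=\bar z+\tilde z$, where $\bar z=\langle z\rangle\in\CC^2$ is the mean and $\tilde z$ has zero average, and solve for the two pieces separately, exploiting that $\langle F_0\rangle=\langle F_1\rangle=0$ (Lemma \ref{lemma:periodica:cotes}). Projecting \eqref{eq:Periodica} onto the constant mode and onto its complement, and using $\langle\DD_0 z\rangle=0$ together with $F=F_0+F_1z+F_2(z,\cdot)$, one sees that $z$ solves \eqref{eq:Periodica} if and only if
\[
\mat\,\bar z=-\langle F(z,\cdot)\rangle,\qquad
\tilde z=\eps\,\GG_0\bigl(F(z,\cdot)-\langle F(z,\cdot)\rangle\bigr).
\]
The first relation carries no $\eps^{-1}$ and is well posed because $\mat$ is invertible; moreover $\langle F(z,\cdot)\rangle=\langle F_1\tilde z\rangle+\langle F_2(z,\cdot)\rangle$, so its right-hand side only involves the small quantities $\tilde z$ and $F_2$.

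First I would solve the second relation for $\tilde z$ as a function of $\bar z$. Fixing $\bar z$ in a ball of radius $\rho:=C(\mu_0)|\mu|\eps^{\eta+1}$, where $C(\mu_0)$ is to be chosen at the end depending only on $\mu_0$ and the constants in Lemmas \ref{lemma:periodica:NormesMatrius}--\ref{lemma:periodica:cotes}, consider the map $\tilde z\mapsto\eps\,\GG_0\bigl(\,\widetilde{F(\bar z+\tilde z,\cdot)}\,\bigr)$ acting on the zero-average elements of $\SSS_\sigma\times\SSS_\sigma$ of norm at most $\rho$. Because $\GG_0$ preserves the zero-average property and is bounded there by an $\eps$-independent constant (Lemma \ref{lemma:periodica:OperadorIntegral}), and because $F_2$ has a small Lipschitz constant on small balls (Lemma \ref{lemma:periodica:cotes}), this map has Lipschitz constant $\OO(|\mu|\eps^{\eta+1})+\OO(\eps\rho)\ll1$, while its value at $\tilde z=0$ equals $\eps\,\GG_0(\widetilde{F_0+F_1\bar z+F_2(\bar z,\cdot)})$, of size $\OO(|\mu|\eps^{\eta+1})$; hence for $\eps$ small it maps the ball into itself. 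This yields a real-analytic solution $\tilde z=\TTT[\bar z]$ with $\|\TTT[\bar z]\|_{1,\sigma}\le K|\mu|\eps^{\eta+1}$ and $\mathrm{Lip}(\TTT)=\OO(|\mu|\eps^{\eta+1})+\OO(\eps\rho)$ in $\bar z$.

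Finally I would substitute this into the mean relation and solve $\bar z=\BB(\bar z):=-\mat^{-1}\langle F(\bar z+\TTT[\bar z],\cdot)\rangle$ on the ball of radius $\rho$ in $\CC^2$. Here the cancellation $\langle F_1\,(\mathrm{const})\rangle=\langle F_1\rangle(\mathrm{const})=0$ is decisive: it reduces the Lipschitz constant of $\BB$ to a combination of $\|F_1\|_{1,\sigma}\,\mathrm{Lip}(\TTT)$ and $\rho\,(1+\mathrm{Lip}(\TTT))$, both $\ll1$ for $\eps$ small, and it reduces $\|\BB(\bar z)\|$ to $K\bigl(\|F_1\|_{1,\sigma}\|\TTT[\bar z]\|_{1,\sigma}+\|F_2(\bar z+\TTT[\bar z],\cdot)\|_{1,\sigma}\bigr)\le K^{2}\mu_0|\mu|\eps^{\eta+1}+K\rho^{2}$, which is at most $\rho/2$ once $C(\mu_0)$ is taken large enough and $\eps$ small enough. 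The unique fixed point $\bar z_\ast$ then gives $(\xp,\yp)=\bar z_\ast+\TTT[\bar z_\ast]$, which is real-analytic since it is produced by maps preserving $\SSS_\sigma$, and satisfies $\|(\xp,\yp)\|_{1,\sigma}\le\|\bar z_\ast\|+\|\TTT[\bar z_\ast]\|_{1,\sigma}\le\rho=b_0|\mu|\eps^{\eta+1}$, which is the claim. In summary, the delicate step is exactly the one where $\eps^{-1}$ would spoil the contraction on the linear term $F_1z$ when $\eta=0$; the remedy is to isolate the constant mode and to solve it through the $\eps$-free identity $\mat\bar z=-\langle F(z,\cdot)\rangle$, in which the zero-average hypothesis on $H_1$ kills precisely the terms $\langle F_0\rangle$ and $\langle F_1\bar z\rangle$ — the same mechanism that makes the periodic orbit one power of $\eps$ smaller than the perturbation.
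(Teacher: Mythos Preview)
Your argument is correct, and it addresses precisely the obstacle you identify: the $\eps^{-1}$ loss of $\GG_0$ on the constant Fourier mode. Your route, however, is genuinely different from the paper's. You perform a Lyapunov--Schmidt reduction, splitting $z=\bar z+\tilde z$ into its mean and zero-mean parts, solving first for $\tilde z$ as a function of $\bar z$ by a contraction on the zero-average subspace (where $\GG_0$ is bounded independently of $\eps$), and then closing a second fixed point for $\bar z$ via the $\eps$-free relation $\mat\,\bar z=-\langle F(z,\cdot)\rangle$, using $\langle F_0\rangle=\langle F_1\rangle=0$ and the invertibility of $\mat$. The paper instead removes the bad linear term by a preliminary near-identity change of variables $z=(\Id+\eps\,\ol F_1(\tau))\,\ol z$, where $\pa_\tau\ol F_1=F_1$ and $\langle\ol F_1\rangle=0$; this single averaging step pushes the contribution of $F_1$ from order $\eps$ to order $\eps^2$ in the transformed equation, after which one contraction $\FF_0=\GG_0\circ\ol F$ on the full space $\SSS_\sigma\times\SSS_\sigma$ suffices. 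Both methods rest on the same structural ingredients and yield the same bound. Your decomposition is perhaps more explicit about \emph{where} the extra power of $\eps$ is gained, while the paper's averaging change of variables is a bit more economical (one fixed point instead of two nested ones) and is the same device reused verbatim in later sections (Lemmas \ref{lemma:infty:ligual:canvi}, \ref{lemma:Parab:canvi}, \ref{lemma:Extensio:Trig:canvi}, \ref{lemma:canvi:canvi:lmenor}, \ref{lemma:canvi:canvi}) whenever a zero-average linear term must be tamed.
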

\begin{corollary}\label{coro:ShiftPeriodica}
The change of variables \eqref{eq:CanviShiftPO} transforms the
Hamiltonian system with Hamiltonian \eqref{def:Ham:Original0} to a
new Hamiltonian system with Hamiltonian
\eqref{def:HamPeriodicaShiftada}.

Moreover, the functions $c_{ij}$ in the definition of
\eqref{def:HamPeriodicaShiftada} (see also
\eqref{def:HamPertorbat:H2}) satisfy
\[
\| c_{ij}\|_\sigma\leq K|\mu|\eps^{\eta}.
\]
\end{corollary}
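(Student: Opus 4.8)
\section*{Proof proposal for Corollary~\ref{coro:ShiftPeriodica}}

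The plan is to prove the two assertions separately. For the first one I would check directly that the time-dependent translation \eqref{eq:CanviShiftPO} is a canonical transformation — it is the identity on the symplectic form, since $dq\wedge dp = dx\wedge dy$ — so it takes the non-autonomous Hamiltonian system \eqref{eq:ode:original:lent}, whose Hamiltonian is $\eps H(x,y,\tau;\eps)$, into a new Hamiltonian system whose Hamiltonian equals
\[
K(q,p,\tau) = \eps H\bigl(q+\xp(\tau),\,p+\yp(\tau),\,\tau;\eps\bigr) - \xp'(\tau)\,p + \yp'(\tau)\,q + c(\tau),
\]
where $c(\tau)$ is an arbitrary function of time alone; this is the standard formula for a time-dependent translation, obtained by comparing $y\,dx - \eps H\,d\tau$ with $p\,dq - K\,d\tau$. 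Then I would use the fact, from Theorem~\ref{th:Periodica} (Proposition~\ref{prop:periodica}), that $(\xp,\yp)$ solves the original system, i.e. $\xp' = \eps\,\partial_y H(\xp,\yp,\tau)$ and $\yp' = -\eps\,\partial_x H(\xp,\yp,\tau)$. Writing $H = H_0 + \mu\eps^\eta H_1$, the linear contributions $-\xp'p + \yp'q$ cancel exactly the linear part (in $(q,p)$) of the Taylor expansion of $\eps H(q+\xp,p+\yp,\tau)$ about $(\xp,\yp)$; choosing $c(\tau)$ to absorb all the remaining functions of $\tau$ alone, namely $c(\tau) = -\eps\bigl(\tfrac12\yp(\tau)^2 + V(\xp(\tau)) + \mu\eps^\eta H_1(\xp(\tau),\yp(\tau),\tau)\bigr)$, produces precisely $\eps\widehat H$ with $\widehat H$ as in \eqref{def:HamPeriodicaShiftada} and $\widehat H_1$ as in \eqref{def:ham:ShiftedOP:perturb}. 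Since $c(\tau)$ does not affect the equations of motion (this is the "added for convenience" remark after \eqref{def:ham:ShiftedOP:perturb}), and since $(\xp,\yp)$ is real-analytic and $2\pi$-periodic, the first statement follows.

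For the estimate on the $c_{ij}$ I would start from the decomposition $\widehat H_1 = \widehat H_1^1 + \eps\widehat H_1^2$ and compare Taylor expansions about $(q,p) = (0,0)$: the coefficient of $q^i p^j$ (with $i+j\ge 2$) in $\widehat H_1$ is $\tfrac1{i!\,j!}\partial_x^i\partial_y^j H_1(\xp(\tau),\yp(\tau),\tau)$, while that in $\widehat H_1^1$ is $\tfrac1{i!\,j!}\partial_x^i\partial_y^j H_1(0,0,\tau)$. Hence
\[
\eps\,c_{ij}(\tau) = \frac{1}{i!\,j!}\Bigl(\partial_x^i\partial_y^j H_1(\xp(\tau),\yp(\tau),\tau) - \partial_x^i\partial_y^j H_1(0,0,\tau)\Bigr).
\]
(As a byproduct, when $i+j = N$ the derivative $\partial_x^i\partial_y^j H_1$ is constant in $(x,y)$, so the top-degree coefficients cancel and $\widehat H_1^2$ has degree at most $N-1$, consistently with \eqref{def:HamPertorbat:H2}.) It then remains to bound the right-hand side in $\|\cdot\|_\sigma$. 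Using that $\SSS_\sigma$ is a Banach algebra, that by \textbf{HP3} all coefficients $a_{kl}$ of $H_1$, hence of each $\partial_x^i\partial_y^j H_1$, satisfy $\|a_{kl}\|_\sigma \le K$, and — in the trigonometric case — that $\|e^{\pm ik\xp} - 1\|_\sigma \le K\,\|\xp\|_\sigma$ and $\|e^{\pm ik\xp}\|_\sigma \le K$ once $\eps$ is small (because $\|\xp\|_\sigma$ is small), a Lipschitz/mean-value estimate for the composition of a (trigonometric) polynomial with $(\xp,\yp)$ gives
\[
\bigl\|\partial_x^i\partial_y^j H_1(\xp,\yp,\cdot) - \partial_x^i\partial_y^j H_1(0,0,\cdot)\bigr\|_\sigma \le K\bigl(\|\xp\|_\sigma + \|\yp\|_\sigma\bigr).
\]
Combined with $\|\xp\|_\sigma + \|\yp\|_\sigma \le b_0|\mu|\eps^{\eta+1}$ from Proposition~\ref{prop:periodica}, this yields $\|\eps\,c_{ij}\|_\sigma \le K|\mu|\eps^{\eta+1}$, i.e. $\|c_{ij}\|_\sigma \le K|\mu|\eps^{\eta}$, as claimed.

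The bulk of this is routine algebra; the one place that needs a little care is the last Lipschitz estimate in the trigonometric case, where one must verify that composing $x\mapsto e^{ikx}$ with the small function $\xp\in\SSS_\sigma$ stays in the algebra with norm controlled linearly by $\|\xp\|_\sigma$. This follows from the Banach-algebra inequality $\|e^{ik\xp} - 1\|_\sigma \le \sum_{m\ge 1}\frac{|k|^m\|\xp\|_\sigma^m}{m!} \le e^{|k|\|\xp\|_\sigma} - 1$ together with the smallness of $\|\xp\|_\sigma$; everything else reduces to expanding binomials and repeatedly invoking that $\SSS_\sigma$ is a Banach algebra.
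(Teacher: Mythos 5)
Your proof is correct and follows exactly the route the paper intends: the paper states this corollary without writing out a proof, the argument being precisely that the translation by the periodic orbit is canonical (using that $(\xp,\yp)$ solves the equations of motion to cancel the linear terms) and that $\eps c_{ij}$ is the difference of Taylor coefficients of $H_1$ at $(\xp,\yp)$ and at $(0,0)$, which the mean value estimate together with $\|(\xp,\yp)\|_{1,\sigma}\leq b_0|\mu|\eps^{\eta+1}$ from Proposition~\ref{prop:periodica} bounds by $K|\mu|\eps^{\eta+1}$, whence the stated bound after dividing by $\eps$. Your fleshing-out of the Banach-algebra estimate for $e^{ik\xp}-1$ in the trigonometric case is exactly the detail one needs and is consistent with the remark following \eqref{def:periodica:banach}.
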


We devote the rest of the section to prove Proposition
\ref{prop:periodica}. We obtain the solution of equation
\eqref{eq:Periodica} through a fixed point argument. To obtain a
contractive operator, first we have to perform a change of
variables, which actually it is only needed in the case $\ell-2r=0$.

Let us consider a function $\ol F_1$ which satisfies $\langle \ol
F_1\rangle=0$ and $\pa_\tau \ol F_1=F_1$, where $F_1$ is the function
in \eqref{def:periodica:F1}. The function $\ol F_1$ can be defined
as
\[
\ol
F_1(\tau)=\sum_{k\in\ZZ\setminus\{0\}}\frac{1}{ik}F_1^{[k]}e^{ik\tau}
\]
and satisfies
\begin{equation}\label{eq:periodica:cotaF1barra}
\left\| \ol F_1\right\|_{1,\sigma}\leq \left\|
F_1\right\|_{1,\sigma}.
\end{equation}
We perform the change of variables
\begin{equation}\label{eq:periodica:canvi}
z=\left(\Id+\eps \ol F_1(\tau)\right)\ol z
\end{equation}
and then equation \eqref{eq:Periodica} becomes
\begin{equation}\label{eq:periodica:eqModificada}
\left(\DD_0-\eps\mat\right) \ol z=\ol F(\ol z,\tau),
\end{equation}
where
\begin{equation}\label{eq:periodica:Fbarra}
\begin{split}
\ol F(\ol z,\tau)= &\eps \left(\Id +\eps \ol
F_1(\tau)\right)\ii F_0(\tau)\\
&+\eps^2\left(\Id +\eps \ol F_1(\tau)\right)\ii\left(\mat \ol
F_1(\tau)-\ol
F_1(\tau)\mat+ \ol F_1(\tau)F_1(\tau)\right)\ol z\\
&+\eps \left(\Id +\eps \ol F_1(\tau)\right)\ii F_2 \left(\left(\Id
+\eps \ol F_1(\tau)\right)\ol z, \tau\right).
\end{split}
\end{equation}
Since the operator $\GG_0$ defined in
\eqref{def:periodica:OperadorIntegral} is a left inverse of
$\DD_0-\eps \mat$, we look for a solution of  equation
\eqref{eq:periodica:eqModificada} as a fixed point of the operator
\begin{equation}\label{def:periodica:Funcional}
\FF_0=\GG_0\circ \ol F.
\end{equation}

Then Proposition \ref{prop:periodica} follows from the following
lemma.
\begin{lemma}\label{lemma:periodica:PuntFix}
Let $\eps_0>0$ small enough. Then, there exists a
constant $b_0>0$ such that, for $\eps\in(0,\eps_0)$, the operator
$\FF_0$ in \eqref{def:periodica:Funcional} is contractive from $\ol
B\left(b_0|\mu|\eps^{\eta+1}\right)\subset\SSS_\sigma\times\SSS_\sigma$
to itself.

Then, $\FF_0$ has a unique fixed point $\ol z^\ast\in \ol
B\left(b_0|\mu|\eps^{\eta+1}\right)\subset\SSS_\sigma\times\SSS_\sigma$.
\end{lemma}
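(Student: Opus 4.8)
The plan is to obtain the solution of \eqref{eq:periodica:eqModificada} as a fixed point of $\FF_0=\GG_0\circ\ol F$ in the ball $\ol B(b_0|\mu|\eps^{\eta+1})\subset\SSS_\sigma\times\SSS_\sigma$ and to invoke the Banach fixed point theorem; thus I would only check that, for a suitable $b_0>0$ independent of $\eps$ and $\mu$ and for $\eps_0$ small, $\FF_0$ maps this ball into itself and is a contraction on it. The guiding observation is that $\GG_0$ loses a factor $\eps^{-1}$ only on terms \emph{without} zero average (Lemma~\ref{lemma:periodica:OperadorIntegral}), so one must single out in $\ol F$, see \eqref{eq:periodica:Fbarra}, its dominant zero-average piece $\eps F_0$. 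Using $(\Id+\eps\ol F_1)\ii=\Id-\eps\ol F_1(\Id+\eps\ol F_1)\ii$ one rewrites the first line of \eqref{eq:periodica:Fbarra} as $\eps F_0-\eps^2\ol F_1(\Id+\eps\ol F_1)\ii F_0$, where $\eps F_0$ has zero average by Lemma~\ref{lemma:periodica:cotes}. Here one also uses $\|\eps\ol F_1\|_{1,\sigma}\le K|\mu|\eps^{\eta+1}<1$ for $\eps$ small, hence $(\Id+\eps\ol F_1)\ii$ exists with $\|(\Id+\eps\ol F_1)\ii\|_{1,\sigma}\le 2$, together with Lemma~\ref{lemma:periodica:NormesMatrius} to estimate products of matrix-valued functions.

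To see that $\FF_0$ preserves the ball, I would take $\ol z\in\ol B(b_0|\mu|\eps^{\eta+1})$ and bound the three lines of \eqref{eq:periodica:Fbarra} separately. The first line contributes $\GG_0(\eps F_0)$, of norm $\le K_1|\mu|\eps^{\eta+1}$ by the zero-average estimate of Lemma~\ref{lemma:periodica:OperadorIntegral} and Lemma~\ref{lemma:periodica:cotes}, plus the remainder $\eps^2\ol F_1(\Id+\eps\ol F_1)\ii F_0$ of norm $\le K|\mu|^2\eps^{2\eta+2}$, which after $\GG_0$ gives $\le K|\mu|^2\eps^{2\eta+1}\le K\mu_0|\mu|\eps^{\eta+1}$ since $\eps^\eta\le 1$. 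The second (linear in $\ol z$) line has a matrix factor $\mat\ol F_1-\ol F_1\mat+\ol F_1 F_1$ of norm $\le K|\mu|\eps^\eta$, hence norm $\le Kb_0|\mu|^2\eps^{2\eta+3}$, contributing $\le Kb_0^2\mu_0\eps|\mu|\eps^{\eta+1}$ after $\GG_0$. For the third line one uses $F_2(0,\tau)=0$ (because $F(0,\tau)=F_0(\tau)$) and the Lipschitz bound of Lemma~\ref{lemma:periodica:cotes} with $\nu=2b_0|\mu|\eps^{\eta+1}$, getting $\|F_2((\Id+\eps\ol F_1)\ol z,\tau)\|_\sigma\le Kb_0^2|\mu|^2\eps^{2\eta+2}$ and hence a contribution $\le Kb_0^2\mu_0\eps|\mu|\eps^{\eta+1}$ after $\GG_0$. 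Adding up, $\|\FF_0(\ol z)\|_{1,\sigma}\le(K_1+K\mu_0+Kb_0^2\mu_0\eps)|\mu|\eps^{\eta+1}$; choosing $b_0:=2(K_1+K\mu_0)$ and then $\eps_0$ small so that $Kb_0^2\mu_0\eps_0\le b_0/2$ makes the bracket $\le b_0$.

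For the contraction, for $\ol z_1,\ol z_2$ in the ball the constant line of \eqref{eq:periodica:Fbarra} cancels in $\ol F(\ol z_1,\cdot)-\ol F(\ol z_2,\cdot)$, the linear line gives $\le K|\mu|\eps^{\eta+2}\|\ol z_1-\ol z_2\|_{1,\sigma}$, and the nonlinear line, again by the Lipschitz estimate of Lemma~\ref{lemma:periodica:cotes} with $\nu=2b_0|\mu|\eps^{\eta+1}$, gives $\le Kb_0|\mu|\eps^{\eta+2}\|\ol z_1-\ol z_2\|_{1,\sigma}$; applying $\GG_0$ with its $\eps^{-1}$ loss yields $\|\FF_0(\ol z_1)-\FF_0(\ol z_2)\|_{1,\sigma}\le K(1+b_0)|\mu|\eps^{\eta+1}\|\ol z_1-\ol z_2\|_{1,\sigma}$, which is $\le\tfrac12\|\ol z_1-\ol z_2\|_{1,\sigma}$ after shrinking $\eps_0$ once more. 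Banach's theorem then provides the unique fixed point $\ol z^\ast$ in the ball, and $\|\ol z^\ast\|_{1,\sigma}\le b_0|\mu|\eps^{\eta+1}$ holds by construction. The only genuinely delicate point is the one already highlighted: the linear term $F_1(\tau)z$ in \eqref{eq:Periodica} has zero average but size only $O(|\mu|\eps^\eta)$, which is \emph{not} small when $\eta=0$, so without the preliminary change of variables \eqref{eq:periodica:canvi} — which replaces it by a term of size $O(|\mu|\eps^{\eta+2})$ — the operator $\GG_0\circ\ol F$ would fail to be contractive for $\mu$ of order one; the rest is a routine tracking of powers of $\eps$.
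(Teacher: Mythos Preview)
Your proof is correct and follows essentially the same approach as the paper: split off the zero-average piece $\eps F_0$ to exploit the improved bound of $\GG_0$ on zero-average functions, and control the remaining terms (which all carry an extra $\eps$) with the crude $K/\eps$ bound. The only organizational difference is that the paper bounds $\FF_0(0)$ and then the Lipschitz constant, deducing self-mapping from $\|\FF_0(\ol z)\|\le\|\FF_0(0)\|+\tfrac12\|\ol z\|$, whereas you bound $\|\FF_0(\ol z)\|$ directly for $\ol z$ in the ball; your bookkeeping also has a harmless typo ($Kb_0^2$ instead of $Kb_0$ in the second-line contribution).
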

\begin{proof}
It is easily checked that $\FF_0$ sends
$\SSS_\sigma\times\SSS_\sigma$ into itself. To see that it is
contractive we first consider $\FF_0(0)$, which can be split as
\[
\FF_0(0)=\eps\GG_0\left(F_0\right)-\eps^2\GG_0\left(\left(\Id+\eps\ol
F_1\right)\ii \ol F_1 F_0\right).
\]
By Lemma \ref{lemma:periodica:cotes}, $\langle F_0\rangle=0$ and
$\|F_0\|_{1,\sigma}\leq K|\mu|\eps^{\eta}$. Then, applying Lemma
\ref{lemma:periodica:OperadorIntegral}, one has that
\[
\left\|\GG_0\left(F_0\right)\right\|_{1,\sigma}\leq
K|\mu|\eps^{\eta}.
\]
For the second term, considering also
\eqref{eq:periodica:cotaF1barra}
 and Lemmas
\ref{lemma:periodica:NormesMatrius},
\ref{lemma:periodica:OperadorIntegral} and
\ref{lemma:periodica:cotes}, one can proceed analogously to obtain
\[
\left\|\GG_0\left(\left(\Id+\eps\ol F_1\right)\ii \ol F_1
F_0\right)\right\|_{1,\sigma}\leq K|\mu|\eps^{2\eta-1}.
\]
Therefore, there exists a constant $b_0>0$ such that
\[
\left\|\FF_0(0)\right\|_{1,\sigma}\leq\frac{
b_0}{2}|\mu|\eps^{\eta+1}.
\]
Let us consider now $z^1,z^2\in \ol B\left(
b_0|\mu|\eps^{\eta+1}\right)\subset\SSS_\sigma\times\SSS_\sigma$.
Then, by Lemmas \ref{lemma:periodica:OperadorIntegral},
\ref{lemma:periodica:NormesMatrius} and \ref{lemma:periodica:cotes},
and reducing $\eps$ if necessary,  one can see that,
\[
\begin{split}
\left\|
\FF_0\left(z^2\right)-\FF_0\left(z^1\right)\right\|_{1,\sigma}&\leq
K|\mu|\eps^{\eta+1}\left\|z^2 -z^1\right\|_{1,\sigma}\\
&\leq \frac{1}{2}\left\|z^2- z^1\right\|_{1,\sigma}.
\end{split}
\]
Then, $\FF_0:\ol B\left(b_0|\mu|\eps^{\eta+1}\right)\rightarrow \ol
B\left(
b_0|\mu|\eps^{\eta+1}\right)\subset\SSS_\sigma\times\SSS_\sigma$ and
is contractive. Therefore, it has a unique fixed point $\ol z^\ast$.
\end{proof}

\begin{proof}[Proof of Proposition \ref{prop:periodica}]
It is enough to take
\[
z^\ast(\tau)=\left(\Id+\eps\ol F_1(\tau)\right)\ol z^\ast(\tau),
\]
which satisfies equation \eqref{eq:Periodica} and satisfies the
desired bound (increasing $b_0$ slightly if necessary).
\end{proof}

\section{Local invariant manifolds: proof of
Theorem~\ref{th:ExistenceCloseInfty}}\label{sec:Infty}

Since the proof for both invariant manifolds is analogous, we only
deal with the unstable case. We look for a solution of equation
\eqref{eq:HamJacGeneral} satisfying the asymptotic condition
\eqref{eq:AsymptCondFuncioGeneradora:uns}. We look for it as a
perturbation of the unperturbed separatrix
\begin{equation}\label{def:T0} T_0(u)=\int_{-\infty}^u p^2_0(v)\,dv
\end{equation}
and therefore we work with $T_1(u,\tau)=T(u,\tau)-T_0(u)$.

 Replacing
$T$ in equation \eqref{eq:HamJacGeneral} and taking into account
that $V(q_0(u))=-p^2_0(u)/2$, it is straightforward to see that the
equation for $T_1$ reads
\begin{equation}\label{eq:HJperT1}
\LL_\eps T_1=\FF\left(\pa_u T_1,u,\tau\right),
\end{equation}
where $\LL_\eps$ is the operator defined in \eqref{def:Lde} and
\[
\begin{array}{ll}
\FF(w,u,\tau)=&\dps-\frac{w^2}{2p_0^2(u)}
-\bigg(V(q_0(u)+\xp(\tau))-V(\xp(\tau))-V(q_0(u))-V'(\xp(\tau))q_0(u)\bigg)\\
&\dps-\mu\eps^{\eta}\widehat
H_1\left(q_0(u),p_0(u)+\frac{w}{p_0(u)},\tau\right),
\end{array}
\]
where $\widehat H_1$ is the function defined in
\eqref{def:ham:ShiftedOP:perturb}.

We split $\FF$ into constant, linear and higher order terms in $w$
as
\begin{equation}\label{eq:infty:lmenor:operadorF}
\FF(w,u,\tau)=A(u,\tau)+\left(B_1(u,\tau)+B_2(u,\tau)\right)w+C(w,
u,\tau),
\end{equation}
with
\begin{align}
A(u,
\tau)=&\dps-\left(V(q_0(u)+\xp(\tau))-V(\xp(\tau))-V(q_0(u))-V'(\xp(\tau))q_0(u)
\right)\notag\\
&\dps-\mu\eps^{\eta}\widehat
H_1\left(q_0(u),p_0(u),\tau\right),\label{def:InftyA}\\
B_1(u,\tau)=&-\mu\eps^{\eta}p_0^{-1}(u)\pa_p \widehat
H_1^1(q_0(u),p_0(u),\tau),\label{def:InftyB1}\\
B_2(u,\tau)=&-\mu\eps^{\eta+1}p_0^{-1}(u)\pa_p \widehat
H_1^2(q_0(u),p_0(u),\tau),\label{def:InftyB2}\\
C(w,u,\tau)=&-\frac{w^2}{2p_0^2(u)}-\mu\eps^{\eta}\widehat
H_1\left(q_0(u),p_0(u)+\frac{w}{p_0(u)},\tau\right)\notag\\
&+\mu\eps^{\eta}\frac{w}{p_0(u)}\pa_p \widehat
H_1(q_0(u),p_0(u),\tau)+\mu\eps^{\eta}\widehat
H_1\left(q_0(u),p_0(u),\tau\right),\label{def:InftyC}
\end{align}
where $\widehat H_1^1$ and $\widehat H_1^2$ are the functions
defined in \eqref{def:HamPertorbat:H1} and
\eqref{def:HamPertorbat:H2} respectively.

\subsection{Local invariant manifolds in the hyperbolic
case}\label{sec:InftyHyperbolic}
In this section we prove the existence of suitable representations
of the unstable and stable invariant manifolds in the domains
$D^{u}_{\infty,\rr}\times\TT_\sigma$ and
$D^{s}_{\infty,\rr}\times\TT_\sigma$ respectively under the
hypothesis that the unperturbed Hamiltonian system has a hyperbolic
critical point at the origin.

\subsubsection{Banach spaces and technical
lemmas}\label{Subsection:Infty:Banach}
This subsection is devoted to define the Banach spaces which will be
used in Section \ref{sub:Infty:Hyp:ProofGeneral}. We also state some
of their useful properties.

We define some norms for functions defined in a domain
$D_{\infty,\rr}^u$ with $\rr\geq0$. Given $\alpha\geq 0$, $\rr\geq
0$ and an analytic function $h:D^{u}_{\infty,\rr}\rightarrow \CC$,
we consider
\[
\|h\|_{\alpha, \rr}=\sup_{u\in D^{u}_{\infty,\rr}}\left| e^{-\alpha
u}h(u)\right|.
\]
Moreover for $2\pi$-periodic in $\tau$, analytic functions
$h:D^{u}_{\infty,\rr}\times\TT_\sigma\rightarrow \CC$, we consider the
corresponding Fourier
norm
\[
\|h\|_{\alpha, \rr,\sigma}=
\sum_{k\in\ZZ}\left\|h^{[k]}\right\|_{\alpha,  \rr}e^{|k|\sigma}.
\]
We consider, thus, the following function space

\begin{equation}\label{def:BanachInftyHyp}
\HH_{\alpha, \rr,\sigma}=\{
h:D^{u}_{\infty,\rr}\times\TT_\sigma\rightarrow\CC;\,\,\text{real-analytic},
\|h\|_{\alpha, \rr,\sigma}<\infty\},
\end{equation}
which can be checked that is a Banach space for any fixed $\alpha>0$
and $\sigma>0$.

In the next lemma, we state some properties of these Banach spaces.
\begin{lemma}\label{lemma:Infty:PropietatsNormes} The following statements hold:
\begin{enumerate}
\item If $\alpha_1\geq \alpha_2\geq 0$, then $\HH_{\alpha_1, \rr,\sigma}\subset
\HH_{\alpha_2, \rr,\sigma}$ and
\[
\|h\|_{\alpha_2, \rr,\sigma}\leq \|h\|_{\alpha_1, \rr,\sigma}.
\]
\item If $\alpha_1,\alpha_2\geq0$, then, for $h\in\HH_{\alpha_1, \rr,\sigma}$
and $g\in\HH_{\alpha_2, \rr,\sigma}$,
we  have that $hg\in\HH_{\alpha_1+\alpha_2, \rr,\sigma}$ and
\[
\|hg\|_{\alpha_1+\alpha_2, \rr,\sigma}\leq \|h\|_{\alpha_1,
\rr,\sigma}\|g\|_{\alpha_2, \rr,\sigma}.
\]
\item Let $\alpha\geq 0$ and  $\rr'>\rr >0$ be such that $\rr'-\rr$ has a
positive lower bound independent of $\eps$. Then for
$h\in\HH_{\alpha,\rr,\sigma}$ we have that $\pa_u
h\in\HH_{\alpha,\rr',\sigma}$ and
\[
\left\| \pa_u h\right\|_{\alpha,\rr',\sigma}\leq
K\|h\|_{\alpha,\rr,\sigma}.
\]
\end{enumerate}
\end{lemma}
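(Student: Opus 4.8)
The plan is to verify the three items directly from the definition of the weighted Fourier norm in \eqref{def:BanachInftyHyp}, reducing everything to elementary pointwise estimates on the half-plane $D^{u}_{\infty,\rr}=\{\Re u<-\rr\}$ followed by the standard bookkeeping over Fourier modes. In all three cases the estimate is first established mode by mode for the sup-type norm $\|\cdot\|_{\alpha,\rr}$ and then summed against the weights $e^{|k|\sigma}$.

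For item (1), I would use that on $D^{u}_{\infty,\rr}$ one has $\Re u<-\rr\le 0$, hence $-\Re u>0$; from $\alpha_1\ge\alpha_2\ge 0$ it follows that $-\alpha_1\Re u\ge-\alpha_2\Re u$, and therefore for any analytic $h$ on $D^{u}_{\infty,\rr}$,
$|e^{-\alpha_2 u}h(u)|=e^{-\alpha_2\Re u}|h(u)|\le e^{-\alpha_1\Re u}|h(u)|=|e^{-\alpha_1 u}h(u)|$.
Taking the supremum over $u$ applied to each $h^{[k]}$ and summing with the weights $e^{|k|\sigma}$ gives $\|h\|_{\alpha_2,\rr,\sigma}\le\|h\|_{\alpha_1,\rr,\sigma}$, which in particular yields the inclusion $\HH_{\alpha_1,\rr,\sigma}\subset\HH_{\alpha_2,\rr,\sigma}$.

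For item (2), the pointwise identity $|e^{-(\alpha_1+\alpha_2)u}h(u)g(u)|=|e^{-\alpha_1 u}h(u)|\,|e^{-\alpha_2 u}g(u)|$ immediately gives $\|h^{[j]}g^{[k-j]}\|_{\alpha_1+\alpha_2,\rr}\le\|h^{[j]}\|_{\alpha_1,\rr}\,\|g^{[k-j]}\|_{\alpha_2,\rr}$; since $(hg)^{[k]}=\sum_{j\in\ZZ}h^{[j]}g^{[k-j]}$, the triangle inequality together with the submultiplicativity $e^{|k|\sigma}\le e^{|j|\sigma}e^{|k-j|\sigma}$ reduces the claim to the fact that the $\ell^1$ norm is submultiplicative under convolution, which is the usual computation.

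For item (3), I would apply the Cauchy integral formula mode by mode: for $u\in D^{u}_{\infty,\rr'}$ set $\delta=\tfrac12(\rr'-\rr)$, which has a positive lower bound independent of $\eps$, and integrate $h^{[k]}$ over the circle $|v-u|=\delta$, which stays inside $D^{u}_{\infty,\rr}$ because $\Re v\le\Re u+\delta<-\rr$. This gives $|\pa_u h^{[k]}(u)|\le\delta^{-1}\sup_{|v-u|=\delta}|h^{[k]}(v)|\le\delta^{-1}e^{\alpha\delta}e^{\alpha\Re u}\|h^{[k]}\|_{\alpha,\rr}$, hence $\|\pa_u h^{[k]}\|_{\alpha,\rr'}\le\delta^{-1}e^{\alpha\delta}\|h^{[k]}\|_{\alpha,\rr}$; multiplying by $e^{|k|\sigma}$ and summing produces the bound with $K=\delta^{-1}e^{\alpha\delta}$, depending only on the fixed quantities $\alpha$ and $\rr'-\rr$. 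None of these steps is a genuine obstacle; the only points requiring care are keeping track of the sign of $\Re u$ in (1), and choosing the Cauchy radius $\delta$ strictly smaller than $\rr'-\rr$ in (3) so that the contour remains inside the larger domain.
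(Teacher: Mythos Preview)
Your proof is correct. The paper does not give a proof of this lemma, treating it as a routine verification; your argument supplies exactly the standard details (pointwise monotonicity of the weight, Banach-algebra convolution estimate, and a Cauchy estimate on a disc of radius comparable to $\rr'-\rr$) that one would expect.
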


Throughout this section we are going to solve equations of the form
$\LL_\eps h=g$, where $\LL_\eps$ is the differential operator
defined in~\eqref{def:Lde}. Note that if $\alpha>0$,
$\mathrm{Ker}\LL_\eps=\{0\}$ and hence $\LL_\eps$ is invertible. It
turns out that its inverse is $\GG_\eps$ defined by
\begin{equation}\label{def:operadorGInfty}
\GG_\eps (h)(u,\tau)=\int_{-\infty}^0h(u+t,\tau+\eps\ii t)\,dt.
\end{equation}
We also introduce
\begin{equation}\label{def:operadorGInftyBarra}
\overline\GG_\eps (h)(u,\tau)=\pa_u\left[\GG_\eps(h)(u,\tau)\right].
\end{equation}
We will consider $\GG_\eps$ defined in $\HH_{\alpha,\rr,\sigma}$
with $\alpha>0$ in order the integral in \eqref{def:operadorGInfty}
to be convergent.
\begin{lemma}\label{lemma:PropietatsGInfty}
Let $\alpha>0$. Then, the operators $\GG_\eps$ and
$\overline\GG_\eps$ in \eqref{def:operadorGInfty} and
\eqref{def:operadorGInftyBarra} respectively  satisfy the following
properties.
\begin{enumerate}
\item $\GG_\eps$ is linear from $\HH_{\alpha,\rr,\sigma}$ to
itself, commutes with $\pa_u$ and
$\LL_\eps\circ\GG_\eps=\mathrm{Id}$.
\item If $h\in\HH_{\alpha, \rr,\sigma}$, then
\[
\left\|\GG_\eps(h)\right\|_{\alpha,\rr,\sigma}\leq
K\|h\|_{\alpha,\rr,\sigma}.
\]
Furthermore, if $\langle h\rangle=0$, then
\[
\left\|\GG_\eps(h)\right\|_{\alpha,\rr,\sigma}\leq
K\eps\|h\|_{\alpha,\rr,\sigma}.
\]
\item If $h\in\HH_{\alpha,\rr,\sigma}$, then $\overline\GG_\eps(h)\in
\HH_{\alpha,\rr,\sigma}$ and
\[
\left\|\overline\GG_\eps(h)\right\|_{\alpha,\rr,\sigma}\leq
K\|h\|_{\alpha,\rr,\sigma}.
\]
\end{enumerate}
\end{lemma}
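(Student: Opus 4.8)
The plan is to verify the three items in order, since each builds on the previous one. For item~(1), I would first check that $\GG_\eps$ is well defined on $\HH_{\alpha,\rr,\sigma}$: writing $h(u,\tau)=\sum_k h^{[k]}(u)e^{ik\tau}$, the integrand $h(u+t,\tau+\eps\ii t)$ has Fourier expansion $\sum_k h^{[k]}(u+t)e^{ik\tau}e^{ik\eps\ii t}$, so formally
\[
\GG_\eps(h)(u,\tau)=\sum_{k\in\ZZ}\left(\int_{-\infty}^0 h^{[k]}(u+t)e^{ik\eps\ii t}\,dt\right)e^{ik\tau}.
\]
Each integral converges absolutely because $|h^{[k]}(u+t)|\le \|h^{[k]}\|_{\alpha,\rr}\,|e^{\alpha(u+t)}|$ and $\Re u<-\rr$, $t<0$ force exponential decay of $e^{\alpha(u+t)}$ as $t\to-\infty$ (here $\alpha>0$ is essential). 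Linearity is immediate, commutation with $\pa_u$ follows by differentiating under the integral sign (justified by the same absolute convergence, uniform on compact subsets), and $\LL_\eps\circ\GG_\eps=\Id$ is the fundamental theorem of calculus applied along the characteristic $t\mapsto(u+t,\tau+\eps\ii t)$: indeed $\frac{d}{dt}\big[\GG_\eps(h)(u+t,\tau+\eps\ii t)\big]$ evaluated via the definition gives exactly $(\LL_\eps\GG_\eps h)(u,\tau)=h(u,\tau)$.

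For item~(2), I would estimate the Fourier norm directly from the expansion above. For the $k$-th coefficient,
\[
\left|\int_{-\infty}^0 h^{[k]}(u+t)e^{ik\eps\ii t}\,dt\right|
\le \|h^{[k]}\|_{\alpha,\rr}\int_{-\infty}^0 |e^{\alpha(u+t)}|\,dt
= \|h^{[k]}\|_{\alpha,\rr}\,\frac{|e^{\alpha u}|}{\alpha},
\]
so $\|\GG_\eps(h)^{[k]}\|_{\alpha,\rr}\le \alpha\ii\|h^{[k]}\|_{\alpha,\rr}$; multiplying by $e^{|k|\sigma}$ and summing gives the first bound with $K=\alpha\ii$. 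For the refined bound when $\langle h\rangle=0$, the $k=0$ term vanishes, so one only needs to gain a factor $\eps$ on each $k\neq0$ term. Integrating by parts in $t$ (using $e^{ik\eps\ii t}=\frac{\eps}{ik}\frac{d}{dt}e^{ik\eps\ii t}$) trades the oscillatory factor for a factor $\eps/|k|$ times boundary terms plus $\int_{-\infty}^0 \pa_t h^{[k]}(u+t)e^{ik\eps\ii t}\,dt$; since $\pa_u h\in\HH_{\alpha,\rr',\sigma}$ by Lemma~\ref{lemma:Infty:PropietatsNormes}(3), this derivative term is controlled, and the $\sum_{k\neq0}$ of $e^{|k|\sigma}/|k|$ against these is bounded. (Alternatively, one can exploit that on functions with zero average the operator $\GG_\eps$ is $\eps$ times a bounded operator because the small divisors $ik$ are bounded away from $0$; I would present whichever is cleanest.) Item~(3) is then immediate: $\overline\GG_\eps(h)=\pa_u\GG_\eps(h)$, and since $\GG_\eps$ commutes with $\pa_u$ we have $\overline\GG_\eps(h)=\GG_\eps(\pa_u h)$ whenever $\pa_u h$ lies in an appropriate space; applying item~(2) with a slightly larger $\rr$ together with Lemma~\ref{lemma:Infty:PropietatsNormes}(3) gives $\|\overline\GG_\eps(h)\|_{\alpha,\rr,\sigma}\le K\|h\|_{\alpha,\rr,\sigma}$.

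The only delicate point is the sharp $\eps$-gain in the second part of item~(2): one must be careful that the constant $K$ there is genuinely independent of $\eps$, which is why the integration-by-parts argument needs the loss in the $\rr$-parameter (absorbed by taking the norm on $D^u_{\infty,\rr}$ with $\rr$ slightly smaller than where $\pa_u h$ is controlled, as in Lemma~\ref{lemma:Infty:PropietatsNormes}(3)), and that the geometric series $\sum_{k}e^{-|k|(\sigma_0-\sigma)}$-type sums converge because we always work with $\sigma<\sigma_0$. Everything else is routine estimation of absolutely convergent integrals and Fourier series.
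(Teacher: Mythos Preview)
Your treatment of item~(1) and the first bound in item~(2) is correct and standard. The gap is in the $\eps$-gain when $\langle h\rangle=0$, and it propagates to item~(3).

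Integration by parts produces the term $\displaystyle\frac{\eps}{ik}\int_{-\infty}^0 (h^{[k]})'(u+t)\,e^{ik\eps^{-1}t}\,dt$, and to bound this uniformly you need control of $(h^{[k]})'$ on all of $D^u_{\infty,\rr}$. Lemma~\ref{lemma:Infty:PropietatsNormes}(3) only gives $\pa_u h\in\HH_{\alpha,\rr',\sigma}$ for $\rr'>\rr$, i.e.\ on the strictly smaller half-plane $D^u_{\infty,\rr'}\subset D^u_{\infty,\rr}$; the Cauchy estimate blows up as $\Re(u+t)\to -\rr$. Since the lemma is stated (and used, e.g.\ in Lemma~\ref{lemma:Infty:Cotes}) with the \emph{same} $\rr$ on both sides, this domain loss is not acceptable, and your proposed absorption does not close.

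The clean fix, which is the approach behind the reference to \cite{GuardiaOS10}, is contour deformation rather than integration by parts. For $k>0$ replace the real path $t\in(-\infty,0]$ by the ray $t=-se^{-i\beta}$, $s\ge0$, with a fixed small $\beta\in(0,\pi/2)$ (and the conjugate tilt for $k<0$). Then $u+t$ stays in $D^u_{\infty,\rr}$ since $\Re(u+t)=\Re u - s\cos\beta<-\rr$, while $|e^{ik\eps^{-1}t}|=e^{-k\eps^{-1}s\sin\beta}$, so
\[
\bigl|\GG_\eps(h)^{[k]}(u)\bigr|\le\|h^{[k]}\|_{\alpha,\rr}\,|e^{\alpha u}|\int_0^\infty e^{-(\alpha\cos\beta+k\eps^{-1}\sin\beta)s}\,ds\le\frac{\eps}{|k|\sin\beta}\,\|h^{[k]}\|_{\alpha,\rr}\,|e^{\alpha u}|,
\]
with no derivative of $h$ and no loss in $\rr$. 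Summing over $k\neq0$ gives the second bound of item~(2).

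For item~(3), your route through $\GG_\eps(\pa_u h)$ inherits the same domain problem. Instead, substitute $s=u+t$ and differentiate by Leibniz to obtain the identity
\[
\overline\GG_\eps(h)^{[k]}(u)=h^{[k]}(u)-\frac{ik}{\eps}\,\GG_\eps(h)^{[k]}(u).
\]
For $k=0$ this is just $h^{[0]}$, and for $k\neq0$ the contour-deformed bound above makes the second term $\le(\sin\beta)^{-1}\|h^{[k]}\|_{\alpha,\rr}|e^{\alpha u}|$, yielding $\|\overline\GG_\eps(h)\|_{\alpha,\rr,\sigma}\le(1+\csc\beta)\|h\|_{\alpha,\rr,\sigma}$ on the original domain.
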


\begin{proof}
It follows the same lines as the proof of Lemma 5.5 in
\cite{GuardiaOS10}.
\end{proof}

Finally, we state a technical lemma about estimates of the functions
$A$, $B_1$, $B_2$ and $C$ defined in \eqref{def:InftyA},
\eqref{def:InftyB1}, \eqref{def:InftyB2} and \eqref{def:InftyC}
respectively.

\begin{lemma}\label{lemma:Infty:Cotes}
Let $\{\lambda,-\lambda\}$ be the eigenvalues of the hyperbolic
critical point of the unperturbed Hamiltonian system and
$\overline\GG_\eps$ the operator defined in \eqref{def:operadorGInftyBarra}.
Let us fix $\rr_0$ big enough such
that $p_0(u)\neq 0$ in $D^u_{\infty,\rr_0}$ defined in
\eqref{def:DominsInfinit}. Then, for any
$\rr>\rr_0$, the functions $A$, $B_1$, $B_2$ and $C$ defined in
\eqref{def:InftyA}, \eqref{def:InftyB1}, \eqref{def:InftyB2} and
\eqref{def:InftyC} satisfy the following properties,
\begin{enumerate}
\item $A,\pa_u A\in\HH_{2\lambda,\rr,\sigma}$ and satisfy
\begin{equation}\label{eq:Infty:CotesA}
\begin{array}{cc}
\left\|\overline\GG_\eps (A)\right\|_{2\lambda,\rr,\sigma}\leq
K|\mu|\eps^{\eta+1}, &\left\|\pa_u
A\right\|_{2\lambda,\rr,\sigma}\leq K|\mu|\eps^{\eta}.
\end{array}
\end{equation}
\item $B_1, \pa_u B_1, B_2 \in \HH_{0,\rr,\sigma}$ and satisfy
\begin{equation}\label{eq:Infty:CotesB}
\begin{array}{ccc}
\|B_1\|_{0,\rr,\sigma}\leq K|\mu|\eps^{\eta},
&\|\pa_uB_1\|_{0,\rr,\sigma}\leq
K|\mu|\eps^{\eta},&\|B_2\|_{0,\rr,\sigma}\leq K|\mu|\eps^{\eta+1}.
\end{array}
\end{equation}
\item Let $h_1,h_2\in B(\nu)\subset\HH_{2\la,\rr,\sigma}$. Then,
\[
\left\| C(h_2,u,\tau)- C(h_1,u,\tau)\right\|_{2\la,\rr,\sigma}\leq K
\nu \|h_2-h_1\|_{2\la,\rr,\sigma}.
\]
\end{enumerate}
\end{lemma}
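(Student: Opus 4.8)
The plan is to estimate $A$, $B_1$, $B_2$ and $C$ term by term, the main tools being the weighted‑norm algebra of Lemma~\ref{lemma:Infty:PropietatsNormes}, the smoothing and zero–average properties of $\GG_\eps$ and $\overline\GG_\eps$ in Lemma~\ref{lemma:PropietatsGInfty}, and the bound $\|(\xp,\yp)\|_{1,\sigma}\le b_0|\mu|\eps^{\eta+1}$ coming from Theorem~\ref{th:Periodica}. Throughout one uses that, on $D^u_{\infty,\rr}$ with $\rr>\rr_0$, the expansions \eqref{eq:separatrix:hyp:infty} give $q_0,p_0\in\HH_{\lambda,\rr,\sigma}$ (as $\tau$–independent functions, with norm $\OO(1)$) and, since $p_0$ does not vanish there, $e^{\lambda u}/p_0(u)$ is bounded and holomorphic on $D^u_{\infty,\rr}$; equivalently $p_0^{-1}$ carries ``weight $-\lambda$'' and $p_0^{-2}$ ``weight $-2\lambda$''. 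Also, for a $u$–only factor $f\in\HH_{\alpha,\rr}$ and a $\tau$–only factor $g\in\SSS_\sigma$ one has $fg\in\HH_{\alpha,\rr,\sigma}$ with $\|fg\|_{\alpha,\rr,\sigma}\le\|f\|_{\alpha,\rr}\|g\|_\sigma$.

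First I would split $A=A^{(1)}+A^{(2)}$, where $A^{(1)}$ is the potential–difference part in \eqref{def:InftyA} and $A^{(2)}=-\mu\eps^\eta\wh H_1(q_0,p_0,\tau)$. For $A^{(1)}$ the key point is the cancellation forced by $V(0)=V'(0)=0$: setting $W(t)=V(q_0(u)+t)-V(t)-V(q_0(u))-V'(t)q_0(u)$, one has $W^{(k)}(0)=V^{(k)}(q_0(u))-V^{(k)}(0)-V^{(k+1)}(0)q_0(u)=\OO(q_0^2(u))$ for every $k\ge0$ (using $V'(x)=V''(0)x+\OO(x^2)$, $V''(x)=V''(0)+\OO(x)$, etc.), so that the finite Taylor expansion $A^{(1)}(u,\tau)=-W(\xp(\tau))=-\sum_{k\ge1}\tfrac{W^{(k)}(0)}{k!}\xp(\tau)^k$ shows $A^{(1)}\in\HH_{2\lambda,\rr,\sigma}$ with $\|A^{(1)}\|_{2\lambda,\rr,\sigma}\le K|\mu|\eps^{\eta+1}$ (the $k=1$ term dominates), whence $\|\overline\GG_\eps(A^{(1)})\|_{2\lambda,\rr,\sigma}\le K|\mu|\eps^{\eta+1}$ by Lemma~\ref{lemma:PropietatsGInfty}. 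For $A^{(2)}$ I would use $\wh H_1=\wh H_1^1+\eps\wh H_1^2$: since $\wh H_1^1,\wh H_1^2$ are of order $\ge2$ in $(q,p)$, evaluation on the separatrix gives $\wh H_1^{1,2}(q_0,p_0,\cdot)\in\HH_{2\lambda,\rr,\sigma}$ with norm $\OO(1)$ (indeed $\OO(|\mu|\eps^\eta)$ for $\wh H_1^2$ by the bound on the $c_{kl}$); now $\langle\wh H_1^1\rangle=0$, so $\GG_\eps$ applied to the first piece gains an extra factor $\eps$, while the second piece already carries an explicit $\eps$, giving $\|\overline\GG_\eps(A^{(2)})\|_{2\lambda,\rr,\sigma}\le K|\mu|\eps^{\eta+1}$. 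The inclusion $A\in\HH_{2\lambda,\rr,\sigma}$ with $\|A\|_{2\lambda,\rr,\sigma}\le K|\mu|\eps^\eta$ is immediate from the same computations, and then $\|\pa_uA\|_{2\lambda,\rr,\sigma}\le K|\mu|\eps^\eta$ follows from the Cauchy estimate Lemma~\ref{lemma:Infty:PropietatsNormes}(3) (proving the bound on a slightly larger radius first).

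For $B_1$ and $B_2$ the estimates are direct. Since $\wh H_1^1,\wh H_1^2$ are of order $\ge2$ in $(q,p)$, their $p$–derivatives are of order $\ge1$, so $\pa_p\wh H_1^{1}(q_0,p_0,\cdot)$ and $\pa_p\wh H_1^{2}(q_0,p_0,\cdot)$ belong to $\HH_{\lambda,\rr,\sigma}$ with norm $\OO(1)$; multiplying by $p_0^{-1}$ (weight $-\lambda$) lands in $\HH_{0,\rr,\sigma}$, and the explicit prefactors $\mu\eps^\eta$, resp.\ $\mu\eps^{\eta+1}$, give $\|B_1\|_{0,\rr,\sigma}\le K|\mu|\eps^\eta$ and $\|B_2\|_{0,\rr,\sigma}\le K|\mu|\eps^{\eta+1}$; the bound $\|\pa_uB_1\|_{0,\rr,\sigma}\le K|\mu|\eps^\eta$ is again Lemma~\ref{lemma:Infty:PropietatsNormes}(3). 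For $C$ I would observe that every term in \eqref{def:InftyC} is of order $\ge2$ in $w$: the term $-w^2/(2p_0^2)$ has weight $2\lambda$ (two factors $w\in\HH_{2\lambda,\rr,\sigma}$ over weight $-2\lambda$), while the remaining terms assemble into $-\mu\eps^\eta$ times the second–order Taylor remainder of $\wh H_1$ in its momentum slot about $p_0$, evaluated at the increment $w/p_0$. Since $w\in B(\nu)\subset\HH_{2\lambda,\rr,\sigma}$ forces $w/p_0=\OO(\nu e^{\lambda u})$ to be small on $D^u_{\infty,\rr}$, all evaluations stay inside the domain of analyticity of $\wh H_1$, and differentiating in $w$ and applying the mean value theorem in the Banach algebra $\HH_{2\lambda,\rr,\sigma}$ yields $\|C(h_2,u,\tau)-C(h_1,u,\tau)\|_{2\lambda,\rr,\sigma}\le K\nu\|h_2-h_1\|_{2\lambda,\rr,\sigma}$, absorbing the harmless factor $\mu\eps^\eta\le\mu_0$.

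I expect the main obstacle to be the cancellation argument for $A^{(1)}$: one must extract the full decay $e^{2\lambda u}$, not merely $e^{\lambda u}$ or no decay, so that $A^{(1)}$ fits in the norm $\|\cdot\|_{2\lambda,\rr,\sigma}$ with size $|\mu|\eps^{\eta+1}$ — and this is exactly what the identities $W^{(k)}(0)=\OO(q_0^2)$, ultimately a consequence of $(0,0)$ being a critical point of $V$, provide. Everything else reduces to the weighted–norm bookkeeping of Lemma~\ref{lemma:Infty:PropietatsNormes} combined with the smoothing and zero–average properties of $\GG_\eps$ and $\overline\GG_\eps$.
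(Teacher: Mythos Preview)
Your proposal is correct and follows essentially the same approach as the paper: the same splitting of $A$ into the potential part and the perturbation part (the paper further splits $\wh H_1$ into $\wh H_1^1$ and $\eps\wh H_1^2$, as you do), the same use of the zero--average of $\wh H_1^1$ to gain the extra $\eps$, and the same weight bookkeeping for $B_1,B_2,C$, which the paper simply calls ``straightforward''. The only cosmetic difference is in the treatment of the potential term $A^{(1)}$: you Taylor--expand $W(\xp)$ and observe $W^{(k)}(0)=\OO(q_0^2)$, whereas the paper writes the equivalent double--integral mean value formula $A_1=-q_0^2\,\xp\int_0^1\!\int_0^1 V'''(s_2\xp+s_1q_0)(1-s_1)\,ds_1ds_2$; both extract the same $q_0^2\cdot\xp$ factor.
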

\begin{proof}
For the first bounds, we split $A=A_1+A_2+A_3$ as
\begin{align}
A_1(u, \tau)=&-\left(V(q_0(u)+\xp(\tau))-V(\xp(\tau))-V(q_0(u))-V'(\xp(\tau))q_0(u)\right)\label{def:HJ:A1}\\
A_2(u,\tau)=&-\mu\eps^{\eta}\wh H_1^1(q_0(u), p_0(u),\tau)\label{def:HJ:A2}\\
A_3(u,\tau)=&-\mu\eps^{\eta+1}\wh H_1^2(q_0(u),
p_0(u),\tau),\label{def:HJ:A3}
\end{align}
where $\wh H_1^1$ and $\wh H_1^2$ are the functions defined in
\eqref{def:HamPertorbat:H1} and \eqref{def:HamPertorbat:H2}.

For $A_1$, using the mean value theorem and Hypothesis
\textbf{HP1.1}, one can see that
\begin{equation}\label{eq:Formula:A1}
\begin{split}
A_1(u,\tau)=&-q_0^2(u)\int_0^1\left( V^{''}\left(x_p(\tau)+s_1
q_0(u)\right)-V^{''}\left(s_1q_0(u)\right)\right)(1-s_1)\,ds_1\\
=&-q_0^2(u)x_p(\tau)\int_0^1\int_0^1
V^{'''}\left(s_2x_p(\tau)+s_1q_0(u)\right)(1-s_1)\,ds_1ds_2.
\end{split}
\end{equation}
Therefore, $A_1\in\HH_{2\la,\rr,\sigma}$ and
$\|A_1\|_{2\la,\rr,\sigma}\leq K|\mu|\eps^{\eta+1}$. Applying Lemma
\ref{lemma:PropietatsGInfty}, we obtain
$\|\ol\GG_\eps(A_1)\|_{2\la,\rr,\sigma}\leq K|\mu|\eps^{\eta+1}$.

For the other terms, let us point out that, by construction, $\wh
H^1_1$ and $\wh H_1^2$ are quadratic in $(q,p)$ and therefore
$A_2,A_3\in \HH_{2\lambda,\rr,\sigma}$. To bound $\ol\GG_\eps(A_2)$,
using that $\langle A_2\rangle=0$ and taking
into account that $A_2$ is analytic in
$D_{\infty,\rr_0}^u\times\TT_\sigma$ and $\rr>\rr_0$, by Lemmas
\eqref{lemma:Infty:PropietatsNormes} and
\ref{lemma:PropietatsGInfty},
\[
\|\ol\GG_\eps(A_2)\|_{2\la,\rr,\sigma}\leq
K\eps\|A_2\|_{2\la,\rr,\sigma}\leq K|\mu|\eps^{\eta+1}.
\]
On the other hand, since by Corollary \ref{coro:ShiftPeriodica},
$\|A_3\|_{2\la,\rr,\sigma}\leq K|\mu|^2\eps^{2\eta+1}$, we have that
$\|\ol\GG_\eps(A_3)\|_{2\la,\rr,\sigma}\leq K|\mu|^2\eps^{2\eta+1}$.
Therefore
\[
\left\|\overline\GG_\eps (A)\right\|_{2\lambda,\rr,\sigma}\leq
K|\mu|\eps^{\eta+1}.
\]
The bound for $\pa_u A$ can be obtained just differentiating $A_i$,
$i=1,2,3$.

 The other bounds are straightforward.
\end{proof}
\subsubsection{Proof of Theorem \ref{th:ExistenceCloseInfty} in the hyperbolic
case}\label{sub:Infty:Hyp:ProofGeneral} We devote this section to
prove Theorem \ref{th:ExistenceCloseInfty} for the case in which the
unperturbed Hamiltonian has a hyperbolic critical point. First we
rewrite it in terms of the Banach spaces defined in
\eqref{def:BanachInftyHyp}.

\begin{proposition}\label{prop:HJ:hyp:general}
Let  $\{\lambda,-\lambda\}$ be the eigenvalues of the unperturbed
hyperbolic critical point, $\rr_1>0$ big enough and $\eps_0>0$ small
enough. Then, for $\eps\in(0,\eps_0)$, there exists a function
$T_1(u,\tau)$ defined in $D_{\infty,\rr_1}^{u}\times\TT_\sigma$
which satisfies equation \eqref{eq:HJperT1} and the asymptotic
condition \eqref{eq:AsymptCondFuncioGeneradora:uns}. Moreover, there
exists a constant $b_1>0$ such that
\[
\|\pa_u T_1\|_{2\lambda,\rr_1,\sigma}\leq b_1|\mu|\eps^{\eta+1}.
\]
\end{proposition}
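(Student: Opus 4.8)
The plan is to solve the invariance equation \eqref{eq:HJperT1}, namely $\LL_\eps T_1 = \FF(\pa_u T_1,u,\tau)$ with the asymptotic condition \eqref{eq:AsymptCondFuncioGeneradora:uns}, by a fixed point argument in the Banach space $\HH_{2\la,\rr_1,\sigma}$ of \eqref{def:BanachInftyHyp}. Since $\LL_\eps$ is invertible with inverse $\GG_\eps$ from \eqref{def:operadorGInfty}, the equation for $T_1$ is equivalent to $T_1 = \GG_\eps\bigl(\FF(\pa_u T_1,u,\tau)\bigr)$; differentiating and writing $W = \pa_u T_1$, it becomes $W = \ol\GG_\eps\bigl(\FF(W,u,\tau)\bigr)$, where $\ol\GG_\eps$ is defined in \eqref{def:operadorGInftyBarra}. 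Using the splitting \eqref{eq:infty:lmenor:operadorF} of $\FF$ into $A+(B_1+B_2)W+C(W,u,\tau)$, the fixed point equation reads
\[
W = \ol\GG_\eps(A) + \ol\GG_\eps\bigl((B_1+B_2)W\bigr) + \ol\GG_\eps\bigl(C(W,u,\tau)\bigr) =: \FF_1(W).
\]
First I would check, using Lemma \ref{lemma:Infty:Cotes}, that $\|\ol\GG_\eps(A)\|_{2\la,\rr_1,\sigma}\leq K|\mu|\eps^{\eta+1}$, so that the independent term is already of the expected size; this fixes the radius $b_1|\mu|\eps^{\eta+1}$ of the ball on which we work.

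Next I would prove that $\FF_1$ is a contraction on $\ol B(b_1|\mu|\eps^{\eta+1})\subset\HH_{2\la,\rr_1,\sigma}$ for a suitable constant $b_1$ and $\eps$ small. The linear part is handled by Lemma \ref{lemma:PropietatsGInfty}(3) together with the bounds $\|B_1\|_{0,\rr_1,\sigma}, \|B_2\|_{0,\rr_1,\sigma}\leq K|\mu|\eps^{\eta}$ from \eqref{eq:Infty:CotesB} and the product estimate in Lemma \ref{lemma:Infty:PropietatsNormes}(2): this gives $\|\ol\GG_\eps((B_1+B_2)W)\|_{2\la,\rr_1,\sigma}\leq K|\mu|\eps^{\eta}\|W\|_{2\la,\rr_1,\sigma}$, which is a contraction factor $o(1)$. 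For the quadratic-and-higher term $C$, Lemma \ref{lemma:Infty:Cotes}(3) gives the Lipschitz bound $\|C(W_2,\cdot,\cdot)-C(W_1,\cdot,\cdot)\|_{2\la,\rr_1,\sigma}\leq K\nu\|W_2-W_1\|_{2\la,\rr_1,\sigma}$ on a ball of radius $\nu$; since here $\nu = b_1|\mu|\eps^{\eta+1}$ and using also $C(0,u,\tau)=0$ (so $\|\ol\GG_\eps(C(W,\cdot,\cdot))\|\leq K\nu\|W\|$), both the self-map property and the contraction property follow once $\eps_0$ is small enough: $\|\FF_1(W)\|\leq K|\mu|\eps^{\eta+1} + (K|\mu|\eps^{\eta}+K b_1|\mu|\eps^{\eta+1})b_1|\mu|\eps^{\eta+1}\leq b_1|\mu|\eps^{\eta+1}$, and the Lipschitz constant of $\FF_1$ is $\leq K|\mu|\eps^{\eta}(1+b_1\eps) < 1/2$. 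The Banach fixed point theorem then yields a unique $W^\ast \in \ol B(b_1|\mu|\eps^{\eta+1})$.

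Then I would set $T_1(u,\tau) = \GG_\eps\bigl(\FF(W^\ast,u,\tau)\bigr)$ (equivalently, integrate $W^\ast$ with the correct normalization), check using $\LL_\eps\circ\GG_\eps = \Id$ from Lemma \ref{lemma:PropietatsGInfty}(1) that it solves \eqref{eq:HJperT1}, and verify that $\pa_u T_1 = W^\ast$ so that $\|\pa_u T_1\|_{2\la,\rr_1,\sigma}\leq b_1|\mu|\eps^{\eta+1}$, which is exactly the claimed estimate. The asymptotic condition \eqref{eq:AsymptCondFuncioGeneradora:uns} follows because $W^\ast\in\HH_{2\la,\rr_1,\sigma}$ forces $p_0^{-1}(u)\pa_u T^u(u,\tau) = p_0^{-1}(u)(p_0^2(u) + W^\ast(u,\tau))\to 0$ as $\Re u\to-\infty$, using \eqref{eq:separatrix:hyp:infty} for the decay of $p_0$ and of $W^\ast$. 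Real-analyticity is preserved at every step since $\GG_\eps$, $\ol\GG_\eps$ and $\FF$ map real-analytic functions to real-analytic functions. Uniqueness modulo an additive constant follows from injectivity of $\LL_\eps$ on functions with positive exponential weight, i.e. from $\mathrm{Ker}\,\LL_\eps = \{0\}$ on $\HH_{\alpha,\rr_1,\sigma}$ with $\alpha>0$. The main obstacle is not conceptual but bookkeeping: one must choose the weight $\alpha = 2\la$ so that $A$, $B_1 W$, and $C(W,\cdot,\cdot)$ all land in the same space $\HH_{2\la,\rr_1,\sigma}$ — this is exactly why $\wh H_1^1$ and $\wh H_1^2$ were arranged to be quadratic in $(q,p)$ in \eqref{def:HamPertorbat:H1}–\eqref{def:HamPertorbat:H2}, giving the $e^{2\la u}$ decay — and one must take $\rr_1 > \rr_0$ large enough that $p_0(u)$ does not vanish on $D^u_{\infty,\rr_1}$, which is where the change of variables \eqref{eq:CanviSimplecticSeparatriu} and hence the whole Hamilton–Jacobi setup is legitimate.
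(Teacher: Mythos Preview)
Your argument has a genuine gap at the contraction step for the linear term. You bound
\[
\left\|\ol\GG_\eps\bigl((B_1+B_2)W\bigr)\right\|_{2\la,\rr_1,\sigma}\leq K|\mu|\eps^{\eta}\|W\|_{2\la,\rr_1,\sigma}
\]
and assert this is a contraction factor $o(1)$. But the proposition must cover $\eta=0$ (this is the whole point of Hypothesis \textbf{HP5}, and Theorem~\ref{th:ExistenceCloseInfty} explicitly allows $\eta\ge 0$), and then the factor is $K|\mu|$, which is not small: $\mu$ ranges over $B(\mu_0)$ for an arbitrary fixed $\mu_0>0$. You cannot rescue this by exploiting $\langle B_1\rangle=0$, because it is $B_1 W$ you feed to $\ol\GG_\eps$, and $\langle B_1 W\rangle\neq 0$ in general, so the $\eps$-gain in Lemma~\ref{lemma:PropietatsGInfty}(2) is unavailable.

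The paper handles this by first killing the bad linear term with a near-identity change of time $u=v+g(v,\tau)$, where $g$ solves $\LL_\eps g=-B_1$ (Lemma~\ref{lemma:infty:ligual:canvi}); since $\langle B_1\rangle=0$ one gets $\|g\|_{0,\sigma}\leq K|\mu|\eps^{\eta+1}$. After this change the transformed linear coefficient $\wh B$ in \eqref{def:InftyHyp:Bhat} involves only $B_1(v+g,\tau)-B_1(v,\tau)$ and $B_2$, both of size $K|\mu|\eps^{\eta+1}$, so the fixed point argument (Lemma~\ref{lemma:HJ:hyp:ligual}) now closes with contraction constant $K|\mu|\eps^{\eta+1}$ regardless of whether $\eta=0$. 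Your direct scheme is fine for $\eta>0$ but does not cover the case the paper actually cares about.
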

Theorem \ref{th:ExistenceCloseInfty} is a straightforward
consequence of this proposition.

Let us observe that the operator $\FF$ defined in
\eqref{eq:infty:lmenor:operadorF} has linear terms in $w$ which are
not small when $\eta=0$. Therefore, if one wants to prove the
existence of $T$ through a fixed point argument, first we must look
for a change of variables. Let us point out that this change of
variables is not necessary for the case $\eta>0$.

\begin{lemma}\label{lemma:infty:ligual:canvi}
Let $\rr_1>\rr_0'>\rr_0>0$, where $\rr_0$ is big enough
such that $p_0(u)\neq 0$ for $u\in D_{\infty,\rr_0}^u$. Then, for
$\eps>0$ small enough, there exists a function
$g\in\HH_{0,\rr_0',\sigma}$ such that $\langle g\rangle=0$ and is
solution of
\begin{equation}\label{eq:Infty:Canvi}
\LL_\eps g=-B_1(v,\tau),
\end{equation}
where $\LL_\eps$ is the operator defined in \eqref{def:Lde} and
$B_1$ is the function defined in \eqref{def:InftyB1}. Moreover, it
satisfies that
\[
\begin{array}{cc}
\|g\|_{0,\rr_0',\sigma}\leq K|\mu|\eps^{\eta+1},&\|\pa_v
g\|_{0,\rr_0',\sigma}\leq K|\mu|\eps^{\eta+1}
\end{array}
\]
and $v+g(v,\tau)\in D^{u}_{\infty, \rr_0}$  for $(v,\tau)\in
D_{\infty, \rr_0'}^{u}\times\TT_\sigma$.

Furthermore, $(u,\tau)=(v+g(v,\tau),\tau)$ is invertible and its
inverse is of the form $(v,\tau)=(u+h(u,\tau),\tau)$, where $h$ is a
function defined for $(u,\tau)\in
D_{\infty,\rr_1}^{u}\times\TT_\sigma$ and satisfies that $h\in
\HH_{0,\rr_1,\sigma}$,
\[
\|h\|_{0,\rr_1,\sigma }\leq K|\mu|\eps^{\eta+1}
\]
and that $u+h(u,\tau)\in D_{\infty,\rr_0'}^{u}$ for $(u,\tau)\in
D_{\infty,\rr_1}^{u}\times\TT_\sigma$.

\end{lemma}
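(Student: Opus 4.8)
The plan is to construct the function $g$ as a fixed point of a contraction, invert the resulting change of variables via a standard implicit-function argument in the Banach scale $\HH_{0,\rr,\sigma}$, and then propagate the estimates. First I would note that by part~2 of Lemma \ref{lemma:PropietatsGInfty}, since $\langle B_1\rangle = 0$ (because $\wh H_1^1$ has zero average, hence so does $\pa_p\wh H_1^1$ evaluated on the separatrix), the operator $\GG_\eps$ gains a factor $\eps$: the candidate solution of \eqref{eq:Infty:Canvi} is simply $g = -\GG_\eps(B_1)$, which by Lemma \ref{lemma:Infty:Cotes} (bound $\|B_1\|_{0,\rr,\sigma}\leq K|\mu|\eps^{\eta}$) and the improved estimate in Lemma \ref{lemma:PropietatsGInfty} satisfies $\|g\|_{0,\rr_0',\sigma}\leq K|\mu|\eps^{\eta+1}$. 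Here I must be a bit careful about domains: $B_1$ is defined and analytic on $D^u_{\infty,\rr_0}\times\TT_\sigma$ where $p_0(u)\ne 0$, so $\GG_\eps(B_1)$, which only integrates ``to the left'' (decreasing $\Re u$), is analytic on the same domain, and in particular on $D^u_{\infty,\rr_0'}$. The derivative bound $\|\pa_v g\|_{0,\rr_0',\sigma}\leq K|\mu|\eps^{\eta+1}$ follows from part~3 of Lemma \ref{lemma:PropietatsGInfty} (or part~3 of Lemma \ref{lemma:Infty:PropietatsNormes}), using that $\rr_0'-\rr_0$ has a positive lower bound.

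Next I would check that $(u,\tau) = (v+g(v,\tau),\tau)$ maps $D^u_{\infty,\rr_0'}\times\TT_\sigma$ into $D^u_{\infty,\rr_0}\times\TT_\sigma$: since $|g(v,\tau)|\leq K|\mu|\eps^{\eta+1}$ is much smaller than $\rr_0'-\rr_0$ for $\eps$ small, and the domains $D^u_{\infty,\rr}$ are half-planes $\{\Re u < -\rr\}$, the real part moves by at most $K|\mu|\eps^{\eta+1} < \rr_0'-\rr_0$, so $\Re(v+g(v,\tau)) < -\rr_0' + K|\mu|\eps^{\eta+1} < -\rr_0$. For the inverse, I would set up the equation $v = u - g(v,\tau)$ and solve it by a contraction-mapping / fixed-point argument in $\HH_{0,\rr_1,\sigma}$: writing $v = u + h(u,\tau)$, the map $h \mapsto -g(u+h(u,\tau),\tau)$ is a contraction on a small ball of $\HH_{0,\rr_1,\sigma}$ because $\|\pa_v g\|$ is small, provided $\rr_1 > \rr_0'$ with $\rr_1-\rr_0'$ bounded below (so that $u+h(u,\tau)\in D^u_{\infty,\rr_0'}$ whenever $u\in D^u_{\infty,\rr_1}$). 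This yields $h\in\HH_{0,\rr_1,\sigma}$ with $\|h\|_{0,\rr_1,\sigma}\leq K|\mu|\eps^{\eta+1}$ and the claimed mapping property. Real-analyticity of $h$ follows from that of $g$ and the analytic implicit function theorem (or directly from the fact that the fixed point of an analytic contraction is analytic).

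Finally, I would verify that this $(u,\tau)=(v+g(v,\tau),\tau)$ and $(v,\tau)=(u+h(u,\tau),\tau)$ are genuinely inverse to each other, which is immediate from the fixed-point characterization, and record that $g$ indeed solves \eqref{eq:Infty:Canvi}: this is just the identity $\LL_\eps\circ\GG_\eps = \Id$ from part~1 of Lemma \ref{lemma:PropietatsGInfty} applied to $-B_1$. The only mildly delicate point—the ``main obstacle'', such as it is—is bookkeeping of the nested domains $D^u_{\infty,\rr_0}\subset D^u_{\infty,\rr_0'}\subset D^u_{\infty,\rr_1}$ and making sure each composition stays inside the domain where the previous object was constructed; everything else is a routine application of the estimates already packaged in Lemmas \ref{lemma:Infty:PropietatsNormes}, \ref{lemma:PropietatsGInfty} and \ref{lemma:Infty:Cotes}. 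Note that the whole lemma is only needed in the borderline case $\eta=0$ (when $B_1$ is $\OO(1)$ and cannot be absorbed as a small perturbation in the subsequent fixed-point argument for $T_1$); for $\eta>0$ one can simply take $g=0$, $h=0$.
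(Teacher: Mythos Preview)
Your argument has a genuine gap at the very first step: you cannot simply set $g=-\GG_\eps(B_1)$. The operator $\GG_\eps$ in \eqref{def:operadorGInfty} is the improper integral
\[
\GG_\eps(h)(u,\tau)=\int_{-\infty}^{0}h(u+t,\tau+\eps^{-1}t)\,dt,
\]
and the paper is explicit that it is only considered on $\HH_{\alpha,\rr,\sigma}$ with $\alpha>0$, precisely so that the integrand decays exponentially as $t\to-\infty$. The function $B_1$ is \emph{not} in any $\HH_{\alpha,\rr,\sigma}$ with $\alpha>0$: from \eqref{def:InftyB1} and the asymptotics \eqref{eq:separatrix:hyp:infty} one computes
\[
\lim_{\Re v\to-\infty}B_1(v,\tau)=-\mu\eps^{\eta}\Bigl(\tfrac{a_{11}(\tau)}{\lambda}+2a_{02}(\tau)\Bigr),
\]
which is generically nonzero, so $B_1$ is merely bounded ($\alpha=0$) and the defining integral for $\GG_\eps(B_1)$ diverges. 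The zero-mean condition $\langle B_1\rangle=0$ does not help here, since the improved estimate in Lemma~\ref{lemma:PropietatsGInfty} still sits under the standing hypothesis $\alpha>0$, and in any case the integral over each characteristic $\{(u+t,\tau+\eps^{-1}t):t\le 0\}$ oscillates without converging when the integrand has a nonzero periodic limit.

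The paper's fix is exactly to isolate this obstruction: split $B_1=B_{10}+B_{11}$ where $B_{10}(\tau)$ is the $v$-independent limit above and $B_{11}=B_1-B_{10}\in\HH_{\lambda,\rr_0',\sigma}$. The equation $\LL_\eps g_0=-B_{10}(\tau)$ is solved directly by $g_0=-\eps\,\overline B_{10}(\tau)$ with $\pa_\tau\overline B_{10}=B_{10}$ (well-defined since $\langle B_{10}\rangle=0$), while $\GG_\eps$ can legitimately be applied to $B_{11}$ because it decays. One then takes $g=-\eps\,\overline B_{10}-\GG_\eps(B_{11})$. With this correction in place the rest of your outline (domain bookkeeping, Cauchy/derivative estimate, and the contraction argument for the inverse $h$) goes through as you wrote.
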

\begin{proof}
From the definition of $B_1$ in \eqref{def:InftyB1} we have that
$\langle B_1\rangle=0$. On the other hand, using the definition of
$\wh H_1^1$ and $\lambda$ in \eqref{def:HamPertorbat:H1} and
\eqref{def:Potencial:Hyperbolic:PropZero} respectively, $B_1$ can be
split as
\[B_1(v,\tau)=B_{10}(\tau)+B_{11}(v,\tau),\] where, using
\eqref{eq:separatrix:hyp:infty}, \[B_{10}(\tau)=\lim_{\Re
v\rightarrow-\infty}B_{1}(v,\tau)=-\mu\eps^{\eta}\left(\frac{a_{11}(\tau)}{
\lambda}+2a_{02}(\tau)\right)\]
and $B_{11}(v,\tau)=B_1(v,\tau)-B_{10}(\tau)$. Both terms have zero
mean. Moreover,  $B_{10}\in \HH_{0,\rr_0',\sigma}$ and satisfies
$\|B_{10}\|_{0,\rr_0',\sigma}\leq K|\mu| \eps^{\eta}$ and $B_{11}\in
\HH_{\lambda,\rr_0',\sigma}$ and satisfies
$\|B_{11}\|_{\lambda,\rr_0',\sigma}\leq K|\mu|\eps^{\eta}$.

Since
$B_{10}(\tau)=\sum_{k\in\ZZ\setminus\{0\}}B_{10}^{[k]}e^{ik\tau}$
has zero average, we can define a $2\pi$-periodic primitive with
zero average as
\[\overline
B_{10}(\tau)=\sum_{k\in\ZZ\setminus\{0\}}\frac{B_{10}^{[k]}}{ik}e^{ik\tau}
\]
which satisfies $\|\overline B_{10}\|_{0,\rr_0',\sigma}\leq
K|\mu|\eps^{\eta}$.

By the linearity of equation \eqref{eq:Infty:Canvi}, we can take $g$
as
\[
g(v,\tau)=-\eps\overline
B_{10}(\tau)-\GG_\eps\left(B_{11}\right)(v,\tau),
\]
where $\GG_\eps$ is the operator defined in
\eqref{def:operadorGInfty}. Moreover, using the first statement of
Lemma \ref{lemma:Infty:PropietatsNormes} and Lemma
\ref{lemma:PropietatsGInfty},
\[
\|g\|_{0,\rr_0',\sigma}\leq \eps\left\|\overline
B_{10}\right\|_{0,\rr_0',\sigma}+\left\|\GG_\eps\left(B_{11}\right)\right\|_{\la
,\rr_0',\sigma}\leq
K|\mu|\eps^{\eta+1}+K\eps\left\|B_{11}\right\|_{\lambda,\rr_0',\sigma}\leq
K|\mu|\eps^{\eta+1}.
\]
Moreover, by Lemma \ref{lemma:PropietatsGInfty},
\[
\pa_v g=-\pa_v\GG_\eps \left(B_{11}\right)=-\GG_\eps \left(\pa_v
B_{11}\right)
\]
and then,
\[
\|\pa_v g\|_{0,\rr_0',\sigma}\leq \|\pa_v
g\|_{\la,\rr_0',\sigma}=\|\GG_\eps \left(\pa_v
B_{11}\right)\|_{\la,\rr_0',\sigma}\leq K\eps \left\|\pa_v
B_{11}\right\|_{\la,\rr_0',\sigma}\leq K|\mu|\eps^{\eta+1}.
\]
Since $\|g\|_{0,\rr_0',\sigma}\leq K|\mu|\eps^{\eta+1}$, we have
that $v+g(v,\tau)\in D_{\infty, \rr_0}^{u}$ for $(v,\tau)\in
D_{\infty, \rr_0'}^{u}\times\TT_\sigma$ provided $\eps$ is small
enough and $\rr_0'>\rr_0$.

To obtain the inverse change and its properties it is
straightforward.
\end{proof}
If we apply the change of variables $u=v+g(v,\tau)$ to equation
\eqref{eq:HJperT1}, one can see that
\[
\wh T_1(v,\tau)=T_1\left(v+g(v,\tau),\tau\right)
\]
is solution of
\begin{equation}\label{eq:HJperT1:ligual}
\LL_\eps \wh T_1=\wh\FF \left(\pa_v \wh T_1\right),
\end{equation}
where
\begin{equation}\label{eq:HJperT1:ligual:RHS}
\dps \wh\FF(h)(v,\tau)=\dps \wh A(v,\tau)+\wh B(v,\tau)
h(v,\tau)+\wh C(h(v,\tau),v,\tau),
\end{equation}
with
\begin{align}
\wh A(v,\tau)&=A\left(v+g(v,\tau),\tau\right)\label{def:InftyHyp:Ahat}\\
 \wh
B(v,\tau)&=\frac{B_1\left(v+g(v,\tau),\tau\right)-B_1(v,\tau)+B_2\left(v+g(v,
\tau),\tau\right)}{1+\pa_v
g(v,\tau)}\label{def:InftyHyp:Bhat}\\
\wh C(w,v,\tau)&=C\left(\frac{1}{1+\pa_v
g(v,\tau)}w,v+g(v,\tau),\tau\right)\label{def:InftyHyp:Chat},
\end{align}
where the functions $A(u,\tau)$, $B_1(u,\tau)$ and $B_2(u,\tau)$
 are defined in \eqref{def:InftyA}, \eqref{def:InftyB1} and \eqref{def:InftyB2}.

We look for $\wh T_1$
by using a fixed point argument for $\pa_v \wh T_1$ instead of $\wh
T_1$ itself. Therefore, we look for a fixed point of the operator
\begin{equation}\label{def:Infty:FullOperator:ligual}
\overline\FF=\overline \GG_\eps\circ\wh \FF,
\end{equation}
where $\overline \GG_\eps$ is the operator in
\eqref{def:operadorGInftyBarra}, in the Banach space
$\HH_{2\lambda,\rr_0', \sigma}$ defined in
\eqref{def:BanachInftyHyp}.

\begin{lemma}\label{lemma:HJ:hyp:ligual}
Let $\rr_0'$ be defined in Lemma \ref{lemma:infty:ligual:canvi} and
$\eps_0>0$ small enough. Then, for $\eps\in(0,\eps_0)$ there exists
a function $\wh T_1(v,\tau)$ defined in
$D_{\infty,\rr_0'}^{u}\times\TT_\sigma$ such that $\pa_v\wh
T_1\in\HH_{2\lambda,\rr_0',\sigma}$ is a fixed point of the operator
\eqref{def:Infty:FullOperator:ligual}. Furthermore, there exists a
constant $b_1>0$ such that,
\[
\left\|\pa_v\wh T_1\right\|_{2\lambda,\rr_0',\sigma}\leq
b_1|\mu|\eps^{\eta+1}.
\]

\end{lemma}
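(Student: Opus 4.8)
The statement to prove, Lemma~\ref{lemma:HJ:hyp:ligual}, asserts that the operator $\overline\FF=\overline\GG_\eps\circ\wh\FF$ defined in \eqref{def:Infty:FullOperator:ligual} has a fixed point $\pa_v\wh T_1$ in a ball $\ol B(b_1|\mu|\eps^{\eta+1})\subset\HH_{2\lambda,\rr_0',\sigma}$. The natural strategy is a contraction mapping argument in this Banach space. First I would record the estimates on the building blocks $\wh A$, $\wh B$, $\wh C$ of $\wh\FF$ in \eqref{eq:HJperT1:ligual:RHS}, which follow by composing the bounds of Lemma~\ref{lemma:Infty:Cotes} with the properties of the change of variables $g$ established in Lemma~\ref{lemma:infty:ligual:canvi}. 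Concretely: since $\|g\|_{0,\rr_0',\sigma}\le K|\mu|\eps^{\eta+1}$ and $\|\pa_v g\|_{0,\rr_0',\sigma}\le K|\mu|\eps^{\eta+1}$, the composition $\wh A(v,\tau)=A(v+g(v,\tau),\tau)$ still satisfies $\|\ol\GG_\eps(\wh A)\|_{2\lambda,\rr_0',\sigma}\le K|\mu|\eps^{\eta+1}$ (a Taylor expansion of $A$ around $v$, using that $\pa_u A\in\HH_{2\lambda,\rr,\sigma}$ with norm $\OO(|\mu|\eps^\eta)$, controls the extra terms and the $\eps$ gain comes from the factor $g$). The crucial point is the linear coefficient $\wh B$ in \eqref{def:InftyHyp:Bhat}: here the change of variables $g$ was precisely designed so that $B_1(v+g(v,\tau),\tau)-B_1(v,\tau)=\LL_\eps g\cdot(\text{something})+\dots$ is small — more precisely, by the defining equation \eqref{eq:Infty:Canvi} for $g$ and a first-order Taylor expansion, $B_1(v+g,\tau)-B_1(v,\tau)=g\,\pa_v B_1+\OO(g^2\pa_v^2B_1)$, which together with $\|g\|\le K|\mu|\eps^{\eta+1}$ and $\|\pa_v B_1\|_{0,\rr,\sigma}\le K|\mu|\eps^\eta$ gives $\|\wh B\|_{0,\rr_0',\sigma}\le K|\mu|^2\eps^{2\eta+1}+K|\mu|\eps^{\eta+1}$. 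The denominator $1+\pa_v g$ is invertible with $\|(1+\pa_v g)\ii\|\le 2$ for $\eps$ small. Thus all three pieces of $\wh\FF$ contribute terms that, after applying $\overline\GG_\eps$ (bounded by Lemma~\ref{lemma:PropietatsGInfty}), are $\OO(|\mu|\eps^{\eta+1})$ when evaluated at functions in a ball of radius $\OO(|\mu|\eps^{\eta+1})$; and the quadratic term $\wh C$ is Lipschitz with a small constant on that ball by part~3 of Lemma~\ref{lemma:Infty:Cotes}, composed with the $g$ estimates.

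\textbf{The contraction step.} With these estimates in hand, I would show that for a suitable constant $b_1>0$ and $\eps$ small enough, $\overline\FF$ maps $\ol B(b_1|\mu|\eps^{\eta+1})$ into itself. Evaluating at $h=0$: $\overline\FF(0)=\overline\GG_\eps(\wh A+\wh C(0,\cdot,\cdot))$, and since $\wh C(0,v,\tau)=0$ by construction (the constant and linear-in-$w$ terms were subtracted in \eqref{def:InftyC}), we get $\|\overline\FF(0)\|_{2\lambda,\rr_0',\sigma}\le \tfrac{b_1}{2}|\mu|\eps^{\eta+1}$ for $b_1$ large. For $h_1,h_2$ in the ball, the difference $\overline\FF(h_2)-\overline\FF(h_1)=\overline\GG_\eps\big(\wh B(h_2-h_1)+\wh C(h_2,\cdot,\cdot)-\wh C(h_1,\cdot,\cdot)\big)$ has norm bounded by $K(\|\wh B\|_{0,\rr_0',\sigma}+b_1|\mu|\eps^{\eta+1})\|h_2-h_1\|_{2\lambda,\rr_0',\sigma}\le\tfrac12\|h_2-h_1\|_{2\lambda,\rr_0',\sigma}$ for $\eps$ small, where I use part~3 of Lemma~\ref{lemma:Infty:Cotes} composed with the bound $\|(1+\pa_vg)\ii\|\le 2$ (note $\HH_{2\lambda}\subset\HH_0$ so multiplication by $\wh B\in\HH_{0,\rr_0',\sigma}$ keeps the $2\lambda$ weight by part~2 of Lemma~\ref{lemma:Infty:PropietatsNormes}). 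Hence $\overline\FF$ is a contraction and has a unique fixed point $\pa_v\wh T_1\in\ol B(b_1|\mu|\eps^{\eta+1})$. Defining $\wh T_1$ as the $u$-primitive of this fixed point (fixed by the asymptotic condition, since $\pa_v\wh T_1\in\HH_{2\lambda,\rr_0',\sigma}$ decays as $\Re v\to-\infty$ so the integral $\int_{-\infty}^v$ converges), we obtain that $\wh T_1$ solves \eqref{eq:HJperT1:ligual} because $\LL_\eps\overline\GG_\eps=\mathrm{Id}$ on $\pa_v\wh T_1$ (part~1 of Lemma~\ref{lemma:PropietatsGInfty}) and $\LL_\eps$ commutes with $\pa_v$, so $\pa_v(\LL_\eps\wh T_1-\wh\FF(\pa_v\wh T_1))=0$, and the asymptotic condition forces the constant to vanish.

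\textbf{Main obstacle.} The only genuinely delicate point is the treatment of the linear term $\wh B$ when $\eta=0$: in the original equation \eqref{eq:HJperT1} the coefficient $B_1$ of the linear term is only $\OO(|\mu|)$, not small in $\eps$, so without the preliminary change of variables $g$ the operator $\overline\GG_\eps\circ\wh\FF$ would not be contractive. The change $u=v+g(v,\tau)$ from Lemma~\ref{lemma:infty:ligual:canvi} exactly kills the leading linear term, leaving a residual $\wh B$ that is $\OO(|\mu|\eps^{\eta+1})$; one must check carefully, via Taylor expansion and the smallness of $g$ and $\pa_v g$, that this residual together with the factor $(1+\pa_v g)\ii$ is indeed small enough in the weighted norm. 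Everything else — the mapping property, the Lipschitz estimate, and the recovery of $\wh T_1$ from $\pa_v\wh T_1$ — is routine once these estimates are in place, and then Proposition~\ref{prop:HJ:hyp:general} (hence Theorem~\ref{th:ExistenceCloseInfty} in the hyperbolic case) follows by undoing the change of variables via the inverse $v=u+h(u,\tau)$ supplied by Lemma~\ref{lemma:infty:ligual:canvi}, using part~3 of Lemma~\ref{lemma:Infty:PropietatsNormes} to bound the resulting $\pa_u T_1$.
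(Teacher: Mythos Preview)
Your proposal is correct and follows essentially the same approach as the paper: a contraction mapping argument for $\overline\FF=\overline\GG_\eps\circ\wh\FF$ on a ball of radius $b_1|\mu|\eps^{\eta+1}$ in $\HH_{2\lambda,\rr_0',\sigma}$, estimating $\overline\FF(0)=\overline\GG_\eps(\wh A)$ by splitting $\wh A=A+(\wh A-A)$ and using the mean value theorem for the second piece, then bounding the Lipschitz constant via $\wh B$ and the quadratic term $\wh C$, and finally recovering $\wh T_1$ as the $v$-primitive $\int_{-\infty}^v h^\ast(w,\tau)\,dw$ of the fixed point.

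One small clarification: your explanation of why $\wh B$ is small is slightly tangled. The smallness of $B_1(v+g,\tau)-B_1(v,\tau)$ comes simply from the mean value theorem together with $\|g\|_{0,\rr_0',\sigma}\le K|\mu|\eps^{\eta+1}$ and $\|\pa_u B_1\|_{0,\rr,\sigma}\le K|\mu|\eps^{\eta}$, giving $\OO(|\mu|^2\eps^{2\eta+1})$; the defining equation $\LL_\eps g=-B_1$ is what produces the particular structure of $\wh B$ in \eqref{def:InftyHyp:Bhat} (the bare $B_1$ term disappears from the transformed equation), not an additional cancellation inside the estimate itself. Your final bound $\|\wh B\|_{0,\rr_0',\sigma}\le K|\mu|\eps^{\eta+1}$ is nonetheless correct, with the $B_2$ contribution providing the dominant term.
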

\begin{proof}
It is straightforward to see that $\ol\FF$ is well defined from
$\HH_{2\lambda,\rr_0',\sigma}$ to itself. We are going to prove that
there exists a constant $b_1>0$ such that $\ol\FF$ sends $\ol
B(b_1|\mu|\eps^{\eta+1})\subset \HH_{2\lambda,\rr_0',\sigma}$ to
itself and it is contractive there.

Let us first consider $\ol\FF(0)$. From the definition of $\ol\FF$
in \eqref{def:Infty:FullOperator:ligual} and the definition of
$\wh\FF$ in \eqref{eq:HJperT1:ligual:RHS}, we have that
\[
\overline\FF(0)(v,\tau)=\ol\GG_\eps\left(\wh
A\right)(v,\tau)=\ol\GG_\eps(A)(v,\tau)+\ol\GG_\eps\left(\wh
A-A\right)(v,\tau).
\]
The first term was already bounded in Lemma \ref{lemma:Infty:Cotes}.
For the second one, it is enough to use mean value theorem and
Lemmas \ref{lemma:Infty:Cotes} and \ref{lemma:infty:ligual:canvi} to
bound $\pa_uA$ and $g$ respectively, to obtain
\[
\left\|A(v+g(v,\tau),\tau)-A(v,\tau)\right\|_{2\lambda,\rr_0',\sigma}\leq
K|\mu|^2\eps^{2\eta+1}.
\]
Thus, applying Lemma \ref{lemma:PropietatsGInfty}, there exists
constant a $b_1>0$ such that
\[\left\|\ol\FF(0)\right\|_{2\lambda,\rr_0',\sigma}\leq\frac{b_1}{2}|\mu|\eps^{
\eta+1}.\]

Now, let $h_1,h_2\in\ol
B(b_1|\mu|\eps^{\eta+1})\in\HH_{2\lambda,\rr_0',\sigma}$. Then,
using the properties of $\ol\GG_\eps$ in Lemma
\ref{lemma:PropietatsGInfty} and the definition  of $\wh\FF$ in
\eqref{eq:HJperT1:ligual:RHS}
\[
\begin{split}
\dps\left\| \ol \FF(h_2)-\ol
\FF(h_1)\right\|_{2\lambda,\rr_0',\sigma}&\dps\leq K\left\|
\wh\FF(h_2)-\wh\FF(h_1)\right\|_{2\lambda,\rr_0',\sigma}\\
&\leq \dps K\left\|\wh B\cdot (h_2-h_1)+\wh C(h_2,u,\tau)-\wh
C(h_1,u,\tau) \right\|_{2\lambda,\rr_0',\sigma}.
\end{split}
\]
Taking into account the definitions of $\wh B$ and $\wh C$ in
\eqref{def:InftyHyp:Bhat} and \eqref{def:InftyHyp:Chat} respectively
and applying Lemmas \ref{lemma:Infty:PropietatsNormes},
\ref{lemma:Infty:Cotes} and \ref{lemma:infty:ligual:canvi}, we
 obtain
\[
\dps\left\| \ol \FF(h_2)-\ol
\FF(h_1)\right\|_{2\lambda,\rr_0',\sigma}\leq
K|\mu|\eps^{\eta+1}\|h_2-h_1\|_{2\lambda,\rr_0',\sigma}.
\]
Therefore, reducing $\eps$ if necessary, $\mathrm{Lip}\ol\FF\leq
1/2$ and therefore $\ol\FF$ is contractive from the ball $\ol
B(b_1|\mu|\eps^{\eta+1})\subset\HH_{2\lambda,\rr_0',\sigma}$ into
itself, and  it has a unique fixed point $h^\ast$. Since it
satisfies
\[
\left|h^\ast(v,\tau)\right|\leq b_1|\mu|\eps^{\eta+1}e^{2\la \Re v}
\]
for $(v,\tau)\in D_{\infty,\rr_0'}^u\times\TT_\sigma$, we can take
$\wh T_1$ as
\[
\wh T_1(v,\tau)=\int_{-\infty}^v h^\ast (w,\tau)\,dw.
\]
\end{proof}
Finally, to prove Proposition \ref{prop:HJ:hyp:general}  from Lemma
\ref{lemma:HJ:hyp:ligual}, it is enough to consider the change
$v=u+h(u,\tau)$ obtained in Lemma \ref{lemma:infty:ligual:canvi},
take $T_1(u,\tau)= \wh T_1(u+h(u,\tau),\tau)$ and increase slightly
$b_1$ if necessary.

\subsection{Local invariant manifolds in the parabolic
case}\label{sec:InftyParabolic}

We devote this section to prove the
existence of suitable representations of the unstable and stable
invariant manifolds in the domains
$D_{\infty,\rr}^u\times\TT_\sigma$ and
$D_{\infty,\rr}^s\times\TT_\sigma$ respectively, under the
hypotheses that the unperturbed Hamiltonian system has a parabolic
critical point at the origin. We proceed as we have done in Section
\ref{sec:InftyHyperbolic} for the hyperbolic case, that is, solving
equation \eqref{eq:HJperT1}. Let us point out that in the parabolic
case, by Hypothesis \textbf{HP4.2}, the perturbation is taken in
such a way that the periodic orbit remains at the origin.

\subsubsection{Banach spaces and technical lemmas}
Given $\alpha\geq0$, $\rr\geq 0$  and an analytic function
$h:D^{u}_{\infty,\rr}\rightarrow \CC$, we define
\[
\|h\|_{\alpha,\rr}=\sup_{u\in D^{u}_{\infty,\rr}}\left| u^\alpha
h(u)\right|.
\]
Moreover for $2\pi$-periodic in $\tau$, analytic functions
$h:D^{u}_{\infty,\rr}\times\TT_\sigma\rightarrow \CC$, we define the
corresponding Fourier norm
\[
\|h\|_{\alpha,\rr,\sigma}=\sum_{k\in\ZZ}\left\|h^{[k]}\right\|_{\alpha,\rr}e^{
|k|\sigma}.
\]
We introduce, thus, the following function space

\begin{equation}\label{def:BanachInftyParab}
\PP_{\alpha,\rr,\sigma}=\{
h:D^{u}_{\infty,\rr}\times\TT_\sigma\rightarrow\CC;\,\,\text{real-analytic},
\|h\|_{\alpha,\rr,\sigma}<\infty\},
\end{equation}
which can be checked that is a Banach space for any fixed
$\alpha\geq 0$.

In the next lemma, we state some properties of these Banach spaces.
\begin{lemma}\label{lemma:Infty:PropietatsNormes:Parab} The following statements
hold:
\begin{enumerate}
\item If $\alpha_1\geq \alpha_2\geq 0$, then $\PP_{\alpha_1,\rr,\sigma}\subset
\PP_{\alpha_2,\rr,\sigma}$ and
\[
\|h\|_{\alpha_2,\rr,\sigma}\leq \|h\|_{\alpha_1,\rr,\sigma}.
\]
\item If $\alpha_1,\alpha_2\geq 0$, then, for $h\in\PP_{\alpha_1,\rr,\sigma}$
and $g\in\PP_{\alpha_2,\rr,\sigma}$,
we  have that $hg\in\PP_{\alpha_1+\alpha_2,\rr,\sigma}$ and
\[
\|hg\|_{\alpha_1+\alpha_2,\rr,\sigma}\leq
\|h\|_{\alpha_1,\rr,\sigma}\|g\|_{\alpha_2,\rr,\sigma}.
\]
\end{enumerate}
\end{lemma}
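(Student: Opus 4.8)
The plan is to prove both statements by the same elementary manipulations used for the analogous hyperbolic statement Lemma \ref{lemma:Infty:PropietatsNormes}, working first at the level of the weighted sup-norms $\|\cdot\|_{\alpha,\rr}$ on $D^u_{\infty,\rr}$ and then propagating to the Fourier norms $\|\cdot\|_{\alpha,\rr,\sigma}$ termwise.

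For the first statement, I would start from the observation that on $D^u_{\infty,\rr}=\{u\in\CC;\ \Re u<-\rr\}$ one has $|u|\geq \rr\geq 1$ (we take $\rr$ large enough, as is done throughout Section \ref{sec:Infty}). Hence, for $\alpha_1\geq\alpha_2\geq 0$ and any analytic $h$,
\[
\left|u^{\alpha_2}h(u)\right|=|u|^{\alpha_2-\alpha_1}\left|u^{\alpha_1}h(u)\right|\leq \left|u^{\alpha_1}h(u)\right|,
\]
since $|u|^{\alpha_2-\alpha_1}\leq 1$. Taking the supremum over $u\in D^u_{\infty,\rr}$ yields $\|h\|_{\alpha_2,\rr}\leq\|h\|_{\alpha_1,\rr}$; applying this to every Fourier coefficient $h^{[k]}$ and multiplying by $e^{|k|\sigma}$ before summing gives $\|h\|_{\alpha_2,\rr,\sigma}\leq\|h\|_{\alpha_1,\rr,\sigma}$, and in particular $h\in\PP_{\alpha_1,\rr,\sigma}$ implies $h\in\PP_{\alpha_2,\rr,\sigma}$. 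Real-analyticity is of course unaffected.

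For the second statement, the pointwise inequality $|u^{\alpha_1+\alpha_2}(hg)(u)|=|u^{\alpha_1}h(u)|\,|u^{\alpha_2}g(u)|$ immediately gives $\|hg\|_{\alpha_1+\alpha_2,\rr}\leq\|h\|_{\alpha_1,\rr}\|g\|_{\alpha_2,\rr}$ for the sup-norms. To pass to the Fourier norms I would use that the Fourier coefficients of a product are given by the convolution $(hg)^{[k]}=\sum_{j\in\ZZ}h^{[j]}g^{[k-j]}$, so that $\|(hg)^{[k]}\|_{\alpha_1+\alpha_2,\rr}\leq\sum_j\|h^{[j]}\|_{\alpha_1,\rr}\|g^{[k-j]}\|_{\alpha_2,\rr}$; then, using $|k|\leq|j|+|k-j|$ and Fubini for the (absolutely convergent) double series,
\[
\sum_{k}\|(hg)^{[k]}\|_{\alpha_1+\alpha_2,\rr}e^{|k|\sigma}
\leq\sum_{j}\|h^{[j]}\|_{\alpha_1,\rr}e^{|j|\sigma}\sum_{m}\|g^{[m]}\|_{\alpha_2,\rr}e^{|m|\sigma}
=\|h\|_{\alpha_1,\rr,\sigma}\|g\|_{\alpha_2,\rr,\sigma}<\infty,
\]
which shows both $hg\in\PP_{\alpha_1+\alpha_2,\rr,\sigma}$ and the claimed norm bound.

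I do not expect any genuine obstacle here: the only points requiring a word of care are that $\rr$ is taken large enough so that $|u|\geq 1$ on the domain (otherwise the monotonicity in $\alpha$ would fail), and that the convolution formula for Fourier coefficients of products is justified on $\TT_\sigma$ by the absolute convergence encoded in the definition of $\PP_{\alpha,\rr,\sigma}$. The argument is the exact parabolic analogue of the proof of Lemma \ref{lemma:Infty:PropietatsNormes}, with the exponential weight $e^{-\alpha u}$ replaced by the polynomial weight $u^\alpha$.
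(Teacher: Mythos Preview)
Your proof is correct. The paper does not give an explicit proof of this lemma (nor of its hyperbolic analogue, Lemma \ref{lemma:Infty:PropietatsNormes}); it treats these Banach-space properties as standard and moves on directly to the next statement. Your argument is exactly the natural one, and your observation that the inequality in item~1 requires $|u|\geq 1$ on $D^u_{\infty,\rr}$, hence $\rr\geq 1$, is a correct caveat that the paper leaves implicit (in practice $\rr$ is always taken large, e.g.\ so that $p_0$ does not vanish).
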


As in Section \ref{sec:InftyHyperbolic}, we need to use the
operators  $\GG_\eps$ and $\bar\GG_\eps$ formally defined in
\eqref{def:operadorGInfty} and \eqref{def:operadorGInftyBarra}
respectively.

\begin{lemma}\label{lemma:PropietatsGInfty:parab}
The operators $\GG_\eps$ and  $\overline\GG_\eps$ acting on the
spaces $\PP_{\alpha,\rr,\sigma}$ with $\alpha>1$ satisfy the
following properties.
\begin{enumerate}
\item For any $\alpha>1$, $\GG_\eps
:\PP_{\alpha,\rr,\sigma}\rightarrow\PP_{\alpha-1,\rr,\sigma}$ is
well defined and linear continuous. Moreover, commutes with $\pa_u$
and $\LL_\eps\circ\GG_\eps=\mathrm{Id}$.
\item If $h\in\PP_{\alpha,\rr,\sigma}$ for some $\alpha> 1$, then
\[
\left\|\GG_\eps(h)\right\|_{\alpha-1,\rr,\sigma}\leq
K\|h\|_{\alpha,\rr,\sigma}.
\]
Furthermore, if $h\in\PP_{\alpha,\rr,\sigma}$ for some $\alpha> 0$
and $\langle h\rangle=0$, then
\[
\left\|\GG_\eps(h)\right\|_{\alpha,\rr,\sigma}\leq
K\eps\|h\|_{\alpha,\rr,\sigma}.
\]
\item If $h\in\PP_{\alpha,\rr,\sigma}$ for some $\alpha\geq 1$, then
$\overline\GG_\eps(h)\in
\PP_{\alpha,\rr,\sigma}$ and
\[
\left\|\overline\GG_\eps(h)\right\|_{\alpha,\rr,\sigma}\leq
K\|h\|_{\alpha,\rr,\sigma}.
\]
\end{enumerate}
\end{lemma}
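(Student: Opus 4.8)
The plan is to follow the same lines as the proof of Lemma~\ref{lemma:PropietatsGInfty} (the hyperbolic analogue), which in turn mimics Lemma~5.5 in \cite{GuardiaOS10}; the only difference is that the weights defining $\PP_{\alpha,\rr,\sigma}$ are the algebraic ones $|u|^{\alpha}$ instead of exponential ones, so one must replace the elementary exponential estimates by estimates of $\int_{0}^{\infty}|u-s|^{-\alpha}\,ds$. First I would observe that for $u\in D^{u}_{\infty,\rr}$ and $t\le 0$ the point $u+t$ stays in $D^{u}_{\infty,\rr}$ (its real part only decreases) while $\tau+\eps\ii t$ stays in $\TT_\sigma$ (its imaginary part is unchanged), so the integrals \eqref{def:operadorGInfty} and \eqref{def:operadorGInftyBarra} are evaluated along admissible characteristics. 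Passing to Fourier coefficients, $\GG_\eps$ acts mode by mode as $\GG_\eps(h)^{[k]}(u)=\int_{-\infty}^{0}h^{[k]}(u+t)e^{ikt/\eps}\,dt$, and the whole statement reduces to uniform estimates of these scalar oscillatory integrals together with the summation over $k$ weighted by $e^{|k|\sigma}$.

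For item~1, convergence when $\alpha>1$ is immediate from $|h^{[k]}(u+t)|\le\|h^{[k]}\|_{\alpha,\rr}|u+t|^{-\alpha}$ and $\int_{-\infty}^{0}|u+t|^{-\alpha}\,dt\le|\Re u|^{1-\alpha}/(\alpha-1)<\infty$; for $0<\alpha\le1$ the $k\ne0$ modes converge only conditionally and will be handled by a contour deformation. The algebraic facts are routine: differentiation under the integral sign gives $\pa_u\GG_\eps(h)=\GG_\eps(\pa_u h)$, and writing $\tfrac{d}{dt}h(u+t,\tau+\eps\ii t)=(\pa_u h)+\eps\ii(\pa_\tau h)$ and integrating from $-\infty$ to $0$, using that each mode $h^{[k]}(u+t)\to0$ as $t\to-\infty$, yields $h=\pa_u\GG_\eps(h)+\eps\ii\pa_\tau\GG_\eps(h)=\LL_\eps\GG_\eps(h)$, the operator $\LL_\eps$ being the one in \eqref{def:Lde} with $\pa_u$ in place of $\pa_v$.

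The two bounds in item~2 come from different mechanisms. For the loss-of-one-power bound I would estimate, for $\alpha>1$, the model integral $\int_{0}^{\infty}|u-s|^{-\alpha}\,ds$ by splitting according to whether $|\Im u|\le|\Re u|$ (where $|u|\le\sqrt2\,|\Re u|$ and $|u-s|\ge|\Re u|+s$, giving $\le K|\Re u|^{1-\alpha}\le K'|u|^{1-\alpha}$) or $|\Im u|>|\Re u|$ (where $|u|\le\sqrt2\,|\Im u|$ and $|u-s|^{2}\ge(|\Re u|+s)^{2}+|\Im u|^{2}$, so rescaling by $|\Im u|$ gives $\le K|\Im u|^{1-\alpha}\le K'|u|^{1-\alpha}$); this yields $\|\GG_\eps(h)^{[k]}\|_{\alpha-1,\rr}\le K\|h^{[k]}\|_{\alpha,\rr}$ and, after multiplying by $e^{|k|\sigma}$ and summing, the first inequality. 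For the gain-of-$\eps$ bound, with $\langle h\rangle=0$ only $k\ne0$ modes occur; I would deform the contour $(-\infty,0]$ in the $t$-variable to a ray $\{-\rho e^{i\theta}:\rho\ge0\}$ with $\theta$ a fixed acute angle entering the half-plane in which $e^{ikt/\eps}$ decays, so that there $|e^{ikt/\eps}|=e^{-|k|\rho\sin|\theta|/\eps}$; the deformation is legitimate because the integrand is analytic in the intervening sector and decays on it. Splitting $\rho\in[0,|u|/2]$ (where $|u-\rho e^{i\theta}|\ge|u|/2$, producing a factor $|u|^{-\alpha}\cdot\eps/(|k|\sin|\theta|)$) from $\rho>|u|/2$ (where $e^{-|k|\rho\sin|\theta|/\eps}$ is super-exponentially small in $1/\eps$, hence negligible against $\eps|u|^{-\alpha}$), one gets $\|\GG_\eps(h)^{[k]}\|_{\alpha,\rr}\le\tfrac{K\eps}{|k|}\|h^{[k]}\|_{\alpha,\rr}$; summing over $k\ne0$, with $1/|k|\le1$, gives $\|\GG_\eps(h)\|_{\alpha,\rr,\sigma}\le K\eps\|h\|_{\alpha,\rr,\sigma}$.

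Finally, for item~3 I would not differentiate the integrand but use the identity $\overline\GG_\eps(h)=\pa_u\GG_\eps(h)=h-\eps\ii\pa_\tau\GG_\eps(h-\langle h\rangle)$, which follows from $\LL_\eps\GG_\eps=\Id$ and the fact that $\pa_\tau$ annihilates the $k=0$ mode; the per-mode bound $\|\GG_\eps(h-\langle h\rangle)^{[k]}\|_{\alpha,\rr}\le\tfrac{K\eps}{|k|}\|h^{[k]}\|_{\alpha,\rr}$ then makes the dangerous factor $\eps\ii|k|$ cancel, giving $\|\overline\GG_\eps(h)\|_{\alpha,\rr,\sigma}\le K\|h\|_{\alpha,\rr,\sigma}$. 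The main obstacle is precisely the contour-deformation estimate underlying the gain of $\eps$: one must track the weight $|u|^{\alpha}$ (not $|\Re u|^{\alpha}$) uniformly up to the boundary $\{\Re u=-\rr\}$ of $D^{u}_{\infty,\rr}$, which is what forces the near/far splitting in $\rho$, and one must verify that the integrand genuinely decays on the deformation sector so that the contour can be moved — a point that is delicate exactly in the range $0<\alpha\le1$ where the original integral is only conditionally convergent.
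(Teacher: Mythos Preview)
Your proposal is correct. The paper does not actually give a proof of this lemma: it is stated without proof, and the analogous hyperbolic result (Lemma~\ref{lemma:PropietatsGInfty}) is dispatched with the single sentence ``It follows the same lines as the proof of Lemma 5.5 in \cite{GuardiaOS10}.'' So there is no in-paper argument to compare against; you have written out precisely the kind of proof the authors had in mind, adapting the exponential weights of the hyperbolic case to the algebraic weights $|u|^{\alpha}$ of $\PP_{\alpha,\rr,\sigma}$.

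A couple of remarks on points you flagged yourself. First, your near/far splitting for the model integral $\int_{0}^{\infty}|u-s|^{-\alpha}\,ds$ is the right way to get the uniform bound $K|u|^{1-\alpha}$; the naive estimate via $|\Re u|$ alone would fail when $|\Im u|\gg|\Re u|$. Second, the contour-deformation step for the gain-of-$\eps$ bound is legitimate even for $0<\alpha\le1$: the arc contribution at radius $R$ is controlled by $R^{1-\alpha}\int_{0}^{|\theta|}e^{-|k|R\sin\phi/\eps}\,d\phi\le K\eps R^{-\alpha}/|k|\to0$, so conditional convergence on the real axis is no obstacle. Third, your identity $\overline\GG_\eps(h)=h-\eps^{-1}\pa_\tau\GG_\eps(h-\langle h\rangle)$ is the clean way to handle item~3, and has the pleasant side effect of making $\overline\GG_\eps(h)$ well defined even at $\alpha=1$, where the zero mode of $\GG_\eps(h)$ itself would diverge logarithmically; the paper's statement of item~3 for $\alpha\ge1$ is thus consistent with your argument.
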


We also state  a technical lemma about properties of the functions
$A$, $B_1$ and $C$ defined in \eqref{def:InftyA},
\eqref{def:InftyB1} and \eqref{def:InftyC} respectively. Notice
that now the function $B_2$ defined in \eqref{def:InftyB2}
satisfies $B_2=0$ since, by hypothesis, the perturbation fixes the
periodic orbit at the origin.

We first fix $\rr_0>0$ such that $p_0(u)$ does not vanish in
$D^{u}_{\infty,\rr_0}$ and we define the constant
\begin{equation}\label{def:alpha0}
\alpha_0=\frac{2n}{m-2}>1,
\end{equation}
where $m$ is the order of the potential
\eqref{def:Potencial:Parabolic} and $n$ is the order of the
perturbation \eqref{def:Ham:Original:perturb:poli}. We observe that
$q_0(u)\in \PP_{\frac{2}{m-2},\rr,\sigma}$ and $p_0(u)\in
\PP_{\frac{m}{m-2},\rr,\sigma}$ for any  $\rr$ big enough and any
$\sigma>0$.

\begin{lemma}\label{lemma:Infty:Cotes:parab}
Let us consider $\rr>\rr_0$. Then, the functions $A$, $B_1$ and $C$
defined in \eqref{def:InftyA}, \eqref{def:InftyB1} and
\eqref{def:InftyC} satisfy the following properties,
\begin{enumerate}
\item $A\in\PP_{\alpha_0,\rr,\sigma}$ and $\pa_u
A\in\PP_{\alpha_0+1,\rr,\sigma}$. Moreover, $\langle A\rangle=\langle
\pa_uA\rangle=0$ and
\begin{equation}\label{eq:Infty:CotesA:parab}
\begin{array}{cc}
\left\| \pa_uA\right\|_{\alpha_0+1,\rr,\sigma}\leq
K|\mu|\eps^{\eta}, &\left\|\overline\GG_\eps
(A)\right\|_{\alpha_0+1,\rr,\sigma}\leq K|\mu|\eps^{\eta+1}.
\end{array}
\end{equation}
\item $B_1 \in \PP_{\frac{2n-m-2}{m-2},\rr,\sigma}$ and $\pa_uB_1 \in
\PP_{\frac{2n-m-2}{m-2}+1,\rr,\sigma}$.  Moreover, they
satisfy
\begin{equation}\label{eq:Infty:CotesB:parab}
\begin{array}{cc}
\|B_1\|_{\frac{2n-m-2}{m-2},\rr,\sigma}\leq
K|\mu|\eps^{\eta},&\|\pa_uB_1\|_{\frac{2n-m-2}{m-2}+1,\rr,\sigma}\leq
K|\mu|\eps^{\eta}.
\end{array}
\end{equation}
\item Let $h_1,h_2\in B(\nu)\subset\PP_{\alpha_0+1,\rr,\sigma}$ with $\nu\ll 1$.
Then,
\[
\left\| C(h_2,u,\tau)-
C(h_1,u,\tau)\right\|_{\alpha_0+1,\rr,\sigma}\leq K \nu
\|h_2-h_1\|_{\alpha_0+1,\rr,\sigma}.
\]
\end{enumerate}
\end{lemma}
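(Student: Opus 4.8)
The plan is to follow closely the structure of the proof of Lemma~\ref{lemma:Infty:Cotes} for the hyperbolic case, exploiting the decisive simplification that in the parabolic setting $(\xp,\yp)=(0,0)$. Indeed, with $\xp\equiv\yp\equiv 0$ and $V(0)=V'(0)=0$ (Hypothesis~\textbf{HP1.2}), the ``potential difference'' bracket in the definition~\eqref{def:InftyA} of $A$ vanishes identically, and $\wh H_1$ reduces to $\wh H_1^1$ (so $B_2\equiv 0$, as already noted). Hence $A(u,\tau)=-\mu\eps^{\eta}\wh H_1^1(q_0(u),p_0(u),\tau)$ and $B_1(u,\tau)=-\mu\eps^{\eta}p_0^{-1}(u)\,\pa_p\wh H_1^1(q_0(u),p_0(u),\tau)$. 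The whole statement is then a matter of weight bookkeeping in the spaces $\PP_{\alpha,\rr,\sigma}$, using that by~\eqref{eq:separatrix:hyp:parab} one has $q_0\in\PP_{2/(m-2),\rr,\sigma}$ and $p_0\in\PP_{m/(m-2),\rr,\sigma}$, together with the dual fact that, since $\rr>\rr_0$, the function $p_0^{-1}$ is analytic on $D^u_{\infty,\rr}$ and satisfies $|p_0(u)^{-1}|\le K|u|^{m/(m-2)}$ there. Every constant comparison below comes down to Hypothesis~\textbf{HP4.2}, i.e. $2n-2\ge m$ (which in particular forces $n\ge 3>2$, so the subtraction of linear terms in $\wh H_1^1$ plays no role).

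For the estimates on $A$: $\wh H_1^1$ in~\eqref{def:HamPertorbat:H1} is a sum of monomials $q^k p^l$ with $k+l\ge n$; evaluated on the separatrix, $q_0^k p_0^l\in\PP_{(2k+ml)/(m-2),\rr,\sigma}$, and the minimum of $2k+ml$ over $k+l\ge n$, $k,l\ge 0$, is $2n$, attained at $(k,l)=(n,0)$. Hence $A\in\PP_{\alpha_0,\rr,\sigma}$ with $\alpha_0$ as in~\eqref{def:alpha0}, $\langle A\rangle=0$ (since $\langle\wh H_1^1\rangle=0$) and $\|A\|_{\alpha_0,\rr,\sigma}\le K|\mu|\eps^{\eta}$. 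Differentiating and using $\dot q_0=p_0$, $\dot p_0=-V'(q_0)$ improves the decay by exactly one power, so $\pa_u A\in\PP_{\alpha_0+1,\rr,\sigma}$, $\langle\pa_u A\rangle=0$ and $\|\pa_u A\|_{\alpha_0+1,\rr,\sigma}\le K|\mu|\eps^{\eta}$. Finally, since $\GG_\eps$ commutes with $\pa_u$, $\overline\GG_\eps(A)=\pa_u\GG_\eps(A)=\GG_\eps(\pa_u A)$; as $\pa_u A$ has zero average, the zero-mean case of Lemma~\ref{lemma:PropietatsGInfty:parab}(2) gives $\|\overline\GG_\eps(A)\|_{\alpha_0+1,\rr,\sigma}\le K\eps\|\pa_u A\|_{\alpha_0+1,\rr,\sigma}\le K|\mu|\eps^{\eta+1}$, which is~\eqref{eq:Infty:CotesA:parab}.

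For $B_1$: the monomials of $\pa_p\wh H_1^1$ are $q^k p^{l'}$ with $k+l'\ge n-1$, so $\pa_p\wh H_1^1(q_0,p_0,\cdot)\in\PP_{2(n-1)/(m-2),\rr,\sigma}$; multiplying by $p_0^{-1}$, which contributes a growth factor $|u|^{m/(m-2)}$, yields $B_1\in\PP_{(2n-m-2)/(m-2),\rr,\sigma}$, a nonnegative weight precisely by~\textbf{HP4.2}, with $\|B_1\|_{\frac{2n-m-2}{m-2},\rr,\sigma}\le K|\mu|\eps^{\eta}$; one more differentiation gains a power of decay, giving the second bound in~\eqref{eq:Infty:CotesB:parab}. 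For $C$ in~\eqref{def:InftyC}, one writes it, exactly as in the hyperbolic case, as the Taylor remainder of order $\ge 2$ in $w$ of $-\mu\eps^{\eta}\wh H_1(q_0,p_0+w/p_0,\cdot)$ together with the term $-w^2/(2p_0^2)$; for $h_i\in B(\nu)\subset\PP_{\alpha_0+1,\rr,\sigma}$ one has $h_i/p_0\in\PP_{(2n-2)/(m-2),\rr,\sigma}$ with norm $\le K\nu$, and the inequality $2n-2\ge m$ is exactly what guarantees that $(h_i/p_0)^2$, and all higher remainder terms, land back in $\PP_{\alpha_0+1,\rr,\sigma}$. An integral representation of the remainder together with the product estimates of Lemma~\ref{lemma:Infty:PropietatsNormes:Parab} then produces $\|C(h_2,u,\tau)-C(h_1,u,\tau)\|_{\alpha_0+1,\rr,\sigma}\le K\nu\|h_2-h_1\|_{\alpha_0+1,\rr,\sigma}$.

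The step I expect to require the most care is not any individual estimate but the uniform control of $p_0^{-1}$ on the unbounded domain $D^u_{\infty,\rr}$: one needs both that $\rr=\rr(\rr_0)$ is large enough that $p_0$ has no zeros there and that $|p_0(u)|\ge c|u|^{-m/(m-2)}$ uniformly, which must be extracted from the asymptotic expansion~\eqref{eq:separatrix:hyp:parab} as $\Re u\to-\infty$, combined with a compactness argument on the bounded part of the domain. Once this is secured, every remaining estimate is a routine application of Lemmas~\ref{lemma:Infty:PropietatsNormes:Parab} and~\ref{lemma:PropietatsGInfty:parab}, parallel to the hyperbolic proof, and the statement follows.
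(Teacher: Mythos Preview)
Your proposal is correct and follows essentially the same approach as the paper's own proof: both exploit that in the parabolic case $(x_p,y_p)=(0,0)$ kills the potential bracket and the $\wh H_1^2$ term, reduce everything to weight bookkeeping for monomials $q_0^k p_0^l$ in the spaces $\PP_{\alpha,\rr,\sigma}$, and obtain the $\overline\GG_\eps(A)$ bound via $\GG_\eps(\pa_u A)$ and the zero-mean estimate of Lemma~\ref{lemma:PropietatsGInfty:parab}. The only cosmetic difference is that the paper treats part~(3) by splitting $C=C_1+C_2$ and checking the inequality $2m/(m-2)\le\alpha_0+1$ explicitly, while you phrase the same computation as $h_i/p_0\in\PP_{(2n-2)/(m-2)}$; your flagging of the uniform lower bound on $|p_0|$ is a detail the paper handles implicitly through the choice of $\rr_0$.
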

\begin{proof}
We prove the lemma in the polynomial case. The trigonometric one can
be done analogously. For the first statement, recall that
in the parabolic case the periodic orbit is located at the origin by
Hypothesis \textbf{HP4.2}. Then
\[
A(u,\tau)=-\mu\eps^{\eta} H_1 (q_0(u),p_0(u),\tau),
\]
where $H_1$ is the function defined in
\eqref{def:Ham:Original:perturb:poli} and has zero mean. On the
other hand, it is clear that the monomial with lowest order as $\Re
u\rightarrow +\infty$ corresponds to $a_{n0}q_0^n(u)$ which behaves
as
\[
a_{n0}(\tau)q_0^n(u)\sim\frac{1}{u^{\alpha_0}}.
\]
Then $A\in \PP_{\alpha_0,\rr,\sigma}$, that implies $\pa_u A\in
\PP_{\alpha_0+1,\rr,\sigma}$ and
\[
\left\|\pa_uA\right\|_{\alpha_0+1,\rr,\sigma}\leq K|\mu|\eps^{\eta}.
\]
Moreover, by Lemma \ref{lemma:PropietatsGInfty:parab},
\[
\left\|\ol\GG_\eps (A)\right\|_{\alpha_0+1,\rr,\sigma}=
\left\|\GG_\eps (\pa_u A)\right\|_{\alpha_0+1,\rr,\sigma}\leq
K|\mu|\eps^{\eta+1}.
\]
For the second statement, let us recall that
\[
B_1(u,\tau)=-\mu\eps^{\eta}\sum_{\substack{i+j=n\\ j\geq
1}}^Na_{ij}(\tau)q_0^i(u)p_0^{j-2}(u).
\]
As $\Re u\rightarrow-\infty$, the monomials of $B_1$ behave as
\[
a_{ij}(\tau)q_0^i(u)p_0^{j-2}(u)\sim
u^{-\left(\frac{2}{m-2}i+\left(\frac{2}{m-2}+1\right)(j-2)\right)}.
\]
Taking into account that $2n-2\geq m$ by Hypothesis \textbf{HP5} and
that $i+j\geq n$ and $j\geq 1$,
\[
\begin{split}
\frac{2}{m-2}i+\left(\frac{2}{m-2}+1\right)(j-2)&=\frac{2}{m-2}(i+j)+j-\frac{2m}
{m-2}\\
&\geq \frac{2n}{m-2}+1-\frac{2m}{m-2}.
\end{split}
\]
Therefore $B_1\in \PP_{\frac{2n-m-2}{m-2},\rr,\sigma}$ and satisfies
$\|B_1\|_{\frac{2n-m-2}{m-2},\rr,\sigma}\leq K|\mu|\eps^{\eta}$. For
$\pa_u B_1$, it is enough to differentiate. For the case $2n-2> m$
we have that $\pa_u B_1\in \PP_{\frac{2n-m-2}{m-2}+1,\rr,\sigma}$.
In the case $2n-2=m$ we have that
\[
\pa_u B_1\in
\PP_{\frac{1}{m-2}+1,\rr,\sigma}\subset\PP_{\frac{2n-m-2}{m-2}+1,\rr,\sigma}.
\]
In both cases, we have that
$\|\pa_uB_1\|_{\frac{2n-m-2}{m-2}+1,\rr,\sigma}\leq
K|\mu|\eps^{\eta}$.

We bound the third term in the polynomial case. We split $C=C_1+C_2$ as
\[
\begin{split}
C_1(w,u,\tau)&=-\frac{w^2}{2p_0^2(u)}\\
C_2(w,u,\tau)&=-\mu\eps^{\eta} \sum_{\substack{i+j=n\\j\geq 1}}^N
a_{ij}(\tau)
q_0^i(u)p_0^j(u)\left(\left(1+\frac{w}{p_0^2(u)}\right)^j-1-j\frac{w}{p_0^2(u)}
\right).
\end{split}
\]
Let $h_1,h_2\in B(\nu)\subset\PP_{\alpha_0+1,\rr,\sigma}$. Then, for
the first term,
\[
\begin{split}
\left\|C_1(h_2,u,\tau)-C_1(h_1,u,\tau)\right\|_{\alpha_0+1,\rr,\sigma}&\leq K
\left\|p_0(u)^{-2}(h_2+h_1)\right\|_{0,\rr,\sigma}\left\|h_2-h_1\right\|_{
\alpha_0+1,\rr,\sigma}\\
&\leq
K\left\|h_2+h_1\right\|_{2m/(m-2),\rr,\sigma}\left\|h_2-h_1\right\|_{\alpha_0+1,
\rr,\sigma}.
\end{split}
\]
By Hypotheses \textbf{HP5}, we have $2n-2\geq m$ which implies
$2m/(m-2)\leq \alpha_0+1$ and therefore
\[
\left\|h_2+h_1\right\|_{2m/(m-2),\rr,\sigma}\leq
\left\|h_2+h_1\right\|_{\alpha_0+1,\rr,\sigma}\leq K\nu.
\]
Reasoning analogously, one can see that
\[
\left\|C_2(h_2,u,\tau)-C_2(h_1,u,\tau)\right\|_{\alpha_0+1,\rr,\sigma}\leq
K|\mu|\eps^{\eta}\nu\left\|h_2-h_1\right\|_{\alpha_0+1,\rr,\sigma}.
\]
\end{proof}

\subsubsection{Proof of Theorem \ref{th:ExistenceCloseInfty} in the
parabolic case}\label{sub:Infty:Parab:ProofGeneral}

We devote this
section to prove Theorem \ref{th:ExistenceCloseInfty} for the case
in which the unperturbed Hamiltonian has a parabolic critical point.
First we rewrite it in terms of the Banach spaces defined in
\eqref{def:BanachInftyParab}.

\begin{proposition}\label{prop:HJ:parab:general}
Let the constant $\alpha_0$ be defined in
\eqref{def:alpha0}, $\rr_1>0$ big enough and  $\eps_0>0$ small
enough. Then, for $\eps\in(0,\eps_0)$, there exists a function
$T_1(u,\tau)$ defined in $D_{\infty,\rr_1}^{u}\times\TT_\sigma$
which satisfies equation \eqref{eq:HJperT1} and the asymptotic
condition \eqref{eq:AsymptCondFuncioGeneradora:uns}. Moreover,
$\pa_uT_1\in\PP_{\alpha_0+1,\rr_1,\sigma}$ and there exists a
constant $b_1>0$ such that
\[
\|\pa_u T_1\|_{\alpha_0+1,\rr_1,\sigma}\leq b_1|\mu|\eps^{\eta+1}.
\]
\end{proposition}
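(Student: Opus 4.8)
The plan is to reduce Proposition~\ref{prop:HJ:parab:general} to a contraction mapping argument for $\pa_u T_1$ in the Banach space $\PP_{\alpha_0+1,\rr_1,\sigma}$, exactly parallel to the hyperbolic case treated in Section~\ref{sub:Infty:Hyp:ProofGeneral}, but using the estimates of Lemma~\ref{lemma:Infty:Cotes:parab} and the operators $\GG_\eps$, $\overline\GG_\eps$ acting on the spaces $\PP_{\alpha,\rr,\sigma}$ as described in Lemma~\ref{lemma:PropietatsGInfty:parab}. First I would recall that a solution $T_1$ of \eqref{eq:HJperT1} satisfying the asymptotic condition \eqref{eq:AsymptCondFuncioGeneradora:uns} is obtained by solving $\LL_\eps T_1=\FF(\pa_u T_1,u,\tau)$ with $\FF$ split as in \eqref{eq:infty:lmenor:operadorF} into $A+(B_1+B_2)w+C(w,u,\tau)$; in the parabolic case $B_2=0$ since the periodic orbit sits at the origin (Hypothesis \textbf{HP4.2}). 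Differentiating formally, $w=\pa_u T_1$ must be a fixed point of $\overline\FF=\overline\GG_\eps\circ\FF$, where $\FF$ is now viewed as the map $h\mapsto A+B_1 h+C(h,u,\cdot)$.

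The key contrast with the hyperbolic case is the role of the linear term $B_1 h$. By Lemma~\ref{lemma:Infty:Cotes:parab}, $B_1\in\PP_{\frac{2n-m-2}{m-2},\rr,\sigma}$ with $\|B_1\|\leq K|\mu|\eps^{\eta}$, and since $2n-2\geq m$ by Hypothesis \textbf{HP5} (i.e.\ \textbf{HP4.2}), the weight $\frac{2n-m-2}{m-2}$ is nonnegative; hence multiplication by $B_1$ maps $\PP_{\alpha_0+1,\rr,\sigma}$ into a space with strictly larger (or equal) weight, so no preliminary change of variables of the kind used in Lemma~\ref{lemma:infty:ligual:canvi} is needed here — the linear term is automatically small, of size $K|\mu|\eps^{\eta}$, and better still gains decay. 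Concretely I would argue: (i) $\overline\FF(0)=\overline\GG_\eps(A)$, which by \eqref{eq:Infty:CotesA:parab} has norm $\leq K|\mu|\eps^{\eta+1}$, so one can fix $b_1>0$ with $\|\overline\FF(0)\|_{\alpha_0+1,\rr_1,\sigma}\leq \tfrac{b_1}{2}|\mu|\eps^{\eta+1}$; (ii) for $h_1,h_2\in \overline B(b_1|\mu|\eps^{\eta+1})\subset\PP_{\alpha_0+1,\rr_1,\sigma}$,
\[
\|\overline\FF(h_2)-\overline\FF(h_1)\|_{\alpha_0+1,\rr_1,\sigma}\leq K\big(\|B_1\|\,\|h_2-h_1\|+\|C(h_2,\cdot)-C(h_1,\cdot)\|\big)\leq K|\mu|\eps^{\eta}\|h_2-h_1\|_{\alpha_0+1,\rr_1,\sigma},
\]
using Lemma~\ref{lemma:PropietatsGInfty:parab} for $\overline\GG_\eps$, Lemma~\ref{lemma:Infty:PropietatsNormes:Parab} for the product, and the Lipschitz estimate for $C$ in Lemma~\ref{lemma:Infty:Cotes:parab}; reducing $\eps_0$ makes this Lipschitz constant $\leq 1/2$. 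So $\overline\FF$ is a contraction of the ball into itself and has a unique fixed point $w^\ast=\pa_u T_1$ with $\|w^\ast\|_{\alpha_0+1,\rr_1,\sigma}\leq b_1|\mu|\eps^{\eta+1}$.

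Finally I would recover $T_1$ itself by integration: since $|w^\ast(u,\tau)|\leq b_1|\mu|\eps^{\eta+1}|u|^{-(\alpha_0+1)}$ with $\alpha_0+1>1$ on $D^u_{\infty,\rr_1}$, the integral $T_1(u,\tau)=\int_{-\infty}^{u}w^\ast(v,\tau)\,dv$ (taken along the appropriate direction $\Re v\to-\infty$ inside $D^u_{\infty,\rr_1}$) converges, defines a real-analytic function, satisfies $\pa_u T_1=w^\ast$, and by construction fulfills \eqref{eq:HJperT1} and the asymptotic condition \eqref{eq:AsymptCondFuncioGeneradora:uns}. Then Theorem~\ref{th:ExistenceCloseInfty} in the parabolic case follows immediately, since $\PP_{\alpha_0+1,\rr_1,\sigma}\subset\PP_{\alpha_0,\rr_1,\sigma}$ and $\pa_u T_0(u)=p_0^2(u)$ decays like $|u|^{-2m/(m-2)}$, so $|\pa_u T_1(u,\tau)-\pa_u T_0(u)|=|\pa_u T_1(u,\tau)|\leq b_1|\mu|\eps^{\eta+1}$ after adjusting constants. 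The main obstacle, such as it is, is bookkeeping of the weights: one must check that $\alpha_0+1$ is the correct landing exponent for $\overline\GG_\eps\circ\FF$, i.e.\ that the weight losses from $\overline\GG_\eps$ on $A$ (which is of weight $\alpha_0$, cf.\ the $\pa_u A$ estimate) and the weight gains from $B_1$ and from the quadratic structure of $C$ all conspire to keep the iteration in $\PP_{\alpha_0+1,\rr_1,\sigma}$; this is exactly what Hypothesis \textbf{HP4.2} ($2n-2\geq m$) guarantees, and is where that hypothesis is used.
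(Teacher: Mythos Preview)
Your overall architecture is right, but there is a genuine gap in the contraction step when $\eta=0$. You claim that ``reducing $\eps_0$ makes this Lipschitz constant $\leq 1/2$,'' but the Lipschitz constant you obtain is $K|\mu|\eps^{\eta}$; when $\eta=0$ this equals $K|\mu|$ and is independent of $\eps$. Since $\mu$ ranges over $B(\mu_0)$ with $\mu_0$ an arbitrary fixed positive number, there is no way to make $K|\mu|<1/2$ by shrinking $\eps_0$. Your remark that $B_1$ ``gains decay'' does not help either: even though $B_1\in\PP_{\frac{2n-m-2}{m-2},\rr,\sigma}$, the product estimate and the passage back to $\PP_{\alpha_0+1}$ give exactly $\|\overline\GG_\eps(B_1 h)\|_{\alpha_0+1}\leq K\|B_1\|_{\frac{2n-m-2}{m-2}}\|h\|_{\alpha_0+1}\leq K|\mu|\eps^{\eta}\|h\|_{\alpha_0+1}$, with no extra smallness. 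In the borderline case $2n-2=m$ allowed by \textbf{HP4.2}, the weight $\frac{2n-m-2}{m-2}$ is zero, so one cannot even trade decay for smallness by taking $\rr_1$ large.

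This is precisely why the paper, before setting up the fixed point, performs the preliminary change of variables of Lemma~\ref{lemma:Parab:canvi}: one finds $g$ with $\LL_\eps g=-B_1$ (possible because $\langle B_1\rangle=0$), which replaces $B_1$ by the combination $\wh B=(1+\pa_v g)^{-1}\bigl(B_1(v+g,\tau)-B_1(v,\tau)\bigr)$; the difference $B_1(v+g,\tau)-B_1(v,\tau)$ is then $\OO(|\mu|\eps^{\eta+1})$ by the mean value theorem and the bound $\|g\|\leq K|\mu|\eps^{\eta+1}$. After this the Lipschitz constant becomes $K|\mu|\eps^{\eta+1}$, which \emph{is} small for all $\eta\geq 0$ once $\eps$ is small. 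Your argument is essentially the paper's argument for $\eta>0$ (and the paper itself notes the change is unnecessary in that regime), but the case $\eta=0$ requires this extra step.
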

Theorem \ref{th:ExistenceCloseInfty} is a straightforward
consequence of this proposition.

The proof of this proposition follows the same steps as the proof of
Proposition \ref{prop:HJ:hyp:general}.

The first step is to perform a change of variables which reduces the
size of the linear term of $\FF$ in
\eqref{eq:infty:lmenor:operadorF}. This change is not necessary for
the case $\eta>0$.

\begin{lemma}\label{lemma:Parab:canvi}
Let $\rr_0'$ be such that $\rr_0<\rr_0'<\rr_1$. Then, for $\eps>0$ small enough,
there exists a function $g\in\PP_{0,\rr_0',\sigma}$ such that
$\langle g\rangle=0$ and is a solution of \eqref{eq:Infty:Canvi}.
Moreover, it satisfies that
\[
\begin{array}{cc}
\| g\|_{0,\rr_0',\sigma}\leq K|\mu|\eps^{\eta+1}, &\|
\pa_vg\|_{0,\rr_0',\sigma}\leq K|\mu|\eps^{\eta+1},
\end{array}
\]
and $v+g(v,\tau)\in D_{\infty,\rr_0}^u$ for $(v,\tau)\in D_{\infty,
\rr_0'}^u\times\TT_\sigma$.

Furthermore, $(u,\tau)=(v+g(v,\tau),\tau)$ is invertible and its
inverse is of the form $(v,\tau)=(u+h(u,\tau),\tau)$, where $h$ is a
function defined for $(u,\tau)\in
D_{\infty,\rr_1}^u\times\TT_\sigma$ and satisfies that $h\in
\PP_{0,\rr_1,\sigma}$,
\[
\|h\|_{0,\rr_1,\sigma}\leq K|\mu|\eps^{\eta+1}
\]
and that $u+h(u,\tau)\in D^u_{\infty, \rr_0'}$ for $(u,\tau)\in
D_{\infty,\rr_1}^u\times\TT_\sigma$.
\end{lemma}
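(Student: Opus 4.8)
The plan is to follow step by step the proof of the hyperbolic counterpart, Lemma \ref{lemma:infty:ligual:canvi}, replacing the exponentially weighted spaces $\HH_{\alpha,\rr,\sigma}$ by the algebraically weighted spaces $\PP_{\alpha,\rr,\sigma}$ of \eqref{def:BanachInftyParab}, and invoking Lemmas \ref{lemma:Infty:PropietatsNormes:Parab}, \ref{lemma:PropietatsGInfty:parab} and \ref{lemma:Infty:Cotes:parab} in place of their hyperbolic analogues. The starting observation is that, in the parabolic case, the function $B_1$ of \eqref{def:InftyB1} already has zero $\tau$-average: the coefficients of $\wh H_1^1$ have zero mean (the monomials $q^ip^j$ being linearly independent while $\wh H_1^1$ does), and $B_1$ is a combination of these coefficients with factors depending only on $u$. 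By Lemma \ref{lemma:Infty:Cotes:parab} we also know $B_1\in\PP_{\frac{2n-m-2}{m-2},\rr_0',\sigma}$ with $\|B_1\|_{\frac{2n-m-2}{m-2},\rr_0',\sigma}\le K|\mu|\eps^{\eta}$, where by Hypothesis \textbf{HP4.2} the weight $\frac{2n-m-2}{m-2}$ is nonnegative.

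To build $g$ I would invert $\LL_\eps$ using the operator $\GG_\eps$ of \eqref{def:operadorGInfty}, which by Lemma \ref{lemma:PropietatsGInfty:parab} satisfies $\LL_\eps\circ\GG_\eps=\Id$ and commutes with $\pa_v$. When $2n-2>m$ the weight is strictly positive, so the zero-average estimate in Lemma \ref{lemma:PropietatsGInfty:parab} applies directly to $g=-\GG_\eps(B_1)$ and gives $\|g\|_{\frac{2n-m-2}{m-2},\rr_0',\sigma}\le K\eps\|B_1\|_{\frac{2n-m-2}{m-2},\rr_0',\sigma}\le K|\mu|\eps^{\eta+1}$, hence the claimed bound in $\PP_{0,\rr_0',\sigma}$. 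In the borderline case $2n-2=m$ the slowest term of $B_1$ does not decay: $B_1$ tends, as $\Re u\to-\infty$, to a zero-mean function $B_{10}(\tau)$, so I would peel it off exactly as in the hyperbolic proof, writing $B_1=B_{10}(\tau)+B_{11}(v,\tau)$ and taking $g=-\eps\overline{B}_{10}(\tau)-\GG_\eps(B_{11})$ with $\overline{B}_{10}$ the zero-mean $\tau$-primitive of $B_{10}$; then $\LL_\eps(-\eps\overline{B}_{10})=-B_{10}$ because $\overline{B}_{10}$ depends only on $\tau$, and since $B_{11}$ carries positive algebraic decay and zero mean, $\|g\|_{0,\rr_0',\sigma}\le K|\mu|\eps^{\eta+1}$ again. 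Because $\GG_\eps$ commutes with $\pa_v$ and the $\overline{B}_{10}$-term is $v$-independent, $\pa_vg=-\GG_\eps(\pa_vB_1)$, and combining Lemma \ref{lemma:Infty:Cotes:parab} (for $\pa_vB_1$) with the zero-average estimate of Lemma \ref{lemma:PropietatsGInfty:parab} yields $\|\pa_vg\|_{0,\rr_0',\sigma}\le K|\mu|\eps^{\eta+1}$. Finally $v+g(v,\tau)\in D^u_{\infty,\rr_0}$ for $(v,\tau)\in D^u_{\infty,\rr_0'}\times\TT_\sigma$ follows at once from $|g(v,\tau)|\le\|g\|_{0,\rr_0',\sigma}<\rr_0'-\rr_0$, valid for $\eps$ small.

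For the inverse change, since $\|\pa_vg\|_{0,\rr_0',\sigma}\le K|\mu|\eps^{\eta+1}<1/2$ for $\eps$ small, the map $(u,\tau)=(v+g(v,\tau),\tau)$ is invertible on $D^u_{\infty,\rr_1}\times\TT_\sigma$, with inverse $(v,\tau)=(u+h(u,\tau),\tau)$ obtained as the fixed point of $h\mapsto-g(u+h(u,\tau),\tau)$; the identity $h=-g\circ(\Id+h)$ gives $\|h\|_{0,\rr_1,\sigma}\le\|g\|_{0,\rr_0',\sigma}\le K|\mu|\eps^{\eta+1}$ and, since $\rr_1>\rr_0'$ while the right-hand side tends to $0$, also $u+h(u,\tau)\in D^u_{\infty,\rr_0'}$. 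I expect no serious obstacle here: the only point demanding care is the borderline regime $2n-2=m$, in which $B_1$ fails to decay and one must subtract its limit at $-\infty$ before inverting $\LL_\eps$, precisely as is done in the hyperbolic case.
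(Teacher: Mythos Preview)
Your approach is correct but differs from the paper's. The paper avoids your case split entirely by passing to the $\tau$-primitive $\ol B_1$ of $B_1$ (with $\pa_\tau\ol B_1=B_1$, $\langle\ol B_1\rangle=0$) and taking
\[
g(v,\tau)=-\eps\,\ol B_1(v,\tau)+\eps\,\GG_\eps\!\left(\pa_v\ol B_1\right)(v,\tau).
\]
A direct computation gives $\LL_\eps g=-B_1$, and the bounds follow uniformly because $\pa_v\ol B_1\in\PP_{\frac{2n-m-2}{m-2}+1,\rr_0',\sigma}$ always has weight $\geq 1$, so Lemmas \ref{lemma:PropietatsGInfty:parab} and \ref{lemma:Infty:Cotes:parab} apply without distinguishing whether $2n-2>m$ or $2n-2=m$. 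This buys a cleaner argument with no peeling-off of limits.

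Your route, modelled on Lemma \ref{lemma:infty:ligual:canvi}, also works, but the step ``$B_{11}$ carries positive algebraic decay'' in the borderline case $2n-2=m$ needs a word of justification: Lemma \ref{lemma:Infty:Cotes:parab} only places $B_1$ in $\PP_{0,\rr_0',\sigma}$, and the claim that $B_1-\lim_{\Re v\to-\infty}B_1\in\PP_{\nu,\rr_0',\sigma}$ for some $\nu>0$ requires invoking the subleading term in the separatrix expansion \eqref{eq:separatrix:hyp:parab} to see that $q_0^{n-1}p_0^{-1}$ converges to its limit at an algebraic rate. This is true and easy, but it is an extra step that the paper's $\tau$-primitive trick sidesteps.
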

\begin{proof}
Since  $B_1 \in \PP_{\frac{2n-m-2}{m-2},\rr,\sigma}$ and it might
happen that $\frac{2n-m-2}{m-2}<1$, we cannot apply directly Lemma
\ref{lemma:PropietatsGInfty:parab} to invert $\LL_\eps$. Let us
observe that, by Lemma \ref{lemma:Infty:Cotes:parab}, $\langle
B_1\rangle=0$ and then we can define a function $\ol B_1$ such that
\[
\pa_\tau \ol B_1=B_1\,\,\,\text{ and }\,\,\,\langle \ol
B_1\rangle=0,
\]
which satisfies $\|\ol B_1 \|_{\frac{2n-m-2}{m-2},\rr,\sigma}\leq
K|\mu|\eps^{\eta}$.

We can define $g$ as
\[
g(v,\tau)=-\eps \ol B_1(v,\tau)+\eps \GG_\eps \left(\pa_v\ol
B_1\right)(v,\tau).
\]
Then, applying Lemmas \ref{lemma:PropietatsGInfty:parab} and
\ref{lemma:Infty:Cotes:parab} one obtains the bounds for $g$ and
$\pa_v g$.

 The proof of the other statements is analogous to the
proof of Lemma \ref{lemma:infty:ligual:canvi}.
\end{proof}
As in Section \ref{sub:Infty:Hyp:ProofGeneral}, we define
\[
\wh T_1(v,\tau)=T_1(v+g(v,\tau),\tau),
\]
which is a solution of \eqref{eq:HJperT1:ligual}. Then, we look for
$\pa_v\wh T_1$ as a fixed point of the operator
\eqref{def:Infty:FullOperator:ligual} in the Banach space
$\PP_{\alpha_0+1,\rr_0',\sigma}$.

\begin{lemma}\label{lemma:HJ:parab:lmajor}
Let $\alpha_0$ be the constant defined in
\eqref{def:alpha0} and $\eps_0>0$ small enough. Then, for
$\eps\in(0,\eps_0)$ there exists a function $\wh T_1(v,\tau)$
defined in $D_{\infty,\rr_0'}^{u}\times\TT_\sigma$ such that
$\pa_v\wh T_1\in\PP_{\alpha_0+1,\rr_0',\sigma}$ is a fixed point of
the operator \eqref{def:Infty:FullOperator:ligual}. Furthermore,
there exists a constant $b_1>0$ such that
\[
\left\|\pa_v\wh T_1\right\|_{\alpha_0+1,\rr_0',\sigma,0}\leq
b_1|\mu|\eps^{\eta+1}.
\]
\end{lemma}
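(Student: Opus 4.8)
The plan is to prove Lemma~\ref{lemma:HJ:parab:lmajor} exactly as its hyperbolic counterpart Lemma~\ref{lemma:HJ:hyp:ligual}, replacing the Banach spaces $\HH_{\alpha,\rr,\sigma}$ by the parabolic spaces $\PP_{\alpha,\rr,\sigma}$ defined in \eqref{def:BanachInftyParab} and using Lemmas~\ref{lemma:Infty:PropietatsNormes:Parab}, \ref{lemma:PropietatsGInfty:parab} and \ref{lemma:Infty:Cotes:parab} in place of Lemmas~\ref{lemma:Infty:PropietatsNormes}, \ref{lemma:PropietatsGInfty} and \ref{lemma:Infty:Cotes}. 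First I would check that $\ol\FF=\ol\GG_\eps\circ\wh\FF$, with $\wh\FF$ defined in \eqref{eq:HJperT1:ligual:RHS} (where now $B_2=0$ by Hypothesis \textbf{HP4.2}, so $\wh B(v,\tau)=\big(B_1(v+g(v,\tau),\tau)-B_1(v,\tau)\big)/(1+\pa_vg(v,\tau))$), is well defined from $\PP_{\alpha_0+1,\rr_0',\sigma}$ to itself: this follows from part~3 of Lemma~\ref{lemma:PropietatsGInfty:parab}, since $\alpha_0+1>1$, once we know $\wh\FF(h)\in\PP_{\alpha_0+1,\rr_0',\sigma}$.

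Next I would estimate $\ol\FF(0)=\ol\GG_\eps(\wh A)$ with $\wh A(v,\tau)=A(v+g(v,\tau),\tau)$, splitting it as $\ol\GG_\eps(A)+\ol\GG_\eps(\wh A-A)$. The first term is bounded by $K|\mu|\eps^{\eta+1}$ by \eqref{eq:Infty:CotesA:parab}; for the second, the mean value theorem together with the bound $\|\pa_uA\|_{\alpha_0+1,\rr_0',\sigma}\leq K|\mu|\eps^\eta$ from \eqref{eq:Infty:CotesA:parab} and $\|g\|_{0,\rr_0',\sigma}\leq K|\mu|\eps^{\eta+1}$ from Lemma~\ref{lemma:Parab:canvi} gives $\|\wh A-A\|_{\alpha_0+1,\rr_0',\sigma}\leq K|\mu|^2\eps^{2\eta+1}$, hence by part~3 of Lemma~\ref{lemma:PropietatsGInfty:parab} the same bound after applying $\ol\GG_\eps$. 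This produces a constant $b_1>0$ with $\|\ol\FF(0)\|_{\alpha_0+1,\rr_0',\sigma}\leq \tfrac{b_1}{2}|\mu|\eps^{\eta+1}$. For the contraction, given $h_1,h_2\in\ol B(b_1|\mu|\eps^{\eta+1})\subset\PP_{\alpha_0+1,\rr_0',\sigma}$, part~3 of Lemma~\ref{lemma:PropietatsGInfty:parab} reduces the estimate to bounding $\|\wh B\cdot(h_2-h_1)+\wh C(h_2,u,\tau)-\wh C(h_1,u,\tau)\|_{\alpha_0+1,\rr_0',\sigma}$. The term with $\wh C$ is controlled by part~3 of Lemma~\ref{lemma:Infty:Cotes:parab} (Lipschitz in $\nu=b_1|\mu|\eps^{\eta+1}$), and for $\wh B$ one uses $\|\wh B\|_{?,\rr_0',\sigma}\leq K|\mu|\eps^{\eta}\cdot\|g\|_{0,\rr_0',\sigma}$-type estimates coming from the mean value theorem applied to $B_1$ together with $\|\pa_uB_1\|_{\frac{2n-m-2}{m-2}+1,\rr_0',\sigma}\leq K|\mu|\eps^\eta$ from \eqref{eq:Infty:CotesB:parab}; here I must be slightly careful with the weights, but since $\frac{2n-m-2}{m-2}\geq 0$ by Hypothesis~\textbf{HP5} and $\alpha_0+1=\frac{2n}{m-2}+1$, the multiplication rule (part~2 of Lemma~\ref{lemma:Infty:PropietatsNormes:Parab}) together with the monotonicity of the norms (part~1) yields the needed bound $\|\wh B\cdot(h_2-h_1)\|_{\alpha_0+1,\rr_0',\sigma}\leq K|\mu|\eps^{\eta+1}\|h_2-h_1\|_{\alpha_0+1,\rr_0',\sigma}$. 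Thus $\mathrm{Lip}\,\ol\FF\leq K|\mu|\eps^{\eta+1}\leq 1/2$ for $\eps_0$ small, so $\ol\FF$ maps $\ol B(b_1|\mu|\eps^{\eta+1})$ into itself and is a contraction, giving a unique fixed point $h^\ast=\pa_v\wh T_1$.

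Finally I would recover $\wh T_1$ itself from $h^\ast$ by integrating from $-\infty$: since $h^\ast\in\PP_{\alpha_0+1,\rr_0',\sigma}$ with $\alpha_0+1>1$, the integral $\wh T_1(v,\tau)=\int_{-\infty}^v h^\ast(w,\tau)\,dw$ converges along horizontal lines and defines a function satisfying \eqref{eq:HJperT1:ligual}, with the asymptotic condition \eqref{eq:AsymptCondFuncioGeneradora:uns} holding because $p_0^{-1}(u)\pa_v\wh T_1$ decays as $\Re u\to-\infty$ (the gain of decay from $\alpha_0+1$ versus the $m/(m-2)$ order of $p_0^{-1}$ being exactly what $2n-2\geq m$ guarantees). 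The main obstacle is not any single hard estimate but the bookkeeping of the polynomial weights in the $\PP_{\alpha,\rr,\sigma}$ norms: unlike the hyperbolic exponential weights, here the weights add under multiplication and one must repeatedly invoke Hypothesis~\textbf{HP5} ($2n-2\geq m$, equivalently $\alpha_0\geq \frac{2m}{m-2}-1$) to make sure every product lands in a space with weight at least $\alpha_0+1$; the borderline case $2n-2=m$ (where $\frac{2n-m-2}{m-2}=0$) is the one to watch, and it is precisely handled as in the proof of Lemma~\ref{lemma:Infty:Cotes:parab}. Once Lemma~\ref{lemma:HJ:parab:lmajor} is established, Proposition~\ref{prop:HJ:parab:general} (and hence Theorem~\ref{th:ExistenceCloseInfty} in the parabolic case) follows by undoing the change $v=u+h(u,\tau)$ from Lemma~\ref{lemma:Parab:canvi}, setting $T_1(u,\tau)=\wh T_1(u+h(u,\tau),\tau)$ and enlarging $b_1$ slightly.
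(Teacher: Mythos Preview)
Your proposal is correct and follows essentially the same approach as the paper: both argue that $\ol\FF=\ol\GG_\eps\circ\wh\FF$ is a contraction on a ball of radius $b_1|\mu|\eps^{\eta+1}$ in $\PP_{\alpha_0+1,\rr_0',\sigma}$ by bounding $\ol\FF(0)$ via the splitting $\ol\GG_\eps(A)+\ol\GG_\eps(\wh A-A)$ and then estimating the Lipschitz constant through $\wh B$ (with $B_2=0$) and the quadratic term $\wh C$, finally recovering $\wh T_1$ by integration from $-\infty$. Your extra commentary on the weight bookkeeping (and the borderline case $2n-2=m$) is accurate but not needed beyond what the paper invokes through Lemmas~\ref{lemma:Infty:PropietatsNormes:Parab}, \ref{lemma:Infty:Cotes:parab} and~\ref{lemma:Parab:canvi}; note also that the relevant hypothesis $2n-2\geq m$ is \textbf{HP4.2}, not \textbf{HP5}.
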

\begin{proof}
It is straightforward to see that $\ol\FF$ is well defined from
$\PP_{\alpha_0+1,\rr_0',\sigma}$ to itself. We are going to prove
that there exists a constant $b_1>0$ such that $\ol\FF$ is
contractive in $\ol B(b_1|\mu|\eps^{\eta+1})\subset
\PP_{\alpha_0+1,\rr_0',\sigma}$.

Let us consider first $\ol\FF(0)$. From the definition of
$\overline\FF$ in \eqref{def:Infty:FullOperator:ligual} and the
definition of $\wh\FF$ in \eqref{eq:HJperT1:ligual:RHS}, we have
that
\[
\ol\FF(0)(v,\tau)=\ol\GG_\eps \left(\wh A(v,\tau)\right)=\ol\GG_\eps
\left(A(v,\tau)\right)+\ol\GG_\eps \left(
A(v+g(v,\tau),\tau)-A(v,\tau)\right).
\]
The first term has been bounded in Lemma
\ref{lemma:Infty:Cotes:parab}. For the second one, we apply Lemmas
 \ref{lemma:Infty:Cotes:parab} and \ref{lemma:Parab:canvi} and the mean value
theorem to obtain
\[
\left\|
A(v+g(v,\tau),\tau)-A(v,\tau)\right\|_{\alpha_0+1,\rr_0',\sigma}
\leq \|\pa_u A\|_{\alpha_0+1,\rr_0,\sigma}\|g\|_{0,\rr_0',\sigma}
\leq K|\mu|^2\eps^{2\eta+1}.
\]
Thus, applying Lemma \ref{lemma:PropietatsGInfty:parab}, there
exists a constant $b_1>0$ such that
\[
\left\|\overline\FF(0)\right\|_{\alpha_0+1,\sigma}\leq\frac{b_1}{2}|\mu|\eps^{
\eta+1}.
\]
Let $h_1,h_2\in \ol B(b_1|\mu|\eps^{\eta+1})\subset
\PP_{\alpha_0+1,\rr_0',\sigma}$. Then, using the properties of
$\overline\GG_\eps$ in Lemma~\ref{lemma:PropietatsGInfty:parab} and
the definition of $\wh\FF$ in  \eqref{eq:HJperT1:ligual:RHS},
\[
\begin{split}
\dps\left\| \ol \FF(h_2)-\ol
\FF(h_1)\right\|_{\alpha_0+1,\rr_0',\sigma}& \dps\leq K\left\|
\wh\FF(h_2)-\wh \FF(h_1)\right\|_{\alpha_0+1,\rr_0',\sigma}\\
&\leq \dps K\left\|\wh B\cdot (h_2-h_1)+\wh C(h_2,v,\tau)-\wh
C(h_1,v,\tau) \right\|_{\alpha_0+1,\rr_0',\sigma}.
\end{split}
\]
Taking into account the definitions of $\wh B$ and $\wh C$ in
\eqref{def:InftyHyp:Bhat} and \eqref{def:InftyHyp:Chat}, recalling
that $B_2=0$ and applying Lemmas
\ref{lemma:Infty:PropietatsNormes:Parab},
 \ref{lemma:Infty:Cotes:parab} and \ref{lemma:Parab:canvi},we
 obtain
\[
\dps\left\| \ol \FF(h_2)-\ol
\FF(h_1)\right\|_{\alpha_0+1,\rr_0',\sigma}\leq
K|\mu|\eps^{\eta+1}\|h_2-h_1\|_{\alpha_0+1,\rr_0',\sigma}.
\]
Then,  reducing $\eps$ if necessary, $\mathrm{Lip}\ol\FF<1/2$ and
then $\ol\FF$ is contractive from $\ol
B\left(b_1|\mu|\eps^{\eta+1}\right)\subset \PP_{\alpha_0+1,\sigma}$
to itself and has a unique fixed point $h^\ast$. Moreover, since it
satisfies
\[
\left|h^\ast(v,\tau)\right|\leq b_1 |\mu|\eps^{\eta+1}\frac{1}{|
v|^{\alpha_0+1}}
\]
for $(v,\tau)\in D_{\infty,\rr_0'}^{u}\times\TT_\sigma$, we can
define $\wh T_1$ as
\[
\wh T_1(v,\tau)=\int_{-\infty}^v h^\ast(w,\tau)\,dw.
\]
\end{proof}

To prove Proposition \ref{prop:HJ:parab:general} from Lemma
\ref{lemma:HJ:parab:lmajor}, as we have proceeded in Section
\ref{sub:Infty:Hyp:ProofGeneral}, it is enough to consider the
change of variables $v=u+h(u,\tau)$ obtained in Lemma
\ref{lemma:Parab:canvi}, take $T_1(u,\tau)= \wh
T_1(u+h(u,\tau),\tau)$ and increase slightly $b_1$ if necessary.

\section{Invariant manifolds in the outer domains: proof of Theorems
\ref{th:Extensio:Trig}
and \ref{th:ExtensioFinal}}\label{sec:Extensiogeneral}

\subsection{Invariant manifolds in the outer domains when $p_0(u)\neq 0$:
proof of Theorem \ref{th:Extensio:Trig}}\label{sec:Extensio}

In this section we prove the existence of the invariant manifolds in
the domains $D^{\out,\ast}_{\rr,\kk}\times\TT_\sigma$ for $\ast=u,s$
defined in \eqref{def:DominisOuter} provided $p_0(u)\neq0$ in these
domains. Since the proof for both invariant manifolds is analogous,
we only deal with the unstable case.

First in Section \ref{Subsection:ExistenceQ:Banach} we define some
Banach spaces and we state some technical lemmas. Then, in
Section \ref{subsec:Extensio:trig} we prove Theorem
\ref{th:Extensio:Trig}.

\subsubsection{Banach spaces and technical
lemmas}\label{Subsection:ExistenceQ:Banach}
We start by defining some norms. Given $\nu\in\RR$ and an analytic
function $h:D^{\out,u}_{\rr,\kk}\rightarrow \CC$, where
$D^{\out,u}_{\rr,\kk}$ is the domain defined in
\eqref{def:DominisOuter}, we consider
\[
\|h\|_{\nu,\rr,\kk}=\sup_{u\in D^{\out,u}_{\rr,\kk}}\left|
\left(u^2+a^2\right)^\nu h(u)\right|.
\]
Moreover for $2\pi$-periodic in $\tau$, analytic functions
$h:D^{\out,u}_{\rr,\kk}\times\TT_\sigma\rightarrow \CC$, we consider the
corresponding Fourier
norm
\[
\|h\|_{\nu,\rr,\kk,\sigma}=\sum_{k\in\ZZ}\left\|h^{[k]}\right\|_{\nu,\rr,\kk}e^{
|k|\sigma}.
\]
We consider, thus, the following function space

\begin{equation}\label{def:BanachExtension}
\EE_{\nu,\rr,\kk,\sigma}=\{
h:D^{\out,u}_{\rr,\kk}\times\TT_\sigma\rightarrow\CC;\,\,\text{real-analytic},
\|h\|_{\nu,\rr,\kk,\sigma}<\infty\},
\end{equation}
which can be checked that is a Banach space for any $\nu\in\RR$.

If there is no danger of confusion about the domain
$D_{\rr,\kk}^{\out,u}$, we will denote
\[
\begin{array}{ccc}
\|\cdot\|_{\nu,\sigma}=\|\cdot\|_{\nu,\rr,\kk,\sigma}&\text{ and
}&\EE_{\nu,\sigma}=\EE_{\nu,\rr,\kk,\sigma}.
\end{array}
\]

In the next lemma, we state some properties of these Banach spaces.
In the estimates  we will make explicit the dependence  of the
constants with respect to $\kappa$.
\begin{lemma}\label{lemma:PropietatsNormes} The following statements hold:
\begin{enumerate}
\item If $\nu_1\geq\nu_2$, then
$\EE_{\nu_1,\sigma}\subset\EE_{\nu_2,\sigma}$ and moreover if
$h\in\EE_{\nu_1,\sigma}$,
\[
\|h\|_{\nu_2,\sigma}\leq
K(\kk\eps)^{\nu_2-\nu_1}\|h\|_{\nu_1,\sigma}.
\]
\item If $\nu_1\leq\nu_2$, then
$\EE_{\nu_1,\sigma}\subset\EE_{\nu_2,\sigma}$ and moreover if
$h\in\EE_{\nu_1,\sigma}$,
\[
\|h\|_{\nu_2,\sigma}\leq K\|h\|_{\nu_1,\sigma}.
\]
\item If $h\in\EE_{\nu_1,\sigma}$ and $g\in\EE_{\nu_2,\sigma}$, then
$hg\in\EE_{\nu_1+\nu_2,\sigma}$ and
\[
\|hg\|_{\nu_1+\nu_2,\sigma}\leq
\|h\|_{\nu_1,\sigma}\|g\|_{\nu_2,\sigma}.
\]
\item Let $\rr'<\rr$ be such that $\rr-\rr'$ has a positive lower bound
independent of $\eps$, $\kk'$ and $\kk$ such that  $\kk <\kk'<0$  and
$h\in\EE_{\nu,\rr,\kk,\sigma}$. Then
$\pa_uh\in\EE_{\nu,\rr',\kk',\sigma}$ and satisfies
\[
\|\pa_uh\|_{\nu,\rr',\kk',\sigma}\leq\frac{K}{\eps\left|\kk'-\kk\right|}\|
h\|_{\nu,\rr,\kk,\sigma}.
\]
\end{enumerate}
\end{lemma}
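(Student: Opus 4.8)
\textbf{Proof plan for Lemma \ref{lemma:PropietatsNormes}.}

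The plan is to verify each of the four statements directly from the definition of the norms $\|\cdot\|_{\nu,\rr,\kk,\sigma}$ and the underlying weight $w_\nu(u)=(u^2+a^2)^\nu$, working first at the level of a single Fourier coefficient and then summing over $k$ with the weight $e^{|k|\sigma}$. For statement (3), the inclusion and multiplicative inequality, I would observe that $w_{\nu_1}(u)w_{\nu_2}(u)=w_{\nu_1+\nu_2}(u)$, so $\left|(u^2+a^2)^{\nu_1+\nu_2}(hg)^{[k]}(u)\right|\le\sum_{j}\left|(u^2+a^2)^{\nu_1}h^{[j]}(u)\right|\left|(u^2+a^2)^{\nu_2}g^{[k-j]}(u)\right|$ pointwise, and then the standard convolution estimate for Fourier norms (the same one that makes $\SSS_\sigma$ a Banach algebra, cf. the discussion after \eqref{def:periodica:banach}) yields $\|hg\|_{\nu_1+\nu_2,\sigma}\le\|h\|_{\nu_1,\sigma}\|g\|_{\nu_2,\sigma}$. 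Statements (1) and (2) then reduce to comparing $w_{\nu_2}$ with $w_{\nu_1}$ on the domain $D^{\out,u}_{\rr,\kk}$: I would write $w_{\nu_2}(u)=w_{\nu_1}(u)\cdot(u^2+a^2)^{\nu_2-\nu_1}$ and estimate the extra factor $(u^2+a^2)^{\nu_2-\nu_1}$ uniformly on $D^{\out,u}_{\rr,\kk}$.

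The key geometric input is the behaviour of $|u^2+a^2|=|u-ia|\,|u+ia|$ on the outer domain. On $D^{\out,u}_{\rr,\kk}$ the factor $|u+ia|$ is bounded below by a positive constant and above by $K(1+|u|)$, while $|u-ia|$ ranges between $\OO(\kk\eps)$ (at the tip of the domain near the singularity $ia$) and $\OO(1+|u|)$; the lower bound $|u-ia|\ge c\,\kk\eps$ is exactly what the truncation $|\Im u|<-\tan\beta_1\Re u+a-\kk\eps$ guarantees. Hence when $\nu_2-\nu_1<0$ (statement (1)) the factor $(u^2+a^2)^{\nu_2-\nu_1}$ is largest where $|u^2+a^2|$ is smallest, i.e. of size $(\kk\eps)^{\nu_2-\nu_1}$ up to a constant, which gives the claimed factor $K(\kk\eps)^{\nu_2-\nu_1}$; when $\nu_2-\nu_1>0$ (statement (2)) one must instead control $(u^2+a^2)^{\nu_2-\nu_1}$ for large $|u|$, and here one uses that $(u^2+a^2)^{\nu_1}h^{[k]}$ is already bounded, together with the fact that $w_{\nu_1}$ and $w_{\nu_2}$ have the same growth order up to a constant after dividing by the appropriate power — more precisely, $h^{[k]}$ itself decays like $(u^2+a^2)^{-\nu_1}$, and multiplying by $(u^2+a^2)^{\nu_2}$ with $\nu_2\ge\nu_1$ still leaves something bounded only if we are careful; the clean way is to note $|w_{\nu_2}/w_{\nu_1}|\le K$ on $D^{\out,u}_{\rr,\kk}$ whenever $\nu_2\ge\nu_1$ provided $|u^2+a^2|\le K$, which fails for large $|u|$, so in fact statement (2) requires the additional observation that $h\in\EE_{\nu_1,\sigma}$ already forces $h^{[k]}(u)=\OO((\kk\eps)^{-\nu_1})$-type decay is irrelevant and what matters is that the \emph{boundedness} is preserved because the domain, although unbounded in $\Re u$, carries the weight $(u^2+a^2)^{\nu_1}$ which grows; one simply checks $\sup_{u\in D^{\out,u}_{\rr,\kk}}\big|(u^2+a^2)^{\nu_2-\nu_1}\big|\cdot\big|(u^2+a^2)^{\nu_1}h^{[k]}(u)\big|$ and uses that on this domain $|u^2+a^2|\ge c>0$ combined with the a priori bound to reduce to the regime $|u^2+a^2|$ small. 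I expect this large-$|u|$ bookkeeping to be the one mildly delicate point, handled by splitting the domain into $\{|u|\le R\}$ and $\{|u|>R\}$ and using the decay encoded in the $\nu_1$-weighted norm on the unbounded piece.

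For statement (4), the derivative estimate, I would use a Cauchy estimate adapted to the geometry: given $u\in D^{\out,u}_{\rr',\kk'}$ with $\rr-\rr'$ bounded below and $\kk<\kk'<0$, there is a disc of radius $\rho(u)\ge c\min\{1,|\kk'-\kk|\eps\}$ around $u$ contained in $D^{\out,u}_{\rr,\kk}$, because the extra room in $\Re u$ is $\OO(1)$ while the extra room near the singularity line is $\OO(|\kk'-\kk|\eps)$. Then $\pa_u h^{[k]}(u)$ is bounded by $\rho(u)^{-1}\sup_{\text{disc}}|h^{[k]}|$, and on that disc $|u^2+a^2|$ changes only by a bounded factor, so $\|\pa_u h^{[k]}\|_{\nu,\rr',\kk'}\le K(\eps|\kk'-\kk|)^{-1}\|h^{[k]}\|_{\nu,\rr,\kk}$; summing against $e^{|k|\sigma}$ (noting that shrinking the domain in $u$ does not touch the $\tau$-strip, so no loss in $\sigma$) gives the stated inequality. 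The hardest part of the whole lemma is really just getting the $\kappa$-dependence honest in (1) and (4), i.e.\ tracking that the relevant radius/weight near the singularity is exactly of order $\kk\eps$ rather than $\eps$; everything else is the routine Fourier-norm machinery already used for $\SSS_\sigma$ and $\HH_{\alpha,\rr,\sigma}$ earlier in the paper.
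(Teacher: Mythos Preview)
The paper states this lemma without proof, treating it as a routine collection of facts about the weighted Fourier norms; your overall approach---verify each item directly from the weight $w_\nu(u)=(u^2+a^2)^\nu$, use the convolution estimate for (3), and a Cauchy estimate for (4)---is exactly what is intended.

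There is, however, one genuine confusion in your plan. In your discussion of statement~(2) you worry at length about controlling $(u^2+a^2)^{\nu_2-\nu_1}$ ``for large $|u|$'' and propose splitting the domain into $\{|u|\le R\}$ and $\{|u|>R\}$. This is unnecessary, because the outer domain $D^{\out,u}_{\rr,\kk}$ defined in \eqref{def:DominisOuter} is \emph{bounded}: the condition $\Re u>-\rr$ bounds it on the left, and the condition $|\Im u|<-\tan\beta_1\,\Re u+a-\kk\eps$ forces $\Re u<(a-\kk\eps)/\tan\beta_1$, since otherwise the right-hand side is nonpositive. Hence $|u^2+a^2|$ is bounded above on $D^{\out,u}_{\rr,\kk}$ by a constant depending only on $\rr$, $a$ and $\beta_1$, and statement~(2) follows at once from $|(u^2+a^2)^{\nu_2-\nu_1}|\le K$ for $\nu_2\ge\nu_1$. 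The entire paragraph about decay and domain splitting can simply be deleted.

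Your handling of (1), (3) and (4) is correct. For (4), the hypothesis ``$\kk<\kk'<0$'' in the statement is evidently a typo in the paper (it should read $0<\kk<\kk'$); you have interpreted it correctly, and your observation that the Cauchy radius available at a point of $D^{\out,u}_{\rr',\kk'}$ inside $D^{\out,u}_{\rr,\kk}$ is of order $\min\{\rr-\rr',\,(\kk'-\kk)\eps\}=\OO((\kk'-\kk)\eps)$, together with the fact that $|u^2+a^2|$ varies only by a bounded factor on such a disc, gives precisely the stated bound.
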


Throughout this section we are going to solve equations of the form
$\LL_\eps h=g$, where $\LL_\eps$ is the differential operator
defined in~\eqref{def:Lde}. Note that $\LL_\eps$ acting on
$\EE_{\nu,\rr}$ is not invertible. Indeed for any smooth
function $f$, $f(u/\eps-\tau)\in\text{Ker}\LL_\eps$. We consider a
left-inverse of the operator $\LL_\eps$, which we call $\GG_\eps$,
defined acting on the Fourier coefficients. Let us consider $u_1,
\bar u_1\in \CC$ the vertices of the domain $D^{\out,u}_{\rr,\kk}$
(see Figure \ref{fig:OuterDomains}). Then, we define $\GG_\eps$ as
\begin{equation}\label{def:operadorG:HJtoParam}
\GG_\eps(h)(u,\tau)=\sum_{k\in\ZZ}\GG_\eps(h)^{[k]}(u)e^{ik\tau},
\end{equation}
where its Fourier coefficients are given by
\begin{align*}
\dps\GG_\eps (h)^{[k]}(u)&=\int_{\bar u_1}^u e^{ik\eps\ii
(t-u)}h^{[k]}(t)\,dt& \text{ for }k<0\\
\dps\GG_\eps (h)^{[0]}(u)&=\int_{- \rr}^u h^{[0]}(t)\,dt&
\\
\dps\GG_\eps (h)^{[k]}(u)&= \int_{u_1}^u e^{ik\eps\ii
(t-u)}h^{[k]}(t)\,dt& \text{ for }k> 0.
\end{align*}

\begin{remark}
Let us observe that the definition of the operator $\GG_\eps$
depends on the domain, since in its definition we use its vertices
$u_1$, $\bar u_1$ and also $\rr$.
\end{remark}

\begin{lemma}\label{lemma:PropietatsGExtensio}
The operator $\GG_\eps$ in \eqref{def:operadorG:HJtoParam} satisfies
the following properties.
\begin{enumerate}
\item If $h\in \EE_{\nu,\sigma}$ for some $\nu\geq 0$, then $\GG_\eps(h)\in
\EE_{\nu,\sigma}$ and
\[
\|\GG_\eps(h)\|_{\nu,\sigma}\leq K\|h\|_{\nu,\sigma}.
\]
Furthermore, if $\langle h\rangle=0$,
\[
\left\|\GG_\eps(h)\right\|_{\nu,\sigma}\leq
K\eps\left\|h\right\|_{\nu,\sigma}.
\]
\item If $h\in\EE_{\nu,\sigma}$ for some $\nu>1$, then $\GG_\eps(h)\in
\EE_{\nu-1,\sigma}$ and
\[
\left\|\GG_\eps(h)\right\|_{\nu-1,\sigma}\leq K\|h\|_{\nu,\sigma}.
\]
\item If $h\in\EE_{\nu,\sigma}$ for some $\nu\in (0,1)$, then $\GG_\eps(h)\in
\EE_{0,\sigma}$ and
\[
\left\|\GG_\eps(h)\right\|_{0,\sigma}\leq K\|h\|_{\nu,\sigma}.
\]

\item If $h\in\EE_{\nu,\sigma}$ for some $\nu\geq0$, then $\GG_\eps(\pa_uh)\in
\EE_{\nu,\sigma}$ and
\[
\left\|\GG_\eps(\pa_uh)\right\|_{\nu,\sigma}\leq
K\|h\|_{\nu,\sigma}.
\]
\item If $h\in\XX_{\nu,\sigma}$ for some $\nu\geq 0$,
$\LL_\eps\circ\GG_\eps (h)=h$ and
\[
\GG_\eps\circ\LL_\eps(h)(v,\tau)=h(v,\tau)-\sum_{k<0}e^{ik\eps\ii
(-u_1-u)}h^{[k]}(-u_1)-h^{[0]}(u_0)-\sum_{k>0}e^{ik\eps\ii
(u_1-u)}h^{[k]}(u_1).
\]
\item If $h\in\XX_{\nu,\sigma}$ for some $\nu\geq 0$,
$\LL_\eps\circ\GG_\eps (h)=h$ and
\[
\GG_\eps\circ\LL_\eps(h)(v,\tau)=h(v,\tau)-\sum_{k<0}e^{ik\eps\ii
(-u_1-u)}h^{[k]}(-u_1)-h^{[0]}(u_0)-\sum_{k>0}e^{ik\eps\ii
(u_1-u)}h^{[k]}(u_1).
\]
\end{enumerate}
\end{lemma}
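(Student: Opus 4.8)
The proof goes Fourier mode by Fourier mode: since $\GG_\eps(h)^{[k]}$ depends only on $h^{[k]}$, it suffices to bound $\|\GG_\eps(h)^{[k]}\|_{\nu}$ by $C_k\,\|h^{[k]}\|_{\nu}$ with $\sum_k C_k e^{|k|\sigma}$ controlled as required and then resum. The first step is a geometric one: one must check that the integration contours used in \eqref{def:operadorG:HJtoParam} --- the straight segment from the vertex $\bar u_1$ to $u$ for $k<0$, from $u_1$ to $u$ for $k>0$, and any path in $D^{\out,u}_{\rr,\kk}$ from $-\rr$ to $u$ for $k=0$ --- lie inside $D^{\out,u}_{\rr,\kk}$. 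This holds because $D^{\out,u}_{\rr,\kk}$ is an intersection of three half planes, hence convex; moreover, since $\Im u_1$ (resp.\ $\Im\bar u_1$) is the maximum (resp.\ minimum) of $\Im u$ over the domain, along these segments $\Im(t-u)$ has the sign that makes $|e^{ik\eps\ii(t-u)}|\le 1$ --- this is precisely why these particular vertices are taken as base points. Quantitatively, for fixed $\beta_1$ the segments make with the real axis an angle bounded below by a positive constant, so $|e^{ik\eps\ii(t-u)}|$ decays like $e^{-c|k|\eps\ii\,\mathrm{dist}(t,u)}$.

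Next one estimates $|h^{[k]}(t)|\le\|h^{[k]}\|_{\nu}\,|t^2+a^2|^{-\nu}$ and is reduced to the weighted, oscillatory integral $\int|t^2+a^2|^{-\nu}\,|e^{ik\eps\ii(t-u)}|\,|dt|$ along the contour. For $k\ne0$ the exponential concentrates this integral in a disc of radius $\OO(\eps/|k|)$ about $u$; on the outer domains this disc stays at distance $\gtrsim\kk\eps$ from $\pm ia$, and since there $|t-u|\le\OO(\eps)$ while $|u^2+a^2|\ge c\,\kk\eps$, one has $|t^2+a^2|\ge\frac12|u^2+a^2|$ once $\kk$ is large; hence the integral is $\le K(\eps/|k|)\,|u^2+a^2|^{-\nu}$, that is $\|\GG_\eps(h)^{[k]}\|_{\nu}\le K(\eps/|k|)\|h^{[k]}\|_{\nu}$. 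Summing against $e^{|k|\sigma}$ and using $|k|\ii\le1$ gives the $K$-bound of item~1 and, when $\langle h\rangle=0$ (so that $h^{[0]}=0$ and only $k\ne0$ survive), the sharper $K\eps$-bound. For the $k=0$ term one is left with $\int_{-\rr}^{u}|t^2+a^2|^{-\nu}\,|dt|$, and comparing it with $|u^2+a^2|^{-\nu}$, using that $D^{\out,u}_{\rr,\kk}$ is bounded (cf.\ Lemma \ref{lemma:PropietatsNormes}), produces the three regimes: for $\nu=0$ nothing is gained or lost (item~1); for $0<\nu<1$ the weight is integrable up to $\pm ia$, so $\GG_\eps(h)^{[0]}$ is merely bounded and $\GG_\eps(h)\in\EE_{0,\sigma}$ (item~3); for $\nu>1$ the contribution near the singularity is of size $|u-ia|^{1-\nu}\sim|u^2+a^2|^{1-\nu}$, so one power of $(u^2+a^2)$ is lost and $\GG_\eps(h)\in\EE_{\nu-1,\sigma}$ (item~2).

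Item~4 follows from the above by an integration by parts in $\GG_\eps(\pa_u h)^{[k]}(u)=\int_{c_k}^{u}e^{ik\eps\ii(t-u)}\pa_t h^{[k]}(t)\,dt$: the boundary contributions are $h^{[k]}(u)$ (already carrying the right weight) and $e^{ik\eps\ii(c_k-u)}h^{[k]}(c_k)$ (bounded by $\|h^{[k]}\|_{\nu}$ since the base point $c_k$ stays away from $\pm ia$ and $|u^2+a^2|$ is bounded above), while the remaining term equals $ik\eps\ii\GG_\eps(h)^{[k]}$, whose $\EE_{\nu}$-norm is $\le|k|\eps\ii\cdot K(\eps/|k|)\|h^{[k]}\|_{\nu}=K\|h^{[k]}\|_{\nu}$ by the $k\ne0$ estimate; for $k=0$ one simply gets $h^{[0]}(u)-h^{[0]}(-\rr)$. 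Finally, the identities between $\GG_\eps$ and $\LL_\eps$ are a direct computation: differentiating the defining integrals gives $\pa_u\GG_\eps(h)^{[k]}=h^{[k]}-ik\eps\ii\GG_\eps(h)^{[k]}$, i.e.\ $(\eps\ii\pa_\tau+\pa_u)\GG_\eps(h)=h$ mode by mode, so $\LL_\eps\circ\GG_\eps=\Id$; applying $\GG_\eps$ to $\LL_\eps(h)$ and using the fundamental theorem of calculus recovers $h$ minus the boundary terms at the base points $c_k$ listed in the statement (the two last items of the lemma being the same assertion).

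The main obstacle is the bundle of facts needed in the second step: one has to make the geometry of the outer domains quantitative --- the segments remain inside $D^{\out,u}_{\rr,\kk}$ at distance $\gtrsim\kk\eps$ from $\pm ia$, they are uniformly transverse to the real axis (this is where the role of $\beta_1$ and of the vertices $u_1,\bar u_1$ enters), and then the weighted oscillatory-integral estimate must be carried out carefully enough to exhibit simultaneously the three behaviours $\nu=0$, $0<\nu<1$, $\nu>1$ and the gain of a factor $\eps$ for zero-mean data. Everything else is routine bookkeeping with the Fourier norm $\|\cdot\|_{\nu,\rr,\kk,\sigma}$.
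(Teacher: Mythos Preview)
Your sketch is correct and is essentially what the cited Lemma~5.5 of \cite{GuardiaOS10} contains; the paper itself does not reprove it but simply invokes that reference, so there is nothing further to compare. Two small points worth tightening: (i) for item~1 with $\nu>0$ and the $k=0$ mode your discussion only explicitly covers ``$\nu=0$'', but of course once you have $\GG_\eps(h)^{[0]}\in\EE_{\nu-1}$ (or $\EE_0$) the embedding $\|\cdot\|_{\nu}\le K\|\cdot\|_{\nu-1}$ from Lemma~\ref{lemma:PropietatsNormes}~(2) on the bounded domain gives $\GG_\eps(h)^{[0]}\in\EE_{\nu}$ as well; (ii) for $k\neq 0$ the heuristic ``the exponential concentrates the integral in a disc of radius $\OO(\eps/|k|)$'' is right, but the segment from $u_1$ to $u$ can pass closer to $ia$ than $u$ itself is, so one should split the integral at, say, $r=|u-ia|/2$: on $\{r<|u-ia|/2\}$ your weight comparison holds, while on the complementary tail one uses the crude bound $|t-ia|\ge c\kk\eps$ together with the exponential factor $e^{-c|k|\eps^{-1}|u-ia|/2}$, and the elementary estimate $\max_{y>0} y^{\nu+1}e^{-cy}\le K$ closes the argument. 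These are routine refinements; the structure of your proof is the intended one.
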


\begin{proof}
It is a consequence of Lemma 5.5 in \cite{GuardiaOS10}.
\end{proof}

\subsubsection{Proof of
Theorem~\ref{th:Extensio:Trig}}\label{subsec:Extensio:trig}

We prove Theorem \ref{th:Extensio:Trig}, by looking for the analytic
continuation of the function $T_1=T-T_0$ obtained in Propositions
\ref{prop:HJ:hyp:general} and \ref{prop:HJ:parab:general} as a
solution of equation \eqref{eq:HJperT1}. First we rewrite the result in
terms of the Banach spaces defined in \eqref{def:BanachExtension}.

\begin{proposition}\label{prop:extensio:trig}
Let $\rr_1$ be the constant introduced in Theorem
\ref{th:ExistenceCloseInfty} and let $\rr_2>\rr_1$,
$\eps_0>0$ small enough and $\kk_1>0$ big enough. Then, for $\eps\in
(0,\eps_0)$, there exists a function
$T_1\in\EE_{\ell+1,\rr_2,\kk_1,\sigma}$ which satisfies equation
\eqref{eq:HJperT1} and is the analytic continuation of the analytic
function  $T_1$ obtained in Propositions \ref{prop:HJ:hyp:general}
and \ref{prop:HJ:parab:general}. Moreover, there exists a constant
$b_2>0$ such that
\[
\left\|\pa_u T_1\right\|_{\ell+1,\rr_2,\kk_1,\sigma}\leq
b_2|\mu|\eps^{\eta+1}.
\]
\end{proposition}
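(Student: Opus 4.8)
The plan is to set up a fixed point argument for $\pa_u T_1$ in the Banach space $\EE_{\ell+1,\rr_2,\kk_1,\sigma}$, extending the local solution already obtained in Propositions \ref{prop:HJ:hyp:general} and \ref{prop:HJ:parab:general}, and relying on the smoothing properties of the left-inverse operator $\GG_\eps$ from Lemma \ref{lemma:PropietatsGExtensio}. Recall that $T_1$ solves $\LL_\eps T_1 = \FF(\pa_u T_1, u,\tau)$ with $\FF$ decomposed as in \eqref{eq:infty:lmenor:operadorF} into $A + (B_1+B_2)w + C(w,u,\tau)$. First I would differentiate the Hamilton--Jacobi equation with respect to $u$, so that $W := \pa_u T_1$ satisfies $\LL_\eps W = \pa_u\big[\FF(W,u,\tau)\big]$, and then invert $\LL_\eps$ using $\GG_\eps$ defined in \eqref{def:operadorG:HJtoParam} to write $W$ as a fixed point of $\overline\FF := \GG_\eps \circ \pa_u \circ \FF(\cdot,u,\tau)$, matching the local solution at the vertices $u_1,\bar u_1$ and at $-\rr_2$ via the boundary conditions encoded in $\GG_\eps$. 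The key estimates needed are bounds on $A$, $B_1$, $B_2$ and the Lipschitz constant of $C$ in the norms $\|\cdot\|_{\nu,\rr_2,\kk_1,\sigma}$: by \textbf{HP2.1}/\textbf{HP2.2}, the function $q_0(u)$ has a branching point of order $r-1$ (or logarithmic) and $p_0(u)$ of order $r$ at $u=\pm ia$, so the monomials appearing in $A$ (which contains $\wh H_1$ evaluated on the separatrix) have singularities of order at most $\ell$, giving $A\in\EE_{\ell,\sigma}$, hence $\pa_u A\in\EE_{\ell+1,\sigma}$ with $\|\pa_u A\|_{\ell+1,\sigma}\le K|\mu|\eps^\eta$; the quadratic term $-w^2/(2p_0^2(u))$ in $C$ contributes a factor $(u^2+a^2)^{-r}$, which is where the hypothesis $\eta\ge\ell-2r$ from \textbf{HP5} becomes essential to close the estimate.

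The heart of the argument is checking that $\overline\FF$ maps a ball $\overline B(b_2|\mu|\eps^{\eta+1})\subset\EE_{\ell+1,\rr_2,\kk_1,\sigma}$ into itself and is contractive there, for $\kk_1$ large and $\eps_0$ small. For $\overline\FF(0)$ one uses that $A$ has the above singularity order and that, by Lemma \ref{lemma:PropietatsGExtensio}(1), $\GG_\eps$ applied to a zero-mean function gains a factor $\eps$, while Lemma \ref{lemma:PropietatsGExtensio}(4) handles the $\GG_\eps(\pa_u\cdot)$ combination without loss; this yields $\|\overline\FF(0)\|_{\ell+1,\sigma}\le \tfrac{b_2}{2}|\mu|\eps^{\eta+1}$. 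For the Lipschitz bound, given $W_1,W_2$ in the ball, the linear terms $B_1,B_2$ contribute $\|B_i\cdot(W_2-W_1)\|$, bounded using $\|B_1\|_{0,\sigma}\le K|\mu|\eps^\eta$ together with the smoothing in $\GG_\eps$, and the contribution of $C$ is controlled by the third statement of the analogues of Lemmas \ref{lemma:Infty:Cotes}/\ref{lemma:Infty:Cotes:parab}, giving an overall Lipschitz constant $\le K|\mu|\eps^{\eta+1-(\ell-2r)}$ times the critical negative power that cancels against the singular weight --- here is precisely where the condition $\eta-(\ell-2r)\ge 0$ is used to make this quantity $\le \tfrac12$ after shrinking $\eps$. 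The matching with the local manifold is automatic because both the local and the extended solutions satisfy the same equation and the same boundary data on the overlap $D^u_{\infty,\rr_1}\cap D^{\out,u}_{\rr_2,\kk_1}$, which is nonempty since $\rr_2>\rr_1$; uniqueness (modulo an additive constant) of bounded solutions then forces them to coincide, so the extension is analytic.

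The main obstacle I expect is the bookkeeping of the singular weights $(u^2+a^2)^\nu$ near $u=\pm ia$: one must verify that every term produced by $\FF$ and then by $\GG_\eps$ lands in $\EE_{\ell+1}$ (and not a worse space), which requires carefully tracking how $\GG_\eps$ acts on functions with a pole of order $>1$ at the vertices (Lemma \ref{lemma:PropietatsGExtensio}(2)) versus order in $(0,1)$ (statement (3)), and splitting $A$ and $C$ into their constituent monomials according to whether their singularity order exceeds $1$. A secondary technical point is that $\GG_\eps$ is defined relative to the vertices of $D^{\out,u}_{\rr_2,\kk_1}$, so the constants in Lemma \ref{lemma:PropietatsGExtensio} depend mildly on $\kk_1$ and $\rr_2$; one needs $\kk_1$ large enough so that the integration paths in the definition of $\GG_\eps$ stay at distance $\OO(\kk_1\eps)$ from the singularities, and this large-$\kk_1$ choice is harmless because all bounds are uniform once $\kk_1\ge\kk_1^\ast$. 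Finally, Proposition \ref{prop:extensio:trig} is restated as Theorem \ref{th:Extensio:Trig} by integrating $\pa_u T_1$ and recalling $\pa_u T_0 = p_0^2(u)$; the parabolic case is identical except that $B_2=0$ and one uses the polynomial-decay norms from Section \ref{sec:InftyParabolic} near $\Re u\to-\infty$, glued to the weighted norms $\EE_{\nu}$ near the singularities.
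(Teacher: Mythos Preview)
Your overall architecture is right, but there is a genuine gap in the contraction step. The claimed Lipschitz bound $K|\mu|\eps^{\eta+1-(\ell-2r)}$ is not justified: the operator $\GG_\eps\circ\pa_u$ does \emph{not} gain a factor $\eps$ on the linear term $B_1\cdot W$. Lemma~\ref{lemma:PropietatsGExtensio}(4) gives only $\|\GG_\eps(\pa_u h)\|_{\nu,\sigma}\le K\|h\|_{\nu,\sigma}$, and the zero-mean gain in statement~(1) applies to $\langle h\rangle=0$, which fails for the product $B_1(W_2-W_1)$ even though $\langle B_1\rangle=0$. The honest bound is
\[
\|\GG_\eps(\pa_u(B_1(W_2-W_1)))\|_{\ell+1,\sigma}\le K\|B_1\|_{0,\sigma}\|W_2-W_1\|_{\ell+1,\sigma},
\]
and by Lemma~\ref{lemma:Extensio:Trig:Cotes} together with Lemma~\ref{lemma:PropietatsNormes}(1) one has $\|B_1\|_{0,\sigma}\le K|\mu|\eps^{\eta}\,(\kk_1\eps)^{-\max\{0,\ell-2r\}}$. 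At the boundary value $\eta=\eta^\ast=\max\{0,\ell-2r\}$ with $\ell\le 2r$ this is $K|\mu|$, with \emph{no} small parameter available; since $\mu\in B(\mu_0)$ is allowed to be of order one, the map is not contractive and your fixed-point argument breaks down precisely in the case the proposition must cover.

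The paper fixes this by inserting a preliminary change of variables $u=v+g(v,\tau)$ (Lemma~\ref{lemma:Extensio:Trig:canvi}), where $g$ solves $\LL_\eps g=-B_1$; this is the outer-domain analogue of the changes in Lemmas~\ref{lemma:infty:ligual:canvi} and~\ref{lemma:Parab:canvi} that you already accepted in the local construction. After this change the linear coefficient becomes $\wh B=(1+\pa_v g)^{-1}\big(B_1(v+g)-B_1(v)+B_2(v+g)\big)$, which by the mean value theorem is $\OO(|\mu|\eps^{\eta+1})$; the resulting operator $\JJ$ in \eqref{def:Extensio:Trig:Operador:Sencer} then has Lipschitz constant bounded by $K|\mu|\eps^{\eta-(\ell-2r)}/\kk_1^{\ell-2r+1}$ (when $\ell\ge 2r$) or $K|\mu|\eps^{\eta+1-\max\{0,\ell-2r+1\}}$ (when $\ell<2r$), and \emph{these} can be made $<\tfrac12$ by taking $\kk_1$ large or $\eps$ small in every case allowed by \textbf{HP5}. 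Your matching/uniqueness argument on the overlap $D^u_{\infty,\rr_1}\cap D^{\out,u}_{\rr_2,\kk_1}$ is correct and is exactly how the paper concludes in Lemma~\ref{lemma:Extensio:Trig:General}, but it must be run for the transformed unknown $\wh T_1(v,\tau)=T_1(v+g(v,\tau),\tau)$ and then undone via the inverse change $v=u+h(u,\tau)$.
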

This proposition gives the existence of the invariant manifolds in
$D^{\out,*}_{\rr_2,\kk_1}\times\TT_\sigma$, $*= u, s$.

We devote the rest of the section to prove Proposition
\ref{prop:extensio:trig}.

First, we state a technical lemma about properties of the functions
$A$, $B_1$, $B_2$ and $C$ defined in \eqref{def:InftyA},
\eqref{def:InftyB1}, \eqref{def:InftyB2} and \eqref{def:InftyC}
respectively.

\begin{lemma}\label{lemma:Extensio:Trig:Cotes}
Let $\rr>0$ and $\kk>0$. Then, the functions $A$,
$B_1$, $B_2$ and $C$ defined in \eqref{def:InftyA},
\eqref{def:InftyB1}, \eqref{def:InftyB2} and \eqref{def:InftyC}
satisfy the following properties.
\begin{enumerate}
\item $A\in \EE_{\ell,\rr,\kk,\sigma}$ and $\pa_u
A\in\EE_{\ell+1,\rr,\kk,\sigma}$. Moreover $\pa_uA$
satisfies
\begin{equation}\label{eq:Infty:CotesA:outer}
\begin{split}
\left\|\pa_u A\right\|_{\ell+1,\rr,\kk,\sigma}&\leq
K|\mu|\eps^{\eta}\\
 \left\|\GG_\eps
(\pa_uA)\right\|_{\ell+1,\rr,\kk,\sigma}&\leq K|\mu|\eps^{\eta+1}.
\end{split}
\end{equation}
\item If $\ell-2r<0$, $B_1, \pa_u B_1, B_2 \in \EE_{0,\rr,\kk,\sigma}$ and
satisfy
$\langle B_1\rangle=0$ and
\begin{equation}\label{eq:Infty:CotesB:outer:lmenor}
\begin{split}
\dps\|B_1\|_{0,\rr,\kk,\sigma}&\leq K|\mu|\eps^{\eta}\\
\dps\|\pa_uB_1\|_{\max\{0,\ell-2r+1\},\rr,\kk,\sigma}&\leq
K|\mu|\eps^{\eta}\\
\dps\|B_2\|_{0,\rr,\kk,\sigma}&\leq K|\mu|^2\eps^{2\eta+1}.
\end{split}
\end{equation}
\item If $\ell-2r\geq 0$, $B_1, B_2 \in \EE_{\ell-2r,\rr,\kk,\sigma}$, $\pa_u
B_1 \in \EE_{\ell-2r+1,\rr,\kk,\sigma}$ and satisfy
$\langle B_1\rangle=0$ and
\begin{equation}\label{eq:Infty:CotesB:outer}
\begin{split}
\dps\|B_1\|_{\ell-2r,\rr,\kk,\sigma}&\leq K|\mu|\eps^{\eta}\\
\dps\|\pa_uB_1\|_{\ell-2r+1,\rr,\kk,\sigma}&\leq
K|\mu|\eps^{\eta}\\
\dps\|B_2\|_{\ell-2r,\rr,\kk,\sigma}&\leq K|\mu|^2\eps^{2\eta+1}.
\end{split}
\end{equation}
\item Let us consider $h_1,h_2\in B(\nu)\subset\EE_{\ell+1,\rr,\kk,\sigma}$
with $\nu\ll 1$. Then,
\begin{itemize}
\item If $\ell-2r<0$,
\[
\left\| C(h_2,u,\tau)-
C(h_1,u,\tau)\right\|_{\ell+1,\rr,\kk,\sigma}\leq K
\frac{\nu}{\eps^{\max\{0,\ell-2r+1\}}}\|h_2-h_1\|_{\ell+1,\rr,\kk,\sigma}.
\]
\item If $\ell-2r\geq0$,
\[
\left\| C(h_2,u,\tau)-
C(h_1,u,\tau)\right\|_{2\ell-2r+2,\rr,\kk,\sigma}\leq K
\nu\|h_2-h_1\|_{\ell+1,\rr,\kk,\sigma}.
\]
\end{itemize}
\end{enumerate}
\end{lemma}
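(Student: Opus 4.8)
The plan is to treat this lemma as a (long) exercise in order counting. The functions $A$, $B_1$, $B_2$ and $C$ are explicit finite combinations of $q_0(u)$, $p_0(u)$, the potential $V$, the polynomials (or trigonometric polynomials) $\wh H_1^1$, $\wh H_1^2$ and the periodic orbit $\xp(\tau)$; since all of these are analytic on the relevant sets, the analytic continuation of $A,B_1,B_2,C$ from the local domains of Propositions~\ref{prop:HJ:hyp:general} and~\ref{prop:HJ:parab:general} to the outer domain $D^{\out,u}_{\rr,\kk}$ is automatic, and the entire content of the lemma is the control of the order of the branch point of each of these functions at $u=\pm ia$. I would base this on two ingredients. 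First, Hypotheses~\textbf{HP2.1} and~\textbf{HP2.2}, which give $q_0(u)\sim(u\mp ia)^{-(r-1)}$, $p_0(u)\sim(u\mp ia)^{-r}$ and $p_0^{-1}(u)\sim(u\mp ia)^{r}$ near the singularities, together with the definition~\eqref{def:ell} of $\ell$, which says that every monomial $q_0^k p_0^l$ of $\wh H_1^1$ or $\wh H_1^2$ has a branch point of order at most $\ell$, so that $\pa_p$ of these Hamiltonians evaluated on the separatrix has order at most $\ell-r$ and $\pa_p^2$ order at most $\ell-2r$. Second, the elementary fact that on $D^{\out,u}_{\rr,\kk}$ one has $|u^2+a^2|=|u-ia|\,|u+ia|\geq K\kk\eps$ while the nonsingular factor stays of order one, which is exactly what the weighted norms $\|\cdot\|_{\nu,\rr,\kk,\sigma}$ are built to exploit.

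For $A$ I would use the decomposition $A=A_1+A_2+A_3$ already employed in the proof of Lemma~\ref{lemma:Infty:Cotes}. The terms $A_2=-\mu\eps^\eta\wh H_1^1(q_0,p_0,\tau)$ and $A_3=-\mu\eps^{\eta+1}\wh H_1^2(q_0,p_0,\tau)$ have order at most $\ell$, hence lie in $\EE_{\ell,\rr,\kk,\sigma}$, and using Corollary~\ref{coro:ShiftPeriodica} for the size of the coefficients $c_{kl}$ one gets $\|A_2\|_{\ell}\leq K|\mu|\eps^\eta$ and $\|A_3\|_{\ell}\leq K|\mu|^2\eps^{2\eta+1}$ (in the parabolic case $\wh H_1^2\equiv 0$, so $A_3\equiv 0$). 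The purely potential term $A_1$ is, after Taylor expansion, of the form $-q_0^2(u)\,\xp(\tau)\int_0^1\int_0^1 V'''\!\left(s_2\xp(\tau)+s_1q_0(u)\right)(1-s_1)\,ds_1\,ds_2$; its order at $u=\pm ia$ is fixed by the relation $\deg V=2r/(r-1)$ recalled in Section~\ref{remrkshipotesis} (and its trigonometric counterpart coming from \textbf{HP2.2}), while the factor $\xp(\tau)=\OO(|\mu|\eps^{\eta+1})$ supplies the smallness. Differentiating and applying Lemma~\ref{lemma:PropietatsNormes} then gives $\pa_u A\in\EE_{\ell+1,\rr,\kk,\sigma}$ with $\|\pa_u A\|_{\ell+1,\rr,\kk,\sigma}\leq K|\mu|\eps^\eta$, and for the estimate of $\GG_\eps(\pa_u A)$ one uses that $\langle A_2\rangle=0$, so that Lemma~\ref{lemma:PropietatsGExtensio} gains the missing power of $\eps$ on $A_2$, whereas $A_1$ and $A_3$ already carry it.

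For $B_1=-\mu\eps^\eta p_0^{-1}(u)\,\pa_p\wh H_1^1(q_0,p_0,\tau)$ the order count gives at most $-r+(\ell-r)=\ell-2r$, which is exactly the source of the dichotomy in the statement: if $\ell-2r<0$ the function is bounded near $\pm ia$ and $B_1\in\EE_{0,\rr,\kk,\sigma}$, while if $\ell-2r\geq 0$ one has $B_1\in\EE_{\ell-2r,\rr,\kk,\sigma}$; in both cases $\|B_1\|\leq K|\mu|\eps^\eta$, the vanishing average being inherited from $\langle\wh H_1^1\rangle=0$, and $\pa_u B_1$ is estimated by differentiation, the index $\max\{0,\ell-2r+1\}$ reflecting that raising the order by one may or may not keep it negative. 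The function $B_2$ is treated identically, the extra $\eps$ coming from its definition and the extra $|\mu|\eps^\eta$ from the bound on $c_{kl}$ in Corollary~\ref{coro:ShiftPeriodica} (and again $B_2\equiv 0$ in the parabolic case). Finally, for $C$ I would write $C=C_1+C_2$ with $C_1(w,u,\tau)=-w^2/(2p_0^2(u))$ and $C_2$ the order-two-in-$w$ Taylor remainder of $-\mu\eps^\eta\wh H_1(q_0,p_0+w/p_0,\tau)$: for $h_1,h_2\in B(\nu)\subset\EE_{\ell+1,\rr,\kk,\sigma}$ the Lipschitz constant of $C_1$ is governed by $\|p_0^{-2}(h_1+h_2)\|\,\|h_2-h_1\|$, and since $p_0^{-2}$ has order $-2r$ and $h_1+h_2$ order at most $\ell+1$, this quantity lives naturally in $\EE_{2\ell-2r+2,\rr,\kk,\sigma}$ — precisely the second bullet when $\ell-2r\geq 0$ — and, when $\ell-2r<0$, one lowers the index down to $\ell+1$ through Lemma~\ref{lemma:PropietatsNormes}, paying a factor $(\kk\eps)^{-\max\{0,\ell-2r+1\}}$ that becomes the $\eps^{-\max\{0,\ell-2r+1\}}$ of the first bullet; $C_2$ is handled the same way, with $\pa_p^2\wh H_1$ of order at most $\ell-2r$, no worse than $p_0^{-2}$ in the relevant exponent.

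The two places where the bookkeeping must be done with real care — and which I regard as the main obstacle, there being no single conceptual difficulty here — are, first, matching exactly the prescribed norm indices, in particular the appearance of $\ell-2r$ versus $0$ for $B_1$, $B_2$ and $C$, which encodes whether the relevant quantity stays bounded or blows up at $u=\pm ia$, and the order of the purely potential contribution $A_1$, for which one has no choice but to invoke $\deg V=2r/(r-1)$ together with the smallness of $\xp(\tau)$; and, second, accounting at every differentiation and every application of $\GG_\eps$ for the unavoidable small shrinking of $\rr$ and enlargement of $\kk$ (Lemma~\ref{lemma:PropietatsNormes} and the definition of $\GG_\eps$), which is harmless in the applications — where $\rr$ and $\kk$ are fixed constants — so that all resulting $\kk$-dependent factors are absorbed into the constant $K$.
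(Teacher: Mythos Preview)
Your proposal is correct and follows essentially the same approach as the paper: the same decomposition $A=A_1+A_2+A_3$, the same use of $\deg V=2r/(r-1)$ and the smallness of $\xp(\tau)$ for $A_1$, the zero mean of $A_2$ to gain the extra $\eps$ through $\GG_\eps$, and the same order counting for $B_1,B_2,C$ (which the paper dismisses with ``the other bounds are straightforward''). One small caution: when you write ``Differentiating and applying Lemma~\ref{lemma:PropietatsNormes}'' to obtain $\pa_u A\in\EE_{\ell+1}$, make sure you mean direct differentiation of the explicit expressions (which raises the order by one) and not the Cauchy-type estimate in part~4 of that lemma, which would only give $\pa_u A\in\EE_{\ell}$ with a bad $(\kk\eps)^{-1}$ factor.
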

\begin{proof}
For the first bounds, we split $A=A_1+A_2+A_3$, where $A_i$,
$i=1,2,3$, are the functions defined in \eqref{def:HJ:A1},
\eqref{def:HJ:A2} and \eqref{def:HJ:A3} respectively.

Using \eqref{eq:Formula:A1} and \eqref{eq:PotencialInfinit}, one can
see that
$A_1\in\EE_{r+1,\rr,\de,\sigma}\subset\EE_{\ell+1,\rr,\de,\sigma}$
and
\begin{equation}\label{eq:Cota:A1}
\|A_1\|_{\ell+1,\rr,\de,\sigma}\leq\|A_1\|_{r+1,\rr,\de,\sigma}\leq
K|\mu|\eps^{\eta+1}.
\end{equation}
 Applying
Lemma \ref{lemma:PropietatsGInfty}, we obtain
$\|\GG_\eps(\pa_uA_1)\|_{\ell+1,\rr,\de,\sigma}\leq
K|\mu|\eps^{\eta+1}$.

Moreover, by the definition of
$\ell$, $A_2,A_3\in \EE_{\ell,\rr,\de,\sigma}$. Therefore
$\pa_uA_2,\pa_uA_3\in \EE_{\ell+1,\rr,\de,\sigma}$ and satisfy
$\|\pa_u A_2\|_{\ell+1,\rr,\de,\sigma}\leq K|\mu|\eps^{\eta}$ and
$\|\pa_u A_3\|_{\ell+1,\rr,\de,\sigma}\leq K|\mu|^2\eps^{2\eta+1}$.

To bound $\GG_\eps(A_2)$, let us point out that $\langle
A_2\rangle=0$ and then, by Lemma \ref{lemma:PropietatsGInfty},
\[
\left\|\GG_\eps\left(\pa_uA_2\right)\right\|_{\ell+1,\rr,\de,\sigma}\leq
K\eps\|\pa_uA_2\|_{\ell+1,\rr,\de,\sigma}\leq K|\mu|\eps^{\eta+1}.
\]
Applying again  Lemma \ref{lemma:PropietatsGInfty} we have
$\|\GG_\eps(\pa_uA_3)\|_{\ell+1,\rr,\de,\sigma}\leq
K|\mu|^2\eps^{2\eta+1}$. Therefore
\[
\left\|\GG_\eps (\pa_uA)\right\|_{\ell+1,\rr,\de,\sigma}\leq
K|\mu|^2\eps^{2\eta+1}.
\]
The other bounds are straightforward.
\end{proof}

To prove Proposition \ref{prop:extensio:trig}, we proceed as in the
proofs of Propositions \ref{prop:HJ:hyp:general} and
\ref{prop:HJ:parab:general}. That is, we first perform a change of
variables which reduces the size of the linear terms of $\FF$ in
\eqref{eq:infty:lmenor:operadorF}. Notice that in order to prove
Proposition \ref{prop:extensio:trig} we could look for this
change as the analytic continuation of the changes obtained in
Lemmas \ref{lemma:infty:ligual:canvi} and \ref{lemma:Parab:canvi}.
Nevertheless, since we want the proof of Theorem
\ref{th:Extensio:Trig} be also valid for Theorem
\ref{th:ExtensioFinal}, we look for a change $g$ which is not
necessarily continuation of the one obtained in Lemmas
\ref{lemma:infty:ligual:canvi} and \ref{lemma:Parab:canvi}.

\begin{lemma}\label{lemma:Extensio:Trig:canvi}
Let $\kk_1>\kk_0'>\kk_0>0$ and
$\rr_1''>\rr_1'>\rr_2>\rr_0'$, where $\rr_0'$ is the constant
introduced in Lemmas \ref{lemma:infty:ligual:canvi} and
\ref{lemma:Parab:canvi}. Then, for $\eps>0$ small enough and
$\kk_0'$ big enough, there exists a function
$g$ which is solution of
\eqref{eq:Infty:Canvi} and satisfies:
\begin{itemize}
\item If $\ell-2r<0$, $g \in\EE_{0,\rr_1',\kk_0',\sigma}$
and
\[
\begin{split}
\dps\|g\|_{0,\rr_1',\kk_0',\sigma}&\leq K|\mu|\eps^{\eta+1}\\
\dps\|\pa_v g\|_{0,\rr_1',\kk_0',\sigma}&\leq K|\mu|\eps^{\eta+1}.
\end{split}
\]
\item If $\ell-2r\geq0$, $g \in\EE_{\ell -2r,\rr_1',\kk_0',\sigma}$
and
\[
\begin{split}
\dps\|g\|_{\ell-2r,\rr_1',\kk_0',\sigma}&\leq K|\mu|\eps^{\eta+1}\\
\dps\|\pa_v g\|_{\ell-2r+1,\rr_1',\kk_0',\sigma}&\leq
K|\mu|\eps^{\eta+1}.
\end{split}
\]
\end{itemize}
Moreover, $v+g(v,\tau)\in D^{\out,u}_{\rr_1'',\kk_0}$ for
$(v,\tau)\in D^{\out,u}_{\rr_1',\kk_0'}\times\TT_\sigma$.

Furthermore, the change of variables $(u,\tau)=(v+g(v,\tau),\tau)$
is invertible and its inverse is of the form
$(v,\tau)=(u+h(u,\tau),\tau)$. The function $h$ is defined in the
domain $D_{\rr_2,\kk_1}^{\out,u}\times\TT_\sigma$ and it satisfies
\begin{itemize}
\item If $\ell-2r<0$
\[
\dps\|h\|_{0,\rr_2,\kk_1,\sigma}\leq K|\mu|\eps^{\eta+1}.
\]
\item If $\ell-2r\geq 0$
\[
\dps\|h\|_{\ell-2r,\rr_2,\kk_1,\sigma}\leq K|\mu|\eps^{\eta+1}.
\]
\end{itemize}
Moreover, $u+h(u,\tau)\in D^{\out,u}_{\rr_1',\kk_0'}$ for
$(u,\tau)\in D^{\out,u}_{\rr_2,\kk_1}\times\TT_\sigma$.
\end{lemma}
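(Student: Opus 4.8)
The change $(u,\tau)=(v+g(v,\tau),\tau)$ is designed to cancel, to leading order, the linear term $(B_1+B_2)w$ of the operator $\FF$ in \eqref{eq:infty:lmenor:operadorF}: whereas $B_2=\OO(|\mu|\eps^{\eta+1})$ is already admissible, $B_1=\OO(|\mu|\eps^{\eta})$ is not small when $\eta=0$, so we remove it by imposing $\LL_\eps g=-B_1$, that is equation \eqref{eq:Infty:Canvi}. First I would fix the constants in the order $\rr_0'<\rr_2<\rr_1'<\rr_1''$, then $0<\kk_0<\kk_0'<\kk_1$ with $\kk_0'$ large and $\eps_0$ small (each depending only on the previous ones and on $\mu_0$), and set
\[
g=-\GG_\eps(B_1),
\]
where $\GG_\eps$ is the left inverse of $\LL_\eps$ from \eqref{def:operadorG:HJtoParam} on $D^{\out,u}_{\rr_1',\kk_0'}$. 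By Lemma \ref{lemma:PropietatsGExtensio} this solves \eqref{eq:Infty:Canvi}, and $g$ is real-analytic because $B_1$ is real-analytic and the endpoints $u_1,\ol u_1$ of the integration paths defining $\GG_\eps$ are complex conjugate while $-\rr_1'$ is real. Contrary to Lemmas \ref{lemma:infty:ligual:canvi} and \ref{lemma:Parab:canvi}, no $2\pi$-periodic primitive in $\tau$ is needed here, since the integrals defining $\GG_\eps$ have finite endpoints.

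\textbf{Size estimates for $g$.} I would invoke Lemma \ref{lemma:Extensio:Trig:Cotes}: $\langle B_1\rangle=0$ and $\|B_1\|_{\max\{0,\ell-2r\},\rr_1',\kk_0',\sigma}\le K|\mu|\eps^{\eta}$. The zero-average bound of Lemma \ref{lemma:PropietatsGExtensio}(1) then gives $\|g\|_{\max\{0,\ell-2r\},\rr_1',\kk_0',\sigma}\le K\eps\,\|B_1\|_{\max\{0,\ell-2r\},\rr_1',\kk_0',\sigma}\le K|\mu|\eps^{\eta+1}$, which is the claimed bound in both regimes $\ell-2r<0$ and $\ell-2r\ge 0$. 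For $\pa_v g$, I would integrate by parts in each Fourier component of $\GG_\eps(B_1)$; the terms $-B_1^{[k]}$ cancel and one is left with
\[
\pa_v g^{[k]}=-\GG_\eps(\pa_v B_1)^{[k]}-e^{ik\eps\ii(u_\pm-u)}B_1^{[k]}(u_\pm),
\]
the vertex $u_\pm$ being the one selected by the sign of $k$ in the definition of $\GG_\eps$. The first term is estimated exactly as $g$, using $\langle\pa_v B_1\rangle=0$ together with the bound on $\|\pa_v B_1\|_{\max\{0,\ell-2r+1\},\cdot}$ from Lemma \ref{lemma:Extensio:Trig:Cotes} and the mapping properties (1) and (3) of Lemma \ref{lemma:PropietatsGExtensio} (part (3) being what turns the mild singularity of $\pa_v B_1$, of order $<1$ when $\ell-2r<0$, into a bounded function); the boundary terms are controlled by the choice of the angle $\beta_1$ in \eqref{def:DominisOuter}, which forces $|e^{ik\eps\ii(u_\pm-u)}|\le 1$ along the relevant paths, so that after multiplication by the appropriate weight (which is $\OO(1)$ at the vertices and vanishes near $\pm ia$) they do not exceed the claimed order. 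This produces the stated bounds on $\pa_v g$ in the two cases.

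\textbf{Forward inclusion and inverse change.} On $D^{\out,u}_{\rr_1',\kk_0'}$ one has $|v^2+a^2|\ge K\kk_0'\eps$, so the bound on $g$ yields $|g(v,\tau)|\le K|\mu|\eps^{\eta+1-\max\{0,\ell-2r\}}(\kk_0')^{-\max\{0,\ell-2r\}}$, which by Hypothesis \textbf{HP5} ($\eta\ge\ell-2r$) is $\OO(|\mu|\eps)$ — and, when $\ell-2r>0$, even carries a favorable negative power of $\kk_0'$. This is smaller than the distance between $D^{\out,u}_{\rr_1',\kk_0'}$ and $D^{\out,u}_{\rr_1'',\kk_0}$ once $\rr_1''-\rr_1'$, $\kk_0'-\kk_0$ (and $\kk_0'$ itself, if $\ell-2r>0$) are taken large enough, which proves $v+g(v,\tau)\in D^{\out,u}_{\rr_1'',\kk_0}$. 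The inverse change $v=u+h(u,\tau)$ is then obtained by a standard contraction (solving $v=u-g(v,\tau)$ for $v$, legitimate since $g$ is small with small derivative) on $D^{\out,u}_{\rr_2,\kk_1}$, with $u+h(u,\tau)\in D^{\out,u}_{\rr_1',\kk_0'}$ and $h$ inheriting the weighted bounds of $g$; the estimate on $\pa_u h$ follows by differentiating the relation $v=u+h(u,\tau)$, $u=v+g(v,\tau)$.

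\textbf{Main obstacle.} The hard part is the estimate on $\pa_v g$ with the gained power $\eps^{\eta+1}$, which is exactly what keeps $1+\pa_v g$ invertible in the non-perturbative regime $\eta=0$: one must track the boundary contributions produced when integrating by parts against the finite-endpoint operator $\GG_\eps$ and show, via the precise geometry of the outer domains (the role of the slope $\beta_1$), that they decay like the rest. This has to be carried out separately in the regime $\ell-2r<0$ (weight $0$, where $\pa_v B_1$ is singular of order $<1$ and $\GG_\eps$ integrates it to a bounded quantity) and in the regime $\ell-2r\ge 0$ (weight $\ell-2r+1$, where the vanishing of the weight at $\pm ia$ absorbs the loss). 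Checking the domain inclusions when $\ell-2r>0$, where Hypothesis \textbf{HP5} is precisely what makes $g$ genuinely $\OO(\eps)$-small even within distance $\OO(\eps)$ of the singularities, is the other point where some care is needed.
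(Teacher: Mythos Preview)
Your definition $g=-\GG_\eps(B_1)$ does solve \eqref{eq:Infty:Canvi} and does satisfy the bound on $\|g\|$ (the zero-average gain in Lemma \ref{lemma:PropietatsGExtensio} gives the extra $\eps$). The gap is in the bound for $\pa_v g$. Your integration-by-parts formula
\[
\pa_v g^{[k]}=-\GG_\eps(\pa_v B_1)^{[k]}-e^{ik\eps^{-1}(u_\pm-v)}B_1^{[k]}(u_\pm)
\]
is correct, and the first term is indeed $\OO(|\mu|\eps^{\eta+1})$; but the boundary term is not. The vertices $u_1,\ol u_1$ of $D^{\out,u}_{\rr_1',\kk_0'}$ lie at the \emph{left} corners $-\rr_1'\pm i(\tan\beta_1\rr_1'+a-\kk_0'\eps)$, hence at distance $\OO(1)$ from $\pm ia$. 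At $v\to u_\pm$ the exponential tends to $1$, the weight $(v^2+a^2)^{\nu}$ is $\OO(1)$, and $|B_1^{[k]}(u_\pm)|\le K\|B_1^{[k]}\|_{\max\{0,\ell-2r\}}$. Summing over $k$ gives only
\[
\bigl\|\,e^{ik\eps^{-1}(u_\pm-\cdot)}B_1^{[k]}(u_\pm)\,\bigr\|_{\nu,\sigma}\le K\|B_1\|_{\max\{0,\ell-2r\},\sigma}\le K|\mu|\eps^{\eta},
\]
with no extra $\eps$. In the critical case $\eta=0$ this makes $\pa_v g=\OO(|\mu|)$, so $1+\pa_v g$ is not uniformly bounded away from $0$ for $|\mu|\le\mu_0$ with $\mu_0$ arbitrary, and the change of variables need not be invertible.

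The paper avoids this by \emph{not} dispensing with the $\tau$-primitive: it sets $\ol B_1$ with $\pa_\tau\ol B_1=B_1$, $\langle\ol B_1\rangle=0$, and takes
\[
g(v,\tau)=-\eps\,\ol B_1(v,\tau)+\eps\,\GG_\eps(\pa_v\ol B_1)(v,\tau).
\]
Both summands carry an explicit $\eps$, which survives differentiation in $v$: one gets $\pa_v g=-\eps\pa_v\ol B_1+\eps\,\pa_v\GG_\eps(\pa_v\ol B_1)$, and now the boundary contribution from the second term involves $\eps\,\pa_v\ol B_1^{[k]}(u_\pm)=\OO(|\mu|\eps^{\eta+1})$. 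Your $g$ and the paper's $g$ differ precisely by the kernel element $\sum_k e^{ik\eps^{-1}(u_\pm-v)}\eps\,\ol B_1^{[k]}(u_\pm)$, whose $v$-derivative is $\sum_k e^{ik\eps^{-1}(u_\pm-v)}B_1^{[k]}(u_\pm)$ --- exactly the bad term. So the passage through the $\tau$-primitive is not a matter of taste here; it is what buys the missing factor of~$\eps$.
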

In the case $\ell-2r<0$ we need more precise bounds of both functions $g$ and
$h$
restricted to the inner domain $D_{\kk_1,c}^{\inn,+,u}$ defined in
\eqref{def:DominisInnerEnu}.
These bounds are given in the next corollary.
\begin{corollary}\label{coro:Extensio:CotaCanvigInner}
Let us assume $\ell-2r<0$ and let $c_1>0$. Then, the functions $g$ and $h$
obtained in Lemma \ref{lemma:Extensio:Trig:canvi},
restricted to the inner domain $D_{\kk_1,c_1}^{\inn,+,u}$, satisfy the following
bounds
\[
\sup |g (u,\tau)|_{(u,\tau)  \in D_{\kk_1,c_1}^{\inn,+,u}\times \TT_\sigma }\leq
K|\mu|\eps^{\eta+1+\nu_1^*}\quad\text{ and }\quad\sup |h(u,\tau) |_{(u,\tau)
\in D_{\kk_1,c_1}^{\inn,+,u}\times \TT_\sigma } \leq K|\mu|\eps^{\eta+1+\nu_1^*}
\]
with $\nu_1^*=\min\{(2r-\ell)\gamma,1\}$.
\end{corollary}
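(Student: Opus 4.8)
The plan is to return to the explicit construction of $g$ in the proof of Lemma~\ref{lemma:Extensio:Trig:canvi}. Since $\langle B_1\rangle=0$ (Lemma~\ref{lemma:Extensio:Trig:Cotes}), in the case $\ell-2r<0$ that construction produces essentially $g=-\GG_\eps(B_1)$, where $\GG_\eps$ is the outer operator \eqref{def:operadorG:HJtoParam}. The two vertices $u_1,\bar u_1$ of $D^{\out,u}_{\rr,\kk}$ entering the definition of $\GG_\eps$ lie at distance $\OO(1)$ from the singularities $\pm ia$, so on the inner domain $D_{\kk_1,c_1}^{\inn,+,u}$ the boundary contributions $e^{ik\eps\ii(v_k-u)}$ appearing in $\GG_\eps$ are super-exponentially small and can be absorbed into the error term. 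The same remark, combined with the bound $\|h\|_{0,\rr_2,\kk_1,\sigma}\leq K|\mu|\eps^{\eta+1}$ of Lemma~\ref{lemma:Extensio:Trig:canvi}, reduces the estimate for $h$ to that for $g$: indeed $h(u,\tau)=-g(u+h(u,\tau),\tau)$ and $u+h(u,\tau)$ remains in a slightly enlarged inner domain whenever $u\in D_{\kk_1,c_1}^{\inn,+,u}$.

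The crucial structural point is that $B_1$ \emph{vanishes} at the singularities. Indeed $B_1=-\mu\eps^\eta p_0^{-1}\pa_p\wh H_1^1(q_0,p_0,\tau)$, and when $\ell-2r<0$ the Hamiltonian $H_1$ is at most linear in $y$; hence, by the definition of $\ell$ in \eqref{def:ell}, $\pa_p\wh H_1^1$ evaluated on the separatrix has at $\pm ia$ a branching point of order at most $\ell-r$, and, $p_0^{-1}$ having a zero of order $r$ there, $B_1$ has at $\pm ia$ a branching point of order at most $\ell-2r<0$, i.e.\ a zero of order at least $2r-\ell$. In the notation of Section~\ref{sec:Extensio} this means $B_1\in\EE_{\ell-2r,\rr,\kk,\sigma}$ with $\|B_1\|_{\ell-2r,\rr,\kk,\sigma}\leq K|\mu|\eps^\eta$ (the trigonometric case being analogous with the conventions $r=1$, $\beta=M$). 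Since on $D_{\kk_1,c_1}^{\inn,+,u}$ one has $\OO(\kk_1\eps)\leq|u-ia|\leq\OO(\eps^\ga)$ while $|u+ia|\sim 2a$, this forces $\sup|B_1(u,\tau)|\leq K|\mu|\eps^{\eta+(2r-\ell)\ga}$ on the inner domain.

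Feeding this into $\GG_\eps$ mode by mode, $\GG_\eps(B_1)^{[k]}(u)=\int_{v_k}^u e^{ik\eps\ii(t-u)}B_1^{[k]}(t)\,dt$ with the path deformed so that $|e^{ik\eps\ii(t-u)}|\leq1$ along it and this kernel decays with scale $\eps/|k|$ away from $u$. One integration by parts extracts the factor $\eps/|k|$ and produces the main term $\tfrac{\eps}{ik}B_1^{[k]}(u)$, which inherits the inner-domain smallness just established, plus a remainder $\tfrac{\eps}{ik}\int_{v_k}^u e^{ik\eps\ii(t-u)}(B_1^{[k]})'(t)\,dt$; since the oscillatory kernel localizes this last integral to within $\OO(\eps/|k|)$ of $u$, where $|t-ia|\sim|u-ia|\geq\kk_1\eps$ and $(B_1^{[k]})'$ is controlled through its decay of order $|t-ia|^{2r-\ell-1}$, the remainder is of strictly higher order in $\eps$ than the main term. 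Summing the Fourier series and using $\|B_1\|_{\ell-2r,\rr,\kk,\sigma}\leq K|\mu|\eps^\eta$ then gives $\sup_{D_{\kk_1,c_1}^{\inn,+,u}\times\TT_\sigma}|g|\leq K|\mu|\eps^{\eta+1+\min\{(2r-\ell)\ga,1\}}$, the truncation at $1$ being the bound one reads off comfortably from a single integration by parts and all that is needed later; the estimate for $h$ follows as explained in the first step.

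The main obstacle is the contour bookkeeping in the third step — choosing integration paths inside the narrow inner domain so that the kernel is genuinely bounded by $1$ and the remainder is provably subdominant, uniformly in the Fourier index $k$ — together with checking the vanishing-order statement for $B_1$ uniformly in $\tau$, and, in the trigonometric case, in the presence of the logarithmic singularities of $q_0$.
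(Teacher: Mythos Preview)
Your proposal is correct and follows essentially the same route as the paper: the key observation is that $B_1$ has a zero of order $2r-\ell$ at $u=\pm ia$ when $\ell-2r<0$, and your integration by parts in $t$ recovers exactly the paper's $\tau$-primitive $\ol B_1$. The paper's argument is more economical because it uses the already-derived representation $g=-\eps\ol B_1+\eps\GG_\eps(\pa_v\ol B_1)$ from \eqref{def:FuncioG:extensio} together with the outer bound $\|g+\eps\ol B_1\|_{\max\{\ell-2r+1,0\},\sigma}\leq K|\mu|\eps^{\eta+2}$ established in Lemma~\ref{lemma:Extensio:Trig:canvi}; on the inner domain one then only needs to read off $|\eps\ol B_1|\leq K|\mu|\eps^{\eta+1}|u-ia|^{2r-\ell}\leq K|\mu|\eps^{\eta+1+(2r-\ell)\gamma}$ directly, which sidesteps entirely the contour bookkeeping you flag as the main obstacle.
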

\begin{proof}[Proof of Lemma \ref{lemma:Extensio:Trig:canvi} and Corollary
\ref{coro:Extensio:CotaCanvigInner}]
To define $g$, let us recall first that, by Lemma
\ref{lemma:Extensio:Trig:Cotes}, $\langle B_1\rangle=0$. Then we can
define a function $\ol B_1$ such that $\pa_\tau\ol B_1=B_1$ and
$\langle \ol B_1\rangle=0$.
Then, one can see that a solution of equation
\eqref{eq:Infty:Canvi}, can be given by
\begin{equation}\label{def:FuncioG:extensio}
g(v,\tau)=-\eps\ol B_1(v,\tau)+\eps\GG_\eps(\pa_v\ol B_1)(v,\tau),
\end{equation}
where $\GG_\eps$ is the integral operator defined in
\eqref{def:operadorG:HJtoParam}.

By Lemma \ref{lemma:Extensio:Trig:Cotes} one has:
if $\ell-2r\geq 0$,
\begin{equation}\label{eq:cotaB1barraCanvi}
\begin{split}
\left\|\ol B_1\right\|_{\ell-2r,\rr_2,\kk_0',\sigma}&\leq
K|\mu|\eps^{\eta}\\
\left\|\pa_v\ol B_1\right\|_{\ell-2r+1,\rr_2,\kk_0',\sigma}&\leq
K|\mu|\eps^{\eta},
\end{split}
\end{equation}
if  $-1 \le \ell-2r < 0$,
\begin{equation}\label{eq:cotaB1barraCanvi1}
\begin{split}
\left\|\ol B_1\right\|_{0,\rr_2,\kk_0',\sigma}&\leq
K|\mu|\eps^{\eta}\\
\left\|\pa_v\ol B_1\right\|_{\ell-2r+1,\rr_2,\kk_0',\sigma}&\leq
K|\mu|\eps^{\eta}.
\end{split}
\end{equation}
and finally, if $\ell-2r < -1$
\begin{equation}\label{eq:cotaB1barraCanvi2}
\begin{split}
\left\|\ol B_1\right\|_{0,\rr_2,\kk_0',\sigma}&\leq
K|\mu|\eps^{\eta}\\
\left\|\pa_v\ol B_1\right\|_{0,\rr_2,\kk_0',\sigma}&\leq
K|\mu|\eps^{\eta}.
\end{split}
\end{equation}

From these inequalities, using Lemma
\ref{lemma:PropietatsGExtensio} we conclude that:
\[
\left\|  g(v,\tau)+\eps\ol
B_1(v,\tau)\right\|_{\max\{\ell-2r+1,0\},\rr_2,\kk_0',\sigma} \le
K\mu \eps ^{\eta +2},
\]
which, together with  \eqref{eq:cotaB1barraCanvi} when $\ell-2r \ge 0$ and with
\eqref{eq:cotaB1barraCanvi1}  and \eqref{eq:cotaB1barraCanvi2} when
$\ell-2r < 0$, gives the desired bounds for $g$.
For the proof of the bound of $\pa_v g$ it is enough to apply again
Lemmas \ref{lemma:PropietatsGExtensio} and
\ref{lemma:Extensio:Trig:Cotes} and \eqref{eq:cotaB1barraCanvi}.

The rest of the statements are straightforward.

To proof Corollary \ref{coro:Extensio:CotaCanvigInner} we just need to use the
definition of $B_1$ in \eqref{def:InftyB1},
and observe that it has a singularity or order $\ell-2r$ if $\ell-2r \ge 0$ and
a zero of order $2r-\ell$ if $\ell-2r \le 0$.
\end{proof}

Once we have the change $g$, we proceed as in Section
\ref{sub:Infty:Hyp:ProofGeneral}, defining
\begin{equation}\label{def:TtoTbarra}
\wh T_1(v,\tau)=T_1(v+g(v,\tau),\tau)
\end{equation}
which is solution of \eqref{eq:HJperT1:ligual}, that is:
$$
\LL_\eps \wh T_1=\wh\FF \left(\pa_v \wh T_1\right).
$$
We look for it using
a fixed point argument on $\pa_v\wh T_1$. Nevertheless, since we
want $\pa_u T_1$ to be the analytic continuation of the function
$\pa_u T_1$ obtained in Propositions \ref{prop:HJ:hyp:general} and
\ref{prop:HJ:parab:general}, we have to impose \emph{initial
conditions}. Nevertheless, since we invert $\LL_\eps$ by using the
operator $\GG_\eps$ defined in \eqref{def:operadorG:HJtoParam}
adapted to the domain $D_{\rr_1',\de}^{\out, u}\times \TT_\sigma$,
we consider a different initial condition depending on the Fourier
coefficient. Recall that we are looking for $\pa_v\wh T_1$ defined
in $D_{\rr_1',\de}^{\out, u}\times \TT_\sigma$. Thus, we define
\begin{equation}\label{def:Extensio:A0}
\begin{split}
A_0(v,\tau)=&\sum_{k<0}\pa_v\wh T_1^{[k]}\left(\ol v_1\right)
e^{-ik\eps^{-1}(v-\ol v_1)}e^{ik\tau}\\
&+\sum_{k>0}\pa_v\wh T_1^{[k]}\left( v_1\right)
e^{-ik\eps^{-1}(v- v_1)}e^{ik\tau}\\
&+ \pa_v\wh T_1^{[0]}(-\rr_1'),
\end{split}
\end{equation}
where $v_1,\ol v_1$ are the vertices of the outer domain
$D_{\rr_1',\de}^{\out, u}$ (see Figure \ref{fig:OuterDomains}) and
$\pa_v\wh T_1$ can be obtained differentiating
\eqref{def:TtoTbarra}, since $T_1$ is already known in a
neighborhood of these points. Note that $v_1, \ol v_1, \rr_1'\in
D_{\infty,\rr_1}^u$. Applying the bounds obtained in Propositions
\ref{prop:HJ:hyp:general} and \ref{prop:HJ:parab:general} and Lemma
\ref{lemma:Extensio:Trig:canvi}, one can see that
\begin{equation}\label{eq:Cota:CondIni:ExtTrig}
\|A_0\|_{0,\rr_1',\kk_0',\sigma}\leq K|\mu|\eps^{\eta+1}.
\end{equation}
Let us define $S(v,\tau)$ as the solution of
\[
S(v,\tau)=A_0(v,\tau)+\GG_\eps\left(\pa_v \wh \FF(S)\right)(v,\tau),
\]
where $\GG_\eps$ and $\wh\FF$ are the operators defined in
\eqref{def:operadorG:HJtoParam} and \eqref{eq:HJperT1:ligual:RHS} respectively.
Let us point out that the definition of $\wh \FF$ involves the
functions $\wh A$, $\wh B$ and $\wh C$ defined in
\eqref{def:InftyHyp:Ahat}, \eqref{def:InftyHyp:Bhat}
 and \eqref{def:InftyHyp:Chat}.
 Even if we keep the same notation, now the definitions involve the function $g$
obtained in Lemma \ref{lemma:Extensio:Trig:canvi} instead of the ones given in
Lemmas \ref{lemma:infty:ligual:canvi} and Lemma \ref{lemma:Parab:canvi}.

We will see that $S$ is the analytic continuation of the function
$\pa_uT_1(v+g(v,\tau),\tau)(1+\pa_v g(v,\tau))\ii$, where $T_1$ is  obtained
from
Propositions \ref{prop:HJ:hyp:general} and
\ref{prop:HJ:parab:general}.

Thus, we look for a fixed point
$S\in\EE_{\ell+1,\rr_1',\kk_0',\sigma}$ of the operator
\begin{equation}\label{def:Extensio:Trig:Operador:Sencer}
\JJ(S)(v,\tau)=A_0(v,\tau)+\GG_\eps\left(\pa_v \wh
\FF(S)\right)(v,\tau).
\end{equation}
\begin{lemma}\label{lemma:Extensio:Trig:General}
Let $\eps_0>0$ be small enough and $\kk_0'>\kk_0$ big
enough. Then, for $\eps\in (0,\eps_0)$, there exists a function
$S\in\EE_{\ell+1,\rr_1',\kk_0',\sigma}$ defined in
$D^{\out,u}_{\rr_1',\kk_0'}\times\TT_\sigma$ such that it is a fixed
point of the operator \eqref{def:Extensio:Trig:Operador:Sencer} and
is the analytic continuation of the function
$\pa_uT_1(v+g(v,\tau),\tau)(1+\pa_v g(v,\tau))\ii$, where $T_1$ is obtained from
Propositions \ref{prop:HJ:hyp:general} and
\ref{prop:HJ:parab:general} and $g$ is given in Lemma
\ref{lemma:Extensio:Trig:canvi}.

Moreover, there exists a constant
$b_2>0$ such that
\[
\|S\|_{\ell+1,\rr_1',\kk_0',\sigma}\leq b_2|\mu|\eps^{\eta+1}.
\]
\end{lemma}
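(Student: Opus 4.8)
The plan is to obtain $S$ as the unique fixed point of the operator $\JJ$ in \eqref{def:Extensio:Trig:Operador:Sencer} inside the ball $\ol B(b_2|\mu|\eps^{\eta+1})\subset\EE_{\ell+1,\rr_1',\kk_0',\sigma}$, and then to identify this fixed point with the analytic continuation of $\pa_u T_1\circ(v+g)\cdot(1+\pa_v g)\ii$ by a uniqueness argument. The ingredients are the weighted estimates of Lemma \ref{lemma:Extensio:Trig:Cotes} for $A,B_1,B_2,C$ (see \eqref{def:InftyA}--\eqref{def:InftyC}), the mapping properties of $\GG_\eps$ in Lemma \ref{lemma:PropietatsGExtensio}, the weight-shifting inclusions of Lemma \ref{lemma:PropietatsNormes}, and the bounds on the change $g$ from Lemma \ref{lemma:Extensio:Trig:canvi}.

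First I would bound $\JJ(0)=A_0+\GG_\eps(\pa_v\wh A)$. Writing, from \eqref{def:InftyHyp:Ahat}, $\wh A(v,\tau)=A(v,\tau)+\big(A(v+g(v,\tau),\tau)-A(v,\tau)\big)$, the term $\GG_\eps(\pa_v A)$ is $\OO(|\mu|\eps^{\eta+1})$ in $\EE_{\ell+1,\sigma}$ by Lemma \ref{lemma:Extensio:Trig:Cotes}(1), and $A\circ(v+g)-A$ is $\OO(|\mu|^2\eps^{2\eta+1})$ by the mean value theorem together with the bounds on $\pa_u A$ (Lemma \ref{lemma:Extensio:Trig:Cotes}(1)) and on $g$ (Lemma \ref{lemma:Extensio:Trig:canvi}); combining with \eqref{eq:Cota:CondIni:ExtTrig} for $A_0$ and the embedding of Lemma \ref{lemma:PropietatsNormes}(2), one gets $\|\JJ(0)\|_{\ell+1,\rr_1',\kk_0',\sigma}\leq\tfrac{b_2}{2}|\mu|\eps^{\eta+1}$ for a suitable $b_2$.

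For the Lipschitz estimate, given $S_1,S_2$ in the ball, \eqref{eq:HJperT1:ligual:RHS} gives $\wh\FF(S_2)-\wh\FF(S_1)=\wh B\cdot(S_2-S_1)+\big(\wh C(S_2,v,\tau)-\wh C(S_1,v,\tau)\big)$, so by Lemma \ref{lemma:PropietatsGExtensio}(4) it suffices to bound these two terms in $\EE_{\ell+1,\sigma}$. From \eqref{def:InftyHyp:Bhat}, using $B_1\circ(v+g)-B_1=\pa_uB_1\cdot g+\cdots$, the bound on $B_2$, and Lemmas \ref{lemma:Extensio:Trig:Cotes}(2)--(3) and \ref{lemma:Extensio:Trig:canvi}, one controls $\wh B$ by $K|\mu|^2\eps^{2\eta+1}$ in a space with a larger weight, so $\wh B\cdot(S_2-S_1)$ lands in a space of weight $\ell+1$ plus a nonnegative shift, which by Lemma \ref{lemma:PropietatsNormes}(1) re-embeds into $\EE_{\ell+1,\sigma}$ at the price of a negative power of $\kk\eps$; likewise the Lipschitz constant of $\wh C$ follows from Lemma \ref{lemma:Extensio:Trig:Cotes}(4) composed with $g$, producing $K\nu\eps^{-c}\|S_2-S_1\|_{\ell+1,\sigma}$ with $\nu=b_2|\mu|\eps^{\eta+1}$ and, after re-embedding, a further negative power of $\kk\eps$. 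The key point is that, separately in the cases $\ell-2r<0$ and $\ell-2r\geq0$, the powers of $\eps$ so lost are strictly beaten by the powers $\eps^{2\eta+1}$, resp.\ $\eps^{\eta+1}$, available from $g$, $B_2$ and the ball radius, thanks to Hypothesis \textbf{HP5} ($\eta\geq\max\{0,\ell-2r\}$), while the leftover powers of $\kk$ are made small by taking $\kk_0'$ large. This yields $\Lip\JJ\leq 1/2$ for $\eps$ small and $\kk_0'$ large, hence a unique fixed point $S$ in the ball, with the stated bound.

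It remains to check that $S$ continues the local solution. Differentiating \eqref{eq:HJperT1:ligual} in $v$ and using that $\LL_\eps$ commutes with $\pa_v$, the function $\widehat S:=\pa_v\wh T_1$ built through \eqref{def:TtoTbarra} from the solution of Propositions \ref{prop:HJ:hyp:general} and \ref{prop:HJ:parab:general} satisfies $\LL_\eps\widehat S=\pa_v\wh\FF(\widehat S)$ on the overlap $D^{\out,u}_{\rr_1',\kk_0'}\cap D^u_{\infty,\rr_1}$; applying $\GG_\eps$ and Lemma \ref{lemma:PropietatsGExtensio}(5) produces exactly the boundary terms defining $A_0$ in \eqref{def:Extensio:A0} (which is precisely why $A_0$ was chosen with those vertex values), so $\widehat S=\JJ(\widehat S)$ there; since $\widehat S$ is small by those propositions, uniqueness of the fixed point forces $S=\widehat S$ on the overlap. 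Proposition \ref{prop:extensio:trig}, and with it Theorem \ref{th:Extensio:Trig} (proved in Section \ref{subsec:Extensio:trig}), then follows by undoing the change $v=u+h(u,\tau)$ of Lemma \ref{lemma:Extensio:Trig:canvi} and integrating. I expect the delicate point to be the bookkeeping in the contraction estimate: one has to track carefully, in each of the cases $\ell-2r<0$ and $\ell-2r\geq0$, the exact weighted space into which $\wh B\cdot(S_2-S_1)$ and $\wh C(S_2)-\wh C(S_1)$ fall, and verify that every negative power of $\eps$ generated by the weight shifts is strictly dominated by the gain coming from the smallness of $g$, $B_2$ and the radius, leaving only positive powers of $\kk_0'$ to be absorbed.
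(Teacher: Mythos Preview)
Your proposal is correct and follows essentially the same approach as the paper: bound $\JJ(0)$ by splitting $\wh A=A+(\wh A-A)$, establish contraction via the estimates on $\wh B$ and $\wh C$ with a case split on the sign of $\ell-2r$, and identify the fixed point with the local solution by uniqueness on the overlap (the paper phrases this last step as ``both are fixed points in the intersection, hence equal by contraction'', which is exactly what your $\GG_\eps\circ\LL_\eps$ argument via Lemma~\ref{lemma:PropietatsGExtensio} amounts to). One small correction to your bookkeeping: in the case $\ell-2r<0$ no weight shift or $\kk$ factor is needed at all, since $\wh B\in\EE_{0,\sigma}$ multiplies directly and the $\wh C$ term already lands in $\EE_{\ell+1,\sigma}$ by Lemma~\ref{lemma:Extensio:Trig:Cotes}(4), giving the Lipschitz bound $K|\mu|\eps^{\eta+1-\max\{0,\ell-2r+1\}}$ (positive exponent since $\ell-2r<0$); it is only in the case $\ell-2r\geq0$ that the weight shift produces the factor $(\kk_0')^{-(\ell-2r+1)}$ together with $\eps^{\eta-(\ell-2r)}$, and \textbf{HP5} then makes the latter $\leq1$ as you say.
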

\begin{proof}
We recall that, during the proof,  $g$ is the function given in Lemma
\ref{lemma:Extensio:Trig:canvi}.

It is straightforward to see that $\JJ$ is well defined from
$\EE_{\ell+1,\rr_1',\de,\sigma}$ to itself. We are going to prove
that there exists a constant $b_2>0$ such that $\JJ$ is contractive
in $\ol
B(b_2|\mu|\eps^{\eta+1})\subset\EE_{\ell+1,\rr_1',\kk_0',\sigma}$.

First we deal with $\JJ(0)$. From the definition of $\JJ$ in
\eqref{def:Extensio:Trig:Operador:Sencer} and the definition of
$\wh\FF$ in \eqref{eq:HJperT1:ligual:RHS}, we have
\[
\JJ(0)(v,\tau)=A_0(v,\tau)+\GG_\eps\left(\pa_v \wh A(v,\tau)\right),
\]
where $\wh A$
is the function in \eqref{def:InftyHyp:Ahat}.

Taking into account the definition of $\wh A$, we split $\JJ(0)$  as
\[
\JJ(0)(v,\tau)=A_0(v,\tau)+\GG_\eps\left(\pa_v
A(v,\tau)\right)+\GG_\eps\left(\pa_v \left[A(v+g(v,\tau),\tau)-
A(v,\tau)\right]\right),
\]
where $A$ is given in  \eqref{def:InftyA}.
The first term has already been bounded in
\eqref{eq:Cota:CondIni:ExtTrig} and the second one in Lemma
\ref{lemma:Extensio:Trig:Cotes}. For the third one, using $\rr''_1$
introduced in Lemma \ref{lemma:Extensio:Trig:canvi}, and applying
Lemmas \ref{lemma:PropietatsGExtensio},
\ref{lemma:Extensio:Trig:Cotes} and \ref{lemma:Extensio:Trig:canvi}
and the mean value theorem,
\[
\begin{split}
\left\|\GG_\eps\left(\pa_v \left[A(v+g(v,\tau),\tau)-
A(v,\tau)\right]\right)\right\|_{\ell+1,\rr_1',\kk_0',\sigma}&\leq
\left\|A(v+g(v,\tau),\tau)- A(v,\tau)\right\|_{\ell+1,\rr_1',\kk_0',\sigma}\\
&\leq
\|\pa_uA\|_{\ell+1,\rr_1'',\kk_0\eps,\sigma}\|g\|_{0,\rr_1',\kk_0',\sigma}\\
&\leq K|\mu|^2\eps^{2\eta+1}
\end{split}
\]
Thus, there exists a constant $b_2>0$ such that
\[
\left\|\JJ(0)\right\|_{\ell+1,\rr_1',\kk_0',\sigma}\leq
\frac{b_2}{2}|\mu|\eps^{\eta+1}.
\]
Now let $h_1,h_2\in\ol
B(b_2|\mu|\eps^{\eta+1})\subset\EE_{\ell+1,\rr_1',\kk_0',\sigma}$.
Using the definitions of $\JJ$ and $\wh \FF$ in
\eqref{def:Extensio:Trig:Operador:Sencer} and
\eqref{eq:HJperT1:ligual:RHS} respectively, and applying Lemma
\ref{lemma:PropietatsGExtensio},
\[
\begin{split}
\left\|\JJ(h_2)-\JJ(h_1)\right\|_{\ell+1,\rr_1',\kk_0',\sigma}&\leq
K\left\|\wh\FF(h_2)-\wh\FF(h_1)\right\|_{\ell+1,\rr_1',\kk_0',\sigma}\\
&\leq K\left\|\wh B\cdot(h_2-h_1)+\wh C(h_2,v,\tau)-\wh
C(h_1,v,\tau)\right\|_{\ell+1,\rr_1',\kk_0',\sigma}.
\end{split}
\]
To bound the Lipschitz constant of $\JJ$, one has to take into account the
definitions of $\wh B$ and $\wh C$ in \eqref{def:InftyHyp:Bhat} and
\eqref{def:InftyHyp:Chat} respectively,
and to apply Lemmas
\ref{lemma:Extensio:Trig:Cotes} and \ref{lemma:Extensio:Trig:canvi}.
We bound it in different ways depending whether $\ell-2r<0$ or
$\ell-2r\geq 0$. In the first case we obtain
\[
\left\|\JJ(h_2)-\JJ(h_1)\right\|_{\ell+1,\rr_1',\kk_0',\sigma}\leq
K|\mu|\eps^{\eta+1-\max\{0,\ell-2r+1\}}\left\|h_2-h_1\right\|_{\ell+1,\rr_1',
\kk_0',\sigma},
\]
and in the second,
\[
\left\|\JJ(h_2)-\JJ(h_1)\right\|_{\ell+1,\rr_1',\kk_0',\sigma}\leq
K|\mu|\frac{\eps^{\eta-(\ell-2r)}}{\left(\kk_0'\right)^{\ell-2r+1}}
\left\|h_2-h_1\right\|_{\ell+1,\rr_1',\kk_0',\sigma}.
\]
Therefore, since $\eta\geq\max\{0,\ell-2r\}$,  taking $\eps<\eps _0$ and
$\kk_0'$ big enough, $\Lip\JJ<1/2$ and then $\JJ$ is
contractive in $\ol
B(b_2|\mu|\eps^{\eta+1})\subset\EE_{\ell+1,\rr_1',\kk_0',\sigma}$
and it has a unique fixed point $S(v,\tau)$.

Now, we have to prove that $S(v,\tau)$ is the analytic continuation
 of the function $\wt S(v,\tau)=\pa_u
T_1(v+g(v,\tau),\tau)(1+\pa_vg(v,\tau))\ii$  obtained from
Propositions \ref{prop:HJ:hyp:general} and
\ref{prop:HJ:parab:general}.
First let us observe that the operator \eqref{def:Extensio:Trig:Operador:Sencer}
is well defined for functions in
$\left(D_{\infty,\rr_1}^u\cap
D_{\rr_1',\kk_0'}^{\out,u}\right)\times\TT_\sigma$. Moreover,  both
functions $S(v,\tau)$ and $\wt S(v,\tau)$
are defined in $\left(D_{\infty,\rr_1}^u\cap
D_{\rr_1',\kk_0'}^{\out,u}\right)\times\TT_\sigma$ and for
$(v,\tau)$ in this domain both are fixed points of the operator
\eqref{def:Extensio:Trig:Operador:Sencer} and
\[
\left\| \wt S \right\|_{\ell+1,\sigma} \le b_1\mu \eps^{\eta+1}.
\]

Then, using the norms
defined in Section \ref{Subsection:ExistenceQ:Banach} but for
functions defined in $\left(D_{\infty,\rr_1}^u\cap
D_{\rr_1',\kk_0'}^{\out,u}\right)\times\TT_\sigma$, one can see that
\[
\begin{split}
\left\|S(v,\tau)-\wt
S(v,\tau)\right\|_{\ell+1,\sigma}&\leq\left\|\JJ\left(S(v,
\tau)\right)-\JJ\left(\wt S(v,\tau)\right)\right\|_{\ell+1,\sigma}\\
&\leq \frac{1}{2}\left\|S(v,\tau)-\wt
S(v,\tau)\right\|_{\ell+1,\sigma}.
\end{split}
\]
Then $S(v,\tau)=\wt S(v,\tau)$ for
$(v,\tau)\in\left(D_{\infty,\rr_1}^u\cap
D_{\rr_1',\kk_0'}^{\out,u}\right)\times\TT_\sigma$ and  $S(v,\tau)$
is the analytic continuation of the function $\pa_u
T_1(v+g(v,\tau),\tau)(1+\pa_vg(v,\tau))\ii$ to
$D_{\rr_1',\kk_0'}^{\out,u}\times\TT_\sigma$. Finally, one can easily recover
$\wh T_1$ from $S$.
\end{proof}

\begin{proof}[Proof of Proposition \ref{prop:extensio:trig}]
To prove Proposition \ref{prop:extensio:trig} from Lemma
\ref{lemma:Extensio:Trig:General}, it is enough to consider the
change of variables $v=u+h(u,\tau)$  obtained in Lemma
\ref{lemma:Extensio:Trig:canvi} and to take $T_1(u,\tau)=\wh
T_1(u+h(u,\tau),\tau)$ which by construction is the analytic
continuation of the function $T_1$ obtained in Propositions
\ref{prop:HJ:hyp:general} and \ref{prop:HJ:parab:general}.
\end{proof}

\subsection{Invariant manifolds in the outer domains in the general case: proof
of Theorems \ref{th:HJtoParam}, \ref{th:Extensio}, \ref{th:ParamtoHJ} and
\ref{th:ExtensioFinal}}\label{sec:Extensio:polinomial}

We devote
this section to prove the existence of the invariant manifolds in
the outer domains, in the general case, that is assuming that $p_0(u)$
can vanish. We split the proofs into
Theorems \ref{th:HJtoParam}, \ref{th:Extensio}, \ref{th:ParamtoHJ}
and \ref{th:ExtensioFinal}.

\subsubsection{The variational equation along the separatrix}
In order to prove the existence of the perturbed stable and unstable
invariant manifolds in certain domains, we will need to consider a
real-analytic fundamental matrix solution  of the variational
equations along the unperturbed separatrix
\begin{equation}\label{eq:variacional}
\dot\xi=A(u)\xi,
\end{equation}
where
\begin{equation}\label{eq:MatriuLinealSeparatriu}
A(u)=\left(\begin{array}{cc}0&1\\\dps
-V''\left(q_0(u)\right)&0\end{array}\right)
\end{equation}
and $(q_0(u), p_0(u))$ is the parameterization of the unperturbed
separatrix given in Hypothesis \textbf{HP2}.

It is a well known fact that the derivative of the parameterization
of the separatrix, that is $(p_0(u),\dot p_0(u))$ (recall that $\dot
q_0(u)=p_0(u)$), is a solution of \eqref{eq:variacional}. A second
independent solution can be given by $(\zeta(u),\dot \zeta (u))$,
where
\begin{equation}\label{def:SolVariacional2}
\zeta(u)=p_0(u)\int_{u_0}^u\frac{1}{p^2_0(v)}\,dv,
\end{equation}
where $u_0\in\RR$ is such that $p_0(u_0)\neq 0$. We consider then
the following fundamental matrix
\begin{equation}\label{def:MatriuFonamentalReal}
\Phi(u)=\left(\begin{array}{cc} p_0(u) &\zeta (u)\\\dot p_0(u) &\dot
\zeta (u)\end{array}\right).
\end{equation}

\begin{remark}
Notice that the function $\zeta$ defined in
\eqref{def:SolVariacional2} is well defined and analytic even if
$p_0(u)$ can vanish for some $u\in\CC$ and even that \emph{a priori} it could
seem that the integral depends on the path of integration.

Indeed, since $\ddot p_0(u)=-V''(q_0(u))p_0(u)$, one can see that
the Taylor expansion around any zero $u^\ast\in\CC$ of $p_0(u)$ is
of the form
\[
p_0(u)=\dot
p_0\left(u^\ast\right)\left(u-u^\ast\right)+\OO\left(u-u^\ast\right)^3
\]
(observe that $\dot p_0(u^\ast)\neq 0)$) and then, the residue of
the integrand appearing in the definition of $\zeta$ in
\eqref{def:SolVariacional2} is zero. Finally,
even if the integral might be divergent if one takes $u^\ast$ as the
upper limit of integration, $\lim_{u\to
u^\ast}\zeta(u)=-1/\dot{p}_0(u^\ast)$.
\end{remark}

\subsubsection{Proof of Theorem \ref{th:HJtoParam}}\label{sec:HJtoParam}
In this section we prove the existence of a change of variables
which allow us to obtain a parameterization of the invariant
manifolds which satisfies equation \eqref{eq:PDEParametritzacions}
from the parameterization obtained in Theorem
\ref{th:ExistenceCloseInfty}.

It is straightforward to see that the functions defined in \eqref{eq:HJtoParam}
satisfy equation \eqref{eq:PDEParametritzacions} provided $\UU^u$ satisfies
\begin{equation}\label{eq:PDEHJtoParam}
\LL_\eps h=M\left(v+h(v,\tau),\tau\right),
\end{equation}
where
\begin{equation}\label{eq:PDEHJtoParamRHS}
M(u,\tau)=\frac{1}{p_0^2(u)}\pa_uT_1(u,\tau)+\frac{\mu\eps^{\eta}}{p_0(u)}\pa_p
\wh H_1\left(q_0(u),p_0(u)+\frac{1}{p_0(u)}\pa_u
T_1(u,\tau),\tau\right),
\end{equation}
$\wh H_1$ is the Hamiltonian defined in
\eqref{def:ham:ShiftedOP:perturb} and $T_1$ is the function obtained
in Proposition \ref{prop:HJ:hyp:general}.

Decomposing the right hand side of equation \eqref{eq:PDEHJtoParam}
into constant, linear and higher order terms in $h$, it can be
rewritten as
\begin{equation}\label{eq:PDEHJtoParam2}
\LL_\eps h=\Mm(h),
\end{equation}
where
\begin{equation}\label{eq:PDEtoParamRHS2}
\Mm(h)(v,\tau)=M(v,\tau)+\left(N_1(v,\tau)+N_2(v,\tau)\right)
h(v,\tau)+R(h(v,\tau),v,\tau)
\end{equation}
and
\begin{align}
N_1(v,\tau)&=\mu\eps^{\eta}\pa_v\left[\frac{1}{p_0(v)}\pa_p \wh
H_1^1\left(q_0(v),p_0(v),\tau\right)\right]\label{def:PDEtoPam:N1} \\
N_2(v,\tau)&=\pa_vM(v,\tau)-N_1(v,\tau)\label{def:PDEtoPam:N2} \\
R(h,v,\tau)&=M(v+h,\tau)-\pa_v
M(v,\tau)h-M(v,\tau)\label{def:PDEtoPam:R},
\end{align}
where $\wh H_1^1$ and $M$ are defined in
\eqref{def:HamPertorbat:H1} and \eqref{eq:PDEHJtoParamRHS}
respectively.

We now define appropriate Banach spaces. For analytic functions $h:
\tro_{\rr_3,\rr_4}^u\times\TT_\sigma\rightarrow\CC$, where
$\tro_{\rr_3,\rr_4}^u$ is the domain defined in
\eqref{def:TransDomainInfty}, we define the Fourier norm
\[
\|h\|_\sigma=\sum_{k\in\ZZ}\left\|h^{[k]}\right\|_{\infty}e^{|k|\sigma},
\]
where $\|\cdot\|_\infty$ is the classical supremum norm in
$\tro_{\rr_3,\rr_4}^u$. We consider
 the following function space
\begin{equation}\label{def:HJtoParam:Banach}
\AAA_\sigma=\left\{h:
\tro_{\rr_3,\rr_4}^u\times\TT_\sigma\rightarrow \CC;\,\,
\text{real-analytic}, \|h\|_\sigma<\infty\right\}
\end{equation}
which is straightforward to see that is a Banach algebra.

Throughout this section we will need to solve equations of the form
$\LL_\eps h=g$, where $\LL_\eps$ is the differential operator
defined in \eqref{def:Lde}. We take the operator $\GG_\eps$ defined
in \eqref{def:operadorG:HJtoParam} as right inverse of $\LL_\eps$.
In Section \ref{Subsection:ExistenceQ:Banach} it was applied to
functions belonging to $\EE_{\nu,\rr,\de,\sigma}$ (see
\eqref{def:BanachExtension}) but it is clear that it can also be
applied to functions in $\AAA_\sigma$ if we take as the constant integration
limits of
the Fourier coefficients of $\GG_\eps$  as $v_1$, $\ol v_1$,
the vertices of the domain $\tro_{\rr_3,\rr_4}^u$, and $-\rr_4$
(see Figure \ref{fig:TransInfty}).

\begin{lemma}\label{lemma:PropietatsG:HJtoParam}
The operator $\GG_\eps$ in \eqref{def:operadorG:HJtoParam} satisfies
the following properties.
\begin{itemize}
\item $\GG_\eps$ is linear from $\AAA_\sigma$ to itself and satisfies
$\LL_\eps\circ\GG_\eps=\mathrm{Id}$.
\item If $h\in\AAA_\sigma$, then
\[
\|\GG_\eps(h)\|_{\sigma}\leq K\|h\|_\sigma.
\]
Furthermore, if $\langle h\rangle=0$, then
\[
\|\GG_\eps(h)\|_{\sigma}\leq K\eps\|h\|_\sigma.
\]
\end{itemize}
\end{lemma}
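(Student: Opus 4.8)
Looking at Lemma~\ref{lemma:PropietatsG:HJtoParam}, this is a routine estimate on the left-inverse operator $\GG_\eps$ defined in \eqref{def:operadorG:HJtoParam}, now applied to functions in the Banach algebra $\AAA_\sigma$ of bounded (in Fourier norm) functions on the transition domain $\tro_{\rr_3,\rr_4}^u\times\TT_\sigma$. The structure of the proof is entirely parallel to Lemma~\ref{lemma:PropietatsGExtensio}, which the excerpt attributes to Lemma~5.5 in \cite{GuardiaOS10}; the only difference is the shape of the domain, so I expect no genuine obstacle here — the main point is just to verify that the geometry of $\tro_{\rr_3,\rr_4}^u$ still allows the integration paths defining the Fourier coefficients $\GG_\eps(h)^{[k]}$ to stay inside the domain and have uniformly bounded length.

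The plan is as follows. First I would record that $\LL_\eps\circ\GG_\eps=\mathrm{Id}$ and that $\GG_\eps$ is linear; these are immediate from the definition \eqref{def:operadorG:HJtoParam}, since differentiating $\int_{v_1}^u e^{ik\eps\ii(t-u)}h^{[k]}(t)\,dt$ with respect to $u$ (in the operator $\LL_\eps=\eps\ii\pa_\tau+\pa_v$, acting on the $k$-th Fourier mode as $\eps\ii ik+\pa_v$) returns $h^{[k]}(u)$, and similarly for $k=0$ and $k>0$. Second, I would establish the $\AAA_\sigma\to\AAA_\sigma$ bound by estimating each Fourier coefficient separately. For $k<0$ one integrates from the vertex $\ol v_1$ along a path in $\tro_{\rr_3,\rr_4}^u$; one uses that for such a path $\Im(t-u)$ has the right sign so that $|e^{ik\eps\ii(t-u)}|\le 1$ (this is exactly where the convexity/shape of the boomerang-type transition domain and the choice of which vertex to integrate from matters), and that the path length is bounded by a constant $K$ independent of $\eps$; hence $|\GG_\eps(h)^{[k]}(u)|\le K\|h^{[k]}\|_\infty$. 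The case $k>0$ is symmetric (integrating from $v_1$), and $k=0$ integrates along the real segment from $-\rr_4$, giving the same bound. Summing $e^{|k|\sigma}\|\GG_\eps(h)^{[k]}\|_\infty$ over $k$ then yields $\|\GG_\eps(h)\|_\sigma\le K\|h\|_\sigma$.

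Third, for the improved bound when $\langle h\rangle = h^{[0]}=0$: here only the modes $k\ne0$ survive, and for each such mode one exploits the oscillatory factor $e^{ik\eps\ii(t-u)}$. The standard trick (as in the cited Lemma~5.5 of \cite{GuardiaOS10}) is to integrate by parts once, writing $h^{[k]}(t)\,dt = \frac{\eps}{ik}\,d\bigl(e^{ik\eps\ii t}\cdot\text{(something)}\bigr)$-type manipulation, or more simply to bound $\bigl|\int_{\ol v_1}^u e^{ik\eps\ii(t-u)}h^{[k]}(t)\,dt\bigr|$ by splitting the contour near the endpoint; either way one gains a factor $\eps/|k|$ against the oscillation. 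Since $\sum_{k\ne0}\frac{e^{|k|\sigma}}{|k|}\|h^{[k]}\|_\infty \le \sum_{k\ne 0} e^{|k|\sigma}\|h^{[k]}\|_\infty \le \|h\|_\sigma$, this produces $\|\GG_\eps(h)\|_\sigma\le K\eps\|h\|_\sigma$.

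The only step that requires a little care — and the closest thing to an ``obstacle'' — is the geometric verification in step two that the segments $[\ol v_1,u]$ (for $k<0$) and $[v_1,u]$ (for $k>0$) actually lie in $\tro_{\rr_3,\rr_4}^u = D^{\out,u}_{\kk,\bar\rr}\cap D^u_{\infty,\rr}$ and that along them the imaginary part behaves monotonically enough for $|e^{ik\eps\ii(t-u)}|\le 1$ to hold; this is why the definition of $\GG_\eps$ is ``adapted to the domain'' as noted in the Remark before the lemma. Once that is checked, everything reduces to the one-dimensional estimates already proved in \cite{GuardiaOS10}, and I would simply write ``the proof follows the same lines as that of Lemma~5.5 in \cite{GuardiaOS10}'', as the authors do for the analogous Lemma~\ref{lemma:PropietatsGExtensio}.
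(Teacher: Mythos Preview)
Your proposal is correct and matches the paper's approach: the paper states this lemma without proof, having already noted just before it that the operator $\GG_\eps$ and its properties carry over from the setting of $\EE_{\nu,\rr,\kk,\sigma}$ (Lemma~\ref{lemma:PropietatsGExtensio}, itself reduced to Lemma~5.5 of \cite{GuardiaOS10}) once the integration endpoints are taken to be the vertices $v_1,\ol v_1$ and $-\rr_4$ of $\tro_{\rr_3,\rr_4}^u$. Your geometric worry is in fact mild here since $\tro_{\rr_3,\rr_4}^u=D^{\out,u}_{\kk,\rr_4}\cap D^u_{\infty,\rr_3}$ is a convex trapezoidal strip, so straight segments from the vertices stay in the domain and the sign of $\Im(t-u)$ along them gives the exponential decay $|e^{ik\eps^{-1}(t-u)}|\le e^{-|k|\eps^{-1}c|t-u|}$ directly, yielding the $\eps/|k|$ gain without needing integration by parts.
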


Finally, we  state  a technical lemma which gives some properties of
the functions $M$,  $N_1$, $N_2$ and $R$ defined in
\eqref{eq:PDEHJtoParamRHS}, \eqref{def:PDEtoPam:N1},
\eqref{def:PDEtoPam:N2} and \eqref{def:PDEtoPam:R} respectively.
\begin{lemma}\label{lemma:HJtoParam:Cotes}
The functions $M$,  $N_1$, $N_2$ and $R$ defined in
\eqref{eq:PDEHJtoParamRHS}, \eqref{def:PDEtoPam:N1},
\eqref{def:PDEtoPam:N2} and \eqref{def:PDEtoPam:R} satisfy the
following properties:
\begin{enumerate}
\item $M\in\AAA_\sigma$ and  satisfies
\begin{equation}\label{eq:HJtoParam:CotesM}
\begin{array}{cc}
\dps\|M\|_{\sigma}\leq K|\mu|\eps^{\eta}, &\dps\|\GG_\eps
(M)\|_{\sigma}\leq K|\mu|\eps^{\eta+1}.
\end{array}
\end{equation}
\item $N_1,N_2 \in \AAA_{\sigma}$. Moreover they satisfy $\langle N_1\rangle=0$
and
\begin{equation}\label{eq:HJtoParam:CotesN}
\begin{array}{cc}
\dps \|N_1\|_{\sigma}\leq K|\mu|\eps^{\eta},&\dps
\|N_2\|_{\sigma}\leq K|\mu|\eps^{\eta+1}.
\end{array}
\end{equation}
\item Let us consider $h_1,h_2\in B(\nu)\subset\AAA_\sigma$ with
$\nu\ll 1$. Then,
\[
\left\|R(h_2,v,\tau)-R(h_1,v,\tau)\right\|_\sigma \leq K\nu
\|h_2-h_1\|_\sigma.
\]
\end{enumerate}
\end{lemma}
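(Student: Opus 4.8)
The statement to prove is Lemma~\ref{lemma:HJtoParam:Cotes}, giving estimates in the Banach algebra $\AAA_\sigma$ for the functions $M$, $N_1$, $N_2$, $R$ built from the right-hand side of \eqref{eq:PDEHJtoParam}. The plan is to estimate each function directly from its explicit formula, using three inputs: the bound $\|\pa_u T_1\|_{\ell+1,\rr_1,\sigma}\leq b_1|\mu|\eps^{\eta+1}$ from Proposition~\ref{prop:HJ:hyp:general} (equivalently Proposition~\ref{prop:HJ:parab:general}), the fact that $p_0(u)$ does not vanish on the transition domain $\tro_{\rr_3,\rr_4}^u$ (which lies in $D^u_{\infty,\rr_1}$ and is far from the singularities $\pm ia$, so $|p_0(u)|$ is bounded away from $0$ and $\infty$ there), and the analyticity and $2\pi$-periodicity of $\wh H_1$ with coefficients bounded on $\ol\TT_{\sigma_0}$, $\sigma<\sigma_0$. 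Since $\tro_{\rr_3,\rr_4}^u$ is a fixed domain bounded away from the singularities and from the zeros of $p_0$, on it the supremum norm $\|\cdot\|_\infty$ and hence $\|\cdot\|_\sigma$ behave like those of a Banach algebra of bounded analytic functions, and all the weighted norms $\|\cdot\|_{\nu,\rr_1,\sigma}$ of Section~\ref{sec:InftyHyperbolic} are equivalent, up to harmless constants, to $\|\cdot\|_\sigma$ when restricted to this domain. This observation is what lets us import the Proposition~\ref{prop:HJ:hyp:general}/\ref{prop:HJ:parab:general} bound.

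First I would treat $M$. Writing $M(u,\tau)=p_0^{-2}(u)\pa_uT_1(u,\tau)+\mu\eps^\eta p_0^{-1}(u)\pa_p\wh H_1(q_0(u),p_0(u)+p_0^{-1}(u)\pa_uT_1(u,\tau),\tau)$, the first summand is bounded by $K|\mu|\eps^{\eta+1}$ since $p_0^{-2}$ is bounded on the domain and $\|\pa_uT_1\|_\sigma\leq K|\mu|\eps^{\eta+1}$; for the second summand, note that $q_0(u)$, $p_0(u)$ and $p_0^{-1}(u)$ are bounded on $\tro_{\rr_3,\rr_4}^u$, so $p_0+p_0^{-1}\pa_uT_1$ stays in a fixed compact set where $\pa_p\wh H_1$ is analytic and hence bounded (using $\|a_{kl}\|_\sigma<K$), giving a bound $K|\mu|\eps^\eta$ for the whole $M$. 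To get $\|\GG_\eps(M)\|_\sigma\leq K|\mu|\eps^{\eta+1}$ I would split $M=\langle M\rangle+(M-\langle M\rangle)$ and, crucially, show $\langle M\rangle$ is itself $\OO(|\mu|\eps^{\eta+1})$: the mean of $p_0^{-1}\pa_p\wh H_1^1(q_0,p_0,\tau)$ over $\tau$ vanishes because $\wh H_1^1$ has zero average (Hypothesis~\textbf{HP3}, see \eqref{def:HamPertorbat:H1}), so only the $\OO(\eps)$-part $\wh H_1^2$ and the quadratic remainder in $\wh H_1^1$ and the $p_0^{-2}\pa_uT_1$ term (which is $\OO(\eps^{\eta+1})$) survive; then Lemma~\ref{lemma:PropietatsG:HJtoParam} applied to the zero-average part gains the extra $\eps$.

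Next, $N_1=\mu\eps^\eta\pa_v[p_0^{-1}\pa_p\wh H_1^1(q_0,p_0,\tau)]$ has zero average in $\tau$ because $\wh H_1^1$ does, and the factor $\mu\eps^\eta$ is explicit; the $\pa_v$ costs only a constant since we work on a fixed domain with interior distance bounded below (Cauchy estimates / Lemma~\ref{lemma:PropietatsNormes}(4) type argument), so $\|N_1\|_\sigma\leq K|\mu|\eps^\eta$. For $N_2=\pa_vM-N_1$, the terms in $\pa_vM$ coming from $p_0^{-2}\pa_uT_1$ and from the $\wh H_1^2$-part and the higher-order-in-$\pa_uT_1$ part of $\pa_p\wh H_1$ are all $\OO(|\mu|\eps^{\eta+1})$, and $N_1$ is designed to cancel exactly the remaining $\OO(|\mu|\eps^\eta)$ linear piece, so $\|N_2\|_\sigma\leq K|\mu|\eps^{\eta+1}$. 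Finally, for $R(h,v,\tau)=M(v+h,\tau)-\pa_vM(v,\tau)h-M(v,\tau)$, a second-order Taylor estimate on the fixed domain gives, for $h_1,h_2\in B(\nu)$ with $\nu$ small enough that $v+h_i$ remains in a slightly larger fixed domain where $M$ is still analytic, $\|R(h_2)-R(h_1)\|_\sigma\leq K\sup\|\pa_v^2 M\|\,\nu\,\|h_2-h_1\|_\sigma\leq K\nu\|h_2-h_1\|_\sigma$, using the Banach algebra property of $\AAA_\sigma$ and Cauchy bounds on $\pa_v^2 M$ obtained from $\|M\|_\sigma\leq K|\mu|\eps^\eta\leq K$.

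The only genuine subtlety, and the step I would be most careful about, is the identification of $\langle M\rangle$ (and correspondingly $\langle N_1\rangle=0$) as being of order $\eps^{\eta+1}$ rather than merely $\eps^\eta$: this zero-average cancellation, which comes from $\langle\wh H_1^1\rangle=0$ combined with the structure that the non-zero-average contributions all carry an extra $\eps$ (either through $\wh H_1^2$ or through $\pa_uT_1$), is what makes Lemma~\ref{lemma:PropietatsG:HJtoParam}'s gain-of-$\eps$ clause applicable and ultimately what allows the fixed-point argument of Theorem~\ref{th:HJtoParam} to close with a change of variables of size $\OO(|\mu|\eps^{\eta+1})$. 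Everything else is routine composition and Cauchy estimates on a compact domain away from the singularities.
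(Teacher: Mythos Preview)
Your proposal is correct and follows essentially the same route as the paper. The paper's proof splits $M=M_1+M_2$ with $M_1(u,\tau)=\mu\eps^\eta p_0^{-1}(u)\pa_p\wh H_1^1(q_0(u),p_0(u),\tau)$ (which has zero $\tau$-average, so $\GG_\eps$ gains an $\eps$) and $M_2=M-M_1$ (which is already $\OO(|\mu|\eps^{\eta+1})$ by Corollary~\ref{coro:ShiftPeriodica} and Proposition~\ref{prop:HJ:hyp:general}); your split into $\langle M\rangle$ and $M-\langle M\rangle$ is equivalent, since $\langle M_1\rangle=0$ forces $\langle M\rangle=\langle M_2\rangle=\OO(|\mu|\eps^{\eta+1})$, which is exactly the subtlety you flagged. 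The treatment of $N_1$, $N_2$, and $R$ via Cauchy estimates on the fixed transition domain matches the paper's ``analogously'' and ``quadratic in $h$'' remarks.
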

\begin{proof}
The first bound is straightforward taking into account the bounds
for  $c_{lk}$ and  $T_1$ obtained in  Corollary
\ref{coro:ShiftPeriodica} and Propositions \ref{prop:HJ:hyp:general}
and \ref{prop:HJ:parab:general}. For the second one, one has to
split $M$ as $M=M_1+M_2$, where
\[
M_1(u,\tau)=\mu\eps^{\eta}\frac{1}{p_0(u)} \pa_p\wh H_1^1
(q_0(u),p_0(u),\tau),
\]
where $\wh H_1^1$ is the Hamiltonian in \eqref{def:HamPertorbat:H1},
and $M_2=M-M_1$. Since $\langle M_1\rangle=0$ and satisfies
$\|M_1\|_\sigma\leq K|\mu|\eps^{\eta}$, by Lemma
\ref{lemma:PropietatsG:HJtoParam} we have that
$\|\GG_\eps(M_1)\|_\sigma\leq K|\mu|\eps^{\eta+1}$. On the other
hand, by the bound of $c_{lk}$ in  Corollary
\ref{coro:ShiftPeriodica} and the bound of $T_1$ given by
Proposition \ref{prop:HJ:hyp:general}, $M_2$ satisfies
$\|M_2\|_\sigma\leq K|\mu|\eps^{\eta+1}$, and therefore $\|\GG_\eps
(M_2)\|_\sigma\leq K|\mu|\eps^{\eta+1}$.

The bounds of $N_1$, $N_2$ and $R$ can be obtained analogously
taking into account the definition of $M$ in
\eqref{eq:PDEHJtoParamRHS} and that $R$ is quadratic in $h$.
\end{proof}

We split Theorem
\ref{th:HJtoParam} in the following proposition and corollary, which
are rewritten in terms of the Banach space defined in
\eqref{def:HJtoParam:Banach}. Theorem  \ref{th:HJtoParam} follows
directly from those results.

\begin{proposition}\label{prop:HJtoParam}
Let $\rr_1$  be the constant considered in Proposition
\ref{prop:HJ:hyp:general} and let us consider $\rr_3$ and $\rr_4$
such that $\rr_4>\rr_3>\rr_1$  and $\eps_0>0$ small enough (which
might depend on $\rr_i$, $i=1,3,4$). Then, for $\eps\in(0,\eps_0)$
there exists a function $\UU^u\in \AAA_\sigma$ defined in
$\tro_{\rr_3,\rr_4}^u\times\TT_\sigma$ that satisfies equation
\eqref{eq:PDEHJtoParam2}. Moreover,  for
$(v,\tau)\in\tro_{\rr_3,\rr_4}^u\times\TT_\sigma$,
$v+\UU^u(v,\tau)\in
 D^{u}_{\infty,\rr_1}$ and there
exists a constant $b_3>0$ such that
\[
\|\UU^u\|_{\sigma}\leq b_3|\mu|\eps^{\eta+1}.
\]
\end{proposition}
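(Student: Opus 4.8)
The plan is to solve equation \eqref{eq:PDEHJtoParam2} for $\UU^u$ via a fixed point argument in the Banach algebra $\AAA_\sigma$ defined in \eqref{def:HJtoParam:Banach}, exactly in the spirit of the proofs of Propositions \ref{prop:HJ:hyp:general}, \ref{prop:HJ:parab:general} and \ref{prop:extensio:trig}. Since $\GG_\eps$ is a right inverse of $\LL_\eps$ on $\AAA_\sigma$ (Lemma \ref{lemma:PropietatsG:HJtoParam}), I would look for $\UU^u$ as a fixed point of the operator
\[
\FF(h)=\GG_\eps\left(\Mm(h)\right)=\GG_\eps(M)+\GG_\eps\big((N_1+N_2)h\big)+\GG_\eps\big(R(h,\cdot,\cdot)\big),
\]
with $M$, $N_1$, $N_2$, $R$ as in \eqref{eq:PDEHJtoParamRHS}, \eqref{def:PDEtoPam:N1}, \eqref{def:PDEtoPam:N2}, \eqref{def:PDEtoPam:R}. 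The estimates in Lemma \ref{lemma:HJtoParam:Cotes} give $\|\GG_\eps(M)\|_\sigma\le K|\mu|\eps^{\eta+1}$, so $\|\FF(0)\|_\sigma\le \frac{b_3}{2}|\mu|\eps^{\eta+1}$ for a suitable $b_3$. Because $\langle N_1\rangle=0$, the bound $\|\GG_\eps(N_1 h)\|_\sigma\le K\eps\|N_1\|_\sigma\|h\|_\sigma\le K|\mu|\eps^{\eta+1}\|h\|_\sigma$; the term $N_2$ is already of size $|\mu|\eps^{\eta+1}$, and $R$ is quadratic in $h$ with Lipschitz constant $K\nu$ on $B(\nu)$. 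Hence on the ball $\ol B(b_3|\mu|\eps^{\eta+1})\subset\AAA_\sigma$ the operator $\FF$ is contractive with Lipschitz constant $\le 1/2$ for $\eps$ small enough, and by the contraction mapping principle it has a unique fixed point $\UU^u$ with $\|\UU^u\|_\sigma\le b_3|\mu|\eps^{\eta+1}$.

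The key point, and the reason the change of variables is built the way it is, is that $\langle N_1\rangle=0$: without this mean-zero property the linear term $N_1 h$ would only gain a factor $\eps^{\eta}$ (not $\eps^{\eta+1}$) after applying $\GG_\eps$, and this is not small when $\eta=0$. This is precisely the role played by $N_1$ being the $v$-derivative of a periodic-in-$\tau$ quantity coming from $\wh H_1^1$ evaluated on the separatrix, which has zero $\tau$-average by \textbf{HP3}; I would include this averaging check as a preliminary lemma (it is contained in Lemma \ref{lemma:HJtoParam:Cotes}(2)). Once $\UU^u$ is obtained, the inclusion $v+\UU^u(v,\tau)\in D^u_{\infty,\rr_1}$ for $(v,\tau)\in\tro^u_{\rr_3,\rr_4}\times\TT_\sigma$ follows from the bound $\|\UU^u\|_\sigma\le b_3|\mu|\eps^{\eta+1}$ together with the elementary geometry of the transition domain \eqref{def:TransDomainInfty} (one chooses $\rr_4>\rr_3>\rr_1$ with the gaps having positive $\eps$-independent lower bounds, so a perturbation of size $\OO(\eps^{\eta+1})$ stays inside $D^u_{\infty,\rr_1}$ for $\eps$ small).

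Finally, the corresponding parameterizations $(Q^u,P^u)$ defined by \eqref{eq:HJtoParam} automatically satisfy \eqref{eq:PDEParametritzacions}, as noted there; and the estimates $|Q^u(v,\tau)-q_0(v)|\le b_4|\mu|\eps^{\eta+1}$, $|P^u(v,\tau)-p_0(v)|\le b_4|\mu|\eps^{\eta+1}$ follow by composing the bounds on $\UU^u$ with the analyticity of $q_0$, $p_0$, $\pa_uT_1$ on the relevant domains (using Theorem \ref{th:ExistenceCloseInfty} for the size of $\pa_uT_1$) and the mean value theorem; here one uses that $p_0(u)$ does not vanish on $D^u_{\infty,\rr_1}$, which is ensured by taking $\rr_1$ large. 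I expect no serious obstacle: the construction is a routine contraction, and the only genuinely delicate point is keeping track of which terms carry the extra $\eps$ — the mean-zero structure of $N_1$ — and verifying that $\eta\ge 0$ suffices. Raising $b_3$ slightly at the end (as in the analogous propositions) gives the stated conclusion.
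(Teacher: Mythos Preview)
There is a genuine gap in your contraction estimate. You write that because $\langle N_1\rangle=0$ one has
\[
\|\GG_\eps(N_1 h)\|_\sigma\le K\eps\,\|N_1\|_\sigma\|h\|_\sigma,
\]
but Lemma \ref{lemma:PropietatsG:HJtoParam} gives the extra factor $\eps$ only when the \emph{function being integrated} has zero mean. Even if $\langle N_1\rangle=0$, the product $N_1 h$ in general does not have zero average: its zero Fourier coefficient is $\sum_{k}N_1^{[k]}h^{[-k]}$, which has no reason to vanish for a generic $h\in\AAA_\sigma$. Consequently the linear part of your operator only satisfies $\|\GG_\eps(N_1 h)\|_\sigma\le K\|N_1\|_\sigma\|h\|_\sigma\le K|\mu|\eps^{\eta}\|h\|_\sigma$, and when $\eta=0$ this is $\OO(|\mu|)$ and not contractive. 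The paper explicitly flags this obstruction right after stating Corollary \ref{coro:ParameterizationOuter:Tecnic}.

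The paper's fix is to perform a preliminary multiplicative change $h=(1+\ol N_1)\ol h$ with $\ol N_1=\GG_\eps(N_1)$; since $\langle N_1\rangle=0$ one does get $\|\ol N_1\|_\sigma\le K|\mu|\eps^{\eta+1}$ from Lemma \ref{lemma:PropietatsG:HJtoParam}, and in the transformed equation the dangerous linear coefficient becomes $\wh N=(1+\ol N_1)^{-1}\bigl(N_1\ol N_1+N_2(1+\ol N_1)\bigr)$, which is $\OO(|\mu|\eps^{\eta+1})$. This is the same mechanism used in Lemmas \ref{lemma:infty:ligual:canvi}, \ref{lemma:Parab:canvi} and \ref{lemma:Extensio:Trig:canvi}. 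Your argument is fine for $\eta>0$, but to cover $\eta=0$ you must insert such a change of variables (or an equivalent device) before running the contraction. The remaining parts of your outline --- the bound on $\FF(0)$, the quadratic remainder, the inclusion $v+\UU^u(v,\tau)\in D^u_{\infty,\rr_1}$, and the consequences for $(Q^u,P^u)$ --- are correct.
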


\begin{corollary}\label{coro:ParameterizationOuter:Tecnic}
Let  us consider the constants $\rr_3$ and $\rr_4$ given by
Proposition \ref{prop:HJtoParam} and $\eps_0>0$ small enough. Then,
for $\eps\in(0,\eps_0)$ there exist parameterizations of the
invariant manifolds
\[(Q^u(v,\tau),P^u(v,\tau))=(q_0(v)+Q^u_1(v,\tau),p_0(v)+P^u_1(v,\tau))\]
which are solution of equation \eqref{eq:PDEParametritzacions}.
Moreover $(Q^u_1,P^u_1)\in \AAA_\sigma\times\AAA_\sigma$ are defined
in $\tro_{\rr_3,\rr_4}^u\times\TT_\sigma$  and there exists a
constant $b_4>0$ such that
\[
\begin{array}{l}
\left\| Q^u_1\right\|_\sigma\leq b_4|\mu|\eps^{\eta+1}\\
\left\| P^u_1\right\|_\sigma\leq b_4|\mu|\eps^{\eta+1}.
\end{array}
\]
\end{corollary}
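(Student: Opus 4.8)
The statement to prove is Theorem~\ref{th:HJtoParam} — more precisely, I will take it in the form of Proposition~\ref{prop:HJtoParam} together with Corollary~\ref{coro:ParameterizationOuter:Tecnic}, which are the Banach-space reformulations that appear at the end of the excerpt. Let me lay out the plan.

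The plan is to solve the fixed point equation \eqref{eq:PDEHJtoParam2} for the change of variables $h=\UU^u$ by a contraction argument in the Banach algebra $\AAA_\sigma$ defined in \eqref{def:HJtoParam:Banach}. First I would rewrite equation \eqref{eq:PDEHJtoParam} in the form \eqref{eq:PDEHJtoParam2}--\eqref{eq:PDEtoPam:R}, splitting the right-hand side $M(v+h,\tau)$ into its constant term $M(v,\tau)$, its linear term $(N_1+N_2)h$, and the quadratic remainder $R(h,v,\tau)$; the only subtlety at this stage is that, since $\eta$ may be $0$, the linear coefficient $N_1$ need not be small — however, by Lemma~\ref{lemma:HJtoParam:Cotes}, $N_1$ has zero average, so after applying the right inverse $\GG_\eps$ of $\LL_\eps$ (defined in \eqref{def:operadorG:HJtoParam} with integration limits adapted to the domain $\tro_{\rr_3,\rr_4}^u$ as indicated in the text), the corresponding contribution gains a factor $\eps$ by the second estimate of Lemma~\ref{lemma:PropietatsG:HJtoParam}. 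I would then define the operator $\GG_\eps\circ\Mm$ and show it maps the ball $\ol B(b_3|\mu|\eps^{\eta+1})\subset\AAA_\sigma$ into itself and is a contraction there, exactly as was done in the local existence proofs (Lemmas~\ref{lemma:HJ:hyp:ligual} and \ref{lemma:HJ:parab:lmajor}). The estimate of $\GG_\eps(\Mm(0))=\GG_\eps(M)$ comes from the second bound in \eqref{eq:HJtoParam:CotesM}; the Lipschitz estimate follows from the bound on $N_1$ via the $\langle\cdot\rangle=0$ improvement, the $\OO(\eps^{\eta+1})$ bound on $N_2$, and the quadratic bound on $R$ in Lemma~\ref{lemma:HJtoParam:Cotes}, all combined with Lemma~\ref{lemma:PropietatsG:HJtoParam}. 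Reducing $\eps_0$ (depending on $\rr_1,\rr_3,\rr_4$) makes the Lipschitz constant at most $1/2$, giving a unique fixed point $\UU^u$ with $\|\UU^u\|_\sigma\le b_3|\mu|\eps^{\eta+1}$. Since $\rr_4>\rr_3>\rr_1$ and $\|\UU^u\|_\sigma$ is small, for $(v,\tau)\in\tro_{\rr_3,\rr_4}^u\times\TT_\sigma$ the point $v+\UU^u(v,\tau)$ lies in $D^u_{\infty,\rr_1}$, so that $T_1$ and $\pa_uT_1$ from Proposition~\ref{prop:HJ:hyp:general} (or \ref{prop:HJ:parab:general}) are defined there and the composition in \eqref{eq:HJtoParam} makes sense.

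Next, for Corollary~\ref{coro:ParameterizationOuter:Tecnic} I would simply set $(Q^u,P^u)$ by formula \eqref{eq:HJtoParam} using the fixed point $\UU^u$, verify by direct computation (using that $T^u$ solves the Hamilton--Jacobi equation \eqref{eq:HamJacGeneral}, as in the paragraph preceding \eqref{eq:PDEHJtoParam}) that $(Q^u,P^u)$ satisfies \eqref{eq:PDEParametritzacions}, and then estimate $Q^u_1=Q^u-q_0$ and $P^u_1=P^u-p_0$. For $Q^u_1(v,\tau)=q_0(v+\UU^u(v,\tau))-q_0(v)$ the mean value theorem together with the bound on $\UU^u$ and the analyticity of $q_0$ on the compact set (the closure of $\tro_{\rr_3,\rr_4}^u$ is far from the singularities $\pm ia$) gives $\|Q^u_1\|_\sigma\le b_4|\mu|\eps^{\eta+1}$; for $P^u_1=\pa_uT^u(v+\UU^u)/p_0(v+\UU^u)-p_0(v)$ one writes $\pa_uT^u=\pa_uT_0+\pa_uT_1=p_0^2+\pa_uT_1$ and combines the bound $\|\pa_uT_1\|\le b_1|\mu|\eps^{\eta+1}$ from Theorem~\ref{th:ExistenceCloseInfty} with the bound on $\UU^u$ and the fact that $p_0$ does not vanish on $\tro_{\rr_3,\rr_4}^u$ (which holds because, by \eqref{eq:separatrix:hyp:infty}, \eqref{eq:separatrix:hyp:parab}, $\rr_1$ is chosen large enough that $p_0\ne0$ on $D^u_{\infty,\rr_1}$, and the image of $\tro_{\rr_3,\rr_4}^u$ under $v\mapsto v+\UU^u$ lies inside $D^u_{\infty,\rr_1}$). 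This yields the stated estimate for $P^u_1$.

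I do not expect a serious obstacle here: the result is, as the excerpt itself says, ``a classical perturbative result'' because the transition domain $\tro_{\rr_3,\rr_4}^u$ stays at an $\OO(1)$ distance from the singularities $\pm ia$. The only point requiring care is the bookkeeping with the non-small linear coefficient $N_1$ when $\eta=0$ — but since $N_1$ is the $v$-derivative of something with zero $\tau$-average, hence itself of zero average, the $\GG_\eps$ gain of an $\eps$ (Lemma~\ref{lemma:PropietatsG:HJtoParam}) handles it, precisely as the auxiliary change of variables $g$ handled the analogous issue in Sections~\ref{sub:Infty:Hyp:ProofGeneral} and \ref{sub:Infty:Parab:ProofGeneral}; in fact, because the domain is bounded away from the singularities and no factor blows up, here one does not even need a preliminary change of variables — one can absorb the linear term directly into the contraction. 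The invertibility of the change $(u,\tau)=(v+\UU^u(v,\tau),\tau)$, claimed in the bullet points of Theorem~\ref{th:HJtoParam}, follows from the implicit function theorem applied to the small perturbation $\UU^u$ of the identity, together with the bound on $\pa_v\UU^u$ obtained from $\pa_v\UU^u=\pa_v\GG_\eps(\Mm(\UU^u))=\GG_\eps(\pa_v\Mm(\UU^u))$ and Lemma~\ref{lemma:PropietatsG:HJtoParam}.
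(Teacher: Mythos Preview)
Your overall strategy is right, and the treatment of Corollary~\ref{coro:ParameterizationOuter:Tecnic} from Proposition~\ref{prop:HJtoParam} is exactly what the paper does. However, there is a genuine gap in the fixed-point argument for Proposition~\ref{prop:HJtoParam} itself.

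You claim that because $\langle N_1\rangle=0$, the contribution $\GG_\eps(N_1 h)$ gains a factor $\eps$ via the second estimate of Lemma~\ref{lemma:PropietatsG:HJtoParam}, and that therefore no preliminary change of variables is needed. This does not work: the second estimate of Lemma~\ref{lemma:PropietatsG:HJtoParam} requires the \emph{whole integrand} to have zero average, and $\langle N_1 h\rangle$ is not zero in general when $h$ depends on $\tau$. In the Lipschitz step one must bound $\|\GG_\eps(N_1(h_2-h_1))\|_\sigma$, and without the zero-average gain this is only $\le K\|N_1\|_\sigma\|h_2-h_1\|_\sigma\le K|\mu|\eps^{\eta}\|h_2-h_1\|_\sigma$, which is not small when $\eta=0$. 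So the direct operator $\GG_\eps\circ\Mm$ is \emph{not} contractive in that case.

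The paper handles this by a multiplicative change in the unknown rather than the independent variable: set $\ol N_1=\GG_\eps(N_1)$ (which is $\OO(|\mu|\eps^{\eta+1})$ precisely because $\langle N_1\rangle=0$), substitute $h=(1+\ol N_1)\ol h$, and observe that the new linear coefficient becomes $\wh N=(1+\ol N_1)^{-1}\bigl(N_1\ol N_1+N_2(1+\ol N_1)\bigr)$, which is genuinely $\OO(|\mu|\eps^{\eta+1})$. Then $\GG_\eps\circ\Mm^*$ is contractive on $\ol B(b_3|\mu|\eps^{\eta+1})\subset\AAA_\sigma$ exactly as you describe, and one undoes the change at the end. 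This is the same device (in multiplicative rather than additive form) as the changes $g$ in Lemmas~\ref{lemma:infty:ligual:canvi} and~\ref{lemma:Parab:canvi} that you invoke by analogy---so your instinct about where the difficulty lies is correct, only the fix cannot be skipped here.
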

The proof of this corollary is a straightforward consequence of
Proposition \ref{prop:HJtoParam}.

We prove Proposition \ref{prop:HJtoParam} by using a fixed point
argument. Nevertheless, the operator $\MM$ in
\eqref{eq:PDEtoParamRHS2} has linear terms in $h$ which are not
small when $\eta=0$. Therefore, we have first to consider a change
of variables to obtain a contractive operator. For this purpose, let
us consider $\ol N_1=\GG_\eps (N_1)$, where $\GG_\eps$ is the
operator in \eqref{def:operadorG:HJtoParam} and $N_1$ the function
in \eqref{def:PDEtoPam:N1}. Taking into account that $\langle
N_1\rangle=0$ and applying Lemmas \ref{lemma:HJtoParam:Cotes} and
\ref{lemma:PropietatsG:HJtoParam}, we have that
\begin{equation}\label{eq:CotaN1barra}
\left\| \ol
N_1\right\|_\sigma=\left\|\GG_\eps(N_1)\right\|_\sigma\leq
K|\mu|\eps^{\eta+1}.
\end{equation}
Then, we consider the change
\begin{equation}\label{eq:HJtoPDE:canvi:ligual}
h=\left(1+\ol N_1\right)\ol h
\end{equation}
which, by \eqref{eq:CotaN1barra}, is invertible for $(v,\tau)\in
\tro_{\rr_3,\rr_4}^u\times\TT_\sigma$. By \eqref{eq:PDEHJtoParam2}
and \eqref{eq:HJtoPDE:canvi:ligual}, $\ol h$ is solution of
\[
\LL_\eps\ol h=\Mm^*(\ol h),
\]
where
\begin{equation}\label{eq:HJtoParam:OperadorM1:ligual}
\Mm^* \left(\ol h\right)(v,\tau)= \wh M(v,\tau)+\wh N (v,\tau)\ol
h(v,\tau)+\wh R\left(\ol h(v,\tau),v,\tau\right)
\end{equation}
with
\begin{align}
\wh M(v,\tau)&=\left(1+\ol N_1(v,\tau)\right)\ii
M(v,\tau)\label{def:HJtoParam:Mbarret}\\
\wh N(v,\tau)&=\left(1+\ol N_1(v,\tau)\right)\ii N_1(v,\tau) \ol
N_1(v,\tau)+N_2(v,\tau)\label{def:HJtoParam:Nbarret}\\
\wh R(\ol h,v,\tau)&=\left(1+\ol N_1(v,\tau)\right)\ii
R\left(\left(1+\ol N_1(v,\tau)\right)\ol
h,v,\tau\right).\label{def:HJtoParam:Rbarret}
\end{align}
To find a solution of this equation, we look for a fixed point $\ol
h\in\AAA_\sigma$ of the operator
\begin{equation}\label{eq:HJtoParam:Functional:ligual}
\ol\Mm=\GG_\eps\circ\Mm^*,
\end{equation}
where $\GG_\eps$ and $\Mm^*$ are the operators
\eqref{def:operadorG:HJtoParam} and
\eqref{eq:HJtoParam:OperadorM1:ligual}. Then, Proposition
\ref{prop:HJtoParam} is a consequence of the following lemma.

\begin{lemma}\label{lemma:HJtoParam:ligual}
Let us consider $\eps_0>0$ small enough. Then, for
$\eps\in(0,\eps_0)$, there exists a function $\overline
h\in\AAA_\sigma$ defined in $\tro_{\rr_3,\rr_4}^u\times\TT_\sigma$,
such that it is a fixed point of the operator
\eqref{eq:HJtoParam:Functional:ligual}. Moreover, it satisfies
\[
\left\| \overline h\right\|_\sigma\leq K|\mu|\eps^{\eta+1}
\]
and  then $u=v+\left(1+\ol N_1(v,\tau)\right)\ol h(v,\tau)\in
D^{u}_{\infty,\rr_1}$ for
$(v,\tau)\in\tro_{\rr_3,\rr_4}^u\times\TT_\sigma$.
\end{lemma}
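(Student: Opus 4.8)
The plan is to prove Lemma~\ref{lemma:HJtoParam:ligual} by a standard contraction-mapping argument for the operator $\ol\Mm=\GG_\eps\circ\Mm^*$ in the ball $\ol B(b_3|\mu|\eps^{\eta+1})\subset\AAA_\sigma$, exactly in the spirit of Lemmas~\ref{lemma:HJ:hyp:ligual} and \ref{lemma:Extensio:Trig:General}. First I would check that $\ol\Mm$ sends $\AAA_\sigma$ into itself, which is immediate from Lemma~\ref{lemma:PropietatsG:HJtoParam} (since $\GG_\eps$ maps $\AAA_\sigma$ to itself) together with the fact that $\AAA_\sigma$ is a Banach algebra, so that products and compositions with analytic functions of small functions stay in $\AAA_\sigma$.

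The core estimate is to bound $\ol\Mm(0)=\GG_\eps(\wh M)$. Writing $\wh M=(1+\ol N_1)\ii M$ and using the bound $\|\ol N_1\|_\sigma\leq K|\mu|\eps^{\eta+1}$ from \eqref{eq:CotaN1barra}, I would reduce to bounding $\GG_\eps(M)$ and $\GG_\eps((1+\ol N_1)\ii\ol N_1 M)$. The first is exactly the second estimate in \eqref{eq:HJtoParam:CotesM}, giving $\|\GG_\eps(M)\|_\sigma\leq K|\mu|\eps^{\eta+1}$; for the second, the extra factor $\ol N_1$ makes it of order $|\mu|^2\eps^{2\eta+2}$, which is smaller. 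Hence there is a constant $b_3>0$ with $\|\ol\Mm(0)\|_\sigma\leq\tfrac{b_3}{2}|\mu|\eps^{\eta+1}$. For the Lipschitz estimate, for $\ol h_1,\ol h_2\in\ol B(b_3|\mu|\eps^{\eta+1})$ I would use the linearity of $\GG_\eps$ and Lemma~\ref{lemma:PropietatsG:HJtoParam} to reduce to estimating $\|\wh N\cdot(\ol h_2-\ol h_1)\|_\sigma$ and $\|\wh R(\ol h_2,\cdot)-\wh R(\ol h_1,\cdot)\|_\sigma$. The key point — and the reason the change of variables \eqref{eq:HJtoPDE:canvi:ligual} was introduced — is that the \emph{linear} term of $\Mm^*$ is no longer $N_1+N_2$ (which has the non-small piece $N_1$ of size $|\mu|\eps^\eta$ when $\eta=0$) but rather $\wh N=(1+\ol N_1)\ii N_1\ol N_1+N_2$, which by \eqref{eq:HJtoParam:CotesN} and \eqref{eq:CotaN1barra} satisfies $\|\wh N\|_\sigma\leq K|\mu|\eps^{\eta+1}$. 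Combined with the quadratic-in-$h$ Lipschitz bound for $R$ from Lemma~\ref{lemma:HJtoParam:Cotes}(3) (which transfers to $\wh R$ up to the bounded factor $(1+\ol N_1)\ii$), this gives $\mathrm{Lip}\,\ol\Mm\leq K|\mu|\eps^{\eta+1}<\tfrac12$ for $\eps$ small enough.

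Therefore $\ol\Mm$ is a contraction on the closed ball $\ol B(b_3|\mu|\eps^{\eta+1})\subset\AAA_\sigma$ and has a unique fixed point $\ol h$ there, satisfying $\|\ol h\|_\sigma\leq K|\mu|\eps^{\eta+1}$. To conclude, I would undo the change of variables: set $\UU^u=(1+\ol N_1)\ol h$ (this is $h$ in \eqref{eq:HJtoPDE:canvi:ligual}), which satisfies \eqref{eq:PDEHJtoParam2} and obeys $\|\UU^u\|_\sigma\leq K|\mu|\eps^{\eta+1}$; then since $\rr_4>\rr_3>\rr_1$ and $\|\UU^u\|_\sigma$ is as small as we wish by taking $\eps$ small, the point $v+\UU^u(v,\tau)$ lies in $D^u_{\infty,\rr_1}$ for every $(v,\tau)\in\tro^u_{\rr_3,\rr_4}\times\TT_\sigma$, as claimed. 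I expect no serious obstacle here: the only delicate bookkeeping is the careful splitting of $\wh M$, $\wh N$, $\wh R$ so that every term that is not manifestly $\OO(\eps^{\eta+1})$ is absorbed by a factor of $\ol N_1$; this is precisely what the preliminary change of variables was designed to achieve, and it is the analogue of the device already used in Lemmas~\ref{lemma:infty:ligual:canvi}–\ref{lemma:HJ:hyp:ligual}.
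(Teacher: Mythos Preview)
Your proposal is correct and follows essentially the same approach as the paper's proof: the same splitting of $\ol\Mm(0)=\GG_\eps(M)-\GG_\eps\big((1+\ol N_1)^{-1}\ol N_1 M\big)$, the same Lipschitz bound via $\wh N$ and $\wh R$, and the same contraction argument on $\ol B(b_3|\mu|\eps^{\eta+1})\subset\AAA_\sigma$. One tiny slip: the second piece of $\ol\Mm(0)$ is of order $|\mu|^2\eps^{2\eta+1}$ (since $\|M\|_\sigma\leq K|\mu|\eps^{\eta}$ and $\|\ol N_1\|_\sigma\leq K|\mu|\eps^{\eta+1}$), not $|\mu|^2\eps^{2\eta+2}$, but this does not affect the conclusion.
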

\begin{proof}
It is straightforward to see that the operator $\ol \Mm$ sends
$\AAA_\sigma$ to itself. We are going to prove that there exists a
constant $b_3>0$ such that $\ol\Mm$ is contractive in $\ol
B(b_3|\mu|\eps^{\eta+1})\subset\AAA_\sigma$.

Let us consider first $\overline\Mm(0)=\GG_\eps\circ\Mm^*(0)$. From
the definitions of $\Mm^*$ and $\wh M$ in
\eqref{eq:HJtoParam:OperadorM1:ligual} and
\eqref{def:HJtoParam:Mbarret} respectively, we have that
\[
\ol \Mm(0)=\GG_\eps(\Mm^*)=\GG_\eps\left(\left(1+\ol N_1\right)\ii
M\right)=\GG_\eps\left(M\right)-\GG_\eps\left(\left(1+\ol
N_1\right)\ii\ol N_1M\right).
\]
The first term has already been bounded in Lemma
\ref{lemma:HJtoParam:Cotes}, and satisfies
$\|\GG_\eps(M)\|_\sigma\leq K|\mu|\eps^{\eta+1}$. For the second one
has to take into account Lemma \ref{lemma:PropietatsG:HJtoParam},
and then \eqref{eq:CotaN1barra} and  Lemma
\ref{lemma:HJtoParam:Cotes},  to obtain
\[
\left\|\GG_\eps\left(\left(1+\ol N_1\right)\ii\ol
N_1M\right)\right\|_\sigma\leq K \left\|\ol
N_1\right\|_\sigma\left\|M\right\|_\sigma\leq
K|\mu|^2\eps^{2\eta+1}.
\]
Therefore, there exists a constant $b_3>0$ such that
\[
\left\|\ol\MM(0)\right\|_\sigma\leq \frac{b_3}{2}|\mu|\eps^{\eta+1}.
\]
Let us consider now $\ol h_1,\ol  h_2\in \ol
B(b_3|\mu|\eps^{\eta+1})\subset\AAA_\sigma$. Then using the
properties of $\GG_\eps$ given in Lemma
\ref{lemma:PropietatsG:HJtoParam} and the definition of $\Mm^*$ in
\eqref{eq:HJtoParam:OperadorM1:ligual},
\[
\begin{array}{ll}
\dps\left\|\ol\Mm(\ol h_2)-\ol\Mm(\ol h_1)\right\|_\sigma&\dps\leq
K\left\|\Mm^*(\ol h_2)-\Mm^*(\ol h_1)\right\|_\sigma\\
&\dps\leq K\left\| \wh N(v,\tau)(\ol h_2-\ol h_1)+\wh R(\ol
h_2,v,\tau)-\wh R(\ol h_1,v,\tau)\right\|_\sigma.
\end{array}
\]
Taking into account the definitions of $\wh N$ and $\wh R$ in
\eqref{def:HJtoParam:Nbarret} and \eqref{def:HJtoParam:Rbarret} and
applying Lemma \ref{lemma:HJtoParam:Cotes} and bound
\eqref{eq:CotaN1barra}, one obtains
\[
\dps\left\|\ol\Mm(\ol h_2)-\ol\Mm(\ol h_1)\right\|_\sigma\leq
K|\mu|\eps^{\eta+1}\|\ol h_2-\ol h_1\|_\sigma.
\]
Therefore, reducing $\eps$ if necessary, $\mathrm{Lip}\ol\Mm\leq
1/2$ and therefore $\ol\Mm$ is contractive from the ball
$B(b_3|\mu|\eps^{\eta+1})\subset\AAA_\sigma$ into itself and it has
a unique fixed point $\overline h$.
\end{proof}

\begin{proof}[Proof of Proposition \ref{prop:HJtoParam}]
To prove Proposition \ref{prop:HJtoParam} from Lemma
\ref{lemma:HJ:hyp:ligual}, it is enough to undo the change of
variables \eqref{eq:HJtoPDE:canvi:ligual} to obtain
$\UU^u=\left(1+\ol N_1\right)\ol h$. Then, using bound
\eqref{eq:CotaN1barra} and increasing slightly $b_3$ if necessary,
we obtain the bound for $\UU^u$.
\end{proof}

\subsubsection{Proof of
Theorem~\ref{th:Extensio}}\label{subsec:Extensio:general}
We prove Theorem~\ref{th:Extensio} looking for a solution
of~\eqref{eq:PDEParametritzacions} through a fixed point argument,
taking the parameterizations of the invariant manifolds as
perturbations of the parameterizations of the unperturbed
separatrix. Since we only deal with the unstable manifold, we omit
the superscript $u$. We consider
\[
\left(\begin{array}{c} Q(v,\tau)\\P(v,\tau)\end{array}\right)=
\left(\begin{array}{c}
q_0(v)+Q_1(v,\tau)\\p_0(v)+P_1(v,\tau)\end{array}\right)
\]
and thus we  look for $(Q_1,P_1)$ as solutions of
\begin{equation}\label{eq:PDEParametritzacionsLinealitzat}
\left(\LL_\eps-A(u)\right)\left(\begin{array}{l}Q_1\\P_1\end{array}
\right)=\KK\left(\begin{array}{l}Q_1\\P_1\end{array}\right),
\end{equation}
where $\LL_\eps$ is the operator defined in \eqref{def:Lde}, $A$ is
the matrix defined in \eqref{eq:MatriuLinealSeparatriu},
\[
\KK(\xi)(u,\tau)=\left(\begin{array}{l}\dps\mu\eps^{\eta}\pa_p\widehat
H_1\left(q_0(u)+\xi_1,p_0(u)+\xi_2,\tau\right)\\
\dps G(\xi_1)(u,\tau)-\mu\eps^{\eta}\pa_q\widehat
H_1\left(q_0(u)+\xi_1,p_0(u)+\xi_2,\tau\right)\end{array}\right)
\]
and
\begin{equation}\label{def:Extensio:G}
G(\xi_1)(u,
\tau)=-\left(V'(\xp(\tau)+q_0(u)+\xi_1)-V'(\xp(\tau))-V'(q_0(u))-V''(q_0(u))\xi_
1\right),
\end{equation}
where for shortness we have put $\xi_1$ and $\xi_2$ for
$\xi_1(u,\tau)$ and $\xi_2(u,\tau)$.

We decompose $\KK$ considering constant, linear and higher order terms in $\xi$
as
\begin{equation}\label{def:ExtensionOperator}
\KK(\xi)(u,\tau)=L(u,\tau)+\left(M_1(u,\tau)+M_2(u,\tau)\right)\xi(u,
\tau)+N(\xi)(u,\tau)
\end{equation}
with
\begin{align}
L(u,\tau)&=\mu\eps^{\eta}\left(\begin{array}{c}\pa_p\widehat
H_1(q_0(u),p_0(u),\tau)\\
-\pa_q\widehat
H_1(q_0(u),p_0(u),\tau)\end{array}\right)+\left(\begin{array}{c}0\\
G(0)(u,\tau)\end{array}\right)\label{def:Extension:L}\\
M_1(u,\tau)&=\mu\eps^{\eta}\left(\begin{array}{cc}\dps\pa_{qp}\widehat
H^1_1\left(q_0(u),p_0(u),\tau\right)&\pa_{pp}\widehat
H^1_1(q_0(u),p_0(u),\tau)\\
\dps -\pa_{qq}\widehat
H^1_1\left(q_0(u),p_0(u),\tau\right)&-\pa_{qp}\widehat
H^1_1(q_0(u),p_0(u),\tau)
\end{array}\right)\label{def:Extension:M1}\\
M_2(u,\tau)&=\mu\eps^{\eta+1}\left(\begin{array}{cc}\dps\pa_{qp}\widehat
H^2_1\left(q_0(u),p_0(u),\tau\right)&\pa_{pp}\widehat
H^2_1(q_0(u),p_0(u),\tau)\\
\dps -\pa_{qq}\widehat
H^2_1\left(q_0(u),p_0(u),\tau\right)&-\pa_{qp}\widehat
H^2_1(q_0(u),p_0(u),\tau)
\end{array}\right)\label{def:Extension:M2}\\
N(\xi)(u,\tau)&=L(u,\tau)+\left(M_1(u,\tau)+M_2(u,\tau)\right)\xi(u,
\tau)-\KK(\xi)(u,\tau)\label{def:Extension:N}.
\end{align}

First step is to define the following function space
\[
\YY_\sigma=\left\{h:\wt
D^{\out,u}_{\rr,d,\kk}\times\TT\rightarrow\CC;\text{ real-analytic},
\|h\|_{\sigma}<\infty\right\},
\]
where $\wt D^{\out,u}_{\rr,d,\kk}$ is the domain defined in
\eqref{def:DominOuterParam} and
\begin{equation}\label{def:FourierNorm}
\|h\|_\sigma=\sum_{k\in\ZZ}\left\|h^{[k]}\right\|_\infty
e^{|k|\sigma},
\end{equation}
where $\|\cdot\|_\infty$ is the classical supremmum norm. It is a
well known fact that  this function space is a Banach algebra (see
for instance \cite{Sauzin01}). We also define the product space
\begin{equation}\label{def:BanachExtension:vec}
\YY_{\sigma}\times\YY_{\sigma}=\{ h=(h_1,h_2):\wt
D^{\out,u}_{\rr,d,\kk}\times\TT_\sigma\rightarrow\CC^2;\,\,\text{real-analytic},
\|h\|_{\sigma}=\|h_1\|_{\sigma}+\|h_2\|_{\sigma}<\infty\}.
\end{equation}
Since we deal with the Banach space
$\YY_{\sigma}\times\YY_{\sigma}$, it is also useful to consider the
norm for $2\times 2$ matrices induced by $\|\cdot\|_{\sigma}$. Let
$B=\left(b^{ij}\right)$ be a $2\times 2$ matrix such that
$b^{ij}\in \YY_{\sigma}$. Then, the induced norm with respect to the
norm of $\YY_{\sigma}\times\YY_{\sigma}$, which we also denote
$\|\cdot\|_{\sigma}$ abusing notation, is given by
\begin{equation}
\|B\|_{\sigma}=\max_{j=1,2}\left\{\left\|b^{1j}\right\|_{\sigma}+\left\|b^{2j}
\right\|_{\sigma}\right\}.
\end{equation}
The next lemma gives some properties of this induced norm.
\begin{lemma}\label{lemma:Extensio:MatrixNorms} The following statements are
satisfied
\begin{enumerate}
\item If $h\in \YY_{\sigma}\times\YY_{\sigma}$ and
$B=(b^{ij})$ is a $2\times 2$ matrix with $b^{ij}\in \YY_{\sigma}$,
then $Bh\in \YY_{\sigma}\times\YY_{\sigma}$ and
\[
\|Bh\|_{\sigma}\leq\|B\|_{\sigma}\|h\|_{\sigma}.
\]
\item If $B_1=(b_1^{ij})$ and $B_2=(b_2^{ij})$ are $2\times 2$ matrices which
satisfy $b_1^{ij}\in \YY_{\sigma}$ and $b_2^{ij}\in \YY_{\sigma}$
respectively, then $B_3=(b_3^{ij})=B_1B_2$ satisfies $b_3^{ij}\in
\EE_{\sigma}$ and
\[
\|B_3\|_{\sigma}\leq\|B_1\|_{\sigma}\|B_2\|_{\sigma}.
\]
\end{enumerate}
\end{lemma}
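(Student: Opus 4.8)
\textbf{Proof plan for Lemma \ref{lemma:Extensio:MatrixNorms}.}

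The plan is to reduce both statements to the defining properties of the Fourier norm $\|\cdot\|_\sigma$ on $\YY_\sigma$, which is a Banach algebra. First I would recall that for $h\in\YY_\sigma$ one has $\|h\|_\sigma=\sum_{k\in\ZZ}\|h^{[k]}\|_\infty e^{|k|\sigma}$, and that the algebra property gives $\|fg\|_\sigma\leq\|f\|_\sigma\|g\|_\sigma$ for $f,g\in\YY_\sigma$ (this is the standard fact that the convolution of the coefficient sequences, weighted by $e^{|k|\sigma}$, satisfies the triangle and submultiplicativity inequalities; it is quoted from \cite{Sauzin01} in the excerpt and may be assumed). For the scalar $\|\cdot\|_\sigma$ on vectors, $\|h\|_\sigma=\|h_1\|_\sigma+\|h_2\|_\sigma$ by definition \eqref{def:BanachExtension:vec}.

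For statement (1), I would write out $Bh$ componentwise: $(Bh)_i=\sum_{j=1,2}b^{ij}h_j$ for $i=1,2$. Then, using the triangle inequality for $\|\cdot\|_\sigma$ on $\YY_\sigma$ and the algebra property,
\[
\|Bh\|_\sigma=\sum_{i=1,2}\Big\|\sum_{j=1,2}b^{ij}h_j\Big\|_\sigma\leq\sum_{i=1,2}\sum_{j=1,2}\|b^{ij}\|_\sigma\|h_j\|_\sigma=\sum_{j=1,2}\|h_j\|_\sigma\Big(\sum_{i=1,2}\|b^{ij}\|_\sigma\Big).
\]
Bounding $\sum_{i=1,2}\|b^{ij}\|_\sigma\leq\|B\|_\sigma$ for each $j$ (which is exactly the definition of the induced matrix norm), the right-hand side is at most $\|B\|_\sigma\sum_{j=1,2}\|h_j\|_\sigma=\|B\|_\sigma\|h\|_\sigma$, as claimed. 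In particular this shows $Bh\in\YY_\sigma\times\YY_\sigma$.

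For statement (2), I would compute the entries of $B_3=B_1B_2$, namely $b_3^{ij}=\sum_{k=1,2}b_1^{ik}b_2^{kj}$. Each summand lies in $\YY_\sigma$ by the algebra property, so $b_3^{ij}\in\YY_\sigma$. Then for fixed $j$,
\[
\sum_{i=1,2}\|b_3^{ij}\|_\sigma\leq\sum_{i=1,2}\sum_{k=1,2}\|b_1^{ik}\|_\sigma\|b_2^{kj}\|_\sigma=\sum_{k=1,2}\|b_2^{kj}\|_\sigma\Big(\sum_{i=1,2}\|b_1^{ik}\|_\sigma\Big)\leq\|B_1\|_\sigma\sum_{k=1,2}\|b_2^{kj}\|_\sigma\leq\|B_1\|_\sigma\|B_2\|_\sigma,
\]
where in the last two steps I used $\sum_{i}\|b_1^{ik}\|_\sigma\leq\|B_1\|_\sigma$ (uniformly in $k$) and $\sum_k\|b_2^{kj}\|_\sigma\leq\|B_2\|_\sigma$. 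Taking the maximum over $j$ yields $\|B_3\|_\sigma\leq\|B_1\|_\sigma\|B_2\|_\sigma$. There is no real obstacle here: the only point requiring any care is bookkeeping of the column-sum convention in the definition of the induced norm, and checking that the submultiplicativity of $\|\cdot\|_\sigma$ on $\YY_\sigma$ itself is genuinely available (it is, being the Banach algebra property of the Fourier norm); everything else is the standard argument that an operator norm induced by a vector norm on a product of Banach algebras is submultiplicative.
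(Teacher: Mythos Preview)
Your proof is correct. The paper does not actually provide a proof of this lemma: it is stated without proof, as is the analogous Lemma~\ref{lemma:periodica:NormesMatrius} earlier in the paper, presumably because both are standard elementary facts about induced matrix norms over Banach algebras. Your argument is exactly the expected one.
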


Second step is to look for a right inverse of $\LL_\eps-A(u)$, where
$A$ is defined in \eqref{eq:MatriuLinealSeparatriu}. To obtain it we
use the operator $\GG_\eps$ defined in
\eqref{def:operadorG:HJtoParam}, which is well defined for functions
belonging to $\YY_\sigma$, if we take $u_1,\ol u_1$ the vertices of
the domain $\wt D^{\out,u}_{\rr,d,\kk}$ defined in
\eqref{def:DominOuterParam} (see Figure \ref{fig:OuterParam}).
Recalling that $\Phi$ defined in \eqref{def:MatriuFonamentalReal}
satisfies $\LL_\eps \Phi=A\Phi$, we can define a right inverse of
$\LL_\eps-A(v)$ as
\begin{equation}\label{def:operadorGbarraExtensio}
\wh\GG_\eps(h)=\Phi\GG_\eps(\Phi\ii h), \qquad\text{ for
}\quad
h=\left(\begin{array}{c}h_1\\h_2\end{array}\right).
\end{equation}
\begin{lemma}\label{lemma:PropietatsGbarraExtensio}
The operator $\wh \GG_\eps$ in \eqref{def:operadorGbarraExtensio}
satisfies the following properties.
\begin{enumerate}
\item If $h\in\YY_{\sigma}\times\YY_{\sigma}$, then $\wh\GG_\eps(h)\in
\YY_{\sigma}\times\YY_{\sigma}$ and
\[
\left\|\wh\GG_\eps(h)\right\|_{\sigma}\leq K\|h\|_{\sigma}.
\]
\item Furthermore, if $\langle h\rangle=0$,  then
\[
\left\|\wh\GG_\eps(h)\right\|_{\sigma}\leq K\eps\|h\|_{\sigma}.
\]
\end{enumerate}
\end{lemma}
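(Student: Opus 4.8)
\textbf{Proof plan for Lemma \ref{lemma:PropietatsGbarraExtensio}.}

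The plan is to reduce everything to the properties of the scalar operator $\GG_\eps$ from Lemma \ref{lemma:PropietatsGExtensio} (respectively its $\AAA_\sigma$-version, Lemma \ref{lemma:PropietatsG:HJtoParam}) together with norm bounds for the fundamental matrix $\Phi$ and its inverse. Concretely, since $\wh\GG_\eps(h)=\Phi\,\GG_\eps(\Phi^{-1}h)$, statement (1) follows once we establish that (a) $\Phi$ and $\Phi^{-1}$ have entries lying in $\YY_\sigma$ with norms bounded by a constant $K$ independent of $\eps$ and $\mu$ on the relevant outer domain, and (b) the componentwise application of $\GG_\eps$ is continuous on $\YY_\sigma\times\YY_\sigma$ with a constant independent of $\eps$. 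Then by Lemma \ref{lemma:Extensio:MatrixNorms} (submultiplicativity of the induced matrix norm) we get
\[
\left\|\wh\GG_\eps(h)\right\|_\sigma\le\|\Phi\|_\sigma\left\|\GG_\eps(\Phi^{-1}h)\right\|_\sigma\le K\|\Phi\|_\sigma\left\|\Phi^{-1}\right\|_\sigma\|h\|_\sigma\le K\|h\|_\sigma.
\]
That $\LL_\eps-A(v)$ is a left inverse relation, i.e. $(\LL_\eps-A)\circ\wh\GG_\eps=\mathrm{Id}$, is a direct computation using $\LL_\eps\Phi=A\Phi$ and $\LL_\eps\circ\GG_\eps=\mathrm{Id}$ (Lemma \ref{lemma:PropietatsGExtensio}), so this part is routine.

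For step (a), I would record the explicit form of $\Phi$ in \eqref{def:MatriuFonamentalReal}: its columns are $(p_0,\dot p_0)$ and $(\zeta,\dot\zeta)$ with $\zeta$ as in \eqref{def:SolVariacional2}. On the outer domain $\wt D^{\out,u}_{\rr,d,\kk}$ the separatrix $p_0(u)$ behaves like $(u^2+a^2)^{-r}$ near the singularities $\pm ia$ (by \textbf{HP2.1}/\textbf{HP2.2}) and is exponentially small as $\Re u\to-\infty$; correspondingly $\zeta(u)$ behaves like $(u^2+a^2)^{r-1}$, i.e. the two columns have opposite weights so that $\det\Phi\equiv\mathrm{const}\ne0$ (Wronskian of the variational equation). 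This gives clean bounds for the entries of $\Phi$ and $\Phi^{-1}=(\det\Phi)^{-1}\mathrm{adj}\,\Phi$ in the weighted norms; since the domain $\wt D^{\out,u}_{\rr,d,\kk}$ stays at distance $\ge d/2$ from $\pm ia$, these weights are all bounded above and below by constants, so in the plain Fourier norm $\|\cdot\|_\sigma$ of \eqref{def:FourierNorm} we simply get $\|\Phi\|_\sigma,\|\Phi^{-1}\|_\sigma\le K$. (The Remark following \eqref{def:SolVariacional2} guarantees $\zeta$ is genuinely analytic even where $p_0$ vanishes, so no spurious singularities appear.) Step (b) is immediate from Lemma \ref{lemma:PropietatsGExtensio}(1) applied to each of the two components.

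For statement (2), the zero-mean hypothesis $\langle h\rangle=0$ does \emph{not} pass directly through the multiplication by $\Phi^{-1}$, since $\langle\Phi^{-1}h\rangle\ne0$ in general. The fix is to exploit that $\Phi^{-1}$ is itself $\tau$-independent (it depends only on $u$), hence $\langle\Phi^{-1}h\rangle=\Phi^{-1}\langle h\rangle=0$; therefore $\GG_\eps$ gains the extra factor $\eps$ by the second half of Lemma \ref{lemma:PropietatsGExtensio}(1), and the same chain of inequalities as above yields $\|\wh\GG_\eps(h)\|_\sigma\le K\eps\|h\|_\sigma$. The main (and essentially only) obstacle is the bookkeeping in step (a): one must be careful that on $\wt D^{\out,u}_{\rr,d,\kk}$ — which touches neither the singularities $\pm ia$ nor, by construction, the zeros of $p_0$ — both $\Phi$ and $\Phi^{-1}$ are uniformly bounded; once the geometry of that domain is invoked this is routine, and for the analogous statement needed on the $\AAA_\sigma$ spaces (Theorem \ref{th:HJtoParam}) the same argument works verbatim with Lemma \ref{lemma:PropietatsG:HJtoParam} in place of Lemma \ref{lemma:PropietatsGExtensio}.
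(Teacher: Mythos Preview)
Your approach is correct and is the natural one; the paper in fact states this lemma without proof, so there is nothing to compare against except the obvious argument you give: bound $\Phi$ and $\Phi^{-1}$ uniformly on the (bounded) domain $\wt D^{\out,u}_{\rr,d,\kk}$, apply the scalar $\GG_\eps$ bounds componentwise, and use that $\Phi^{-1}$ is $\tau$-independent to pass the zero-mean hypothesis through.

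One small slip: in your final paragraph you say the domain $\wt D^{\out,u}_{\rr,d,\kk}$ ``by construction'' avoids the zeros of $p_0$. This is not true --- the whole point of Section~\ref{subsec:Extensio:general} is precisely that $p_0$ \emph{may} vanish there, which is why the Hamilton--Jacobi formulation is abandoned in favour of the flow parametrization. Fortunately this does not affect your argument: as you yourself noted earlier, the Remark after \eqref{def:SolVariacional2} shows $\zeta$ (and hence every entry of $\Phi$ and of $\Phi^{-1}=(\det\Phi)^{-1}\mathrm{adj}\,\Phi$, the Wronskian $\det\Phi$ being a nonzero constant) is analytic across the zeros of $p_0$. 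Combined with the fact that $\wt D^{\out,u}_{\rr,d,\kk}$ is bounded (since $\Re u>-\rr$) and stays at distance $\ge d/2$ from $\pm ia$, this gives the uniform bound on $\|\Phi\|_\sigma$ and $\|\Phi^{-1}\|_\sigma$ you need. Just delete the clause about zeros of $p_0$ and the proof stands.
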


We rewrite  Theorem \ref{th:Extensio} in terms of equation
\eqref{eq:PDEParametritzacionsLinealitzat} and the Banach spaces
defined in \eqref{def:BanachExtension:vec}.

\begin{proposition}\label{prop:extensio}
Let $\rr_4$  and $\kk_1$ be the constant considered in Proposition
\ref{prop:HJtoParam} and \ref{prop:extensio:trig} and let also $d_0>0$ and
$\eps_0>0$ small enough. Then, for
$\eps\in(0,\eps_0)$ there exist functions
$(Q_1,P_1)\in\YY_{\sigma}\times\YY_{\sigma}$ which satisfy equation
\eqref{eq:PDEParametritzacionsLinealitzat} and are the analytic
continuation of the functions $(Q_1,P_1)$ obtained in Corollary
\ref{coro:ParameterizationOuter:Tecnic}. Moreover, there exists a
constant $b_5>0$ such that
\[
\|(Q_1,P_1)\|_{\sigma}\leq b_5|\mu|\eps^{\eta+1}.
\]
\end{proposition}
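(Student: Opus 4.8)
The plan is to prove Proposition \ref{prop:extensio} by a fixed point argument in the Banach space $\YY_\sigma\times\YY_\sigma$, treating the parameterization of the perturbed manifold as a perturbation of $(q_0,p_0)$ and solving the linear-plus-higher-order equation \eqref{eq:PDEParametritzacionsLinealitzat}. The key point is that, unlike in the Hamilton-Jacobi formulation, here we must invert the operator $\LL_\eps-A(u)$ rather than just $\LL_\eps$, which is exactly what the operator $\wh\GG_\eps$ in \eqref{def:operadorGbarraExtensio} does via the fundamental matrix $\Phi$ of the variational equation. Since the domain $\wt D^{\out,u}_{\rr_4,d_0,\kk_1}$ stays at distance $\OO(1)$ from the singularities $\pm ia$ (it is cut off by $|\Im u|<-\tan\beta_2\Re u+a-d_0/2$), $\Phi$ and $\Phi^{-1}$ are bounded there, so Lemma \ref{lemma:PropietatsGbarraExtensio} applies and no singular estimates are needed: this is, as the text says, a classical perturbative result.

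First I would record the estimates on the terms of $\KK$ in \eqref{def:ExtensionOperator}: namely that $L\in\YY_\sigma\times\YY_\sigma$ with $\|L\|_\sigma\le K|\mu|\eps^{\eta}$ (using \textbf{HP3}, the bounds on $\xp,\yp$ from Theorem \ref{th:Periodica}, and Corollary \ref{coro:ShiftPeriodica} for the $c_{kl}$); that the linear coefficient $M_1+M_2$ has norm $\le K|\mu|\eps^\eta$, with $M_1$ of zero average (since $\wh H_1^1$ has zero average); and that $N$ is quadratic, so $\|N(\xi_2)-N(\xi_1)\|_\sigma\le K\nu\|\xi_2-\xi_1\|_\sigma$ on a ball of radius $\nu\ll1$. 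Because the linear coefficient is only $\OO(|\mu|\eps^\eta)$ and hence \emph{not} small when $\eta=0$, I would, just as in the proofs of Lemmas \ref{lemma:HJ:hyp:ligual} and \ref{lemma:HJtoParam:ligual}, first perform a preliminary near-identity change of variables of the form $\xi=(\Id+\wh N)\ol\xi$ with $\wh N=\wh\GG_\eps(M_1)$ (well defined and of size $\OO(|\mu|\eps^{\eta+1})$ by the zero-average part of Lemma \ref{lemma:PropietatsGbarraExtensio}), which absorbs the dangerous part of the linear term and turns the remaining linear coefficient into something of size $\OO(|\mu|\eps^{\eta+1})$.

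After that change, $\ol\xi$ solves $(\LL_\eps-A)\ol\xi=\KK^*(\ol\xi)$ with $\KK^*$ having a constant term of size $\OO(|\mu|\eps^\eta)$, a linear coefficient of size $\OO(|\mu|\eps^{\eta+1})$, and a quadratic remainder; I would then set up the fixed point operator $\ol\KK=\wh\GG_\eps\circ\KK^*$, choosing the integration endpoints in $\GG_\eps$ to be the vertices $u_1,\ol u_1$ of $\wt D^{\out,u}_{\rr_4,d_0,\kk_1}$ and $-\rr_4$, and prove that for $\eps$ small it maps the ball $\ol B(b_5|\mu|\eps^{\eta+1})$ into itself and is a contraction there, by the usual splitting $\|\ol\KK(0)\|_\sigma\le\tfrac{b_5}{2}|\mu|\eps^{\eta+1}$ and $\Lip\ol\KK\le\tfrac12$, using Lemma \ref{lemma:PropietatsGbarraExtensio} and Lemma \ref{lemma:Extensio:MatrixNorms}. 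The unique fixed point, after undoing the change $\xi=(\Id+\wh N)\ol\xi$, gives $(Q_1,P_1)$ with the stated bound. Finally, to check that it is the analytic continuation of the functions from Corollary \ref{coro:ParameterizationOuter:Tecnic}: on the overlap $\tro^u_{\rr_3,\rr_4}\cap\wt D^{\out,u}_{\rr_4,d_0,\kk_1}$ both objects are fixed points of (a restriction of) the same operator with the same boundary data, so by uniqueness they coincide there, and hence one is the analytic extension of the other. The only mild subtlety — which I expect to be the main technical obstacle — is bookkeeping the domains so that the integration paths defining $\GG_\eps$ stay inside $\wt D^{\out,u}_{\rr_4,d_0,\kk_1}$ and so that the preliminary change of variables is invertible on the whole domain; once the geometry of the boomerang/outer domains is set up carefully this is routine, and everything else reduces to the estimates in Lemmas \ref{lemma:PropietatsGbarraExtensio} and \ref{lemma:Extensio:MatrixNorms}.
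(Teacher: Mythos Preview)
Your proposal follows essentially the same strategy as the paper (Lemma \ref{lemma:Extension:cotes} for the estimates on $L,M_1,M_2,N$, a preliminary change of variables to absorb the non-small linear part when $\eta=0$, then a contraction argument using $\wh\GG_\eps$), and the general outline is correct. Two points deserve attention.

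First, the change of variables. The paper defines $\overM$ in \eqref{def:M1barra:Extensio} componentwise by the \emph{scalar} operator $\GG_\eps$, i.e.\ $\LL_\eps\overM=M_1$, not via $\wh\GG_\eps$. Your $\wh N=\wh\GG_\eps(M_1)$ is a slight type mismatch (since $\wh\GG_\eps$ acts on vectors, not matrices), and even interpreted columnwise it satisfies $(\LL_\eps-A)\wh N=M_1$ rather than $\LL_\eps\wh N=M_1$; the resulting residual linear term differs (you get a $-\wh N A$ rather than the commutator $A\overM-\overM A$ in \eqref{def:Extensio:Mbarra}). Both residuals are $\OO(|\mu|\eps^{\eta+1})$ since $A$ is bounded on $\wt D^{\out,u}_{\rr_4,d_0,\kk_1}$, so your variant would work, but it is not quite what the paper does.

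Second, and more substantively, your fixed point operator $\ol\KK=\wh\GG_\eps\circ\KK^*$ is missing the initial condition term $L_0$ of \eqref{def:ExtensioCondInicial}, which the paper includes as $\ol\KK=L_0+\wh\GG_\eps\circ\wh\KK$ (see \eqref{def:ExtensionFullOperator} and Lemma \ref{lemma:Extensio:CondInicial}). Without $L_0$, your fixed point is merely \emph{some} solution of the PDE, namely the one whose Fourier coefficients vanish at the vertices $u_1,\ol u_1,-\rr_4$. The old $(Q_1,P_1)$ from Corollary \ref{coro:ParameterizationOuter:Tecnic} has no reason to satisfy those boundary conditions, so it is not a fixed point of your operator, and the sentence ``both objects are fixed points of the same operator with the same boundary data'' is not justified. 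The paper's $L_0$ is precisely what carries the boundary values of the previously constructed $\ol\xi$ into the new integral equation, so that uniqueness of the fixed point in the ball yields analytic continuation, exactly as in the proof of Lemma \ref{lemma:Extensio:Trig:General}. You should add this term (and check, as the paper does, that $\|L_0\|_\sigma\le K|\mu|\eps^{\eta+1}$).
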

Before proving the proposition, we state and prove the following
technical lemma.
\begin{lemma}\label{lemma:Extension:cotes}
The functions $L$, $M_1$, $M_2$ and $N$ defined in
\eqref{def:Extension:L}, \eqref{def:Extension:M1},
\eqref{def:Extension:M2} and  \eqref{def:Extension:N} respectively,
have the following properties,
\begin{enumerate}
\item $L\in \YY_{\sigma}\times\YY_{\sigma}$ and
satisfies
\[
\| L \|_{\sigma}\leq K|\mu|\eps^{\eta},\qquad
\| \wh\GG_\eps(L)\|_{\sigma}\leq K|\mu|\eps^{\eta+1}.
\]
\item $M_1=\left(m_1^{ij}\right)$ and $M_2=\left(m_2^{ij}\right)$ satisfy
$m_1^{ij},m_2^{ij}\in \YY_{\sigma}\times\YY_{\sigma}$, $\langle M_1\rangle=0$
and
\[
\|M_1\|_{\sigma}\leq K|\mu|\eps^{\eta},\qquad
\|M_2\|_{\sigma}\leq K|\mu|^2\eps^{2\eta+1}.
\]
\item If $\xi,\xi'\in B(\nu)\subset \YY_{\sigma}\times\YY_{\sigma}$, then
\[
\left\| N(\xi')-N(\xi)\right\|_{\sigma}\leq K\nu\left\|
\xi'-\xi\right\|_{\sigma}.
\]
\end{enumerate}
\end{lemma}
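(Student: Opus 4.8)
The plan is to establish the three items of Lemma~\ref{lemma:Extension:cotes} by unwinding the definitions of $L$, $M_1$, $M_2$, $N$ in \eqref{def:Extension:L}--\eqref{def:Extension:N} and then applying the estimates on the Hamiltonian and the periodic orbit already available in the excerpt, together with the mapping properties of $\wh\GG_\eps$ from Lemma~\ref{lemma:PropietatsGbarraExtensio}. The key point to exploit is that the outer domain $\wt D^{\out,u}_{\rr,d,\kk}$ appearing in the definition of the Banach space $\YY_\sigma$ (see \eqref{def:DominOuterParam}) stays at an $\OO(1)$ distance from the singularities $\pm ia$: indeed it is obtained by intersecting $D^{\out,u}_{\rr,\kk}$ with the half-plane cutting off a strip of width $d/2$ below the singularity. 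Hence $q_0(u)$, $p_0(u)$ and all the functions built from them (like $\wh H_1^1(q_0(u),p_0(u),\tau)$ and its derivatives) are bounded on $\wt D^{\out,u}_{\rr,d,\kk}\times\TT_\sigma$ by constants independent of $\eps$ and $\mu$; there is no need to track the singular behaviour $|u^2+a^2|^{-\ell-1}$ here, which is why the norm $\|\cdot\|_\sigma$ in \eqref{def:FourierNorm} is the plain Fourier--sup norm. This is precisely the remark in the text that ``to extend the parameterizations $(Q^{u,s},P^{u,s})$ to these new domains has no technical difficulties since they are far from the singularities.''

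First I would treat $L$. By \eqref{def:Extension:L}, $L$ is the sum of $\mu\eps^\eta$ times a vector built from first derivatives of $\wh H_1$ evaluated on the unperturbed separatrix, plus the vector $(0,G(0)(u,\tau))$ with $G$ given in \eqref{def:Extensio:G}. For the first piece, $\wh H_1=\wh H_1^1+\eps\wh H_1^2$; since $\partial_q\wh H_1^1,\partial_p\wh H_1^1$ are polynomials (resp.\ trigonometric polynomials) in $(q,p)$ with zero-mean coefficients $a_{kl}(\tau;\eps)$ satisfying $\|a_{kl}\|_\sigma<K$, and $q_0,p_0$ are bounded on $\wt D^{\out,u}_{\rr,d,\kk}$, we get $\|\mu\eps^\eta\partial_{q,p}\wh H_1^1(q_0,p_0,\cdot)\|_\sigma\le K|\mu|\eps^\eta$; the $\eps\wh H_1^2$ contribution carries an extra $\eps$ and the bounds $\|c_{kl}\|_\sigma\le K|\mu|\eps^\eta$ from Corollary~\ref{coro:ShiftPeriodica}, so it is $\OO(|\mu|^2\eps^{2\eta+1})$, hence absorbed. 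For $G(0)(u,\tau)=-(V'(\xp(\tau)+q_0(u))-V'(\xp(\tau))-V'(q_0(u)))$, one uses the second-order Taylor expansion in $\xp(\tau)$, which is $\OO(\xp(\tau)^2)=\OO((|\mu|\eps^{\eta+1})^2)$ by Theorem~\ref{th:Periodica}; actually even the first-order term in $\xp$ is $\OO(|\mu|\eps^{\eta+1})$, which is already smaller than $K|\mu|\eps^\eta$. This gives $\|L\|_\sigma\le K|\mu|\eps^\eta$. For the $\wh\GG_\eps(L)$ bound I would split $L=L_0+(L-L_0)$ where $L_0$ collects the zero-mean terms (the $\partial_{q,p}\wh H_1^1$ piece has zero $\tau$-average because $\langle\wh H_1^1\rangle=0$) and $L-L_0$ is already $\OO(|\mu|\eps^{\eta+1})$; applying the second (improving) estimate of Lemma~\ref{lemma:PropietatsGbarraExtensio} to $L_0$ and the first to $L-L_0$ yields $\|\wh\GG_\eps(L)\|_\sigma\le K|\mu|\eps^{\eta+1}$.

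For item (2), $M_1$ in \eqref{def:Extension:M1} is $\mu\eps^\eta$ times a matrix of second derivatives of $\wh H_1^1$ on the separatrix; these second derivatives are again (trigonometric) polynomials in $(q_0,p_0)$ with zero-mean coefficients, so $\langle M_1\rangle=0$ and $\|M_1\|_\sigma\le K|\mu|\eps^\eta$ entrywise, hence in the induced matrix norm of Lemma~\ref{lemma:Extensio:MatrixNorms}. Similarly $M_2$ in \eqref{def:Extension:M2} carries the extra factor $\eps$ and involves $\wh H_1^2$ whose coefficients $c_{kl}$ are $\OO(|\mu|\eps^\eta)$ by Corollary~\ref{coro:ShiftPeriodica}, giving $\|M_2\|_\sigma\le K|\mu|^2\eps^{2\eta+1}$. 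For item (3), $N(\xi)=\KK(\xi)-L-(M_1+M_2)\xi$ is, by Taylor's theorem, quadratic in $\xi$: it collects the remainder of the Taylor expansions of $\partial_q\wh H_1$, $\partial_p\wh H_1$ and of $V'$ around the separatrix. Writing $N(\xi')-N(\xi)$ as an integral of the Hessian (third derivatives of $\wh H_1$ and $V$) along the segment joining $\xi$ and $\xi'$, and using that $\YY_\sigma$ is a Banach algebra together with the boundedness of those derivatives on $\wt D^{\out,u}_{\rr,d,\kk}$ for $\xi,\xi'\in B(\nu)$, one obtains the Lipschitz bound $\|N(\xi')-N(\xi)\|_\sigma\le K\nu\|\xi'-\xi\|_\sigma$. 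The only mildly delicate point is bookkeeping the $\eps$-powers so that the $\wh H_1^2$ (hence $c_{kl}$) contributions are genuinely of lower order; this is routine given the already-available estimates and, unlike the corresponding lemmas near the singularity, here there are no negative powers of $|u^2+a^2|$ to control, so no real obstacle arises.
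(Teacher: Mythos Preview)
Your proposal is correct and follows essentially the same route as the paper. The paper's proof splits $L=L_1+L_2+L_3$ with $L_1$ the $\wh H_1^1$-piece (zero mean, size $|\mu|\eps^\eta$), $L_2$ the $\wh H_1^2$-piece (size $|\mu|^2\eps^{2\eta+1}$ via Corollary~\ref{coro:ShiftPeriodica}), and $L_3=(0,G(0))^T$, bounded via the double-integral mean value identity $G(0)=-q_0\,x_p\int_0^1\int_0^1 V'''(s_1x_p+s_2q_0)\,ds_1ds_2$ to size $|\mu|\eps^{\eta+1}$; then Lemma~\ref{lemma:PropietatsGbarraExtensio} yields the $\wh\GG_\eps(L)$ bound exactly as in your zero-mean/remainder split, and items (2)--(3) are declared straightforward. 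Your treatment of $G(0)$ via Taylor in $x_p$ is equivalent (the leading term is $(V''(q_0)-V''(0))x_p=\OO(|\mu|\eps^{\eta+1})$, not $\OO(x_p^2)$ as you first wrote, but you catch this yourself), and your emphasis on $\wt D^{\out,u}_{\rr,d,\kk}$ being $\OO(1)$ away from $\pm ia$ is exactly the reason the paper calls this step ``classical perturbative''.
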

\begin{proof}
For the first statement let us split $L$ as $L=L_1+L_2+L_3$ with
\[
L_i(u,\tau)=\left(\begin{array}{c}\mu\eps^{\eta+i-1}\pa_p \wh
H_1^i(q_0(u),p_0(u),\tau)\\
-\mu\eps^{\eta+i-1}\pa_q \wh
H_1^i(q_0(u),p_0(u),\tau)\end{array}\right),\,\,i=1,2
\]
and
\[
L_3(u,\tau)=\left(\begin{array}{c}0\\
G(0)(u,\tau)\end{array}\right),
\]
 where $\wh H_1^1$, $\wh H_1^2$ and $G$ are the functions defined in
\eqref{def:HamPertorbat:H1}, \eqref{def:HamPertorbat:H2} and
\eqref{def:Extensio:G} respectively. One can easily see that
$L_1,L_2\in \YY_{\sigma}\times\YY_{\sigma}$,  $\langle L_1\rangle=0$
and $\|L_1\|_{\sigma}\leq K|\mu|\eps^{\eta}$ and, using Corollary
\ref{coro:ShiftPeriodica}, also that $\|L_2\|_{\sigma}\leq
K|\mu|^2\eps^{2\eta+1}$. Thus, applying Lemma
\ref{lemma:PropietatsGbarraExtensio} one obtains
$\|\wh\GG_\eps(L_i)\|_{\sigma}\leq K|\mu|\eps^{\eta+1}$ for $i=1,2$.

To obtain analogous properties for $L_3$, it is enough to apply Mean
Value Theorem to obtain
\[
L_3(u,\tau)=\left(\begin{array}{c} 0\\\dps
-\int_0^1V^{'''}\left(s_1x_p(\tau)+s_2q_0(u)\right)ds_1ds_2
q_0(u)x_p(\tau)\end{array}\right).
\]
Then, $\|L_3\|_{\sigma}\leq K|\mu|\eps^{\eta+1}$. Therefore,
applying Lemma \ref{lemma:PropietatsGbarraExtensio} we have that
$\|\wh\GG_\eps(L_3)\|_{\sigma}\leq K|\mu|\eps^{\eta+1}$. This
finishes the proof of the first statement.

The proof of the other statements is straightforward.
\end{proof}

To prove Proposition \ref{prop:extensio}, first one has to perform a
change of variables to equation
\eqref{eq:PDEParametritzacionsLinealitzat} to obtain a contractive
operator. In fact, this change is only necessary in the case
$\eta=0$. Let us consider
\begin{equation}\label{def:M1barra:Extensio}
\overM=\left(\ol m_1^{ij}\right)\qquad\text{ with }\qquad \ol
m_1^{ij}=\GG_\eps\left(m_1^{ij}\right),
\end{equation}
where $\GG_\eps$ is the operator defined in
\eqref{def:operadorG:HJtoParam} and $M_1=\left(m_1^{ij}\right)$ is
the matrix defined in \eqref{def:Extension:M1}. By Lemmas
\ref{lemma:Extension:cotes} and \ref{lemma:PropietatsGExtensio}, one
can see that
\begin{equation}\label{eq:Extensio:cotaMbarra}
\|\overM\|_{\sigma}\leq K|\mu|\eps^{\eta+1}.
\end{equation}
We consider the change of variables
\begin{equation}\label{def:extensio:xibarra}
\xi=\left(\Id+\overM\right)\overline\xi
\end{equation}
which is invertible. Using
\eqref{eq:PDEParametritzacionsLinealitzat} and
\eqref{def:extensio:xibarra}, $\overline\xi$ is solution of equation
\begin{equation}\label{eq:PDE:extensio:xibarra}
\left(\LL_\eps-A(u)\right)\overline\xi=\wh\KK(\overline \xi),
\end{equation}
where
\begin{equation}\label{eq:Extensio:OperadorRHS:xibarra}
\wh\KK(\overline \xi) =\wh L+\wh M \ol \xi+\wh N\left(\ol\xi\right)
\end{equation}
with
\begin{align}
\wh L&=\dps\left(\Id+\overM\right)\ii L \label{def:Extensio:Lbarra}\\
\wh M&=\left(\Id+\overM\right)\ii\left(M_1\overM+ A\overM -\overM
A+M_2\left(\Id+\overM\right)\right)\label{def:Extensio:Mbarra}\\
\wh N(\ol \xi)&=\dps\left(\Id+\overM\right)\ii
N\left(\left(\Id+\overM\right)\ol\xi\right)\label{def:Extensio:Nbarra}.
\end{align}

Since we want to obtain the analytic continuation of the
parameterizations of the manifolds obtained in Corollary
\ref{coro:ParameterizationOuter:Tecnic}, we need to impose
\emph{initial conditions}. Nevertheless, since we invert
$\LL_\eps-A(u)$  by using the operator $\wh\GG_\eps$ in
\eqref{def:operadorGbarraExtensio} which is defined acting on the
Fourier coefficients, we need to consider a different initial
condition depending on the Fourier coefficient, that is in $u_1$ or
in $\bar u_1$ (see Figure \ref{fig:OuterParam}). Thus, we define the
following function
\begin{equation}\label{def:ExtensioCondInicial}
\begin{split}
L_0(v,\tau)=&\sum_{k<0}\Phi(v)\Phi\ii(\ol v_1)\ol \xi^{[k]}\left(\ol
v_1\right)e^{-ik\eps\ii (v-\bar v_1)}e^{ik\tau}
\\
&+\sum_{k\geq 0}\Phi(v)\Phi\ii( v_1)\ol \xi^{[k]}\left(
v_1\right)e^{-ik\eps\ii
(v-v_1)}e^{ik\tau}\\
&+\Phi(v)\Phi\ii\left( -\rr_4\right)\xi^{[0]}\left(-\rr_4\right).
\end{split}
\end{equation}
Recall that $\ol\xi(v,\tau)$ is already known for $v=v_1,\ol
v_1,-\rr_4$ using \eqref{def:extensio:xibarra},
\eqref{def:M1barra:Extensio} and Corollary
\ref{coro:ParameterizationOuter:Tecnic}.

\begin{lemma}\label{lemma:Extensio:CondInicial}
The function $L_0(u,\tau)$ in \eqref{def:ExtensioCondInicial}
satisfies de following properties:
\begin{itemize}
\item $\left(\LL_\eps-A(v)\right) L_0=0$, where $\LL_\eps$ is the operator in
\eqref{def:Lde}.
\item $L_0\in
\YY_{\sigma}\times\YY_{\sigma}$ and
\[\|L_0\|_{\sigma}\leq K|\mu|\eps^{\eta+1}.\]
\end{itemize}
\end{lemma}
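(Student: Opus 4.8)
Lemma \ref{lemma:Extensio:CondInicial} asserts two things about the function $L_0$ defined in \eqref{def:ExtensioCondInicial}: that it lies in the kernel of $\LL_\eps-A(v)$, and that it satisfies the exponential-type bound $\|L_0\|_\sigma\leq K|\mu|\eps^{\eta+1}$. The plan is to treat each assertion separately, since they have rather different flavours: the first is a direct computation exploiting the structure of the fundamental matrix $\Phi$, while the second is a bookkeeping estimate that ultimately relies on the bounds already established in Corollary \ref{coro:ParameterizationOuter:Tecnic} and the definition \eqref{def:extensio:xibarra} of $\ol\xi$ together with \eqref{eq:Extensio:cotaMbarra}.

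For the first item I would argue Fourier coefficient by Fourier coefficient. Each summand of $L_0$ has the form $v\mapsto \Phi(v)\,c_k\,e^{-ik\eps\ii v}\,e^{ik\tau}$, where $c_k\in\CC^2$ is a constant vector (namely $\Phi\ii(\ol v_1)\ol\xi^{[k]}(\ol v_1)$, $\Phi\ii(v_1)\ol\xi^{[k]}(v_1)$, or $\Phi\ii(-\rr_4)\xi^{[0]}(-\rr_4)$ depending on the sign of $k$), and the constant term $k=0$ has no exponential factor. Applying $\LL_\eps=\eps\ii\pa_\tau+\pa_v$ to such a summand, the $\eps\ii\pa_\tau$ part produces $\eps\ii (ik)\Phi(v)c_k e^{-ik\eps\ii v}e^{ik\tau}$, while $\pa_v$ produces $\big(\Phi'(v)-ik\eps\ii\Phi(v)\big)c_k e^{-ik\eps\ii v}e^{ik\tau}$; the two terms $\pm ik\eps\ii\Phi(v)$ cancel, leaving $\Phi'(v)c_k e^{-ik\eps\ii v}e^{ik\tau}$. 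Since $\Phi$ is a fundamental matrix solution of the variational equation \eqref{eq:variacional}, $\Phi'(v)=A(v)\Phi(v)$, hence this equals $A(v)\Phi(v)c_k e^{-ik\eps\ii v}e^{ik\tau}$, which is precisely $A(v)$ applied to the corresponding summand of $L_0$. Summing over $k$ gives $\LL_\eps L_0=A(v)L_0$, i.e. $(\LL_\eps-A(v))L_0=0$. (For $k=0$ one checks directly that $\pa_v(\Phi(v)c_0)=A(v)\Phi(v)c_0$ by the same identity.)

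For the second item, the key point is that $L_0$ is, essentially by construction, built from the boundary data of $\ol\xi$ on the three distinguished points $v_1,\ol v_1,-\rr_4$, all of which lie in the domain where the estimate of Corollary \ref{coro:ParameterizationOuter:Tecnic} applies (after passing through the change of variables \eqref{def:extensio:xibarra} and using \eqref{eq:Extensio:cotaMbarra} to see that $\|\ol\xi\|_\sigma$ inherits the bound $K|\mu|\eps^{\eta+1}$ from $\|(Q_1,P_1)\|_\sigma$, since $\Id+\overM$ is invertible with inverse $\Id+\OO(|\mu|\eps^{\eta+1})$). Concretely, I would bound $\|L_0\|_\sigma$ by summing the Fourier-weighted sup-norms of the three groups of terms. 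For the $k<0$ group, $\big\|\Phi(v)\Phi\ii(\ol v_1)\ol\xi^{[k]}(\ol v_1)e^{-ik\eps\ii(v-\ol v_1)}\big\|_\infty e^{|k|\sigma}$ is controlled using the boundedness of $\Phi$ and $\Phi\ii$ on the relevant domain, the Cauchy-type estimate $|\ol\xi^{[k]}(\ol v_1)|\leq \|\ol\xi\|_\sigma e^{-|k|\sigma}$, and the crucial observation that $|e^{-ik\eps\ii(v-\ol v_1)}|\leq e^{|k|\sigma/2}$ (say) for $v$ in $\wt D^{\out,u}_{\rr,d,\kk}$ — this is exactly where the geometry of the outer domain relative to the vertex $\ol v_1$ enters, ensuring the exponential from the oscillatory factor does not overwhelm the gain from the Fourier weights; one absorbs the loss by working with $\sigma$ slightly smaller than $\sigma_0$, as is done throughout the paper. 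The $k>0$ group is treated symmetrically using $v_1$, and the $k=0$ term is bounded directly by $\|\Phi(v)\Phi\ii(-\rr_4)\|_\infty\,|\xi^{[0]}(-\rr_4)|\leq K|\mu|\eps^{\eta+1}$. Summing a geometric series in the Fourier index closes the estimate.

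The main obstacle here is the estimate of the oscillatory factors $e^{-ik\eps\ii(v-v_1)}$ and $e^{-ik\eps\ii(v-\ol v_1)}$: one must verify that for every $v$ in the domain $\wt D^{\out,u}_{\rr,d,\kk}$ the real part of $-ik\eps\ii(v-v_{1})$ (for $k>0$) and of $-ik\eps\ii(v-\ol v_1)$ (for $k<0$) stays below a suitable multiple of $|k|\sigma/\eps$ — equivalently, that the imaginary part of $v-v_1$ (resp. $v-\ol v_1$) has the right sign and is not too large. This is a purely geometric fact about how the domain sits relative to its vertices, analogous to the estimates underlying Lemma \ref{lemma:PropietatsGExtensio}, and it is where the shape of the outer domains (the angle $\beta_1$ and the fact that the domain opens away from the singularities) is used. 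Once this geometric lemma is in hand, the rest of the bound is routine summation and the first assertion is a one-line computation with the fundamental matrix.
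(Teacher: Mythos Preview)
Your proposal is correct and follows the natural approach; the paper states the lemma without giving a proof, treating it as a straightforward consequence of the variational-equation identity $\Phi'(v)=A(v)\Phi(v)$ together with the bounds on $\ol\xi$ at the boundary points coming from Corollary~\ref{coro:ParameterizationOuter:Tecnic} and \eqref{eq:Extensio:cotaMbarra}.

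One small simplification in your second item: the estimate on the oscillatory factors is cleaner than you suggest. The vertices $v_1$ and $\ol v_1$ are chosen as the points of the domain $\wt D^{\out,u}_{\rr,d,\kk}$ with maximal and minimal imaginary part respectively, so for every $v$ in that domain one has $\Im(v-v_1)\leq 0$ and $\Im(v-\ol v_1)\geq 0$. Hence $|e^{-ik\eps\ii(v-v_1)}|\leq 1$ for $k>0$ and $|e^{-ik\eps\ii(v-\ol v_1)}|\leq 1$ for $k<0$, so there is no loss to absorb and no need to shrink $\sigma$. This is exactly the same geometric fact that underlies Lemmas~\ref{lemma:PropietatsGExtensio} and~\ref{lemma:PropietatsGbarraExtensio}, and with it the bound $\|L_0\|_\sigma\leq K\|\Phi\|_\infty\|\Phi^{-1}\|_\infty\|\ol\xi\|_\sigma\leq K|\mu|\eps^{\eta+1}$ follows directly by summing the Fourier series.
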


The function $\ol\xi$ satisfies equation
\eqref{eq:PDE:extensio:xibarra} and the initial conditions on the
Fourier coefficients $L_0$ in \eqref{def:ExtensioCondInicial} if and
only if it is solution of the integral equation
\[
\left(\begin{array}{l}Q_1\\P_1\end{array}\right)=L_0+\wh\GG_\eps
\circ \KK \left(\begin{array}{l}Q_1\\P_1\end{array}\right),
\]
where $\wh\GG_\eps$ and  $\KK$ are the operators defined in
\eqref{def:operadorGbarraExtensio} and \eqref{def:ExtensionOperator}
respectively. Thus, we look for a fixed point
$\xi=(Q_1,P_1)\in\YY_{\sigma}\times\YY_{\sigma}$ of the operator
\begin{equation}\label{def:ExtensionFullOperator}
\ol\KK= L_0+\wh\GG_\eps\circ \wh\KK.
\end{equation}
Therefore, Proposition \ref{prop:extensio} is a straightforward
consequence of the following lemma.

\begin{lemma}\label{lemma:Extension_lmajor}
Let $\eps_0>0$ be small enough. Then, for
$\eps\in(0,\eps_0)$, there exists a function
$\ol\xi\in\YY_{\sigma}\times\YY_{\sigma}$ defined in $\wt D^{\out,
u}_{\rr_4,d_0,\kk_1}\times \TT_\sigma$ such that is a fixed point of
the operator \eqref{def:ExtensionFullOperator} and satisfies
\[
\left\|\ol\xi\right\|_{\sigma}\leq b_5|\mu|\eps^{\eta+1}.
\]
for a certain constant $b_5>0$ independent of $\eps$ and $\mu$.
Moreover, $\xi=(\Id+\overM)\overline\xi$, where $\overM$ is the
function defined in \eqref{def:M1barra:Extensio}, is the analytic
continuation of the function $\xi=(Q_1,P_1)$ obtained in Corollary
\ref{coro:ParameterizationOuter:Tecnic}.
\end{lemma}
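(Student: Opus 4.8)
The plan is to prove Lemma \ref{lemma:Extension_lmajor} by a standard fixed point argument for the operator $\ol\KK$ defined in \eqref{def:ExtensionFullOperator}, acting on a suitable ball of the Banach space $\YY_\sigma\times\YY_\sigma$. First I would check that $\ol\KK$ maps $\YY_\sigma\times\YY_\sigma$ into itself; this follows from Lemma \ref{lemma:PropietatsGbarraExtensio} (boundedness of $\wh\GG_\eps$), Lemma \ref{lemma:Extensio:CondInicial} (that $L_0\in\YY_\sigma\times\YY_\sigma$), Lemma \ref{lemma:Extensio:MatrixNorms} (matrix--vector and matrix--matrix estimates in the induced norm), and Lemma \ref{lemma:Extension:cotes} (estimates on $L$, $M_1$, $M_2$, $N$, hence on $\wh L$, $\wh M$, $\wh N$ in \eqref{def:Extensio:Lbarra}--\eqref{def:Extensio:Nbarra} together with bound \eqref{eq:Extensio:cotaMbarra} for $\overM$).

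The core of the argument is to estimate $\ol\KK(0)$ and the Lipschitz constant of $\ol\KK$ on the ball $\ol B(b_5|\mu|\eps^{\eta+1})$. For $\ol\KK(0)$, I would write $\ol\KK(0)=L_0+\wh\GG_\eps(\wh L)$; by Lemma \ref{lemma:Extensio:CondInicial} we have $\|L_0\|_\sigma\le K|\mu|\eps^{\eta+1}$, and for $\wh\GG_\eps(\wh L)$ one splits $\wh L=(\Id+\overM)\ii L$ and uses that $\langle L_1\rangle=0$ (where $L=L_1+L_2+L_3$ as in the proof of Lemma \ref{lemma:Extension:cotes}), so that the extra factor of $\eps$ provided by the second statement of Lemma \ref{lemma:PropietatsGbarraExtensio} kicks in for the part with zero mean, while $L_2$ and $L_3$ are already $\OO(|\mu|\eps^{\eta+1})$; together with \eqref{eq:Extensio:cotaMbarra} this gives $\|\ol\KK(0)\|_\sigma\le \tfrac{b_5}{2}|\mu|\eps^{\eta+1}$ for a suitable $b_5$. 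For the contraction estimate, taking $\xi^1,\xi^2\in\ol B(b_5|\mu|\eps^{\eta+1})$ and using $\wh\KK(\xi^2)-\wh\KK(\xi^1)=\wh M(\xi^2-\xi^1)+\wh N(\xi^2)-\wh N(\xi^1)$, the bounds $\|\wh M\|_\sigma\le K|\mu|\eps^{\eta+1}$ (from \eqref{def:Extensio:Mbarra}, Lemma \ref{lemma:Extension:cotes} and \eqref{eq:Extensio:cotaMbarra}; note the potentially dangerous linear term $M_1\xi$ has been absorbed by the change \eqref{def:extensio:xibarra}, which is precisely why it is needed when $\eta=0$) and the Lipschitz estimate for $\wh N$ (from the third statement of Lemma \ref{lemma:Extension:cotes}) give $\Lip\ol\KK\le K|\mu|\eps^{\eta+1}<1/2$ for $\eps$ small enough. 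Hence $\ol\KK$ has a unique fixed point $\ol\xi$ in the ball, and undoing the change \eqref{def:extensio:xibarra} gives $\xi=(\Id+\overM)\ol\xi$ with the stated bound, increasing $b_5$ slightly if necessary.

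It remains to show that $\xi=(Q_1,P_1)$ is the analytic continuation of the functions obtained in Corollary \ref{coro:ParameterizationOuter:Tecnic}. This is done exactly as in the corresponding step of the proof of Lemma \ref{lemma:Extensio:Trig:General}: both the newly constructed $\ol\xi$ and the restriction $\wt{\ol\xi}$ of (the $\overM$-transform of) the function from Corollary \ref{coro:ParameterizationOuter:Tecnic} are defined on the overlap $\left(\tro^u_{\rr_3,\rr_4}\cap \wt D^{\out,u}_{\rr_4,d_0,\kk_1}\right)\times\TT_\sigma$, both lie in a ball of radius $\OO(|\mu|\eps^{\eta+1})$ because of the bounds in Corollary \ref{coro:ParameterizationOuter:Tecnic}, and both are fixed points there of the operator \eqref{def:ExtensionFullOperator}; the initial conditions built into $L_0$ via \eqref{def:ExtensioCondInicial} were chosen precisely so that the values at $v_1$, $\ol v_1$ and $-\rr_4$ agree. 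Since $\Lip\ol\KK\le 1/2$ on this overlap as well, uniqueness of the fixed point forces $\ol\xi=\wt{\ol\xi}$ there, which yields the analytic continuation, and then Proposition \ref{prop:extensio} (hence Theorem \ref{th:Extensio}) follows at once.

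The main obstacle I anticipate is bookkeeping rather than conceptual: one must track the interplay between the weights/norms on the various nested outer domains (the $\rr_i$, $\kk_i$, $d_i$ hierarchy), make sure the operator $\GG_\eps$ entering $\overM$ and $L_0$ is the one adapted to the correct domain with the correct vertices, and verify that the extra factor $\eps$ from zero-mean terms really is available where it is needed so that the smallness required for the contraction survives the worst case $\eta=0$. The change of variables \eqref{def:extensio:xibarra} removing the non-small linear term $M_1$ is the crucial structural point, just as the analogous change was in Lemmas \ref{lemma:infty:ligual:canvi}, \ref{lemma:Parab:canvi} and \ref{lemma:Extensio:Trig:canvi}; once it is in place, all remaining estimates are routine applications of the technical lemmas already established.
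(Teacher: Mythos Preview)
Your proposal is correct and follows essentially the same approach as the paper. The only cosmetic difference is in bounding $\wh\GG_\eps(\wh L)$: the paper writes $\wh L = L - \overM(\Id+\overM)^{-1}L$ and invokes directly the precomputed bound $\|\wh\GG_\eps(L)\|_\sigma\le K|\mu|\eps^{\eta+1}$ from Lemma~\ref{lemma:Extension:cotes}, whereas you unpack that lemma by splitting $L=L_1+L_2+L_3$ and using $\langle L_1\rangle=0$; the substance is identical.
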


\begin{proof}
To prove the lemma, first we see that there exists a constant
$b_5>0$ such that the operator $\bar\KK$ in
\eqref{def:ExtensionFullOperator} is contractive from $\ol
B(b_5|\mu|\eps^{\eta+1})\subset \YY_{\sigma}\times\YY_{\sigma}$ to
itself and thus that it has a fixed point. Then, we will see that
 $\xi=(\Id+\overM)\overline\xi$, where $\overM$ is the
function defined in \eqref{def:M1barra:Extensio}, is the analytic
continuation of the parameterizations of the manifolds which have
been obtained in Corollary \ref{coro:ParameterizationOuter:Tecnic}.

Let us first consider $\ol\KK(0)$. Using the definitions of
$\ol\KK$, $\wh\KK$ and $\wh L$ in \eqref{def:ExtensionFullOperator},
\eqref{def:ExtensionOperator} and \eqref{def:Extensio:Lbarra}, we
have that
\[
\begin{split}
\ol\KK(0)&= L_0+\wh\GG_\eps\left(\wh L\right)\\
&= L_0+\wh\GG_\eps\left(
L\right)-\wh\GG_\eps\left(\overM\left(\Id+\overM\right)\ii L\right).
\end{split}
\]

From Lemmas~\ref{lemma:Extensio:CondInicial},
\ref{lemma:PropietatsGbarraExtensio} and
\ref{lemma:Extension:cotes}, and applying also  the bound of
$\overM$ in \eqref{eq:Extensio:cotaMbarra}, it is straightforward to
see that $\|\ol\KK(0)\|_{\sigma}\leq K|\mu|\eps^{\eta+1}$, and thus
there exists a constant $b_5>0$ such that
$\|\ol\KK(0)\|_{\sigma}\leq b_5|\mu|\eps^{\eta+1}/2$.

Let us consider now $\ol\xi^1,\ol\xi^2\in \ol B(
b_5|\mu|\eps^{\eta+1})\subset \YY_{\sigma}\times\YY_{\sigma}$. Then
using the definitions of $\ol\KK$ and $\wh\KK$ in
\eqref{def:ExtensionFullOperator} and
\eqref{eq:Extensio:OperadorRHS:xibarra}, and applying Lemma
\ref{lemma:PropietatsGbarraExtensio},
\begin{align*}
\left\|\ol\KK\left(\ol\xi^1\right)-\ol\KK\left(\ol\xi^2\right)\right\|_{\sigma}
\leq
&K\left\|\wh\KK\left(\ol\xi^1\right)-\wh\KK\left(\ol\xi^2\right)\right\|_{\sigma
}\\
\leq&\dps K\left\|\wh M\left(\ol \xi^2-\ol \xi^1\right)+\wh
N\left(\ol \xi^1\right)-\wh N\left(\ol
\xi^2\right)\right\|_{\sigma}.
\end{align*}
Then, using the definitions of $\wh M$ and $\wh N$ in
\eqref{def:Extensio:Mbarra} and \eqref{def:Extensio:Nbarra} and
applying Lemma \ref{lemma:Extension:cotes} and bound
\eqref{eq:Extensio:cotaMbarra}, one can see that
\[
\left\|\ol\KK\left(\ol\xi^1\right)-\ol\KK\left(\ol\xi^2\right)\right\|_{\sigma}
\leq
K|\mu|\eps^{\eta+1}\left\|\ol\xi^1-\ol\xi^2\right\|_{\sigma}.
\]
Therefore, reducing $\eps$ if necessary, $\Lip \ol\KK<1/2$ and then
$\ol\KK$ is contractive  from $\ol B( b_5|\mu|\eps^{\eta+1})\subset
\YY_{\sigma}\times\YY_{\sigma}$ to itself and it has a unique fixed
point $\ol\xi$.

To prove that $\xi=(\Id+\ol M_1)\ol\xi$ is the analytic continuation
of the function $\xi=(Q_1,P_1)$ obtained in Corollary
\ref{coro:ParameterizationOuter:Tecnic}, one can proceed as in the
proof of Lemma \ref{lemma:Extensio:Trig:General}.
\end{proof}

\begin{proof}[Proof of Proposition \ref{prop:extensio}]
It is enough to undo the change \eqref{def:extensio:xibarra}. For the
bound of $\xi=(Q_1,P_1)$ it is enough to consider the bound of
$\overM$ in \eqref{eq:Extensio:cotaMbarra} and the bound of $\ol\xi$
in Lemma \ref{lemma:Extension_lmajor} and increase slightly $b_5$ if
necessary.
\end{proof}

\subsubsection{Proof of Theorem \ref{th:ParamtoHJ}}\label{sec:Transicio}
This section is devoted to obtain a parameterization of the
invariant manifolds of the form \eqref{eq:ParameterizationHJ} in the
domains \eqref{def:DominisRaros}. To this end, we look for changes
of variables $v=u+\VV^{u,s}(u,\tau)$ which satisfy
\eqref{eq:CanviParamToHJ}.

Since the proof of Theorem \ref{th:ParamtoHJ} is analogous for both
invariant manifolds, we only deal with the unstable case and we omit
the superscript $u$ to simplify notation.

Writing $Q(v,\tau)=q_0(v)+Q_1(v,\tau)$, equation
\eqref{eq:CanviParamToHJ} reads
\[
q_0\left(u+\VV(u,\tau)\right)-q_0(u)=-Q_1\left(u+\VV(u,\tau),\tau\right).
\]
Taking into account that $\dot q_0(u)=p_0(u)$, to obtain a solution
of this equation is equivalent to obtain a fixed point of the
operator
\begin{equation}\label{def:Functional:Transition}
\NNN(h)(u,\tau)=-\frac{1}{p_0(u)}\left(Q_1\left(u+h(u,\tau),
\tau\right)+q_0\left(u+h(u,\tau)\right)-q_0(u)-p_0(u)h(u,\tau)\right).
\end{equation}

Let the function space
\begin{equation}\label{def:Transition:Banach}
\QQ_{\kk,d,\sigma}=\left\{h:
\tro_{\kk,d}^{\out,u}\times\TT_\sigma\rightarrow \CC;\,\,
\text{real-analytic}, \|h\|_{\kk,d,\sigma}<\infty\right\},
\end{equation}
where $\|\cdot\|_{\kk,d,\sigma}$ is the Fourier norm defined in
\eqref{def:FourierNorm} but applied to functions defined in
$\tro_{\kk,d}^{\out,u}\times\TT_\sigma$.

We split Theorem \ref{th:ParamtoHJ} in the following proposition and
corollary, which are written in terms of the Banach space defined in
\eqref{def:Transition:Banach}.

\begin{proposition}\label{prop:Transition}
Let us consider  the constant $\kk_1$ given in Proposition
\ref{prop:extensio}, $d_0>d_1>0$,  $\kk_2>\kk_1$ and $\eps_0>0$
small enough, which might depend on the previous constants. Then,
there exists a constant $b_6>0$ such that for $\eps\in(0,\eps_0)$
and $\kk_1$ and $\kk_2$ big enough, the operator $\NNN$ is
contractive from $\ol
B\left(b_6|\mu|\eps^{\eta+1}\right)\subset\QQ_{\sigma}$ to itself.

Then, $\NNN$ has a unique fixed point $\VV\in \ol
B\left(b_6|\mu|\eps^{\eta+1}\right)\subset\QQ_{\sigma}$, which
satisfies that $u+\VV(u,\tau)\in \tro_{\kk_1,d_0}^{\out,u}$ for
$(u,\tau)\in \tro_{\kk_2,d_1}^{\out,u}\times\TT_\sigma$.
\end{proposition}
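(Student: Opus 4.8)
The plan is to prove Proposition~\ref{prop:Transition} by a standard contraction mapping argument in the Banach space $\QQ_{\sigma}=\QQ_{\kk_2,d_1,\sigma}$, using the operator $\NNN$ defined in \eqref{def:Functional:Transition}. First I would record the key analytic input: on the transition domain $\tro^{\out,u}_{\kk,d}\subset D^{u}_{\kk,d}$ the function $p_0(u)$ does not vanish (this is exactly how the boomerang domains and their transition pieces were set up in \eqref{def:DominisRaros}, \eqref{def:Dominis:Trans:Raros}), and moreover $|p_0(u)|$ is bounded below by a positive constant independent of $\eps$ there, since the closest zeros are at a fixed distance. Hence $1/p_0(u)$ is bounded on $\tro^{\out,u}_{\kk_2,d_1}$. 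By Proposition~\ref{prop:extensio} (equivalently Theorem~\ref{th:Extensio}), the function $Q_1$ is defined and analytic on $\wt D^{\out,u}_{\rr_4,d_0,\kk_1}\times\TT_\sigma\supset\tro^{\out,u}_{\kk_1,d_0}\times\TT_\sigma$ with $\|Q_1\|_{\kk_1,d_0,\sigma}\leq b_5|\mu|\eps^{\eta+1}$; Cauchy estimates on a slightly smaller domain give the same bound (up to a constant depending only on $d_0-d_1$ and $\kk_2-\kk_1$, both with positive lower bounds) for $\pa_u Q_1$.

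Next I would estimate $\NNN(0)$: directly from \eqref{def:Functional:Transition}, $\NNN(0)(u,\tau)=-p_0(u)^{-1}Q_1(u,\tau)$, so $\|\NNN(0)\|_{\kk_2,d_1,\sigma}\leq K\|Q_1\|_{\kk_2,d_1,\sigma}\leq K b_5|\mu|\eps^{\eta+1}$, which fixes the choice $b_6$ so that $\|\NNN(0)\|_{\kk_2,d_1,\sigma}\leq \tfrac{b_6}{2}|\mu|\eps^{\eta+1}$. For the Lipschitz estimate, take $h_1,h_2\in\ol B(b_6|\mu|\eps^{\eta+1})$. Then $\NNN(h_2)-\NNN(h_1)$ is, up to the bounded factor $-1/p_0(u)$, the sum of $Q_1(u+h_2,\tau)-Q_1(u+h_1,\tau)$ and $q_0(u+h_2)-q_0(u+h_1)-p_0(u)(h_2-h_1)$. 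The first term is bounded by $\|\pa_u Q_1\|\cdot\|h_2-h_1\|\leq K|\mu|\eps^{\eta+1}\|h_2-h_1\|$ via the mean value theorem (keeping $u+h_j$ inside the domain $\tro^{\out,u}_{\kk_1,d_0}$ where $Q_1$ is defined — this is why $\kk_2>\kk_1$, $d_1<d_0$ and $\eps_0$ small are needed, so that the $\OO(\eps^{\eta+1})$ displacement does not leave the larger domain). The second term is $q_0(u+h_2)-q_0(u+h_1)-\dot q_0(u)(h_2-h_1)$, which by Taylor's theorem with the integral remainder equals $(h_2-h_1)\int_0^1(\dot q_0(u+h_1+s(h_2-h_1))-\dot q_0(u))\,ds$, hence is bounded by $K(\|h_1\|+\|h_2\|)\|h_2-h_1\|\leq K|\mu|\eps^{\eta+1}\|h_2-h_1\|$ using the bound on $\ddot q_0=\dot p_0$ on the slightly enlarged domain. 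Combining, $\Lip\NNN\leq K|\mu|\eps^{\eta+1}<\tfrac12$ for $\eps_0$ small, which together with the bound on $\NNN(0)$ gives that $\NNN$ maps $\ol B(b_6|\mu|\eps^{\eta+1})$ into itself and is a contraction there; the Banach fixed point theorem yields the unique $\VV$, and the bound $\|\VV\|\leq b_6|\mu|\eps^{\eta+1}$ immediately implies $u+\VV(u,\tau)\in\tro^{\out,u}_{\kk_1,d_0}$ for $(u,\tau)\in\tro^{\out,u}_{\kk_2,d_1}\times\TT_\sigma$.

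The main obstacle I anticipate is not the contraction estimate itself but the bookkeeping of domains: one must check that after the $\OO(\eps^{\eta+1})$-perturbation $u\mapsto u+h(u,\tau)$, the argument stays inside the domain $\tro^{\out,u}_{\kk_1,d_0}$ on which $Q_1$ (from Proposition~\ref{prop:extensio}) is available and on which $p_0$ is bounded away from zero, and similarly that the Cauchy/mean-value estimates for $\pa_u Q_1$, $\dot q_0$, $\ddot q_0$ are uniform in $\eps$. This forces the nested choice of constants $\kk_1<\kk_2$, $d_1<d_0$ and the smallness of $\eps_0$ in terms of these, and one must verify that $\tro^{\out,u}_{\kk_2,d_1}$ together with its $\OO(\eps^{\eta+1})$-neighborhood is contained in $\tro^{\out,u}_{\kk_1,d_0}$; this is geometric but elementary given the explicit definitions in \eqref{def:DominisRaros} and \eqref{def:Dominis:Trans:Raros}. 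Once $\VV$ is obtained, the passage to the generating function $T^u$ and its estimate (the second bullet of Theorem~\ref{th:ParamtoHJ}) follows from $\pa_u T^u(u,\tau)=p_0(u)P^u(u+\VV(u,\tau),\tau)$ together with the bounds on $P^u-p_0$ from Proposition~\ref{prop:extensio} and on $\VV$, using that $p_0$ and $\dot p_0$ are bounded on the domain; I would present this as a short corollary, exactly as in the statement of Corollary~\ref{coro:ParameterizationOuter:Tecnic}.
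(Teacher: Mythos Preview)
Your proof is essentially the paper's: both set up $\NNN$ as a contraction on $\ol B\!\left(b_6|\mu|\eps^{\eta+1}\right)\subset\QQ_\sigma$, bound $\NNN(0)=-p_0^{-1}Q_1$ via Proposition~\ref{prop:extensio}, and control the Lipschitz constant with the mean value theorem together with a Cauchy estimate on $\pa_v Q_1$.

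The one substantive difference is the smallness mechanism for the Lipschitz constant. You claim $\|\pa_u Q_1\|\le K|\mu|\eps^{\eta+1}$ from a Cauchy radius of order~$1$, so that contraction follows from $\eps_0$ small. The paper instead takes the Cauchy radius to be of order $\kk_1\eps$ (nesting $\tro^{\out,u}_{2\kk_1,d_0/2}\subset\tro^{\out,u}_{\kk_1,d_0}$), obtaining $\|\pa_v Q_1\|\le K|\mu|\eps^{\eta}/\kk_1$, and then makes $\NNN$ contractive by taking $\kk_1$ large --- which is precisely why the statement carries the clause ``$\kk_1$ and $\kk_2$ big enough''. Your justification that the radius is $\OO(1)$ because ``$\kk_2-\kk_1$ has a positive lower bound'' is loose: the parameter $\kk$ enters the domain definitions \eqref{def:DominisRaros}, \eqref{def:DominOuterParam}, \eqref{def:Dominis:Trans:Raros} only through the combination $\kk\eps$, so the corresponding boundary displacement is $(\kk_2-\kk_1)\eps$, not $\kk_2-\kk_1$. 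The paper's more conservative estimate is the one that is directly compatible with the stated hypotheses.
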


\begin{corollary}\label{coro:Transition:CotesGeneradora}
There exists a function
$T:\tro_{\kk_2,d_1}^{\out,u}\times\TT_\sigma\rightarrow\CC$ such
that
\[
\pa_uT(u,\tau)=p_0(u)P(u+\VV(u,\tau),\tau),
\]
where $P$ and $\VV$ are the functions obtained in Theorem
\ref{th:Extensio} and Proposition \ref{prop:Transition}
respectively, and satisfies equation \eqref{eq:HamJacGeneral}.
Moreover, it belongs to $\QQ_{\sigma}$ and satisfies
\[
\left\| \pa_u T-\pa_u T_0\right\|_{\kk_2,d_1,\sigma}\leq
b_7|\mu|\eps^{\eta+1}.
\]
for certain constant $b_7>0$.
\end{corollary}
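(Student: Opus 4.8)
\textbf{Proof strategy for Corollary \ref{coro:Transition:CotesGeneradora}.}
The plan is to deduce the existence and estimates for the generating function $T$ directly from the parameterization $(Q,P)$ provided by Theorem \ref{th:Extensio} and the change of variables $\VV$ given by Proposition \ref{prop:Transition}. First I would observe that, by Proposition \ref{prop:Transition}, the map $v=u+\VV(u,\tau)$ sends $\tro_{\kk_2,d_1}^{\out,u}\times\TT_\sigma$ into $\tro_{\kk_1,d_0}^{\out,u}$, where $p_0(v)$ does not vanish by construction of the \emph{boomerang domains}; hence the composition $p_0(u)P(u+\VV(u,\tau),\tau)$ is well defined and real-analytic on $\tro_{\kk_2,d_1}^{\out,u}\times\TT_\sigma$. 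The first key step is to check that this function is a \emph{closed} $1$-form in $u$, i.e. that there exists $T$ with $\pa_u T(u,\tau)=p_0(u)P(u+\VV(u,\tau),\tau)$ and $\pa_\tau T+\eps\ol H(u,\pa_u T,\tau)=0$. This follows from the fact that $(Q,P)$ is a solution of \eqref{eq:PDEParametritzacions}, which encodes the Lagrangian (invariant under the flow) character of the perturbed manifold: pulling back by the symplectic change \eqref{eq:CanviSimplecticSeparatriu} and by $\VV$, the graph $w=\pa_u T$ must coincide with the perturbed invariant manifold expressed in the $(u,w)$ variables, and since the manifold is Lagrangian the corresponding $1$-form is exact on the simply connected domain $\tro_{\kk_2,d_1}^{\out,u}$. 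Concretely, I would verify that $\pa_\tau\big(p_0(u)P(u+\VV(u,\tau),\tau)\big)=\pa_u\big(-\eps\ol H(u,p_0(u)P(u+\VV,\tau),\tau)\big)$ by substituting the defining relations \eqref{eq:CanviParamToHJ}, \eqref{eq:HJtoParam} and using that $(Q,P)$ solves \eqref{eq:PDEParametritzacions}; then $T$ is recovered by integrating $\pa_u T$ along a path in $\tro_{\kk_2,d_1}^{\out,u}$ (fixing the additive constant arbitrarily), and the Hamilton-Jacobi equation \eqref{eq:HamJacGeneral} holds automatically.

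The second key step is the quantitative estimate. I would write
\[
\pa_u T(u,\tau)-\pa_u T_0(u)=p_0(u)\big(P(u+\VV(u,\tau),\tau)-p_0(u)\big),
\]
and split this as $p_0(u)\big(P(u+\VV,\tau)-p_0(u+\VV)\big)+p_0(u)\big(p_0(u+\VV)-p_0(u)\big)$. The first summand is controlled by the bound $\|P_1\|_\sigma\leq b_5|\mu|\eps^{\eta+1}$ of Theorem \ref{th:Extensio} (after composing with the change $\VV$, which by Proposition \ref{prop:Transition} stays in $\tro_{\kk_1,d_0}^{\out,u}$), while the second summand is controlled by the mean value theorem together with $\|\VV\|_{\kk_2,d_1,\sigma}\leq b_6|\mu|\eps^{\eta+1}$ and the fact that on $\tro_{\kk_1,d_0}^{\out,u}$ the derivative $\dot p_0$ is bounded in the relevant weighted norm (these domains are at a fixed distance $\OO(1)$ in the $d$-direction from the singularities, but reach $\OO(\eps)$ only in the $\kk\eps$-direction, so in the norm $\|\cdot\|_{\kk_2,d_1,\sigma}$ — which carries no singular weight since it is the plain Fourier-sup norm on $\tro_{\kk_2,d_1}^{\out,u}$ — the factors $p_0$ and $\dot p_0$ are $\OO(1)$ after accounting for the growth, or more precisely one uses the weight implicit in the definition of $\QQ_{\kk,d,\sigma}$). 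Collecting both contributions gives $\|\pa_u T-\pa_u T_0\|_{\kk_2,d_1,\sigma}\leq b_7|\mu|\eps^{\eta+1}$ with $b_7$ depending only on $b_5$, $b_6$ and the geometry of the domains. The main obstacle I anticipate is the bookkeeping of which weighted norm one uses on the transition domain $\tro_{\kk_2,d_1}^{\out,u}$ and verifying that $p_0$ and $\dot p_0$ remain bounded there with the correct weight; since $\tro_{\kk_2,d_1}^{\out,u}$ is the intersection of an outer domain with a \emph{boomerang} domain and is therefore bounded away from $\pm ia$ by a distance $\OO(\eps)$ (but bounded away by $\OO(1)$ in most of it), one must be slightly careful, but no genuinely new estimate is needed beyond those already established in Theorems \ref{th:Extensio:Trig}, \ref{th:Extensio} and Proposition \ref{prop:Transition}. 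Finally, the verification that $T$ satisfies \eqref{eq:HamJacGeneral} and that $(q,p)=(q_0(u),p_0(u)^{-1}\pa_u T(u,\tau))$ parameterizes the unstable manifold is then immediate from the chain of identities relating the two kinds of parameterizations, completing the proof of Theorem \ref{th:ParamtoHJ}.
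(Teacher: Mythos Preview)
Your proposal is correct and follows essentially the same route as the paper: first verify the compatibility condition $\pa_\tau\big(p_0(u)P(u+\VV,\tau)\big)=-\pa_u\big(\eps\ol H(u,p_0(u)P(u+\VV,\tau),\tau)\big)$ using the differentiated identities coming from \eqref{eq:CanviParamToHJ} together with the fact that $(Q,P)$ solves \eqref{eq:PDEParametritzacions}, and then obtain the estimate by the same splitting $p_0(u)\big(P_1(u+\VV,\tau)+p_0(u+\VV)-p_0(u)\big)$ combined with Theorem \ref{th:Extensio}, Proposition \ref{prop:Transition} and the mean value theorem.

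Your ``main obstacle'' is in fact a non-issue: the transition domain $\tro^{\out,u}_{\kk_2,d_1}\subset\wt D^{\out,u}_{\rr_4,d_0,\kk_1}$ satisfies by construction $|\Im u|<-\tan\beta_2\,\Re u+a-d_0/2$, so it stays at distance of order $d_0=\OO(1)$ from the singularities $\pm ia$ (see Figure \ref{fig:TransBoom}). Hence $p_0$ and $\dot p_0$ are uniformly bounded there, the plain Fourier--sup norm $\|\cdot\|_{\kk_2,d_1,\sigma}$ carries no weight, and no extra care is needed.
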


We devote the rest of this section to prove Proposition
\ref{prop:Transition} and Corollary
\ref{coro:Transition:CotesGeneradora}.

\begin{proof}[Proof of Proposition \ref{prop:Transition}]
The operator $\NNN$ sends $\QQ_{\kk_2,d_1,\sigma}$ to itself. To see
that exists a constant $b_6>0$ such that $\NNN$ is contractive in
$\ol
B\left(b_6|\mu|\eps^{\eta+1}\right)\subset\QQ_{\kk_2,d_1,\sigma}$,
we first consider $\NNN(0)$. By Proposition \ref{prop:extensio},
there exists a constant $b_6>0$ such that
\[
\left\|\NNN(0)\right\|_{\kk_2,d_1,\sigma}=\left\|p_0\ii(v)Q_1(v,\tau)\right\|_{
\kk_2,d_1,\sigma}\leq
\frac{b_6}{2}|\mu|\eps^{\eta+1}.
\]
To see that $\NNN$ is contractive, let $h_1,h_2\in \ol
B\left(b_6|\mu|\eps^{\eta+1}\right)\subset\QQ_{\kk_2,d_1,\sigma}$.
By Proposition \ref{prop:extensio}, we know that $Q_1(u,\tau)$ is
defined in $\tro_{\kk_1,d_0}^{\out,u}$ and satisfies
$\|Q_1\|_{\kk_1,d_0,\sigma}\leq K |\mu|\eps^{\eta+1}$ in this
 domain. Applying Cauchy estimates in the nested
domains $\tro_{2\kk_1,d_0/2}^{\out,u}\subset
\tro_{\kk_1,d_0}^{\out,u}$, one has that
\[
\|\pa_vQ_1\|_{2\kk_1,d_0/2,\sigma}\leq \frac{K}{\kk_1}
\mu\eps^{\eta}.
\]
Then, defining $h^s(v,\tau)=sh_2(v,\tau)+(1-s)h_1(v,\tau)$ for
$s\in(0,1)$, using the mean value theorem, increasing $\kk_1$ if
necessary and taking $\kk_2>2\kk_1$,
\[
\begin{split}
\left\| \NNN(h_2)-\NNN(h_1)\right\|_{\kk_2,d_1,\sigma}\leq&
\dps\left\|p_0\ii(v)\int_0^1\left(\pa_u
Q_1\left(v+h^s,\tau\right)+p_0(v+h^s)-p_0(v)\right)ds\right\|_{\kk_2,d_1,\sigma}
\\
&\cdot\left\|h_2-h_1\right\|_{\kk_2,d_1,\sigma}\\
\leq&\dps
\frac{K|\mu|\eps^{\eta}}{\kk_1}\left\|h_2-h_1\right\|_{\kk_2,d_1,\sigma}\\
\leq& \frac{1}{2}\left\|h_2-h_1\right\|_{\sigma}.
\end{split}
\]
Then, $\NNN: \ol B\left(b_6|\mu|\eps^{\eta+1}\right)\rightarrow \ol
B\left(b_6|\mu|\eps^{\eta+1}\right)\subset\QQ_{\kk_2,d_1,\sigma}$
and is contractive. Therefore, it has a unique fixed point which
satisfies the properties stated in Proposition
\ref{prop:Transition}.
\end{proof}

\begin{proof}[Proof of Corollary
\ref{coro:Transition:CotesGeneradora}] Proposition
\ref{prop:Transition}, gives a parameterization of the form
\[
(q,p)=(Q(u+\VV(u,\tau),\tau),P(u+\VV(u,\tau),\tau))=(q_0(u),P(u+\VV(u,\tau),
\tau)).
\]
We want to have a parameterization of the form
\eqref{eq:ParameterizationHJ}, where  $T$ is a function which
satisfies \eqref{eq:HamJacGeneral}. To recover this function it is
enough to point out that, since we want it to be solution of
\eqref{eq:HamJacGeneral}, we know its gradient
\[
(\pa_u T(u,\tau), \pa_\tau T(u,\tau))
=\left(p_0(u)P(u+\VV(u,\tau),\tau), -\eps\ol
H(u,p_0(u)P(u+\VV(u,\tau),\tau),\tau\right).
\]
Then, it is enough to check the compatibility condition
\begin{equation}\label{eq:canvi:compat}
\pa_\tau\left[p_0(u)P(u+\VV(u,\tau),\tau)\right]=-\pa_u\left[\eps\ol
H\left(u,p_0(u)P(u+\VV(u,\tau),\tau),\tau\right)\right].
\end{equation} Differentiating equation \eqref{eq:CanviParamToHJ}, one has
that $\VV$ satisfies
\[
\begin{split}
\pa_vQ(u+\VV(u,\tau),\tau)\left(1+\pa_u\VV(u,\tau)\right)&=p_0(u)\\
\pa_vQ(u+\VV(u,\tau),\tau)\pa_\tau\VV(u,\tau)+\pa_\tau
Q(u+\VV(u,\tau),\tau)&=0
\end{split}
\]
Then, using this equalities and  equation
\eqref{eq:PDEParametritzacions}, one can prove
\eqref{eq:canvi:compat}.

Finally, recalling that $\pa_uT_0(u)=p^2_0(u)$ and
$P(v,\tau)=p_0(v)+P_1(v,\tau)$ and applying Proposition
\ref{prop:extensio} and the mean value theorem,
\[
\begin{split}
\left\| \pa_u T-\pa_u T_0\right\|_{\kk_2,d_1,\sigma}&\leq\left\| p_0(u)
\left(P_1(u+\VV(u,\tau),\tau)+p_0(u+\VV(u,\tau))-p_0(u)\right)
\right\|_{\sigma}\\
&\leq b_7|\mu|\eps^{\eta+1}.
\end{split}
\]
\end{proof}

\subsubsection{Proof of Theorem
\ref{th:ExtensioFinal}}\label{subsec:ExtensioFinal} The proof of
Theorem \ref{th:ExtensioFinal} follows the same steps as the proof
of Theorem \ref{th:Extensio:Trig}. For this reason, in this section
we only explain which are the main differences.

First, let us point out that the operator $\GG_\eps$ defined in
\eqref{def:operadorG:HJtoParam} can be also applied to functions
defined in $D^u_{\kk_3,d_2}\times\TT_\sigma$ if one takes as
$u_1,\bar u_1$ the vertices of $D^u_{\kk_3,d_2}$ (see Figure
\ref{fig:BoomerangDomains}) and as $\rr$ the left endpoint of the
interval $D^u_{\kk_3,d_2}\cap\RR$. Now the paths of integration
cannot be straight lines. Nevertheless, it is easy to see that
$\GG_\eps$ satisfies the same properties as the ones stated in Lemma
\ref{lemma:PropietatsGExtensio} but applied to functions defined in
the new domain.

Then, if one considers Banach spaces analogous to
$\EE_{\nu,\sigma}$, with $\nu>0$, given in
\eqref{def:BanachExtension}, for functions defined in
$D^u_{\kk_3,d_2}\times\TT_\sigma$, one can prove Proposition
\ref{prop:extensio:trig}, but looking for the function $T_1$ as the
analytic continuation of the function obtained in Corollary
\ref{coro:Transition:CotesGeneradora} instead of the function $T_1$
obtained in Proposition \ref{prop:HJ:hyp:general} and Proposition
\ref{prop:HJ:parab:general}.

The rest of the proof follows the same lines as the proof of
Proposition \ref{prop:extensio:trig}.

\subsection{The first asymptotic term of the invariant manifolds near the
singularities for the
case $\ell =2r$}\label{subsec:Extensio:ProvaExpansioT:ligual}

In the case $\eta=0$ and $\ell-2r=0$,  we need a
better knowledge of the first asymptotic terms of the invariant
manifolds close the singularities of the unperturbed separatrix
$u=\pm ia$. In the next result, we obtain them for the unstable
invariant manifold close to $u=ia$. The other cases can be done
analogously.

For real, $2\pi$-periodic in $\tau$, analytic functions
$h:D_{\kk_3,d_2}^u\times\TT_\sigma \to \CC$,
we define the Fourier norm
\begin{equation*}
\Vert h \Vert_{\nu,\sigma} = \sum_{k\in \ZZ} \sup_{(u,\tau) \in
D_{\kk_3,d_2}^u\times\TT_\sigma} |(u^2 +a ^2)^{\nu} h^{[k]}(u)| e^{|k| \sigma}
\end{equation*}
being, as usual, $h^{[k]}$ the $k$-Fourier coefficient of $h$.

The next proposition will be used later in Section \ref{sec:CanviFinal}.
\begin{proposition}\label{coro:TuPropSing}
Let us assume $\ell-2r=0$,
and let $Q_j$ and $F_j$ be the functions defined in
\eqref{def:FunctionsQ} and \eqref{def:FunctionsF} respectively (see
also Remark \ref{remark:DefQIntrinseca}) and the constant $C_+$
given in \eqref{eq:SepartriuAlPol} and
\eqref{eq:SepartriuAlPolTrig}.

Then, there exists a real analytic function
$\xi :D_{\kk_3,d_2}^u\times\TT_\sigma \to \CC$,
satisfying that:
\begin{equation*}
\Vert \xi \Vert_{2r+1-1/q,\sigma} \leq K|\mu |\eps^{\eta+1},
\end{equation*}
where $r=\p/\q$ has been
defined in Hypothesis \textbf{HP2} and, for $(u,\tau)\in
D_{\kk_3,c_2}^u\times\TT_\sigma$, the functions $T^u$
obtained respectively in Proposition \ref{prop:extensio:trig} (case $\p_0(u)
\neq 0$)
and Proposition \ref{prop:extensio} (general case),
are such that
\begin{equation}\label{eq:ExpansioT1u}
\left\|\pa_uT_1(u,\tau)-\frac{2r\mu\eps^{\eta+1}
C_+^2}{(u-ia)^{2r+1}}\left(F_0(\tau)+\mu\langle Q_0F_1\rangle\right)
+\xi(u,\tau)\right\|_{2r+2,\sigma}\leq
K|\mu|\eps^{\eta+2}.
\end{equation}
\end{proposition}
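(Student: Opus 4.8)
The plan is to extract the first asymptotic term of $\pa_u T_1$ near $u=ia$ from the Hamilton--Jacobi equation \eqref{eq:HJperT1} satisfied by $T_1$, rather than from the inner equation machinery. Recall that $\pa_u T_1$ is a solution of $\LL_\eps T_1 = \FF(\pa_u T_1, u,\tau)$ with $\FF$ decomposed as in \eqref{eq:infty:lmenor:operadorF}. Since we already know from Theorem \ref{th:ExtensioFinal} (or Proposition \ref{prop:extensio:trig}) that $\|\pa_u T_1\|_{\ell+1,\sigma}\le K|\mu|\eps^{\eta+1}$, with $\ell=2r$, the term $C(\pa_u T_1,u,\tau)$ contributes at order $\OO(|\mu|^2\eps^{2\eta+1}\,(u-ia)^{-(2\ell+2-2r)})=\OO(|\mu|^2\eps^{2\eta+1}(u-ia)^{-(2r+2)})$, which is absorbed in the right-hand side of \eqref{eq:ExpansioT1u}. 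Similarly the $B_2$ term is $\OO(|\mu|\eps^{\eta+2})$. So the leading behaviour is governed by $\LL_\eps T_1 = A(u,\tau) + B_1(u,\tau)\,\pa_u T_1 + (\text{lower order})$. First I would compute the Laurent expansion of $A$ and $B_1$ at $u=ia$: from \eqref{def:InftyA}, \eqref{def:InftyB1} and the expansions \eqref{eq:SepartriuAlPol}/\eqref{eq:SepartriuAlPolTrig} together with the intrinsic definition of $Q_j$ in Remark \ref{remark:DefQIntrinseca}, one gets $A(u,\tau) = -\mu\eps^\eta C_+^2\,(u-ia)^{-2r}\,Q_0(\tau)(1+\OO((u-ia)^{1/\q}))$ and $B_1(u,\tau) = -\mu\eps^\eta C_+^2\,(u-ia)^{-2r}\,Q_1(\tau)\,p_0^{-2}(u)\cdot\text{(something)}$; more precisely one tracks the $j=1$ term so that, after multiplying by $\pa_u T_0 = p_0^2$, the product $B_1\,\pa_u T_0$ contributes $-\mu\eps^\eta C_+^2 (u-ia)^{-2r} Q_1(\tau)$ at leading order.

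Next I would solve the linear equation perturbatively. Write $T_1 = T_1^{(1)} + T_1^{(2)} + \xi$, where $T_1^{(1)}$ solves $\LL_\eps T_1^{(1)} = A$ and $T_1^{(2)}$ solves $\LL_\eps T_1^{(2)} = B_1\,\pa_u T_1^{(1)}$, and $\xi$ is the remainder. Since $\LL_\eps = \eps^{-1}\pa_\tau + \pa_u$, one solves $\LL_\eps h = g$ near the singularity essentially by $\pa_u$-integration for the average and a near-identity correction for the oscillatory part; the key point is that $\pa_u$-integration of $(u-ia)^{-2r}$ gives $(u-ia)^{-(2r-1)}/(1-2r)$, and $\pa_u$ of that is again $(u-ia)^{-2r}$, so differentiating the relation $\LL_\eps T_1^{(1)} = A$ and using $\pa_\tau F_0 = Q_0$ (cf. \eqref{def:FunctionsF}) yields $\pa_u T_1^{(1)} = 2r\mu\eps^{\eta+1}C_+^2 (u-ia)^{-(2r+1)} F_0(\tau) + \OO(|\mu|\eps^{\eta+1}(u-ia)^{-(2r+1-1/\q)})$. (The gain $\eps$ comes from the $\eps^{-1}\pa_\tau$ structure of $\LL_\eps$ applied to zero-mean functions, exactly as in Lemma \ref{lemma:PropietatsGExtensio}.) For the second order, $\pa_u T_1^{(2)}$ is driven by $B_1\,\pa_u T_1^{(1)}$, whose leading term is $\mu^2\eps^{2\eta+1}$ times $(u-ia)^{-(4r+1)}$ times $Q_1 F_0$-type functions; the average part of this produces, after $\pa_u$-integration and using $\langle\,\cdot\,\rangle$, precisely the term $2r\mu^2\eps^{\eta+1}C_+^2(u-ia)^{-(2r+1)}\langle Q_0 F_1\rangle$ appearing in \eqref{eq:ExpansioT1u} (with $\eta=0$, the factors $\eps^{2\eta+1}$ and $\eps^{\eta+1}$ coincide), while the oscillatory part is higher order. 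Everything else — the $C$ term, the $B_2$ term, the subleading pieces of the expansions of $A$, $B_1$, $p_0$, and the correction from solving $\LL_\eps$ exactly rather than by pure $\pa_u$-integration — is collected into $\xi$, and one checks it satisfies $\|\xi\|_{2r+1-1/\q,\sigma}\le K|\mu|\eps^{\eta+1}$ by the weighted-norm estimates of Lemma \ref{lemma:PropietatsGExtensio} and Lemma \ref{lemma:Extensio:Trig:Cotes} (or their analogues for the $D^u_{\kk_3,d_2}$ domain).

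The main obstacle I anticipate is the careful bookkeeping of orders in the two-step perturbative solution: one must verify that no term other than $F_0(\tau) + \mu\langle Q_0 F_1\rangle$ survives at the exact order $\eps^{\eta+1}(u-ia)^{-(2r+1)}$, which requires knowing the next-order terms in the expansions \eqref{eq:SepartriuAlPol}/\eqref{eq:SepartriuAlPolTrig} are genuinely of relative order $(u-ia)^{1/\q}$ (hence contribute only to $\xi$ at weight $2r+1-1/\q$), and that the operator $\GG_\eps$ used to invert $\LL_\eps$ on the boomerang domain does not spoil these weights. A secondary technical point is that one cannot simply use pure antiderivatives in $u$ — the actual inverse of $\LL_\eps$ mixes $\tau$ via the characteristics $u/\eps - \tau = \text{const}$ — so the identification of the leading term must be done by differentiating the equation and comparing principal parts, then estimating the difference between the true solution and the naive one; this is where the $1/\q$ loss in the weight of $\xi$ genuinely comes from. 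Once the leading term is pinned down and $\xi$ is shown to satisfy the stated bound, the proposition follows; I would also note that \eqref{eq:ExpansioT1u} is stated for general $\eta$ only formally — with $\ell=2r$, Hypothesis \textbf{HP5} forces $\eta\ge 0$, and the interesting case $\eta=0$ is where the $\mu^2$ correction and the term $\xi$ at weight $2r+1-1/\q$ matter for the subsequent analysis of $\CCC$ in Section \ref{sec:CanviFinal}.
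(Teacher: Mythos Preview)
Your overall strategy --- a two-step perturbative expansion of $\pa_u T_1$ directly from the Hamilton--Jacobi equation, without first performing the change of variables that kills $B_1$ --- is a genuine alternative to the paper's route, but as written it contains a real error and a real gap.

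\textbf{The error.} You claim that $B_1\,\pa_u T_1^{(1)}$ has leading singularity $(u-ia)^{-(4r+1)}$. This is incorrect: when $\ell=2r$, the function $B_1 = -\mu\eps^\eta p_0^{-1}\pa_p\wh H_1^1$ is \emph{bounded} near $u=ia$ (indeed $B_1 \sim -\mu\eps^\eta Q_1(\tau)$), so the product $B_1\,\pa_u T_1^{(1)}$ is of order $(u-ia)^{-(2r+1)}$. With this correction, $\pa_u\langle T_1^{(2)}\rangle = \langle B_1\pa_u T_1^{(1)}\rangle$ directly gives the coefficient $-2r\mu^2\eps^{2\eta+1}C_+^2\langle Q_1 F_0\rangle (u-ia)^{-(2r+1)}$, and one must then invoke the integration-by-parts identity $\langle Q_1 F_0\rangle = -\langle Q_0 F_1\rangle$ (since $\pa_\tau F_j = Q_j$) to match the stated formula. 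You do not mention this identity.

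\textbf{The gap.} You wave the remainder into $\xi$, but the equation for $R = T_1 - T_1^{(1)} - T_1^{(2)}$ still contains $B_1\,\pa_u R$ on the right-hand side, and $B_1=\OO(1)$ when $\eta=0$, so this is not automatically small. The paper avoids this by working with $\wh T_1(v,\tau) = T_1(v+g(v,\tau),\tau)$, where $g$ is the change from Lemma~\ref{lemma:Extensio:Trig:canvi} that eliminates $B_1$: then $\pa_v\wh T_1$ is a fixed point of the \emph{contractive} operator $\JJ$, and the remainder $D_7 = \JJ(\pa_v\wh T_1)-\JJ(0)$ is bounded by the Lipschitz constant. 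In the paper's decomposition the $F_0$ term comes from $D_3=\GG_\eps(\pa_v A_2)$ and the $\langle Q_0 F_1\rangle$ term from $D_4=\GG_\eps\bigl(\pa_v[\pa_v A_2\cdot g]\bigr)$ --- i.e.\ from Taylor-expanding $A$ along the change of variables, not from iterating $B_1$. Your direct iteration can be made to work, but only after observing that $\langle B_1\rangle = 0$ kills the dangerous feedback $\langle B_1\,\pa_u\langle R\rangle\rangle$; you would need to make this explicit to close the argument.
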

\begin{proof}
We prove Proposition \ref{coro:TuPropSing} in the polynomial case. Taking into
account
Remark \ref{remark:DefQIntrinseca}, the proof of the trigonometric
case is completely analogous.

We only deal with the case $p_0(u)\neq 0$ being the other case analogous.
For this reason we will only take into account the previous results in this
case.
In fact we will see that Proposition \ref{coro:TuPropSing} is also valid for
$(u,\tau) \in D_{\rr_1',\kk_0'}^{out,u}$
where $\rr_1'$ and $\kk_0'$ are the constants for which Proposition
\ref{prop:extensio:trig} holds.

We first obtain the asymptotic
expansion for the function $\pa_v \wh T_1(v,\tau)$ obtained in
Proposition \ref{prop:extensio:trig}, which is defined for
$(v,\tau)\in D_{\rr_1',\kk_0'}^{\out,u}\times\TT_\sigma$ and then we
use the change variables $v=u+h(u,\tau)$ defined in Lemma
\ref{lemma:Extensio:Trig:canvi}.

To obtain the asymptotic expansion, we decompose $\pa_v\wh T_1$ into
several parts taking into account that $\pa_v \wh T_1$ is a fixed
point of the operator $\JJ$ in
\eqref{def:Extensio:Trig:Operador:Sencer} and that we know
explicitly $\JJ(0)$. We use the functions $A_i$ defined in
\eqref{def:HJ:A1}, \eqref{def:HJ:A2}, \eqref{def:HJ:A3}
respectively, the change of variables $g$ obtained in Lemma
\ref{lemma:Extensio:Trig:canvi} and the operator $\JJ$ in
\eqref{def:Extensio:Trig:Operador:Sencer}. We take
\[
\pa_v\wh T_1=\sum_{i=1}^7 D_i(v,\tau)
\]
with
\begin{align}
D_1(v,\tau)&=A_0(v,\tau)\label{def:ExpT1:D1}\\
D_2(v,\tau)&=\GG_\eps\left(\pa_vA_1(v+g(v,\tau),\tau)\right)\label{def:ExpT1:D2}
\\
D_3(v,\tau)&=\GG_\eps\left(\pa_v A_2(v,\tau)\right)\label{def:ExpT1:D3}\\
D_4(v,\tau)&=\GG_\eps\left(\pa_v\left[
\pa_vA_2(v,\tau)g(v,\tau)\right]\right)\label{def:ExpT1:D4}\\
D_5(v,\tau)&=\GG_\eps\left(\pa_v\left[ A_2(v+g(v,\tau),\tau)-\pa_v
A_2(v,\tau)g(v,\tau)-A_2(v,\tau)\right]\right)\label{def:ExpT1:D5}\\
D_6(v,\tau)&=\GG_\eps\left(\pa_v
\left[A_3(v+g(v,\tau),\tau)\right]\right)\label{def:ExpT1:D6}\\
D_7(v,\tau)&=\JJ\left(\pa_v\wh
T_1\right)(v,\tau)-\JJ\left(0\right)(v,\tau)\label{def:ExpT1:D7}.
\end{align}
Let us point out that the sum of the first six terms is $\JJ(0)$. We
bound each term.  For the second to the fifth terms, we follow the proof
of Lemma \ref{lemma:Extensio:Trig:Cotes}, where the functions $A_1$,
$A_2$ and $A_3$ have been bounded.

To bound \eqref{def:ExpT1:D1}, it is enough to recall that, by
\eqref{eq:Cota:CondIni:ExtTrig}, $D_1\in
\EE_{0,\rr_2,\kk_1,\sigma}\subset\EE_{2r+1-1/\q,\rr_2,\kk_1,\sigma}$,
to obtain
\[
\|D_1\|_{2r+1-\frac{1}{\q},\sigma}\leq \|D_1\|_{0,\sigma}\leq
K|\mu|\eps^{\eta+1}.
\]
To bound \eqref{def:ExpT1:D2}, we apply the bound of $A_1$ obtained
in \eqref{eq:Cota:A1} and use $r\geq 1$ to see that
$D_2\in\EE_{r+1,\sigma}\subset\EE_{2r+1-1/\q,\sigma}$ and
\[
\|D_2\|_{2r+1-\frac{1}{\q},\sigma}\leq\|D_2\|_{r+1,\sigma}\leq
K|\mu|\eps^{\eta+1}.
\]
Since $\langle A_2\rangle=0$,  we can define a function $\ol A_2$
such that $\pa_\tau \ol A_2=A_2$ and $\langle \ol A_2\rangle=0$.
Moreover, one can write
\[
\begin{split}
D_3&=\GG_\eps(\pa_v A_2)=\GG_\eps\left(\pa^2_{\tau v}\ol A_2\right)\\
&=\eps\GG_\eps\left(\LL_\eps\left(\pa_v\ol
A_2\right)\right)-\eps\GG_\eps \left(\pa^2_v \ol A_2\right).
\end{split}
\]
Then, using the definition of $\GG_\eps$ in
\eqref{def:operadorG:HJtoParam} and applying Lemma
\ref{lemma:PropietatsGExtensio}, one can see that there exists a
function $\wt\xi_3\in \EE_{0,\sigma}\subset\EE_{2r+1-1/\q,\sigma}$,
which satisfies,
\[
\|\wt \xi_3\|_{2r+1-\frac{1}{\q},\sigma}\leq
K\|\wt\xi_3\|_{0,\sigma}\leq K|\mu|\eps^{\eta+1},
\]
such that
\[
\left\|D_3-\eps\pa_v\ol A_2-\wt\xi_3\right\|_{2r+2,\sigma}\leq
K|\mu|\eps^{\eta+2}.
\]
Moreover, recalling the definition of $A_2$ in \eqref{def:HJ:A2} and
defining functions $\ol a_{kl}$ such that
\begin{equation}\label{def:Primis:as}
\pa_\tau \ol a_{kl}=0\,\,\text{ and }\,\,\langle \ol a_{kl}\rangle=0
\end{equation}
we have that
\[
\pa_v\ol A_2(v,\tau)=-\mu\sum_{2\leq k+l\leq N}\ol a_{kl}(\tau)\pa_v
\left(q_0(v)^k p_0(v)^l\right).
\]
Then, recalling the definition of the functions $Q_j$ and $F_j$ in
\eqref{def:FunctionsQ} and \eqref{def:FunctionsF} and the constant
$C_+$ in \eqref{eq:SepartriuAlPol}, $\pa_v\ol A_2$ satisfies
\[
\eps\pa_v\ol A_2(v,\tau)=\frac{2r\mu\eps^{\eta+1}
C_+^2F_0(\tau)}{(v-ia)^{2r+1}}+\OO\left(\frac{\mu\eps^{\eta+1}}{(v-ia)^{
2r+1-\frac{1}{\q}}}\right).
\]
Therefore, there exists
$\xi_3\in\EE_{2r+1-1/\q,\rr_2,\kk_1,\sigma}$ satisfying
\[
\| \xi_3\|_{2r+1-\frac{1}{\q},\sigma}\leq K|\mu|\eps^{\eta+1},
\]
 such
that
\[
\left\|D_3(v,\tau)-\frac{2r\mu\eps^{\eta+1} C_+^2
F_0(\tau)}{(v-ia)^{2r+1}}-\xi_3(v,\tau)\right\|_{2r+2,\sigma}\leq
K|\mu|\eps^{\eta+2}.
\]
To bound \eqref{def:ExpT1:D4}, we  first subtract its averaged term.
Then, using Lemma \ref{lemma:Extensio:Trig:canvi} to bound $g$ and
$\pa_v g$, Lemma \ref{lemma:Extension:cotes} to bound the first and
second derivatives of $A_2$ and Lemma
\ref{lemma:PropietatsGExtensio}, we obtain
\[
\left\|D_4-\GG_\eps\left(\pa_v\langle\pa_v A_2\cdot
g\rangle\right)\right\|_{2r+2,\sigma}\leq K|\mu|^2\eps^{\eta+2}.
\]
On the other hand, using the definition of $\GG_\eps$ in
\eqref{def:operadorG:HJtoParam}
\[
\GG_\eps\left(\pa_v\langle \pa_v A_2\cdot g\rangle\right)(v)=\langle
\pa_v A_2\cdot g\rangle(v)-\langle\pa_v A_2\cdot g\rangle(-\rr_1').
\]
To obtain its leading term,  first we look for the first order of
the function $g$ given in \eqref{def:FuncioG:extensio}. Using the
definition of $B_1$ in \eqref{def:InftyB1}, the functions
\eqref{def:Primis:as}, the bounds of $\pa_v B_1$ in
\eqref{eq:Infty:CotesB} and Lemma \ref{lemma:PropietatsGExtensio},
we have that
\begin{equation}\label{eq:Expansio:canvi}
\left\| g(v,\tau)-\mu\eps^{\eta+1}\sum_{\substack{2\leq k+l\leq
N\\l\geq 1}}l\ol a_{kl}(\tau)q_0(v)^k
p_0(v)^{l-2}\right\|_{1,\sigma}\leq K|\mu|\eps^{\eta+2}.
\end{equation}
Then, using the functions $Q_j$ and $F_j$ defined in
\eqref{def:FunctionsQ} and \eqref{def:FunctionsF} respectively, and
taking into account the definition of $A_2$ in \eqref{def:HJ:A2},
there exists a function
$\xi_4\in\EE_{2r+1-1/\q,\rr_2,\kk_1,\sigma}$ satisfying
\[
\| \xi_4\|_{2r+1-\frac{1}{\q},\sigma}\leq K|\mu|\eps^{\eta+1},
\]
such that
\[
\GG_\eps\left(\pa_v\langle\pa_v A_2\cdot
g\rangle\right)=\frac{2r\mu^2\eps^{2\eta+1} C_+^2\langle Q_0
F_1\rangle}{(v-ia)^{2r+1}}+\xi_4(u,\tau).
\]
Therefore,  one can see that
\[
\left\|D_4(v,\tau)-\frac{2r\mu^2\eps^{2\eta+1} C_+^2 \langle Q_0
F_1\rangle}{(v-ia)^{2r+1}}-\xi_4(u,\tau)\right\|_{2r+2,\sigma}\leq
K|\mu|^2\eps^{2\eta+2}.
\]
For \eqref{def:ExpT1:D5}, it is enough to apply Lemmas
\ref{lemma:PropietatsGExtensio} and \ref{lemma:Extensio:Trig:canvi},
the definition of $A_2$ and the mean value theorem, to obtain
\[
\|D_5\|_{2r+2,\sigma}\leq K|\mu|^3\eps^{3\eta+2}.
\]
To bound \eqref{def:ExpT1:D6}, let us recall the definitions of
$A_3$ and $\wh H_1^2$ in \eqref{def:HJ:A3} and
\eqref{def:HamPertorbat:H2}. Then, it is enough to apply Lemma
\ref{lemma:PropietatsGExtensio}, to obtain
\[
\| D_6\|_{2r+1-\frac{1}{\q},\sigma}\leq\| D_6\|_{2r,\sigma}\leq
K|\mu|\eps^{\eta+1}.
\]
Finally, for \eqref{def:ExpT1:D7}, it is enough to take into account
the definitions of $\JJ$ and $\wh \FF$ in
\eqref{def:Extensio:Trig:Operador:Sencer} and
\eqref{eq:HJperT1:ligual:RHS} and apply Lemmas
\ref{lemma:PropietatsGExtensio}, \ref{lemma:Extensio:Trig:Cotes} and
\ref{lemma:Extensio:Trig:General}, which give,
\[
\begin{split}
\left\|\JJ\left(\pa_v\wh
T_1\right)-\JJ\left(0\right)\right\|_{2r+2,\sigma}&\leq
\left\|\wh\FF\left(\pa_v\wh
T_1\right)-\wh\FF\left(0\right)\right\|_{2r+2,\sigma}\\
&\leq \left\|\wh B\cdot\pa_v \wh T_1+\wh C\left(\pa_v \wh T_1,v,\tau\right)-\wh
C\left(0,v,\tau\right)\right\|_{2r+2,\sigma}\\
&\leq K|\mu|\eps^{\eta+1}\left\|\pa_v\wh
T_1\right\|_{2r+1,\sigma}\leq K|\mu|^2\eps^{2\eta+2}.
\end{split}
\]
Considering all the bounds of $D_i$,  we define
\[
\xi(u,\tau)=D_1(u,\tau)+D_2(u,\tau)+\xi_3(u,\tau)+\xi_4(u,\tau)+D_6(u,\tau)
\]
Then, $\xi\in\EE_{2r+1-1/\q,\sigma}$ satisfying
\[
\| \xi\|_{2r+1-\frac{1}{\q},\sigma}\leq K|\mu|\eps^{\eta+1},
\]
and then we have
\begin{equation}\label{eq:T1xi}
\left\|\pa_v\wh T_1(v,\tau)-\frac{2r\mu\eps^{\eta+1}
C_+^2}{(v-ia)^{2r+1}}\left(F_0(\tau)+\mu\langle
Q_0F_1\rangle\right)-\xi(u,\tau)\right\|_{2r+2,\sigma}\leq
K|\mu|\eps^{\eta+2}.
\end{equation}
To finish the proof of Proposition \ref{coro:TuPropSing}, one has to
consider the change of variables $v=u+h(u,\tau)$ defined in Lemma
\ref{lemma:Extensio:Trig:canvi} to obtain
\[
\pa_u T_1(u,\tau)=(1+\pa_u h(u,\tau))\ii \pa_v \wh
T_1(u+h(u,\tau),\tau).
\]
Then, the bounds of $h$ and $\pa_uh$ in Lemma
\ref{lemma:Extensio:Trig:canvi} and \eqref{eq:T1xi}, finish the
proof of the proposition.
\end{proof}

\section{Approximation of the invariant manifolds in the inner
domains.}\label{sec:matching}

\subsection{Case $\ell <2r$ : proof of Proposition
\ref{coro:Varietat:FirstOrder:lmenor}}
\label{subsec:Extensio:ProvaExpansioT:lmenor}

We prove the results stated in Proposition \ref{coro:Varietat:FirstOrder:lmenor}
concerning the unstable
manifold. The proof of the results concerning the stable one follows the same
lines.
To obtain the bound of $\pa_u T_1^u(u,\tau)-\pa_u\TTT^u_0(u,\tau)$, we first
bound
$\pa_v \wh T^u_1(v,\tau)-\pa_v\TTT^u_0(v,\tau)$ where $\wh T^u_1$ is the
function obtained in Theorems \ref{th:Extensio:Trig} and \ref{th:ExtensioFinal},
which is defined for
$(v,\tau)\in D_{\kk_3,d_2}^u\times\TT_\sigma$, and $\TTT^u_0$ is the function
defined in
\eqref{def:MigMelnikov}. Then, we will use the change of variables
$v=u+h(u,\tau)$ defined in Lemma
\ref{lemma:Extensio:Trig:canvi} to obtain the bound stated in Proposition
\ref{coro:Varietat:FirstOrder:lmenor}.

Let us define first $v_3$ and $v_4$ the leftmost and rightmost vertices of the
inner domain
$D_{\kk_3,c_1}^{\inn,+,u}$ (see Figure \ref{fig:Inners}). Then, we can define
the operator
\begin{equation}\label{def:operadorGtitlla:MillorCota}
\wt\GG_\eps(h)(v,\tau)=\sum_{k\in\ZZ}\wt\GG_\eps(h)^{[k]}(v)e^{ik\tau},
\end{equation}
where its Fourier coefficients are given by
\begin{align*}
\dps\wt\GG_\eps (h)^{[k]}(v)&=\int_{v_3}^v e^{ik\eps\ii
(t-v)}h^{[k]}(t)\,dt& \text{ for }k>0\\
\dps\wt\GG_\eps (h)^{[0]}(v)&=\int_{v_4}^v h^{[0]}(t)\,dt&
\\
\dps\wt\GG_\eps (h)^{[k]}(v)&= \int_{v_4}^v e^{ik\eps\ii
(t-v)}h^{[k]}(t)\,dt& \text{ for }k< 0.
\end{align*}
It can be easily seen that this operator satisfies analogous properties to the
ones satisfied by
the operator $\GG_\eps$ defined in \eqref{def:operadorG:HJtoParam}, which are
given in Lemma
\ref{lemma:PropietatsGExtensio}. Let us consider also the Fourier expansions
\[
h_1(v,\tau)=H_1(q_0(v),p_0(v),\tau)=\sum_{k\in\ZZ}
H_1^{[k]}(v)e^{ik\tau}\quad\text{ and }\quad \wh
A(v,\tau)=A(v+g(v,\tau),\tau)=\sum_{k\in\ZZ}\wh A^{[k]}(v)e^{ik\tau},
\]
where $H_1$ is the function defined in \eqref{def:Ham:Original:perturb:poli} and
\eqref{def:Ham:Original:perturb:trig}, $A$ is the function defined in
\eqref{def:InftyA} and $g$ has been
given in Lemma \ref{lemma:Extensio:Trig:canvi}.

First, we observe that, since $\pa_v \wh T_1 = \JJ(\pa_v \wh T_1)$, where the
operator $\JJ$ is defined in
\eqref{def:Extensio:Trig:Operador:Sencer},
\[
\pa_v\wh T_1 (v,\tau) = \wt\GG_\eps (\pa_v A )(v,\tau) + \sum_{i=1}^4
N_i(v,\tau)
\]
with:
\begin{align}
N_1(v,\tau)=&A_0(v,\tau)\label{def:ExpT1:N1}\\
N_2(v,\tau)=&\JJ\left(\pa_v\wh
T_1\right)(v,\tau)-\JJ\left(0\right)(v,\tau)\label{def:ExpT1:N2}.\\
N_3(v,\tau)=&-\wt\GG_\eps (\pa_v \wh A) (v,\tau) +\GG_\eps(\pa_v \wh A)
(v,\tau)\label{def:ExpT1:N3}\\
N_4(v,\tau)=&\wt\GG_\eps (\pa_v \wh A) (v,\tau) -\wt\GG_\eps(\pa_v  A)
(v,\tau).\label{def:ExpT1:N4}
\end{align}

Second we split $\pa_v\TTT^u_0$ as:
\[
\pa_v\TTT^u_0 = -\mu \eps ^{\eta}\wt\GG_\eps (\pa_v h_1)(v,\tau) - N_5,
\]
where
\begin{eqnarray}
N_5(v,\tau)&=&\mu\eps^\eta\sum_{k>0}\int_{-\infty}^{v_3} e^{ik\eps\ii(t-v)}
\pa_v H^{[k]}_1(t) \,dt\notag \nonumber\\
&&+\mu\eps^\eta\sum_{k\leq 0}\int_{-\infty}^{v_4} e^{ik\eps\ii(t-v)}\pa_v
H^{[k]}_1(t)\,dt .\label{def:ExpT1:N5}
\end{eqnarray}

Finally, we use the definition of $A$ in \eqref{def:InftyA} and $\wh H_1$ in
\eqref{def:ham:ShiftedOP:perturb}, and the fact that, as the periodic orbit does
not depend on $v$,
\[
\pa_v\left(V(x_p(\tau))+H_1(x_p(\tau),y_p(\tau),\tau)\right)=0
\]
to obtain

\begin{align}
\wt\GG_\eps (\pa_v A )(v,\tau)+\mu \eps ^{\eta}\wt\GG_\eps (\pa_v h_1) (v,\tau)
=& -y_p(\tau)p_0(u)+x_p(\tau)\dot p_0(u)\\
&+N_6 + N_7+N_8
\end{align}
with
\begin{eqnarray}
N_6 & = &-\mu
\eps^{\eta}\wt\GG_\eps\pa_v\Big(H_1(q_0(v)+x_p(\tau),p_0(v)+y_p(\tau),
\tau)-H_1(q_0(v),p_0(v),\tau)\Big)
\label{def:ExpT1:N6} \\
N_7 & = &
-\wt\GG_\eps\pa_v\Big(V(q_0(u)+x_p(\tau))-V(q_0(u))-V'(q_0(u))x_p(\tau)\Big)
\label{def:ExpT1:N7} \\
N_8 & = & \wt\GG_\eps\pa_v\Big(-V'(q_0(u))x_p(\tau)+V'(x_p(\tau))q_0(u)
\nonumber \\
&&+ \mu\eps^\eta\left(q_0(u)\pa_x H_1(x_p(\tau),y_p(\tau),\tau)+p_0(u)\pa_y
H_1(x_p(\tau),y_p(\tau),\tau)\right)\Big)\nonumber\\
&&+ y_p(\tau)p_0(u)-x_p(\tau)\dot p_0(u). \label{def:ExpT1:N8}
\end{eqnarray}
Finally we obtain:
\[
\pa_v\wh T_1 (v,\tau) -\pa_v\TTT^u_0 = -y_p(\tau)p_0(u)+ x_p(\tau)\dot
p_0(u)+\sum_{i=1}^8 N_i(v,\tau).
\]

Now, we proceed  to bound $N_1 ,\dots ,N_8$.

To bound $N_1$ in  \eqref{def:ExpT1:N1}, it is enough to recall that, by
\eqref{eq:Cota:CondIni:ExtTrig}, $N_1\in \EE_{0,\rr_1',\kk_0',\sigma}$
and
\[
\|N_1\|_{0,\sigma}\leq K|\mu|\eps^{\eta+1}.
\]

For $N_2$ in \eqref{def:ExpT1:N2}, it is enough to consider the bound of $\pa_v
\wh T_1$ given in Proposition \ref{prop:extensio:trig} and the Lipschitz
constant of the operator  $\JJ$ in
\eqref{def:Extensio:Trig:Operador:Sencer} restricted to the ball $\ol
B(|\mu|\eps^{\eta+1})\subset \EE_{\ell+1,\rr_1',\kk_0',\sigma}$, which has been
obtained in the proof of Lemma \ref{lemma:Extensio:Trig:General}. Then,
\[
 \begin{split}
 \| N_2\|_{0,\sigma}\leq & K\frac{\eps^{-(\ell+1)}}{(\kappa_0')^{\ell +1}}
\|N_2\|_{\ell+1,\sigma}\\
\leq & K|\mu|\eps^{-(\ell+1)+\eta+1-\max\{0,\ell-2r+1\}}\left\| \pa_v\wh
T_1\right\|_{\ell+1,\sigma}\\
\leq & K|\mu|^2\eps^{2\eta-\ell+1-\max\{0,\ell-2r+1\}}.
 \end{split}
\]

To bound $N_3$ in \eqref{def:ExpT1:N3} we observe that $\langle N_3 \rangle=0$
and
\begin{align*}
 N_3^{[k]} (v) & = e^{ik\eps^{-1}(v_3-v)}\int_{u_1}^{v_3} e^{ik\eps^{-1}
(t-v_3)}\left(\pa_v \wh A^{[k]} \right)(t)\,dt &
\text{for }\;\;\; k>0\\
N_3^{[0]}(v)&=\wh A^{[0]}(v)-\wh A^{[0]}(v_4)\\
N_3^{[k]} (v) & = e^{ik\eps^{-1}(v_4-v)}\int_{\bar{u}_1}^{v_4} e^{ik\eps^{-1}
(t-v_4)}\left(\pa_v \wh A^{[k]}\right)(t)\,dt &
\text{for }\;\;\; k<0.
\end{align*}
Taking into account that the operator $\wt\GG_\eps$ satisfies also the
properties of the operator $\GG_\eps$ given in Lemma
\ref{lemma:PropietatsGExtensio},  and using the bounds of $g$ and $\pa_vA$ given
in  Lemmas \ref{lemma:Extensio:Trig:canvi} and \ref{lemma:Extensio:Trig:Cotes}
respectively, we  obtain the following bounds. For $k\neq 0$,
\[
\begin{split}
\left\|N_3^{[k]}\right\|_{0,\sigma} &\leq \left\|\wt \GG_\eps \left(\pa_v \wh
A^{[k]}(v)e^{ik\tau}\right)\right\| _{0,\sigma} \\
&\leq K\eps \left\| \pa_v \wh A^{[k]}(v)e^{ik\tau}\right\| _{0,\sigma} \\
&\leq K\eps^{1-(\ell+1)\gamma} \left\| \pa_v \wh A^{[k]}(v)e^{ik\tau}\right\|
_{\ell+1,\sigma} \\
&\leq K|\mu|\eps^{\eta+1-(\ell+1)\gamma}.
\end{split}
\]
For $k=0$, we have that
\[
\begin{split}
 \|N_3^{[0]}\|_{0,\sigma}& \leq K\left\|\wh A^{[0]} \right\| _{0,\sigma} \\
&\leq K\eps^{-\ell\ga}\left\|\wh A^{[0]} \right\| _{\ell,\sigma} \leq
K|\mu|\eps^{\eta-\ell\ga}.
\end{split}
\]
Finally, note that in the case $\ell=0$, we have that the change $g$ obtained in
 Lemma \ref{lemma:Extensio:Trig:canvi} satisfies $g=0$. Then $\wh A=A$, which
implies $\langle \wh A\rangle=0$. Therefore when $\ell=0$ we have that
$N_3^{[0]}=0$. Taking this fact into account, we can bound $N_3$ by
\[
  \|N_3\|_{0,\sigma}\leq K|\mu|\eps^{\eta-\ell+\nu_2^\ast},
\]
where
\[
 \nu_2^\ast=\left\{\begin{array}{ll} \ell(1-\gamma) &\text{if }\ell>0\\
                    1-\ga &\text{if }\ell=0.
                   \end{array}\right.
\]
For $N_4$ in \eqref{def:ExpT1:N4}, one has to consider the bound of $\pa_v A$
given in Lemma \ref{lemma:Extensio:Trig:Cotes} and the bound of $g$ restricted
to the inner domain given in Corollary \ref{coro:Extensio:CotaCanvigInner}.
Then, using again the  bounds analogous to the ones given in Lemma
\ref{lemma:PropietatsGExtensio}, but to the operator $\wt\GG_\eps$
\[
\| N_4\|_{0,\sigma}\leq K\| \wh A -A \|_{0,\sigma} \leq K \|\pa_v
A\|_{0,\sigma}\|g\|_{0,\sigma}\leq K|\mu|^2\eps^{2\eta-\ell+\nu_1^{\ast}}
\]
with $\nu_1^{\ast}$ is defined in Corollary \ref{coro:Extensio:CotaCanvigInner}.

For $N_5$ in \eqref{def:ExpT1:N5}, it is enough to take into account that
$\langle h_1\rangle=0$, that $h_1$ has a ramified point of order $\ell$ at
$u=ia$ and that both $v_3$ and $v_4$  satisfy
$|v_i-ia|=\OO\left(\eps^\gamma\right)$, $i=3,4$. Then, bounding the integrals as
in Lemma \ref{lemma:PropietatsGInfty} and \ref{lemma:PropietatsGInfty:parab},
one has that
\[
\|N_5\|_{0,\sigma}\leq K|\mu|\eps^{\eta+1}  \|\partial _v h_1 \|_{0,\sigma} \leq
K|\mu|\eps^{\eta+1-\ga(\ell+1)}.
\]

To bound $N_6$ in \eqref{def:ExpT1:N6} we first use the mean value theorem to
obtain
\[
 \left\|H_1(q_0(v)+x_p(\tau),p_0(v)+y_p(\tau),\tau)-H_1(q_0(v),p_0(v),
\tau)\right\|_{0,\sigma}\leq
|\mu| \eps^{\eta-\ell+r}.
\]
Then, using that $\wt \GG_\eps$ has similar properties to the ones given in
Lemma \ref{lemma:PropietatsGExtensio} for the operator  $\GG_\eps$ we obtain
\[
\|N_6\|_{0,\sigma}\leq K|\mu|^2\eps^{2\eta-\ell+r}.
\]

The bound for $N_7$ in \eqref{def:ExpT1:N7} comes from applying the mean bound
theorem to the function
\[
V(q_0(u)+x_p(\tau))-V(q_0(u))-V'(q_0(u))x_p(\tau)
\]
and using that $V''(q_0(u))$ has a pole of second order, the bound of the
periodic orbit and the properties of  $\wt \GG_\eps$. Then, we obtain
\[
\|N_7\|_{0,\sigma} \leq K
\|V(q_0(u)+x_p(\tau))-V(q_0(u))-V'(q_0(u))x_p(\tau)\|_{0,\sigma} \leq
K|\mu|^2\eps^{2\eta} =K |\mu|^2\eps^{(\eta-\ell)+(\eta+\ell)}.
\]

To bound $N_8$ in \eqref{def:ExpT1:N8}, we write it as
\[
N_8= \wt\GG_\eps\left(\pa_v N_8^0\right ) + y_p(\tau)p_0(u)-x_p(\tau)\dot p_0(u)
\]
with
\[
\begin{split}
N_8^0(v,\tau)=&-V'(q_0(u))x_p(\tau)+V'(x_p(\tau))q_0(u)\\
&+\mu\eps^\eta\left(q_0(u)\pa_x H_1(x_p(\tau),y_p(\tau),\tau)+p_0(u)\pa_y
H_1(x_p(\tau),y_p(\tau),\tau)\right).
\end{split}
\]
Using that $-V'(q_0(u))=\dot p_0(u)$, $\dot q_0(u)=p_0(u)$ and that the periodic
orbit satisfies equations \eqref{eq:ode:original:lent}, one has
\[
 \begin{split}
N_8^0(v,\tau)=& \dot p_0(u)x_p(\tau)-\eps^{-1}\pa_\tau y_p(\tau)
q_0(u)-p_0(u)y_p(\tau)+\eps^{-1}\pa_\tau x_p(\tau) p_0(u)\\
=&-\LL_\eps (y_p(\tau)q_0(u))+\LL_\eps (x_p(\tau)p_0(u)).
 \end{split}
\]
Therefore $N_8$ can be written as
\[
\begin{split}
N_8=&\wt\GG_\eps\pa_v\LL_\eps\left(-y_p(\tau)q_0(u)+x_p(\tau)p_0(u)\right)\\
&+y_p(\tau)p_0(u)-x_p(\tau)\dot p_0(u)\\
=&\wt\GG_\eps\LL_\eps\left(-y_p(\tau)p_0(u)+x_p(\tau)\dot p_0(u)\right)\\
&-(-y_p(\tau)p_0(u)+x_p(\tau)\dot p_0(u)).
\end{split}
\]
Then, using that $\wt \GG_{\eps}$ satisfies an analogous property to the one
given  for $\GG_\eps$ in the last item of Lemma \ref{lemma:PropietatsGExtensio}:
\[
 \|N_8\|_{0,\sigma}\leq K|\mu|\eps^{\eta+1-(r+1)\gamma}.
\]
Now, choosing  $\gamma$ such that
\[
 1-(r+1)\gamma>-\ell,
\]
that is,
\[
 \gamma<\frac{\ell+1}{r+1}
\]
and considering all the bounds of $N_i$ and taking
\[
\nu^\ast=\min\left\{\nu_2^\ast, \nu_1^{\ast},1-\max\{0,\ell-2r+1\},r,\ell,
\ell+1- (r+1) \ga \right\},
\]
we obtain
\[
\left\|\pa_v\wh T_1(v,\tau)-\pa_v\TTT_0(v,\tau)\right\|_{0,\sigma}\leq
K|\mu|\eps^{\eta-\ell+\nu^\ast}.
\]
To finish the proof of Proposition \ref{coro:Varietat:FirstOrder:lmenor}, it is
enough to
consider the change of variables $v=u+h(u,\tau)$ defined in Lemma
\ref{lemma:Extensio:Trig:canvi}
and its bounds restricted to the inner domains given in Corollary
\ref{coro:Extensio:CotaCanvigInner}.

\subsection{Case $\ell \geq 2r$: proof of Theorem \ref{th:MatchingHJ}}

This section is devoted to obtain good approximations of the
invariant manifolds in the \emph{inner domains} defined in
\eqref{def:DominisInnerEnu} for the case $\ell \geq 2r$.

First in Section \ref{sec:matching:Banach} we define the Banach
spaces that will be used in the forthcoming sections and we state
some technical lemmas. In Section \ref{sec:matching:HJ} we prove
Theorem \ref{th:MatchingHJ}.

\subsubsection{Banach spaces and technical
lemmas}\label{sec:matching:Banach}

We start by defining some norms. Given $\nu\in\RR$ and an analytic
function $h:\DD_{\kk,\C}^{\inn, +,u}\rightarrow\CC$, where
$\DD_{\kk,\C}^{\inn, +,u}$ is the domain defined in
\eqref{def:DominisInnerEnu}, we consider
\[
\|h\|_{\nu,\kk,\C}=\sup_{z\in \DD_{\kk,\C}^{\inn, +,u}}\left| z^\nu
h(z)\right|.
\]
Then, for analytic functions $h:\DD_{\kk,\C}^{\inn,
+,u}\times\TT_\sigma\rightarrow\CC$ which are $2\pi$-periodic in
$\tau$, we define the corresponding Fourier norm
\[
\|h\|_{\nu,\kk,\C,\sigma}=\sum_{k\in\ZZ}\|h^{[k]}\|_{\nu,\kk,\C}e^{|k|\sigma}
\]
and the function space
\begin{equation}\label{def:matching:banach}
\ZZZ_{\nu,\kk,\C,\sigma}=\left\{h:\DD_{\kk,\C}^{\inn,
+,u}\times\TT_\sigma\rightarrow\CC; \text{ analytic},
\|h\|_{\nu,\kk,\C,\sigma}<\infty\right\}
\end{equation}
which can be checked that is a Banach space for any $\nu\in\RR$.

If there is no danger of confusion about the definition domain
$\DD_{\kk,\C}^{\inn, +,u}$ we will denote
\[
\begin{array}{ccc}
\|\cdot\|_{\nu,\sigma}=\|\cdot\|_{\nu,\kk,\C,\sigma}&\text{ and
}&\ZZZ_{\nu,\sigma}=\ZZZ_{\nu,\kk,\C,\sigma}.
\end{array}
\]

The next lemma gives some properties of these Banach spaces.
\begin{lemma}\label{lemma:matching:propsnormes}
Let $\C,\kappa>0$.
\begin{enumerate}
\item If $\nu_1\leq \nu_2$,
$\ZZZ_{\nu_2,\sigma}\subset\ZZZ_{\nu_1,\sigma}$. Moreover,
\[
\|h\|_{\nu_2,\sigma}\leq
\frac{K}{\kk^{\nu_2-\nu_1}}\|h\|_{\nu_1,\sigma}.
\]
\item If $h\in\ZZZ_{\nu_1,\sigma}$ and $g\in\ZZZ_{\nu_2,\sigma}$, then
$hg\in\ZZZ_{\nu_1+\nu_2,\sigma}$ and
\[
\|hg\|_{{\nu_1+\nu_2},\sigma}\leq
\|h\|_{\nu_1,\sigma}\|g\|_{\nu_2,\sigma}.
\]
\item Let $h\in\ZZZ_{\nu,\kk,\C,\sigma}$ and $\wh \C<\C$,
then, $\pa_xh\in\XX_{\nu,2\kk,\wh \C,\sigma}$ and
\[
\|\pa_x h\|_{\nu,2\kk,\wh \C,\sigma}\leq\frac{K}{\kk}\|
h\|_{\nu,\kk,\C,\sigma}.
\]
\end{enumerate}
\end{lemma}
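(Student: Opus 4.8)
The plan is to deduce the three statements from two geometric features of the inner domain $\DD_{\kk,\C}^{\inn,+,u}$ introduced in \eqref{def:DominisInnerEnz}, combined with the standard behaviour of weighted Fourier norms under multiplication. First I would record the \emph{lower bound}: there is an absolute constant $c_0>0$, independent of $\eps$, $\kk$ and $\C$, with $|z|\ge c_0\kk$ for all $z\in\DD_{\kk,\C}^{\inn,+,u}$. This is immediate from the observation made right after \eqref{def:DominisInnerEnu}, namely that $u\in D_{\kk,\C}^{\inn,+,u}$ forces $|u-ia|\ge\OO(\kk\eps)$, so that $|z|=\eps\ii|u-ia|\ge\OO(\kk)$. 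Second, for $\wh\C<\C$ I would check the \emph{room estimate}: every $z_0\in\DD_{2\kk,\wh\C}^{\inn,+,u}$ admits a closed disc $\{\,|\zeta-z_0|\le c_1\kk\,\}\subset\DD_{\kk,\C}^{\inn,+,u}$, with $c_1>0$ depending only on the fixed angles $\beta_0,\beta_1,\beta_2$. The reason is that each of the finitely many lines cutting out $\DD_{2\kk,\wh\C}^{\inn,+,u}$ lies at distance at least (const)$\cdot\kk$ from the corresponding line defining $\DD_{\kk,\C}^{\inn,+,u}$: after the rescaling $u=ia+\eps z$ the change of offset $\kk\eps\mapsto 2\kk\eps$ produces a displacement of order $\kk$, while the change $\wh\C\mapsto\C$ only enlarges the domain further on its remaining side. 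On such a disc, $|z_0|$ and $|\zeta|$ are both of order at least $\kk$ and differ by at most $c_1\kk$, so $|z_0|/|\zeta|$ and the arguments of $z_0,\zeta$ are bounded, whence $|z_0^{\nu}/\zeta^{\nu}|$ is bounded above and below by absolute constants for any fixed $\nu\in\RR$.

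Granting these, statement (1) is straightforward: for $\nu_1\le\nu_2$ and $z\in\DD_{\kk,\C}^{\inn,+,u}$ one writes $z^{\nu_1}h^{[k]}(z)=z^{-(\nu_2-\nu_1)}\bigl(z^{\nu_2}h^{[k]}(z)\bigr)$ and bounds the extra factor by $|z|^{-(\nu_2-\nu_1)}\le (c_0\kk)^{-(\nu_2-\nu_1)}$ using the lower bound; summing over $k$ against the weights $e^{|k|\sigma}$ gives the inclusion $\ZZZ_{\nu_2,\sigma}\subset\ZZZ_{\nu_1,\sigma}$ together with the norm comparison carrying the factor $\kk^{-(\nu_2-\nu_1)}$. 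Statement (2) is the usual Banach-algebra estimate for Fourier norms: the Cauchy product gives $(hg)^{[k]}=\sum_{j\in\ZZ}h^{[j]}g^{[k-j]}$, hence the pointwise identity $z^{\nu_1+\nu_2}(hg)^{[k]}(z)=\sum_j\bigl(z^{\nu_1}h^{[j]}(z)\bigr)\bigl(z^{\nu_2}g^{[k-j]}(z)\bigr)$ yields $\|(hg)^{[k]}\|_{\nu_1+\nu_2,\kk,\C}\le\sum_j\|h^{[j]}\|_{\nu_1,\kk,\C}\,\|g^{[k-j]}\|_{\nu_2,\kk,\C}$; multiplying by $e^{|k|\sigma}$, using $e^{|k|\sigma}\le e^{|j|\sigma}e^{|k-j|\sigma}$ and summing over $k$ and $j$ gives $\|hg\|_{\nu_1+\nu_2,\sigma}\le\|h\|_{\nu_1,\sigma}\|g\|_{\nu_2,\sigma}$, the absolute convergence of the rearranged series and the analyticity of the product being clear from these bounds.

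Statement (3) follows from Cauchy estimates on the discs furnished by the room estimate. Fixing $z_0\in\DD_{2\kk,\wh\C}^{\inn,+,u}$ and the disc $\{|\zeta-z_0|=c_1\kk\}\subset\DD_{\kk,\C}^{\inn,+,u}$, for each Fourier coefficient one has $\pa_z h^{[k]}(z_0)=\tfrac1{2\pi i}\oint_{|\zeta-z_0|=c_1\kk}\tfrac{h^{[k]}(\zeta)}{(\zeta-z_0)^2}\,d\zeta$, so $|\pa_z h^{[k]}(z_0)|\le (c_1\kk)^{-1}\sup_{|\zeta-z_0|=c_1\kk}|h^{[k]}(\zeta)|$; multiplying by $|z_0^{\nu}|$ and inserting $|z_0^{\nu}|=|z_0^{\nu}/\zeta^{\nu}|\,|\zeta^{\nu}|$ with the bound on $|z_0^{\nu}/\zeta^{\nu}|$ gives $|z_0^{\nu}\pa_z h^{[k]}(z_0)|\le \tfrac{K}{\kk}\|h^{[k]}\|_{\nu,\kk,\C}$, and summing over $k$ against $e^{|k|\sigma}$ finishes it. The one point requiring genuine care is the room estimate in the first paragraph: one must verify that it is the tightening $\kk\mapsto 2\kk$ (not the change $\wh\C\mapsto\C$) that buys a disc of radius proportional to $\kk$, so that the gain in (3) is exactly $\kk\ii$; once this geometric fact about the wedge-shaped domains $\DD_{\kk,\C}^{\inn,+,u}$ is settled, the rest is routine bookkeeping with the Fourier weights.
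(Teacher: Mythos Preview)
The paper does not supply a proof for this lemma; it is stated as a routine technical result and left to the reader. Your argument is correct and is exactly the natural approach: the lower bound $|z|\ge c_0\kk$ on $\DD_{\kk,\C}^{\inn,+,u}$ yields (1), the Cauchy product together with $e^{|k|\sigma}\le e^{|j|\sigma}e^{|k-j|\sigma}$ gives (2), and Cauchy estimates on discs of radius $\sim\kk$ (available after the shrinking $\kk\mapsto 2\kk$, $\C\mapsto\wh\C$) give (3). Your geometric verification of the ``room estimate'' is accurate: in $z$-coordinates the two upper boundary lines of $\DD_{2\kk,\wh\C}^{\inn,+,u}$ sit at distance $\sim\kk$ from those of $\DD_{\kk,\C}^{\inn,+,u}$, while the lower boundary moves by $\sim\eps^{\gamma-1}\gg\kk$, so a disc of radius $c_1\kk$ fits.

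One remark on the statement itself: the displayed inequality in (1) has the subscripts on the norms transposed---consistent with the inclusion $\ZZZ_{\nu_2,\sigma}\subset\ZZZ_{\nu_1,\sigma}$ it should read $\|h\|_{\nu_1,\sigma}\le K\kk^{-(\nu_2-\nu_1)}\|h\|_{\nu_2,\sigma}$, which is precisely what your argument proves. Likewise in (3) the symbols $\pa_x$ and $\XX$ are evident misprints for $\pa_z$ and $\ZZZ$; your proof addresses the intended claim.
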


Throughout this section we are going to solve equations of the form
$\LL h=g$ and $\LL h=\pa_z g$, where
\begin{equation}\label{def:L}
\LL=\pa_z+\pa_\tau.
\end{equation}
To solve these equations we consider operators $\GG$ and $\ol\GG$,
which are defined ``acting on the Fourier coefficients''.

\begin{figure}[h]
\begin{center}
\psfrag{D}{$\DD_{\kk,\C}^{\inn,+,u}$}\psfrag{I}{$\tri_{ \C,\ol
\C}^{+,u}$}\psfrag{z1}{$z_1$}\psfrag{z2}{$z_2$}
\psfrag{b0}{$\beta_0$}\psfrag{b1}{$\beta_1$}\psfrag{b2}{$\beta_2$}
\psfrag{a}{$ia$}\psfrag{a1}{$-ia$}\psfrag{a2}{$i(a-\kk\eps)$}
\includegraphics[height=6cm]{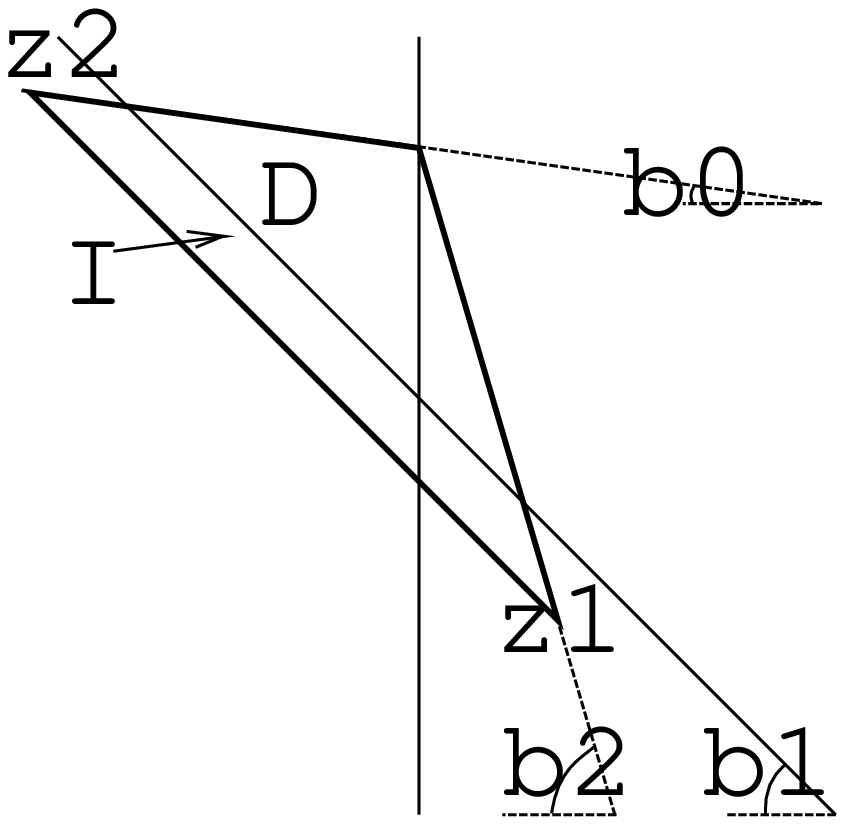}
\end{center}
\caption{\figlabel{fig:InnerVarInner} The \emph{inner domain}
$\DD_{\kk,\C}^{\inn,+,u}$ defined in \eqref{def:DominisInnerEnz} and
the transition domain $\tri_{ \C,\ol \C}^{+,u}$ defined in
\eqref{def:DominisTransInner}.}
\end{figure}

Let us consider $z_1$ and $z_2$ the vertices of the inner domain
$\DD_{\kk,\C}^{\inn,+,u}$ (see Figure \ref{fig:InnerVarInner}). As
we have done in Section \ref{sec:HJtoParam} to invert the operator
$\LL_\eps=\eps\ii\pa_\tau+\pa_v$, we invert $\LL$ integrating from
$z_1$ or $z_2$ depending on the harmonic.

We define the operators
\begin{equation}\label{def:OperadorIntegral:Matching}
\GG(h)(z,\tau)=\sum_{k\in\ZZ}\GG(h)^{[k]}(z)e^{ik\tau},
\end{equation}
where the Fourier coefficients are given by
\begin{align*}
\GG(h)^{[k]}(z)&=\int_{z_1}^z e^{-ik (z-s)}h^{[k]}(s)\,ds &\text{
for
}k<0\\
\GG(h)^{[k]}(z)&=\int_{z_2}^z e^{-ik (z-s)}h^{[k]}(s)\,ds &\text{
for }k\geq 0
\end{align*}
and
\begin{equation}\label{def:OperadorIntegralBarra:Matching}
\ol\GG(h)(z,\tau)=\sum_{k\in\ZZ}\ol\GG(h)^{[k]}(z)e^{ik\tau},
\end{equation}
where its Fourier coefficients are given by
\begin{align*}
\ol\GG(h)^{[k]}(z)&=h^{[k]}(z)-e^{-ik
(z-z_1)}h^{[k]}(z_1)-ik\int_{z_1}^z e^{-ik (z-s)}h^{[k]}(s)\,ds
&\text{ for
}k<0\\
\ol\GG(h)^{[0]}(z)&=h^{[0]}(z)-h^{[0]}(z_2) &\\
\ol\GG(h)^{[k]}(z)&=h^{[k]}(z)-e^{-ik
(z-z_2)}h^{[k]}(z_2)-ik\int_{z_2}^z e^{-ik (z-s)}h^{[k]}(s)\,ds
&\text{ for }k >0.
\end{align*}
The next lemma gives some properties of these operators. Its proof
is analogous to the one of Lemma 5.5 in \cite{GuardiaOS10}.
\begin{lemma}\label{lemma:matching:operador}
Let $\kk,\C,\nu>0$ and $\gamma\in (0,1)$. Then,
\begin{enumerate}
\item The operator $\GG:
\ZZZ_{\nu+1,\sigma}\rightarrow\ZZZ_{\nu,\sigma}$ is well defined.
Moreover, if $h\in\ZZZ_{\nu+1,\sigma}$,
\[
\left\|\GG(h)\right\|_{\nu,\sigma}\leq K\|h\|_{\nu+1,\sigma}.
\]
\item The operator $\GG:
\ZZZ_{\nu,\sigma}\rightarrow\ZZZ_{\nu,\sigma}$ is well defined.
Moreover, if $h\in\ZZZ_{\nu,\sigma}$,
\[
\left\|\GG(h)\right\|_{\nu,\sigma}\leq
K\eps^{\gamma-1}\|h\|_{\nu,\sigma}.
\]
\item The operator $\ol\GG:
\ZZZ_{\nu,\sigma}\rightarrow\ZZZ_{\nu,\sigma}$ is well defined.
Moreover, if $h\in\ZZZ_{\nu,\sigma}$,
\[
\left\|\ol\GG(h)\right\|_{\nu,\sigma}\leq K\|h\|_{\nu,\sigma}.
\]
\end{enumerate}
\end{lemma}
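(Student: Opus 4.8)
The final statement to prove is Lemma~\ref{lemma:matching:operador}, which asserts three estimates for the integral operators $\GG$ and $\ol\GG$ defined in \eqref{def:OperadorIntegral:Matching} and \eqref{def:OperadorIntegralBarra:Matching}, acting on the weighted Fourier spaces $\ZZZ_{\nu,\sigma}$.

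The plan is to work Fourier coefficient by Fourier coefficient, exactly as in the proof of Lemma~5.5 in \cite{GuardiaOS10} which the paper explicitly cites as the template. For a fixed harmonic $k$, one must estimate $\sup_{z}|z^\nu \GG(h)^{[k]}(z)|$ in terms of $\|h^{[k]}\|_{\nu+1,\kk,\C}$ (item~1), respectively $\|h^{[k]}\|_{\nu,\kk,\C}$ (item~2 and item~3), so that after multiplying by $e^{|k|\sigma}$ and summing over $k$ the geometric-type series converges and reproduces the claimed constant. The key geometric inputs are: (i) the domain $\DD_{\kk,\C}^{\inn,+,u}$ is a sector-like region bounded away from the origin by $|z|\ge \kk$ and contained in the cone $|\Im z|>\tet_0\Re z+\kk$ (after the choice $\tet_0=\tan\beta_2/2$), so that along any admissible integration path from a vertex $z_j$ to $z$ one has $|s|$ comparable to $|z|$ up to a bounded factor, and the path length is $\OO(|z|)$ in the ``outer'' part and $\OO(\eps^\gamma)$ when $z$ is near the tip; (ii) the weight $|s|^{-\nu}$ or $|s|^{-\nu-1}$ is, up to constants, monotone along such a path. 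First I would set up, for each $k$, the path of integration as a concatenation of a segment along the boundary ray and a horizontal (or vertical) segment reaching $z$, exactly as in the definition of the domain, and record the elementary bounds $|e^{-ik(z-s)}|\le 1$ for the appropriate sign of $k$ (this is why the integration base point $z_j$ is chosen depending on the sign of $k$: it guarantees $\Im(k(z-s))\ge 0$ along the path, hence the exponential is a decaying, not growing, factor).

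With these geometric facts, item~1 follows by the straightforward estimate
\[
|z^\nu\GG(h)^{[k]}(z)|\le |z|^\nu\int_{z_j}^z |h^{[k]}(s)|\,|ds|\le \|h^{[k]}\|_{\nu+1,\kk,\C}\,|z|^\nu\int_{z_j}^z \frac{|ds|}{|s|^{\nu+1}},
\]
and the remaining integral is $\OO(|z|^{-\nu})$ because $\nu>0$ and the path stays in the region $|s|\gtrsim |z|$ near the far end while the $|s|^{-\nu-1}$ weight makes the contribution near the tip integrable; this produces the uniform bound $K\|h^{[k]}\|_{\nu+1,\kk,\C}$, and summing against $e^{|k|\sigma}$ gives item~1. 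For item~2, where we only gain no power of $|s|$, the integral $|z|^\nu\int |s|^{-\nu}|ds|$ is controlled by the length of the path; since for $z$ in the inner domain the path length is at most $\OO(\eps^\gamma)$ (the diameter of $\DD_{\kk,\C}^{\inn,+,u}$ being $\OO(\eps^{\gamma-1})$ in the $z$-variable, wait---more precisely the relevant segments reaching a given $z$ have length comparable to $|z|$, and $|z|\le \OO(\eps^{\gamma-1})$), one obtains the factor $\eps^{\gamma-1}$ claimed; I would make this precise by splitting the estimate according to whether $|z|$ is of order $\kk$ or larger, using in the large-$|z|$ regime the inequality $|z|^\nu\int_{z_j}^z|s|^{-\nu}|ds|\le K|z|\le K\eps^{\gamma-1}$. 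For item~3 on $\ol\GG$, one uses the definition termwise: the first two terms $h^{[k]}(z)$ and $e^{-ik(z-z_j)}h^{[k]}(z_j)$ are bounded directly (for the second one, $|z^\nu h^{[k]}(z_j)|\le |z/z_j|^\nu\|h^{[k]}\|_{\nu,\kk,\C}$ and $|z|\le K|z_j|$ on the domain, with $|e^{-ik(z-z_j)}|\le 1$), and the remaining term $ik\int_{z_j}^z e^{-ik(z-s)}h^{[k]}(s)\,ds$ is estimated by integrating by parts once in $s$ against $\partial_s e^{-ik(z-s)}=ik e^{-ik(z-s)}$, which removes the factor $|k|$ at the cost of boundary terms of the same type already controlled plus an integral of $\partial_s h^{[k]}$; alternatively, and more simply, one bounds $|k|\int_{z_j}^z e^{-ik(z-s)}h^{[k]}(s)\,ds$ directly by noting $|k e^{-ik(z-s)}|$ is the derivative of an exponential of modulus $\le 1$, so $|k|\int|e^{-ik(z-s)}|\,|h^{[k]}(s)|\,|ds|$ telescopes against the exponential decay, yielding a bound independent of $|k|$. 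Summing over $k$ with the weight $e^{|k|\sigma}$ then gives item~3.

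The main obstacle is purely bookkeeping: one must be careful that the integration paths used in the definitions of $\GG$ and $\ol\GG$ can be chosen inside $\DD_{\kk,\C}^{\inn,+,u}$ (or a slightly larger domain, which is why item~3 allows shrinking $\C$ implicitly through Lemma~\ref{lemma:matching:propsnormes}, and why the operators are applied with the vertices $z_1,z_2$ as base points) and that along these paths the key comparisons $|s|\asymp|z|$, path length $\lesssim|z|$, and $|e^{-ik(z-s)}|\le1$ all hold simultaneously with the correct sign conventions for $k$. Since the paper states the proof is analogous to Lemma~5.5 of \cite{GuardiaOS10}, I would simply verify that the present domain $\DD_{\kk,\C}^{\inn,+,u}$ has the same geometric features (conical shape, bounded away from $0$, diameter controlled by $\eps^{\gamma-1}$) that were used there, and invoke the cited computation. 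No genuinely new idea is needed; the content is the reduction to the three elementary one-harmonic estimates above.
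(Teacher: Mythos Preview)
Your approach matches the paper's: the paper's own proof is just the sentence ``Its proof is analogous to the one of Lemma~5.5 in \cite{GuardiaOS10}'', and you correctly identify the structure (work harmonic by harmonic, choose the base vertex $z_j$ according to the sign of $k$ so that $|e^{-ik(z-s)}|\le 1$ along the integration path, then estimate the weighted integrals). Items~1 and~2 are sketched correctly; in item~2 the controlling quantity is indeed the diameter $\OO(\eps^{\gamma-1})$ of $\DD_{\kk,\C}^{\inn,+,u}$ in the $z$-variable, and the relevant path can be chosen so that $|s|\ge c|z|$ throughout (both $z_1,z_2$ are the far vertices, at distance $\sim\eps^{\gamma-1}$ from the origin).

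One caution on item~3: your first suggestion (integrate by parts to produce $\partial_s h^{[k]}$) is circular. That integration by parts simply recovers the identity $\ol\GG(h)=\GG(\partial_z h)$, and then using item~1 would require $\|\partial_z h\|_{\nu+1}\le K\|h\|_\nu$, which is \emph{not} available on the full domain (Lemma~\ref{lemma:matching:propsnormes} only gives $\|\partial_z h\|_{\nu}\le K\kk^{-1}\|h\|_\nu$ after shrinking, because the distance to the boundary need not be comparable to $|z|$ near the vertices). Your second suggestion---absorbing the $|k|$ factor via the exponential decay along a path with bounded angle to the vertical---is the correct mechanism and is exactly what the cited lemma does: one parameterizes the path from $z_j$ to $z$ by $t=\Im s$ (monotone by the choice of $z_j$ and the triangular geometry), so that $|k|\,|ds|\le c_0^{-1}|k|\,|dt|$ and $|k|\int e^{-|k|(t-\Im z)}\,dt\le 1$. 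The only point to check is that such paths exist inside $\DD_{\kk,\C}^{\inn,+,u}$, which follows because the boundary sides all have slope bounded away from $0$ and $\infty$. Since you explicitly defer to the cited reference for these details, your proposal is adequate.
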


\subsubsection{Proof of Theorem \ref{th:MatchingHJ}}\label{sec:matching:HJ}

We rewrite Theorem \ref{th:MatchingHJ} in terms of the Banach space
\eqref{def:matching:banach}.
\begin{proposition}\label{prop:matching:HJ}
Let $\ga\in(0,\ga_2)$, where
\begin{equation}\label{def:gamma2}
\ga_2=\frac{\q(\ell-2r+1)}{\q(\ell-2r+1)+1},
\end{equation}
$\C_1>0$, $\eps_0>0$ small
enough and $\kk_6>\max\{\kk_3,\kk_5\}$ big enough, where $\kk_5$ are the
constants
defined in Theorems \ref{th:ExtensioFinal} and \ref{th:InnerImma} respectively.
Let,
\[
\varphi=\psi^u-\psi_0^u,
\]
where $\psi^u$ is the function in
\eqref{eq:FuncioGeneradoraInner}and  $\psi_0$ is the function
obtained in Theorem \ref{th:InnerImma}. Then, for $\eps\in
(0,\eps_0)$, we have $\varphi\in
\ZZZ_{2r-\frac{1}{\q},\kk_6,\C_1,\sigma}$ and there exists a
constant $b_{10}>0$ such that
\[
\left\| \pa_z\varphi\right\|_{2r-\frac{1}{\q},\kk_6,
\C_1,\sigma}\leq {b_{10}}\eps^{\frac{1}{\q}},
\]
where $r=\p/\q$ has been defined in \eqref{eq:SepartriuAlPol}.
\end{proposition}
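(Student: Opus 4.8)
The goal is to bound the difference $\varphi=\psi^u-\psi^u_0$ between the scaled generating function $\psi^u$ of the true unstable manifold in the inner variable and the solution $\psi^u_0$ of the inner equation, exploiting that both are known objects: $\psi^u$ exists by Theorems \ref{th:Extensio:Trig} and \ref{th:ExtensioFinal} (rescaled via \eqref{eq:FuncioGeneradoraInner}), and $\psi^u_0$ exists by Theorem \ref{th:InnerImma}. This is the ``complex matching'' step, but -- as emphasised in Section \ref{sec:heuristic:asymptotic} -- we do not need to \emph{construct} $\psi^u$ again; we only need an a posteriori estimate. So the first step is to write down the linear partial differential equation satisfied by $\pa_z\varphi$. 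Since $\psi^u$ solves the full inner Hamilton--Jacobi equation \eqref{eq:HJGeneralInner} with Hamiltonian $\HH$ in \eqref{Hamiltonia:varInner}, and $\psi^u_0$ solves the limit equation \eqref{eq:HJEqInner2} with Hamiltonian $\HH_0$ in \eqref{Hamiltonia:EqInner}, subtracting the two equations and differentiating in $z$ yields an equation of the form $\LL(\pa_z\varphi)=\mathcal{M}(\pa_z\varphi)$, where $\LL=\pa_z+\pa_\tau$ is the operator \eqref{def:L} and $\mathcal{M}$ is an affine operator: a constant (independent) term coming from $\HH-\HH_0=\OO(\eps^{1/\q})$ when evaluated on $\pa_z\psi^u_0$ (this is where the exponent $1/\q$ enters, via the $\OO((\eps z)^{1/\q})$ corrections in the expansions \eqref{eq:SepartriuAlPol}--\eqref{eq:SepartriuAlPolTrig}), plus linear and higher order terms in $\pa_z\varphi$ whose coefficients are small (of order $\hmu/z^{\ell-2r}$ or smaller, by the structure of $\HH_0$ and the a priori bound from Corollary \ref{coro:CotesOuter:muhat}).

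The second step is the choice of domain and Banach space. I would work in $\ZZZ_{2r-1/\q,\kk_6,\C_1,\sigma}$ from \eqref{def:matching:banach}; the a priori estimate in Corollary \ref{coro:CotesOuter:muhat} together with the expansion $\pa_uT_0(\eps z+ia)=C_+^2\eps^{-2r}z^{-2r}(1+\OO((\eps z)^{1/\q}))$ shows that $\pa_z\varphi$ already sits in $\ZZZ_{2r,\kk_6,\C_1,\sigma}$ with norm $\OO(\hmu)$ once $\kk_6$ is large; the point of the theorem is to improve the weight from $2r$ to $2r-1/\q$ and the size from $\OO(\hmu)$ to $\OO(\eps^{1/\q})$. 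The inversion of $\LL$ is done with the operators $\GG,\ol\GG$ of \eqref{def:OperadorIntegral:Matching}--\eqref{def:OperadorIntegralBarra:Matching}, integrating from the vertices $z_1,z_2$ of the inner domain; Lemma \ref{lemma:matching:operador} gives the gain of one unit in the weight (or a factor $\eps^{\ga-1}$ without gain). One then sets up the fixed point equation $\pa_z\varphi = \GG(\text{source}) + \GG(\text{linear and higher order terms in }\pa_z\varphi)$ and shows this operator is a contraction on a ball of radius $b_{10}\eps^{1/\q}$ in $\ZZZ_{2r-1/\q,\kk_6,\C_1,\sigma}$. Here one must: (i) estimate the source term $\GG$ of the $\HH-\HH_0$ discrepancy evaluated on $\pa_z\psi^u_0$, using the $z^{-2r}$ behaviour of $\pa_z\psi^u_0$ (from \eqref{eq:SolucioInnerHJ}, where $\pa_z(-1/((2r-1)z^{2r-1}))=z^{-2r}$) and the $\OO((\eps z)^{1/\q})$ error, which produces a term of size $\eps^{1/\q}z^{-2r+1/\q}$, i.e. of norm $\OO(\eps^{1/\q})$ in the $2r-1/\q$ weight after applying $\GG$; (ii) show the linear operator has Lipschitz constant $<1/2$, which requires the coefficients (of order $\hmu z^{-(\ell-2r)}$, and $z$ bounded below by $\OO(\kk_6\eps)$ in the inner domain, $\kk_6\eps \le |z\eps|\le\OO(\eps^\ga)$, so $|z|\ge\kk_6$) to be small after multiplication by the gain from $\GG$; this is precisely where taking $\kk_6$ large and $\eps$ small, together with the constraint $\ga<\ga_2$ in \eqref{def:gamma2}, is used -- the bound $\ga_2=\q(\ell-2r+1)/(\q(\ell-2r+1)+1)$ is exactly what makes the worst contribution (coming from the $\GG$-without-gain estimate, which costs $\eps^{\ga-1}$, combined with the singular coefficient) still beat $1$.

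The third step is to match the a posteriori solution of the fixed point equation with the genuine $\pa_z\varphi$. Since $\pa_z\varphi$ is already known to exist and to satisfy the same linear PDE with the same (essentially) boundary behaviour near the vertices $z_1,z_2$ -- these points lie in the overlap of the inner domain with the outer domain where Corollary \ref{coro:CotesOuter:muhat} applies -- one argues as in the proof of Lemma \ref{lemma:Extensio:Trig:General}: the difference between $\pa_z\varphi$ and the fixed point is itself a fixed point of the same contraction, hence zero. This identifies the fixed point with $\pa_z\varphi$ and transfers the bound $\|\pa_z\varphi\|_{2r-1/\q,\kk_6,\C_1,\sigma}\le b_{10}\eps^{1/\q}$. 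Finally one recovers $\varphi$ itself by integration from a vertex, noting that by construction the additive constant $K^{u,s}$ in \eqref{eq:SolucioInnerHJ} is chosen (as in Theorem \ref{th:InnerImma}) so that this is consistent, giving $\varphi\in\ZZZ_{2r-1/\q,\kk_6,\C_1,\sigma}$.

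\textbf{Main obstacle.} The delicate part is not the structure of the argument -- it is the standard ``a posteriori Gronwall/fixed point'' scheme the authors advertise -- but the precise bookkeeping of exponents: one must verify that the source term from $\HH-\HH_0$ really has the clean weight $2r-1/\q$ (this relies on \emph{all} the subleading terms in the inner equation, including those coming from the $y$-dependence of $H_1$ through the functions $A_l(\tau)$ in \eqref{def:FuncionsA} and the $\OO((\eps z)^{1/\beta})$ corrections, contributing no worse than $z^{-2r+1/\q}$), and that the constraint $\ga<\ga_2$ with $\ga_2$ as in \eqref{def:gamma2} is exactly the threshold below which the linear part of the fixed point operator is contractive uniformly for $\eps$ small. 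Getting $\ga_2$ right, rather than some weaker sufficient condition, is the technical heart of the proof.
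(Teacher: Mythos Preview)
Your overall architecture is right and matches the paper's: derive a linear PDE for $\varphi$, invert $\LL=\pa_z+\pa_\tau$ via the operators $\GG,\ol\GG$ of \eqref{def:OperadorIntegral:Matching}--\eqref{def:OperadorIntegralBarra:Matching}, and run a contraction in $\ZZZ_{2r-1/\q,\sigma}$, then identify the fixed point with the known $\pa_z\varphi$. The source term estimate $\OO(\eps^{1/\q})$ in the $2r-1/\q$ weight is also exactly what the paper proves (their Lemma \ref{lemma:matching:cotes}).

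There are, however, two concrete gaps.

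\textbf{Missing case split at $\ell=2r$.} You assert that the linear coefficient is of order $\hmu z^{-(\ell-2r)}$ and is made small by taking $\kk_6$ large. This works when $\ell-2r>0$ (the paper's contraction constant is $K/\kk^{\ell-2r}$), but when $\ell=2r$ the leading linear coefficient is $\hmu Q_1(\tau)$, which is \emph{not} small for any choice of $\kk_6$ or $\eps$. The paper handles this by a preliminary change of variables $z=x+\hmu F_1(\tau)$ (with $F_1$ from \eqref{def:FunctionsF}), which kills the $\hmu Q_1(\tau)$ term and reduces the linear part to $M$, which \emph{is} small (of order $\kk^{-1}$ by Lemma \ref{lemma:matching:cotes}). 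Without this step, your contraction fails in the case $\ell=2r$.

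\textbf{Misplaced role of $\ga_2$.} You locate the constraint $\ga<\ga_2$ in the Lipschitz estimate, via the $\eps^{\ga-1}$ loss in $\GG$. In fact the paper uses $\ol\GG$ for the fixed-point map, which has no such loss (Lemma \ref{lemma:matching:operador}, item 3), and contractiveness comes purely from largeness of $\kk$. The condition $\ga<\ga_2$ enters instead in the \emph{initial condition}: at the vertices $z_1,z_2$ (which lie in the transition region where $|z|\sim\eps^{\ga-1}$) one needs the outer estimate $\|\pa_uT_1\|_{\ell+1}\le K|\hmu|\eps^{\ell-2r+1}$ from Corollary \ref{coro:CotesOuter:muhat} to yield $|\pa_z\varphi|\le K\eps^{2r(1-\ga)+\ga/\q}$, and one needs this to be $\OO(\eps^{1/\q})$ in the $2r-1/\q$ weight. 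The inequality $(\ell+1)(1-\ga)\ge 2r(1-\ga)+\ga/\q$ is equivalent to $\ga\le\ga_2$. So $\ga_2$ is the threshold for the boundary data $W_0$, not for the contraction.
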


\begin{remark}
We emphasize that Proposition \ref{prop:matching:HJ} implies straightforwardly
Theorem \ref{th:MatchingHJ}.
Indeed, we observe that the only restriction is about the range of values of
$\ga \in (0,\ga_2)$.
Let us denote by $D_{\ga}^{\inn}$ the inner domain defined by $\ga$. It is clear
that, if
$\ga \geq \ga_2 >\ga_1$, then $D_{\ga}^{\inn} \subset D_{\ga_1}^{\inn}$ and
henceforth the result holds also for
values of $\ga \geq \ga_2$.

We need to impose this condition about $\ga$ just for technical reasons.
\end{remark}
In the proof of this proposition we will refer several times to the
bounds given in Theorem \ref{th:InnerImma}. In fact, we need these
bounds expressed in terms of the Fourier norm, which are given in
Proposition 4.8 of \cite{Baldoma06}, instead of the ones given in
this theorem, which use the classical supremmum norm.

Let us point out that using the bounds of Proposition 4.8 of
\cite{Baldoma06}  and Corollary
\ref{coro:Transition:CotesGeneradora} leads to a bound of
$\pa_z\varphi$ of order 1 with respect to $\eps$. Nevertheless, this
bound is too rough to prove later the asymptotic formula for the
splitting of separatrices and therefore we will need the improved
estimates given in Proposition \ref{prop:matching:HJ}.

The proof of  Proposition \ref{prop:matching:HJ} goes as follows.
First in Section \ref{subsec:matching:HJtoPDE} we obtain a
(non-homogeneous) linear partial differential equation satisfied by
$\varphi=\psi-\psi_0$. Then, in Section
\ref{subsec:matching:initialcondition}, we obtain quantitative
estimates of $\pa_z\varphi$ in the \emph{transition domain}
$\tri_{\C,\ol \C}^{+,u}$ defined as
\begin{equation}\label{def:DominisTransInner}
\tri_{ \C,\ol \C}^{\pm,u}=\left\{z\in\CC; ia+\eps z\in
D^{\out,u}_{\rho_2,\ol\C\eps^\gamma}\cap D_{\kk, \C}^{\inn,
\pm,u}\right\},
\end{equation}
where $\ast=u,s$ (see Figure \ref{fig:InnerVarInner}), which allow
us to obtain an integral equation satisfied by $\pa_z\varphi$.
Finally, in Sections \ref{subsec:matching:fixedPoint:lmajor} and
\ref{subsec:matching:fixedPoint:ligual}  we obtain the improved
bound for $\pa_z\varphi$ for the cases $\ell-2r>0$ and $\ell-2r=0$
respectively, proving Proposition \ref{prop:matching:HJ}.

\paragraph{The Hamilton-Jacobi equation}\label{subsec:matching:HJtoPDE}

First we look for the equation satisfied by
\begin{equation}\label{def:perturbacioInner}
\varphi=\psi-\psi_0.
\end{equation}
Subtracting the Hamilton-Jacobi equations \eqref{eq:HJGeneralInner}
and \eqref{eq:HJEqInner}, one obtains
\[
\pa_\tau\varphi
+\HH(\pa_z\psi_0+\pa_z\varphi,z,\tau)-\HH_0(\pa_z\psi_0,z,\tau)=0.
\]
Taking into account that we already know the existence of $\varphi$,
we know that it is also solution of
\begin{equation}\label{eq:PDEMatchin:enZ}
\LL\varphi=\WW(\pa_z\varphi,z,\tau),
\end{equation}
where $\LL$ is the operator defined in \eqref{def:L} and
\begin{equation}\label{eq:matcing:RHS}
\WW(w,z,\tau)=-L(z,\tau)-\left(Q_1(\tau)\frac{\hmu}{z^{\ell-2r}}+M(z,
\tau)\right)w,
\end{equation}
where $Q_1$ is the function defined in \eqref{def:FunctionsQ} and
\begin{align}
L(z,\tau)&=
\HH(\pa_z\psi_0,z,\tau)-\HH_0(\pa_z\psi_0,z,\tau)\label{def:matching:L}\\
M(z,\tau)&=\int_0^1 \pa_w
\HH\left(\pa_z\psi_0(z,\tau)+s\pa_z\varphi(z,\tau),z,\tau\right)ds-1-
Q_1(\tau)\frac{\hmu}{z^{\ell-2r}},\label{def:matching:M}
\end{align}
where $\HH$ and $\HH_0$ are the Hamiltonians defined in
\eqref{Hamiltonia:varInner} and \eqref{Hamiltonia:EqInner}
respectively. Even if $M$ depends on $\varphi$, since its existence
is already known, $M$ can be seen as a function depending on the
variables $z$ and $\tau$, and then equation
\eqref{eq:PDEMatchin:enZ} can be seen as a linear equation. This
fact simplifies considerably the obtention of the estimates for
$\varphi$.

Let us point out that the term $\hmu Q_1(\tau )z^{-(\ell-2r)}$ in
\eqref{eq:matcing:RHS} behaves in a completely different way in the cases
$\ell-2r>0$ and $\ell-2r=0$, since in the first case is small for
$z\in \DD_{\kk,c}^{\inn,+,u}$ and in the second is not. For this
reason, we split the proof of Proposition \ref{prop:matching:HJ} into
these two cases.

Finally in this section, we state the following lemma, which gives
some properties of the functions involved in equation
\eqref{eq:PDEMatchin:enZ}.
\begin{lemma}\label{lemma:matching:cotes}
Let $\kk\geq\kk_5$ and $\C>0$. The functions $L$
and $M$ defined in \eqref{def:matching:L} and \eqref{def:matching:M}
respectively, satisfy the following properties.
\begin{enumerate}
\item $L\in \ZZZ_{2r-\frac{1}{\q},\kk,\C,\sigma}$ and satisfies
\[
\left\| L\right\|_{2r-\frac{1}{\q},\kk,\C,\sigma}\leq
K\eps^{\frac{1}{\q}}.
\]
\item $M\in \ZZZ_{0,\kk,\C,\sigma}$ and satisfies
\[
\left\| M\right\|_{0,\kk,\C,\sigma}\leq \frac{K}{\kk^{\ell-2r+1}}.
\]
\end{enumerate}
\end{lemma}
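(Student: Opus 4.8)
The plan is to establish the two estimates of Lemma \ref{lemma:matching:cotes} separately, working with the scaled Hamiltonians $\HH$ and $\HH_0$ and exploiting the available bounds on the generating functions $T^{u,s}$ (via $\psi^{u,s}$) near the singularities together with the bounds on $\psi_0^{u,s}$ from Theorem \ref{th:InnerImma}. For the bound on $L$ in \eqref{def:matching:L}, I would first write out explicitly the difference $\HH(\pa_z\psi_0,z,\tau)-\HH_0(\pa_z\psi_0,z,\tau)$ using the definitions \eqref{Hamiltonia:varInner} and \eqref{Hamiltonia:EqInner}. Recall $\HH(z,w,\tau)=\eps^{2r}C_+^{-2}\ol H(ia+\eps z,\eps^{-2r}C^2_+w,\tau)$, where $\ol H$ is the Hamiltonian of \eqref{def:Hbarra}; expanding $\ol H$ through \eqref{def:HamPeriodicaShiftada:muhat} and the change \eqref{eq:CanviSimplecticSeparatriu}, the terms of $\HH$ are $\frac{1}{2}z^{2r}w^2$, a potential term $\eps^{2r}C_+^{-2}\big(V(q_0(ia+\eps z)+x_p)-V(x_p)-V'(x_p)q_0(ia+\eps z)\big)$, and the perturbative term $\hmu\eps^{\ell-2r}\eps^{2r}C_+^{-2}\wh H_1\big(q_0(ia+\eps z),\dots\big)$. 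The point is that $\HH_0$ is obtained from this by taking the \emph{leading order} in $\eps$ of each piece, using the asymptotic expansions \eqref{eq:SepartriuAlPol} (or \eqref{eq:SepartriuAlPolTrig}) of $q_0,p_0$ around $ia$. So $L$ collects precisely the higher-order remainders. Since by \textbf{HP2.1} these remainders carry an extra factor of order $(u\mp ia)^{1/\q}=(\eps z)^{1/\q}$ compared with the leading terms (and analogously in the trigonometric case with $1/M=1/\q$ by convention), and since the leading terms of $\pa_z \HH$ near $z$ are $\OO(z^{-2r})$, one gains a factor $\eps^{1/\q}|z|^{-2r+1/\q}$ overall; that is exactly the claimed $\|L\|_{2r-1/\q}\le K\eps^{1/\q}$. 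The mechanical part is keeping track of which monomial of $\wh H_1$ realizes the maximal order $\ell$ and checking that its correction is genuinely one power of $(\eps z)^{1/\q}$ smaller; the definition of $\ell$ in \eqref{def:ell} and Remark \ref{remark:DefQIntrinseca} guarantee this.

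For the bound on $M$ in \eqref{def:matching:M}, I would expand $\pa_w\HH(w,z,\tau)$, which equals $z^{2r}w+(\text{perturbative part})$. Evaluating at $w=\pa_z\psi_0+s\pa_z\varphi$ and using \eqref{eq:SolucioInnerHJ}, we have $\pa_z\psi_0^{u,s}(z,\tau)=z^{-2r}+\hmu\pa_z\ol\psi_0^{u,s}(z,\tau)$ with $|z^{\ell+1}\pa_z\ol\psi_0|$ bounded (Theorem \ref{th:InnerImma}), and by Corollary \ref{coro:CotesOuter:muhat} (re-expressed in inner variables) $\pa_z\varphi$ is a priori $\OO(|z|^{-\ell-1})$. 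Then $z^{2r}\cdot\pa_z\psi_0=1+\OO(\hmu z^{2r-\ell-1})=1+\OO(\hmu z^{-1})$ since $\ell\ge 2r$; subtracting the $1$ and the designated linear perturbative term $\hmu Q_1(\tau)z^{-(\ell-2r)}$ leaves only terms that are $\OO(\kk^{-(\ell-2r+1)})$ uniformly on $\DD^{\inn,+,u}_{\kk,\C}$, because there $|z|\ge\OO(\kk)$. The key is that the \emph{leading} perturbative contribution to $\pa_w\HH$ — which is $\hmu\ell$-order at $z=ia$ — has been split off into the explicit $\hmu Q_1(\tau)z^{-(\ell-2r)}$ term inside \eqref{eq:matcing:RHS}, so $M$ only retains the subleading pieces; these are controlled either by the extra power of $z^{-1}$ coming from the $\OO((\eps z)^{1/\q})$ corrections in \eqref{eq:SepartriuAlPol}, by a factor $\hmu$, or by being higher-degree in $\pa_z\psi$. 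All of those are $\le K/\kk^{\ell-2r+1}$ once $\kk$ is large.

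The main obstacle, I expect, is bookkeeping in the trigonometric case: there $q_0$ has a logarithmic singularity while $\cos q_0,\sin q_0$ have algebraic singularities of order $2/M$, so the monomials $e^{ikx}y^l$ of $\wh H_1$ must be handled through $\cos q_0$ and $\sin q_0$ rather than through $q_0$ directly, and one must verify that the scaled Hamiltonian still has the clean form \eqref{eq:HJGeneralInner}–\eqref{Hamiltonia:EqInner} with the convention $r=1$, $\beta=M$. Remark \ref{remark:DefQIntrinseca} is designed to absorb this, giving an intrinsic definition of $Q_j$ valid in both settings, so the structure of the argument is identical; but the explicit identification of which corrections are $\OO((\eps z)^{1/M})$ requires care. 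A secondary technical point is that the a priori bound on $\pa_z\varphi$ used in estimating $M$ must be the rough $\OO(1)$ bound (coming from Corollary \ref{coro:CotesOuter:muhat} and Proposition 4.8 of \cite{Baldoma06}), not the improved one we are ultimately proving — so one has to be careful that the estimate of $M$ is genuinely a priori and does not circularly depend on Proposition \ref{prop:matching:HJ}; since equation \eqref{eq:PDEMatchin:enZ} is being treated as \emph{linear} (with $M$ a known function of $(z,\tau)$), this is legitimate. Once Lemma \ref{lemma:matching:cotes} is in hand, the improved bound on $\pa_z\varphi$ follows by a fixed-point/Gronwall argument on the integral equation obtained from \eqref{eq:PDEMatchin:enZ} using the operators $\GG,\ol\GG$ of Lemma \ref{lemma:matching:operador}, with the initial condition analysis in the transition domain $\tri^{+,u}_{\C,\ol\C}$ — but that is the content of the subsequent sections, not of this lemma.
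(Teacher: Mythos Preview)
Your proposal is correct and follows essentially the same approach as the paper: for $L$ you decompose $\HH-\HH_0$ into kinetic, potential, and perturbative pieces and bound each remainder using the $(\eps z)^{1/\q}$ corrections in the separatrix expansions \eqref{eq:SepartriuAlPol}--\eqref{eq:SepartriuAlPolTrig}; for $M$ you subtract off the $1$ and the $\hmu Q_1(\tau)z^{-(\ell-2r)}$ term and use the a priori bound $\|\pa_z\varphi\|_{\ell+1}\le K$ (from Corollary~\ref{coro:CotesOuter:muhat} and Theorem~\ref{th:InnerImma}) together with the bounds on $\pa_z\ol\psi_0$, exactly as the paper does via its splitting $M=M_1+M_2+M_3$. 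One small imprecision: you write that ``the terms of $\HH$ are $\tfrac12 z^{2r}w^2$, a potential term, \dots'' --- in fact $\tfrac12 z^{2r}w^2$ is the kinetic term of $\HH_0$, and the kinetic term of $\HH$ is $\tfrac12\,\eps^{-2r}C_+^2 w^2/p_0^2(ia+\eps z)$, whose difference from $\tfrac12 z^{2r}w^2$ is precisely the first piece ($L_1$ in the paper's notation) that contributes the $\eps^{1/\q}$ factor; but your subsequent sentence shows you understand this.
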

\begin{proof}
We prove the lemma in the polynomial case. The trigonometric case
can be done analogously taking into account Remark \ref{remark:DefQIntrinseca}.

First we bound $L$. Using the definitions of $\HH$, $\ol H$, $\wh H$
and $\HH_0$ in \eqref{Hamiltonia:varInner}, \eqref{def:Hbarra},
\eqref{def:HamPeriodicaShiftada} and \eqref{Hamiltonia:EqInner}
respectively, we split it as $L=L_1+L_2+L_3+L_4$ with
\[
\begin{split}
L_1(z,\tau)=&\frac{1}{2}\left(\frac{C_+^2}{\eps^{2r}p_0^2(ia+\eps
z)}-z^{2r}\right)\left(\pa_z\psi_0\right)^2\\
L_2(z,\tau)=&\frac{\eps^{2r}}{C_+^2}\left(V(q_0\left(ia+\eps
z)+x_p(\tau)\right)-V\left(x_p(\tau)\right)-V'\left(x_p(\tau)\right)q_0(ia+\eps
z)\right)\\
&-\frac{1}{2z^{2r}}\\
L_3(z,\tau)=&\frac{\hmu\eps^\ell}{C_+^2}\wh H_1^1\left(q_0(ia+\eps
z),C_+^2\eps^{-2r}\pa_z\psi_0(z,\tau),\tau\right)\\
&-\frac{\hmu}{z^\ell}\sum_{(r-1)k+rl=\ell}a_{kl}(\tau)\frac{C_+^{k+l-2}}{(1-r)^k
}\left(z^{2r}\pa_z\psi_0(z,\tau)\right)^l\\
L_4(z,\tau)=&\frac{\hmu\eps^{\ell+1}}{C_+^2}\wh
H_1^2\left(q_0(ia+\eps z),C_+^2\eps^{-2r}\pa_z\psi_0(z,\tau)\right).
\end{split}
\]
Taking into account the properties of $p_0(u)$ in
\eqref{eq:SepartriuAlPol} and Theorem \ref{th:InnerImma}, one can
see that
\[
\left\| L_1\right\|_{2r-\frac{1}{\q},\kk,\C,\sigma}\leq
K\eps^{\frac{1}{\q}}.
\]
For $L_2$ one has to take into account that $V(q_0(u))=-p^2_0(u)/2$,
use \eqref{eq:PotencialInfinit} and the bound of $x_p(\tau)$ in
Proposition \ref{prop:periodica}. Then, one obtains
\[
\left\| L\right\|_{2r-\frac{1}{\q},\kk,\C,\sigma}\leq
K\eps^{\frac{1}{\q}}.
\]
To bound the third term, using the definition of $\wh H_1^1$ in
\eqref{def:HamPertorbat:H1} and also \eqref{eq:SepartriuAlPol}, one
can rewrite it as
\[
\begin{split}
L_3(z,\tau)=&\hmu\eps^{\ell-(r-1)k-rl}\sum_{2\leq k+l\leq
N}a_{kl}(\tau)\frac{C_+^{k+l-2}}{(1-r)^k}\left(\frac{1}{z^{r-1}}+
\OO\left(\frac{\eps^\frac{1}{\q}}{z^{r-1-\frac{1}{\q}}}
\right)\right)^k\left(z^r\pa_z\psi\right)^l\\
&-\frac{\hmu}{z^\ell}\sum_{(r-1)k+rl=\ell}a_{kl}(\tau)\frac{C_+^{k+l-2}}{(1-r)^k
}\left(z^{2r}\pa_z\psi_0(z,\tau)\right)^l.
\end{split}
\]
Then, it is easy to see that $L_3\in
\ZZZ_{\ell-\frac{1}{\q},\kk,\C,\sigma}\subset\ZZZ_{2r-\frac{1}{\q},\kk,\C,\sigma
}$
and
\[
\left\| L_3\right\|_{2r-\frac{1}{\q},\kk,\C,\sigma}\leq K\left\|
L_3\right\|_{\ell-\frac{1}{\q},\kk,\C,\sigma}\leq
K\eps^{\frac{1}{\q}}.
\]
The bound of $L_4$ is straightforward.

For the bound of $M$, we split it as $M=M_1+M_2+M_3$ with
\[
\begin{split}
M_1(z,\tau)&=\pa_w\HH_0\left(\pa_z\psi_0,z,\tau\right)-
Q_1(\tau)\frac{\hmu}{z^{\ell-2r}}-1\\
M_2(z,\tau)&=\int_0^1\left(\pa_w\HH_0\left(\pa_z\psi_0+s\pa_z\varphi,z,
\tau\right)-\pa_w\HH_0\left(\pa_z\psi_0,z,\tau\right)\right)ds\\
M_3(z,\tau)&=\int_0^1\left(\pa_w\HH\left(\pa_z\psi_0+s\pa_z\varphi,z,
\tau\right)-\pa_w\HH_0\left(\pa_z\psi_0+s\pa_z\varphi,z,\tau\right)\right)ds
\end{split}
\]
and we bound each term.

Taking into account the definitions of $\HH_0$ and $Q_j$ in
\eqref{Hamiltonia:EqInner} and \eqref{def:FunctionsQ} respectively,
and the properties of $\psi_0$ given by Theorem \ref{th:InnerImma},
one can see that $M_1\in \ZZZ_{\ell-2r+1,\kk,\C,\sigma}$ and
$\|M_1\|_{\ell-2r+1,\kk,\C,\sigma}\leq K$, which implies
\[
\|M_1\|_{0,\kk,\C,\sigma}\leq\frac{K}{\kk^{\ell-2r+1}}.
\]
For the second term, let us recall that, using the definition of
$T_0$ in \eqref{def:T00}, by Theorems \ref{th:Extensio:Trig} (see
also Section \ref{subsec:ExtensioFinal}) and \ref{th:InnerImma}, we
have an \emph{a priori} estimate for $\pa_z\varphi$,
\[
\|\pa_z\varphi\|_{\ell+1,\kk,\C,\sigma}\leq K.
\]
Then, it is enough to apply again the mean value theorem and the
bounds of $\psi_0$ in Theorem \ref{th:InnerImma} to obtain
\[
\|M_2\|_{0,\kk,\C,\sigma}\leq\frac{K}{\kk^{\ell-2r+1}}.
\]
For $M_3$, it is enough to proceed as in the bound for $L$ to obtain
\[
\|M_3\|_{0,\kk,\C,\sigma}\leq K\eps^{\frac{\ga}{\q}}.
\]
\end{proof}

\paragraph{The initial condition in the transition
domains}\label{subsec:matching:initialcondition}

To obtain better estimates of $\pa_z\varphi$ we  use an integral equation. To
obtain
it from \eqref{eq:PDEMatchin:enZ} we need initial conditions.
Therefore, we take constants $\C_1<\C_0'<\C_0$ an we look for them
in the transition domains $\tri_{\C_0, \C_0'}^{+,u}\times
\TT_\sigma$, defined in \eqref{def:DominisTransInner} (see also
Figure \ref{fig:InnerVarInner}). In this domain, the next lemma
gives sharp estimates for the function $\pa_z\varphi$. We abuse
notation and we use the norms defined in Section
\ref{sec:Transicio}, even if here the suprema are taken in
$\tri_{\C_0,\C_0'}^{+,u}$.
\begin{lemma}\label{lemma:matching:CondInicial:general}
Let $\gamma\in (0,\gamma_2)$, where $\ga_2$ is defined
in \eqref{def:gamma2}, and $\eps_0>0$ small enough. Then, for
$\eps\in(0,\eps_0)$, the function $\pa_z\varphi$ restricted to
$\tri_{\C_0,\C_0'}^{+,u}$ satisfies
\[
\left\|\pa_z\varphi\right\|_{0,\sigma}\leq
K\eps^{2r(1-\gamma)+\frac{\gamma}{\q}}.
\]
\end{lemma}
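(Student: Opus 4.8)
The statement to prove is Lemma~\ref{lemma:matching:CondInicial:general}: a bound for $\pa_z\varphi$ restricted to the transition domain $\tri_{\C_0,\C_0'}^{+,u}$, namely $\|\pa_z\varphi\|_{0,\sigma}\le K\eps^{2r(1-\gamma)+\gamma/\q}$, where $\varphi=\psi^u-\psi_0^u$.

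\textbf{Overall strategy.} The plan is to exploit the fact that in the transition domain $\tri_{\C_0,\C_0'}^{+,u}$ we have \emph{two independent} descriptions of the unstable generating function: on one hand the \emph{outer} estimates from Theorems~\ref{th:Extensio:Trig} and \ref{th:ExtensioFinal} (via Corollary~\ref{coro:CotesOuter:muhat}), which control $\pa_u T^u - \pa_u T_0$ by $K|\hmu|\eps^{\ell-2r+1}|u^2+a^2|^{-(\ell+1)}$; and on the other hand the inner solution $\psi_0^u$ of Theorem~\ref{th:InnerImma}, whose derivative $\pa_z\psi_0^u$ has the explicit leading term $z^{-2r}$ plus a correction $\hmu\,\pa_z\ol\psi_0^u$ controlled by $|z|^{-(\ell+1)}$. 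Since the transition domain sits at distance $|u-ia|\sim\eps^\gamma$ from the singularity, I would translate both bounds into the inner variable $z=\eps^{-1}(u-ia)$ using the scaling \eqref{eq:FuncioGeneradoraInner}, $\psi^{u}(z,\tau)=\eps^{2r-1}C_+^{-2}T^{u}(ia+\eps z,\tau)$, so that $\pa_z\psi^u = \eps^{2r}C_+^{-2}\pa_u T^u(ia+\eps z,\tau)$, and then subtract.

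\textbf{Key steps.} First I would record precisely what each theorem gives on $\tri_{\C_0,\C_0'}^{+,u}$. From Corollary~\ref{coro:CotesOuter:muhat}, rewriting in the inner variable, $\pa_z\psi^u(z,\tau) = \eps^{2r}C_+^{-2}\pa_uT_0(ia+\eps z) + E_1(z,\tau)$ where, since $|u^2+a^2| = |u-ia||u+ia|\sim \eps^\gamma$ (the factor $|u+ia|$ being $\OO(1)$), the error satisfies $|E_1|\le K\eps^{2r}\cdot\eps^{\ell-2r+1}/(\eps^\gamma)^{\ell+1} = K\eps^{\ell+1-\gamma(\ell+1)} = K\eps^{(\ell+1)(1-\gamma)}$. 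Using the expansion of $\pa_uT_0$ around the singularity (as recorded right after \eqref{def:CanviVarInnerOuter}), $\eps^{2r}C_+^{-2}\pa_uT_0(ia+\eps z) = z^{-2r}(1+\OO((\eps z)^{1/\q}))$. Second, from Theorem~\ref{th:InnerImma}, $\pa_z\psi_0^u(z,\tau) = z^{-2r} + \hmu\,\pa_z\ol\psi_0^u(z,\tau)$ with $|\pa_z\ol\psi_0^u|\le K|z|^{-(\ell+1)}$, so on the transition domain $|\hmu\,\pa_z\ol\psi_0^u|\le K\eps^{-\gamma(\ell+1)}\cdot\eps^{?}$ — here one must be careful and keep $\hmu$ of order one, giving $\le K\eps^{-\gamma(\ell+1)}$, which is \emph{large}; so the $\hmu\pa_z\ol\psi_0^u$ term cannot be discarded and must be matched against $E_1$. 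Third, subtracting: $\pa_z\varphi = \pa_z\psi^u - \pa_z\psi_0^u = [z^{-2r}(1+\OO((\eps z)^{1/\q})) - z^{-2r}] + E_1 - \hmu\,\pa_z\ol\psi_0^u$. The first bracket is $\OO(z^{-2r}(\eps z)^{1/\q}) = \OO(\eps^{1/\q}z^{-2r+1/\q})$, which on $|z|\sim\eps^{\gamma-1}$ is $\OO(\eps^{1/\q}\eps^{(\gamma-1)(-2r+1/\q)}) = \OO(\eps^{1/\q + (1-\gamma)(2r-1/\q)}) = \OO(\eps^{2r(1-\gamma)+\gamma/\q})$ — exactly the claimed exponent. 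It then remains to check that $E_1$ and $\hmu\pa_z\ol\psi_0^u$ do not worsen this. For $E_1$: $(\ell+1)(1-\gamma) \ge 2r(1-\gamma)+\gamma/\q$ needs $(\ell+1-2r)(1-\gamma)\ge\gamma/\q$, i.e. $\gamma\le \q(\ell-2r+1)/(\q(\ell-2r+1)+1)=\gamma_2$, which is exactly the hypothesis $\gamma\in(0,\gamma_2)$. For the term $\hmu\pa_z\ol\psi_0^u$, whose naive bound $\eps^{-\gamma(\ell+1)}$ is \emph{not} small, the resolution is that this same term appears inside $\pa_z\psi_0^u$ and must be cancelled by the corresponding contribution to $\pa_z\psi^u$ hidden in $E_1$; the cleanest route is to not split $\pa_z T^u$ against $\pa_z T_0$ but directly against the inner solution, observing that on the overlap $\tri_{\C_0,\C_0'}^{+,u}$ both $\psi^u$ and $\psi_0^u$ solve the respective Hamilton--Jacobi equations \eqref{eq:HJGeneralInner} and \eqref{eq:HJEqInner}, so the difference of these equations plus the \emph{a priori} outer bound already available for $\varphi$ forces the stated estimate.

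\textbf{Main obstacle.} The delicate point is precisely the treatment of the $\hmu\,\pa_z\ol\psi_0^u$ contribution: a crude triangle-inequality split of $T^u - T_0$ against the inner expansion produces a term that is algebraically large in $\eps^{-1}$, so one cannot simply combine Corollary~\ref{coro:CotesOuter:muhat} with Theorem~\ref{th:InnerImma} term-by-term. The right viewpoint is that $\ol\psi_0^u$ is itself the first-order (in $\hmu$) perturbation of the inner equation, and the outer function $T^u$ also contains this perturbative correction to the same accuracy; so the honest argument is to write $\pa_z\varphi$ as the solution of the linear equation \eqref{eq:PDEMatchin:enZ} with the already-known \emph{a priori} bound $\|\pa_z\varphi\|_{\ell+1,\kk,\C,\sigma}\le K$ (from combining the outer and inner results, as quoted in the proof of Lemma~\ref{lemma:matching:cotes}), restrict to $\tri_{\C_0,\C_0'}^{+,u}$ where $|z|\sim\eps^{\gamma-1}$, and convert the weighted bound into the unweighted one: $\|\pa_z\varphi\|_{0,\sigma}\le K|z|^{-(\ell+1)}\cdot\;$ (refined factor). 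Getting the refined factor $\eps^{1/\q}$ — rather than just the crude $1$ — requires revisiting the construction of $\psi^u$ in the transition domain and extracting the $(\eps z)^{1/\q}$ gain coming from the subleading term in the expansion \eqref{eq:SepartriuAlPol} of the separatrix near $\pm ia$; this is where the hypothesis $\gamma<\gamma_2$ is genuinely used to absorb the outer error $\eps^{(\ell+1)(1-\gamma)}$ into $\eps^{2r(1-\gamma)+\gamma/\q}$. Once that bookkeeping is done, the bound follows; I expect the technical heart to be this matching of the two subleading expansions at distance $\eps^\gamma$ rather than any new functional-analytic input.
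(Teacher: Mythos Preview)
Your initial three-term split
\[
\pa_z\varphi = \eps^{2r}C_+^{-2}\big(\pa_uT^u-\pa_uT_0\big) \;+\; \Big(\eps^{2r}C_+^{-2}p_0^2(ia+\eps z)-z^{-2r}\Big) \;-\; \hmu\,\pa_z\ol\psi_0^u
\]
is exactly the decomposition the paper uses, and your treatment of the first two terms is correct. The ``main obstacle'' you identify, however, is a phantom created by an arithmetic slip in the third term.

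You correctly record (and use, for the middle term) that on the transition domain $|z|\sim\eps^{\gamma-1}$, since $|u-ia|\sim\eps^\gamma$ and $z=\eps^{-1}(u-ia)$. But then for $\hmu\,\pa_z\ol\psi_0^u$ you write $|z|^{-(\ell+1)}\sim\eps^{-\gamma(\ell+1)}$ and declare the term ``large''. That exponent is wrong: with $|z|\sim\eps^{\gamma-1}$ one has
\[
|z|^{-(\ell+1)}\sim \eps^{-(\gamma-1)(\ell+1)}=\eps^{(1-\gamma)(\ell+1)},
\]
which is a \emph{positive} power of $\eps$. Hence the bound from Theorem~\ref{th:InnerImma} (in its Fourier-norm version, Proposition~4.8 of \cite{Baldoma06}) gives
\[
\|\hmu\,\pa_z\ol\psi_0^u\|_{0,\sigma}\le K\eps^{(1-\gamma)(\ell+1)},
\]
precisely the same exponent as your $E_1$. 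You have already checked that $(1-\gamma)(\ell+1)\ge 2r(1-\gamma)+\gamma/\q$ is equivalent to $\gamma\le\gamma_2$, so this term is absorbed by the hypothesis $\gamma\in(0,\gamma_2)$ with no cancellation needed.

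Once this is corrected, the crude triangle-inequality split you started with already proves the lemma; the detour through equation~\eqref{eq:PDEMatchin:enZ} and the discussion of hidden cancellations between $E_1$ and $\hmu\,\pa_z\ol\psi_0^u$ are unnecessary.
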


\begin{proof}
Considering the functions $T=T_0+T_1$,  obtained in Proposition
\ref{prop:extensio:trig} (see also Section
\ref{subsec:ExtensioFinal}), and
\[
\psi_0(z,\tau)=-\frac{1}{(2r-1)z^{2r-1}}+\hmu\ol\psi_0(z,\tau)+K,
\] obtained in Theorem \ref{th:InnerImma}, and recalling
that $\pa_u T_0(u)=p_0^2(u)$, we split $\pa_z\varphi$ as
\[
\begin{split}
\pa_z\varphi(z,\tau)=&\pa_z\psi(z,\tau)-\pa_z\psi_0(z,\tau)\\
=&\eps^{2r}C_+^2\bigg(\pa_uT(\eps z+ia,\tau)-\pa_uT_0(\eps
z+ia)\bigg)\\
&+\left(\eps^{2r}C_+^2p_0^2(\eps
z+ia)-\frac{1}{z^{2r}}\right)-\hmu\pa_z\ol\psi_0(z,\tau).
\end{split}
\]
We bound each term. For the first term it is enough to apply the
result obtained in Proposition \ref{prop:extensio:trig} to obtain
\[
\left\|\eps^{2r}C_+^2\Big(\pa_uT(\eps z+ia,\tau)-\pa_uT_0(\eps
z+ia)\Big)\right\|_{0,\sigma}\leq K\eps^{(1-\ga)(\ell+1)}.
\]
Then, since $\ga\in (0,\ga_2)$, $(\ell+1)(1-\gamma)\geq
2r(1-\gamma)+\frac{\gamma}{\q}$, we obtain the desired bound. For
the second term we use \eqref{eq:SepartriuAlPol}. Finally, the bound
of the third term is a direct consequence of Proposition 4.8 of
\cite{Baldoma06}. This proposition states the same results of
Theorem \ref{th:InnerImma} but bounds $\ol\psi_0(z,\tau)$ using
Fourier norms instead of using classical supremum norm.
\end{proof}

\paragraph{The fixed point equation for
$\ell-2r>0$}\label{subsec:matching:fixedPoint:lmajor}

In this section we prove Proposition \ref{prop:matching:HJ} under
the hypothesis $\ell-2r>0$. Let us define $\ppi=\pa_z\varphi$,
which, using \eqref{eq:PDEMatchin:enZ}, is solution of
\begin{equation}\label{eq:PDEmatching:derivada}
\left(\LL\phi\right)(z,\tau)=\pa_z\left[\WW(\phi(z,\tau),z,\tau)\right],
\end{equation}
where $\LL=\pa_\tau+\pa_z$ and $\WW$ is the operator defined in
\eqref{eq:matcing:RHS}. We use this equation to obtain bounds for
$\ppi$.

To invert the operator $\LL=\pa_\tau+\pa_z$, we consider the
operator $\ol\GG$ defined in
\eqref{def:OperadorIntegralBarra:Matching}. Since the operator
$\ol\GG$ is defined acting on the Fourier harmonics, we impose a
different initial condition for each one. Recall that for the negative
harmonics we integrate from $z_1\in \DD_{\kk_5',\C_0}^{u,+}$ and for
the positive and zero harmonics from $z_2\in
\DD_{\kk_5',\C_0}^{u,+}$ (see Figure \ref{fig:InnerVarInner}) for a
fixed $\kk_5'>\kk_5$. Then, we define the function
\begin{equation}\label{def:matching:CondIni}
W_0(z,\tau)=\sum_{k<0}\pa_z\varphi^{[k]}(z_1)e^{-ik(z-z_1)}e^{ik\tau}+
\sum_{k\geq 0}\pa_z\varphi^{[k]}(z_2)e^{-ik(z-z_2)}e^{ik\tau},
\end{equation}
where $\pa_z\varphi$ is the function bounded in Lemma
\ref{lemma:matching:CondInicial:general}. The next lemma, whose
proof is straightforward, gives some properties of this function.
\begin{lemma}\label{lemma:matching:CondIni:lmajor}
The function $W_0$ defined in \eqref{def:matching:CondIni}
satisfies:
\begin{enumerate}
\item $\LL W_0=0$, where $\LL=\pa_\tau+\pa_z$.
\item $W_0\in \ZZZ_{2r-\frac{1}{\q},\sigma}$ and
\[
\left\|W_0\right\|_{2r-\frac{1}{\q},\sigma}\leq
K\eps^{\frac{1}{\q}}.
\]
\end{enumerate}
\end{lemma}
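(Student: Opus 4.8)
The plan is to prove the two statements separately; both reduce to short computations once the geometry behind the base points $z_1,z_2$ is recalled. For the first statement I would simply observe that $W_0$ is, by construction, a superposition of elementary solutions of $\LL=\pa_z+\pa_\tau$: each summand $e^{-ik(z-z_j)}e^{ik\tau}$, with $j\in\{1,2\}$, satisfies $\pa_\tau\bigl(e^{-ik(z-z_j)}e^{ik\tau}\bigr)=ik\,e^{-ik(z-z_j)}e^{ik\tau}$ and $\pa_z\bigl(e^{-ik(z-z_j)}e^{ik\tau}\bigr)=-ik\,e^{-ik(z-z_j)}e^{ik\tau}$, so $\LL$ annihilates it; summing over $k$ gives $\LL W_0=0$. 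This is precisely the reason $W_0$ is the right object to use as the ``initial condition'' when one inverts $\LL$ with $\ol\GG$ in \eqref{def:OperadorIntegralBarra:Matching}.

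For the second statement I would read the Fourier coefficients off \eqref{def:matching:CondIni}: $W_0^{[k]}(z)=\pa_z\varphi^{[k]}(z_1)e^{-ik(z-z_1)}$ for $k<0$ and $W_0^{[k]}(z)=\pa_z\varphi^{[k]}(z_2)e^{-ik(z-z_2)}$ for $k\ge0$. The structural point — the same one that makes $\GG$ and $\ol\GG$ bounded in Lemma \ref{lemma:matching:operador} — is that $z_1,z_2$ are the vertices of $\DD^{\inn,+,u}_{\kk_5',\C_0}$, chosen so that the exponentials are non‑increasing along the relevant integration directions; hence $|e^{-ik(z-z_1)}|\le K$ for all $k<0$ and $|e^{-ik(z-z_2)}|\le K$ for all $k\ge0$, uniformly for $z\in\DD^{\inn,+,u}_{\kk_6,\C_1}$. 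In the inner domain \eqref{def:DominisInnerEnz} one also has $|z|\le K\eps^{\gamma-1}$, and since $2r-1/\q>0$ (because $r>1$, resp. $r=1$ with $\q=M$, while $1/\q\le1$), this gives $\sup_z|z^{2r-1/\q}|\le K\eps^{-(1-\gamma)(2r-1/\q)}$. Therefore
\[
\bigl\|W_0^{[k]}\bigr\|_{2r-1/\q}\le K\,\eps^{-(1-\gamma)(2r-1/\q)}\,\bigl|\pa_z\varphi^{[k]}(z_{j(k)})\bigr|,\qquad j(k)\in\{1,2\}.
\]
Now $z_1,z_2$ lie in the transition domain $\tri^{+,u}_{\C_0,\C_0'}$ (this is exactly why the nested constants $\C_1<\C_0'<\C_0$ were introduced), where Lemma \ref{lemma:matching:CondInicial:general} applies and yields $\sum_{k<0}|\pa_z\varphi^{[k]}(z_1)|e^{|k|\sigma}+\sum_{k\ge0}|\pa_z\varphi^{[k]}(z_2)|e^{|k|\sigma}\le\|\pa_z\varphi\|_{0,\sigma}\le K\eps^{2r(1-\gamma)+\gamma/\q}$. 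Multiplying the displayed inequality by $e^{|k|\sigma}$ and summing over $k$, the powers of $\eps$ combine as $-(1-\gamma)(2r-1/\q)+2r(1-\gamma)+\gamma/\q=(1-\gamma)/\q+\gamma/\q=1/\q$, which gives $\|W_0\|_{2r-1/\q,\sigma}\le K\eps^{1/\q}$, as claimed; in particular $W_0\in\ZZZ_{2r-1/\q,\kk_6,\C_1,\sigma}$.

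The only point requiring a little care is the uniform‑in‑$k$ boundedness of $e^{-ik(z-z_j)}$ over the whole inner domain; this is a purely geometric fact about the shape of \eqref{def:DominisInnerEnz} and the placement of its vertices, already used implicitly in the mapping properties of $\GG$ and $\ol\GG$, so it is not a genuine obstacle. I would also verify at the outset that $z_1,z_2\in\tri^{+,u}_{\C_0,\C_0'}$, so that the estimate of Lemma \ref{lemma:matching:CondInicial:general} is available at those points; after that, everything is bookkeeping of $\eps$‑exponents. Hence this lemma is indeed elementary, which is consistent with the remark preceding it.
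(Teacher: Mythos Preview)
Your proposal is correct and is precisely the routine verification the paper has in mind when it omits the proof as ``straightforward.'' The only nontrivial ingredients---the geometric boundedness of the exponentials $e^{-ik(z-z_j)}$ (which is exactly the mechanism behind Lemma~\ref{lemma:matching:operador}) and the availability of Lemma~\ref{lemma:matching:CondInicial:general} at the outer vertices $z_1,z_2\in\tri^{+,u}_{\C_0,\C_0'}$---are correctly identified, and your $\eps$-exponent bookkeeping is accurate.
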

Then, the function  $\phi$ is a  solution  of the integral equation
\[
\phi=W_0+\ol\GG\circ \WW(\phi).
\]
We use a fixed point argument to obtain good estimates of $\phi$. We
study $\phi\in\ZZZ_{2r-\frac{1}{\q},\sigma}$ as a fixed point of the
operator
\begin{equation}\label{def:matching:FuncionalPtFix}
\ol \WW=W_0+\ol\GG\circ \WW.
\end{equation}
\begin{lemma}\label{lemma:matching:PuntFix}
Let $\gamma\in (0,\gamma_2)$, $\eps_0$ small enough and
$\kk_5'>\kk_5$ big enough. Then, for $\eps\in (0,\eps_0)$, the
operator $\ol\WW$ is contractive from
$\ZZZ_{2r-\frac{1}{\q},\sigma}$ to itself.

Then, there exists a constant $b_{10}>0$ such that $\ppi$, the unique
fixed point of $\ol\WW$, satisfies
\[
\| \ppi\|_{2r-\frac{1}{\q},\sigma} \leq b_{10}\eps^{\frac{1}{\q}}.
\]
\end{lemma}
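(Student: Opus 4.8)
The plan is to establish Lemma \ref{lemma:matching:PuntFix}, and thereby Proposition \ref{prop:matching:HJ} in the case $\ell-2r>0$, by a Banach fixed point argument for the operator $\ol\WW=W_0+\ol\GG\circ\WW$ of \eqref{def:matching:FuncionalPtFix} acting on the ball $\ol B(b_{10}\eps^{1/\q})\subset\ZZZ_{2r-1/\q,\sigma}$. The first step is to record that $\ppi:=\pa_z\varphi$ is itself a solution of the equation $\phi=\ol\WW(\phi)$. Indeed, $\varphi=\psi^u-\psi_0^u$ already exists and solves the linear equation \eqref{eq:PDEMatchin:enZ}, i.e.\ $\LL\varphi=\WW(\pa_z\varphi,z,\tau)$; since $\ol\GG$ of \eqref{def:OperadorIntegralBarra:Matching} is built, harmonic by harmonic, so as to invert $\LL$ up to the boundary terms of the prescribed integration paths, applying it to \eqref{eq:PDEMatchin:enZ} reproduces $\pa_z\varphi$ up to exactly the initial contributions collected in $W_0$ (the Fourier coefficients of $\pa_z\varphi$ at the vertices $z_1,z_2$ of the inner domain, cf.\ \eqref{def:matching:CondIni}). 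Hence $\ppi=\ol\WW(\ppi)$, and once the contraction is in place, uniqueness of the fixed point identifies it with $\pa_z\varphi$, which is the content of Proposition \ref{prop:matching:HJ}.

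Next I would bound $\ol\WW(0)=W_0-\ol\GG(L)$, where $L$ is the function in \eqref{def:matching:L} coming from $\WW(0,z,\tau)=-L(z,\tau)$. Lemma \ref{lemma:matching:CondIni:lmajor} gives $\|W_0\|_{2r-1/\q,\sigma}\le K\eps^{1/\q}$ — here the hypothesis $\ga\in(0,\ga_2)$ enters, through Lemma \ref{lemma:matching:CondInicial:general} applied at $z_1,z_2$, which requires these vertices to lie in the transition domain $\tri_{\C_0,\C_0'}^{+,u}$. Lemma \ref{lemma:matching:cotes} gives $\|L\|_{2r-1/\q,\sigma}\le K\eps^{1/\q}$, and Lemma \ref{lemma:matching:operador}(3) shows $\ol\GG$ is bounded on $\ZZZ_{2r-1/\q,\sigma}$. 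Altogether $\|\ol\WW(0)\|_{2r-1/\q,\sigma}\le \tfrac{b_{10}}{2}\eps^{1/\q}$ for a suitable $b_{10}>0$.

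For the Lipschitz estimate, take $\phi_1,\phi_2\in\ZZZ_{2r-1/\q,\sigma}$; the constant term of $\WW$ cancels, so
\[
\ol\WW(\phi_1)-\ol\WW(\phi_2)=-\ol\GG\!\left(\Big(Q_1(\tau)\tfrac{\hmu}{z^{\ell-2r}}+M(z,\tau)\Big)(\phi_1-\phi_2)\right).
\]
Because $\ell-2r>0$ and $|z|\ge\kk$ on the inner domain, the periodic analytic function $Q_1$ of \eqref{def:FunctionsQ} satisfies $\|Q_1(\tau)\hmu z^{-(\ell-2r)}\|_{0,\sigma}\le K|\hmu|\kk^{-(\ell-2r)}$, while Lemma \ref{lemma:matching:cotes} gives $\|M\|_{0,\sigma}\le K\kk^{-(\ell-2r+1)}$. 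Multiplying by $\phi_1-\phi_2$ (Lemma \ref{lemma:matching:propsnormes}(2)) and using the boundedness of $\ol\GG$, one obtains $\Lip\ol\WW\le K\kk^{-(\ell-2r)}$ uniformly for $\hmu\in B(\hmu_0)$. Taking $\kk=\kk_5'>\kk_5$ large enough and $\eps_0$ small makes $\Lip\ol\WW\le 1/2$, so $\ol\WW$ maps $\ol B(b_{10}\eps^{1/\q})$ into itself and is a contraction there; the fixed point $\ppi$ then satisfies $\|\ppi\|_{2r-1/\q,\sigma}\le b_{10}\eps^{1/\q}$, and by the first step $\ppi=\pa_z\varphi$.

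The analysis is elementary; the main obstacle is the bookkeeping of the constants. One must choose $\kk_5<\kk_5'<\kk_6$ and $\C_1<\C_0'<\C_0$ so that the Cauchy-type shrinkages ($\kk\mapsto 2\kk$, $\C\mapsto\wh\C<\C$) incurred whenever a function is differentiated (Lemma \ref{lemma:matching:propsnormes}(3)) still leave every object defined on a domain containing $\DD_{\kk_6,\C_1}^{\inn,+,u}$, and so that the vertices $z_1,z_2$ used both in $W_0$ and in $\ol\GG$ sit inside the transition domain where the sharp initial estimate of Lemma \ref{lemma:matching:CondInicial:general} holds. The structural reason the argument closes is that $\ell-2r>0$ forces the singular coefficient $\hmu Q_1(\tau)z^{-(\ell-2r)}$ to be small in the inner domain; when $\ell-2r=0$ this coefficient is merely $O(1)$ and a genuine change of variables is needed instead, which is carried out in Section \ref{subsec:matching:fixedPoint:ligual}.
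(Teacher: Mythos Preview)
Your proposal is correct and follows essentially the same route as the paper: bound $\ol\WW(0)=W_0+\ol\GG\circ\WW(0)$ via Lemmas \ref{lemma:matching:CondIni:lmajor}, \ref{lemma:matching:cotes} and \ref{lemma:matching:operador}, then show the Lipschitz constant is $O(\kk^{-(\ell-2r)})$ using that $Q_1(\tau)\hmu z^{-(\ell-2r)}$ and $M$ are small in $\ZZZ_{0,\sigma}$, and finally take $\kk_5'$ large. One small difference is that the paper, exploiting that $\WW$ is affine in $\phi$, proves the contraction directly on all of $\ZZZ_{2r-1/\q,\sigma}$ rather than on a ball, and then identifies the unique fixed point with $\pa_z\varphi$ a posteriori; your explicit discussion of why $\ppi=\pa_z\varphi$ is a fixed point (via the boundary terms in $\ol\GG$ matching $W_0$) makes this identification more transparent than the paper does.
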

\begin{proof}
$\ol \WW$ sends $\ZZZ_{2r-\frac{1}{\q},\sigma}$ to itself. To see
that $\ol\WW$ is contractive from $\ZZZ_{2r-\frac{1}{\q},\sigma}$ to
itself, let us consider $\phi_1,\phi_2\in
\ZZZ_{2r-\frac{1}{\q},\sigma}$. Then, applying Lemmas
\ref{lemma:matching:operador}  and \ref{lemma:matching:cotes} and
the definition of $\WW$ in \eqref{eq:matcing:RHS}, and increasing
$\kk_5'>0$ if necessary,
\[
\begin{split}
\left\|\ol\WW(\phi_2)-\ol\WW(\phi_1)\right\|_{2r-\frac{1}{\q},\sigma}&\leq
K\left\|\WW(\phi_2)-\WW(\phi_1)\right\|_{2r-\frac{1}{\q},\sigma}\\
&\leq
K\left\|\left(Q_1(\tau)\frac{\hmu}{z^{\ell-2r}}+M(z,\tau)\right)
\cdot
(\phi_2-\phi_1)\right\|_{2r-\frac{1}{\q},\sigma}\\
 &\leq
\frac{K}{(\kk_5')^{\ell-2r}}\left\|\phi_2-\phi_1\right\|_{2r-\frac{1}{\q},\sigma
}\\
&\leq
\frac{1}{2}\left\|\phi_2-\phi_1\right\|_{2r-\frac{1}{\q},\sigma}.
\end{split}
\]
Then $\ol\WW$ is contractive from $ \ZZZ_{2r-\frac{1}{\q},\sigma}$
to itself, and then it has a unique fixed point $\ppi$.

To obtain a bound for $\ppi$, it is enough to take into account that
$\|\ppi\|_{2r-\frac{1}{\q},\sigma}\leq
2\|\ol\WW(0)\|_{2r-\frac{1}{\q},\sigma}$. By the definition of
$\ol\WW$  in \eqref{def:matching:FuncionalPtFix}, we have that
$\ol\WW(0)=W_0+\ol\GG( L)$. Then, applying Lemmas
\ref{lemma:matching:operador}, \ref{lemma:matching:cotes} and
\ref{lemma:matching:CondInicial:general}, there exists a constant
$b_{10}>0$ such that
\[
\left\|\ol\WW(0)\right\|_{2r-\frac{1}{\q},\sigma}\leq
\left\|W_0\right\|_{2r-\frac{1}{\q},\sigma}+\left\|\ol\GG\left(
L\right)\right\|_{2r-\frac{1}{\q},\sigma}\leq
\frac{b_{10}}{2}\eps^{\frac{1}{\q}}.
\]
Let us point out that since the fixed point of $\ol\WW$ is unique in
$\ZZZ_{2r-\frac{1}{\q},\sigma}$, the obtained function $\ppi$ must
coincide with $\phi=\psi^u-\psi_0^u$, where $\psi^u$ is the function
defined in \eqref{eq:FuncioGeneradoraInner} and $\psi_0^u$ is the
one given in Theorem \ref{th:InnerImma}.
\end{proof}

\paragraph{The fixed point equation for
$\ell-2r=0$}\label{subsec:matching:fixedPoint:ligual}

We devote this section to prove Proposition \ref{prop:matching:HJ}
under the hypothesis $\ell-2r=0$. Now, the term $\hmu
Q_1(\tau)z^{-(\ell-2r)}=\hmu Q_1(\tau)$ in $\WW$ (see
\eqref{eq:matcing:RHS}) is  not small. Then, following
\cite{Baldoma06}, the first step is to perform the change of
variables
\begin{equation}\label{def:CanviInner}
z=x+\hmu F_1(\tau),
\end{equation}
where $F_1$ is the function defined in \eqref{def:FunctionsF}. Then,
we define
\[
\wh \varphi(x,\tau)=\varphi\left(x+\hmu F_1(\tau),\tau\right),
\]
which satisfies equation
\begin{equation}\label{eq:PDEMatchin:enZ:ligual}
\LL\wh\varphi=\wh\WW(\pa_x\wh\varphi,x,\tau),
\end{equation}
with
\begin{equation}\label{eq:matcing:RHS:2}
\wh\WW(w,x,\tau)=L(x+\hmu F_1(\tau),\tau)+M(x+\hmu F_1(\tau),\tau)w.
\end{equation}
We study this equation through a fixed point argument, as we have
done in Section \ref{subsec:matching:fixedPoint:lmajor}. Then, we
define $\wh\ppi=\pa_x\wh\varphi$, which is a solution of
\[
\LL\wh\ppi=\pa_x\left[\wh\WW(\pa_x\wh\ppi,x,\tau)\right].
\]
Let us take $\C_0''\in (\C_0',\C_0)$ and $\kk_5''>\kk_5$. Then, we
look for $\wh\ppi$ defined for $(x,\tau)\in \DD_{\kk_5'',
\C_0''}^{\inn,+,u}\times\TT_\sigma$.

To invert the operator $\LL=\pa_\tau+\pa_x$, we consider the
operator $\ol\GG$ defined in
\eqref{def:OperadorIntegralBarra:Matching} and initial conditions as
we have done in Section \ref{subsec:matching:fixedPoint:lmajor}.
Thus, we define
\begin{equation}\label{def:matching:CondIni:ligual}
\begin{split}
\wh W_0(x,\tau)=&\sum_{k<0}\pa_z\varphi^{[k]}(x_1+\hmu
F_1(\tau))e^{-ik(x-x_1)}e^{ik\tau}\\
&+ \sum_{k\geq 0}\pa_z\varphi^{[k]}(x_2+\hmu
F_1(\tau))e^{-ik(x-x_2)}e^{ik\tau},
\end{split}
\end{equation}
where $x_1$ and $x_2$ are the vertices of $\DD_{\kk_5'',
\C_0''}^{\inn,+,u}$. Since $\C_0''\in (\C_0',\C_0)$, $x_1,x_2\in
\tri_{\C_0,\C_0'}^{+,u}$ and then $\pa_z\varphi$ is already defined
in $x_i+\mu F_1(\tau), i=1,2$ and moreover, we can use the bounds in
Lemma \ref{lemma:matching:CondInicial:general}. Then, it is
straightforward to see that $\wh W_0$ satisfies the same properties
as the function $W_0$ given in Lemma
\ref{lemma:matching:CondIni:lmajor}.

The function  $\wh\ppi$ is a  solution  of the integral equation
\[
\wh\phi=\wh W_0+\ol\GG\circ \wh\WW(\wh\phi).
\]
We study $\wh\ppi\in\ZZZ_{2r-\frac{1}{\q},\sigma}$ as a fixed point
of the operator
\begin{equation}\label{def:matching:FuncionalPtFix:ligual}
\wt \WW=\wh W_0+\ol\GG\circ\wh \WW.
\end{equation}
\begin{lemma}\label{lemma:matching:PuntFix:igual}
Let $\gamma\in (0,\gamma_2)$, $\eps_0>0$ small enough
and $\kk_5''>\kk_5$ big enough. Then,  for $\eps\in (0,\eps_0)$, the
operator $\wt\WW$ is contractive from $\ZZZ_{2r-\frac{1}{\q},
\kk_5'', \C_0'',\sigma}$ to itself.

Then, there exists a constant $b_{10}>0$ such that $\wh\ppi$, the
unique fixed point of $\wt\WW$, satisfies
\[
\| \wh\ppi\|_{2r-\frac{1}{\q},\kk_5'', \C_0'',\sigma} \leq
b_{10}\eps^{\frac{1}{\q}}.
\]
\end{lemma}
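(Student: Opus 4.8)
The statement to prove is Lemma \ref{lemma:matching:PuntFix:igual}, which asserts that the operator $\wt\WW$ defined in \eqref{def:matching:FuncionalPtFix:ligual} is contractive on $\ZZZ_{2r-\frac{1}{\q},\kk_5'',\C_0'',\sigma}$ and that its unique fixed point $\wh\ppi$ satisfies the bound $\|\wh\ppi\|_{2r-\frac{1}{\q},\kk_5'',\C_0'',\sigma}\leq b_{10}\eps^{\frac{1}{\q}}$.

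The plan is to follow exactly the same scheme as in the proof of Lemma \ref{lemma:matching:PuntFix} for the case $\ell-2r>0$, but now taking care of the change of variables \eqref{def:CanviInner} which was introduced precisely to absorb the non-small term $\hmu Q_1(\tau)$ appearing in $\WW$. First I would check that $\wt\WW$ sends $\ZZZ_{2r-\frac{1}{\q},\kk_5'',\C_0'',\sigma}$ into itself; this is immediate from the mapping properties of $\ol\GG$ in Lemma \ref{lemma:matching:operador} together with the estimates of $L$ and $M$ in Lemma \ref{lemma:matching:cotes}. To control the composed functions $L(x+\hmu F_1(\tau),\tau)$ and $M(x+\hmu F_1(\tau),\tau)$ appearing in $\wh\WW$ (see \eqref{eq:matcing:RHS:2}), I would note that the shift $x\mapsto x+\hmu F_1(\tau)$ is $\OO(1)$ bounded and maps $\DD_{\kk_5'',\C_0''}^{\inn,+,u}$ into a slightly larger inner domain $\DD_{\kk,\C}^{\inn,+,u}$ with $\kk$ still large (choosing $\kk_5''$ big enough and $\eps_0$ small), so the estimates of Lemma \ref{lemma:matching:cotes} transfer with essentially the same constants — this requires only that $|x+\hmu F_1(\tau)|\geq |x| - K|\hmu|\sim |x|$ on that domain, which holds once $\kk_5''$ exceeds some threshold.

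Next I would estimate the Lipschitz constant. Given $\wh\phi_1,\wh\phi_2\in\ZZZ_{2r-\frac{1}{\q},\kk_5'',\C_0'',\sigma}$, by linearity of $\wh\WW$ in $w$ we have $\wt\WW(\wh\phi_2)-\wt\WW(\wh\phi_1)=\ol\GG\left(M(x+\hmu F_1(\tau),\tau)\cdot(\wh\phi_2-\wh\phi_1)\right)$. Using the boundedness of $\ol\GG$ on $\ZZZ_{2r-\frac{1}{\q},\sigma}$ (third item of Lemma \ref{lemma:matching:operador}), the product estimate of Lemma \ref{lemma:matching:propsnormes}, and the crucial bound $\|M\|_{0,\kk,\C,\sigma}\leq K/\kk^{\ell-2r+1}=K/\kk$ (since $\ell-2r=0$), we get a Lipschitz constant bounded by $K/\kk_5''$, which is less than $1/2$ once $\kk_5''$ is taken large enough. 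Hence $\wt\WW$ is a contraction and has a unique fixed point $\wh\ppi$. To obtain the stated size, I would use $\|\wh\ppi\|_{2r-\frac{1}{\q},\sigma}\leq 2\|\wt\WW(0)\|_{2r-\frac{1}{\q},\sigma}$; since $\wt\WW(0)=\wh W_0+\ol\GG\left(L(x+\hmu F_1(\tau),\tau)\right)$, applying the estimate $\|L\|_{2r-\frac{1}{\q},\kk,\C,\sigma}\leq K\eps^{\frac{1}{\q}}$ from Lemma \ref{lemma:matching:cotes}, the mapping bound of $\ol\GG$, and the bound on $\wh W_0$ (which mirrors Lemma \ref{lemma:matching:CondIni:lmajor} via Lemma \ref{lemma:matching:CondInicial:general}, using $2r(1-\gamma)+\gamma/\q\geq 1/\q$ for $\gamma\in(0,\gamma_2)$), one concludes $\|\wt\WW(0)\|_{2r-\frac{1}{\q},\sigma}\leq \frac{b_{10}}{2}\eps^{\frac{1}{\q}}$. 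Finally, by uniqueness of the fixed point, $\wh\ppi$ must agree with $\pa_x\wh\varphi=\pa_x\left(\varphi(x+\hmu F_1(\tau),\tau)\right)$, so undoing \eqref{def:CanviInner} recovers the bound for $\pa_z\varphi$ on the original inner domain, which is what is needed for Proposition \ref{prop:matching:HJ}.

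The main obstacle I anticipate is the bookkeeping around the change of variables \eqref{def:CanviInner}: one must verify that the transformed domain $\DD_{\kk_5'',\C_0''}^{\inn,+,u}$ after the shift by $\hmu F_1(\tau)$ still sits inside a domain where Lemmas \ref{lemma:matching:cotes} and \ref{lemma:matching:operador} apply with the relevant exponents and slopes, and that the initial condition $\wh W_0$ in \eqref{def:matching:CondIni:ligual} is well-defined — i.e. that $x_1+\hmu F_1(\tau)$ and $x_2+\hmu F_1(\tau)$ lie in the transition domain $\tri_{\C_0,\C_0'}^{+,u}$ where $\pa_z\varphi$ has already been estimated in Lemma \ref{lemma:matching:CondInicial:general}. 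This forces the chain of constraints $\C_1<\C_0'<\C_0''<\C_0$ and $\kk_5<\kk_5''<\kk_6$ to be arranged compatibly, and $\eps_0$ to be taken small enough that $\eps^\gamma$-sized neighborhoods of $\pm ia$ behave as claimed; once these are fixed, the contraction argument itself is routine and parallels Lemma \ref{lemma:matching:PuntFix} verbatim.
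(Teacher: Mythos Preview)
Your proposal is correct and follows exactly the same route as the paper: the paper's proof simply states that the argument is completely analogous to that of Lemma~\ref{lemma:matching:PuntFix}, once one observes that the shifted functions $L(x+\hmu F_1(\tau),\tau)$ and $M(x+\hmu F_1(\tau),\tau)$ satisfy the same bounds as $L(z,\tau)$ and $M(z,\tau)$ from Lemma~\ref{lemma:matching:cotes}. Your write-up fills in precisely these details (the shift maps into a slightly larger inner domain, the Lipschitz constant is $K/\kk_5''$ since $\ell-2r=0$, and $\wh W_0$ inherits the bound from Lemma~\ref{lemma:matching:CondInicial:general}), so it is in fact more explicit than the paper's own treatment.
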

\begin{proof}
The proof of this lemma is completely analogous to the proof of
Lemma \ref{lemma:matching:PuntFix}. The only fact that one has to
take into account is that the functions $L(x+\hmu F_1(\tau),\tau)$
and $M(x+\hmu F_1(\tau),\tau)$ satisfy the same properties as
$L(z,\tau)$ and $M(z,\tau)$, which are given in Lemma
\ref{lemma:matching:cotes}.
\end{proof}
To prove Proposition \ref{prop:matching:HJ} for $\ell-2r=0$, it is
enough to undo the change of variables \eqref{def:CanviInner}. Then,
taking $\phi(z,\tau)=\wh\phi(x-\hmu F_1(\tau),\tau)$, we recover
$\pa_z\varphi$ which is defined for $(z,\tau)\in
D_{\kk_6,\C_1}^{\inn,+,u}\times\TT_\sigma$, where $\C_1<\C_0''$ and
$\kk_6>\kk_5''$.

\section{An injective solution of the partial differential equation
$\wt{\mathcal{L}}_{\eps} \xi =0$}\label{sec:CanviFinal}

In this section we prove the existence and provide useful properties of a
solution $\xi_0$ of the
equation $\wt{\mathcal{L}}_{\eps} \xi =0$ (see \eqref{eq:DifferencePDE}) of the
form
\[
\xi_0(u,\tau)=\eps\ii u-\tau+\CCC(u,\tau).
\]
The function $\CCC$ must satisfy
\begin{equation}\label{eq:General:Diff}
\LL_\eps\CCC(u,\tau)=\FF\left(\CCC\right)(u,\tau),
\end{equation}
where $\LL_\eps$ is the operator in \eqref{def:Lde},
\begin{equation}\label{eq:Operador:CanviFinal}
\FF(\CCC)(u,\tau)=-\eps\ii G(u,\tau)-G(u,\tau)\pa_u \CCC(u,\tau)
\end{equation}
and $G$ is the function defined in
\eqref{def:FuncioLAnulador:lmenor} (case $\ell-2r<0$) and
\eqref{def:FuncioLAnulador:lmajor} (case $\ell-2r\geq 0$). We devote
the rest of the section to obtain a solution of this equation in both cases.

\subsection{Banach spaces and technical lemmas}\label{sec:Banach:canvi}

This section is devoted to define
the Banach spaces and to state some technical lemmas which will be
used in Sections \ref{sec:PtFix:canvi:lmenor} and
\ref{sec:PtFix:canvi}.

We start by defining some norms. Given $\nu\geq 0$ and an analytic
function $h:R_{\kk,d}\rightarrow\CC$, where $R_{\kk,d}$ is the
domain defined in \eqref{def:DominiRaro:Interseccio}, we consider
\[
\begin{split}
\|h\|_{\nu,\kk,d}&=\sup_{u\in R_{\kk,d}}\left| \left(u^2+a^2\right)^\nu
h(u)\right|\\
\|h\|_{\ln,\kk,d}&=\sup_{u\in
R_{\kk,d}}\left|\ln\ii\left|u^2+a^2\right|\cdot h(u)\right|.
\end{split}
\]
Moreover for $2\pi$-periodic in $\tau$, analytic functions
$h:R_{\kk,d}\times\TT_\sigma\rightarrow \CC$, we consider the corresponding
Fourier
norms
\[
\begin{split}
\|h\|_{\nu,\kk,d,\sigma}&=\sum_{k\in\ZZ}\left\|h^{[k]}\right\|_{\nu,\kk,d}e^{
|k|\sigma}\\
\|h\|_{\ln,\kk,d,\sigma}&=\sum_{k\in\ZZ}\left\|h^{[k]}\right\|_{\ln,\kk,d}e^{
|k|\sigma}.
\end{split}
\]
We consider, thus, the following function spaces
\begin{equation}\label{def:BanachCanvi}
\begin{split}
\XX_{\nu,\kk,d,\sigma}&=\{
h:R_{\kk,d}\times\TT_\sigma\rightarrow\CC;\,\,\text{real-analytic},
\|h\|_{\nu,\kk,d,\sigma}<\infty\}\\
\XX_{\ln,\kk,d,\sigma}&=\{
h:R_{\kk,d}\times\TT_\sigma\rightarrow\CC;\,\,\text{real-analytic},
\|h\|_{\ln,\kk,d,\sigma}<\infty\},
\end{split}
\end{equation}
which can be checked that are a Banach spaces.

If there is no danger of confusion about the definition domain
$R_{\kk,d}$ we will denote
\[
\begin{array}{ccc}
\|\cdot\|_{\nu,\sigma}=\|\cdot\|_{\nu,\kk,d,\sigma}&\text{ and
}&\XX_{\nu,\sigma}=\XX_{\nu,\kk,d,\sigma}.
\end{array}
\]

In the next lemma, we state some properties of these Banach spaces.
\begin{lemma}\label{lemma:PropietatsNormes:canvi} The following statements hold:
\begin{enumerate}
\item If $\nu_1\geq\nu_2\geq 0$,
$\XX_{\nu_1,\sigma}\subset\XX_{\nu_2,\sigma}$ and moreover if
$h\in\XX_{\nu_1,\sigma}$,
\[
\|h\|_{\nu_2,\sigma}\leq
K(\kk\eps)^{\nu_2-\nu_1}\|h\|_{\nu_1,\sigma}.
\]
\item If $0\leq \nu_1\leq\nu_2$,
$\XX_{\nu_1,\sigma}\subset\XX_{\nu_2,\sigma}$ and moreover if
$h\in\XX_{\nu_1,\sigma}$,
\[
\|h\|_{\nu_2,\sigma}\leq K\|h\|_{\nu_1,\sigma}.
\]
\item If $h\in\XX_{\nu_1,\sigma}$ and $g\in\XX_{\nu_2,\sigma}$, then
$hg\in\XX_{\nu_1+\nu_2,\sigma}$ and
\[
\|hg\|_{\nu_1+\nu_2,\sigma}\leq
\|h\|_{\nu_1,\sigma}\|g\|_{\nu_2,\sigma}.
\]
\item Let $d>d'>0$ be such that $d-d'$ has a positive lower bound independent of
$\eps$, and $h\in
\XX_{\nu,\kk,d,\sigma}$. Then, $\pa_uh\in \XX_{\nu,2\kk,d',\sigma}$
and satisfies
\[
\|\pa_uh\|_{\nu,2\kk,d',\sigma}\leq\frac{K}{\kk\eps}
\|h\|_{\nu,\kk,d,\sigma}.
\]
\end{enumerate}
\end{lemma}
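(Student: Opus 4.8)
The plan is to reduce all four statements to two elementary geometric facts about the intersection domain $R_{\kk,d}$ of \eqref{def:DominiRaro:Interseccio}, and then apply the standard Fourier/Cauchy manipulations used throughout the paper for its other families of Banach spaces.

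The first fact I would record is that the weight $|u^2+a^2|=|u-ia||u+ia|$ is pinched: there exist $c_1,c_2>0$, independent of $\eps$ and of $\kk$ (as long as $\kk\eps<a$), with
\[
c_1\kk\eps\leq\left|u^2+a^2\right|\leq c_2\qquad\text{for all }u\in R_{\kk,d}.
\]
The upper bound holds because $R_{\kk,d}$ sits in a fixed bounded region. For the lower bound one splits $R_{\kk,d}$ into its component near $ia$ and the one near $-ia$; in the first, the constraints $|\Im u|<\tan\beta_2\Re u+a-\kk\eps$ and $|\Im u|<-\tan\beta_1\Re u+a-\kk\eps$ force $a-\Im u\geq\kk\eps$, so $|u-ia|\geq\kk\eps$, while $|u+ia|\geq a+\Im u$ stays bounded below by a fixed constant there; the second component is symmetric. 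This reading-off from the explicit inequalities is routine but fiddly, and it is the only genuinely non-automatic part of the proof.

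With this, statements (1)--(3) are immediate on the level of a single Fourier coefficient. Writing $|(u^2+a^2)^{\nu_2}h^{[k]}|=|u^2+a^2|^{\nu_2-\nu_1}\,|(u^2+a^2)^{\nu_1}h^{[k]}|$ and bounding $|u^2+a^2|^{\nu_2-\nu_1}$ by $(c_1\kk\eps)^{\nu_2-\nu_1}$ when $\nu_2\leq\nu_1$ (giving (1)) and by $c_2^{\nu_2-\nu_1}=K$ when $\nu_2\geq\nu_1$ (giving (2)), then multiplying by $e^{|k|\sigma}$ and summing over $k$, yields (1) and (2). For (3) the weight is multiplicative, $(u^2+a^2)^{\nu_1+\nu_2}=(u^2+a^2)^{\nu_1}(u^2+a^2)^{\nu_2}$; from the Cauchy product $(hg)^{[k]}=\sum_j h^{[j]}g^{[k-j]}$ one gets $\|(hg)^{[k]}\|_{\nu_1+\nu_2,\kk,d}\leq\sum_j\|h^{[j]}\|_{\nu_1,\kk,d}\|g^{[k-j]}\|_{\nu_2,\kk,d}$, and then $e^{|k|\sigma}\leq e^{|j|\sigma}e^{|k-j|\sigma}$ together with the Young inequality for the convolution gives $\|hg\|_{\nu_1+\nu_2,\sigma}\leq\|h\|_{\nu_1,\sigma}\|g\|_{\nu_2,\sigma}$ — the usual Banach-algebra argument for this Fourier norm (as in \cite{Sauzin01}).

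For (4) I would use the Cauchy integral formula on each Fourier coefficient, which requires the second geometric fact: if $d'<d$ with $d-d'$ bounded below and $\kk$ is as in the statement, then for every $u\in R_{2\kk,d'}$ the disk $\{|\zeta-u|\leq\rho\}$ with $\rho=c\kk\eps$ ($c$ a suitable small constant) is contained in $R_{\kk,d}$; indeed, passing from $2\kk$ to $\kk$ opens a collar of width $\gtrsim\kk\eps$ near the tips and along the slanted boundaries, and passing from $d'$ to $d$ opens a collar of fixed width on the remaining boundary. Then $\pa_uh^{[k]}(u)=\frac{1}{2\pi i}\oint_{|\zeta-u|=\rho}\frac{h^{[k]}(\zeta)}{(\zeta-u)^2}\,d\zeta$ gives $|\pa_uh^{[k]}(u)|\leq\rho^{-1}\sup_{|\zeta-u|=\rho}|h^{[k]}(\zeta)|$. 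Since $\rho$ is a fixed fraction of $|u\mp ia|\gtrsim\kk\eps$ and of $|u\pm ia|\sim1$, one checks $|\zeta^2+a^2|\asymp|u^2+a^2|$ uniformly on that circle, so $|(u^2+a^2)^\nu\pa_uh^{[k]}(u)|\leq K\rho^{-1}\|h^{[k]}\|_{\nu,\kk,d}\leq K(\kk\eps)^{-1}\|h^{[k]}\|_{\nu,\kk,d}$; multiplying by $e^{|k|\sigma}$ and summing completes the proof. The main obstacle, then, is purely the bookkeeping in the two geometric facts; everything after that is the standard machinery already in place.
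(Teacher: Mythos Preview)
Your proof is correct. The paper does not give a proof of this lemma (nor of the analogous Lemma~\ref{lemma:PropietatsNormes} for the outer domains); it is stated as a routine technical fact, and your argument supplies precisely the standard details one expects: the two-sided bound $c_1\kk\eps\leq|u^2+a^2|\leq c_2$ on $R_{\kk,d}$ for items (1)--(3), and the Cauchy estimate on a disk of radius $\sim\kk\eps$ inside the collar $R_{2\kk,d'}\subset R_{\kk,d}$ for item (4).
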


Throughout this section we are going to solve equations of the form
$\LL_\eps h=g$, where $\LL_\eps$ is the operator defined in
\eqref{def:Lde}. To find a right-inverse of this operator in
$R_{\kk,d}$ let us consider $u_1=i(a-\kk\eps)$ and  $u_0$ the left
endpoint of $R_{\kk,d}\cap\RR$. Then, we define the operator
$\GG_\eps$ as
\begin{equation}\label{def:operador:canvi}
\GG_\eps(h)(u,\tau)=\sum_{k\in\ZZ}\GG_\eps(h)^{[k]}(u)e^{ik\tau},
\end{equation}
 where its Fourier coefficients are given by
\begin{align*}
\dps \GG_\eps(h)^{[k]}(u)&=\int_{-u_1}^u
e^{ik\eps\ii(v-u)}h^{\left[k\right]}(v)\,dv&\textrm{ if }k<0\\
\dps \GG_\eps(h)^{[0]}(u)&= \int_{u_0}^{u} h^{\left[0\right]}(v)\,dv&\\
\dps \GG_\eps(h)^{[k]}(u)&= -\int^{u_1}_u
e^{ik\eps\ii(v-u)}h^{\left[k\right]}(v)\,dv&\textrm{ if }k>0,
\end{align*}
where we make the integrals along any path contained in $R_{\kk,d}$.

Let us point that we will apply this operator to functions defined
in $R_{\kk,d}\times\TT_\sigma$ with different values of $\kk$ and
$d$ and then the definition of $\GG_\eps$ depends on the domain.

\begin{lemma}\label{lemma:Canvi:Operador}
The operator $\GG_\eps$ in \eqref{def:operador:canvi} satisfies the
following properties.
\begin{enumerate}
\item If $h\in \XX_{\nu,\sigma}$ for some $\nu\geq 0$, then
$\GG_\eps(h)\in \XX_{\nu,\sigma}$ and
\[
\|\GG_\eps(h)\|_{\nu,\sigma}\leq K\|h\|_{\nu,\sigma}.
\]
Furthermore, if $\langle h\rangle=0$,
\[
\left\|\GG_\eps(h)\right\|_{\nu,\sigma}\leq
K\eps\left\|h\right\|_{\nu,\sigma}.
\]
\item If $h\in\XX_{\nu,\sigma}$ for some $\nu>1$, then $\GG_\eps(h)\in
\XX_{\nu-1,\sigma}$ and
\[
\left\|\GG_\eps(h)\right\|_{\nu-1,\sigma}\leq K\|h\|_{\nu,\sigma}.
\]
\item If $h\in\XX_{\nu,\sigma}$ for some $\nu\in (0,1)$, then $\GG_\eps(h)\in
\XX_{0,\sigma}$ and
\[
\left\|\GG_\eps(h)\right\|_{0,\sigma}\leq K\|h\|_{\nu,\sigma}.
\]
\item If $h\in\XX_{1,\sigma}$, then $\GG_\eps(h)\in
\XX_{\ln,\sigma}$ and
\[
\left\|\GG_\eps(h)\right\|_{\ln,\sigma}\leq K\|h\|_{1,\sigma}.
\]

\item If $h\in\XX_{\nu,\sigma}$ for some $\nu\geq 0$, then $\GG_\eps(\pa_u h)\in
\XX_{\nu,\sigma}$ and
\[
\left\|\GG_\eps(\pa_u h)\right\|_{\nu,\sigma}\leq
K\|h\|_{\nu,\sigma}.
\]
\item If $h\in\XX_{\nu,\sigma}$ for some $\nu\geq 0$, then $\pa_u\GG_\eps(h)\in
\XX_{\nu,\sigma}$ and
\[
\left\|\pa_u\GG_\eps(h)\right\|_{\nu,\sigma}\leq
K\|h\|_{\nu,\sigma}.
\]
\item If $h\in\XX_{\nu,\sigma}$ for some $\nu\geq 0$,
$\LL_\eps\circ\GG_\eps (h)=h$ and
\[
\GG_\eps\circ\LL_\eps(h)(v,\tau)=h(v,\tau)-\sum_{k<0}e^{ik\eps\ii
(-u_1-u)}h^{[k]}(-u_1)-h^{[0]}(u_0)-\sum_{k>0}e^{ik\eps\ii
(u_1-u)}h^{[k]}(u_1).
\]
\end{enumerate}
\end{lemma}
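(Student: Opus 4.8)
The plan is to analyze the operator $\GG_\eps$ in \eqref{def:operador:canvi} harmonic by harmonic, since it is defined through its Fourier coefficients. For each $k\in\ZZ$ the coefficient $\GG_\eps(h)^{[k]}$ is an explicit line integral, and the key estimate is to bound $\left|(u^2+a^2)^\nu \GG_\eps(h)^{[k]}(u)\right|$ in terms of $\|h^{[k]}\|_{\nu,\kk,d}$. For $k\ne 0$ the integrand carries the oscillatory factor $e^{ik\eps\ii (v-u)}$, whose modulus decays exponentially along the path of integration as one moves away from the relevant endpoint $\pm u_1$; this is precisely the mechanism that gains a factor $\eps$ (more precisely, $\eps/|k|$, which after summing against $e^{|k|\sigma}$ still gives $K\eps$) whenever we can integrate from a vertex. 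The main technical point is choosing the integration paths inside $R_{\kk,d}$ (which has the ``boomerang'' intersection shape of Figure~\ref{fig:BoomInter}) so that along them the weight $|v^2+a^2|^{-\nu}$ does not grow relative to $|u^2+a^2|^{-\nu}$; a standard choice is to go radially towards the singularities $\pm ia$ and then along arcs, exactly as in Lemma 5.5 of \cite{GuardiaOS10}, which the excerpt explicitly allows us to invoke.

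The steps I would carry out, in order, are: (i) recall from \cite{GuardiaOS10} the one-harmonic estimates for integrals of the form $\int e^{ik\eps\ii(v-u)}g(v)\,dv$ with $g$ bounded by a power of $|v\mp ia|^{-1}$, distinguishing $k<0$, $k=0$, $k>0$; (ii) apply these with $g=h^{[k]}$ to get statements 1, 2, 3 (the three regimes $\nu\ge 0$, $\nu>1$, $\nu\in(0,1)$ correspond to whether the antiderivative of $|v^2+a^2|^{-\nu}$ is bounded, decays like $|v^2+a^2|^{-(\nu-1)}$, or is merely bounded near $\pm ia$), summing over $k$ against $e^{|k|\sigma}$; (iii) for statement 4 note that the antiderivative of $|v^2+a^2|^{-1}$ is logarithmic, giving the $\XX_{\ln,\sigma}$ bound; (iv) for statement 5, integrate by parts in the definition of $\GG_\eps(\pa_u h)^{[k]}$: the boundary terms at the vertices are controlled by the pointwise bound on $h$ there and the interior term reduces to $ik\eps\ii\GG_\eps(h)^{[k]}$ plus $h^{[k]}$ itself, and the oscillatory/endpoint structure is exactly what makes $\GG_\eps(\pa_u h)$ bounded in $\XX_{\nu,\sigma}$ without loss (this is the content of the ``$\GG_\eps$ commutes nicely with $\pa_u$'' phenomenon already used for $\ol\GG_\eps$ in Lemma~\ref{lemma:PropietatsGExtensio}); (v) statement 6 follows from differentiating under the integral sign, using that $\pa_u$ of the oscillatory kernel produces $-ik\eps\ii$ times the kernel, plus the evaluation of the integrand at $u$, again giving $h^{[k]}$ plus $ik\eps\ii\GG_\eps(h)^{[k]}$, which lies in $\XX_{\nu,\sigma}$; (vi) statement 7 is a direct verification: $\LL_\eps = \eps\ii\pa_\tau + \pa_u$ acts on $\GG_\eps(h)^{[k]}e^{ik\tau}$ as $\left(ik\eps\ii + \pa_u\right)\GG_\eps(h)^{[k]}e^{ik\tau} = h^{[k]}e^{ik\tau}$ by the fundamental theorem of calculus applied to each coefficient, and the formula for $\GG_\eps\circ\LL_\eps$ comes from integrating $\left(ik\eps\ii+\pa_u\right)h^{[k]}$ from the appropriate vertex, the boundary term being the displayed correction.

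The one genuine subtlety — and the step I expect to be the main obstacle — is making the path-of-integration argument uniform in $\kk$ (which here will be taken as large as $\OO(\log(1/\eps))$ in later applications) and in $d$, so that the constants $K$ in all seven statements are independent of $\eps,\mu,\kk,d$. This requires checking that $R_{\kk,d}$, even as $\kk\eps\to 0$, still admits paths from each vertex $\pm u_1 = \pm i(a-\kk\eps)$ to a general point $u$ along which $|v^2+a^2|$ is monotone (or at worst comparable to $|u^2+a^2|$ up to a uniform constant), and that the length of these paths times the exponential decay rate $|k|/\eps$ stays bounded. All of this is handled exactly as in \cite{GuardiaOS10}, so in practice the proof reduces to: ``It is a consequence of Lemma 5.5 in \cite{GuardiaOS10},'' together with the elementary differentiation-under-the-integral and integration-by-parts computations for items 5, 6, 7, and the observation that the three power regimes $\nu\ge 0$, $\nu>1$, $0<\nu<1$ and the logarithmic regime $\nu=1$ correspond to the four cases in the antiderivative of a power of $|v^2+a^2|^{-1}$ along the integration path.
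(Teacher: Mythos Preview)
Your proposal is correct and follows essentially the same route as the paper: the paper's proof reads, in its entirety, ``The first four statements are straightforward. For the fifth one, one has to integrate by parts and for the sixth one has to apply Leibnitz rule,'' which is exactly the skeleton you filled in (oscillatory-integral estimates for items 1--4, integration by parts for item 5, Leibniz/differentiation under the integral sign for item 6, and direct verification for item 7). Your appeal to Lemma~5.5 of \cite{GuardiaOS10} for the path-integral estimates is precisely what the paper does for the analogous operator in Lemma~\ref{lemma:PropietatsGExtensio}, and your discussion of the $\kk$-uniformity is a legitimate technical point that the paper simply absorbs into ``straightforward.''
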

\begin{proof}
The first four statements are straightforward. For the fifth one, one
has to integrate by parts and for the sixth one has to apply
Leibnitz rule.
\end{proof}
\subsection{Case $\ell <2r$: proof of Theorem \ref{th:CanviFinal:lmenor} and
Proposition \ref{coro:Canvi:FirstOrder:lmenor}}\label{sec:PtFix:canvi:lmenor}

\subsubsection{Proof of Theorem \ref{th:CanviFinal:lmenor}}

Theorem \ref{th:CanviFinal:lmenor} is a straightforward consequence
of the following proposition.
\begin{proposition}\label{prop:canvi:lmenor}
Let $d_2>0$ and $\kk_3>0$  be defined in
Theorem \ref{th:ExtensioFinal}, $d_3<d_2$, $\eps_0>0$ small enough
and $\kk_7>\kk_3$ big enough, which might depend on the previous
constants. Then, for $\eps\in(0,\eps_0)$ and any $\kk\geq \kk_7$
such that $\eps\kk<a$, there exists a function $\CCC:
R_{\kk,d_3}\times\TT_\sigma\rightarrow \CC$ that satisfies equation
\eqref{eq:General:Diff}.

Moreover,
\[
\left(\xi_0(u,\tau),\tau\right)=\left(\eps\ii
u-\tau+\CCC(u,\tau),\tau\right)
\]
is injective and there exists a constant $b_{11}>0$ independent of
$\eps$, $\mu$ and $\kk$ such that
\[
\begin{split}
\left\|\CCC \right\|_{0,\sigma}&\leq b_{11}|\mu|\eps^{\eta}\\
\left\|\pa_u\CCC \right\|_{0,\sigma}&\leq
b_{11}\kk\ii|\mu|\eps^{\eta-1}.
\end{split}
\]
\end{proposition}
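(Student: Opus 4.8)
The plan is to obtain $\CCC$ as a fixed point of the operator $\GG_\eps\circ\FF$, where $\GG_\eps$ is the right-inverse of $\LL_\eps$ adapted to the domain $R_{\kk,d_3}$ defined in \eqref{def:operador:canvi} and $\FF$ is the operator in \eqref{eq:Operador:CanviFinal}. First I would establish the key estimates for the function $G$ appearing in \eqref{def:FuncioLAnulador:lmenor}. Recalling that $\pa_uT_1^{u,s}=\pa_uT^{u,s}-\pa_uT_0$ and using the bounds in Theorems \ref{th:Extensio:Trig} and \ref{th:ExtensioFinal}, namely $\|\pa_uT_1^{u,s}\|_{\ell+1,\sigma}\le K|\mu|\eps^{\eta+1}$ on $D^{u,s}_{\kk_3,d_2}$, together with the fact that $p_0(u)^{-1}$ and $p_0(u)^{-2}$ are bounded on $R_{\kk,d_3}$ (since $p_0$ does not vanish there by construction of the boomerang domains), one checks that $G\in\XX_{2r,\sigma}$ with $\|G\|_{2r,\sigma}\le K|\mu|\eps^\eta$; the term involving $\pa_p\wh H_1$ contributes similarly because $\wh H_1$ is a polynomial or trigonometric polynomial evaluated on $(q_0,p_0+p_0^{-1}\pa_uT_1)$, whose branching order at $\pm ia$ is at most $\ell\le 2r-1$. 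Then $\|G\|_{0,\sigma}\le K(\kk\eps)^{2r}|\mu|\eps^\eta$ by Lemma \ref{lemma:PropietatsNormes:canvi}(1), which is the smallness we need because $\ell<2r$.

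Next I would set up the contraction. From $\FF(\CCC)=-\eps^{-1}G-G\,\pa_u\CCC$, the constant term of $\FF$ is $-\eps^{-1}G$, which lies in $\XX_{2r,\sigma}$ with norm $\le K|\mu|\eps^{\eta-1}$; applying $\GG_\eps$ and using Lemma \ref{lemma:Canvi:Operador}(1) (the $\eps$-gain is available only for the zero-mean part, but the $\langle G\rangle\ne0$ part is still controlled) gives $\|\GG_\eps\FF(0)\|_{0,\sigma}\le K(\kk\eps)^{2r}|\mu|\eps^{\eta-1}$, which is $\le \tfrac12 b_{11}|\mu|\eps^\eta$ for $\kk$ large and $\eps$ small. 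For the Lipschitz estimate, if $\CCC_1,\CCC_2\in\ol B(b_{11}|\mu|\eps^\eta)\subset\XX_{0,\sigma}$, then $\pa_u\CCC_i\in\XX_{0,2\kk,d_3,\sigma}$ with $\|\pa_u\CCC_i\|_{0}\le K(\kk\eps)^{-1}b_{11}|\mu|\eps^\eta$ by Lemma \ref{lemma:PropietatsNormes:canvi}(4), so $\|G\,(\pa_u\CCC_1-\pa_u\CCC_2)\|_{2r,\sigma}\le K\kk^{-1}\eps^{-1}|\mu|\eps^\eta\|\CCC_1-\CCC_2\|_{0,\sigma}$ and hence, after applying $\GG_\eps$ and Lemma \ref{lemma:PropietatsNormes:canvi}(1) to drop from index $2r$ to $0$, $\|\GG_\eps\FF(\CCC_1)-\GG_\eps\FF(\CCC_2)\|_{0,\sigma}\le K(\kk\eps)^{2r}\kk^{-1}\eps^{-1}|\mu|\eps^\eta\|\CCC_1-\CCC_2\|_{0,\sigma}$, which is $\le\tfrac12\|\CCC_1-\CCC_2\|_{0,\sigma}$ for $\kk\ge\kk_7$ and $\eps\le\eps_0$. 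This produces the fixed point $\CCC$ with $\|\CCC\|_{0,\sigma}\le b_{11}|\mu|\eps^\eta$. The bound $\|\pa_u\CCC\|_{0,\sigma}\le b_{11}\kk^{-1}|\mu|\eps^{\eta-1}$ then follows by differentiating the fixed-point relation $\CCC=\GG_\eps\FF(\CCC)$ and using Lemma \ref{lemma:Canvi:Operador}(6) and (5): $\pa_u\CCC=\pa_u\GG_\eps(-\eps^{-1}G)-\GG_\eps(\pa_u(G\pa_u\CCC))$, so $\|\pa_u\CCC\|_{2r,\sigma}\le K|\mu|\eps^{\eta-1}+K\kk^{-1}\eps^{-1}\|G\|_{2r,\sigma}\|\pa_u\CCC\|_{0,\sigma}$, and absorbing the last term and converting back to $\|\cdot\|_{0,\sigma}$ gives the stated estimate with the $\kk^{-1}$ factor.

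Finally, injectivity of $(u,\tau)\mapsto(\eps^{-1}u-\tau+\CCC(u,\tau),\tau)$ follows from the estimate on $\pa_u\CCC$: the map $u\mapsto\eps^{-1}u+\CCC(u,\tau)$ has derivative $\eps^{-1}(1+\eps\pa_u\CCC)=\eps^{-1}(1+\OO(\kk^{-1}|\mu|\eps^\eta))$, which is nonzero and indeed close to $\eps^{-1}$; combined with convexity/connectedness of $R_{\kk,d_3}$ in the relevant direction (the boomerang domain is star-shaped enough that a segment argument applies) one gets global injectivity on $R_{\kk,d_3}\times\TT_\sigma$ for $\eps$ small and $\kk\ge\kk_7$. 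The main obstacle I anticipate is bookkeeping the interplay between the two weights: the natural home of $G$ and of $\eps^{-1}G$ is $\XX_{2r,\sigma}$ (because $p_0$ has a pole of order $r$, so $p_0^{-2}$ and $p_0^{-1}\pa_p\wh H_1$ combine to a pole of order $2r$ at worst), but the fixed point must live in $\XX_{0,\sigma}$; the gain $(\kk\eps)^{2r}$ from Lemma \ref{lemma:PropietatsNormes:canvi}(1) exactly compensates the $\eps^{-1}$ loss precisely because $2r>\ell\ge0$ and one is free to take $\kk$ large, but one must be careful that the same $\kk$ that makes the contraction work is compatible with $\eps\kk<a$ and with the domain $R_{\kk,d_3}$ being nonempty — this forces the order of quantifiers $\kk_7$ first, then $\eps_0=\eps_0(\kk_7)$, as stated. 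A secondary technical point is that $\langle G\rangle$ need not vanish, so one cannot use the $\eps$-gain of Lemma \ref{lemma:Canvi:Operador}(1) for the full $G$; this is harmless here since even without that gain the $(\kk\eps)^{2r}$ factor suffices, but it does mean the proof in the case $\ell-2r\ge0$ (Theorem \ref{th:CanviFinal}) has to be organized differently, as indeed it is.
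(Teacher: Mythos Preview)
Your argument has a genuine gap: you have inverted the direction of the weight inequality in Lemma~\ref{lemma:PropietatsNormes:canvi}(1). That lemma says that for $\nu_1\ge\nu_2\ge0$ one has $\|h\|_{\nu_2,\sigma}\le K(\kk\eps)^{\nu_2-\nu_1}\|h\|_{\nu_1,\sigma}$; with $\nu_1=2r$ and $\nu_2=0$ the factor is $(\kk\eps)^{-2r}$, which is \emph{large}, not $(\kk\eps)^{2r}$. In fact the dominant piece of $G$ is $G_1(u,\tau)=\mu\eps^{\eta}p_0(u)^{-1}\pa_p\wh H_1^1(q_0(u),p_0(u),\tau)$, which is $\OO(1)$ on the bulk of $R_{\kk,d_3}$ (it only vanishes at $u=\pm ia$ to order $2r-\ell$, which does not help the supremum away from the singularities). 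Hence $\|G\|_{0,\sigma}\sim K|\mu|\eps^{\eta}$ with no additional smallness, and when $\eta=0$ this is order one. Consequently the Lipschitz constant of your operator $\GG_\eps\circ\FF$ on $\XX_{0,\sigma}$ is at best of order $|\mu|$ (via the $\GG_\eps\pa_u(G\cdot\,)$ trick) or $|\mu|\eps^{\eta}(\kk\eps)^{-1}$ (via Cauchy on $\pa_u\CCC$), neither of which can be made $<1$ uniformly in $\mu\in B(\mu_0)$ under the constraint $\kk\eps<a$. The fictitious gain $(\kk\eps)^{2r}$ on which your contraction relies simply does not exist.

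The paper's proof addresses exactly this obstruction by first performing a change of variables $u=v+g(v,\tau)$ (Lemma~\ref{lemma:canvi:canvi:lmenor}), where $g$ solves $\LL_\eps g=G_1$; since $\langle G_1\rangle=0$ one gets $\|g\|_{0,\sigma}\le K|\mu|\eps^{\eta+1}$. After this change the linear coefficient in the transformed equation becomes $N(v,\tau)=-(1+\pa_vg)^{-1}\bigl(G(v+g,\tau)-G_1(v,\tau)\bigr)$, which is genuinely small, $\|N\|_{\max\{\ell-2r+1,0\},\sigma}\le K|\mu|\eps^{\eta+1}$ (Lemma~\ref{lemma:canvi:lmenor:cotesMN}), because the dangerous $G_1$ has been removed up to a mean-value remainder. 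The fixed point for $\wh\CCC$ is then obtained in Lemma~\ref{lemma:canvi:lmenor:PtFix} via the rewriting $\wh\FF(h)=M+\pa_v(Nh)-(\pa_vN)h$ together with items (5)--(6) of Lemma~\ref{lemma:Canvi:Operador}, and $\CCC$ is recovered by composing with the inverse change $v=u+h(u,\tau)$. Your injectivity argument at the end is essentially the same as the paper's and is fine once the correct bound on $\pa_u\CCC$ is available.
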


To prove this proposition, first we split $G$ into several terms.
Recall that, since $\ell-2r<0$, the perturbation $\wh H_1$ in
\eqref{def:ham:ShiftedOP:perturb} is a polynomial of degree one in
$p$. Then, $G$ can be split as $G=G_1+G_2+G_3$ with
\begin{align}
G_1(u,\tau)&=\mu\eps^{\eta}p_0(u)\ii\pa_p \wh
H_1^1\left(q_0(u),p_0(u),\tau\right)\label{def:Canvi:G1:lmenor}\\
G_2(u,\tau)&=\mu\eps^{\eta+1}p_0(u)\ii\pa_p \wh
H_1^2\left(q_0(u),p_0(u),\tau\right)\label{def:Canvi:G2:lmenor}\\
G_3(u,\tau)&=\frac{\pa_u T_1^s(u,\tau)+\pa_u
T_1^u(u,\tau)}{2p^2_0(u)}.\label{def:Canvi:G3:lmenor}
\end{align}
The next lemma gives several properties of these functions.
\begin{lemma}\label{lemma:Canvi:cotes:lmenor}
Let us consider any $\kk>\kk_3$ and $d<d_2$, where $\kk_3$ and $d_2$
are the constants given in Theorem \ref{th:ExtensioFinal}. Then, the
functions $G_1$, $G_2$ and $G_3$ defined in
\eqref{def:Canvi:G1:lmenor}, \eqref{def:Canvi:G2:lmenor} and
\eqref{def:Canvi:G3:lmenor} respectively, have the following
properties.
\begin{enumerate}
\item $G_1\in \XX_{0,\sigma}$ and it satisfies $\langle
G_1\rangle=0$ and
\[
\begin{split}
\left\|G_1\right\|_{0,\sigma}&\leq K|\mu|\eps^\eta\\
\left\|\pa_vG_1\right\|_{\max\{\ell-2r+1,0\},\sigma}&\leq
K|\mu|\eps^\eta.
\end{split}
\]
\item $G_2\in \XX_{0,\sigma}$ and it satisfies
\[
\left\|G_2\right\|_{0,\sigma}\leq K|\mu|\eps^{\eta+1}.
\]
\item $G_3\in \XX_{\max\{\ell-2r+1,0\},\sigma}$ and it satisfies
\[
\left\|G_3\right\|_{\max\{\ell-2r+1,0\},\sigma}\leq
K|\mu|\eps^{\eta+1}.
\]
\end{enumerate}
\end{lemma}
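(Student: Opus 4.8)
The plan is to exploit the structural fact, valid precisely because $\ell<2r$, that $\wh H_1$ (hence also $\wh H_1^1$ and $\wh H_1^2=\eps\ii(\wh H_1-\wh H_1^1)$) is a polynomial of degree at most one in $p$: a monomial $q^kp^l$ with $l\geq 2$, evaluated on the separatrix, has by \eqref{eq:SepartriuAlPol} or \eqref{eq:SepartriuAlPolTrig} a branching point of order $k(r-1)+lr\geq 2r$ at $u=\pm ia$, which would force $\ell\geq 2r$. Consequently $\pa_p\wh H_1^i(q,p,\tau)$ is independent of $p$, so the integral $\int_0^1\pa_p\wh H_1(q_0(u),p_0(u)+(\cdots),\tau)\,ds$ in $G$ (see \eqref{def:FuncioLAnulador:lmenor}) collapses and $G=G_1+G_2+G_3$ \emph{exactly}, with $G_1,G_2,G_3$ as in \eqref{def:Canvi:G1:lmenor}--\eqref{def:Canvi:G3:lmenor}. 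I would then estimate each summand by tracking the order of its poles and zeros at $u=\pm ia$, using repeatedly that $p_0$ does not vanish on the bounded set $R_{\kk,d}$ (by construction of the boomerang domains), so that any function analytic on $R_{\kk,d}$ whose only possible singularities sit at $\pm ia$ belongs to $\XX_{\nu,\sigma}$ as soon as its branching order there is $\leq\nu$, with a norm bound uniform in $\eps$.

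For $G_1$: by the definition of $\ell$ in \eqref{def:ell} every monomial of $\wh H_1^1$ evaluated on $(q_0(u),p_0(u))$ has a branching point at $\pm ia$ of order at most $\ell$; applying $\pa_p$ removes one factor $p_0$ (order $-r$) and the prefactor $p_0\ii$ a second (order $-r$), so $G_1$ has branching order $\leq\ell-2r<0$ there; hence $G_1\in\XX_{0,\sigma}$ with $\|G_1\|_{0,\sigma}\leq K|\mu|\eps^\eta$ from the explicit factor $\mu\eps^\eta$. The identity $\langle G_1\rangle=0$ follows from $\langle\wh H_1^1\rangle=0$ since averaging in $\tau$ commutes with $\pa_p$ and with multiplication by the $\tau$-independent function $p_0\ii$. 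For $\pa_vG_1$ I would differentiate explicitly, using $\dot q_0=p_0$ and $\dot p_0=-V''(q_0)p_0$; the two resulting terms, $\mu\eps^\eta\pa_q\pa_p\wh H_1^1(q_0,\tau)$ and $-\mu\eps^\eta p_0\ii\dot p_0\,p_0\ii\pa_p\wh H_1^1(q_0,\tau)$, both have branching order $\leq\ell-2r+1$ at $\pm ia$, giving $\pa_vG_1\in\XX_{\max\{\ell-2r+1,0\},\sigma}$ with the stated bound. (A Cauchy estimate on $G_1$ via Lemma \ref{lemma:PropietatsNormes:canvi} would only yield the weaker $K|\mu|\eps^{\eta-1}/\kk$, so the explicit differentiation is really needed.)

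The estimate for $G_2$ is identical in spirit: the monomials of $\wh H_1^2$ come from expanding $H_1(x_p(\tau)+q,y_p(\tau)+p,\tau)$, so their $(q,p)$-degrees are componentwise bounded by those of the monomials of $H_1$, whence their branching orders on the separatrix are again $\leq\ell$, and the same $\pa_p,\ p_0\ii$ bookkeeping gives $G_2\in\XX_{0,\sigma}$; the size $\|G_2\|_{0,\sigma}\leq K|\mu|\eps^{\eta+1}$ comes from the explicit prefactor $\mu\eps^{\eta+1}$ together with $|c_{kl}(\tau)|\leq K|\mu|\eps^\eta$ (Corollary \ref{coro:ShiftPeriodica}) and $|\mu|\leq\mu_0$, $\eps^\eta\leq 1$. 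For $G_3$ I would invoke $\|\pa_uT_1^{u,s}\|_{\ell+1,\sigma}\leq K|\mu|\eps^{\eta+1}$ from Theorems \ref{th:Extensio:Trig} and \ref{th:ExtensioFinal} (equivalently Proposition \ref{prop:extensio:trig} and Section \ref{subsec:ExtensioFinal}), recalling $R_{\kk,d}\subset D^u_{\kk,d}\subset D^{\out,u}_{\rr,\kk}$, and write, with $\nu=\max\{\ell-2r+1,0\}$,
\[
(u^2+a^2)^\nu G_3=\tfrac12\,(u^2+a^2)^{\ell+1}\bigl(\pa_uT_1^s+\pa_uT_1^u\bigr)\cdot\frac{(u^2+a^2)^{\nu-\ell-1}}{p_0^2(u)} .
\]
The last factor is bounded on $R_{\kk,d}$ uniformly in $\eps$: near $\pm ia$ it behaves like a constant times $(u^2+a^2)^{\nu-\ell-1+2r}$ with exponent $\nu-\ell-1+2r\geq 0$, while away from $\pm ia$ the set $R_{\kk,d}$ is bounded and $p_0$ does not vanish; being $\tau$-independent, this bound passes to the Fourier norm, so $G_3\in\XX_{\nu,\sigma}$ with $\|G_3\|_{\nu,\sigma}\leq K|\mu|\eps^{\eta+1}$.

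There is no serious analytical difficulty here; the lemma is essentially bookkeeping. The only points requiring care are the ``affine in $p$'' reduction that makes $G=G_1+G_2+G_3$ with no remainder, and the branching-order claims in the trigonometric case, where $q_0$ has a logarithmic singularity and one must argue through the expansions of $\cos q_0,\sin q_0$ in \eqref{eq:SepartriuAlPolTrig}, or more conveniently through the intrinsic characterization of the relevant leading coefficients recorded in Remark \ref{remark:DefQIntrinseca}. Everything else is a direct application of Lemma \ref{lemma:PropietatsNormes:canvi} together with the non-vanishing of $p_0$ on the boomerang intersection domain $R_{\kk,d}$.
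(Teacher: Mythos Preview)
Your proof is correct and follows precisely the route the paper takes, only the paper compresses it to two sentences (``straightforward, using Corollary~\ref{coro:ShiftPeriodica} for $G_2$; for $G_3$ use the bounds on $T_1^{u,s}$ from Proposition~\ref{prop:extensio:trig}''), while you spell out the branching-order bookkeeping and the affine-in-$p$ reduction that the paper records just before the lemma. One small slip: the identity you quote for $\dot p_0$ should read $\dot p_0=-V'(q_0)$ (it is $\ddot p_0$ that equals $-V''(q_0)p_0$), but since you only use the pole order of $\dot p_0$, which is $r+1$ either way, this does not affect your argument.
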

\begin{proof}
The proof of the statements about $G_1$ and $G_2$ are
straightforward, using the bounds obtained
in Corollary \ref{coro:ShiftPeriodica} for $G_2$. For $G_3$, one has to take
into account the bounds for $T_1^u$ obtained in Proposition
\ref{prop:extensio:trig} and the analogous bounds that $T_1^s$
satisfies.
\end{proof}

To prove Proposition \ref{prop:canvi:lmenor}, we first perform a
change of variables which reduces the linear terms of equation
\eqref{eq:General:Diff}.
\begin{lemma}\label{lemma:canvi:canvi:lmenor}
Let $\kk_7>\kk_3'>\kk_3$ and $d_3<d_2'<d_2$. Then, for
$\eps>0$ small enough, there exists a function $g$ which is solution
of the equation
\[
\LL_\eps g(v,\tau)=G_1(v,\tau),
\]
where $G_1$ is the function defined in \eqref{def:Canvi:G1:lmenor}.
Moreover, it satisfies that
\[
\|g\|_{0,\kk_3',d_2',\sigma}\leq K|\mu|\eps^{\eta+1},\qquad
\|\pa_vg\|_{\max\{\ell-2r+1,0\},\kk_3',d_2',\sigma}\leq
K|\mu|\eps^{\eta+1}
\]
and that $u=v+g(v,\tau)\in R_{\kk_3,d_2}$ for $(v,\tau)\in
R_{\kk_3',d_2'}\times\TT_\sigma$.

Moreover, the change $(u,\tau)=(v+g(v,\tau),\tau)$ is invertible
and its inverse is of the form $(v,\tau)=(u+h(u,\tau),\tau)$. The
function $h$ is defined in the domain
$R_{\kk_7,d_3}\times\TT_\sigma$ and it satisfies
\[
\|h\|_{0,\kk_7,d_3,\sigma}\leq K|\mu|\eps^{\eta+1}
\]
and that $u+h(u,\tau)\in R_{\kk_3',d_2'}$ for $(u,\tau)\in
R_{\kk_7,d_3}\times\TT_\sigma$.
\end{lemma}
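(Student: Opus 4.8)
The plan is to follow the pattern of the analogous change-of-variables results already established in this paper, namely Lemma~\ref{lemma:infty:ligual:canvi}, Lemma~\ref{lemma:Parab:canvi} and Lemma~\ref{lemma:Extensio:Trig:canvi}. The equation $\LL_\eps g = G_1$ is linear, and since by Lemma~\ref{lemma:Canvi:cotes:lmenor} we have $\langle G_1\rangle=0$, I would first pass to the $2\pi$-periodic, zero-average primitive $\ol G_1=\sum_{k\neq0}(ik)^{-1}G_1^{[k]}e^{ik\tau}$ of $G_1$ in $\tau$, which is real-analytic on $R_{\kk,d}$ and satisfies $\|\ol G_1\|_{0,\sigma}\leq\|G_1\|_{0,\sigma}$ and $\|\pa_v\ol G_1\|_{\max\{\ell-2r+1,0\},\sigma}\leq\|\pa_vG_1\|_{\max\{\ell-2r+1,0\},\sigma}$. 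Using that $\LL_\eps(\eps\ol G_1)=\pa_\tau\ol G_1+\eps\pa_v\ol G_1=G_1+\eps\pa_v\ol G_1$, and that the operator $\GG_\eps$ of \eqref{def:operador:canvi} is a right inverse of $\LL_\eps$ (Lemma~\ref{lemma:Canvi:Operador}), I would set
\[
g=\eps\ol G_1-\GG_\eps\bigl(\eps\,\pa_v\ol G_1\bigr),
\]
which solves $\LL_\eps g=G_1$. Passing through $\ol G_1$ instead of simply taking $g=\GG_\eps(G_1)$ is precisely what produces the extra power of $\eps$ in the estimate for $\pa_v g$: the bound $\|\pa_v\GG_\eps(G_1)\|\leq K\|G_1\|$ available from Lemma~\ref{lemma:Canvi:Operador} would only give $K|\mu|\eps^{\eta}$, while the statement asks for $K|\mu|\eps^{\eta+1}$.

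The estimates then come from combining Lemma~\ref{lemma:Canvi:Operador} with the bounds on $G_1$ and $\pa_vG_1$ of Lemma~\ref{lemma:Canvi:cotes:lmenor}. For $g$ itself, $\|\eps\ol G_1\|_{0,\sigma}\leq K|\mu|\eps^{\eta+1}$, and since $\ell-2r<0$ the weight $\max\{\ell-2r+1,0\}$ lies in $[0,1)$, so $\|\GG_\eps(\eps\,\pa_v\ol G_1)\|_{0,\sigma}\leq K|\mu|\eps^{\eta+1}$ — when the weight is $0$ one uses the zero-mean gain of $\GG_\eps$, and when it lies in $(0,1)$ one uses the index drop to $0$, in neither case losing a power of $\eps$. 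For $\pa_v g=\eps\,\pa_v\ol G_1-\eps\,\pa_v\GG_\eps(\pa_v\ol G_1)$, the first term is bounded in $\XX_{\max\{\ell-2r+1,0\},\sigma}$ by $K|\mu|\eps^{\eta+1}$ directly, and the second by the item of Lemma~\ref{lemma:Canvi:Operador} giving $\|\pa_v\GG_\eps(h)\|\leq K\|h\|$ (which, being obtained via Leibniz's rule rather than a Cauchy estimate, costs neither a power of $\eps$ nor a shrinkage of the domain), yielding again $K|\mu|\eps^{\eta+1}$.

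It then remains to handle the domains and the inversion. Since all the functions in sight are explicit (built from $q_0$, $p_0$ and $\wh H_1^1$, analytic wherever $p_0\neq0$), essentially no domain is lost in the preceding estimates, so with $\kk_3<\kk_3'$ and $d_2'<d_2$ the function $g$ is well defined, real-analytic on $R_{\kk_3',d_2'}\times\TT_\sigma$ and satisfies the stated bounds; the inclusion $v+g(v,\tau)\in R_{\kk_3,d_2}$ holds because $|g|\leq K|\mu|\eps^{\eta+1}\leq K\mu_0\eps$ is smaller than the buffer $(\kk_3'-\kk_3)\eps$ (respectively $d_2-d_2'$) between the two domains, once $\kk_3'-\kk_3$ is chosen large relative to $\mu_0$ and $\eps$ is small. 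For the inverse, on $R_{\kk_3',d_2'}$ one has $|\pa_v g|\leq K|\mu|\eps^{\eta+1}/(\kk_3'\eps)^{\max\{\ell-2r+1,0\}}$, which tends to $0$ with $\eps$ exactly because $\eta\geq0$ and $2r-\ell>0$; hence $v\mapsto v+g(v,\tau)$ has uniformly small derivative, and solving $v+g(v,\tau)=u$ by the contraction $v=u-g(v,\tau)$ yields $(v,\tau)=(u+h(u,\tau),\tau)$ with $h=-g(u+h(u,\tau),\tau)$, so $\|h\|_{0,\kk_7,d_3,\sigma}\leq K|\mu|\eps^{\eta+1}$ and $u+h(u,\tau)\in R_{\kk_3',d_2'}$ for $(u,\tau)\in R_{\kk_7,d_3}\times\TT_\sigma$, provided $\kk_7>\kk_3'$ and $d_3<d_2'$ leave enough room. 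The only genuinely delicate point is the bookkeeping of the orders of the possible singularities of $G_1$, $\ol G_1$ and their $v$-derivatives at $u=\pm ia$ — which is what forces the weight $\max\{\ell-2r+1,0\}$ — and this must be tracked through every application of $\GG_\eps$ and $\pa_v$ in order to secure the sharp $\eps^{\eta+1}$ bounds.
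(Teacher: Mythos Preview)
Your proposal is correct and follows essentially the same approach as the paper: you define the zero-mean $\tau$-primitive $\ol G_1$, set $g=\eps\ol G_1-\eps\,\GG_\eps(\pa_v\ol G_1)$, and derive the bounds from Lemma~\ref{lemma:Canvi:cotes:lmenor} and Lemma~\ref{lemma:Canvi:Operador}. Your discussion of why passing through $\ol G_1$ buys the extra factor of $\eps$ in the estimate for $\pa_v g$, and your treatment of the domain inclusions and the inversion, spell out what the paper dismisses as ``straightforward''.
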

Furthermore, we need precise bounds of both functions $g$ and $h$ restricted to
the inner domain
$D_{\kk_7,c}^{\inn,+,u}$ defined in \eqref{def:DominisInnerEnu}.
These bounds are given in next corollary, whose proof is straightforward. We
abuse notation and
we use the norms defined in Section \ref{sec:Banach:canvi} for functions
restricted to the inner domain.
\begin{corollary}\label{coro:Canvi:CotaCanvigInner}
Let $\C_1>0$ be the constant defined in Corollary
\ref{coro:Extensio:CotaCanvigInner}
and let also $\C_2>\C_1$. Then, the functions $g$ ad $h$ obtained in Lemma
\ref{lemma:canvi:canvi:lmenor} restricted to the inner domains
$D_{\kk_3',\C_1}^{\inn,+,u}$  and
$D_{\kk_7,\C_2}^{\inn,+,u}$ respectively satisfy the following bounds
\[
\dps\|g\|_{0,\kk_3',d_2',\sigma}\leq K|\mu|\eps^{\eta+1+(2r-\ell)\gamma}\qquad
\text{ and }\qquad\dps\|h\|_{0,\kk_7,d_3,\sigma}\leq
K|\mu|\eps^{\eta+1+(2r-\ell)\gamma}.
\]
\end{corollary}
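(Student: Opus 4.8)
The plan is to refine the proof of Lemma~\ref{lemma:canvi:canvi:lmenor} by keeping track of the order of vanishing of its right-hand side $G_1$ at the singularities $u=\pm ia$, exactly in the way Corollary~\ref{coro:Extensio:CotaCanvigInner} refines Lemma~\ref{lemma:Extensio:Trig:canvi}. First I would observe that, from the definition \eqref{def:Canvi:G1:lmenor} of $G_1$, the definition \eqref{def:HamPertorbat:H1} of $\wh H_1^1$, and the expansions \eqref{eq:SepartriuAlPol} and \eqref{eq:SepartriuAlPolTrig} of the separatrix, the function $G_1$ has at each of the points $u=\pm ia$ a zero of order $2r-\ell>0$: indeed $p_0(u)^{-1}$ has a zero of order $r$ there, while $\pa_p\wh H_1^1(q_0(u),p_0(u),\tau)$ has at $u=\pm ia$ a branching point of order at most $\ell-r$, since differentiating a monomial with respect to $p$ and evaluating it on the separatrix lowers the order of its singularity by $r$. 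Hence there is a constant $K$ such that, for $u$ in a fixed neighbourhood of $ia$ and $\tau\in\TT_\sigma$,
\[
\left|G_1(u,\tau)\right|\leq K|\mu|\eps^{\eta}\left|u-ia\right|^{2r-\ell}.
\]

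Next I would use the explicit solution of $\LL_\eps g=G_1$ built in the proof of Lemma~\ref{lemma:canvi:canvi:lmenor}, whose leading term is $-\eps\ol G_1$ with $\ol G_1$ a $\tau$-primitive of $G_1$ with zero average, the remaining contribution being $\eps\GG_\eps(\pa_v\ol G_1)$ with $\GG_\eps$ the operator \eqref{def:operador:canvi}. Passing to a $\tau$-primitive does not affect the behaviour in $u$, so $-\eps\ol G_1$ inherits the zero of order $2r-\ell$ of $G_1$ at $u=ia$; and by the integration-by-parts structure of $\GG_\eps$ recorded in Lemma~\ref{lemma:Canvi:Operador}, the term $\eps\GG_\eps(\pa_v\ol G_1)$ equals $\eps^2\pa_v\ol G_1$ plus boundary contributions evaluated away from $ia$ with an exponentially small prefactor, so on the inner domain it is smaller than $-\eps\ol G_1$ by a factor $\OO(\eps^{1-\gamma})$. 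Restricting to $D_{\kk_3',\C_1}^{\inn,+,u}$, where $|u-ia|\leq K\eps^\gamma$, the estimate $|G_1|\leq K|\mu|\eps^\eta|u-ia|^{2r-\ell}$ becomes $|G_1|\leq K|\mu|\eps^{\eta+(2r-\ell)\gamma}$, which yields $|g|\leq K|\mu|\eps^{\eta+1+(2r-\ell)\gamma}$ there; the Fourier weights $e^{|k|\sigma}$ are carried along exactly as in Lemma~\ref{lemma:canvi:canvi:lmenor}, giving the claimed bound for $g$.

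For the inverse change $h$, I would use that $h(u,\tau)=-g(u+h(u,\tau),\tau)$ together with the global bound $|h|\leq K|\mu|\eps^{\eta+1}$ of Lemma~\ref{lemma:canvi:canvi:lmenor}: since this displacement is $\ll\eps^\gamma$ (recall $\gamma<1\le\eta+1$), if $u\in D_{\kk_7,\C_2}^{\inn,+,u}$ then $u+h(u,\tau)$ still lies in $D_{\kk_3',\C_1}^{\inn,+,u}$, which is precisely where one uses $\kk_7>\kk_3'$ and $\C_2>\C_1$; substituting the bound for $g$ on that domain into $h(u,\tau)=-g(u+h(u,\tau),\tau)$ gives $|h|\leq K|\mu|\eps^{\eta+1+(2r-\ell)\gamma}$.

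The only delicate point, as in Corollary~\ref{coro:Extensio:CotaCanvigInner}, is to ensure that neither the operator $\GG_\eps$ nor the implicit-function step producing $h$ destroys the order of vanishing at the singularity; but this is controlled exactly as there, since the extra terms produced by $\GG_\eps$ are either evaluated far from $ia$ with an exponentially small prefactor or of strictly higher order in $\eps$ on the inner domain. I do not expect any genuinely new difficulty compared to that already-established refinement, which is why the statement is indeed straightforward.
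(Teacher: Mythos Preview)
Your approach is correct and is exactly what the paper intends: the paper declares the proof ``straightforward'' without details, and the analogous Corollary~\ref{coro:Extensio:CotaCanvigInner} is proved precisely by observing that $B_1$ (which equals $-G_1$) has a zero of order $2r-\ell$ at $u=\pm ia$. One small point: your description of $\eps\GG_\eps(\pa_v\ol G_1)$ as ``$\eps^2\pa_v\ol G_1$ plus boundary terms'' is not quite the right identity; the cleaner route is to note that $\langle\pa_v\ol G_1\rangle=0$ and invoke the second estimate in Lemma~\ref{lemma:Canvi:Operador}(1) together with the bound on $\|\pa_v\ol G_1\|_{\max\{\ell-2r+1,0\},\sigma}$ from \eqref{eq:canvi:Cota:G1barra:lmenor}, which gives this term an extra power of $\eps$ over the leading $\eps\ol G_1$ contribution.
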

\begin{proof}[Proof of Lemma \ref{lemma:canvi:canvi:lmenor}]
From Lemma \ref{lemma:Canvi:cotes:lmenor}, $\langle G_1\rangle=0$
and then we can define a function $\ol G_1$ such that
\begin{equation}\label{def:canvi:G1:barra:lmenor}
\pa_\tau \ol G_1=G_1\,\,\text{ and }\,\,\langle \ol G_1\rangle=0,
\end{equation}
which satisfies
\begin{equation}\label{eq:canvi:Cota:G1barra:lmenor}
\begin{split}
\left\|\ol G_1\right\|_{0,\kk_3',d_2',\sigma}&\leq K|\mu|\eps^\eta\\
\left \|\pa_v\ol
G_1\right\|_{\max\{\ell-2r+1,0\},\kk_3',d_2',\sigma}&\leq
K|\mu|\eps^\eta.
\end{split}
\end{equation}
Then, we can define $g$ as
\begin{equation}\label{def:canvi:canvi:lmenor}
g(v,\tau)=\eps\ol G_1(v,\tau)-\eps\GG_\eps\left(\pa_v\ol
G_1\right)(v,\tau),
\end{equation}
where $\GG_\eps$ is the operator defined in
\eqref{def:operador:canvi} adapted to the domain
$R_{\kk_3',d_2'}\times\TT_\sigma$.

Finally, applying Lemmas
\ref{lemma:Canvi:cotes:lmenor} and \ref{lemma:Canvi:Operador}, one
obtains the bounds for $g$ and $\pa_v g$. The other statements are
straightforward.
\end{proof}

We perform the change of variables $u=v+g(v,\tau)$ given in Lemma
\ref{lemma:canvi:canvi:lmenor} to equation
\eqref{eq:Operador:CanviFinal} and we obtain
\begin{equation}\label{eq:Canvi:PDE:Cbarret:lmenor}
\LL_\eps\wh\CCC=\wh\FF\left(\wh\CCC\right),
\end{equation}
where $\wh\CCC$ is the unknown
\begin{equation}\label{eq:CtoCbarret:lmenor}
\wh\CCC(v,\tau)=\CCC(v+g(v,\tau),\tau)
\end{equation}
and
\begin{equation}\label{def:canvi:operador:RHS:lmenor}
\wh\FF(h)=M(v,\tau)+N(v,\tau)\pa_v h
\end{equation}
with
\begin{align}
M(v,\tau)&=-\eps\ii G\left(v+g(v,\tau),\tau\right)\label{def:canvi:M:lmenor}\\
N(v,\tau)&=-\frac{G\left(v+g(v,\tau),\tau\right)-G_1(v,\tau)}{1+\pa_v
g(v,\tau)}.\label{def:canvi:N:lmenor}
\end{align}
Next lemma gives some properties of these functions
\begin{lemma}\label{lemma:canvi:lmenor:cotesMN}
The functions $M$ and $N$ defined in \eqref{def:canvi:M:lmenor} and
\eqref{def:canvi:N:lmenor} satisfy the following properties.
\begin{itemize}
\item $\GG_\eps(M)\in \XX_{0,\kk_3',d_2',\sigma}$ and it satisfies
\[
\left\|\GG_\eps(M)\right\|_{0,\kk_3',d_2',\sigma}\leq
K|\mu|\eps^\eta.
\]
\item $\langle M\rangle \in \XX_{\max\{\ell-2r+1,0\},\kk_3',d_2',\sigma}$ and it
satisfies
\[
\left\|\langle
M\rangle\right\|_{\max\{\ell-2r+1,0\},\kk_3',d_2',\sigma}\leq
K|\mu|\eps^{\eta}.
\]
\item $\pa_vM\in \XX_{\max\{\ell-2r+1,0\},\kk_3',d_2',\sigma}$ and it satisfies
\[
\left\|\pa_vM\right\|_{\max\{\ell-2r+1,0\},\kk_3',d_2',\sigma}\leq
K|\mu|\eps^{\eta-1}.
\]
\item The function $M$ restricted to $\left( D_{\kk,\C_1}^{\inn,+,u}\cap
D_{\kk,\C_1}^{\inn,+,s}\right)\times\TT_\sigma$ satisfies
\[
\begin{split}
\left\|M\right\|_{0,\kk_3',d_2',\sigma}&\leq
K|\mu|\eps^{\eta+\nu-1}\\
\left\|\langle M\rangle
\right\|_{\max\{\ell-2r+1,0\},\kk_3',d_2',\sigma}&\leq
K|\mu|\eps^{\eta},
\end{split}
\]
where
\begin{equation}\label{def:canvi:lmenor:exponentnu}
\nu=\min\{1-\max\{\ell-2r+1,0\}, (2r-\ell)\gamma\}.
\end{equation}
\item $N\in \XX_{\max\{\ell-2r+1,0\},\kk_3',d_2',\sigma}$ and it satisfies
\[
\begin{split}
\|N\|_{\max\{\ell-2r+1,0\},\kk_3',d_2',\sigma}&\leq K|\mu|\eps^{\eta+1}\\
\left\|\pa_v N\right\|_{\max\{\ell-2r+1,0\},\kk_3',d_2',\sigma}&\leq
K\frac{|\mu|\eps^\eta}{\kk_3'}.
\end{split}
\]
\end{itemize}
\end{lemma}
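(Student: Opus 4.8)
The plan is to establish the five bullet points of Lemma~\ref{lemma:canvi:lmenor:cotesMN} directly from the definitions of $M$ and $N$ in \eqref{def:canvi:M:lmenor} and \eqref{def:canvi:N:lmenor}, using the decomposition $G=G_1+G_2+G_3$ together with the estimates already collected. Recall $G_1$ has zero average by Lemma~\ref{lemma:Canvi:cotes:lmenor}; this is the structural fact that makes everything work, since $N$ is built so that its ``large'' part $\varepsilon^{-1}G_1$ is subtracted off. First I would write $M(v,\tau)=-\varepsilon^{-1}G(v+g(v,\tau),\tau)$ and split it as $-\varepsilon^{-1}\bigl(G_1(v+g,\tau)+G_2(v+g,\tau)+G_3(v+g,\tau)\bigr)$. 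Since $G_1$ has zero mean, I would further split $G_1(v+g(v,\tau),\tau)=G_1(v,\tau)+\bigl(G_1(v+g(v,\tau),\tau)-G_1(v,\tau)\bigr)$, where the first piece still has zero mean and the second is of size $\OO(|\mu|\varepsilon^{\eta}\cdot|\mu|\varepsilon^{\eta+1})$ by the mean value theorem, Lemma~\ref{lemma:Canvi:cotes:lmenor} (for $\partial_vG_1$) and Lemma~\ref{lemma:canvi:canvi:lmenor} (for $g$).

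For the first bullet, $\GG_\varepsilon(M)$: applying $\GG_\varepsilon$ to $-\varepsilon^{-1}G_1(v,\tau)$ and using the improvement by a factor $\varepsilon$ for zero-mean functions in Lemma~\ref{lemma:Canvi:Operador}, one recovers a bound of order $|\mu|\varepsilon^{\eta}$; the remaining terms ($G_2$, $G_3$, the difference $G_1(v+g,\cdot)-G_1(v,\cdot)$) carry an extra power of $\varepsilon$ and so are dominated after applying $\GG_\varepsilon$ and Lemma~\ref{lemma:PropietatsNormes:canvi}. For the second bullet, $\langle M\rangle$ kills $G_1(v,\tau)$ exactly (zero mean), leaving only $-\varepsilon^{-1}$ times the average of $G_2+G_3+(G_1(v+g,\cdot)-G_1(v,\cdot))$, all of which are of size $|\mu|\varepsilon^{\eta+1}$ or better in the appropriate $\XX_{\max\{\ell-2r+1,0\},\cdot}$ norm (here one uses the $G_3$ bound in $\XX_{\max\{\ell-2r+1,0\},\sigma}$), so dividing by $\varepsilon$ gives $|\mu|\varepsilon^{\eta}$. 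The third bullet, on $\partial_v M$, follows by differentiating: $\partial_v\bigl[G_1(v+g(v,\tau),\tau)\bigr]=\partial_vG_1(v+g,\tau)(1+\partial_vg)$, and the bounds on $\partial_vG_1$, $\partial_vG_2$, $\partial_vG_3$ (the last via Cauchy estimates on nested domains, as done elsewhere in the paper) together with $\|g\|,\|\partial_vg\|=\OO(|\mu|\varepsilon^{\eta+1})$ give $|\mu|\varepsilon^{\eta-1}$ after the $\varepsilon^{-1}$ factor.

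The fourth bullet is the one I expect to be the main obstacle, since it requires a genuine gain of $\varepsilon^{\nu}$ on $M$ itself (not just on $\GG_\varepsilon(M)$) when restricted to the inner domain $\bigl(D_{\kk,\C_1}^{\inn,+,u}\cap D_{\kk,\C_1}^{\inn,+,s}\bigr)\times\TT_\sigma$. On this region $|u\mp ia|\le\OO(\varepsilon^{\gamma})$, and one must exploit that $G_1$ evaluated on the separatrix — which has a singularity of order $\ell-2r<0$, i.e.\ actually a \emph{zero} of order $2r-\ell$ at $u=ia$ — is small there, of size $\OO(\varepsilon^{(2r-\ell)\gamma})$; this is exactly the content of the refined bound on $g$ and $h$ in Corollary~\ref{coro:Canvi:CotaCanvigInner}/\ref{coro:Extensio:CotaCanvigInner}. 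Combined with the fact that $G_3$ in the inner region gains a factor $\varepsilon^{1-\max\{\ell-2r+1,0\}}$ (from the norm relation in Lemma~\ref{lemma:PropietatsNormes:canvi} applied to the $G_3$ bound in $\XX_{\max\{\ell-2r+1,0\}}$), and that $G_2$ carries $\varepsilon^{\eta+1}$, one extracts the exponent $\nu=\min\{1-\max\{\ell-2r+1,0\},(2r-\ell)\gamma\}$ in \eqref{def:canvi:lmenor:exponentnu}; the $\varepsilon^{-1}$ in front of $M$ then yields $|\mu|\varepsilon^{\eta+\nu-1}$, while the averaged estimate is as in the second bullet. Finally, for the fifth bullet, $N=-\dfrac{G(v+g,\tau)-G_1(v,\tau)}{1+\partial_vg}$: the numerator is precisely $G_2(v+g,\cdot)+G_3(v+g,\cdot)+\bigl(G_1(v+g,\cdot)-G_1(v,\cdot)\bigr)$, each term of size $|\mu|\varepsilon^{\eta+1}$ in $\XX_{\max\{\ell-2r+1,0\}}$ (the last by the mean value theorem as above), and the denominator is invertible with $\|(1+\partial_vg)^{-1}-1\|=\OO(|\mu|\varepsilon^{\eta+1})$ by Lemma~\ref{lemma:canvi:canvi:lmenor} and the Banach-algebra property of Lemma~\ref{lemma:PropietatsNormes:canvi}; differentiating gives the $\varepsilon^{-1}/\kk_3'$ loss from the Cauchy estimate on $\partial_v$ (Lemma~\ref{lemma:PropietatsNormes:canvi}, item 4), producing $|\mu|\varepsilon^{\eta}/\kk_3'$. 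Throughout, the bookkeeping of which $\XX_{\nu,\cdot}$ norm to use is dictated by the orders of the poles of $G_1$ and $G_3$ at $\pm ia$, and the only subtlety is keeping track of the $\kappa$-dependence, which enters only through item~4 of Lemma~\ref{lemma:PropietatsNormes:canvi}.
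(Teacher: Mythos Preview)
Your proposal is correct and follows essentially the same approach as the paper: both split $M$ into the zero-mean piece $-\varepsilon^{-1}G_1(v,\tau)$ and the remainder $-\varepsilon^{-1}\bigl(G_1(v+g,\tau)-G_1(v,\tau)+G_2(v+g,\tau)+G_3(v+g,\tau)\bigr)$, use the zero-mean gain of $\GG_\varepsilon$ (Lemma~\ref{lemma:Canvi:Operador}) on the first piece, the mean value theorem together with Lemmas~\ref{lemma:Canvi:cotes:lmenor} and~\ref{lemma:canvi:canvi:lmenor} on the difference term, and invoke the refined inner-domain bound on $g$ from Corollary~\ref{coro:Canvi:CotaCanvigInner} for the fourth bullet. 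The paper's proof is terser but structurally identical to yours.
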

\begin{proof}
We split $M$ as
$M=M_1+M_2$ with
\[
\begin{split}
M_1(v,\tau)&=-\eps\ii G_1(v,\tau)\\
M_2(v,\tau)&=-\eps\ii\left(G_1\left(v+g(v,\tau),\tau\right)-G_1(v,
\tau)+G_2\left(v+g(v,\tau),
\tau\right)+G_3\left(v+g(v,\tau),\tau\right)\right).
\end{split}
\]
Then, for the first statement it is enough to use the properties of the
functions $G_1$, $G_2$ and $G_3$
given by Lemma \ref{lemma:Canvi:cotes:lmenor} and apply also Lemmas
\ref{lemma:Canvi:Operador}, \ref{lemma:PropietatsNormes:canvi} and
\ref{lemma:canvi:canvi:lmenor}. For the second and the third one has to apply
again Lemmas
\ref{lemma:Canvi:cotes:lmenor}, \ref{lemma:PropietatsNormes:canvi} and
\ref{lemma:canvi:canvi:lmenor}, taking also into account for the second that
$\langle M_1\rangle=0$.
Besides, these lemmas, for the fourth statement, one has to consider also the
bound of the change $g$ in the
inner domain, which is given in Corollary \ref{coro:Canvi:CotaCanvigInner}. For
the last statement, it is
enough to apply again Lemmas  \ref{lemma:Canvi:cotes:lmenor},
\ref{lemma:PropietatsNormes:canvi} and
\ref{lemma:canvi:canvi:lmenor}.
\end{proof}

With the bounds obtained in Lemma \ref{lemma:canvi:lmenor:cotesMN},
we can look for a solution of equation
\eqref{eq:Canvi:PDE:Cbarret:lmenor}  through a fixed point argument.
For that purpose, we define the operator
\begin{equation}\label{def:canvi:operador:lmenor}
\wt \FF=\GG_\eps\circ\wh\FF,
\end{equation}
where $\GG_\eps$ and $\wh\FF$ are the operators defined in
\eqref{def:operador:canvi} and \eqref{def:canvi:operador:RHS:lmenor}
respectively. For convenience, we rewrite $\wh\FF$ as
\begin{equation}\label{def:Operador:canvi:v2:lmenor}
\wh\FF(h)(u,\tau)=M(u,\tau)+\pa_v
\left(N(v,\tau)h(v,\tau)\right)-\pa_v N(v,\tau)h(v,\tau).
\end{equation}

\begin{lemma}\label{lemma:canvi:lmenor:PtFix}
Let $\eps_0>0$ be small enough and $\kk_3'>\kk_3$ big
enough. Then, the operator $\wt\FF$ defined in
\eqref{def:canvi:operador:lmenor} is contractive from
$\XX_{0,\kk_3',d_2',\sigma}$ to itself.

Thus, it has a unique fixed point, which moreover satisfies
\[
\begin{split}
\left\|\wh\CCC\right\|_{0,\kk_3',d_2',\sigma}&\leq K|\mu|\eps^\eta\\
\left\|\pa_v\wh\CCC\right\|_{0,\kk_3',d_2',\sigma}&\leq
K\frac{|\mu|\eps^{\eta-1}}{\kk_3'}.
\end{split}
\]
\end{lemma}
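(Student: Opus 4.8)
The plan is to exploit that the operator $\wt\FF=\GG_\eps\circ\wh\FF$ of \eqref{def:canvi:operador:lmenor} is \emph{affine} in $h$: by \eqref{def:canvi:operador:RHS:lmenor} one has $\wh\FF(h)=M+N\,\pa_v h$, and the functions $M$, $N$ of \eqref{def:canvi:M:lmenor}--\eqref{def:canvi:N:lmenor} depend only on the change of variables $g$ provided by Lemma \ref{lemma:canvi:canvi:lmenor} and on $T_1^{u,s}$, not on $h$. Consequently it is enough to bound $\wt\FF(0)$ and to show that the linear part of $\wt\FF$ has operator norm $<1/2$ on the Banach space $\XX_{0,\kk_3',d_2',\sigma}$; the contraction principle then furnishes a unique fixed point $\wh\CCC$ with $\|\wh\CCC\|_{0,\sigma}\le 2\|\wt\FF(0)\|_{0,\sigma}$. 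Throughout, $\GG_\eps$ is the operator \eqref{def:operador:canvi} attached to the domain $R_{\kk_3',d_2'}$.

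First, $\wt\FF(0)=\GG_\eps(M)$, so the first item of Lemma \ref{lemma:canvi:lmenor:cotesMN} gives directly $\|\wt\FF(0)\|_{0,\sigma}\le K|\mu|\eps^{\eta}$. For the linear part, given $g\in\XX_{0,\sigma}$, the key move is to use the rewriting \eqref{def:Operador:canvi:v2:lmenor}, which yields
\[
\wt\FF(h+g)-\wt\FF(h)=\GG_\eps\bigl(\pa_v(Ng)\bigr)-\GG_\eps\bigl((\pa_v N)g\bigr).
\]
For the first summand I would use item (5) of Lemma \ref{lemma:Canvi:Operador} --- which is designed so that $\GG_\eps\circ\pa_v$ preserves the weight without losing a power of $\eps$ --- together with the bound $\|N\|_{\max\{\ell-2r+1,0\},\sigma}\le K|\mu|\eps^{\eta+1}$ of Lemma \ref{lemma:canvi:lmenor:cotesMN}, and then pass from weight $\max\{\ell-2r+1,0\}$ back to weight $0$ by the first item of Lemma \ref{lemma:PropietatsNormes:canvi}, at the cost of a factor $(\kk_3'\eps)^{-\max\{\ell-2r+1,0\}}$. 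This produces a bound $K(\kk_3')^{-\max\{\ell-2r+1,0\}}|\mu|\eps^{\,\eta+1-\max\{\ell-2r+1,0\}}\|g\|_{0,\sigma}$, and one checks that the exponent of $\eps$ is \emph{strictly positive}: it equals $\eta+1$ when $\ell-2r\le-1$ and $\eta+2r-\ell$ when $-1<\ell-2r<0$. For the second summand, the bound $\|\pa_v N\|_{\max\{\ell-2r+1,0\},\sigma}\le K|\mu|\eps^{\eta}(\kk_3')^{-1}$ from Lemma \ref{lemma:canvi:lmenor:cotesMN}, combined with items (1) and (3) of Lemma \ref{lemma:Canvi:Operador}, gives $K|\mu|\eps^{\eta}(\kk_3')^{-1}\|g\|_{0,\sigma}$. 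Since $\eta\ge0$, taking $\eps_0$ small and $\kk_3'$ large makes the operator norm $<1/2$, so $\wt\FF$ is a contraction and has a unique fixed point $\wh\CCC\in\XX_{0,\sigma}$.

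It then remains to extract the two estimates. The bound $\|\wh\CCC\|_{0,\sigma}\le K|\mu|\eps^{\eta}$ is immediate from $\wh\CCC=\wt\FF(\wh\CCC)$ and the two bounds just obtained. For the derivative the cleanest route is the Cauchy-type estimate of item (4) of Lemma \ref{lemma:PropietatsNormes:canvi}: shrinking $d_2'$ slightly (and, if one insists on the conclusion on exactly $R_{\kk_3',d_2'}$, running the previous paragraph with $\kk_3'/2$ in place of $\kk_3'$ and relabelling) one gets $\|\pa_v\wh\CCC\|_{0,\sigma}\le\frac{K}{\kk_3'\eps}\|\wh\CCC\|_{0,\sigma}\le K|\mu|\eps^{\eta-1}/\kk_3'$, as asserted.

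The only genuine obstacle is the weight bookkeeping in the linear part. Because $N$ and $\pa_v N$ carry a singularity of order $\max\{\ell-2r+1,0\}$ at $u=\pm ia$, returning to the unweighted space $\XX_{0,\sigma}$ is affordable only thanks to the extra $\eps^{\eta+1}$ smallness of $N$; verifying the inequality $\eta+1-\max\{\ell-2r+1,0\}>0$ (which uses $\eta\ge0$ and $\ell-2r<0$) is precisely what closes the argument. The naive alternative --- differentiating inside $\GG_\eps$ with a Cauchy estimate rather than invoking item (5) of Lemma \ref{lemma:Canvi:Operador} --- would lose a factor $1/\eps$ and destroy the contraction when $\eta=0$, so the rewriting \eqref{def:Operador:canvi:v2:lmenor} is essential.
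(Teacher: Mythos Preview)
Your proof is correct and follows essentially the same route as the paper: the rewriting \eqref{def:Operador:canvi:v2:lmenor} together with item (5) of Lemma \ref{lemma:Canvi:Operador} for the $\GG_\eps\!\circ\pa_v$ term, items (1)/(3) for the $(\pa_vN)g$ term, the bound $\wt\FF(0)=\GG_\eps(M)$ from Lemma \ref{lemma:canvi:lmenor:cotesMN}, and finally the Cauchy-type estimate (item (4) of Lemma \ref{lemma:PropietatsNormes:canvi}) for $\pa_v\wh\CCC$. The only cosmetic difference is that the paper converts $\|N\|_{\max\{\ell-2r+1,0\},\sigma}$ to $\|N\|_{0,\sigma}$ \emph{before} applying item (5), whereas you do it after; the resulting bound is identical, and your closing remark on why the rewriting is indispensable when $\eta=0$ is a nice addition.
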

\begin{proof}
To see that $\wt\FF$ is contractive, let $h_1,h_2\in
\XX_{0,\kk_3',d_2',\sigma}$. Then, recalling the definition of
$\wt\FF$ and $\wh\FF$ in \eqref{def:canvi:operador:lmenor} and
\eqref{def:Operador:canvi:v2:lmenor} respectively and applying
Lemmas \ref{lemma:Canvi:Operador},
\ref{lemma:PropietatsNormes:canvi} and
\ref{lemma:canvi:lmenor:cotesMN},
\[
\begin{split}
\left\|\wt\FF(h_2)-\wt\FF(h_1)\right\|_{0,\kk_3',d_2',\sigma}&\leq
\left\|\GG_\eps\pa_v\left(N\cdot(h_2-h_1)\right)\right\|_{0,\kk_3',d_2',\sigma}
+\left\|\GG_\eps\left(\pa_vN\cdot(h_2-h_1)\right)\right\|_{0,\kk_3',d_2',\sigma}
\\
&\leq K
\left\|N\right\|_{0,\kk_3',d_2',\sigma}\left\|h_2-h_1\right\|_{0,\kk_3',d_2',
\sigma}+
K\left\|\pa_v
N\right\|_{\max\{\ell-2r+1,0\},\kk_3',d_2',\sigma}\left\|h_2-h_1\right\|_{0,
\kk_6',d_2',\sigma}\\
&\leq\frac{K|\mu|\eps^\eta}{\kk_3'}\left\|h_2-h_1\right\|_{0,\kk_3',d_2',\sigma}
.
\end{split}
\]
Then, increasing $\kk_3'$ if necessary, $\wt\FF$ is contractive from
$\XX_{0,\kk_3',d_2',\sigma}$ to itself and then it has a unique fixed point.

To obtain a bound for the fixed point $\wh\CCC$, it is enough to
recall that
\[
\left\|\wh\CCC\right\|_{0,\kk_3',d_2',\sigma}\leq
2\left\|\wt\FF(0)\right\|_{0,\kk_3',d_2',\sigma}.
\]
By the definition of $\wt\FF$ in \eqref{def:canvi:operador:lmenor},
$\wt\FF(0)=\GG_\eps(M)$. Then, applying Lemma
\ref{lemma:canvi:lmenor:cotesMN}, we obtain the bound for $\wh\CCC$.
For the bound of $\pa_v\wh\CCC$ it is enough to reduce slightly the
domain and apply the fourth statement of Lemma
\ref{lemma:PropietatsNormes:canvi}.
\end{proof}
\begin{proof}[Proof of Proposition \ref{prop:canvi:lmenor}]
To recover $\CCC$ from $\wh\CCC$ it is enough to consider the change
of variables $v=u+h(u,\tau)$ obtained in Lemma
\ref{lemma:canvi:canvi:lmenor}, which is defined for $(u,\tau)\in
R_{\kk_7,d_3}\times\TT_\sigma$ with $\kk_7>\kk_3'$ and $d_3<d_2'$.
Applying this change, one obtains $\CCC$ which satisfies the bounds
of $\CCC$ and $\pa_u\CCC$ stated in Proposition
\ref{prop:canvi:lmenor}. To check that $(\xi_0(u,\tau),\tau)$ is
injective, it is enough to see that for $(u_1,\tau),(u_2,\tau)\in
R_{\kk_7,d_3}\times\TT_\sigma$,
\[
\eps\ii u_2-\tau+\CCC(u_2,\tau)=\eps\ii u_1-\tau+\CCC(u_1,\tau)
\]
implies $u_2=u_1$. To prove this fact, it is enough to take into
account the just obtained bound of $\pa_u\CCC$, which gives
\[
\begin{split}
\left|u_2-u_1\right|&=\eps
\left|\CCC(u_2,\tau)-\CCC(u_1,\tau)\right|\\
&\leq \frac{K|\mu|\eps^\eta}{\kk_7}|u_2-u_1|.
\end{split}
\]
Then, increasing $\kk_7$ if necessary, one can see that  $u_2=u_1$.
\end{proof}

\subsubsection{Proof of Proposition
\ref{coro:Canvi:FirstOrder:lmenor}}\label{Proof:Existence:C:lmenor}
To prove Proposition \ref{coro:Canvi:FirstOrder:lmenor} it is enough
to study the first asymptotic terms of the function $\wh\CCC$
obtained in Lemma \ref{lemma:canvi:canvi:lmenor}. For that purpose,
we define
\begin{equation}\label{def:canvi:Mtilde:lmenor}
\wt M(v,\tau)=M(v,\tau)-\langle M\rangle(v)
\end{equation}
and we split $\wh\CCC$ as $\wh\CCC=E_1+E_2+E_3$ with
\begin{align}
E_1(v)&=\GG_\eps\left(\langle
M\rangle\right)(v)\label{def:canvi:lmenor:E1}\\
E_2(v,\tau)&=\GG_\eps\left(\wt
M\right)(v,\tau)\label{def:canvi:lmenor:E2}\\
E_3(v,\tau)&=\wt \FF\left(\wh \CCC\right)-\wt
\FF\left(0\right)\label{def:canvi:lmenor:E3}.
\end{align}
Let us point out that the sum of the first two terms corresponds to
$\wt\FF(0)$. We study each term separately. We abuse notation and we
use the same norms as in the previous section but now for functions
defined in  $\left( D_{\kk,\C_1}^{\inn,+,u}\cap
D_{\kk,\C_1}^{\inn,+,s}\right)\times\TT_\sigma$.

For $E_1$, using the definition of $\GG_\eps$ in
\eqref{def:operador:canvi}, one has that
\[
E_1(v,\tau)=\int_{v_0}^v \langle M\rangle (w)\,dw
\]
and then, if we consider $v_1=i(a-\kk_3'\eps)$ the upper vertex of the
domain $R_{\kk_3',d_3}$ (see Figure \ref{fig:BoomInter}), we can
define
\begin{equation}\label{def:C:ViaIntegrals}
C(\mu,\eps)=\int_{v_0}^{v_1} \langle M\rangle (w)\,dw,
\end{equation}
which by Lemmas \ref{lemma:Canvi:Operador} and \ref{lemma:canvi:lmenor:cotesMN}
satisfies
\[
\left\| C(\mu,\eps)\right\|_{0,\sigma}\leq K|\mu|\eps^\eta.
\]
Then
\[
\left\| E_1-C(\mu,\eps)\right\|_{0,\sigma}\leq
K|\mu|\eps^{\eta+(2r-\ell)\ga}.
\]
To bound $E_2$ defined in \eqref{def:canvi:lmenor:E2}, we first
recall that $\langle \wt M\rangle=0$. Then we can define a
function $\ol M$ such that
\[
\pa_\tau \ol M=\wt M\qquad\text{ and }\qquad \langle \ol M\rangle=0,
\]
which satisfies that for $(v,\tau)\in\left(
D_{\kk,\C_1}^{\inn,+,u}\cap
D_{\kk,\C_1}^{\inn,+,s}\right)\times\TT_\sigma$,
\[
\left\|\ol M\right\|_{0,\sigma}\leq K|\mu|\eps^{\eta+\nu-1},
\]
where $\nu$ is the constant defined in
\eqref{def:canvi:lmenor:exponentnu}. Then, we can write $E_2$ as
\[
E_2=\eps\GG_\eps\circ\LL_\eps (\ol M)-\eps\GG_\eps \left(\pa_v\ol
M\right)
\]
and therefore, by Lemma \ref{lemma:Canvi:Operador},
\[
\left\| E_2\right\|_{0,\sigma}\leq  K|\mu|\eps^{\eta+\nu}.
\]
For $E_3$ in \eqref{def:canvi:lmenor:E3}, it is enough to consider
the bound of the Lipschitz constant of the operator $\wt\FF$ given
in the proof of Lemma \ref{lemma:canvi:lmenor:PtFix}, which gives
\[
\|E_3\|_{0,\sigma}\leq K\frac{|\mu|\eps^{2\eta}}{\kk_3'}.
\]
Thus, we have that
\[
\left\| \wh\CCC-C(\mu,\eps)\right\|_{0,\sigma}\leq
K\frac{|\mu|\eps^{\eta}}{\kk_3'}.
\]
To finish the proof of Proposition
\ref{coro:Canvi:FirstOrder:lmenor}, it is enough to consider the
change of variables $v=u+h(u,\tau)$ obtained in Lemma
\ref{lemma:canvi:canvi:lmenor}. Since $h$ restricted to the inner domains
satisfies the bounds
given in Corollary \ref{coro:Canvi:CotaCanvigInner}, this change of variables
does not change the asymptotic
first order of $\CCC$.

\subsubsection{An asymptotic formula for
$C(\mu,\eps)$}\label{Proof:Existence:C0:lmenor}
When $\eta=0$, the constant $C(\mu,\eps)$ considered in Theorem
\ref{th:MainGeometric:regular} satisfies that $\lim_{\eps\rightarrow
0}C(\mu,\eps)=C_0(\mu)$ for a certain function $C_0(\mu)$ analytic in $\mu$. We
devote this section to prove this fact.
This proof follows the same lines as the one of Proposition
\ref{coro:Canvi:FirstOrder:lmenor} in Section \ref{Proof:Existence:C:lmenor}
and, therefore, we only sketch it. Recall that throughout this section we assume
$\eta=0$.

We split the constant $C(\mu,\eps)$ as
$C(\mu,\eps)=C^1(\mu,\eps)+C^2(\mu,\eps)+C^3(\mu,\eps)$ and we  obtain the
corresponding first orders in $\eps$, which we call $C_0^i(\mu)$ for $i=1,2,3$.
Then, the function $C_0(\mu)$ will be given by
$C_0(\mu)=C_0^1(\mu)+C_0^2(\mu)+C_0^3(\mu)$.

Recall that $C(\mu,\eps)$ has been defined as \eqref{def:C:ViaIntegrals} where
$v_0$ is the left endpoint of $R_{\kk_3',d_3}\cap\RR$ , $v_1=i(a-\kk_3'\eps)$ is
the upper vertex of the
domain $R_{\kk_3',d_3}$ (see Figure \ref{fig:BoomInter}) and $M$ is the function
defined in \eqref{def:canvi:M:lmenor}. To obtain the constants $C^i$ we split
$M$ as $M=M^1+M^2+M^3$ with
\begin{equation}\label{def:Mi:lmenor}
M^i(v,\tau)=-\eps\ii G_i\left(v+g(v,\tau),\tau\right)\qquad\text{ for }\qquad
i=1,2,3,
\end{equation}
where $G_i$, $i=1,2,3$, are the functions defined in
\eqref{def:Canvi:G1:lmenor}, \eqref{def:Canvi:G2:lmenor} and
\eqref{def:Canvi:G3:lmenor} and $g$ is the function obtained in Lemma
\ref{lemma:canvi:canvi:lmenor}. Then,
\[
 C^i(\mu,\eps)=\int_{v_0}^{v_1}\left\langle M^i\right\rangle (v)\,dv.
\]

To define $C_0^1$, we expand $M^1$ with respect to $\eps$. Using the formulas
\eqref{def:canvi:canvi:lmenor} for $g$ and  \eqref{def:Canvi:G1:lmenor} for
$G_1$, one can easily see that for $(v,\tau)\in R_{\kk_3',d_3}\times\TT_\sigma$,
\[
 M^1(v,\tau)=-\eps\ii G_1(v,\tau)-\pa_v G_1(v,\tau) \ol
G_1(v,\tau)+\OO\left(\frac{\mu\eps}{(v-ia)^{\max\{0,2-\nu_1\}}}\right)
\]
for certain $\nu_1>0$. Recall that by Lemma \ref{lemma:Canvi:cotes:lmenor}, we
have that $\langle G_1\rangle=0$ and therefore this first term does not
contribute to $C_1(\mu,\eps)$. The second term, that is $-\pa_v G_1(v,\tau) \ol
G_1(v,\tau)$, is independent of $\eps$. Moreover, using the properties of $G_1$
stated in  Lemma \ref{lemma:Canvi:cotes:lmenor}, one can see that it can be
analytically extended to reach $v=ia$ and that it satisfies
\[
 -\pa_v G_1(v,\tau) \ol
G_1(v,\tau)=\OO\left(\frac{\mu}{(v-ia)^{\max\{0,1-\nu_1'\}}}\right)
\]
for certain $\nu_1'>0$. Therefore, one can define
\begin{equation}\label{def:C01:lmenor}
 C_0^1(\mu)=-\int_{v_0}^{ia}\left\langle  \pa_v G_1(v,\tau) \ol
G_1(v,\tau)\right\rangle dv,
\end{equation}
which is a constant independent of $\eps$. Finally it can be easily seen that
\begin{equation}\label{eq:Cota:Resta:C10:lmenor}
 \left|C^1(\mu,\eps)-C_0^1(\mu)\right|\leq K|\mu|\eps^{\nu_1''}
\end{equation}
for a suitable $\nu_1''>0$.

To obtain $C_0^2(\mu)$, let us first point out that, following the proof of
Theorem \ref{th:Periodica}, one can see that the parameterization of the
periodic orbit satisfies
\begin{equation}\label{def:Expansio:Periodica}
 \left(x_p(\tau),y_p(\tau)\right)=\left(\eps x^0_p(\tau),\eps
y^0_p(\tau)\right)+\OO\left(\mu\eps^2\right),
\end{equation}
where $\left( x^0_p(\tau), y^0_p(\tau)\right)$ is independent of $\eps$. Using
this fact, one can easily deduce that the functions $c_{kl}$ involved in the
definition of $\wh H_1^2$ in \eqref{def:HamPertorbat:H2} satisfy
\[
 c_{kl}(\tau)=c_{kl}^0(\tau)+\OO(\mu\eps),
\]
for adequate functions $c_{kl}^0(\tau)$ independent of $\eps$. Therefore, $\wh
H_1^2$  satisfies
\begin{equation}\label{def:Expansio:H2}
 \wh H_1^2(q,p,\tau)=\eps\wh H_1^{20}(q,p,\tau)+\eps^2\wh H_1^{22}(q,p,\tau),
\end{equation}
where $\wh H_1^{20}(q,p,\tau)$ is independent of $\eps$. Taking into account the
definition of $M_2$ in \eqref{def:Mi:lmenor} and recalling that for
$(v,\tau)\in R_{\kk_3',d_3}\times\TT_\sigma$,
\[
 p_0(v)\ii \wh H_1^{20}(q_0(v),p_0(v),\tau)=\OO\left((v-ia)^{2r-\ell}\right),
\]
we can define
\begin{equation}\label{def:C02:lmenor}
 C_0^2(\mu)=-\mu\int_{v_0}^{ia}\left\langle  p_0(v)\ii \wh
H_1^{20}(q_0(v),p_0(v),\tau)\right\rangle dv.
\end{equation}
Then, the constant $C_0^2(\mu)$ is independent of $\eps$. Moreover, using Lemmas
\ref{lemma:Canvi:Operador} and \ref{lemma:Canvi:cotes:lmenor} and
\ref{lemma:canvi:canvi:lmenor}, one can see that
\begin{equation}\label{eq:Cota:Resta:C20:lmenor}
 \left|C^2(\mu,\eps)-C_0^2(\mu)\right|\leq K|\mu|\eps^{\nu_2},
\end{equation}
for certain constant $\nu_2>0$.

To obtain $C_3^0(\mu)$ we need a careful study of the function $G_3$ in
\eqref{def:Canvi:G3:lmenor}. To this end, we have to expand asymptotically the
functions $\pa_v\wh T_1^{u,s}(v,\tau)$ obtained in Theorems
\ref{th:Extensio:Trig} and \ref{th:ExtensioFinal}. To obtain this expansion we
consider equation  \eqref{eq:HJperT1:ligual} for $(v,\tau)\in
R_{\kk_3',d_3}\times\TT_\sigma$.

As a first step we expand the function $A(u,\tau)$ defined in
\eqref{def:InftyA}. It can be seen that it satisfies
\[
 A(u,\tau)=A^0(u,\tau)+\eps
A^1(u,\tau)+\OO\left(\frac{\mu\eps^2}{(v-ia)^{\ell}}\right),
\]
where
\begin{align}
 A^0(u,\tau)&=-\mu \wh
H_1^1\left(q_0(u),p_0(u),\tau\right)\label{def:A:Expansio:A0:lmenor}\\
A^1(u,\tau)&=-\mu \wh
H_1^{20}\left(q_0(u),p_0(u),\tau\right)-V'(q_0(u))x_p^0(\tau)+\la^2x_p^0(\tau),
\label{def:A:Expansio:A1:lmenor}
\end{align}
where $\wh H_1^1$, $\wh H_1^{20}$ and $x_p^0$ are the functions defined in
\eqref{def:HamPertorbat:H1},   \eqref{def:Expansio:H2} and
\eqref{def:Expansio:Periodica} respectively, and $\lambda$ is the constant
defined in Hypothesis \textbf{HP1.1}. Recall that in the parabolic case, we have
that $x_p^0(\tau)=0$. It is clear that both $A^0$ and $A^1$ are independent of
$\eps$.

From this expansion, one can deduce the expansion of the function $\wh A$
defined in \eqref{def:InftyHyp:Ahat}. Let us first recall that the change of
variables $g$ obtained in Lemma \ref{lemma:Extensio:Trig:canvi} can be written
as
\[
 g(v,\tau)=-\eps\ol
B_1(v,\tau)+\OO\left(\frac{\mu\eps^2}{(v-ia)^{\max\{1+\ell-2r,0\}}}\right),
\]
where $\ol B_1$ is the function defined on the proof of Lemma
\ref{lemma:Extensio:Trig:canvi}, which is independent of $\eps$.

Therefore,
\[
\wh A(v,\tau)=\wh A^0(v,\tau)+\eps\wh
A^1(v,\tau)+\OO\left(\frac{\mu\eps^2}{(v-ia)^{\ell+2+2(\ell-2r)}}\right),
\]
with
\[
\begin{split}
 \wh A^0(v,\tau)&= A^0(v,\tau)\\
\wh A^1(v,\tau)&=A^1(v,\tau)-\pa_v A^0(v,\tau)\ol B_1(v,\tau).
\end{split}
\]
Using this fact and the properties of the functions $\wh B$ and $\wh C$ in
\eqref{def:InftyHyp:Bhat} and \eqref{def:InftyHyp:Chat}, one can see that the
functions  $\wh T_1^{u,s}(v,\tau)$ obtained in Theorems \ref{th:Extensio:Trig}
and \ref{th:ExtensioFinal} satisfy that
\[
 \pa_v\wh T_1^{u,s}(v,\tau)=\eps \pa_v\wh
T_1^{0}(v,\tau)+\OO\left(\frac{\mu\eps^2}{(v-ia)^{\max\{0,2+\ell-\nu_3\}}}
\right)
\]
for certain $\nu_3>0$. The first order  $\pa_v\wh T_1^{0}(v,\tau)$ is defined by
$\pa_v\wh T_1^{0}(v,\tau)=\pa_v\ol A^0(v,\tau)+\langle \wh A^1\rangle (v)$,
where $\ol A^0$ is a function satisfying that $\pa_\tau \ol A^0=A^0$ and
$\langle A^0\rangle=0$. Then,  $\pa_v\wh T_1^{0}(v,\tau)$ is independent of
$\eps$ and can be analytically extended to reach $v=ia$.

Taking into account the properties of the change $g$ stated in Lemma
\ref{lemma:Extensio:Trig:canvi}, one can see that the function $\pa_u
T_1(u,\tau)$ has the same expansion as the function $\pa_v\wh T_1(v,\tau)$.

We can define
\begin{equation}\label{def:C03:lmenor}
 C_0^3(\mu)=-\int_{v_0}^{ia}\left\langle p_0(v)^{-2}\pa_v
T_1^0(v,\tau)\right\rangle dv,
\end{equation}
which is a constant independent of $\eps$. Doing little effort, it can be seen
also that
\begin{equation}\label{eq:Cota:Resta:C30:lmenor}
 \left|C^3(\mu,\eps)-C_0^3(\mu)\right|\leq K|\mu|\eps^{\nu_3'}
\end{equation}
for certain $\nu_3'>0$.

Finally, it is enough to define  $C_0(\mu)=C_0^1(\mu)+C_0^2(\mu)+C_0^3(\mu)$
where $C_0^i(\mu)$ are the constants defined in \eqref{def:C01:lmenor},
\eqref{def:C02:lmenor} and \eqref{def:C03:lmenor}. It is straightforward to see
that $C_0(\mu)$ is an entire function. Moreover, by
\eqref{eq:Cota:Resta:C10:lmenor}, \eqref{eq:Cota:Resta:C20:lmenor} and
\eqref{eq:Cota:Resta:C30:lmenor}, it is clear that
\[
 \lim_{\eps\rightarrow 0}C(\mu,\eps)=C_0(\mu).
\]

\subsection{Case $\ell \geq 2r$: Proof of Theorem \ref{th:CanviFinal}
and Proposition \ref{coro:Canvi:FirstOrder}}\label{sec:PtFix:canvi}

\subsubsection{Proof of Theorem \ref{th:CanviFinal}}
Theorem \ref{th:CanviFinal} is a straightforward consequence of the following
proposition.
\begin{proposition}\label{prop:canvi}
Let $d_2>0$ and $\kk_6>0$  be defined in
Theorem \ref{th:ExtensioFinal} and Proposition
\ref{prop:matching:HJ}, $d_3<d_2$, $\eps_0>0$ small enough and
$\kk_8>\kk_6$ big enough, which might depend on the previous
constants. Then, for $\eps\in(0,\eps_0)$ and any $\kk\geq \kk_8$
such that $\eps\kk<a$, there exists a function $\CCC:
R_{\kk,d_3}\times\TT_\sigma\rightarrow \CC$ that satisfies equation
\eqref{eq:General:Diff}.

Moreover,
\[
\left(\xi_0(u,\tau),\tau\right)=\left(\eps\ii
u-\tau+\CCC(u,\tau),\tau \right)
\]
is injective and there exists a constant $b_{15}>0$ such that
\begin{itemize}
\item If $\ell-2r>0$,
\[
\begin{split}
\left\|\CCC \right\|_{\ell-2r,\sigma}&\leq b_{15}|\hmu|\eps^{\ell-2r}\\
\left\|\pa_u\CCC \right\|_{\ell-2r,\sigma}&\leq
b_{15}\kk\ii|\hmu|\eps^{\ell-2r-1}.
\end{split}
\]
\item If $\ell-2r=0$,
\[
\begin{split}
\left\|\CCC \right\|_{\ln,\sigma}&\leq b_{15}|\hmu|\\
\left\|\pa_u\CCC \right\|_{1,\sigma}&\leq b_{15}|\hmu|.
\end{split}
\]
\end{itemize}
\end{proposition}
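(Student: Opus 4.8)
The goal is to construct the function $\CCC$ solving the quasilinear equation $\LL_\eps\CCC=\FF(\CCC)$ in \eqref{eq:General:Diff}, with the stated bounds, and to show that $\xi_0(u,\tau)=\eps\ii u-\tau+\CCC(u,\tau)$ is injective. The overall strategy mirrors exactly the proof of Theorem~\ref{th:CanviFinal:lmenor} (Proposition~\ref{prop:canvi:lmenor}) already carried out in Section~\ref{sec:PtFix:canvi:lmenor}; the reduction to Proposition~\ref{prop:canvi} above is immediate once one has the Banach-space estimates. First I would record the splitting of $G$ (defined now by \eqref{def:FuncioLAnulador:lmajor}) into the pieces coming from the linear part of $\wh H_1$, from the $\eps$-term $\wh H_1^2$, and from the $\pa_uT_1^{u,s}$ terms; here the essential new input is that, by Corollary~\ref{coro:CotesOuter:muhat} (rewriting everything in terms of $\hmu$) together with Theorems~\ref{th:Extensio:Trig} and \ref{th:ExtensioFinal}, each piece of $G$ lives in a weighted space $\XX_{\nu,\sigma}$ with the weight governed by $\ell-2r$ rather than by $0$ as in the case $\ell<2r$. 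Concretely $G$ will be of size $\OO(|\hmu|\eps^{\ell-2r}/|u^2+a^2|^{\ell-2r})$ in the relevant domain, which is precisely what produces the $|u^2+a^2|^{-(\ell-2r)}$ weight (resp. the $\ln|u^2+a^2|$ weight when $\ell-2r=0$) in the final estimates.

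\textbf{Key steps.} The plan proceeds as follows. (i) Prove a technical lemma — the analogue of Lemma~\ref{lemma:Canvi:cotes:lmenor} — giving the weighted norms of the three summands of $G$ and of $\pa_u G$, using the estimates just mentioned. (ii) Perform the preparatory change of variables $u=v+g(v,\tau)$, solving $\LL_\eps g=G_1$ with $G_1$ the ``linear in $\wh H_1^1$'' part, exactly as in Lemma~\ref{lemma:canvi:canvi:lmenor}; this is the step that makes the remaining operator contractive when the perturbation is of order one. The new feature is simply that $g$ and its inverse $h$ now also carry the weight $|u^2+a^2|^{-(\ell-2r)}$ (resp. no weight but a log factor when $\ell-2r=0$), so one works in $\XX_{\ell-2r,\sigma}$ (resp. $\XX_{\ln,\sigma}$). (iii) Transform \eqref{eq:Operador:CanviFinal} under this change to obtain $\LL_\eps\wh\CCC=\wh\FF(\wh\CCC)=M+N\pa_v\wh\CCC$ with $M,N$ as in \eqref{def:canvi:M:lmenor}--\eqref{def:canvi:N:lmenor}, and estimate $M$, $\langle M\rangle$, $\pa_v M$, $N$, $\pa_v N$ in the appropriate weighted norms (the analogue of Lemma~\ref{lemma:canvi:lmenor:cotesMN}); the crucial gain, as in the earlier case, is that $\pa_v N$ has a $1/\kk$ factor, which is what drives the contraction. (iv) Invert $\LL_\eps$ by the operator $\GG_\eps$ of \eqref{def:operador:canvi} and run a fixed-point argument for $\wt\FF=\GG_\eps\circ\wh\FF$ in $\XX_{\ell-2r,\sigma}$ when $\ell-2r>0$, or in $\XX_{\ln,\sigma}$ when $\ell-2r=0$ — this is where I will need the mapping properties of $\GG_\eps$ between the spaces $\XX_{\nu,\sigma}$, $\XX_{\ln,\sigma}$, in particular item~4 of Lemma~\ref{lemma:Canvi:Operador} (which produces the logarithm precisely from a weight-$1$ function), to close the $\ell-2r=0$ case. (v) Undo the change $v=u+h(u,\tau)$ to recover $\CCC$ with the stated bounds, and finally prove injectivity of $(\xi_0(u,\tau),\tau)$ by the same Lipschitz-in-$u$ argument: from $\eps\ii u_2+\CCC(u_2,\tau)=\eps\ii u_1+\CCC(u_1,\tau)$ one gets $|u_2-u_1|\le \eps\sup|\pa_u\CCC|\,|u_2-u_1|\le (K|\hmu|/\kk)|u_2-u_1|$, which forces $u_2=u_1$ for $\kk=\kk_8$ large.

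\textbf{Main obstacle.} The genuinely delicate point, and the reason the two cases $\ell-2r>0$ and $\ell-2r=0$ are stated separately, is the $\ell-2r=0$ case: there the function $G$ no longer decays near the singularities $\pm ia$, it only grows logarithmically, so one cannot work in any $\XX_{\nu,\sigma}$ with $\nu>0$ and must instead set up the whole fixed-point scheme in the logarithmically-weighted space $\XX_{\ln,\sigma}$ (and track $\pa_v\wh\CCC\in\XX_{1,\sigma}$). Getting the bookkeeping right — in particular that $\GG_\eps$ maps $\XX_{1,\sigma}$ into $\XX_{\ln,\sigma}$ with a uniform constant, that multiplication by $N\in\XX_{1,\sigma}$ keeps one inside the right space, and that the $1/\kk$ gain in $\pa_v N$ survives these weight changes so that the operator is still a contraction for $\kk$ large — is where the care is needed. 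A secondary, more routine, difficulty is that the change of variables $g$ must be handled both globally in $R_{\kk,d}$ and with its sharper behaviour near the singularities (as in Corollary~\ref{coro:Canvi:CotaCanvigInner} / Corollary~\ref{coro:Extensio:CotaCanvigInner}), so that composing with it does not spoil the leading asymptotics — but this is only needed for the subsequent Proposition~\ref{coro:Canvi:FirstOrder}, not for Proposition~\ref{prop:canvi} itself, where crude composition estimates suffice.
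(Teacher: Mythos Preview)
Your plan is broadly right in spirit, but the two cases are handled differently from what you propose, and for $\ell-2r=0$ your scheme does not close.

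\textbf{Case $\ell-2r>0$.} Here the preparatory change $u=v+g(v,\tau)$ is unnecessary, and the paper does not perform it. The point is that $G_1\in\XX_{\ell-2r,\sigma}$ with $\ell-2r>0$, so $\|G_1\|_{0,\sigma}\le K|\hmu|\kk^{-(\ell-2r)}$ is already small for $\kk$ large; hence the operator $\ol\FF=\GG_\eps\circ\FF$ acting on $\CCC$ itself (with $\FF$ rewritten as $-\eps^{-1}G-\pa_u(G\CCC)+(\pa_uG)\CCC$) is directly contractive on $\XX_{\ell-2r,\sigma}$, with Lipschitz constant $K|\hmu|/\kk^{\ell-2r}$. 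Your route via the change $g$ would also work but is a detour.

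\textbf{Case $\ell-2r=0$.} Running the fixed point directly in $\XX_{\ln,\sigma}$ does not close. After the change $u=v+g$ one has $\|N\|_{1,\sigma}\le K|\hmu|\eps$ and $\|\pa_vN\|_{1,\sigma}\le K|\hmu|/\kk$, so for $h\in\XX_{\ln,\sigma}$ the product $(\pa_vN)h$ is of size $|\hmu|\,|\ln|v^2+a^2||/(\kk|v^2+a^2|)$; applying $\GG_\eps$ (on the zero mode this is integration) produces a term of size $|\hmu|(\ln|v^2+a^2|)^2/\kk$, which is \emph{not} in $\XX_{\ln,\sigma}$ with a uniform bound. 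In other words the log squares under iteration, and no $1/\kk$ gain rescues it. Tracking $\pa_v\wh\CCC\in\XX_{1,\sigma}$ simultaneously does not help, because $\XX_{1,\sigma}\not\subset\XX_{\ln,\sigma}$ with a uniform constant near the singularity.

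The paper's remedy is to peel off the logarithmic part explicitly: after the change $u=v+g$ one defines
\[
\wh\CCC_0(v,\tau)=-\ol G_1(v,\tau)-\eps^{-1}\GG_\eps\big(\langle(\pa_vG_1)g\rangle\big)(v)-\eps^{-1}\GG_\eps\big(\langle G_2+G_3\rangle\big)(v),
\]
checks that $\wh\CCC_0\in\XX_{\ln,\sigma}$ with $\pa_v\wh\CCC_0\in\XX_{1,\sigma}$, and then seeks the remainder $\wh\CCC_1=\wh\CCC-\wh\CCC_0$ by a fixed-point argument in $\XX_{1,\sigma}$. The modified forcing $\wh M=M-\LL_\eps\wh\CCC_0+N\pa_v\wh\CCC_0$ now satisfies $\|\GG_\eps(\wh M)\|_{1,\sigma}\le K|\hmu|\eps$ (this is the substantial computation, splitting $\wh M$ into six pieces), and in $\XX_{1,\sigma}$ the Lipschitz estimate closes with constant $K|\hmu|/\kk$. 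The logarithmic bound on $\CCC$ then comes solely from the explicit $\wh\CCC_0$, not from the iteration.
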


We split the proof into the two cases : $\ell-2r>0$ and $\ell-2r=0$.

Nevertheless we need to state some useful properties of the function $G$
defined in \eqref{def:FuncioLAnulador:lmajor}.

\paragraph{Properties of the function $G$}\label{sec:Canvi:lemes}

We decompose the function $G$ in \eqref{def:FuncioLAnulador:lmajor}
as $G=G_1+G_2+G_3+G_4$ with
\begin{align}
G_1(u,\tau)&=\hmu\eps^{\ell-2r}p_0(u)\ii\pa_p \wh
H_1^1\left(q_0(u),p_0(u),\tau\right)\label{def:Canvi:G1}\\
G_2(u,\tau)&=\hmu\eps^{\ell-2r+1}p_0(u)\ii\pa_p \wh
H_1^2\left(q_0(u),p_0(u),\tau\right)\label{def:Canvi:G2}\\
G_3(u,\tau)&=\frac{1}{2}\left(1+\hmu\eps^{\ell-2r}\pa^2_p \wh
H_1^1\left(q_0(u),p_0(u),\tau\right)\right)\frac{\pa_u T_1^s(u,\tau)+\pa_u
T_1^u(u,\tau)}{p^2_0(u)}\label{def:Canvi:G3}\\
G_4(u,\tau)&=G(u,\tau)-G_1(u,\tau)-G_2(u,\tau)-G_3(u,\tau),\label{def:Canvi:G4}
\end{align}
where $\wh H_1^1$ and $\wh H_1^2$ are the functions defined in
\eqref{def:HamPertorbat:H1} and \eqref{def:HamPertorbat:H2}. The
next lemma gives some properties of these functions.

\begin{lemma}\label{lemma:canvi:CotesBesties}
Let $\kk>\kk_6$ and $d<d_2$, where $\kk_6$ an $d_0$
are the constants defined in Theorems \ref{prop:matching:HJ} and
\ref{th:ParamtoHJ}. Then, the functions $G_i$, $i=1,2,3,4$, defined
in \eqref{def:Canvi:G1}, \eqref{def:Canvi:G2}, \eqref{def:Canvi:G3}
and \eqref{def:Canvi:G4} respectively, have the following
properties.
\begin{enumerate}
\item $G_1\in \XX_{\ell-2r,\sigma}$ and satisfies $\langle
G_1\rangle=0$ and
\[
\left\|G_1\right\|_{\ell-2r,\sigma}\leq K|\hmu|\eps^{\ell-2r}.
\]
Moreover
\begin{itemize}
\item If $\ell-2r>0$,  $\pa_uG_1\in \XX_{\ell-2r+1,\sigma}$ and
satisfies
\[
\left\|\pa_u G_1\right\|_{\ell-2r+1,\sigma}\leq
K|\hmu|\eps^{\ell-2r}.
\]
\item If $\ell-2r=0$,  $\pa_uG_1\in \XX_{1-\frac{1}{\q},\sigma}$ and
satisfies
\[
\left\|\pa_u G_1\right\|_{1-\frac{1}{\q},\sigma}\leq K|\hmu|.
\]
\end{itemize}
\item $G_2\in \XX_{\ell-2r,\sigma}$ and satisfies
\[
\left\|G_2\right\|_{\ell-2r,\sigma}\leq
K|\hmu|^2\eps^{2(\ell-2r)+1}.
\]
Moreover
\begin{itemize}
\item If $\ell-2r>0$,  $\pa_uG_2\in \XX_{\ell-2r+1,\sigma}$ and
satisfies
\[
\left\|\pa_u G_2\right\|_{\ell-2r+1,\sigma}\leq
K|\hmu|^2\eps^{2(\ell-2r)+1}.
\]
\item If $\ell-2r=0$,  $\pa_uG_2\in \XX_{1-\frac{1}{\q},\sigma}$ and
satisfies
\[
\left\|\pa_u G_2\right\|_{1-\frac{1}{\q},\sigma}\leq K|\hmu|^2\eps.
\]
\end{itemize}
\item $G_3\in \XX_{\ell-2r+1,\sigma}$ and satisfies
\[
\left\|G_3\right\|_{\ell-2r+1,\sigma}\leq K|\hmu|\eps^{\ell-2r+1}.
\]
Moreover,
\begin{itemize}
\item If $\ell-2r>0$,  $\pa_uG_3\in \XX_{\ell-2r+1,\sigma}$ and
satisfies
\[
\left\|\pa_u G_3\right\|_{\ell-2r+1,\sigma}\leq
K\kk\ii|\hmu|\eps^{\ell-2r}.
\]
\item If $\ell-2r=0$,  $\pa_uG_3\in \XX_{2,\sigma}$ and
satisfies
\[
\left\|\pa_u G_3\right\|_{2,\sigma}\leq K|\hmu|\eps.
\]
\end{itemize}
\item $G_4, \pa_u G_4\in \XX_{3(\ell-2r)+2,\sigma}$ and satisfy
\[
\begin{split}
\left\|G_4\right\|_{3(\ell-2r)+2,\sigma}&\leq
K|\hmu|^3\eps^{3(\ell-2r)+2}\\
 \left\|\pa_u
G_4\right\|_{3(\ell-2r)+2,\sigma}&\leq
K\kk\ii|\hmu|^3\eps^{3(\ell-2r)+1}.
\end{split}
\]
\end{enumerate}
\end{lemma}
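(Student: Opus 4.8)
The plan is to establish Lemma~\ref{lemma:canvi:CotesBesties} by analyzing each of the four pieces $G_1,G_2,G_3,G_4$ separately, exploiting the structure of the decomposition \eqref{def:Canvi:G1}--\eqref{def:Canvi:G4} and the behavior of the relevant functions near the singularities $u=\pm ia$. The underlying mechanism is always the same: $\wh H_1^1$ and $\wh H_1^2$ are polynomials (or trigonometric polynomials) which are quadratic in $(q,p)$, evaluated along the unperturbed separatrix $(q_0(u),p_0(u))$, and by Hypothesis \textbf{HP2} and the definition \eqref{def:ell} of $\ell$ such evaluations produce branching points of order at most $\ell$. Dividing by $p_0(u)$ (which has a pole of order $r$ by \eqref{eq:SepartriuAlPol}) or by $p_0^2(u)$ (pole of order $2r$) shifts the order appropriately. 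First I would record, for use throughout, the elementary facts: $p_0(u)\ii\pa_p\wh H_1^1(q_0(u),p_0(u),\tau)$ has a branching point of order $\ell-r$ hence $p_0(u)\ii\pa_p\wh H_1^1$ multiplied by $\eps^{\ell-2r}$ lies in $\XX_{\ell-r,\sigma}\subset\XX_{\ell-2r,\sigma}$ (using $r\geq 1$ and item 2 of Lemma~\ref{lemma:PropietatsNormes:canvi}); that the zero-average property $\langle\wh H_1^1\rangle=0$ passes to $G_1$; and that differentiation costs at most one extra power of $(u^2+a^2)\ii$.

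For $G_1$, the bound on $G_1$ itself follows directly from the observations above together with item 2 of Lemma~\ref{lemma:PropietatsNormes:canvi}. For $\pa_u G_1$ one differentiates the explicit expression: in the case $\ell-2r>0$ one gets a function in $\XX_{\ell-2r+1,\sigma}$ with the stated bound; in the case $\ell-2r=0$ (so $\ell=2r$ and $r=\p/\q$) the leading singular term of $p_0(u)\ii\pa_p\wh H_1^1(q_0(u),p_0(u),\tau)$ is a \emph{constant} in $u$ (order $\ell-r=r$ singularity divided appropriately, landing on order zero modulo the fractional correction $(u\mp ia)^{1/\q}$), so its $u$-derivative sits in $\XX_{1-1/\q,\sigma}$ rather than $\XX_{1,\sigma}$ — this is exactly where the fractional exponent $1/\q$ enters and must be tracked using the refined expansion \eqref{eq:SepartriuAlPol}. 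For $G_2$ the analysis is identical but one gains an extra factor $\eps$ from the explicit $\eps^{\ell-2r+1}$ and an extra $|\hmu|\eps^{\ell-2r}$ from the coefficients $c_{kl}$ of $\wh H_1^2$, which satisfy $\|c_{kl}\|_\sigma\leq K|\mu|\eps^\eta=K|\hmu|\eps^{\ell-2r}$ by Corollary~\ref{coro:ShiftPeriodica} (rewritten in terms of $\hmu$); this yields the quadratic size $|\hmu|^2\eps^{2(\ell-2r)+1}$.

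For $G_3$, the key inputs are the bounds on $\pa_u T_1^{u,s}$ from Theorems~\ref{th:Extensio:Trig} and~\ref{th:ExtensioFinal} (via Corollary~\ref{coro:CotesOuter:muhat}), which give $\|\pa_u T_1^{u,s}-\pa_u T_0\|$ of size $|\hmu|\eps^{\ell-2r+1}$ with a pole of order $\ell+1$; dividing by $p_0^2(u)$ which is order $-2r$, and noting $\pa_u T_0=p_0^2$ so that $(\pa_u T_1^{u,s})/p_0^2$ has a pole of order $\ell+1-2r\le \ell-2r+1$ plus the regular part, one lands in $\XX_{\ell-2r+1,\sigma}$ with the stated bound; the prefactor $\frac12(1+\hmu\eps^{\ell-2r}\pa_p^2\wh H_1^1)$ is bounded and does not worsen the estimate. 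The derivative $\pa_u G_3$ costs one extra $(u^2+a^2)\ii$ and one extra $\kk\ii\eps\ii$ (using item 4 of Lemma~\ref{lemma:PropietatsNormes:canvi} with the Cauchy-type gain $1/(\kk\eps)$), which in the case $\ell-2r>0$ gives $\kk\ii|\hmu|\eps^{\ell-2r}$ and in the case $\ell-2r=0$ gives $|\hmu|\eps$ landing in $\XX_{2,\sigma}$. Finally $G_4$ is the remainder: it collects all the terms that are at least cubic in the small quantities $\pa_u T_1^{u,s}$ and $\mu\eps^\eta\wh H_1$ — coming from the higher-order Taylor expansion of $\pa_p\wh H_1$ in the second argument of \eqref{def:FuncioLAnulador:lmajor} — so by direct inspection of \eqref{def:FuncioLAnulador:lmajor} one checks it has a pole of order $3(\ell-2r)+2$ and size $|\hmu|^3\eps^{3(\ell-2r)+2}$, with the derivative costing the usual $\kk\ii\eps\ii$.

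The main obstacle, and the place requiring genuine care rather than routine bookkeeping, will be the $\ell-2r=0$ sub-cases of items 1 and 3: one must verify that the leading singular behavior near $u=\pm ia$ is actually regular (order zero) rather than of order $\ell-2r=0$ with a nonzero coefficient in a way that would obstruct the $1-1/\q$ and $2$ norm bounds, which forces one to use the \emph{precise} asymptotic expansions \eqref{eq:SepartriuAlPol} (and their trigonometric analogue \eqref{eq:SepartriuAlPolTrig}), including the $\OO((u\mp ia)^{1/\q})$ correction terms, and to match exponents carefully; the rest is a mechanical application of Lemmas~\ref{lemma:PropietatsNormes:canvi} and the bounds already proved for the periodic orbit, the outer manifolds, and the inner matching.
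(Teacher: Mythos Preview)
Your approach is essentially the same as the paper's: direct analysis of $G_1,G_2$ from the polynomial structure and Corollary~\ref{coro:ShiftPeriodica}, the bounds on $\pa_u T_1^{u,s}$ from Proposition~\ref{prop:extensio:trig}/Corollary~\ref{coro:Transition:CotesGeneradora} for $G_3$, Cauchy estimates (item 4 of Lemma~\ref{lemma:PropietatsNormes:canvi}) for the derivatives, and treating $G_4$ as a higher-order remainder. One small slip to fix: the branching order of $p_0(u)^{-1}\pa_p\wh H_1^1(q_0(u),p_0(u),\tau)$ is $\ell-2r$, not $\ell-r$ (each monomial $q_0^k p_0^{l-2}$ has order $(r-1)k+r(l-2)$, maximized at $\ell-2r$; compare $B_1$ in Lemma~\ref{lemma:Extensio:Trig:Cotes}), so it lies directly in $\XX_{\ell-2r,\sigma}$ and no inclusion argument is needed --- your cited item~2 would in fact go the wrong way.
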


\begin{proof}
The proof of the statements about $G_1$ and $G_2$ are
straightforward, taking into account, for $G_2$, the bounds obtained
in Corollary \ref{coro:ShiftPeriodica}. For $G_3$, one has to take
into account the bounds for $T_1^{u}$ obtained in Proposition
\ref{prop:extensio:trig} and Corollary
\ref{coro:Transition:CotesGeneradora} and the analogous bounds that
$T_1^s$ satisfies.  To obtain the bound for its derivative, one can
apply the fourth statement of Lemma
\ref{lemma:PropietatsNormes:canvi}. Analogously, one can obtain the
bounds for $G_4$ and $\pa_u G_4$.
\end{proof}

\paragraph{Case $\ell-2r>0$}\label{sec:PtFix:canvi:lmajor}

To prove Proposition \ref{prop:canvi} for $\ell-2r>0$, we look for
$\CCC$ as a fixed point of the operator
\begin{equation}\label{def:Functional:Canvi}
\ol\FF=\GG_\eps\circ\FF,
\end{equation}
where $\GG_\eps$ and $\FF$ are the operators defined in
\eqref{def:operador:canvi} and \eqref{eq:Operador:CanviFinal}
respectively. For convenience, we rewrite $\FF$ as
\begin{equation}\label{def:Operador:canvi:v2}
\FF(\CCC)(u,\tau)=-\eps\ii G(u,\tau)-\pa_u
\left(G(u,\tau)\CCC(u,\tau)\right)+\pa_uG(u,\tau)\CCC(u,\tau).
\end{equation}
Then Proposition \ref{prop:canvi} is a  consequence of the following
lemma.
\begin{lemma}\label{lemma:canvi:general}
Let $\eps_0>0$ be small enough and $\kk_8>\kk_6$ big
enough. Then, for $\eps\in(0,\eps_0)$ and any $\kk\geq \kk_8$ such
that $\eps\kk<a$, the operator $\ol\FF$ defined in
\eqref{def:Functional:Canvi} is contractive from
$\XX_{\ell-2r,\sigma}$ to itself.

Then, it has a unique fixed point $\CCC\in \XX_{\ell-2r,\sigma}$,
which moreover satisfies
\[
\begin{split}
\|\CCC\|_{\ell-2r,\sigma}&\leq K|\hmu|\eps^{\ell-2r}\\
\|\pa_u\CCC\|_{\ell-2r,\sigma}&\leq K\kk\ii|\hmu|\eps^{\ell-2r-1}.
\end{split}
\]
\end{lemma}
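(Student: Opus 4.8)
The plan is to obtain $\CCC$ as the unique fixed point of the operator $\ol\FF=\GG_\eps\circ\FF$ defined in \eqref{def:Functional:Canvi}, working in the ball $\ol B\bigl(b_{15}|\hmu|\eps^{\ell-2r}\bigr)\subset\XX_{\ell-2r,\sigma}$, where the constant $b_{15}$ is fixed along the way, and then to derive the bound on $\pa_u\CCC$ by a Cauchy estimate on a slightly smaller boomerang domain. Throughout, $\ell-2r>0$, so the key point is that the operator $\GG_\eps$ either gains a power $\eps$ on zero-average functions or lowers one decay index (Lemma \ref{lemma:Canvi:Operador}), which must be balanced against the $\eps^{-1}$ appearing in $\FF$. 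First I would check that $\ol\FF$ maps $\XX_{\ell-2r,\sigma}$ into itself: writing $\FF$ in the form \eqref{def:Operador:canvi:v2} this reduces to combining the mapping properties of $\GG_\eps$, the weighted-norm algebra estimates of Lemma \ref{lemma:PropietatsNormes:canvi}, and the bounds on $G$ and $\pa_u G$ collected in Lemma \ref{lemma:canvi:CotesBesties}.

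Next I would estimate $\ol\FF(0)=-\eps^{-1}\GG_\eps(G)$. Decomposing $G=G_1+G_2+G_3+G_4$ as in \eqref{def:Canvi:G1}--\eqref{def:Canvi:G4}, the dominant term is the one coming from $G_1$: since $\langle G_1\rangle=0$, the second item of Lemma \ref{lemma:Canvi:Operador} gives
\[
\left\|\eps^{-1}\GG_\eps(G_1)\right\|_{\ell-2r,\sigma}\le K\eps^{-1}\cdot\eps\cdot\|G_1\|_{\ell-2r,\sigma}\le K|\hmu|\eps^{\ell-2r}.
\]
The contributions of $G_2,G_3,G_4$ carry either an extra factor $\hmu\eps^{\ell-2r}$ or a strictly higher decay index (so one unit can be traded for smallness under $\GG_\eps$, using $\ell-2r>0$), hence are of higher order in $\eps$ and $\hmu$. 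This yields $\|\ol\FF(0)\|_{\ell-2r,\sigma}\le\tfrac12 b_{15}|\hmu|\eps^{\ell-2r}$ for a suitable $b_{15}$.

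For the contraction estimate, for $h_1,h_2$ in the ball one has, by \eqref{def:Operador:canvi:v2},
\[
\ol\FF(h_2)-\ol\FF(h_1)=-\GG_\eps\bigl(\pa_u(G\cdot(h_2-h_1))\bigr)+\GG_\eps\bigl(\pa_u G\cdot(h_2-h_1)\bigr).
\]
For the first term I would use that $\GG_\eps\circ\pa_u$ preserves the decay index (item 5 of Lemma \ref{lemma:Canvi:Operador}) together with $\|G\cdot(h_2-h_1)\|_{\ell-2r,\sigma}\le K(\kk\eps)^{-(\ell-2r)}\|G\|_{\ell-2r,\sigma}\|h_2-h_1\|_{\ell-2r,\sigma}$ (product rule plus the embedding of item 1 in Lemma \ref{lemma:PropietatsNormes:canvi}), so that by Lemma \ref{lemma:canvi:CotesBesties} this contribution is bounded by $K\kk^{-(\ell-2r)}|\hmu|\,\|h_2-h_1\|_{\ell-2r,\sigma}$; the second term is treated analogously using $\|\pa_u G\|_{\ell-2r+1,\sigma}\le K|\hmu|\eps^{\ell-2r}$ and the corresponding embedding into $\XX_{\ell-2r,\sigma}$. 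Taking $\kk_8$ large enough (and $\eps_0$ small) makes the Lipschitz constant $<1/2$, so $\ol\FF$ is contractive on the ball and has a unique fixed point $\CCC$ with $\|\CCC\|_{\ell-2r,\sigma}\le b_{15}|\hmu|\eps^{\ell-2r}$. Finally, restricting to a slightly smaller domain (decreasing $d$) and applying item 4 of Lemma \ref{lemma:PropietatsNormes:canvi} gives $\|\pa_u\CCC\|_{\ell-2r,2\kk,d,\sigma}\le\frac{K}{\kk\eps}\|\CCC\|_{\ell-2r,\kk,d_3,\sigma}\le\frac{K}{\kk}|\hmu|\eps^{\ell-2r-1}$, which is the asserted bound.

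The main obstacle is not conceptual but one of bookkeeping: one must pick exactly the right weighted space $\XX_{\nu,\sigma}$ for each intermediate quantity so that the smallness produced by $\GG_\eps$ compensates the $\eps^{-1}$ in $\FF$, and so that the Lipschitz constant comes out proportional to a negative power of $\kk$ rather than merely $O(|\hmu|)$ — it is precisely this last point that forces the threshold $\kk_8$, exactly as it did in the analogous Lemma \ref{lemma:canvi:lmenor:PtFix} for the case $\ell-2r<0$.
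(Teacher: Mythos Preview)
Your proposal is correct and follows essentially the same approach as the paper: a contraction mapping argument for $\ol\FF$ on $\XX_{\ell-2r,\sigma}$, using the decomposition \eqref{def:Operador:canvi:v2}, the mapping properties of $\GG_\eps$ in Lemma \ref{lemma:Canvi:Operador}, and the weighted-norm estimates on $G$ (which the paper packages as the auxiliary Lemma \ref{lemma:Canvi:cotes}, while you inline them from Lemma \ref{lemma:canvi:CotesBesties}), followed by a Cauchy estimate for $\pa_u\CCC$. One small imprecision: the contribution of $G_3$ to $\ol\FF(0)$ is not strictly higher order but the same order $K|\hmu|\eps^{\ell-2r}$ as the $G_1$ term (trading one decay unit under $\GG_\eps$ gives no extra $\eps$; the needed $\eps$ is already in $\|G_3\|_{\ell-2r+1,\sigma}$), though this does not affect the final bound.
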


Before proving Lemma \ref{lemma:canvi:general}, we state the following technical
lemma
about  the properties of the function $G$ defined in
\eqref{def:FuncioLAnulador:lmajor}.

\begin{lemma}\label{lemma:Canvi:cotes}
Let us assume $\ell-2r>0$. Then, the function $G$ defined in
\eqref{def:FuncioLAnulador:lmajor} has the following properties:
\begin{enumerate}
\item $G\in\XX_{\ell-2r,\sigma}$ and satisfies
\[
\|G\|_{\ell-2r,\sigma}\leq K|\hmu|\eps^{\ell-2r}.
\]
\item $\pa_u G\in\XX_{\ell-2r+1,\sigma}$ and satisfies
\[
\|\pa_u G\|_{\ell-2r+1,\sigma}\leq K|\hmu|\eps^{\ell-2r}.
\]
\item $\GG_\eps(G)\in\XX_{\ell-2r,\sigma}$ and satisfies
\[
\|\GG_\eps(G)\|_{\ell-2r,\sigma}\leq K|\hmu|\eps^{\ell-2r+1}.
\]
\end{enumerate}
\end{lemma}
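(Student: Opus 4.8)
The statement to prove is Lemma \ref{lemma:Canvi:cotes}, which gives three bounds for the function $G$ defined in \eqref{def:FuncioLAnulador:lmajor} in the case $\ell-2r>0$: an $\XX_{\ell-2r,\sigma}$ bound on $G$ itself, an $\XX_{\ell-2r+1,\sigma}$ bound on $\pa_u G$, and an $\XX_{\ell-2r,\sigma}$ bound on $\GG_\eps(G)$ which is one power of $\eps$ better.

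\textbf{Approach.} The plan is to derive all three bounds directly from the decomposition $G=G_1+G_2+G_3+G_4$ introduced in \eqref{def:Canvi:G1}--\eqref{def:Canvi:G4}, together with the estimates for the four pieces $G_i$ and their derivatives collected in Lemma \ref{lemma:canvi:CotesBesties}, and the mapping properties of the integral operator $\GG_\eps$ listed in Lemma \ref{lemma:Canvi:Operador}. Since $\ell-2r>0$, each $G_i$ already lies in a space $\XX_{\nu_i,\sigma}$ with $\nu_i\geq \ell-2r$, so by the embedding in the second item of Lemma \ref{lemma:PropietatsNormes:canvi} they all lie in $\XX_{\ell-2r,\sigma}$; summing the bounds and keeping the dominant term $G_1$ (of size $|\hmu|\eps^{\ell-2r}$, while $G_2,G_3,G_4$ carry extra positive powers of $\eps$) gives the first bound.

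\textbf{Key steps, in order.} First I would establish statement (1): apply Lemma \ref{lemma:canvi:CotesBesties} to get $\|G_1\|_{\ell-2r,\sigma}\leq K|\hmu|\eps^{\ell-2r}$, $\|G_2\|_{\ell-2r,\sigma}\leq K|\hmu|^2\eps^{2(\ell-2r)+1}$, $\|G_3\|_{\ell-2r+1,\sigma}\leq K|\hmu|\eps^{\ell-2r+1}$ and $\|G_4\|_{3(\ell-2r)+2,\sigma}\leq K|\hmu|^3\eps^{3(\ell-2r)+2}$; then use the $\XX_{\nu_1,\sigma}\subset\XX_{\nu_2,\sigma}$ embedding (item 2 of Lemma \ref{lemma:PropietatsNormes:canvi}) for $G_3$ and $G_4$ to place them in $\XX_{\ell-2r,\sigma}$, and add up, noting that since $\hmu\in B(\hmu_0)$ and $\eps\in(0,\eps_0)$ the subdominant terms are absorbed into the constant. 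Statement (2) is identical in structure applied instead to $\pa_u G=\pa_uG_1+\pa_uG_2+\pa_uG_3+\pa_uG_4$, using the corresponding derivative bounds in Lemma \ref{lemma:canvi:CotesBesties} (here in the case $\ell-2r>0$, so $\pa_u G_1,\pa_u G_2\in\XX_{\ell-2r+1,\sigma}$, $\pa_uG_3\in\XX_{\ell-2r+1,\sigma}$ with an extra $\kk\ii$, and $\pa_uG_4\in\XX_{3(\ell-2r)+2,\sigma}$ with an extra $\kk\ii$), again dominated by $\pa_u G_1$ which is $O(|\hmu|\eps^{\ell-2r})$. For statement (3) I would split $\GG_\eps(G)=\GG_\eps(G_1)+\GG_\eps(G_2)+\GG_\eps(G_3)+\GG_\eps(G_4)$: since $\langle G_1\rangle=0$, the ``zero-mean'' half of item 1 of Lemma \ref{lemma:Canvi:Operador} yields $\|\GG_\eps(G_1)\|_{\ell-2r,\sigma}\leq K\eps\|G_1\|_{\ell-2r,\sigma}\leq K|\hmu|\eps^{\ell-2r+1}$; for $G_2$ the ordinary item 1 bound suffices because $\|G_2\|_{\ell-2r,\sigma}$ already has a spare power $\eps^{(\ell-2r)+1}$; for $G_3$ and $G_4$ one combines the embedding into $\XX_{\ell-2r,\sigma}$ with the boundedness of $\GG_\eps$ on $\XX_{\ell-2r,\sigma}$. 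Collecting terms gives the claimed $\|\GG_\eps(G)\|_{\ell-2r,\sigma}\leq K|\hmu|\eps^{\ell-2r+1}$.

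\textbf{Main obstacle.} The only real subtlety is bookkeeping of which estimate for each $G_i$ (and in which norm) is needed and checking that the loss of one unit of homogeneity weight in passing from $\XX_{\ell-2r+1}$ to $\XX_{\ell-2r}$ (which costs a factor $(\kk\eps)^{-1}$ by item 1 of Lemma \ref{lemma:PropietatsNormes:canvi}) is still harmless for the subdominant pieces; one must verify that $G_2,G_3,G_4$ retain enough spare powers of $\eps$ after this loss so that the final sum is genuinely dominated by $G_1$. Because $\ell-2r>0$ is assumed and each subdominant term carries at least one extra full power of $\eps$ (and the $\kk\ii$ factors only help), this verification is routine; there is no analytic difficulty beyond the already-proven lemmas. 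I would also remark, as the paper does in analogous proofs, that the trigonometric case is handled identically using Remark \ref{remark:DefQIntrinseca}.
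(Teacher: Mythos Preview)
Your approach is the same as the paper's, and statements (1) and (2) are handled correctly. There is, however, a genuine bookkeeping gap in your treatment of statement (3) for the pieces $G_3$ and $G_4$.

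You propose to first embed $G_3\in\XX_{\ell-2r+1,\sigma}$ into $\XX_{\ell-2r,\sigma}$ and then apply boundedness of $\GG_\eps$ on $\XX_{\ell-2r,\sigma}$. The embedding you need here is item~1 (not item~2) of Lemma~\ref{lemma:PropietatsNormes:canvi}, and it costs a factor $(\kk\eps)^{-1}$: one gets
\[
\|G_3\|_{\ell-2r,\sigma}\leq K(\kk\eps)^{-1}\|G_3\|_{\ell-2r+1,\sigma}\leq K\kk^{-1}|\hmu|\eps^{\ell-2r},
\]
and hence only $\|\GG_\eps(G_3)\|_{\ell-2r,\sigma}\leq K\kk^{-1}|\hmu|\eps^{\ell-2r}$. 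Since the lemma must hold for any $\kk\geq\kk_8$ with $\kk\eps<a$ (in particular for $\kk$ of order one, independent of $\eps$), the factor $\kk^{-1}$ does \emph{not} supply the missing power of $\eps$, and you fall short of the claimed bound $K|\hmu|\eps^{\ell-2r+1}$. The same issue occurs for $G_4$.

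The fix is immediate: instead of embedding first, apply item~2 of Lemma~\ref{lemma:Canvi:Operador}, which gives $\GG_\eps:\XX_{\nu,\sigma}\to\XX_{\nu-1,\sigma}$ bounded for $\nu>1$. Since $\ell-2r+1>1$, this yields directly
\[
\|\GG_\eps(G_3)\|_{\ell-2r,\sigma}\leq K\|G_3\|_{\ell-2r+1,\sigma}\leq K|\hmu|\eps^{\ell-2r+1},
\]
and analogously for $G_4$ (apply item~2 and then embed $\XX_{3(\ell-2r)+1,\sigma}$ into $\XX_{\ell-2r,\sigma}$, which now leaves a spare $\eps^{\ell-2r+1}$). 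Your handling of $G_1$ via the zero-mean clause and of $G_2$ via its spare $\eps$ is correct and is exactly what the paper does.
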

\begin{proof}
The bounds for $G$ and $\pa_u G$ are a direct consequence of Lemma
\ref{lemma:canvi:CotesBesties}. To obtain the bound for
$\GG_\eps(G)$, it is enough to apply Lemma
\ref{lemma:Canvi:Operador} and to take into account that $\langle
G_1\rangle=0$.
\end{proof}

Using the bounds given in this lemma, we can prove Lemma
\ref{lemma:canvi:general}.
\begin{proof}[Proof of Lemma \ref{lemma:canvi:general}]
Let $\CCC_1,\CCC_2\in \XX_{\ell-2r,\sigma}$. By
definition of $\FF$ in \eqref{def:Operador:canvi:v2} and
Lemmas \ref{lemma:PropietatsNormes:canvi}, \ref{lemma:Canvi:Operador} and
\ref{lemma:Canvi:cotes}
\[
\begin{split}
\left\|\ol\FF(\CCC_2)-\ol\FF(\CCC_1)\right\|_{\ell-2r,\sigma}&\leq
\left\|\GG_\eps\left(\pa_u\left(G\cdot\left(\CCC_2-\CCC_1\right)\right)\right)
\right\|_{\ell-2r,\sigma}+\left\|\GG_\eps\left(\pa_u
G\cdot\left(\CCC_2-\CCC_1\right)\right)\right\|_{\ell-2r,\sigma}\\
&\leq K\left\|
G\right\|_{0,\sigma}\left\|\CCC_2-\CCC_1\right\|_{\ell-2r,\sigma}+K\left\|
\pa_uG\right\|_{1,\sigma}\left\|\CCC_2-\CCC_1\right\|_{\ell-2r,\sigma}\\
&\leq
\frac{K|\hmu|}{\kk_8^{\ell-2r}}\left\|\CCC_2-\CCC_1\right\|_{\ell-2r,\sigma}.
\end{split}
\]
Then, increasing $\kk_8$ if necessary, $\ol\FF$ is contractive from
$\XX_{\ell-2r,\sigma}$ to itself, and then it has a unique fixed
point $\CCC\in\XX_{\ell-2r,\sigma}$.

To obtain a bound for the fixed point $\CCC$ it is enough to recall
that
\[
\left\|\CCC\right\|_{\ell-2r,\sigma}\leq
2\left\|\ol\FF(0)\right\|_{\ell-2r,\sigma}.
\]
By the definition of $\ol\FF$ in \eqref{def:Functional:Canvi},
$\ol\FF(0)=-\eps\ii\GG_\eps(G)$. Then, applying Lemma
\ref{lemma:Canvi:cotes}, we obtain the bound for $\CCC$. Finally, to
obtain the bound for $\pa_u\CCC$ it is enough to reduce slightly the
domain and apply the fourth statement of Lemma
\ref{lemma:PropietatsNormes:canvi}.
\end{proof}
\begin{proof}[Proof of Proposition \ref{prop:canvi} for $\ell-2r>0$]
To prove Proposition \ref{prop:canvi} from Lemma
\ref{lemma:canvi:general}, it only remains to check that
$(\xi_0(u,\tau),\tau)$ is injective in
$R_{\kk,d_3}\times\TT_\sigma$. We prove this fact as in the proof of
Proposition \ref{prop:canvi:lmenor}, that is, we check that if
\[
\eps\ii u_2-\tau+\CCC(u_2,\tau)=\eps\ii u_1-\tau+\CCC(u_1,\tau)
\]
for $(u_1,\tau),(u_2,\tau)\in R_{\kk,d_3}\times\TT_\sigma$, then we
have that $u_2=u_1$. Indeed, by the bound of $\pa_u\CCC$ given in Lemma
\ref{lemma:canvi:general},
\[
\begin{split}
\left|u_2-u_1\right|&=\eps
\left|\CCC(u_2,\tau)-\CCC(u_1,\tau)\right|\\
&\leq \frac{K|\hmu|}{\kk_8^{\ell-2r+1}}|u_2-u_1|.
\end{split}
\]
Then, increasing $\kk_8$ if necessary, one can see that  $u_2=u_1$.
\end{proof}

\paragraph{Case $\ell-2r=0$}\label{sec:PtFix:canvi:ligual}

We will prove Proposition \ref{prop:canvi} under
the hypothesis $\ell-2r=0$. Now, as happened in Section
\ref{sec:PtFix:canvi:lmenor}, the linear term $G_1$ in
\eqref{def:Canvi:G1} of $\FF$ in \eqref{eq:Operador:CanviFinal} is
not small. Then, we  perform again a change of variables.

\begin{lemma}\label{lemma:canvi:canvi}
Let $\kk_8>\kk_6'>\kk_6$ and $d_3<d_2'<d_2$. Then, for
$\eps>0$ small enough, there exists a function $g$ which is solution
of the equation
\[
\LL_\eps g(v,\tau)=G_1(v,\tau),
\]
where $G_1$ is the function defined in \eqref{def:Canvi:G1}.
Moreover, it satisfies that
\[
\|g\|_{0,\kk_6',d_2',\sigma}\leq K|\hmu|\eps,\qquad
\|\pa_vg\|_{1-\frac{1}{\q},\kk_6',d_2',\sigma}\leq K|\hmu|\eps
\]
and that $u=v+g(v,\tau)\in R_{\kk_6,d_2}$ for $(v,\tau)\in
R_{\kk_6',d_2'}\times\TT_\sigma$.

Furthermore, the change $(u,\tau)=(v+g(v,\tau),\tau)$ is invertible
and its inverse is of the form $(v,\tau)=(u+h(u,\tau),\tau)$. The
function $h$ is defined in the domain
$R_{\kk_8,d_3}\times\TT_\sigma$, satisfies
\[
\|h\|_{0,\kk_8,d_3,\sigma}\leq K|\hmu|\eps
\]
and that $u+h(u,\tau)\in R_{\kk_6',d_2'}$ for $(u,\tau)\in
R_{\kk_8,d_3}\times\TT_\sigma$.
\end{lemma}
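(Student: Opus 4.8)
The plan is to mimic, step by step, the proof of Lemma~\ref{lemma:canvi:canvi:lmenor} (the analogous statement for $\ell-2r<0$), the only changes being the singularity orders of the functions involved at $u=\pm ia$ and hence the weights of the norms. Since in the present case $\ell-2r=0$ we have $\eps^{\ell-2r}=1$, and by Lemma~\ref{lemma:canvi:CotesBesties}~(1) the function $G_1$ in \eqref{def:Canvi:G1} belongs to $\XX_{0,\kk_6',d_2',\sigma}$, has zero $\tau$-average, satisfies $\|G_1\|_{0,\kk_6',d_2',\sigma}\leq K|\hmu|$, and moreover $\pa_vG_1\in\XX_{1-1/\q,\kk_6',d_2',\sigma}$ with $\|\pa_vG_1\|_{1-1/\q,\kk_6',d_2',\sigma}\leq K|\hmu|$. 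First I would introduce the $2\pi$-periodic primitive of $G_1$ with zero average,
\[
\ol G_1(v,\tau)=\sum_{k\in\ZZ\setminus\{0\}}\frac{1}{ik}\,G_1^{[k]}(v)\,e^{ik\tau},
\]
which inherits $\|\ol G_1\|_{0,\kk_6',d_2',\sigma}\leq\|G_1\|_{0,\kk_6',d_2',\sigma}$ and $\|\pa_v\ol G_1\|_{1-1/\q,\kk_6',d_2',\sigma}\leq\|\pa_vG_1\|_{1-1/\q,\kk_6',d_2',\sigma}$, and then set, exactly as in \eqref{def:canvi:canvi:lmenor},
\[
g(v,\tau)=\eps\,\ol G_1(v,\tau)-\eps\,\GG_\eps\left(\pa_v\ol G_1\right)(v,\tau),
\]
where $\GG_\eps$ is the operator in \eqref{def:operador:canvi} adapted to $R_{\kk_6',d_2'}\times\TT_\sigma$. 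Using $\LL_\eps(\eps\ol G_1)=\pa_\tau\ol G_1+\eps\pa_v\ol G_1=G_1+\eps\pa_v\ol G_1$ together with $\LL_\eps\circ\GG_\eps=\Id$ (Lemma~\ref{lemma:Canvi:Operador}~(7)) one checks directly that $\LL_\eps g=G_1$.

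For the quantitative estimates I would invoke Lemma~\ref{lemma:Canvi:Operador}. Since $\pa_v\ol G_1\in\XX_{1-1/\q,\sigma}$ with $1-1/\q\in[0,1)$, item~(3) (or item~(1) in the degenerate case $\q=1$) yields $\|\GG_\eps(\pa_v\ol G_1)\|_{0,\kk_6',d_2',\sigma}\leq K|\hmu|$, hence $\|g\|_{0,\kk_6',d_2',\sigma}\leq K|\hmu|\eps$; and item~(6) yields $\|\pa_v\GG_\eps(\pa_v\ol G_1)\|_{1-1/\q,\kk_6',d_2',\sigma}\leq K|\hmu|$, hence $\|\pa_vg\|_{1-1/\q,\kk_6',d_2',\sigma}\leq K|\hmu|\eps$. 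Because $|g(v,\tau)|\leq\|g\|_{0,\kk_6',d_2',\sigma}\leq K|\hmu|\eps$ uniformly on $R_{\kk_6',d_2'}\times\TT_\sigma$, choosing $\eps_0$ small enough and $\kk_6'>\kk_6$, $d_2'<d_2$ with the gaps $\kk_6'-\kk_6$ and $d_2-d_2'$ large/positive enough that a $K|\hmu|\eps$-displacement cannot take a point of $R_{\kk_6',d_2'}$ out of $R_{\kk_6,d_2}$, we get $v+g(v,\tau)\in R_{\kk_6,d_2}$.

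Finally, to obtain the inverse change $(v,\tau)=(u+h(u,\tau),\tau)$ I would argue as usual: solving $v+g(v,\tau)=u$ for $v$ is equivalent to the fixed-point equation $h(u,\tau)=-g(u+h(u,\tau),\tau)$, and since $\|\pa_vg\|$ is of order $|\hmu|\eps$ this map is a contraction on a ball of radius $\OO(|\hmu|\eps)$ in a Fourier-norm space over $R_{\kk_8,d_3}\times\TT_\sigma$ with $\kk_8>\kk_6'$ and $d_3<d_2'$ (so that $u+h(u,\tau)\in R_{\kk_6',d_2'}$, where $g$ is defined); its unique fixed point satisfies $\|h\|_{0,\kk_8,d_3,\sigma}\leq K|\hmu|\eps$ and $u+h(u,\tau)\in R_{\kk_6',d_2'}$. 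I expect the only genuinely delicate point to be the bookkeeping of the nested boomerang domains $R_{\kk_8,d_3}\subset R_{\kk_6',d_2'}\subset R_{\kk_6,d_2}$: the operator $\GG_\eps$ in \eqref{def:operador:canvi} is attached to one fixed domain, and the displacements caused by $g$ and $h$ are of the same order $\eps$ as the thickness of the strips separating these domains, so one must fix $\eps_0$ and the gaps between the constants $\kk$'s and $d$'s depending on all previously fixed quantities — a step handled exactly as in Lemma~\ref{lemma:canvi:canvi:lmenor}.
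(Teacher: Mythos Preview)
Your proof is correct and follows essentially the same approach as the paper: you introduce the zero-mean $\tau$-primitive $\ol G_1$, define $g=\eps\ol G_1-\eps\GG_\eps(\pa_v\ol G_1)$ exactly as in the paper's formula \eqref{def:canvi:canvi}, and deduce the bounds from Lemmas~\ref{lemma:canvi:CotesBesties} and~\ref{lemma:Canvi:Operador}. Your write-up is in fact more explicit than the paper's (which dismisses the verification of $\LL_\eps g=G_1$ and the inverse change as ``straightforward''), and your observation that item~(3) of Lemma~\ref{lemma:Canvi:Operador} requires $1-1/\q\in(0,1)$, forcing a fallback to item~(1) when $\q=1$, is a fair point the paper glosses over.
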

\begin{proof}
From Lemma \ref{lemma:canvi:CotesBesties}, $\langle G_1\rangle=0$
and then we can define a function $\ol G_1$ such that
\begin{equation}\label{def:canvi:G1:barra}
\pa_\tau \ol G_1=G_1\qquad\text{ and }\qquad \langle \ol G_1\rangle=0,
\end{equation}
which satisfies
\begin{equation}\label{eq:canvi:Cota:G1barra}
\begin{split}
\left\|\ol G_1\right\|_{0,\kk_6',d_2',\sigma}&\leq K|\hmu|\\
\left \|\pa_v\ol
G_1\right\|_{1-\frac{1}{\q},\kk_6',d_2',\sigma}&\leq K|\hmu|.
\end{split}
\end{equation}
Then, we can define $g$ as
\begin{equation}\label{def:canvi:canvi}
g(v,\tau)=\eps\ol G_1(v,\tau)-\eps\GG_\eps\left(\pa_v\ol
G_1\right)(v,\tau),
\end{equation}
where $\GG_\eps$ is the operator defined in
\eqref{def:operador:canvi} adapted to the domain
$R_{\kk_6',d_2'}\times\TT_\sigma$.

Finally, applying Lemma
\ref{lemma:canvi:CotesBesties} and \ref{lemma:Canvi:Operador}, one
obtains the bounds for $g$ and $\pa_v g$. The other statements are
straightforward.
\end{proof}

We perform the change of variables $u=v+g(v,\tau)$ given in Lemma
\ref{lemma:canvi:canvi} to equation \eqref{eq:Operador:CanviFinal}
and we obtain
\begin{equation}\label{eq:Canvi:PDE:Cbarret}
\LL_\eps\wh\CCC=M(v,\tau)+N(v,\tau)\pa_v\wh\CCC,
\end{equation}
where $\wh\CCC$ is the unknown
\begin{equation}\label{eq:CtoCbarret}
\wh\CCC(v,\tau)=\CCC(v+g(v,\tau),\tau)
\end{equation}
and
\begin{align}
M(v,\tau)&=-\eps\ii G\left(v+g(v,\tau),\tau\right)\label{def:canvi:M}\\
N(v,\tau)&=-\frac{G\left(v+g(v,\tau),\tau\right)-G_1(v,\tau)}{1+\pa_v
g(v,\tau)}.\label{def:canvi:N}
\end{align}
Moreover, we want to have the first order terms in $\wh\CCC$, coming
from $G_1$, $G_2$ and $G_3$, in a explicit form. For this purpose, we
define
\begin{equation}\label{def:C0barret}
\begin{split}
\wh\CCC_0(v,\tau)=&-\ol G_1(v,\tau)-\eps\ii \GG_\eps\left(\langle
\pa_vG_1g\rangle\right)(v) \\
&-\eps\ii \GG_\eps\left(\langle G_2+G_3\rangle\right)(v),
\end{split}
\end{equation}
where $\ol G_1$ is the function defined in
\eqref{def:canvi:G1:barra}, $g$ is the function given by Lemma
\ref{lemma:canvi:canvi} and $G_2$ and $G_3$ are the functions
defined in \eqref{def:Canvi:G2} and \eqref{def:Canvi:G3}
respectively. The next lemma, whose proof is straightforward
applying Lemmas \ref{lemma:Canvi:Operador},
\ref{lemma:canvi:CotesBesties} and \ref{lemma:canvi:canvi}, gives
some properties of $\wh\CCC_0$.

\begin{lemma}\label{lemma:canvi:C0}
The function $\wh\CCC_0$ defined in \eqref{def:C0barret} satisfies
that
\[
\left\|\wh\CCC_0\right\|_{\ln,\kk_6',d_2',\sigma}\leq K|\hmu|, \qquad
\left\|\pa_v\wh\CCC_0\right\|_{1,\kk_6',d_2',\sigma}\leq K|\hmu|.
\]
\end{lemma}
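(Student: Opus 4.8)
The plan is to estimate $\wh\CCC_0$ directly from its defining formula \eqref{def:C0barret}, which already presents it as a sum of three explicit pieces, $\wh\CCC_0=-\ol G_1-\eps\ii\GG_\eps(\langle\pa_vG_1\,g\rangle)-\eps\ii\GG_\eps(\langle G_2+G_3\rangle)$, and to bound each piece. All the analytic input is in hand: in the case $\ell-2r=0$, Lemma \ref{lemma:canvi:CotesBesties} gives $G_1\in\XX_{0,\sigma}$ with $\|G_1\|_{0,\sigma}\leq K|\hmu|$ and $\pa_vG_1\in\XX_{1-1/\q,\sigma}$ with $\|\pa_vG_1\|_{1-1/\q,\sigma}\leq K|\hmu|$, $G_2\in\XX_{0,\sigma}$ with $\|G_2\|_{0,\sigma}\leq K|\hmu|^2\eps$, and $G_3\in\XX_{1,\sigma}$ with $\|G_3\|_{1,\sigma}\leq K|\hmu|\eps$; Lemma \ref{lemma:canvi:canvi} gives $\|g\|_{0,\kk_6',d_2',\sigma}\leq K|\hmu|\eps$; and Lemma \ref{lemma:Canvi:Operador} supplies the mapping properties of $\GG_\eps$ (items (1)--(4)) and of $\pa_v\GG_\eps$ (item (6)) between the weighted spaces $\XX_{\nu,\kk,d,\sigma}$, while Lemma \ref{lemma:PropietatsNormes:canvi} supplies the elementary embeddings and the product estimate. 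So the work is purely bookkeeping --- keeping track of which weight each summand falls into. I would first treat the $\|\cdot\|_{\ln}$ estimate and then the derivative estimate.

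For the $\|\cdot\|_{\ln,\kk_6',d_2',\sigma}$ bound: since $\ol G_1$ is the zero-mean $\tau$-primitive of $G_1$ one has $\|\ol G_1\|_{0,\sigma}\leq\|G_1\|_{0,\sigma}\leq K|\hmu|$, and I would pass to the $\ln$-weight via the trivial embedding $\XX_{0,\kk_6',d_2',\sigma}\subset\XX_{\ln,\kk_6',d_2',\sigma}$, which is available because $\ln\ii|u^2+a^2|$ is bounded on the bounded domain $R_{\kk_6',d_2'}$. For the second piece, $\pa_vG_1\cdot g\in\XX_{1-1/\q,\sigma}$ with norm $\leq K|\hmu|^2\eps$ by the product estimate, so after averaging, applying $\GG_\eps$ through Lemma \ref{lemma:Canvi:Operador}(3), and multiplying by $\eps\ii$, it contributes $\leq K|\hmu|^2\leq K|\hmu|$. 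In the third piece, $\langle G_2\rangle\in\XX_{0,\sigma}$ contributes $\leq K|\hmu|^2$ after $\GG_\eps$ (Lemma \ref{lemma:Canvi:Operador}(1)) and $\eps\ii$, whereas $\langle G_3\rangle\in\XX_{1,\sigma}$ with norm $\leq K|\hmu|\eps$ is handled by exactly Lemma \ref{lemma:Canvi:Operador}(4), namely $\GG_\eps:\XX_{1,\sigma}\to\XX_{\ln,\sigma}$, which after the $\eps\ii$ gives a contribution $\leq K|\hmu|$ to the $\ln$-norm. Summing the four contributions gives $\|\wh\CCC_0\|_{\ln,\kk_6',d_2',\sigma}\leq K|\hmu|$.

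The derivative estimate goes along the same lines, with $\pa_v$ distributed over the three pieces: $\pa_v\ol G_1$ is the zero-mean $\tau$-primitive of $\pa_vG_1\in\XX_{1-1/\q,\sigma}$, hence lies in $\XX_{1-1/\q,\sigma}\subset\XX_{1,\sigma}$ with norm $\leq K|\hmu|$, and in the two $\GG_\eps$-terms I would commute $\pa_v$ through and invoke Lemma \ref{lemma:Canvi:Operador}(6), which preserves the weight, so the same powers of $\hmu$ and $\eps$ reappear and yield $\|\pa_v\wh\CCC_0\|_{1,\kk_6',d_2',\sigma}\leq K|\hmu|$. I do not expect any genuine obstacle here --- the remark in the text is right that the lemma is straightforward --- and the only point I would want to highlight is structural: the logarithmic weight $\XX_{\ln}$ is forced by the single term $\langle G_3\rangle$, which carries a true simple pole at $u=\pm ia$ (its weight is $\XX_{\ell-2r+1}=\XX_1$), and integrating that pole along a path reaching within $\OO(\eps)$ of the singularity produces a logarithm; this is precisely the mechanism, encoded in Lemma \ref{lemma:Canvi:Operador}(4), that is ultimately responsible for the $\ln\frac{1}{\eps}$ correction in the exponent of the splitting formula in the boundary case $\ell=2r$.
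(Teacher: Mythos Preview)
Your proof is correct and follows exactly the approach the paper intends: the text itself states that the lemma is a straightforward consequence of Lemmas \ref{lemma:Canvi:Operador}, \ref{lemma:canvi:CotesBesties} and \ref{lemma:canvi:canvi}, and you have simply unpacked that remark term by term. The only small refinement I would make is that the embedding $\XX_{0,\kk_6',d_2',\sigma}\subset\XX_{\ln,\kk_6',d_2',\sigma}$ holds not merely because the domain is bounded, but because on $R_{\kk_6',d_2'}$ one has $|u^2+a^2|\leq K d_2'<1$ (for $d_2'$ small), so that $|\ln|u^2+a^2||$ is bounded below away from zero.
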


Then, we define
\[
\wh\CCC_1=\wh\CCC-\wh\CCC_0.
\]
Taking into account equation \eqref{eq:Canvi:PDE:Cbarret},
$\wh\CCC_1$ is a solution of
\begin{equation}\label{eq:Canvi:PDE:Cbarret1}
\LL_\eps\wh\CCC_1=\wh \FF\left(\wh\CCC_1\right),
\end{equation}
where
\begin{equation}\label{eq:Canvi:OperadorRHS:Cbarret1}
\wh\FF(h)=\wh M(v,\tau)+N(v,\tau)\pa_v h
\end{equation}
and
\begin{equation}\label{def:canvi:Mbarret}
\wh M(v,\tau)=M(v,\tau)-\LL_\eps\wh\CCC_0+N(v,\tau)\pa_v\wh\CCC_0.
\end{equation}
We obtain $\wh\CCC_1$ through a fixed point argument. For this
purpose we define the operator
\begin{equation}\label{def:OperadorPtFix:ligual}
\wt \FF=\GG_\eps\circ \wh\FF,
\end{equation}
where $\wh\FF$ and $\GG_\eps$ are the operators defined
\eqref{eq:Canvi:OperadorRHS:Cbarret1} and
\eqref{def:operador:canvi}. For convenience, we rewrite it as
\begin{equation}\label{eq:Canvi:OperadorRHS:Cbarret1:v2}
\wh\FF(h)(v,\tau)=\wh
M(v,\tau)+\pa_v\left(N(v,\tau)h(v,\tau)\right)-\pa_v
N(v,\tau)h(v,\tau).
\end{equation}
\begin{lemma}\label{lemma:canvi:PtFix:ligual}
Let us consider $\eps_0>0$ small enough and $\kk_6'>\kk_6$ big
enough. Then, the operator $\wt\FF$ is contractive from
$\XX_{1,\kk_6',d_2',\sigma}$ to itself.

Thus, it has a unique fixed point, which moreover satisfies that
\[
\begin{split}
\left\|\wh\CCC_1\right\|_{1,\kk_6',d_2',\sigma}&\leq K|\hmu|\eps\\
\left\|\pa_v\wh\CCC_1\right\|_{1,\kk_6',d_2',\sigma}&\leq
K\frac{|\hmu|}{\kk_6'}.
\end{split}
\]
\end{lemma}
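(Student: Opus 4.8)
The plan is to mimic the proof of Lemma~\ref{lemma:canvi:lmenor:PtFix}, the only new feature being the borderline weight $\ell-2r=0$. First I would record the quantitative estimates for the coefficients of $\wh\FF$ in \eqref{eq:Canvi:OperadorRHS:Cbarret1:v2}. For $N$ in \eqref{def:canvi:N}, writing $G=G_1+G_2+G_3+G_4$ as in \eqref{def:Canvi:G1}--\eqref{def:Canvi:G4} and using that $G_1(v+g(v,\tau),\tau)-G_1(v,\tau)$ is quadratically small by the mean value theorem together with the bounds on $g,\pa_v g$ from Lemma~\ref{lemma:canvi:canvi}, while $G_2,G_3,G_4$ are themselves small by Lemma~\ref{lemma:canvi:CotesBesties}, one gets $N\in\XX_{1,\kk_6',d_2',\sigma}$ with $\|N\|_{1,\kk_6',d_2',\sigma}\le K|\hmu|\eps$ and, after a Cauchy estimate on a slightly smaller domain (Lemma~\ref{lemma:PropietatsNormes:canvi}), $\|\pa_v N\|_{1,\kk_6',d_2',\sigma}\le K|\hmu|/\kk_6'$. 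For $\wh M$ in \eqref{def:canvi:Mbarret} the point is the cancellation engineered into $\wh\CCC_0$: substituting \eqref{def:C0barret}, using that $\GG_\eps$ is a right inverse of $\LL_\eps$ (Lemma~\ref{lemma:Canvi:Operador}(7)) and that $\pa_\tau\ol G_1=G_1$, one computes $\LL_\eps\wh\CCC_0=-\eps\ii G_1-\pa_v\ol G_1-\eps\ii\langle\pa_v G_1\,g\rangle-\eps\ii\langle G_2+G_3\rangle$; expanding $M(v,\tau)=-\eps\ii G(v+g(v,\tau),\tau)$ in powers of $g$ then cancels the non-small term $-\eps\ii G_1$. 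What remains in $\wh M$ is a sum of mean-zero pieces — the fluctuating parts of $\pa_v G_1\,g$, $G_2$ and $G_3$, together with $\pa_v\ol G_1$, all of which pick up a factor $\eps$ when acted on by $\GG_\eps$ (Lemma~\ref{lemma:Canvi:Operador}(1), where the $2\pi$-periodicity in $\tau$ is essential) — plus terms of order $|\hmu|^2$ coming from the Taylor remainders and from $N\pa_v\wh\CCC_0$, plus the order $|\hmu|^3$ term $G_4(v+g,\tau)$. Bounding all of these with Lemmas~\ref{lemma:Canvi:Operador}, \ref{lemma:canvi:CotesBesties}, \ref{lemma:canvi:canvi} and \ref{lemma:canvi:C0} gives $\GG_\eps(\wh M)\in\XX_{1,\kk_6',d_2',\sigma}$ with $\|\GG_\eps(\wh M)\|_{1,\kk_6',d_2',\sigma}\le K|\hmu|\eps$.

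With these bounds the contraction argument is routine. Using \eqref{eq:Canvi:OperadorRHS:Cbarret1:v2} one has $\wt\FF(h)=\GG_\eps(\wh M)+\GG_\eps(\pa_v(Nh))-\GG_\eps(\pa_v N\cdot h)$, so Lemma~\ref{lemma:Canvi:Operador}(1),(5), the Banach-algebra property of the weighted norms (Lemma~\ref{lemma:PropietatsNormes:canvi}(3)) and the embedding $\XX_{2,\sigma}\subset\XX_{1,\sigma}$ with loss $(\kk_6'\eps)^{-1}$ (Lemma~\ref{lemma:PropietatsNormes:canvi}(1)) show that $\wt\FF$ maps $\XX_{1,\kk_6',d_2',\sigma}$ into itself and that, for $h_1,h_2$ there,
\[
\|\wt\FF(h_2)-\wt\FF(h_1)\|_{1,\kk_6',d_2',\sigma}\le K\big(\|N\|_{1,\kk_6',d_2',\sigma}+\|\pa_v N\|_{1,\kk_6',d_2',\sigma}\big)\|h_2-h_1\|_{1,\kk_6',d_2',\sigma}\le\frac{K|\hmu|}{\kk_6'}\|h_2-h_1\|_{1,\kk_6',d_2',\sigma},
\]
the factor $(\kk_6'\eps)^{-1}$ from the embedding being absorbed by the extra $\eps$ carried by $\|N\|_{1,\kk_6',d_2',\sigma}$. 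Taking $\kk_6'$ large enough makes the Lipschitz constant less than $1/2$, so $\wt\FF$ has a unique fixed point $\wh\CCC_1\in\XX_{1,\kk_6',d_2',\sigma}$. Since $\wt\FF(0)=\GG_\eps(\wh M)$ we get $\|\wh\CCC_1\|_{1,\kk_6',d_2',\sigma}\le 2\|\GG_\eps(\wh M)\|_{1,\kk_6',d_2',\sigma}\le K|\hmu|\eps$, and the bound for $\pa_v\wh\CCC_1$ follows by shrinking the domain in the $d$-parameter and applying the Cauchy-type estimate of Lemma~\ref{lemma:PropietatsNormes:canvi}(4).

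The delicate step is the estimate of $\GG_\eps(\wh M)$: one must show it is genuinely of size $|\hmu|\eps$ rather than only $|\hmu|$, and this rests entirely on the precise structure of $\wh\CCC_0$, which is designed so that subtracting $\LL_\eps\wh\CCC_0$ (and correcting by $N\pa_v\wh\CCC_0$) removes exactly the part of $M$ carrying the $\OO(|\hmu|)$ — and logarithmically divergent — contribution, leaving only mean-free terms (improved by $\eps$ under $\GG_\eps$ thanks to periodicity) and terms of strictly higher order in $\hmu$. A secondary difficulty is the weighted-norm bookkeeping near the singularities $u=\pm ia$ in this borderline regime: the relevant weights oscillate between the indices $1-1/\q$, $1$ and $2$ (reflecting the orders of the branching points and the $\eps$-close cut-off in $R_{\kk,d}$), so one has to track the powers of $(\kk_6'\eps)^{-1}$ produced by the embeddings of Lemma~\ref{lemma:PropietatsNormes:canvi} and check that they are always compensated by factors of $\eps$ in the estimates of $N$, $\pa_v N$ and $\GG_\eps(\wh M)$. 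Once Lemma~\ref{lemma:canvi:PtFix:ligual} is established, undoing the change of variables $v=u+h(u,\tau)$ of Lemma~\ref{lemma:canvi:canvi} and setting $\wh\CCC=\wh\CCC_0+\wh\CCC_1$ yields Proposition~\ref{prop:canvi} (hence Theorem~\ref{th:CanviFinal}) in the case $\ell-2r=0$, injectivity of $(\xi_0(u,\tau),\tau)$ being proved exactly as for $\ell-2r>0$.
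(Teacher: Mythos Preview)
Your proposal is correct and follows essentially the same route as the paper: the paper isolates the estimates on $N$, $\pa_v N$ and $\GG_\eps(\wh M)$ into a separate Lemma~\ref{lemma:canvi:Cotes3} (proved by the six-term decomposition $\wh M=\wh M_1+\cdots+\wh M_6$ that your cancellation discussion is summarizing), and then runs exactly the contraction argument you give, with the derivative bound obtained via Lemma~\ref{lemma:PropietatsNormes:canvi}(4). One small notational point: in your displayed Lipschitz estimate the first term should carry $\|N\|_{0,\sigma}$ rather than $\|N\|_{1,\sigma}$ (as the paper writes), though your parenthetical remark about the embedding absorbing the $(\kk_6'\eps)^{-1}$ shows you are applying exactly this conversion.
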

Before proving this lemma, we state the following lemma, whose proof
is postponed to the end of this section.
\begin{lemma}\label{lemma:canvi:Cotes3}
The functions $\wh M$ and $N$ defined in \eqref{def:canvi:Mbarret}
and \eqref{def:canvi:N} respectively, satisfy the following
properties.
\begin{itemize}
\item $\GG_\eps (\wh M)\in \XX_{1,\kk_6',d_2',\sigma}$ and satisfies
\[
\left\| \GG_\eps (\wh M)\right\|_{1,\kk_6',d_2',\sigma}\leq
K|\hmu|\eps.
\]
\item $N, \pa_v N\in \XX_{1,\kk_6',d_2',\sigma}$ and satisfy
\[
\begin{split}
\left\| N\right\|_{1,\kk_6',d_2',\sigma}&\leq K|\hmu|\eps\\
\left\| \pa_v N\right\|_{1,\kk_6',d_2',\sigma}&\leq
K\frac{|\hmu|}{\kk_6'}.
\end{split}
\]
\end{itemize}
\end{lemma}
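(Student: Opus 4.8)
The plan is to establish the two assertions of Lemma~\ref{lemma:canvi:Cotes3} separately, using throughout that the change $g$ of Lemma~\ref{lemma:canvi:canvi} is small, $\|g\|_{0,\kk_6',d_2',\sigma}\le K|\hmu|\eps$ and $\|\pa_v g\|_{1-1/\q,\kk_6',d_2',\sigma}\le K|\hmu|\eps$, together with the decomposition $G=G_1+G_2+G_3+G_4$ and the bounds of Lemma~\ref{lemma:canvi:CotesBesties}. A remark I would make once and for all is that precomposition with the near-identity map $v\mapsto v+g(v,\tau)$ preserves the weighted norms $\|\cdot\|_{\nu,\sigma}$ up to a constant: since $g=\OO(|\hmu|\eps)$ is much smaller than the distance $\gtrsim\kk_6'\eps$ of $R_{\kk_6',d_2'}$ to $\pm ia$, the factor $(v+g)^2+a^2$ is comparable to $v^2+a^2$ there, so each $G_i(v+g(v,\tau),\tau)$ inherits the estimates of $G_i$.

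For $N$ the decisive point is the subtraction of the resonant linear part $G_1(v,\tau)$. I would write $G(v+g,\tau)-G_1(v,\tau)=\big(G_1(v+g,\tau)-G_1(v,\tau)\big)+G_2(v+g,\tau)+G_3(v+g,\tau)+G_4(v+g,\tau)$ and bound the first term by $g\int_0^1\pa_vG_1(v+sg,\tau)\,ds$, using $\pa_vG_1\in\XX_{1-1/\q,\sigma}$ and $\|g\|_{0,\sigma}\le K|\hmu|\eps$, so it is $\OO(|\hmu|^2\eps)$ in $\XX_{1-1/\q,\sigma}\subset\XX_{1,\sigma}$; the remaining three terms are respectively $\OO(|\hmu|^2\eps)$, $\OO(|\hmu|\eps)$, $\OO(|\hmu|^3\eps^2)$ in $\XX_{0,\sigma}$, $\XX_{1,\sigma}$, $\XX_{2,\sigma}$, hence all $\le K|\hmu|\eps$ in $\XX_{1,\sigma}$ after the embeddings of Lemma~\ref{lemma:PropietatsNormes:canvi}. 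Since $\pa_v g$ is small, $(1+\pa_v g)^{-1}\in\XX_{0,\sigma}$ with norm $\le K$ by the Banach-algebra property, and multiplying gives $\|N\|_{1,\kk_6',d_2',\sigma}\le K|\hmu|\eps$. For $\pa_v N$ I would first record the same bound for $N$ on a slightly larger domain (smaller $\kk$, larger $d$, which costs nothing) and then apply the Cauchy-type estimate of statement~4 of Lemma~\ref{lemma:PropietatsNormes:canvi}; the resulting factor $1/(\kk_6'\eps)$ cancels one power of $\eps$ and yields $\|\pa_v N\|_{1,\kk_6',d_2',\sigma}\le K|\hmu|/\kk_6'$.

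The heart of the lemma is the bound for $\GG_\eps(\wh M)$, and the role of $\wh\CCC_0$ in \eqref{def:C0barret} is precisely to cancel from $M=-\eps^{-1}G(v+g,\tau)$ the pieces on which $\GG_\eps$ does not recover the lost power of $\eps$. Using $\LL_\eps\circ\GG_\eps=\Id$ (Lemma~\ref{lemma:Canvi:Operador}) and $\pa_\tau\ol G_1=G_1$ one computes $\LL_\eps\wh\CCC_0=-\eps^{-1}G_1(v,\tau)-\pa_v\ol G_1(v,\tau)-\eps^{-1}\langle\pa_vG_1\,g\rangle(v)-\eps^{-1}\langle G_2+G_3\rangle(v)$. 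Substituting into \eqref{def:canvi:Mbarret} and writing $G_1(v+g,\tau)-G_1(v,\tau)=\pa_vG_1(v,\tau)g(v,\tau)+R_1(v,\tau)$, the $\OO(\eps^{-1})$ contributions organise into: (i) the zero-mean functions $-\eps^{-1}\big(\pa_vG_1\,g-\langle\pa_vG_1\,g\rangle\big)$, $-\eps^{-1}\big(G_2(v,\tau)-\langle G_2\rangle\big)$, $-\eps^{-1}\big(G_3(v,\tau)-\langle G_3\rangle\big)$ and $\pa_v\ol G_1$, on which statement~1 of Lemma~\ref{lemma:Canvi:Operador} makes $\GG_\eps$ gain a factor $\eps$, so their images are $\OO(|\hmu|^2\eps)$, $\OO(|\hmu|^2\eps)$, $\OO(|\hmu|\eps)$, $\OO(|\hmu|\eps)$ in $\XX_{1,\sigma}$; (ii) the higher-weight terms $-\eps^{-1}R_1$, $-\eps^{-1}\big(G_2(v+g)-G_2(v)\big)$, $-\eps^{-1}\big(G_3(v+g)-G_3(v)\big)$, $-\eps^{-1}G_4(v+g)$, which carry an extra power $\eps$ or $\eps^2$ coming from $g$ and lie in $\XX_{\nu,\sigma}$ with $\nu>1$, so that statement~2 of Lemma~\ref{lemma:Canvi:Operador} lowers the weight to $1$ and absorbs the $\eps^{-1}$; and (iii) the term $N\pa_v\wh\CCC_0$, which by the bound for $N$ above and $\|\pa_v\wh\CCC_0\|_{1,\sigma}\le K|\hmu|$ from Lemma~\ref{lemma:canvi:C0} is $\OO(|\hmu|^2\eps)$ in $\XX_{2,\sigma}$. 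Collecting these gives $\|\GG_\eps(\wh M)\|_{1,\kk_6',d_2',\sigma}\le K|\hmu|\eps$.

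The step I expect to be the main obstacle is the weight bookkeeping in part~(ii), specifically the Taylor remainder $R_1=g^2\int_0^1\pa_v^2G_1(v+sg,\tau)(1-s)\,ds$: a bare Cauchy bound for $\pa_v^2G_1$ only produces a factor $1/(\kk_6'\eps)^2$ and destroys the gain, so one has to use that $G_1$ is an explicit algebraic (or trigonometric) function of the separatrix to obtain the sharp estimate $\pa_v^2G_1\in\XX_{2-1/\q,\sigma}$ with norm $\OO(|\hmu|)$; then $R_1\in\XX_{2-1/\q,\sigma}$ with norm $\OO(|\hmu|^3\eps^2)$ and $-\eps^{-1}\GG_\eps(R_1)=\OO(|\hmu|^3\eps)$ in $\XX_{1-1/\q,\sigma}\subset\XX_{1,\sigma}$. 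Everything else is routine, using repeatedly Lemmas~\ref{lemma:PropietatsNormes:canvi}, \ref{lemma:Canvi:Operador}, \ref{lemma:canvi:CotesBesties}, \ref{lemma:canvi:canvi} and \ref{lemma:canvi:C0}, together with the observation that, since $|\hmu|\le\hmu_0$ and $\kk_6'$ is bounded below, every contribution of the form $\OO(|\hmu|^{k}\eps^{j})$ with $k\ge1$, $j\ge1$ collected above — no matter how many inverse powers of $\kk_6'$ it carries — is $\le K|\hmu|\eps$.
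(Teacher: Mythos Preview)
Your proof is correct and follows essentially the same approach as the paper: the paper decomposes $\wh M$ into six explicit terms $\wh M_1,\ldots,\wh M_6$, which are exactly your groups (i), (ii) and (iii) regrouped, and treats $N$ via the same splitting $G(v+g)-G_1(v)=(G_1(v+g)-G_1(v))+(G_2+G_3+G_4)(v+g)$ together with a Cauchy estimate for $\pa_v N$. Two small remarks: first, the term $-\eps^{-1}(G_2(v+g)-G_2(v))$ naturally lands in $\XX_{1-1/\q,\sigma}$ rather than in some $\XX_{\nu,\sigma}$ with $\nu>1$, so you should invoke the harmless embedding $\XX_{1-1/\q,\sigma}\subset\XX_{2,\sigma}$ (statement~2 of Lemma~\ref{lemma:PropietatsNormes:canvi}) before applying $\GG_\eps$; second, for the piece $-\eps^{-1}(G_3(v+g)-G_3(v))$ the paper explicitly invokes Proposition~\ref{coro:TuPropSing} to justify the sharp bound $\pa_u G_3\in\XX_{2,\sigma}$, whereas you simply cite Lemma~\ref{lemma:canvi:CotesBesties} where that bound is stated---either reference suffices.
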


\begin{proof}[Proof of Lemma \ref{lemma:canvi:PtFix:ligual}]
The operator $\wt \FF$ sends $\XX_{1,\kk_6',d_2',\sigma}$ to itself.
Let $h_1,h_2\in
\XX_{1,\kk_6',d_2',\sigma}$. Then, recalling the definitions of
$\wt\FF$ and $\wh\FF$ in \eqref{def:OperadorPtFix:ligual} and
\eqref{eq:Canvi:OperadorRHS:Cbarret1:v2} and applying Lemmas
\ref{lemma:Canvi:Operador} and \ref{lemma:canvi:Cotes3}, one can see
that
\[
\begin{split}
\left\|\wt\FF(h_2)-\wt\FF(h_1)\right\|_{1,\kk_6',d_2',\sigma}&\leq
\left\|\GG_\eps\pa_v\left(N\cdot(h_2-h_1)\right)\right\|_{1,\kk_6',d_2',\sigma}
+\left\|\GG_\eps\left(\pa_vN\cdot(h_2-h_1)\right)\right\|_{1,\kk_6',d_2',\sigma}
\\
&\leq
K\left\|N\right\|_{0,\kk_6',d_2',\sigma}\left\|h_2-h_1\right\|_{1,\kk_6',d_2',
\sigma}+K\left\|\pa_vN\right\|_{1,\kk_6',d_2',\sigma}\left\|h_2-h_1\right\|_{1,
\kk_6',d_2',\sigma}\\
&\leq\frac{K|\hmu|}{\kk_6'}\left\|h_2-h_1\right\|_{1,\kk_6',d_2',\sigma}.
\end{split}
\]
and therefore, increasing $\kk_6'$ if necessary, $\wt\FF$ is
contractive in $\XX_{1,\kk_6',d_2',\sigma}$ and has a unique fixed
point $\wh\CCC_1$. To obtain bounds for $\wh\CCC_1$ it is enough to
recall that
\[
\left\|\wh\CCC_1\right\|_{1,\kk_6',d_2',\sigma}\leq
2\left\|\wt\FF(0)\right\|_{1,\kk_6',d_2',\sigma}.
\]
By the definition of $\wt\FF$ in \eqref{def:OperadorPtFix:ligual},
$\wt\FF(0)=\GG_\eps(\wh M)$. Then, it is enough to apply Lemma
\ref{lemma:canvi:Cotes3} to obtain
\[
\left\|\wh\CCC_1\right\|_{1,\kk_6',d_2',\sigma}\leq K|\hmu|\eps.
\]
For the bound of $\pa_v\wh\CCC_1$ it is enough to apply the fourth
statement of Lemma \ref{lemma:PropietatsNormes:canvi} and rename
$\kk_6'$.
\end{proof}

\begin{proof}[Proof of Proposition \ref{prop:canvi} for $\ell-2r=0$]
By Lemmas \ref{lemma:canvi:C0} and \ref{lemma:canvi:PtFix:ligual},
we have that there exists a constant $b_{15}>0$ such that
\[
\begin{split}
\left\|\wh\CCC \right\|_{\ln,\sigma}&\leq b_{15}|\hmu|\\
\left\|\pa_v\wh\CCC \right\|_{1,\sigma}&\leq b_{15}|\hmu|.
\end{split}
\]
To recover $\CCC$ it is enough to consider the change of variables
$v=u+h(u,\tau)$ obtained in Lemma \ref{lemma:canvi:canvi}, which is
defined for $(u,\tau)\in R_{\kk_8,d_3}\times \TT_\sigma$ with
$\kk_8>\kk_6'$ and $d_3<d_2'$. Applying this change, one obtains
$\CCC$ which satisfies the bounds stated in Proposition
\ref{prop:canvi}. To check that $(\xi_0(u,\tau),\tau)$ is injective,
one can proceed as in the proof of Proposition \ref{prop:canvi} for
$\ell-2r>0$. Finally let us point out that it is easy to see that
this proposition is also satisfied taking any $\kk\ge \kk_8$ such that
$\eps\kk<a$.
\end{proof}

It only remains to prove Lemma \ref{lemma:canvi:Cotes3}.

\begin{proof}[Proof of Lemma \ref{lemma:canvi:Cotes3}]
We start by proving the second statement. Let us split the function $N$
defined in \eqref{def:canvi:N} as $N=N_1+N_2$ with
\begin{align}
N_1(v,\tau)&=-\left(1+\pa_v g(v,\tau)\right)\ii
\left(G_1(v+g(v,\tau),\tau)-G_1(v,\tau)\right)\label{def:canvi:N1}\\
N_2(v,\tau)&=-\left(1+\pa_v g(v,\tau)\right)\ii
\left(G_2(v+g(v,\tau),\tau)+G_3(v+g(v,\tau),\tau)+G_4(v+g(v,\tau),
\tau)\right).\label{def:canvi:N2}
\end{align}
To bound $N_1$, we apply Lemmas \ref{lemma:canvi:canvi} and
\ref{lemma:canvi:CotesBesties} and the mean value theorem, obtaining
\[
\| N_1\|_{1-\frac{1}{\q},\kk_6',d_2',\sigma}\leq K|\hmu|^2\eps.
\]
Applying the same lemmas, one can see that
\[
\| N_2\|_{1,\kk_6',d_2',\sigma}\leq K|\hmu|\eps
\]
which gives the bound for $N$. To obtain the bound for $\pa_v N$ it
is enough to apply the fourth statement of Lemma
\ref{lemma:PropietatsNormes:canvi} and to rename $\kk_6'$.

For the first statement, taking into account the definitions of $\wh
M$ and $M$ in \eqref{def:canvi:M} and \eqref{def:canvi:Mbarret}
respectively, and using the functions $G_i$, $i=1,2,3,4$,  and $\ol
G_1$ defined in \eqref{def:Canvi:G1}, \eqref{def:Canvi:G2},
\eqref{def:Canvi:G3}, \eqref{def:Canvi:G4} and
\eqref{def:canvi:G1:barra}, let us decompose $\wh M$ as
\[
\wh M(v,\tau)=\sum_{i=1}^6 \wh M_i (v,\tau)
\]
with
\begin{align}
\wh M_1(v,\tau)&=\pa_v\ol G_1(v,\tau)-\eps\ii\left(\pa_v
G_1(v,\tau)g(v,\tau)-\langle
\pa_vG_1 g\rangle(v)\right)\label{def:canvi:M1}\\
\wh M_2(v,\tau)&=-\eps\ii \left( G_1(v+g(v,\tau),\tau)-G_1(v,\tau)-\pa_v
G_1(v,\tau)g(v,\tau)\right)
\label{def:canvi:M2}\\
\wh M_3(v,\tau)&=-\eps\ii\left( G_2(v,\tau)+G_3(v,\tau)-\langle G_2
+G_3\rangle(v)\right)
\label{def:canvi:M3}\\
\wh M_4(v,\tau)&=-\eps\ii\left( G_2(v+g(v,\tau),\tau)+G_3(v+g(v,\tau),\tau)-
G_2(v,\tau)-
G_3(v,\tau)\right)\label{def:canvi:M4}\\
\wh M_5(v,\tau)&=-\eps\ii G_4(v+g(v,\tau),\tau)\label{def:canvi:M5}\\
\wh M_6(v,\tau)&=N(v,\tau)\pa_v
\wh\CCC_0(v,\tau).\label{def:canvi:M6}
\end{align}
We bound each term. For the first one, by Lemmas
\ref{lemma:canvi:canvi} and \ref{lemma:canvi:CotesBesties}, we have
that $\wh
M_1\in\XX_{1-\frac{1}{\q},\kk_6',d_2',\sigma}\subset\XX_{1,\kk_6',d_2',\sigma}$.
Moreover, taking also into account \eqref{eq:canvi:Cota:G1barra},
\[
\left\|\wh M_1\right\|_{1,\kk_6',d_2',\sigma}\leq K|\hmu|
\]
and therefore, since $\langle\wh M_1\rangle=0$, by Lemma
\ref{lemma:Canvi:Operador},
\[
\left\|\GG_\eps\left(\wh
M_1\right)\right\|_{1,\kk_6',d_2',\sigma}\leq K|\hmu|\eps.
\]
For the term \eqref{def:canvi:M2}, it is enough to apply Lemmas
\ref{lemma:canvi:canvi} and Taylor's formula to obtain $\wh
M_2\in\XX_{2-\frac{1}{\q},\kk_6',d_2',\sigma}\subset\XX_{2,\kk_6',d_2',\sigma}$
and
\[
\left\|\wh M_2\right\|_{2,\kk_6',d_2',\sigma}\leq K|\hmu|^3\eps.
\]
Then, applying again Lemma \ref{lemma:Canvi:Operador}, we have that,
\[
\left\|\GG_\eps\left(\wh
M_2\right)\right\|_{1,\kk_6',d_2',\sigma}\leq K|\hmu|^3\eps.
\]
To bound \eqref{def:canvi:M3}, it is enough to apply Lemma
\ref{lemma:canvi:CotesBesties} to see that $M_3\in
\XX_{1,\kk_6',d_0',\sigma}$ and
\[
\left\|\wh M_3\right\|_{1,\kk_6',d_2',\sigma}\leq K|\hmu|
\]
which, using that $\langle \wh M_3\rangle=0$, implies
\[
\left\|\GG_\eps\left(\wh
M_3\right)\right\|_{1,\kk_6',d_2',\sigma}\leq K|\hmu|\eps.
\]
Applying the mean value theorem, using the definition of $G_3$ in
\eqref{def:Canvi:G3} and Proposition \ref{coro:TuPropSing}, and the
definition of $G_2$ in \eqref{def:Canvi:G2},  Lemmas
\ref{lemma:canvi:canvi} and \ref{lemma:canvi:CotesBesties}, one can
see that $\wh M_4$ in \eqref{def:canvi:M4} satisfies
\[
\left\|\wh M_4\right\|_{2,\kk_6',d_2',\sigma}\leq K|\hmu|^2\eps
\]
and then,
\[
\left\|\GG_\eps\left(\wh
M_4\right)\right\|_{1,\kk_6',d_2',\sigma}\leq K|\hmu|^2\eps.
\]
For $\wh M_5$ in \eqref{def:canvi:M5}, it is enough to notice that,
by Lemma \ref{lemma:canvi:CotesBesties} and
\ref{lemma:Canvi:Operador},
\[
\left\|\wh M_5\right\|_{2,\kk_6',d_2',\sigma}\leq K|\hmu|^3\eps
\]
and
\[
\left\|\GG_\eps\left(\wh
M_5\right)\right\|_{1,\kk_0',d_2',\sigma}\leq K|\hmu|^3\eps.
\]
Finally, for the last term \eqref{def:canvi:M6}, one has to apply
the bound of $N$ already obtained and Lemma \ref{lemma:canvi:C0}, to
see that
\[
\left\|\wh M_6\right\|_{2,\kk_6',d_2',\sigma}\leq
\left\|N\right\|_{1,\kk_6',d_0',\sigma}\left\|
\pa_v\wh\CCC_0\right\|_{1,\kk_6',d_2',\sigma}\leq K|\hmu|^2\eps.
\]
Then, by Lemma \ref{lemma:Canvi:Operador}, we have that,
\[
\left\|\GG_\eps\left(\wh
M_6\right)\right\|_{1,\kk_6',d_2',\sigma}\leq K|\hmu|^2\eps.
\]
Joining all these bounds, we prove the first statement of Lemma
\ref{lemma:canvi:Cotes3}
\end{proof}

\subsubsection{Proof of Proposition \ref{coro:Canvi:FirstOrder}}
To prove Proposition \ref{coro:Canvi:FirstOrder}, it is enough to
obtain the first asymptotic terms of the function $\wh \CCC_0$
obtained in Lemma \ref{lemma:canvi:C0}. From them, we  can deduce
the first order terms of $\wh\CCC=\wh \CCC_0+\wh\CCC_1$, where
$\wh\CCC_1$ is the function bounded in Lemma
\ref{lemma:canvi:PtFix:ligual}, and from them, using
\eqref{eq:CtoCbarret}, the ones of $\CCC$.

Recall that $\wh\CCC_0$ has been defined in \eqref{def:C0barret} as
$\wh\CCC_0=E_1+E_2+E_3+E_4$ with
\begin{align}
E_1(v,\tau)=&-\ol G_1(v,\tau)\label{def:canvi:E1}\\
E_2(v)=&-\eps\ii \GG_\eps\left(\langle
\pa_vG_1g\rangle\right)(v) \label{def:canvi:E2}\\
E_3(v)= &-\eps\ii \GG_\eps\left(\langle
G_2\rangle\right)(v)\label{def:canvi:E3}\\
E_4(v)=&-\eps\ii \GG_\eps\left(\langle
G_3\rangle\right)(v),\label{def:canvi:E4}
\end{align}
where $G_1$, $G_2$, $G_3$ and $\ol G_1$ are the functions defined in
\eqref{def:Canvi:G1}, \eqref{def:Canvi:G2}, \eqref{def:Canvi:G3} and
\eqref{def:canvi:G1:barra} respectively and $g$ is the function
given by Lemma \ref{lemma:canvi:canvi}.

We analyze each of the four terms $E_i$ that give $\wh\CCC_0$ for
$(v,\tau)\in \left( D_{\kk_6',\C_1}^{\inn, +,u}\cap
D_{\kk_6',\C_1}^{\inn, +,s}\right)\times \TT_\sigma$. For the first
one \eqref{def:canvi:E1}, it is enough to recall that, by
definition, the function $F_1$ defined in \eqref{def:FunctionsF}
satisfies that
\[
\hmu F_1(\tau)=\ol G_1(ia,\tau)
\]
and therefore,
\[
E_1(v,\tau)=-\ol G_1(v,\tau)=-\hmu
F_1(\tau)+\OO(v-ia)^{\frac{1}{\q}}.
\]
Then, using \eqref{eq:cota:canvi:gamma} and that $|v-ia|\leq
K\eps^{\ga}$,
\[
\left\|E_1+\mu F_1\right\|_{1,\sigma}\leq K|\hmu|\eps.
\]
For the second term, let us recall that by \eqref{def:canvi:canvi}
and applying Lemma \ref{lemma:Canvi:Operador}, we have that the
function $g$, obtained in Lemma \ref{lemma:canvi:canvi}, satisfies
\[
\left\| g-\eps\ol G_1(v,\tau)\right\|_{1-\frac{1}{\q},\sigma}\leq
K|\hmu|\eps^2.
\]
Then, by Lemma \ref{lemma:canvi:CotesBesties}, one can see that
\[
 \left\|\pa_v\left(g-\eps\ol
G_1(v,\tau)\right)\right\|_{2-\frac{2}{\beta},\sigma}\leq
K|\hmu|\eps^2
\]
and therefore, using Lemma \ref{lemma:Canvi:Operador},
\[
 \left\|\eps\ii\GG_\eps\left(\pa_v\left(g-\eps\ol
G_1(v,\tau)\right)\right)\right\|_{1,\sigma}\leq K|\hmu|\eps.
\]

Now it remains to bound, the first order of $E_3$, which is given by
\[
-\hmu\int_{v_0}^v \langle\pa_vG_1\ol G_1\rangle (w)\,dw,
\]
where we recall that $v_0\in R_{\kk_6',d_3}$.

Since $\langle\pa_vG_1\ol G_1\rangle=\OO(v-ia)^{1-\frac{1}{\q}}$, we
can define the constant
\[
C_2(\mu)=-\hmu\int_{v_0}^{ia} \langle\pa_vG_1\ol G_1\rangle (w)\,dw
\]
and then, using \eqref{eq:cota:canvi:gamma}, one has that
\[
\left\| E_2-C_2(\hmu)\right\|_{1,\sigma}\leq K|\hmu|^2\eps.
\]
For the third term, by the definitions of $G_2$ in
\eqref{def:Canvi:G2} and $\GG_\eps$ in \eqref{def:operador:canvi},
we have that
\[
\begin{split}
E_3(v)&=-\hmu
\int_{v_0}^v \langle \wh H_1^2\rangle(w)\,dw\\
&=-\hmu \int_{v_0}^{ia} \langle \wh
H_1^2\rangle(w)\,dw+\OO(v-ia)^\frac{1}{\q}.
\end{split}
\]
Then, proceeding as for $E_2$, we define
\[
C_3(\mu,\eps)=-\hmu \int_{v_0}^{ia} \langle \wh H_1^2\rangle(w)\,dw
\]
and using \eqref{eq:cota:canvi:gamma}, we have that
\[
\left\| E_3-C_3(\mu,\eps)\right\|_{1,\sigma}\leq K|\hmu|\eps.
\]
To bound $E_4$, using Proposition \ref{coro:TuPropSing}, we decompose
$G_3$ into two terms as $G_3=G_3^1+G_3^2$, with
\[
G_3^1(v,\tau)=\left(1+\hmu\pa_p^2\wh
H_1^1\left(q_0(u),p_0(u),\tau\right)\right)p_0(u)^{-2}\left(\frac{2r\hmu\eps
C_+^2}{(v-ia)^{2r+1}}\left(F_0(\tau)+\hmu \langle Q_0
F_1\rangle\right)+\xi(u,\tau)\right)
\]
and $G_3^2=G_3-G_3^1$. By Proposition \ref{coro:TuPropSing},
$\|G_3^2\|_{2,\sigma}\leq K\hmu\eps^2$ and therefore
\[
\left\|\eps\ii \GG_\eps\left(\langle
G_3^2\rangle\right)\right\|_{2,\sigma}\leq K|\hmu|\eps.
\]
For the other term, using the definitions of $\wh H_1^1$,  $b$,
$Q_j$ and $F_j$ in \eqref{def:HamPertorbat:H1},
\eqref{def:Constantb}, \eqref{def:FunctionsQ} and
\eqref{def:FunctionsF}, and recalling that by Proposition
\ref{coro:TuPropSing}, $\xi\in \XX_{1-\frac{1}{\q},\sigma}$, there
exist a function $\wh\xi\in\XX_{1-\frac{1}{\q},\sigma}$, such that
\[
\left\langle G_3^1\right\rangle(v)=\frac{b\hmu^2\eps}{v-ia}+\wh \xi(v,\tau).
\]
Then, one can see that there exists a constant  $C_4(\hmu,\eps)$
satisfying $|C_4(\hmu,\eps)|\leq K|\hmu|$, such that,
\[
\left\| E_4(v)+b\hmu^2\ln
(v-ia)-C_4(\hmu,\eps)\right\|_{1,\sigma}\leq K|\hmu|\eps.
\]
Taking $C=C_2+C_3+C_4$ one obtains that
\[
\left\|\wh \CCC(v,\tau)+\hmu F_1(\tau)-C(\hmu,\eps)+b\hmu^2\ln
(v-ia)\right\|_{1,\sigma}\leq K|\hmu|\eps.
\]
To finish the proof of Proposition \ref{coro:Canvi:FirstOrder}, it
is enough to consider the change of variables $v=u+h(u,\tau)$
obtained in Lemma \ref{lemma:canvi:canvi}, which does not change the
asymptotic first order of $\CCC$. Let us note that to see that $C(\mu,\eps)$ has
a well defined limit when $\eps\rightarrow 0$ one can easily proceed as we have
done in the case $\ell-2r<0$ in Section \ref{Proof:Existence:C0:lmenor}.

\section*{Acknowledgements}
I.B. acknowledges the support of the Spanish Grant MEC-FEDER
MTM2006-05849/Consolider, the Spanish Grant MTM2010-16425
and the Catalan SGR grant 2009SGR859 and E.F. the support of the Spanish Grant MEC-FEDER
MTM2006-05849/Consolider, the Spanish Grant MTM2010-16425
and the Catalan grant CIRIT 2005 SGR01028. M. G and T.M.S. have been partially supported by the Spanish MCyT/FEDER grant
MTM2009-06973 and the Catalan SGR grant 2009SGR859. 
In addition, the research of M. G. has been supported by the
Spanish PhD grant FPU AP2005-1314. Part of this work was done while M. G. was doing stays in the Departments of Mathematics of the University of Maryland at College Park and the Pennsylvania State University.  He wants to thank these institutions for their hospitality and support, and specially thank Vadim Kaloshin for making these stays possible.

\bibliography{references}
\bibliographystyle{alpha}
\end{document}